\documentclass[a4paper,11pt,reqno]{amsart}

\usepackage[utf8]{inputenc}
\RequirePackage[l2tabu, orthodox]{nag}
\usepackage[T1]{fontenc}
\usepackage{lmodern}
\usepackage{amsthm,amssymb,amsmath}
\usepackage{latexsym}
\usepackage{graphicx}
\usepackage{subfigure}
\usepackage{tikz}
\usetikzlibrary{arrows}
\usetikzlibrary{calc}
\usepackage[linktocpage=true]{hyperref}
\usepackage[left=3.8cm, right=3.8cm, top=3cm, bottom=3cm]{geometry}
\setcounter{tocdepth}{1}
\usepackage{color}
\usepackage{here}
\usepackage{mathrsfs}
\usepackage{pgf,tikz}
\usetikzlibrary{decorations.pathreplacing, shapes.multipart, arrows, matrix, shapes}
\usetikzlibrary{patterns}
\usepackage{caption}
\usepackage[frenchb,english]{babel}
\usepackage[all] {xy}
\usepackage{enumitem}

\makeatletter
\newcommand\footnoteref[1]{\protected@xdef\@thefnmark{\ref{#1}}\@footnotemark}
\makeatother

\hypersetup{ colorlinks   = true, %Colours links instead of ugly boxes
urlcolor  = blue, %Colour for external hyperlinks
linkcolor    = blue, %Colour of internal links
citecolor   = blue %Colour of citations
}

%\date{\today}

\def\beq{\begin{equation}}
\def\eeq{\end{equation}}

\def\det{\mathrm{det}\ }

\def\restriction#1#2{\mathchoice
              {\setbox1\hbox{${\displaystyle #1}_{\scriptstyle #2}$}
              \end{enumerate}
\restrictionaux{#1}{#2}}
              {\setbox1\hbox{${\textstyle #1}_{\scriptstyle #2}$}
              \restrictionaux{#1}{#2}}
              {\setbox1\hbox{${\scriptstyle #1}_{\scriptscriptstyle #2}$}
              \restrictionaux{#1}{#2}}
              {\setbox1\hbox{${\scriptscriptstyle #1}_{\scriptscriptstyle #2}$}
              \restrictionaux{#1}{#2}}}
\def\restrictionaux#1#2{{#1\,\smash{\vrule height .8\ht1 depth .85\dp1}}_{\,#2}}

\newcommand{\dif}{{\operatorname{Diff}^2(\mathbb{T}^3)}}
\newcommand{\Z}{{\mathbb Z}}
\newcommand{\R}{{\mathbb R}}

\newcommand{\T}{{\mathbb T}}
\newcommand{\E}{{\mathbb E}}
\newcommand{\N}{{\mathbb N}}

\newcommand{\TT}{{\mathbb{T}^3}}
\newcommand{\loc}{{\operatorname{loc}}}

\newcommand{\cA}{{\mathcal A}}
\newcommand{\cC}{{\mathcal C}}
\newcommand{\cD}{{\mathcal D}}

\newcommand{\cF}{{\mathcal F}}

\newcommand{\cB}{{\mathcal B}}
\newcommand{\cH}{{\mathcal H}}

\newcommand{\cL}{{\mathcal L}}
\newcommand{\cM}{{\mathcal M}}
\newcommand{\cN}{{\mathcal N}}
\newcommand{\cP}{{\mathcal P}}

\newcommand{\cV}{{\mathcal V}}
\newcommand{\cW}{{\mathcal W}}
\newcommand{\cWcu}{{\mathcal W}^{cu}}
\newcommand{\cWcs}{{\mathcal W}^{cs}}
\newcommand{\cWc}{{\mathcal W}^{c}}
\newcommand{\cWu}{{\mathcal W}^{u}}
\newcommand{\cWs}{{\mathcal W}^{s}}
\newcommand{\cPH}{{\mathcal{PH}}}

\newcommand{\cU}{{\mathcal U}}

\newcommand{\eps}{\varepsilon}

\newcommand{\length}{\operatorname{length}}
\newcommand{\eqdef}{\stackrel{\scriptscriptstyle\rm def.}{=}}
\def\dans{\mathop{\subset}}
\newcommand{\Leb}{\mathrm{Leb}}

\newtheorem{theorem}{Theorem}[section]
\newtheorem{remark}[theorem]{Remark}

\newtheorem{lemma}[theorem]{Lemma}
\newtheorem{defi}[theorem]{Definition}
\newtheorem{prop}[theorem]{Proposition}
\newtheorem{corollary}[theorem]{Corollary}

\newtheorem{theoalph}{Theorem}

\sloppy
\newtheorem*{conjecture}{Conjecture}

%\newcommand{\la}{\langle}
%\newcommand{\ra}{\rangle}

%%%%%%%%%%%%%%%%%%%%%%%%%%%%%%%%%%%%%%%%%%%%%%%%%%%%%%%%%%%%%%%%%%%%%%%%%%%%%%%%%

\begin{document}
	
\title[]{Rigidity of $\mathbf{\textit{U}}$-Gibbs measures near conservative Anosov diffeomorphisms on $\T^3$}

\author[Sébastien Alvarez, Martin Leguil, Davi Obata, Bruno Santiago]{Sébastien Alvarez, Martin Leguil, Davi Obata, Bruno Santiago}
\address{}
\email{}

\date{\today}

\begin{abstract} 
We show that within a $C^1$-neighbourhood $\cU$ of the set of volume preserving Anosov diffeomorphisms on the three-torus $\TT$ which are strongly partially hyperbolic with expanding center, any $f\in\cU\cap \dif$ satisfies the dichotomy: either the strong stable and unstable bundles $E^s$, $E^u$ of $f$ are jointly integrable, or any fully supported $u$-Gibbs measure of $f$ is SRB.
\end{abstract}

\maketitle

\tableofcontents

\section{Introduction}\label{intro}
%\subsection{Color code}
%
%\begin{itemize}
%\item {\color{orange} Asaf - Report X}
%\item {\color{blue} Gogolev? - Report Y}
%\item {\color{red} Mauricio - Report Z}
%\item {\color{brown} Changes in normal forms and leafwise quotient measures}
%\end{itemize}
%

\subsection{Context}

Invariant foliations play a key role in partially and uniformly hyperbolic dynamics. For example,  they can be used to obtain ergodicity, topological transitivity and mixing for certain systems. In the path of trying to understand these foliations, one can investigate their topological and ergodic properties. For topological properties, one may ask about minimality, or robust minimality, of the invariant foliations.  In this paper we are going to focus on understanding ergodic properties of the invariant foliations for a certain type of dynamical system.

We refer the reader to Section \ref{section definitions notations} for the definition of the dynamical objects that appear in this introduction. 
Let us denote by $\TT\eqdef\R^3/\Z^3$ the three-dimensional torus. We let $\cA^2(\TT)\subset\dif$ be the set of Anosov diffeomorphisms which are strongly partially hyperbolic with uniformly expanding center, that is, a diffeomorphism $f$ belongs to $\cA^2(\TT)$ if $f$ is Anosov and admits a splitting 
\[
T\TT = E^s \oplus E^c \oplus E^u,
\]
where $E^c$ expands uniformly under the action of $Df$. A diffeomorphism in $\cA^2(\TT)$ can be seen as an Anosov and as a partially hyperbolic diffeomorphism.  In this context the bundles $E^s,E^u,E^u\oplus E^c,E^s\oplus E^c$ and $E^c$ integrate to invariant foliations respectively denoted by $\cW^s,\cW^u,\cW^{cu},\cW^{cs}$ and $\cW^c$, and called the stable, unstable, center-unstable, center-stable and center foliations (see \cite{BrinBuragoIvanov,potrie}). If we see $f$ as an Anosov diffeomorphism, $\cW^{cu}$ is a (two dimensional) unstable foliation. If we see it as a partially hyperbolic diffeomorphism, $\cW^u$ is the (one dimensional) strong-unstable foliation.

There are two types of elements of $\cA^2(\TT)$: the \emph{conservative elements}, which preserve some volume (that must be ergodic by Hopf's argument) and form a set denoted by $\cA_m^2(\TT)$, and the \emph{dissipative ones}, which don't. In both cases there exists a unique invariant measure which is the ``most compatible'' with the volume and that is called the \emph{SRB measure} (for Sinai-Ruelle-Bowen): \emph{measures that are absolutely continuous with respect to the Lebesgue measure along center unstable leaves (see \S \ref{subsec.uandsrb}). In particular, they capture the ``statistical'' behavior of Lebesgue-almost every point (see \cite{LSYoung})}. SRB measures are very important in the theory of smooth dynamics.  Palis conjectured that for a typical dynamical system there are finitely many attractors,  each attractor supporting a unique SRB measure and these measures capture the behavior of Lebesgue almost every point \cite{Palis}.  This conjecture remains open.  

\subsection{Dynamics of (center)-unstable foliation}

Let $f\in \cA^2(\TT)$ with a splitting $T\TT = E^s \oplus E^c \oplus E^u$. The dynamics of the center-unstable foliation $\cW^{cu}$ is very well understood.  It is minimal (i.e., every leaf is dense in $\TT$) and there is a unique SRB measure.

On the other hand, recall that properties of the strong-unstable foliation $\cW^u$ are especially interesting for dissipative dynamics: the study of topological and ergodic properties of attractors or quasi-attractors (which are $\cW^u$-saturated, i.e., they contain its $\cW^u$-leaves) is closely related to the problem of understanding properties of these foliations (or laminations). Apart from some finiteness results (see \cite{CrovisierPotrieSamba,NancyChichi,HammPotrie}), the dynamical properties of strong-unstable foliation are not well understood even in the uniformly hyperbolic setting. 

For instance, it was only recently announced by Avila-Crovisier-Eskin-Potrie-Wilkinson-Zhang that $\cW^u$ is minimal for any $C^{1+\alpha}$ Anosov diffeomorphism of $\TT$.  In higher dimensions, Avila-Crovisier-Wilkinson recently announced that $C^1$-openly and $C^r$-densely among the transitive Anosov diffeomorphisms admitting a decomposition $E^s \oplus E^c \oplus E^u$, where $E^c$ is one dimensional and uniformly expanding,  the strong unstable manifold is minimal.

A type of invariant measures that is associated with $\cW^u$ are the so-called $u$-Gibbs measures. A measure is $u$-Gibbs if it admits conditional measures along $\cW^u$ leaves that are absolutely continuous with respect to the Lebesgue measure of these leaves. In particular the support of such a measure is $\cW^u$-saturated. Let us make a few remarks about $u$-Gibbs and SRB measures in our setting:
\begin{enumerate}
\item SRB measures are absolutely continuous along two dimensional objects ($\cW^{cu}$ leaves), while $u$-Gibbs are absolutely continuous along one dimensional objects ($\cW^u$ leaves).
\item SRB measures are also $u$-Gibbs measures. 
\item\label{point trois} In general, we don't know when a $u$-Gibbs measure is an SRB measure. 
\end{enumerate}

\subsection{Main result} The goal of this paper is related to item \eqref{point trois} above. We are interested in knowing when the $u$-Gibbs property implies SRB. In other words, given a measure that is absolutely continuous along $\cW^u$, when can we show that this measure is absolutely continuous along $\cW^{cu}$?

For $\cA^2(\TT)$, we say that $E^s$ and $E^u$ are \emph{jointly integrable} if there exists a two dimensional foliation $\cW^{su}$ tangent to $E^s \oplus E^u$. It is known that $C^1$-openly and $C^2$-densely in $\cA^2(\TT)$ the directions $E^s$ and $E^u$ are not jointly integrable (see \cite{HertzHertzUres}). 

Our main result is the following:

\begin{theoalph}
	\label{mainthm.dicotomia}
There exists an open neighbourhood $\cU$ of $\cA^2_{m}(\TT)$ within   $\dif$ so that for every $f\in\cU$, either
	\begin{enumerate}
		\item $E^s$ and $E^u$ are jointly integrable, or
		\item\label{case deux}  any fully supported ergodic $u$-Gibbs measure $\mu$ is SRB.
	\end{enumerate} 
\end{theoalph}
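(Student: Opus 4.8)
The plan is to establish the dichotomy via a rigidity argument for the disintegration of $u$-Gibbs measures, exploiting the fact that near $\cA^2_m(\TT)$ the center foliation $\cW^c$ is one-dimensional and the system is close to conservative. Fix $f\in\cU$ and a fully supported ergodic $u$-Gibbs measure $\mu$. First I would recall that $\mu$ is SRB if and only if the sum of its positive Lyapunov exponents equals the negative of the center-unstable volume contraction rate, equivalently the Margulis--Ruelle inequality is an equality along $\cW^{cu}$; so the goal is to produce absolute continuity along $\cW^c$, which combined with the given absolute continuity along $\cW^u$ yields absolute continuity along $\cW^{cu}$ by a Fubini-type argument (local product structure of $\cW^{cu}=\cW^c\oplus\cW^u$ inside the two-dimensional unstable foliation of $f$ viewed as Anosov).

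The core of the argument should be an invariance principle / synchronization dichotomy in the spirit of Avila--Viana and Brown--Rodriguez Hertz: consider the $\cW^u$-holonomies and the stable holonomies $h^s$ acting on transversals parametrized locally by $\cW^c$. The disintegration $\{\mu^u_x\}$ of $\mu$ along $\cW^u$-plaques, being absolutely continuous with a smooth (Radon--Nikodym) density governed by the unstable Jacobian, is canonical; the question is how the "transverse'' part of $\mu$ — its disintegration along $\cW^{cu}$ into $\cW^c$-conditionals — behaves under $s$-holonomy. I would set up a skew-product / fibered picture over the non-uniformly hyperbolic base and show that either the stable holonomy $h^s\colon \cW^c_{\loc}(x)\to\cW^c_{\loc}(y)$ preserves the family of $\cW^c$-conditional measures $\mu^c$ (a form of the invariance principle, using that $E^s\oplus E^c$ has subexponential behaviour transverse to $E^u$ along generic orbits, i.e. the center exponent is dominated), or it does not. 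In the first case, $su$-invariance of the conditional structure forces, via an accessibility/density argument using full support, that $\mu^c$ is equivalent to Lebesgue on $\cW^c$-plaques, hence $\mu$ is SRB. The delicate point is converting "$h^s$ preserves $\mu^c$" into "$\mu^c$ is Lebesgue": here one uses that $f|_{\cW^c}$ expands, so $f$-invariance plus $s$-holonomy-invariance of $\mu^c$, propagated by the expansion and by density of $\cW^u$-orbits (full support), yields a measure invariant under a large pseudo-group of local maps including the center dynamics itself, which by a Hopf-type / Livšic-type rigidity must be smooth.

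In the second case — the stable holonomy does not preserve $\mu^c$ — I expect to derive joint integrability of $E^s$ and $E^u$. The mechanism should be: non-invariance of $\mu^c$ under $h^s$, combined with the exact dimensionality and the $u$-Gibbs property, produces a nontrivial "drift'' or defect cocycle; but the only way to reconcile this with the ergodic invariance principle (which would otherwise force invariance, as the relevant transverse Lyapunov exponent — the center exponent seen from the unstable side — vanishes in the relevant cocycle because $\mu$ is $u$-Gibbs) is that the holonomy is actually trivial in the relevant sense, i.e. the $su$-picture degenerates and $E^s\oplus E^u$ integrates. Concretely I would run the Avila--Viana invariance principle for the cocycle given by $s$-holonomies over the system $(f,\mu)$: its hypotheses (a fiberwise dominated or neutral behaviour) hold precisely because $\mu$ is $u$-Gibbs and the center is one-dimensional and, near the conservative locus, the center exponent relative to the unstable Jacobian is controlled; the invariance principle then always gives holonomy-invariance of $\mu^c$ \emph{unless} an obstruction of geometric type — non-transversality leading to joint integrability — occurs.

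The main obstacle I anticipate is this last step: ruling out the "intermediate'' possibility where $\mu^c$ is neither Lebesgue nor $s$-holonomy invariant, which is exactly where the geometry of $\TT^3$ and closeness to the Anosov conservative locus must be used in an essential (non-perturbative-in-spirit but open-condition) way — presumably via a normal-form / non-stationary linearization along $\cW^c$ (using that center is one-dimensional and smooth), reducing the $s$-holonomy cocycle to an affine cocycle on $\R$, for which the Avila--Viana machinery is sharp and the dichotomy "invariant Lebesgue vs. degenerate" is clean. I would also need, throughout, that the $C^1$-neighbourhood $\cU$ can be chosen so that all of $\cW^s,\cW^c,\cW^u$ persist with the needed regularity of holonomies ($C^{1+}$ along $\cW^u$, Hölder transversely), which follows from $\cA^2_m(\TT)$ being $C^1$-open in the space of such partially hyperbolic Anosov systems together with standard normal hyperbolicity and bunching estimates.
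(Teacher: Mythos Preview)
Your proposal has a genuine conceptual gap: the Avila--Viana invariance principle you want to invoke requires the fiber exponent to vanish, but here the center bundle is \emph{uniformly expanding} (that is the hypothesis $f\in\cA^2(\TT)$), so the relevant exponent is strictly positive and the invariance principle gives you nothing. Your dichotomy is also inverted relative to what actually happens: the case that leads to joint integrability is the \emph{degenerate} one where the stable holonomy behaves trivially on the unstable direction, not the case where it fails to preserve the center conditionals; and the case that yields SRB is precisely the \emph{transversal} (non-degenerate) one, because transversality is what powers the drift argument that produces extra invariance. Your ``second case'' paragraph (``non-invariance of $\mu^c$ under $h^s$ forces joint integrability via the invariance principle'') is circular as written and does not correspond to any mechanism I can identify.

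The paper's route is quite different and worth knowing. The dichotomy is governed not by whether $h^s$ preserves $\mu^c$, but by the angle function $\alpha^s(x,y)=\angle(DH^s_{x,y}E^u(x),E^u(y))$, which measures the infinitesimal failure of $E^s\oplus E^u$ to integrate. A martingale argument gives a zero--one law: either $\alpha^s=0$ for a.e.\ pair on the same stable leaf, or $\alpha^s>0$ for a.e.\ such pair. In the first case, full support plus minimality of $\cW^s$ upgrades the infinitesimal integrability to actual joint integrability via a topological argument. In the second case one runs an Eskin--Mirzakhani style exponential drift: one builds $C^1$ normal forms linearizing $f$ along $\cW^{cu}$ (this is where $C^2$ regularity and one-dimensionality of $E^c$ are used), pushes the $\cW^{cu}$-conditionals to leafwise quotient measures $\hat\nu^c_x$ on $\R$, and then uses matched $Y$-configurations with carefully chosen stopping times to show that $\hat\nu^c_x$ is invariant under nontrivial affine maps with arbitrarily small translation part, hence is Lebesgue. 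The positive angle is exactly what makes the center drift in the $Y$-configurations nondegenerate and controllable. The closeness to a conservative map is used only to guarantee a bunching inequality that makes stable holonomies $C^1$, so that $\alpha^s$ is well-defined.
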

\begin{remark}
The hypothesis that the system is near a volume preserving one is used to have $C^1$-stable holonomies. The conclusion of Theorem \ref{mainthm.dicotomia} also holds for $f\in \cA^2(\TT)$ with $C^1$-stable holonomies, see Theorem \ref{mainthm.technique}.
\end{remark}

\begin{remark}
As we mentioned before, Avila et al.  announced that for any $C^{1+\alpha}$ Anosov diffeomorphism in $\TT$ the strong unstable foliation is minimal. Since the support of any $u$-Gibbs measure is saturated by $\cW^u$-leaves, their result would imply that the case \eqref{case deux} of Theorem \ref{mainthm.dicotomia} could be improved to every $u$-Gibbs measure is SRB.  We could also apply the result announced by Avila-Crovisier-Wilkinson, that we mentioned, to obtain that open and densely in $\mathcal{U}$ every $u$-Gibbs is SRB, where $\mathcal{U}$ is the open set from Theorem \ref{mainthm.dicotomia}.  
\end{remark}

Let us mention one application of our result.  In \cite{GKM},  Gogolev, Kolmogorov and Maimon consider the linear Anosov diffeomorphism on $\TT$ induced by the matrix
\[
A = 
\begin{bmatrix}
2 & 1 & 0\\
1 & 2 &1\\
0 & 1 & 1
\end{bmatrix}.
\]
The eigenvalues of $A$ are real and approximately $0.2$, $1.55$ and $3.25$.  In particular, the diffeomorphism induced by this matrix belong to $\cA^2_m(\TT)$.  In \cite{GKM}, the authors did a numerical study for two explicit families of perturbations of $A$, one conservative and one dissipative. Their numerical study indicates that for these families of perturbations of $A$, there is a unique $u$-Gibbs measure and this measure coincides with  the SRB measure.  They make the following conjecture.

\begin{conjecture}[\cite{GKM}, Conjecture $1.3$] 
For all analytic diffeomorphisms $f$ in a sufficiently small neighbourhood of $A$ there exists a unique $u$-Gibbs measure. 
\end{conjecture}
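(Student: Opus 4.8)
The plan is to derive the conjecture from Theorem~\ref{mainthm.dicotomia}, combined with the minimality of the strong unstable foliation announced by Avila--Crovisier--Eskin--Potrie--Wilkinson--Zhang for $C^{1+\alpha}$ Anosov diffeomorphisms of $\TT$, together with the structure of each of the two alternatives of the dichotomy. \textbf{Step 1 (set-up).} Since $A\in\cA^2_m(\TT)$, Theorem~\ref{mainthm.dicotomia} produces a $C^1$-open neighbourhood $\cU\subset\dif$ of $\cA^2_m(\TT)$ on which the dichotomy holds; any analytic $f$ sufficiently close to $A$ lies in $\cU\cap\cA^2(\TT)$ (Anosov-ness and strong partial hyperbolicity with uniformly expanding centre being $C^1$-open) and, being $C^1$-close to the hyperbolic automorphism $A$, is transitive and carries a \emph{unique} SRB measure $\mu_{\mathrm{SRB}}$. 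Since $u$-Gibbs measures always exist (weak-$*$ limits of time averages of normalised Lebesgue measure on unstable plaques), it suffices to prove uniqueness.

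\textbf{Step 2 (the non-integrable alternative).} Assume we are in case~\eqref{case deux} of Theorem~\ref{mainthm.dicotomia}. Let $\mu$ be a $u$-Gibbs measure and consider its ergodic decomposition; since being $u$-Gibbs is a local property (absolute continuity of the conditionals along $\cW^u$), almost every ergodic component $\nu$ is again $u$-Gibbs, hence has $\cW^u$-saturated support. By the announced minimality of $\cW^u$ one has $\operatorname{supp}\nu=\TT$, so $\nu$ is fully supported, and Theorem~\ref{mainthm.dicotomia}\eqref{case deux} forces $\nu$ to be SRB; by uniqueness, $\nu=\mu_{\mathrm{SRB}}$. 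Integrating over the ergodic components gives $\mu=\mu_{\mathrm{SRB}}$, so $\mu_{\mathrm{SRB}}$ is the unique $u$-Gibbs measure.

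\textbf{Step 3 (the jointly integrable alternative --- the main obstacle).} The genuinely hard case is when $E^s$ and $E^u$ are jointly integrable, tangent to an $f$-invariant foliation $\cW^{su}$: there the minimality of $\cW^u$ still holds, but Theorem~\ref{mainthm.dicotomia} yields no conclusion, so an independent argument is required. The natural route is rigidity: by Franks' theorem $f$ is topologically conjugate to $A$ through a homeomorphism matching the strong stable and strong unstable leaves, which forces $\cW^{su}$ to correspond to the unique linear integral foliation of $E^s_A\oplus E^u_A$ --- a minimal foliation of $\TT$ by planes. One would then try to bootstrap the regularity of the $\cW^{su}$-holonomies (in the spirit of de la Llave and Gogolev--Kalinin--Sadovskaya), aiming either to conclude that $f$ is smoothly conjugate to an algebraic model, for which the uniqueness of the $u$-Gibbs measure is classical (it equals Haar measure, which is SRB), or to upgrade every $u$-Gibbs measure to one with absolutely continuous conditionals along $\cW^{su}$ and hence along $\cW^{cu}$, i.e.\ to an SRB measure. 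This is where I expect the real difficulty to lie: joint integrability near a linear model is a nongeneric but nonempty phenomenon (the automorphism $A$ itself lies in it), and excluding exotic $u$-Gibbs measures there seems to need rigidity input beyond Theorem~\ref{mainthm.dicotomia}. In its absence, Steps~1--2 already establish the conjecture for every analytic $f$ near $A$ whose bundles $E^s,E^u$ are not jointly integrable --- in particular $C^2$-densely, and for the explicit perturbation families examined numerically in \cite{GKM}.
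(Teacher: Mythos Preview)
There is nothing to compare against: the paper does \emph{not} prove this statement. It is stated as a conjecture from \cite{GKM}, and the paper explicitly presents only partial progress toward it. Immediately after the conjecture the authors write: ``Our result gives that for any $C^2$-diffeomorphism in a neighbourhood of $A$, either $E^s$ and $E^u$ are jointly integrable or any $u$-Gibbs fully supported is SRB. If we assume Avila et al.'s result, then we would obtain that either we have joint integrability or there is only one $u$-Gibbs measure.'' This is exactly your Steps~1--2, and the paper stops there.

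Your Step~3 correctly identifies the genuine obstruction: in the jointly integrable alternative, Theorem~\ref{mainthm.dicotomia} says nothing, and neither does the paper. The rigidity route you sketch (Franks conjugacy plus bootstrapping regularity of $\cW^{su}$-holonomies) is a reasonable line of attack but is not carried out anywhere in the paper, and you yourself flag it as incomplete. What the paper \emph{does} do, in Section~\ref{s.final_remarks}, is verify via the Gan--Shi criterion that the two explicit GKM families $f_{D,\eps}$ and $f_{C,\eps}$ are accessible (hence not jointly integrable) for small $\eps\neq 0$, so for those specific families the non-integrable branch applies. But the full conjecture --- for \emph{all} analytic $f$ near $A$, including those that happen to be jointly integrable --- remains open in the paper, and your proposal does not close that gap either.
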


Our result gives that for any $C^2$-diffeomorphism in a neighbourhood of $A$, either $E^s$ and $E^u$ are jointly integrable or any $u$-Gibbs fully supported is SRB. If we assume Avila et al.'s result, then we would obtain that either we have joint integrability or there is only one $u$-Gibbs measure. In Section \ref{s.final_remarks} we introduce Gogolev-Kolmogorov-Maimon's conservative and dissipative families and prove that for both examples, $E^s$ and $E^u$ are not jointly integrable, applying a very convenient criterion of Gan-Shi \cite{GanShi}. That gives a theoretical explanation for their numerical study. Let us remark that in \cite{GKM} the authors also make conjectures about transitivity and minimality of $\cW^u$. In \cite{HertzUres}, Hertz-Ures gave a positive answer to their transitivity conjecture.

\subsection{Related works  and further results}

One can think of a $u$-Gibbs measure as a measure that is ``homogeneous'' along strong unstable manifolds and an SRB measure as being ``homogeneous'' along entire unstable manifolds. 

In a series of celebrated works, in the homogeneous setting, Ratner classified measures that are invariant by the action of a unipotent group  \cite{Rat1, Rat2, Rat3, Rat4}.  She proved that such measures are homogeneous, i.e.,  they are the Haar measure of some subgroup. Observe that unipotent flows parameterize unstable manifolds of the geodesic flow  on surfaces with constant negative curvature (the horocycle flow).  Hence,  a consequence of Ratner's measure rigidity result is measure rigidity of the $u$-Gibbs measures of the geodesic flow on surfaces with constant negative curvature. A key idea in Ratner's approach is the so-called polynomial drift, which allowed her to obtain extra invariance of the measure from invariance along orbits of the unipotent flow. 

In \cite{BenoistQuintI}, Benoist-Quint introduced the  idea of exponential drift to prove a measure rigidity result for stationary measures of  a Zariski dense random walk on homogeneous spaces.  

Outside the homogeneous setting, Eskin-Mirzakhani gave a non trivial modification of the exponential drift strategy, which is called the factorization method, to prove measure rigidity results for the action of $\mathrm{SL}(2,\mathbb{R})$ on moduli spaces \cite{EskinMirzakhani}.  Since then, these ideas were pushed to some different settings. In \cite{BRH}, Brown-Hertz classified the hyperbolic stationary measures of random products of surface diffeomorphisms. Cantat-Dujardin applied Brown-Hertz's result to classify random products of automorphisms of real and complex projective spaces \cite{CantatDujardin}. 

In the partially hyperbolic setting, the third author adapted Brown-Hertz's result to obtain a rigidity result for $u$-Gibbs measures for partially hyperbolic skew products with two dimensional center \cite{Obata}.  

In \cite{Katz}, Katz adapts the Eskin-Mirzakhani strategy for the smooth setting.  He proved that for any $C^{\infty}$ Anosov diffeomorphism $f$ having a splitting $TM = E^s \oplus E^c \oplus E^u$, where $E^c$ is one dimensional and  expanding, any $u$-Gibbs measure that verifies a technical condition called \emph{QNI} (quantified non-integrability) is SRB.  Eskin-Potrie-Zhang, in an ongoing work \cite{EskinPotrieZha}, obtained equivalent notions to QNI that are easier to work with. Their result will be used by Avila et al., in another ongoing project, to prove that for any Anosov diffeomorphism in $\cA^{\infty}(\TT)$ either $E^s$ and $E^u$ are jointly integrable, or every $u$-Gibbs measure is SRB.  All of these works are stated for $C^{\infty}$ regularity, but they can be obtained for $C^r$ regularity for $r\ggg1$. Part of the goal of this paper is to obtain this type of measure rigidity result for $u$-Gibbs measures, but in lower regularity (in our case $C^2$).  

\subsection{Ingredients of the proof}

The first ingredient concerns the transversality condition. We replace Katz's QNI condition by a zero-one law for angles inspired by Brown-Hertz \cite[Lemma 7.1]{BRH_Little} (see also \cite{BenoistQuintI}). We use the fact that
% for $x,y$ in the same stable manifold, 
stable holonomies 
$
(H_{x,y}^s)_{x,y}
$ are $C^1$ for any diffeomorphism $f \in \mathcal{A}^2(\TT)$ close to a conservative one (see Lemma \ref{c one holon}); thus, for $x,y$ in the same stable manifold, we can define an (unoriented) angle (see Figure \ref{hol angles}) 
\begin{equation*}
\alpha^s(x,y)\eqdef\angle (DH_{x,y}^s E^u(x),E^u(y)).
\end{equation*}

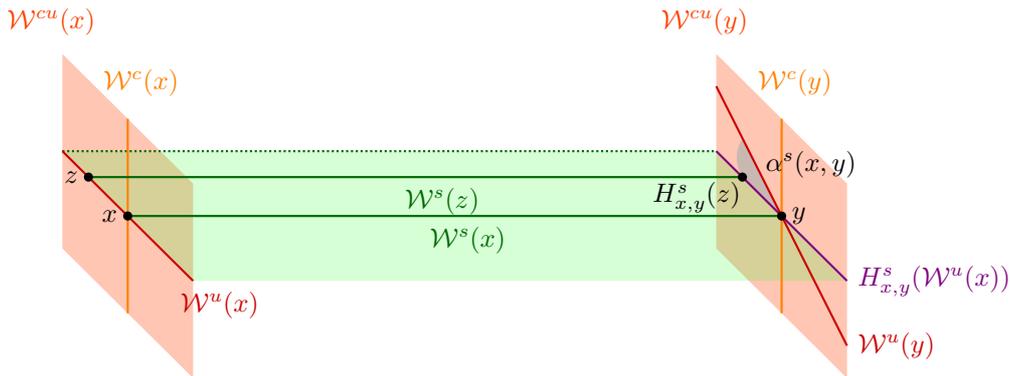
\begin{figure}[h]
	\centering
	\begin{tikzpicture}[scale=.86]
	\fill[red!50!orange, opacity=.3] (-1,2.5)--(1,0.5)--(1,-2.5)--(-1,-0.5)--(-1,2.5);
	\fill[red!50!orange, opacity=.3] (9,2.5)--(11,0.5)--(11,-2.5)--(9,-0.5)--(9,2.5);
	\fill[green!80!white, opacity=.2] (-1,1)--(1,-1)--(11,-1)--(9,1)--(-1,1);
	\draw[densely dotted, green!40!black, thick] (-1,1)--(9,1); 
	%\draw[densely dotted, green!40!black, thick] (1,-1)--(11,-1)node[midway,below]{\tiny $\color{red!50!green}\quad\mathcal{W}^{su}(x)$}; 
	\draw[green!40!black, thick] (0,0)--(10,0) node[midway,below]{\small $\quad\mathcal{W}^{s}(x)$}; 
	\draw[orange, thick] (0,-1.5)--(0,1.5); %node[above]{\tiny $\quad\mathcal{W}^{c}(x)$}; 
	\fill[fill=black] (0,1.7) node[above]{\color{orange}\small $\quad\mathcal{W}^{c}(x)$};
	%node[above]{\tiny $\quad\mathcal{W}^{c}(y)$}; 
	\fill[fill=black] (10,1.7) node[above]{\color{orange}\small $\quad\mathcal{W}^{c}(y)$};
	\fill[black!30!white, opacity=.7] (10,0) -- (9.4,0.6) to[bend left] (9.4,1.2) -- (10,0);
	\draw[orange, thick] (10,-1.5)--(10,1.5);
	\fill[fill=black] (11.3,0.8) node[left]{\small  $\alpha^s(x,y)$}; 
	\draw[red!80!black,thick] (-1,1)--(1,-1) node[right, below]{\small $\qquad \mathcal{W}^u(x)$};
	\draw[green!40!black,thick] (-0.6,0.6)--(9.4,0.6) node[midway, below]{\small $\qquad \mathcal{W}^s(z)$};
	%\draw[red!80!black,thick] (2,1)--(4,-1) node[right, above]{};
	%\draw[red!80!black,thick] (6,1)--(8,-1) node[right, above]{};
	\draw[violet,thick] (9,1)--(11,-1) node[right]{\small $H_{x,y}^s(\mathcal{W}^u(x))$};
	\draw[red!80!black,thick] (9,2)--(11,-2) node[right]{\small $\mathcal{W}^u(y)$};
	\fill[fill=black] (-2,3) node[right]{\small  $\color{red!50!orange} \mathcal{W}^{cu}(x)$};
	\fill[fill=black] (8,3) node[right]{\small  $\color{red!50!orange}  \mathcal{W}^{cu}(y)$}; 
	\fill[fill=black] (0,0) node[left]{\small  $x$} circle (2pt);
	\fill[fill=black] (-0.6,0.6) node[left]{\small $z$} circle (2pt);
	\fill[fill=black] (10,0) node[right]{\small $y$} circle (2pt);  
	\fill[fill=black] (9.4,0.6) circle (2pt);
	\fill[fill=black] (8.7,-0.1) node[left, above]{\small $H^s_{x,y}(z)$};
	\end{tikzpicture} 
	\caption{\label{hol angles} Stable holonomies and the angle function.}
\end{figure}

Note that conditional measures $\mu_x^s$ on stable manifolds  are not well defined. But full- and zero-measure sets for $\mu_x^s$ are well defined, see \S \ref{sub.subord partitions disint}. We can now state our zero-one law (see Theorem \ref{th_0-1} for a slightly more general statement). 
\begin{theoalph}[A zero-one law for angles]\label{thm.zeroonelawintro}
	There exists an open neighbourhood $\mathcal{U}$ of $\mathcal{A}_m^2(\TT)$ within $\mathrm{Diff}^2(\TT)$ such that for any $f\in \mathcal{U}$, and for any ergodic $f$-invariant measure $\mu$, the following dichotomy holds:
	 \begin{enumerate}
	 	\item for $\mu$-a.e. $x \in \TT$ and $\mu_x^s$-a.e. $y \in \mathcal{W}^s(x)$, $\alpha^s(x,y)=0$;
	 	\item for $\mu$-a.e. $x \in \TT$ and $\mu_x^s$-a.e. $y \in \mathcal{W}^s(x)$, $\alpha^s(x,y)>0$. 
	 \end{enumerate}
\end{theoalph}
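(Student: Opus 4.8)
The plan is to show that the vanishing of $\alpha^s$ is invariant under $f$ and along stable leaves, to extract a zero--one alternative from ergodicity of $\mu$, and then to upgrade the resulting ``positive $\mu_x^s$-measure'' alternative into a ``full $\mu_x^s$-measure'' one; this last upgrade is the real content.

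\emph{Step 1 (behaviour of the angle).} Since $f\in\cU$ has $C^1$ stable holonomies (Lemma~\ref{c one holon}) satisfying $f\circ H^s_{x,y}=H^s_{fx,fy}\circ f$ on $\cW^{cu}(x)$, differentiating at $x$ gives
\[
DH^s_{fx,fy}=Df|_{E^{cu}(y)}\circ DH^s_{x,y}\circ\bigl(Df|_{E^{cu}(x)}\bigr)^{-1}.
\]
Since $Df$ preserves $E^u$ and $E^c$ inside $E^{cu}$, writing $L=DH^s_{x,y}E^u(x)$ we have $DH^s_{fx,fy}E^u(fx)=Df|_{E^{cu}(y)}L$ and $E^u(fy)=Df|_{E^{cu}(y)}E^u(y)$, so invertibility of $Df|_{E^{cu}(y)}$ yields $\alpha^s(fx,fy)=0\iff\alpha^s(x,y)=0$. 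The cocycle relation $H^s_{x,z}=H^s_{y,z}\circ H^s_{x,y}$ shows in the same way that ``$\alpha^s=0$'' is an equivalence relation on each stable leaf; let $N_x\subset\cW^s(x)$ be the class of $x$, so $N_{fx}=f(N_x)$. Finally, $Df|_{E^{cu}}$ acts on lines in the plane $E^{cu}$ with $E^u$ attracting and $E^c$ repelling (domination inside $E^{cu}$), so whenever $\alpha^s(x,y)>0$ one has $\alpha^s(f^{-n}x,f^{-n}y)\ge\theta_0$ for all large $n$, with $\theta_0>0$ depending only on $f$; and $\alpha^s$ is continuous on $\{(x,y):y\in\cW^s(x)\}$, so $N_x$ is closed in $\cW^s(x)$.

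\emph{Step 2 (ergodic reduction).} Measurability of the disintegration $x\mapsto\mu_x^s$ makes
\[
Z=\{x:\ \mu_x^s(\cW^s(x)\setminus N_x)=0\}\quad\text{and}\quad W=\{x:\ \mu_x^s(N_x)>0\}
\]
measurable, and Step~1 together with $f$-equivariance of the stable disintegration makes them $f$-invariant mod $\mu$; ergodicity gives $\mu(Z),\mu(W)\in\{0,1\}$, and clearly $Z\subset W$. If $\mu(Z)=1$ we are in case~(1), and if $\mu(W)=0$ we are in case~(2). Everything therefore reduces to excluding the \emph{intermediate case} $\mu(Z)=0$ and $\mu(W)=1$: that for $\mu$-a.e.\ $x$ both $N_x$ and $\cW^s(x)\setminus N_x$ have positive $\mu_x^s$-measure.

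\emph{Step 3 (ruling out the intermediate case).} This is the heart of the argument, and I would follow the scheme of the zero--one law of Brown--Hertz \cite[Lemma~7.1]{BRH_Little} (see also \cite{BenoistQuintI}). In the intermediate case $N_x$ has positive $\mu_x^s$-measure, so by the Lebesgue density theorem for the Radon measure $\mu_x^s$ (and since $\mu$-a.e.\ point of a.e.\ leaf lies in $\operatorname{supp}\mu$), a.e.\ stable leaf contains $\mu_x^s$-density points of $N_x$ in $\operatorname{supp}\mu$, and likewise $\mu_x^s$-density points of its complement. One then renormalizes along stable leaves: using that $f^{-1}$ uniformly expands $\cW^s$ with bounded distortion — which is where the $C^2$-regularity (equivalently the $C^1$ stable holonomies, via a Pesin-type control of the non-uniform contraction rates) is used — and compactness of $\TT$, one pushes by iterates $f^{-n}$ a sequence of shrinking balls around such density points to a fixed size and passes to a limit. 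The limit objects (a stable leaf, a limit of rescaled conditional measures, a limit of the sets $N_{f^{-n}x}$) are invariant under stable holonomy, and, using that $f^{-n}$ drives positive angles to values $\ge\theta_0$ (Step~1) and the $C^1$ unstable holonomies to transport the class $N_x$ transversally, one expects to obtain extra invariance forcing these limits to be trivial — contradicting the fact that, in the limit, the ``$\alpha^s=0$'' class would have density close to $1$ near one point and close to $0$ near another on the same leaf. The hard part is precisely this Step~3: controlling distortion along the non-uniformly contracted stable leaves, and analysing the limiting rescaled conditional measures; granting it, Steps~1--2 give the stated dichotomy.
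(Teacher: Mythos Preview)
Your Steps~1 and~2 are correct and coincide with the paper's reduction: the invariance of ``$\alpha^s=0$'' under $f$ and along stable leaves, together with ergodicity, reduces everything to excluding the intermediate case where $0<\mu_x^s(N_x)<1$ for a.e.\ $x$.

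Step~3, however, is not a proof but a wish list. You never specify what the limiting rescaled measures are, what ``extra invariance'' they acquire, or why that forces a contradiction; and several ingredients you invoke (the angle growth $\alpha^s(f^{-n}x,f^{-n}y)\ge\theta_0$, the $C^1$ unstable holonomies ``to transport $N_x$ transversally'', Pesin-type distortion control) play no role in the actual argument. The paper's proof of Step~3 is entirely different and much shorter: it is a martingale argument. With $\xi^s_n=f^n\xi^s$, set $\phi(x)=\mu_x^s[N_x\cap\xi^s(x)]$ and $\phi_n(x)=\mu_{n,x}^s[N_x\cap\xi^s_n(x)]$. The equivariance $f^n(N_{f^{-n}x}\cap\xi^s(f^{-n}x))=N_x\cap\xi^s_n(x)$ combined with $f^n_*\mu^s_{f^{-n}x}=\mu^s_{n,x}$ yields the exact identity
\[
\phi_n(x)=\phi(f^{-n}(x)).
\]
On the other hand, restricting to a single atom $(\xi^s(x),\mu_x^s)$, the sequence $y\mapsto\mu^s_{n,y}[N_x\cap\xi^s(x)]$ is the conditional expectation of $\mathbf{1}_{N_x\cap\xi^s(x)}$ with respect to the increasing filtration generated by $\xi^s_n$, and the increasing martingale theorem makes it converge $\mu_x^s$-a.e.\ to $\mathbf{1}_{N_x\cap\xi^s(x)}$. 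Since $N_x=N_y$ for $y\in N_x$, this says $\phi_n(y)\to 1$ for $\mu_x^s$-a.e.\ $y\in N_x$. In the intermediate case $\mu_x^s(N_x)>0$ a.e., so the set $\mathcal S=\{x:\phi_n(x)\to 1\}$ has positive $\mu$-measure. But $\phi_n=\phi\circ f^{-n}$, so on $\mathcal S$ the Ces\`aro averages of $\phi\circ f^{-k}$ tend to $1$; Birkhoff then forces $\int\phi\,d\mu=1$, hence $\phi\equiv 1$ a.e., contradicting the intermediate case. No renormalization of geometry, no limits of conditional measures, and no appeal to the angle dynamics are needed.
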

This theorem is stated here in terms of the angle function $\alpha^s$; yet, it seems possible to generalize it to a broader context (see Remark \ref{remark general zero un}). 

%Let us say a few words on the reason why we can prove our result in lower regularity.  
The other important ingredient in our proof is the existence of \emph{normal forms} for the dynamics along two-dimensional unstable foliations. The dynamics along unstable manifolds are simplified when they are looked at using normal forms. The theory of non-stationary normal forms has been studied quite extensively since the pioneering work of Guysinsky-Katok \cite{GuyKat}, see for instance \cite{KalSadI,KalSadII,GogolevKalSad}.  Katz uses the result from \cite{KalSadI} where higher regularity is needed depending on the Lyapunov spectrum. 

Yet, these results do not apply directly here, due namely to the fact that $f$ is only assumed to be $C^2$. Using an \textit{ad hoc} construction, based on one-dimensional normal forms along the center/unstable directions, we show:
\begin{theoalph}[Normal forms]
	\label{thm.normalformsintro}
	%Let $f\colon\T^3 \to \T^3$ be a $C^2$ Anosov diffeomorphism with a partially hyperbolic decomposition $T \T^3 = E^s \oplus E^c \oplus E^u$. Let $\cWcu$ be the two-dimensional unstable foliation integrating $E^{cu}\eqdef E^c\oplus E^u$. Suppose that the center $E^c$ is expanding and that it integrates to a center foliation $\mathcal{W}^c$. Then, t
	Let $f\in \mathcal{A}^2(\TT)$. Then, there exists a family $\{\Phi_x\}_{x \in \TT}$ of $C^1$ diffeomorphisms $\Phi_x \colon \R^2 \to \mathcal{W}^{cu}(x)$ such that
	\begin{enumerate}
		\item $f \circ \Phi_x = \Phi_{f(x)} \circ N_x$, with $N_x\eqdef \begin{bmatrix}
		\lambda^u_x & 0\\
		0 & \lambda^c_x
		\end{bmatrix}
		$, letting $\lambda^*_x\eqdef\|Df(x)|_{E^*}\|$; %(\lambda^c_x s, \lambda^u_x t)$, for all $(s,t)\in \R^2$, where $\lambda_x^*\eqdef\|Df(x)|_{E^*}\|$, for $*=c,u$;
		\item $\Phi_x(0) = x$ and $D\Phi_x(0)(1,0) = v^u(x)$, $D\Phi_x(0)(0,1)= v^c(x)$, $v^u(x)$, resp. $v^c(x)$ being a unit vector in $E^u(x)$, resp. $E^c(x)$;
		\item $\Phi_x(\cdot)$ depends continuously with the choice of $x$ in the local $C^1$-topology\footnote{Uniform convergence of the function and its first derivative on compact sets.}; %;\marginpar{Affine structure?}
		\item $\Phi_x$ is a foliated chart for $\mathcal{W}^u$, i.e., for all $s\in \R$, $\Phi_x(\R\times \{s\})=\mathcal{W}^u(\Phi_x(0,s))$, and $\Phi_x(\{0\}\times \R)=\mathcal{W}^c(x)$. 
	\end{enumerate}
\end{theoalph}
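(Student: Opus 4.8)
\emph{Overview and Step 1 (one–dimensional normal forms).} Since $f\in\cA^2(\TT)$, both $E^u$ and $E^c$ are one–dimensional and uniformly expanded by $Df$, and the leaves of $\cW^u$ and $\cW^c$ are $C^{1+\alpha}$ curves: $\cW^u$ has leaves as regular as $f$, and $\cW^c$ sits inside the $C^2$ surface $\cW^{cu}$ as a $1$–normally hyperbolic subfoliation, hence has $C^{1+\alpha}$ leaves for some small $\alpha>0$, by the domination of $E^c$ by $E^u$. For $\ast\in\{u,c\}$ I would first produce a continuous family $\{\phi^\ast_x\colon\R\to\cW^\ast(x)\}_{x\in\TT}$ of $C^1$ diffeomorphisms with $\phi^\ast_x(0)=x$, $D\phi^\ast_x(0)(1)=v^\ast(x)$ and $f\circ\phi^\ast_x=\phi^\ast_{f(x)}\circ m_{\lambda^\ast_x}$, where $m_\lambda$ denotes multiplication by $\lambda$; these are obtained as the $C^1_{\mathrm{loc}}$–limit of the usual renormalized iterates $f^{-n}\circ(\text{arc-length chart of }\cW^\ast(f^n x))\circ m_{\prod_{k=0}^{n-1}\lambda^\ast_{f^kx}}$, the convergence and the continuity in $x$ following from the uniform expansion together with the Hölder control of the leafwise derivative of $f$, exactly as in the classical one–dimensional theory (Guysinsky–Katok \cite{GuyKat}). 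The point is that in dimension one there are no nontrivial resonances, so the $C^2$ hypothesis is enough and the higher regularity demanded in \cite{KalSadI} is avoided.

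\emph{Step 2 (renormalization cocycle and the chart).} One cannot simply put $\Phi_x(t,s):=\phi^u_{\phi^c_x(s)}(t)$, since $f\circ\phi^u_{\phi^c_x(s)}=\phi^u_{\phi^c_{f(x)}(\lambda^c_xs)}\circ m_{\lambda^u_{\phi^c_x(s)}}$ and $\lambda^u_{\phi^c_x(s)}\neq\lambda^u_x$ in general. I would therefore introduce
\[
\rho(x,s)\eqdef\prod_{k\ge 1}\frac{\lambda^u_{f^{-k}(\phi^c_x(s))}}{\lambda^u_{f^{-k}(x)}},\qquad \Phi_x(t,s)\eqdef\phi^u_{\phi^c_x(s)}\bigl(\rho(x,s)\,t\bigr).
\]
The product converges absolutely and uniformly on compact $s$–intervals: $\phi^c_x(s)$ lies on the center leaf of $x$, so $d\bigl(f^{-k}(\phi^c_x(s)),f^{-k}(x)\bigr)\le C|s|\,\mu^{-k}$ for some $\mu>1$ (uniform expansion of $E^c$), while $\lambda^u$ is Hölder; hence each factor is $1+O(\mu^{-k\alpha}|s|^\alpha)$. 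Thus $\rho$ is continuous in $(x,s)$, $\rho(\cdot,0)\equiv1$, and a one-line computation shows it is the unique such solution of the multiplicative identity $\rho(f(x),\lambda^c_xs)=\tfrac{\lambda^u_{\phi^c_x(s)}}{\lambda^u_x}\,\rho(x,s)$. Properties (2) and (4) are then immediate from $\phi^u_y(0)=y$, $D\phi^u_y(0)(1)=v^u(y)$, $\rho(x,0)=1$ and $D\phi^c_x(0)(1)=v^c(x)$, and the identity for $\rho$ gives precisely
\[
f\bigl(\Phi_x(t,s)\bigr)=\phi^u_{\phi^c_{f(x)}(\lambda^c_xs)}\bigl(\lambda^u_{\phi^c_x(s)}\rho(x,s)\,t\bigr)=\phi^u_{\phi^c_{f(x)}(\lambda^c_xs)}\bigl(\rho(f(x),\lambda^c_xs)\,\lambda^u_x\,t\bigr)=\Phi_{f(x)}\bigl(N_x(t,s)\bigr),
\]
which is (1). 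Surjectivity of $\Phi_x$ onto $\cW^{cu}(x)$ follows from the fact that $\cW^u$ and $\cW^c$ define a global product structure on the plane $\cW^{cu}(x)$.

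\emph{Why $N_x$ can be taken exactly diagonal.} It is worth noting that the triangular shape of $N_x$ imposed by (4) is what allows the exactly-diagonal normal form: the only resonance that could otherwise obstruct linearizing the $E^u$–component, namely a monomial $s^b$ whose $b$–fold $\lambda^c$–rate matches the $\lambda^u$–rate, is automatically excluded because $\Phi_x(\{0\}\times\R)$ must equal $\cW^c(x)$, i.e. $f(\Phi_x(0,s))\in\cW^c(f(x))$, which forces that coefficient to vanish. The function $\rho$ is exactly the cocycle that removes the remaining (non-resonant, hence removable) $s$–dependence of the $E^u$–multiplier.

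\emph{Step 3 (the main obstacle: regularity).} What remains is to check that $\Phi_x$ is a genuine $C^1$ diffeomorphism and that $x\mapsto\Phi_x$ is continuous in the local $C^1$ topology; this is where I expect the bulk of the work. Differentiability in $t$ and continuity in $x$ follow directly from Steps~1--2. The delicate point is differentiability in $s$, i.e. the $C^1$ regularity, transverse to the $\cW^u$–leaves inside the $C^2$ surface $\cW^{cu}(x)$, of both $s\mapsto\rho(x,s)$ and $s\mapsto\phi^u_{\phi^c_x(s)}(\cdot)$. My plan is to work throughout in $C^2$ charts of $\cW^{cu}(x)$, in which $f|_{\cW^{cu}}$ is $C^2$ and the two subfoliations $\cW^u|_{\cW^{cu}}$, $\cW^c|_{\cW^{cu}}$ are $C^1$ ($1$–normal hyperbolicity, from $\lambda^u>\lambda^c>1$), and to re-run the limit of Step~1 and the product of Step~2 while tracking the transverse $C^1$–jet, the required estimates being bounded-distortion estimates for $f|_{\cW^{cu}}$ along and across the $\cW^u$–leaves. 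This is the step where the $C^2$ hypothesis and the expanding-center structure of $\cA^2(\TT)$ are genuinely used; the remaining verifications are routine.
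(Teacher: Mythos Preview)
Your construction is exactly the paper's: your $\rho(x,s)$ is their $\beta_x(s)$, and your $\Phi_x(t,s)=\phi^u_{\phi^c_x(s)}(\rho(x,s)t)$ is their definition verbatim, with the same cocycle identity giving property~(1). So Steps~1--2 match the paper.

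Step~3 contains a genuine error, however. You assert that ``the two subfoliations $\cW^u|_{\cW^{cu}}$, $\cW^c|_{\cW^{cu}}$ are $C^1$ ($1$--normal hyperbolicity, from $\lambda^u>\lambda^c>1$)''. This is false for $\cW^c$: the domination $\lambda^u>\lambda^c$ goes the \emph{wrong} way for the weak subfoliation to inherit $C^1$ transverse regularity; in fact Gogolev has shown that generically in $\cA^r_m(\TT)$ the foliation $\cW^c$ is not even Lipschitz inside $\cW^{cu}$ (the paper notes this explicitly). If your regularity argument were to rely on this, it would fail.

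Fortunately the proof needs only the correct half of your claim: $E^u$ (hence $\cW^u$) is $C^1$ along each $\cW^{cu}$-leaf, and so is $\lambda^u|_{\cW^{cu}}$. The paper's concrete device is an auxiliary foliated chart $\Gamma_x(t,s)=\phi^t(\phi^c_x(s))$ built from the $C^1$ flow $\phi^t$ of the $C^1$ leafwise vector field $v^u|_{\cW^{cu}}$; one then shows that $(t,s)\mapsto\phi^u_{\phi^c_x(s)}(t)$ is $C^1$ by factoring its inverse through $\Gamma_x$, and that $\rho(x,\cdot)$ is $C^1$ by term-by-term differentiation of the product, the summability coming from the exponential decay of $\|Df^{-k}|_{E^{cu}}\|$ together with the $C^1$ bound on $\lambda^u|_{\cW^{cu}}$. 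Your vague plan to ``track the transverse $C^1$-jet'' can be made to work along these lines, but you should drop the appeal to $\cW^c|_{\cW^{cu}}$ being $C^1$ and instead lean exclusively on the $C^1$ regularity of $E^u$ inside $\cW^{cu}$.
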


\begin{remark}
	The normal forms $\{\Phi_x\}_{x \in \TT}$ do not define an affine structure on the foliation $\mathcal{W}^{cu}$; yet, we will see in Section \ref{section formes normales} certain invariance properties of these normal forms under changes of charts. 
\end{remark}

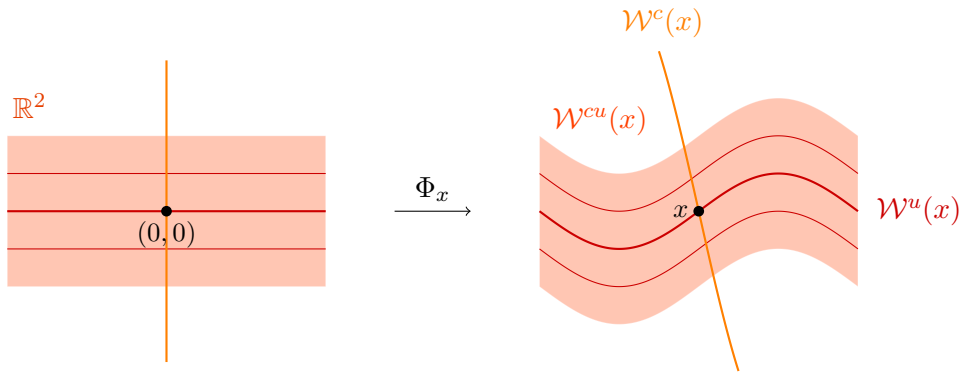
\begin{figure}[h]
	\centering
	\begin{tikzpicture} 
	%\shorthandoff{:} ...   
	\filldraw[draw=white,fill=red!50!orange,opacity=.3]
	plot[domain=-pi:pi,samples=200] (2*\x/3-7,{1})
	-- plot[domain=pi:-pi, samples=200] (2*\x/3-7,{-1})
	-- cycle;
	\draw [domain=-pi:pi, draw=red!80!black, samples=200] plot (2*\x/3-7,{-1/2});  
	\draw [domain=-pi:pi, draw=red!80!black, thick, samples=200] plot (2*\x/3-7,{0}); 
	\draw [domain=-pi:pi, draw=red!80!black, samples=200] plot (2*\x/3-7,{1/2});  
	\draw[draw=orange, thick] (-7,-2)--(-7,2) node[right]{};
	\fill[fill=black] (-7,0) node[left, below]{\small $(0,0)$} circle (2pt);
	\draw[->,draw=black] (-4,0)--(-3,0) node[midway,above]{$\Phi_x$};
	\filldraw[draw=white,fill=red!50!orange,opacity=.3]
	plot[domain=-pi:pi,samples=200] (2*\x/3,{(sin(\x r)-2)/2})
	-- plot[domain=pi:-pi, samples=200] (2*\x/3,{(sin(\x r)+2)/2})
	-- cycle;
	\draw [domain=-pi:pi, draw=red!80!black, samples=200] plot (2*\x/3,{(sin(\x r)-1)/2});  
	\draw [domain=-pi:pi, draw=red!80!black, thick, samples=200] plot (2*\x/3,{(sin(\x r))/2}); 
	\draw [domain=-pi:pi, draw=red!80!black, samples=200] plot (2*\x/3,{(sin(\x r)+1)/2});  
	\draw [domain=-pi/4:pi/4, draw=orange, thick, samples=200] plot (2*\x/3,{(6*sin(-\x r))/2});
	\fill[fill=black] (0.2,2.5) node[left]{\color{orange}$\mathcal{W}^c(x)$};
	\fill[fill=black] (-1.3,1.2) node[]{\color{red!50!orange}$\mathcal{W}^{cu}(x)$};
	\fill[fill=black] (2.9,0) node[]{\color{red!80!black}$\mathcal{W}^{u}(x)$};
	\fill[fill=black] (-8.8,1.4) node[]{\color{red!80!black!50!orange}$\mathbb{R}^2$};
	\fill[fill=black] (0,0) node[left]{\small $x$} circle (2pt);
	\end{tikzpicture}
	\caption{\label{2D NF} Theorem~\ref{thm.normalformsintro} provides non-stationary $C^1$ linearisations of the dynamics along center unstable leaves. They send horizontal lines onto unstable manifolds.}
\end{figure}

Note that Katz also uses the $C^{\infty}$ regularity many other times in his adaptation of Eskin-Mirzakhani's factorization method for Anosov systems. For example, at some moments, he has to approximate stable/unstable manifolds by Taylor polynomials with very high degree.  In our setting,  Theorems \ref{thm.zeroonelawintro} and \ref{thm.normalformsintro} are the two main reasons why we are able to adapt the Eskin-Mirzakhani's strategy in lower regularity.

We stress that, differently from previous works, we implement $Y$-configurations and a version of the factorization technique of \cite{EskinMirzakhani} without using suspensions nor any reparametrization. We make all estimations directly with the diffeomorphism. This is possible because we can obtain uniform estimates for the drift of leaf-wise (quotient) measures along the center as well as synchronization estimates for stopping times (see \S\ref{sss.quadri_couple}), using only basic distortion estimates (see \S\ref{sss.distortionbasic}).
%In the setting of Theorem \ref{mainthm.dicotomia}, since we are close to volume preserving, the stable foliation is $C^1$ (usually this foliation is only H\"older). This, together with Theorem \ref{thm.normalformsintro}, will allow us to prove our theorem using only regularity $C^2$. 

\subsection{Organization of the paper}

This paper is organized as follows: in Section~\ref{section definitions notations} we introduce the basic definitions and results we need. In Section~\ref{heuristics} we give an outline of the proof of Theorem~\ref{mainthm.dicotomia}. In Section~\ref{s.zeroouum} we establish a zero-one law for transversality between the bundles $E^s$ and $E^u$. In Section~\ref{s.main_thm} we reduce the proof of Theorem~\ref{mainthm.dicotomia} to a more technical result, see Theorem~\ref{mainthm.technique}. In Section~\ref{section formes normales} we construct a non-stationary family of $C^1$ linearisations of the dynamics restricted to center unstable manifolds and use them to construct a family of measures $\{\hat{\nu}^c_x\}_{x\in\TT}$ on the real line. We reduce the proof of our main technical result to proving that these measures are Lebesgue almost surely. In Section~\ref{s.Main_texh_lemma}, we explain an argument from \cite{BRH, KalininKatok} that shows that if the measures $\hat{\nu}_x^c$ are ``invariant'' by certain affine maps for many point $x$, then the measures $\hat{\nu}_x^c$ are actually the Lebesgue measure.  We then explain how this is achieved by reducing the proof in proving Proposition \ref{p.drift} In Section~\ref{s.Yconfigurations} we introduce $Y$-configurations and other objects crucial for our argument. In Section~\ref{s.coupled} we introduce matched $Y$-configurations and in Section~\ref{s.end_proof} we complete the proof.

\subsection*{Acknowledgements}
M.L. is most grateful to Federico Rodriguez Hertz for suggesting the problem during a visit at PennState University and many useful discussions, in particular on his joint work \cite{BRH} with Aaron Brown. 
We thank Alex Eskin for suggesting the matching argument from \cite{EskinMirzakhani} to us.  We thank Jonathan DeWitt for pointing out a problem on an earlier version of our work. We also thank Amie Wilkinson, Sylvain Crovisier, Aaron Brown and Rafael Potrie for useful conversations, as well as Marisa Cantarino for valuable comments on an early draft of the manuscript. Last but not least we wish to thank the anonymous referees for their many valuable comments that improved the presentation of our work.

{\footnotesize S.A. was supported by ANII via the Fondo Clemente Estable (project FCE\_3\_2018\_1\_148740), by CSIC, via the project I+D 389 and the Grupo I+D 159 ``Geometr\'ia y acciones de Grupos''. He was also supported by the ANR AAPG 2021 PRC CoSyDy: Conformally symplectic dynamics, beyond symplectic dynamics (ANR-CE40-0014).

M.L. was supported by the ERC project 692925 NUHGD of Sylvain Crovisier, by the ANR AAPG 2021 PRC CoSyDy: Conformally symplectic dynamics, beyond symplectic dynamics (ANR-CE40-0014), and by the ANR JCJC PADAWAN: Parabolic dynamics, bifurcations and wandering domains (ANR-21-CE40-0012).

B.S. was supported by \emph{Conselho Nacional de Desenvolcimento Científico e Tecnológico - CNPq} and FAPERJ and partially supported by \textit{Coordenação de Aperfeiçoamento de Pessoal de Nível Superior - Brasil
(CAPES)} - Finance Code 001. 
}

\section{Partially hyperbolic diffeomorphisms with expanding center}\label{section definitions notations}

In this section we introduce the class of dynamical systems we work with as well as the main objects, taking the opportunity to fix notations and recall important basic facts.

\subsection{Definitions}
For any integer $r \geq 1$, we let $\cPH^r(\TT)$ be the set of all $C^r$ \textit{(strongly) partially hyperbolic diffeomorphisms} of $\TT$ with one-dimensional stable/center/unstable bundles, i.e., the diffeomorphisms $f\colon \TT \to \TT$ such that there exist a continuous splitting of $T\TT$ into $Df$-invariant line bundles,
$$
T\TT = E^{s} \oplus E^{c} \oplus E^u,
$$ 
as well as a Riemannian metric $\|.\|$ \emph{adapted to this splitting} such that the functions  
\begin{equation*} 
x\mapsto \lambda_{x}^*\eqdef\|Df(x)|_{E^*}\|,\quad *\in \{s,c,u\},
\end{equation*}
are continuous and satisfy $\lambda_{x}^s<1<\lambda_{x}^u$ and  $\lambda_{x}^s<\lambda_{x}^c< \lambda_{x}^u$, for all $x \in \TT$. We also let $E^{cs}\eqdef E^c\oplus E^s$, resp. $E^{cu}\eqdef E^c\oplus E^u$ be the center-stable, resp. center-unstable subbundle, and set $E^{su}\eqdef E^s \oplus E^u$. We refer to Katok-Hasselblatt's book \cite{Katok_Hasselblatt} for more details.

\subsubsection{Anosov diffeomorphisms with uniformly expanding center} We denote by $\mathcal{A}^r(\TT)\subset \cPH^r(\TT)$ the subset consisting of partially hyperbolic diffeomorphisms $f\in \cPH^r(\TT)$ with uniformly expanding center, i.e., such that $\lambda_x^c>1$, for all $x \in \TT$; in particular, any such diffeomorphism $f$ is  \textit{Anosov}, for the hyperbolic splitting $E^{s}\oplus E^{cu}$. We also denote by $\mathcal{A}_m^r(\TT)\subset \mathcal{A}^r(\TT)$ the subset made of \textit{conservative} Anosov diffeomorphisms (i.e., that preserve some volume). 

To simplify the exposition, we assume that the bundles $E^*$ are orientable and that $f$ preserves their orientation (this can always be achieved by taking an orientable cover and considering powers of $f$). In particular, there are unitary vector fields $x\in \TT \mapsto v^*(x)\in E^*(x)$ such that 
\[
Df(x)v^*(x)\eqdef\lambda^*_xv^*(f(x)).
\] 

\subsubsection{Notations for orbits and derivatives}\label{sss.notaderiva}
To give a more friendly aspect of some long calculations we shall make, we introduce the following notation. For a point $x\in\TT$ we denote
\begin{equation}
\label{e.orbita}
x_n=f^n(x)\:\:\:\:\textrm{for}\:\:\:n\in\Z.
\end{equation}
Also, for $n\in\Z$ and for $*=s,c,u$ we denote the derivative of $f$ in restriction to the bundle $E^*$ by
\begin{equation}
\label{e.derivada}
\lambda^*_x(n)\eqdef\|Df^n(x)|_{E^*}\|.
\end{equation} 
The following \emph{cocycle property} follows from the chain rule and the fact that the bundles are one-dimensional:
\begin{equation}
\label{e.cocycle}
\lambda^*_x(n+m)=\lambda^*_{f^n(x)}(m)\lambda^*_x(n),\:\:\:*\in \{s,c,u\}. 
\end{equation}

The following quantities associated to $f$ will be useful for crude estimations
\[
\|Df\|\eqdef\max\{\|Df(x)v\|:x\in\TT, v\in T_x\TT,\|v\|=1\}
\]
and 
\[
m(Df)\eqdef\min\{\|Df(x)v\|:x\in\TT, v\in T_x\TT,\|v\|=1\}.
\]

\subsubsection{Adapted metric and hyperbolic estimates}\label{sss.hypestimates}
The following quantity will play a key role later when we introduce stopping times and $Y$-configurations:
\begin{equation}
\label{e.domonationalongy}
d^\ell_x\eqdef\frac{\lambda^c_{x_{-\ell}}(\ell)}{\lambda^u_{x_{-\ell}}(\ell)}.
\end{equation}
Notice that $d^\ell_x$ measures the amount of projective hyperbolicity we have for the dominated splitting $E^c\oplus E^u$. 

We fix a Riemannian metric on $\TT$ and constants $\chi^{*}_j\in\R$, for $*\in\{c,s,u,d\}$ and $j=1,2$ such that the following holds:
\begin{enumerate}
	\item $\chi_1^d<\chi_2^d<0$ and $e^{\chi_1^d\ell}<d^{\ell}_x<e^{\chi^d_2\ell}$, for every $x\in\TT$ and every $\ell\in\Z$.
	\item $\chi_1^s<\chi_2^s<0$ and $e^{\chi_1^s\ell}<\lambda^s_x(\ell)<e^{\chi^s_2\ell}$, for every $x\in\TT$ and every $\ell\in\Z$.
	\item $\chi_1^c>\chi_2^c>0$ and $e^{\chi_1^c\ell}>\lambda^c_x(\ell)>e^{\chi^c_2\ell}$, for every $x\in\TT$ and every $\ell\in\Z$.
	\item $\chi_1^u>\chi_2^u>0$ and $e^{\chi_1^u\ell}>\lambda^u_x(\ell)>e^{\chi^u_2\ell}$, for every $x\in\TT$ and every $\ell\in\Z$.
\end{enumerate}  

\subsubsection{Invariant manifolds} Let $r \geq 1$, and let $f \in \mathcal{A}^r(\TT)$. 
It is well-known (see \cite{HirschPughShub}) that the strong bundles $E^s$ and $E^u$ are uniquely integrable to $f$-invariant continuous foliations with $C^r$-leaves $\cWs_f=\cWs$ and $\cWu_f=\cWu$ respectively, called the \textit{strong stable} and \textit{strong unstable} foliations. Since the splitting $E^s\oplus E^{cu}$ is Anosov, the center-unstable bundle $E^{cu}$ also integrates uniquely to an $f$-invariant  continuous foliation $\cWcu_f=\cWcu$, called the \textit{center-unstable} foliation. For any $x \in\TT$ and $* = s,u,cu$, we denote by $\cW^*(x)$ the leaf of $\cW^*$ through $x$; it is an immersed $C^{r}$ manifold. 

We now define the concept of \emph{joint integrability} which appear in the statement of Theorem~\ref{mainthm.dicotomia}
\begin{defi}[Joint integrability]\label{def.joint}
We say that $f\in\cA^r(\TT)$ is (or that the bundles $E^s$ and $E^u$ are) \emph{jointly integrable} if the bundle $E^s\oplus E^u$ integrates to a continuous foliation with $C^1$ leaves. 
\end{defi} 

Let $r \geq 1$, and fix a $C^r$ Anosov diffeomorphism $f\in \mathcal{A}^r(\TT)$. %with a partially hyperbolic splitting $E_{f_0}^s\oplus E_{f_0}^c\oplus E_{f_0}^u$ such that $Df_0|_{E_{f_0}^c}$ is uniformly expanding. 
By Corollary 1.3 in \cite{casanova} the non-wandering set $\Omega(f)$ of $f$ is equal to $\Omega(f)=\TT$, and $f$ is topologically conjugated to a \emph{hyperbolic toral automorphism}. As a consequence one has that  
%, and $f$ is conjugated to $f_0$ by some homeomorphism $h \colon \TT \to \TT$, as $f_0$ is structurally stable. Since $f_0$ is a transitive Anosov diffeomorphism, its stable foliation is minimal, and we immediately deduce: 
%, for a partially hyperbolic splitting $E^s\oplus E^c\oplus E^u$, and $f$ is Anosov, for the hyperbolic splitting $E^s \oplus E^{cu}$, where $E^{cu}\eqdef E^c\oplus E^u$. 
\begin{lemma}\label{claim min}
The stable foliation of $f$ is minimal, i.e., each leaf of $\mathcal{W}^s$ is dense in $\TT$. 
\end{lemma}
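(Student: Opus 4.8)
The plan is to deduce minimality of $\mathcal{W}^s$ from the stated facts: that $f$ is topologically conjugate to a hyperbolic toral automorphism $A$ (via Casanova's Corollary 1.3), and that for such an automorphism the stable foliation is minimal. First I would recall why minimality holds for the linear model: the matrix $A\in \mathrm{SL}_3(\mathbb{Z})$ is hyperbolic with a one-dimensional stable eigenspace $E^s_A$, and since $A$ is irreducible over $\mathbb{Q}$ (its characteristic polynomial has no rational roots — in fact all eigenvalues are irrational, as in the $\mathcal{A}^2_m$ examples), the stable line has irrational slope, so each stable leaf is a dense line on $\TT$. More robustly, one can argue that the stable foliation of any hyperbolic toral automorphism on $\T^n$ whose stable subspace is not rational is minimal; and rationality of $E^s_A$ would force an $A$-invariant proper subtorus on which $A$ acts with an eigenvalue of modulus $<1$ and determinant of modulus $1$, contradicting hyperbolicity in the remaining directions unless that subtorus is everything — here the stable subspace is one-dimensional, so a rational stable line would be a periodic direction, impossible for an eigenvalue $\neq \pm 1$. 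Hence $\mathcal{W}^s_A$ is minimal.

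Next I would transfer minimality through the topological conjugacy. Let $h\colon \TT\to\TT$ be the homeomorphism with $h\circ f = A\circ h$. The key point is that $h$ maps leaves of $\mathcal{W}^s_f$ onto leaves of $\mathcal{W}^s_A$: indeed $\mathcal{W}^s_f(x) = \{y : d(f^n y, f^n x)\to 0\}$ is characterized dynamically (it is the set of points forward-asymptotic to $x$, which for these Anosov systems coincides with the strong stable leaf since $E^{cu}=E^c\oplus E^u$ is uniformly expanding, so no center or unstable direction is contracted), and this characterization is preserved by $h$ because $h$ is a uniform conjugacy: $d(f^n y, f^n x)\to 0 \iff d(A^n h(y), A^n h(x))\to 0$. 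Therefore $h(\mathcal{W}^s_f(x)) = \mathcal{W}^s_A(h(x))$. Since $h$ is a homeomorphism and $\mathcal{W}^s_A(h(x))$ is dense in $\TT$, its preimage $\mathcal{W}^s_f(x) = h^{-1}(\mathcal{W}^s_A(h(x)))$ is dense in $\TT$ as well. As $x$ was arbitrary, every leaf of $\mathcal{W}^s$ is dense, which is the claim.

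The main obstacle — or at least the point requiring care rather than difficulty — is verifying that $h$ genuinely sends strong stable leaves to strong stable leaves, i.e., that the forward-asymptotic equivalence relation really is $\mathcal{W}^s$ and not something larger. This is exactly where the hypothesis $f\in\mathcal{A}^r(\TT)$ (uniformly expanding center) is essential: for a general Anosov diffeomorphism on $\T^3$ with a contracting two-dimensional stable bundle, the asymptotic relation would be the two-dimensional stable leaf, but here $E^{cu}$ expands, so a point $y$ with $f^n y \to f^n x$ must have $y-x$ (in local coordinates) with no component in $E^c$ or $E^u$, forcing $y\in\mathcal{W}^s_f(x)$; the converse inclusion is immediate from uniform contraction on $E^s$. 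One should phrase this via the local product structure / the characterization of $\mathcal{W}^s$ as the set of forward-asymptotic points, both of which are standard (e.g. Katok–Hasselblatt), and note that the same holds for $A$. A clean alternative, avoiding the asymptotic-relation discussion entirely, is to use that minimality of a foliation is equivalent to the orbit closure statement and invoke that $f$ is topologically transitive (since $\Omega(f)=\TT$) together with the fact that the stable foliation of a transitive Anosov diffeomorphism whose linearization has irrational stable direction is minimal — but routing through the explicit conjugacy $h$ as above is the most direct.
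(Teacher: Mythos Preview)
Your proof is correct and follows precisely the route the paper indicates: the paper simply states the lemma as a consequence of the topological conjugacy to a hyperbolic toral automorphism (obtained from \cite{casanova}) without spelling out the argument, and you have filled in exactly those details --- minimality of the stable foliation for the linear model, and transfer via the conjugacy using the dynamical characterization of strong stable leaves (which indeed relies on $E^{c}$ being uniformly expanding). Your identification of this last point as the place where the hypothesis $f\in\mathcal{A}^r(\TT)$ is used is spot on.
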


\subsubsection{Dynamical coherence}

As remarked before, any diffeomorphism $f\in\cA^r(\TT)$ can be seen either as a strongly partially hyperbolic diffeomorphism with respect to the splitting $E^s\oplus E^c\oplus E^u$ and an Anosov diffeomorphism with respect to the splitting $E^s\oplus E^{cu}$. By Lemma~\ref{claim min} we know that $\Omega(f)=\TT$. 

By the results of Potrie \cite{potrie} and Brin-Burago-Ivanov \cite{BrinBuragoIvanov} on partially hyperbolic diffeomorphisms of $\TT$, we have that $f$ is \textit{dynamically coherent}. In particular, $E^{cs}$ is also integrable to an $f$-invariant continuous foliation $\cW^{cs}_f=\cW^{cs}$, called the \textit{center-stable foliation}. Moreover, $\cW^{s}$ subfoliates $\cWcs$, while $\cWu$ subfoliates $\cWcu$, and the collection of all leaves $\cWc(x)\eqdef \cWcs(x) \cap \cWcu(x)$, $x \in \TT$, forms a foliation $\cW^{c}_f=\cWc$, called the \textit{center foliation}, which integrates $E^{c}$, and subfoliates both $\cWcs$ and $\cWcu$. For $*\in \{u,c,s,cu,cs\}$, let $d_*$ be the leaf-wise distance, and for $x\in \TT$, $\sigma>0$, set $\cW_\sigma^*(x)\eqdef \{y \in \cW^*(x) \ \vert\ d_*(x,y) < \sigma \}$. 

In our Anosov case we can show rather easily that the foliations $\cWc$ and $\cWu$ are globally transverse inside each $\cWcu$ leaf, as the two lemmas below demonstrate. 

\begin{lemma}
	\label{l.coerenciaum}
For any $x \in \TT$, it holds $\cW^{cu}(x)=\cup_{y\in\cW^c(x)}\cW^u(y)$. 
\end{lemma}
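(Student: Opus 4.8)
The plan is to show the two inclusions separately. The inclusion $\cup_{y\in\cW^c(x)}\cW^u(y) \subset \cW^{cu}(x)$ is the easy direction and essentially follows from dynamical coherence: since $\cWu$ subfoliates $\cWcu$, for any $y\in\cW^c(x)\subset\cW^{cu}(x)$ we have $\cW^u(y)\subset\cW^{cu}(y)=\cW^{cu}(x)$, because two center-unstable leaves that share a point coincide. So the whole union lies in $\cW^{cu}(x)$. The content is in the reverse inclusion: every point $z\in\cW^{cu}(x)$ lies on the strong unstable leaf of some center point $y\in\cW^c(x)$.

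For the reverse inclusion, I would argue as follows. Fix a leaf $L\eqdef\cW^{cu}(x)$, which is an immersed $C^r$ surface carrying the one-dimensional foliation $\cWu$ (restricted to $L$) and the one-dimensional foliation $\cWc$ (restricted to $L$), tangent respectively to $E^u$ and $E^c$, which are transverse at every point of $L$. Since $f$ restricted to $L$ (seen through the universal cover $\widetilde L\cong\R^2$) is a diffeomorphism expanding both directions, $\widetilde L$ is diffeomorphic to $\R^2$, and the lifts $\widetilde{\cWu}$, $\widetilde{\cWc}$ are two transverse one-dimensional foliations of the plane. The key point is that each strong-unstable leaf inside $L$ is a properly embedded line (by uniform expansion and the global product structure of the Anosov map downstairs, or directly: a strong-unstable leaf cannot accumulate on itself inside $L$), and likewise for center leaves; then a standard fact about pairs of transverse foliations of $\R^2$ by properly embedded lines — or, more concretely, a Poincaré–Bendixson / Haefliger-type argument — gives that every center leaf meets every strong-unstable leaf in exactly one point, hence the center leaf $\cW^c(x)$ together with the strong-unstable leaves through its points sweeps out all of $L$. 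Concretely: given $z\in L$, follow the center leaf $\cW^c(z)$; I claim it must intersect $\cW^u(x)$... no — rather, follow $\cW^u(z)$ and show it meets $\cW^c(x)$. To make this rigorous I would use the topological conjugacy to a hyperbolic toral automorphism $A$ (from \cite{casanova}): under the conjugacy $h$, the foliation $\cWcu$ maps to the linear foliation by translates of the $2$-plane $E^{cu}_A$, and within such a plane the linear center and strong-unstable foliations obviously satisfy the product structure $\cW^{cu}_A(hx)=\cup_{w\in\cW^c_A(hx)}\cW^u_A(w)$; transporting back via $h$ — which maps center leaves to center leaves and unstable leaves to unstable leaves since $h$ is a conjugacy respecting the coarse partially hyperbolic structure — yields the claim.

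The main obstacle I anticipate is justifying that the conjugacy $h$ really does carry $\cWc$ (resp. $\cWu$) to the corresponding linear foliation, and more basically that it carries $\cWcu$ to the linear center-unstable foliation: the conjugacy is only Hölder and a priori respects only the strong stable/unstable foliations (for which uniqueness of integration is classical) and the Anosov splitting $E^s\oplus E^{cu}$, not obviously the finer splitting. One clean way around this is to avoid the conjugacy for the center direction and instead argue intrinsically on $\widetilde L\cong\R^2$: the restriction $f|_L$ lifts to a plane diffeomorphism which is a global contraction of the inverse, the lifted strong-unstable foliation has leaves that are lines going to infinity in both ends, and the center foliation is transverse to it; then I would invoke that a one-dimensional $C^0$ foliation of $\R^2$ transverse to the trivial fibration by horizontal lines is itself a trivial fibration whose leaves are graphs, so each center leaf is a graph over the $E^c$-"axis" and hence meets every horizontal unstable leaf exactly once — which is precisely the asserted product structure. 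I expect the write-up to lean on this planar-foliation fact plus the properness of leaves, which is where one must be slightly careful, rather than on any hard new estimate.
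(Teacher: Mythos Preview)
Your approach differs substantially from the paper's, and the route you sketch has a real gap. The ``standard fact'' you want to invoke --- that two transverse one-dimensional foliations of $\R^2$ by properly embedded lines have every leaf of one meeting every leaf of the other --- is \emph{false} in general. For instance, a foliation transverse to the horizontal lines has leaves that are locally graphs $x=g(y)$, but a proper leaf can be such a graph only over a bounded $y$-interval, with $g\to\pm\infty$ at the endpoints; that leaf misses all horizontal lines outside the interval. To rescue your argument you would need some quasi-isometric control on the lifted leaves (they stay uniformly close to fixed linear directions), which in this setting is available from the Brin--Burago--Ivanov/Potrie theory you already cite for dynamical coherence, but you do not invoke it, and it is considerably heavier than what the lemma warrants. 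Your alternative via the topological conjugacy has the difficulty you yourself flag: the conjugacy is only known to respect the Anosov splitting $E^s\oplus E^{cu}$, not the finer subfoliations $\cW^c,\cW^u$ inside $\cW^{cu}$.

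The paper's proof is a one-paragraph dynamical argument that sidesteps all of this. Take $z\in\cW^{cu}(x)$; since $f^{-1}$ uniformly contracts along $\cW^{cu}$, iterate backward until $f^{-n}(x)$ and $f^{-n}(z)$ are so close that the \emph{local} product structure inside $\cW^{cu}(f^{-n}(x))$ (which holds at a uniform scale because $E^c$ and $E^u$ are continuous transverse line fields) forces $\cW^c(f^{-n}(x))\cap\cW^u(f^{-n}(z))\neq\emptyset$. Then push forward by $f^n$ using invariance of the foliations. No global planar topology, no conjugacy, no quasi-isometry --- just contraction plus local transversality.
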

\begin{proof}
	Let $z\in\cW^{cu}(x)$ be arbitrary. We need to show that $\cW^{c}(x)\cap\cW^u(z)\neq\emptyset$. Since the bundles $E^c$ and $E^u$ are integrable and the local leaves have uniform size due to hyperbolicity, since the splitting $E^c\oplus E^u$ is dominated and  backwards iteration under $f$ contracts distances uniformly along $\cW^{cu}$ we must have some $n\in\N$ such that 
	\[
	\cW^c(f^{-n}(x))\cap\cW^u(f^{-n}(z))\neq\emptyset,
	\] 
	with transverse intersection. By forward iteration and using that integral manifolds are invariant by the dynamics we obtain the conclusion of the lemma.
\end{proof}

\begin{lemma}
	\label{l.coorenciadois}
	For every $y\in\cW^c(x)$, it holds $\cW^c(x)\cap\cW^u(y)=\{y\}$.	
\end{lemma}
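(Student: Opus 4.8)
The plan is to prove Lemma~\ref{l.coorenciadois} by the same backward-contraction mechanism used in Lemma~\ref{l.coerenciaum}, combined with the fact that inside a two-dimensional unstable leaf the center and strong-unstable foliations behave like a product. Fix $x\in\TT$ and $y\in\cW^c(x)$, and suppose for contradiction that there is a point $z\in\cW^c(x)\cap\cW^u(y)$ with $z\neq y$.

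First I would record that $\cW^c(x)$ and $\cW^u(y)$ are one-dimensional $C^r$ submanifolds of the surface $\cW^{cu}(x)$ (using Lemma~\ref{l.coerenciaum} to see they lie in a common center-unstable leaf), and that they are everywhere transverse because $E^c\oplus E^u$ is a dominated splitting, so $E^c(w)\cap E^u(w)=\{0\}$ at every $w$. Hence the intersection $\cW^c(x)\cap\cW^u(y)$ is a discrete subset of $\cW^{cu}(x)$. So $y$ and $z$ would be two distinct points lying simultaneously on the center leaf through $x$ and on the strong-unstable leaf through $y$; in particular $z\in\cW^c(y)$ and $z\in\cW^u(y)$, and $d_c(y,z)>0$, $d_u(y,z)>0$.

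Next I would iterate backwards. Along $\cW^{cu}$, the map $f^{-1}$ contracts distances uniformly (the whole bundle $E^{cu}$ is expanded by $Df$, with rates bounded below by $e^{\chi_2^c}>1$), so $d_{cu}(f^{-n}(y),f^{-n}(z))\to 0$. For $n$ large, $f^{-n}(y)$ and $f^{-n}(z)$ therefore lie in a small local chart of $\cW^{cu}$ in which $\cW^c$ and $\cW^u$ form a genuine product foliation of uniform local size (local product structure of two transverse continuous foliations with smooth leaves). Since invariant manifolds are carried to invariant manifolds by the dynamics, $f^{-n}(z)$ still lies on $\cW^c(f^{-n}(y))\cap \cW^u(f^{-n}(y))$; but in the local product chart two such local leaves meet only at the base point, forcing $f^{-n}(z)=f^{-n}(y)$, hence $z=y$ — a contradiction. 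The main obstacle to make this fully rigorous is the quantitative control of the "uniform local size" of the center and strong-unstable plaques: one has to know that there is $\sigma_0>0$ such that for every $w$, $\cW^c_{\sigma_0}(w)$ and $\cW^u_{\sigma_0}(w)$ intersect exactly once inside $\cW^{cu}(w)$, and that backward contraction along $\cW^{cu}$ does bring the pair $(f^{-n}(y),f^{-n}(z))$ into this scale while keeping them on the relevant local leaves; this is exactly the local-product-structure input already invoked implicitly in the proof of Lemma~\ref{l.coerenciaum}, so I would cite \cite{HirschPughShub} (or Lemma~\ref{l.coerenciaum} itself) for it rather than reprove it. Everything else is a short transversality-plus-contraction argument.
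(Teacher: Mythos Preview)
Your argument is correct and follows essentially the same route as the paper: assume a second intersection point, iterate backwards so that distances inside $\cW^{cu}$ contract, and derive a contradiction from the uniform transversality of $E^c$ and $E^u$. The only cosmetic difference is in how the contradiction is phrased: you invoke local product structure of the transverse one-dimensional foliations $\cW^c$ and $\cW^u$ inside $\cW^{cu}$, whereas the paper looks at the short backward-iterated unstable arc $f^{-n}(\gamma^u)$ joining the two points and observes that, since its endpoints lie on the same local center plaque, its tangent must somewhere be close to $E^c$, contradicting that it is everywhere tangent to $E^u$.
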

\begin{figure}[h!]
	\begin{tikzpicture}
	\draw[orange, thick] (0,0)--(0,3)node[right]{$\cW^c(x)$};
	\draw[red!80!black, thick] (-1,2.1)--(0,2)--(1,1.9).. controls (1.5,1.8) and (1.5,1.2).. (1,1.1)--(0,1)--(-1,0.9)node[left]{$\cW^u(y)$};
	\draw (0,1) node{$\bullet$};
	\draw (0,2) node{$\bullet$};
	\draw (-.15,1) node[below]{$y$};
	\draw (-.15,2) node[below]{$y'$};
	\begin{scope}[xshift=4cm, yshift=1cm, scale=.4]
	\draw[orange, thick] (0,0)--(0,3);
	\draw[red!80!black, thick] (-1,2.1)--(0,2)--(1,1.9).. controls (1.5,1.8) and (1.5,1.2).. (1,1.1)--(0,1)--(-1,0.9);
	\draw (0,1) node{$\bullet$};
	\draw (0,2) node{$\bullet$};
	\draw (-1.2,1.1) node[below]{\tiny$f^{-n}(y)$};
	\draw (-1.4,2.3) node[below]{\tiny$f^{-n}(y')$};
	\end{scope}
	\draw[->] (1.5,1.4) to[bend left] node[midway, above]{$f^{-n}$} (2.7,1.5);
	\end{tikzpicture}
	\caption{\label{Fig.coerencia} Proof of Lemma~\ref{l.coorenciadois}: iterating the picture of the left we arrive at a small scale where the picture violates the uniformly positive angle between $E^c$ and $E^u$.}
\end{figure}
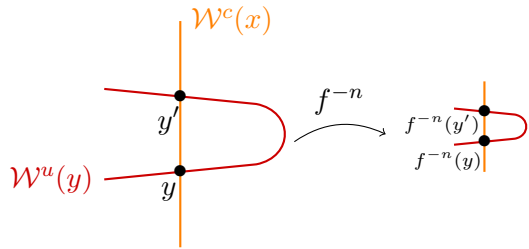

\begin{proof}
	Assume by contradiction the existence of a point $y'\neq y$ in $\cW^c(x)\cap\cW^u(y)$. Consider the piece $\gamma^u$ of unstable manifold joining $y$ to $y'$. Since the length of $f^{-n}(\gamma^u)$ decreases exponentially there exists some $n$ such that the curve $f^{-n}(\gamma^u)$ is entirely contained in a coordinate chart for which the line field $E^c$ is almost vertical. Since this curve joins the points $f^{-n}(y)$ and $f^{-n}(y')$ which belong to the same local integral curve of $E^c$, this proves that the tangent space of $f^{-n}(\gamma^u)$ is almost vertical somewhere. This contradicts the dominated splitting $E^c\oplus E^u$ and completes the proof.
\end{proof}

\subsubsection{Holonomies}\label{sss.holo}

Let $x_1,x_2\in \TT$ be two \textit{stably connected} points, i.e., such that $x_2\in \mathcal{W}^s(x_1)$. Set $r\eqdef d_s(x_1,x_2)$. By transversality, for any sufficiently small $\varepsilon>0$, there is $\sigma_1>0$ such that for any point $y_1\in \mathcal{W}^{cu}_{\sigma_1}(x_1)$, there exists a unique point $y_2 \in \mathcal{W}^{s}(y_1)\cap\mathcal{W}^{cu}_1(x_2)$ with $d_s(y_1,y_2)\in (r-\varepsilon,r+\varepsilon)$.  
We denote $H_{x_1,x_2}^s(y_1)\eqdef y_2$. 
% By transversality, there is a subset $\cC_1' \subset \cC_1$ such that for any $x \in  \cC_1'$, the local stable leaf through $x$ intersects $\cC_2$ at a unique point, denoted by $H_{\cC_1,\cC_2}^{s}(x)\in \cW^s(x)\cap \cC_2$. 
Thus, we get a well defined local homeomorphism
\begin{equation*}
H_{x_1,x_2}^{s}\colon
\cC_1\to \cC_2,
\end{equation*}
from a neighbourhood $\cC_1$ of $x_1$ within $\mathcal{W}^{cu}(x_1)$ to a neighbourhood $\cC_2$ of $x_2$ within $\mathcal{W}^{cu}(x_2)$, 
called the (local) \textit{stable holonomy map} between $\cC_1$ and $\cC_2$. 
Holonomies $H^u,H^{cs},H^{cu}$ along $\mathcal{W}^u$, $\mathcal{W}^{cs}$, $\mathcal{W}^{cu}$ are defined in a similar way.  

\subsection{Regularity of extreme bundles and holonomy maps}

It is crucial to our proof that certain holonomy maps are of class $C^1$. This is the case when some  \textit{bunching} inequalities are satisfied between the rates of contraction/expansion of the system; they actually hold in a neighbourhood of volume preserving Anosov diffeomorphisms of $\TT$, which is the main motivation behind this assumption in our result.   

 In our setting, we will say that $f$ is \emph{bunched} if there exists some $n\in \mathbb{N}$ such that for any $x \in \TT$,
\begin{equation}
\label{eq.centerbunching}
\lambda^s_{f,x}(n) < \frac{\lambda^c_{f,x}(n)}{\lambda^u_{f,x}(n)}.
\end{equation}
In the equation above, we explicited the dependence of the contraction and expansion rates on $f$. We remark that this is clearly a $C^1$-open condition. This definition appears in \cite{PSW} (see also \cite[\S 4.7]{CrovisierPotrie}).

%Loosely speaking, a partially hyperbolic diffeomorphism is \textit{center bunched} if the lack of conformality along the center bundle is dominated by the hyperbolicity along the strong unstable/stable bundles. %Let $f \in \cPH^2(\TT)$ be a small perturbation of the automorphism $A$. Since

\subsubsection{Regularity of the unstable bundle}

Let $f\in\cA^2(\TT)$. Then, $f$ is a strongly partially hyperbolic diffeomorphism with one-dimensional center bundle, hence $f$ is automatically center bunched.  By \cite[Theorem B]{PSW}, the unstable bundle $E^u$ is $C^1$ when restricted to any center unstable leaf $\cWcu(x)$. In particular, the vector field $v^u|_{\cWcu}$ is a $C^1$ vector field over the immersed submanifold $\cWcu(x)$ and its $C^1$ norm depends continuously with respect to $x\in\TT$.

As a corollary, we obtain that for any two small and nearby center curves $\gamma_1,\gamma_2\subset\cWcu(x)$, in the same center unstable leaf, the unstable holonomy map $H^u\colon\gamma_1\to\gamma_2$ is $C^1$.  This regularity will play a role in our argument in Section~\ref{ss.mainreduction}. Moreover, for some estimations in our proof it is important to quantify the Lipschitz constant of these unstable holonomy maps, as in the following result, which is simply a more precise statement of \cite[Theorem B]{PSW} in our case. 

\begin{lemma}
	\label{l.lemadorho0}
Let $f\in\cA^2(\TT)$. There exists $\rho_0>0,C^u>0$ such that for every $x,y\in\TT$, if $x\in\cW^u_2(y)$ then the unstable holonomy $H^u_{x,y}$ between local center manifolds is defined over $\cW^c_{\rho_0}(x)$ and for every $z,z'\in\cW^c_{\rho_0}(x)$ we have
	$$d(H^u_{x,y}(z),H^u_{x,y}(z'))\leq C^ud(z,z').$$
\end{lemma}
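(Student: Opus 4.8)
The statement to prove is Lemma~\ref{l.lemadorho0}: the existence of uniform constants $\rho_0>0$ and $C^u>0$ such that for $x\in\cW^u_2(y)$, the unstable holonomy $H^u_{x,y}$ between local center manifolds is defined on $\cW^c_{\rho_0}(x)$ and is $C^u$-Lipschitz there. The strategy is to deduce this from the regularity of the unstable bundle stated just above (the application of \cite[Theorem B]{PSW}): namely that $v^u$ restricted to each center-unstable leaf $\cWcu(z)$ is a $C^1$ vector field whose $C^1$ norm varies continuously with $z\in\TT$, together with the uniform transversality between $E^c$ and $E^u$ inside $\cWcu$ leaves (Lemmas~\ref{l.coerenciaum} and \ref{l.coorenciadois}). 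First I would fix a local $C^1$ chart of $\cWcu(x)$ adapted to the pair of transverse one-dimensional foliations $\cWu,\cWc$, using the $C^1$ flow boxes coming from integrating the $C^1$ vector fields $v^u$ and $v^c$ (the latter is $C^1$ along $\cWcu$ leaves since $E^c$ is the intersection with the $C^1$ center-stable object, or more simply because it is the complementary $Df$-invariant direction obtained from graph transform; in any case one only needs a $C^0$ foliation with $C^1$ leaves that is uniformly transverse to $\cWu$). In such a chart $\cWu$-leaves are (say) horizontal and the holonomy $H^u_{x,y}$ is simply the map that follows a horizontal leaf from the center curve through $x$ to the center curve through $y$.

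The core estimate is then standard holonomy-derivative control: if $z\in\cW^c_{\rho_0}(x)$ and $z'$ is a nearby point on the same local center curve, then $H^u_{x,y}(z)$ and $H^u_{x,y}(z')$ lie on $\cW^c_{\loc}(y)$, and the ratio $d(H^u_{x,y}(z),H^u_{x,y}(z'))/d(z,z')$ is, up to bounded distortion coming from the two $C^1$ charts, the product/integral of the transverse expansion of the strong-unstable foliation along the connecting $\cWu$-segment. Because $x\in\cW^u_2(y)$, the connecting segments have uniformly bounded length, and because $v^u$ has uniformly bounded $C^1$ norm on $\cWcu$-leaves (uniformly in the leaf, by the continuity statement above) while the angle between $E^c$ and $E^u$ is bounded below uniformly on $\TT$ by compactness and Lemma~\ref{l.coorenciadois}, Gronwall/chain-rule gives a uniform bound $C^u$ on this ratio. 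The choice of $\rho_0$: it must be small enough that for every $y\in\TT$ and every $x\in\cW^u_2(y)$ the holonomy is actually defined on all of $\cW^c_{\rho_0}(x)$ and lands in $\cW^c_1(y)$; this follows from the uniform local product structure of $\cWu$ and $\cWc$ inside $\cWcu$-leaves (uniform size of local strong-unstable plaques, uniform transversality), which is exactly the content of the holonomy construction in \S\ref{sss.holo} made uniform by compactness of $\TT$ and continuity of all the relevant data.

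The main obstacle, and the point requiring care, is the \emph{uniformity} in the base point: \cite[Theorem B]{PSW} gives $C^1$ regularity of $E^u$ along each fixed center-unstable leaf, and one must extract from its proof (or from the stated continuity of the $C^1$ norm of $v^u|_{\cWcu}$ with respect to $x\in\TT$, which the excerpt asserts) a genuinely uniform modulus — i.e. a single constant $C^u$ working for all pairs $(x,y)$ with $x\in\cW^u_2(y)$ simultaneously. Once that uniform $C^1$ bound is in hand, the Lipschitz estimate for the holonomy is a routine consequence of the chain rule in the adapted charts, so I would present the argument as: (i) fix uniform-size $C^1$ flow boxes for $(\cWu,\cWc)$ inside $\cWcu$-leaves, continuous in the base point; (ii) express $H^u_{x,y}$ in these coordinates and differentiate, bounding the derivative by the uniform $C^1$ norm of $v^u$ and the uniform lower bound on $\angle(E^c,E^u)$; (iii) choose $\rho_0$ from the uniform local product structure so that the domain statement holds. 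This is essentially a quantitative repackaging of \cite[Theorem B]{PSW} in the present setting, as the statement of the lemma already advertises.
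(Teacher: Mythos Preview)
The paper does not actually give a proof of this lemma: it presents the statement as ``simply a more precise statement of \cite[Theorem B]{PSW} in our case'' and leaves it at that. Your proposal is a correct and reasonable unpacking of what that citation delivers, namely uniform $C^1$ regularity of $E^u$ along $\cWcu$-leaves (with $C^1$ norm continuous in the base point, hence uniformly bounded by compactness of $\TT$), combined with uniform transversality of $E^c$ and $E^u$ and uniform local product structure to extract the domain $\rho_0$ and the Lipschitz constant $C^u$. So there is no real discrepancy: the paper defers to \cite{PSW}, and your sketch is essentially how one would verify that the cited theorem yields the stated uniform constants in this setting.
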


\subsubsection{Regularity of the stable bundle}

For the bundle $E^s$ a stronger statement can be made when $f$ is close to a volume preserving map. Indeed, fix arbitrarily $f_0\in\cA_m^1(\TT)$. 

\begin{lemma}\label{c one holon}
	There exists a neighbourhood $\mathcal{U}(f_0)$ of $f_0$ within $\mathrm{Diff}^2(\TT)$ such that for any diffeomorphism $f\in \mathcal{U}(f_0)$,  we have $f \in \mathcal{A}^2(\TT)$ and the stable bundle $E^s$ of $f$ is of class $C^1$. 
\end{lemma}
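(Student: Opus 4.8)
The plan is to combine the general regularity theorem for the stable bundle with the fact that the center-bunching inequality \eqref{eq.centerbunching} is open and is satisfied (with room to spare) by conservative Anosov diffeomorphisms on $\TT$ in the class $\cA^1_m(\TT)$. Here is the structure.

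\textbf{Step 1: Openness of $\cA^2(\TT)$.} First I would note that $\cA^1(\TT)$ is a $C^1$-open subset of $\mathrm{Diff}^1(\TT)$: the existence of a dominated splitting $E^s\oplus E^c\oplus E^u$ with $\lambda^s<1<\lambda^c<\lambda^u$ persists under $C^1$-perturbation (the cone-field criterion for partial hyperbolicity with expanding center is $C^1$-open), and the continuity of the three rate functions is automatic. Hence there is a $C^1$-neighbourhood $\cU_1(f_0)$ of $f_0$ inside $\mathrm{Diff}^1(\TT)$ with $\cU_1(f_0)\subset\cA^1(\TT)$; intersecting with $\mathrm{Diff}^2(\TT)$ gives diffeomorphisms in $\cA^2(\TT)$. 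This handles the first assertion of the lemma.

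\textbf{Step 2: The bunching inequality near $f_0$.} The key point is that for a conservative $f_0\in\cA^1_m(\TT)$ one has, for \emph{every} $x$ and every large $n$, a strict inequality $\lambda^s_{f_0,x}(n)<\lambda^c_{f_0,x}(n)/\lambda^u_{f_0,x}(n)$ with a uniform gap. Indeed, since $f_0$ preserves volume, the product of the three one-dimensional Jacobians equals $1$ along every orbit segment, so $\lambda^s_{f_0,x}(n)\,\lambda^c_{f_0,x}(n)\,\lambda^u_{f_0,x}(n)=1$; rewriting, $\lambda^s_{f_0,x}(n)=\bigl(\lambda^c_{f_0,x}(n)\lambda^u_{f_0,x}(n)\bigr)^{-1}$, so the desired inequality $\lambda^s_{f_0,x}(n)<\lambda^c_{f_0,x}(n)/\lambda^u_{f_0,x}(n)$ becomes $(\lambda^c_{f_0,x}(n)\lambda^u_{f_0,x}(n))^{-1}<\lambda^c_{f_0,x}(n)/\lambda^u_{f_0,x}(n)$, i.e. $1<\bigl(\lambda^c_{f_0,x}(n)\bigr)^2$, which holds for all $n\ge1$ because $\lambda^c_{f_0,x}(n)\ge e^{\chi^c_2 n}>1$. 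So $f_0$ is bunched with a definite uniform margin (for $n=1$ already). Since the finitely-many-times composition of the continuous rate functions depends $C^1$-continuously on the diffeomorphism and the inequality \eqref{eq.centerbunching} is strict and uniform in $x$, there is a $C^1$-neighbourhood $\cU_2(f_0)$ on which \eqref{eq.centerbunching} persists (with the same $n$, say $n=1$). Thus every $f\in\cU(f_0)\eqdef\cU_1(f_0)\cap\cU_2(f_0)\cap\mathrm{Diff}^2(\TT)$ lies in $\cA^2(\TT)$ and is bunched.

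\textbf{Step 3: Conclude $C^1$-regularity of $E^s$.} Finally I would apply the regularity theorem of Pugh--Shub--Wilkinson \cite{PSW} (the same one invoked right before the lemma, now applied to the inverse $f^{-1}$ so that $E^s$ plays the role of the unstable bundle and the roles of $\lambda^u$ and $\lambda^s$ are interchanged): the bunching condition \eqref{eq.centerbunching} is exactly the hypothesis that guarantees that the bundle $E^s$ — the strong bundle that is ``dominated'' by the central direction in the sense made precise by \eqref{eq.centerbunching} — is globally $C^1$ on $\TT$. This yields the lemma, with $\cU(f_0)$ the neighbourhood built in Step 2.

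\textbf{Main obstacle.} The only genuinely delicate point is Step 3: one must make sure the bunching inequality \eqref{eq.centerbunching}, which controls $E^s$ against the \emph{ratio} $\lambda^c/\lambda^u$ rather than against $\lambda^c$ alone, is precisely the form of bunching under which the Pugh--Shub--Wilkinson machinery produces a \emph{global} (not merely leafwise) $C^1$ bundle — and to match conventions one applies it to $f^{-1}$, for which $E^s(f)=E^u(f^{-1})$ and the contraction/expansion rates swap. Steps 1 and 2 are soft: openness of the dominated splitting and persistence of a strict uniform inequality under $C^1$-perturbation are standard, and the conservativity of $f_0$ is used only through the elementary identity that the product of the three scalar Jacobians is $1$.
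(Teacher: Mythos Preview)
Your overall strategy matches the paper's: show the bunching inequality \eqref{eq.centerbunching} holds for conservative $f_0$, note it is $C^1$-open, and then invoke \cite{PSW}. However, Step~2 contains a genuine gap. The identity $\lambda^s_{f_0,x}(n)\lambda^c_{f_0,x}(n)\lambda^u_{f_0,x}(n)=1$ is \emph{not} correct in general: the product of the three one-dimensional rates is not the Jacobian of $Df_0$, because the invariant bundles are not orthogonal for the Riemannian metric (there is a change-of-basis determinant depending on $x$), and moreover the volume preserved by $f_0$ need not be the Riemannian volume of the adapted metric (so the Jacobian itself is only a coboundary $\phi\circ f_0/\phi$, not identically $1$). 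In particular your claim that bunching holds ``for $n=1$ already'' is unjustified.

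The paper's proof handles exactly this: it writes $|\det Df_0(x)|=|\det C(f_0(x))^{-1}||\det C(x)|\,\lambda^s_x\lambda^c_x\lambda^u_x$ with $C(x)$ the change of basis to an orthonormal frame, observes that the Jacobian is a bounded coboundary (Liv\v{s}ic), and concludes that $\lambda^s_{f_0,x}(n)$ equals a \emph{uniformly bounded} factor times $(\lambda^c_{f_0,x}(n))^{-2}\cdot\lambda^c_{f_0,x}(n)/\lambda^u_{f_0,x}(n)$, so that bunching holds for all sufficiently large $n$. Your argument is easily repaired along these lines: replace the false identity by the statement that $\lambda^s_x(n)\lambda^c_x(n)\lambda^u_x(n)$ is uniformly bounded above and below independently of $x,n$, and then take $n$ large enough that $(\lambda^c_x(n))^2$ beats this bound.
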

\begin{proof}
	%It is a consequence of the results of \cite{PSW} (see also \cite[Theorem 4.21]{CrovisierPotrie}). Indeed, d
	Since $f_0$ is uniformly hyperbolic, there exists a continuous function $\TT \ni x \mapsto C(x) \in \mathrm{GL}(3,\mathbb{R})$ such that for every point $x\in \TT$,  
	\[
	Df_0(x) = C(f_0(x))^{-1} \begin{bmatrix}
	\lambda^s_{f_0,x} & 0 & 0\\ 0 & \lambda^c_{f_0,x} & 0\\ 0 & 0 & \lambda^u_{f_0,x}
	\end{bmatrix} C(x).
	\]
	The matrix $C(x)$ is a matrix with positive determinant that takes a basis formed by unit vectors in $E^*(x)$,  and sends it to the orthogonal basis $(1,0,0)$, $(0,1,0)$ and $(0,0,1)$.  In particular, there exists a uniform constant $C\geq 1$ such that
	\[
	\max\{|\det C(x)|, |\det C(x)^{-1}|\} \leq C.
	\]
	Consequently,
	\[
	|\det Df_0(x)| = |\det C(f_0(x))^{-1}|\cdot|\det C(x)|\cdot\lambda^s_{f_0,x} \lambda^c_{f_0,x} \lambda^u_{f_0,x}.
	\] 
	 Observing that for every $n\in \N$, 
	 \[
	 |\det Df_0(x)| =|\det C(f^n_0(x))^{-1}|\cdot|\det C(x)|\cdot\lambda^s_{f_0,x}(n)\lambda^c_{f_0,x}(n) \lambda^u_{f_0,x}(n).
	 \]
	The diffeomorphism $f_0$ is conservative so its Jacobian is a coboundary (by Liv$\check{\mathrm{s}}$ic's Theorem \cite{Livsic} and \cite[Theorem 4.14]{BowenR}). Thus, there exists a continuous function $\phi\colon\TT\to(0,\infty)$ bounded away from $0$ and $\infty$ such that for any $x \in \TT$, we have 
	$$
	 |\det C(f_0(x))^{-1}|\cdot|\det C(x)|\cdot\lambda^s_{f_0,x} \lambda^c_{f_0,x} \lambda^u_{f_0,x}=\frac{\phi\circ f(x)}{\phi(x)}.$$ 
	Hence,   
	\begin{align*} 
	\lambda_{f_0,x}^s(n)&=\frac{\lambda_{f_0,x}^s(n)\lambda_{f_0,x}^c(n) \lambda_{f_0,x}^u(n)}{\lambda_{f_0,x}^c(n) \lambda_{f_0,x}^u(n)}\\
	&=|\det C(f_0(x))^{-1}|\cdot|\det C(x)|\cdot \frac{\phi\circ f^n(x)}{\phi(x)}\cdot\frac{1}{(\lambda_{f_0,x}^c(n))^2}\frac{\lambda_{f_0,x}^c(n)}{\lambda_{f_0,x}^u(n)}\\
	&<\frac{\lambda_{f_0,x}^c(n)}{\lambda_{f_0,x}^u(n)},
	\end{align*} 
	as long as $n$ is large enough so that
	$$
	|\det C(f_0(x))^{-1}|\cdot|\det C(x)| \frac{\phi\circ f^n(x)}{\phi(x)}< (\lambda_{f_0,x}^c(n))^2.
	$$
	Such an integer $n$ may be chosen independently of $x$ because $\phi$ is uniformly bounded away from $0$ and $\infty$, $\max\{|\det C(x)^{-1}|, |\det C(x)|\} \leq C$  and $E^c$ is uniformly expanding. In particular, if $f\in\mathrm{Diff}^2(\TT)$ is sufficiently $C^1$-close to $f_0$, then $f \in \mathcal{A}^2(\TT)$, and for the same choice of $n\in\N$ and  every $x \in \TT$, it holds
	$
	\lambda_{f,x}^s(n) < \frac{\lambda_{f,x}^c(n)}{\lambda_{f,x}^u(n)}
	$ and $f$ verifies \eqref{eq.centerbunching}.
	In other words, the hyperbolic splitting $E^s \oplus E^{cu}$ of $f$ is bunched (see \eqref{eq.centerbunching}). Then, according to the results of \cite{PSW}  (see also \cite[Theorem 4.21]{CrovisierPotrie}), the stable bundle $E_f^s$ is $C^1$, as well as the stable holonomy maps. 
\end{proof}

\begin{remark}\label{remarque bunching}
	As noted in the above proof, the assumption that $f$ is $C^1$-close to a conservative diffeomorphism ensures that the following \emph{bunching condition} is automatically satisfied (hence that stable holonomies $H^s$ are $C^1$, by \cite{PSW}):
\begin{equation}\label{condicondi bunching}
\lambda_{f,x}^s(n) < \frac{\lambda_{f,x}^c(n)}{\lambda_{f,x}^u(n)},\quad \forall\,x \in \TT.
\end{equation}
In other words, \eqref{condicondi bunching} means that the lack of conformality of $Df$ along $E^{cu}$ is dominated by the contraction along $E^s$. In particular, the conclusion of Theorem \ref{mainthm.dicotomia} holds true for any Anosov diffeomorphism $f\in \mathcal{A}^2(\TT)$ satisfying \eqref{condicondi bunching}. 
\end{remark}

\subsubsection{Hölder regularity of $cs$-holonomies}
Despite the above regularity results, a substantial source of technical difficulties for our strategy comes from the absence of smoothness for \emph{center stable holonomies}. The quest for overcoming this is the main reason behind our \emph{matching argument for $Y$-configurations} in Sections~\ref{s.coupled} and \ref{ss.matching}. In our setting, the best that can be said about center stable holonomies comes from Theorem A in \cite{PSW}, which we quote below in a convenient way for our purposes.

\begin{lemma}\label{thm_regularity}
Let $f\in\cA^2(\TT)$. Then there exist $\rho_0>0$ (which we can assume is the same from Lemma~\ref{l.lemadorho0}), $C^{cs}>0,\theta^{cs}>0$ such that for every $x,y\in\TT$ if $x\in\cW^{cs}_{\rho_0}(y)$ then the center-stable holonomy $H^{cs}_{x,y}$ is defined on $\cW^{u}_2(x)$ and for every $z,z'\in\cW^{u}_2(x)$ we have
$$d(H^{cs}_{x,y}(z),H^{cs}_{x,y}(z'))\leq C^{cs}d(z,z')^{\theta^{cs}}.$$
\end{lemma}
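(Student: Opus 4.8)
The plan is to deduce this from Theorem~A of \cite{PSW}; the only work is to verify its hypotheses, to pin down the domain of definition, and to make the Hölder data uniform. First I would recall that, since $f\in\cA^2(\TT)$ is $C^2$ and dynamically coherent (Potrie \cite{potrie}, Brin--Burago--Ivanov \cite{BrinBuragoIvanov}), the bundle $E^{cs}$ integrates to the continuous foliation $\cW^{cs}$ (subfoliated by $\cW^s$ and $\cW^c$), and that $E^u$ is \emph{uniformly} transverse to $E^{cs}$ by compactness of $\TT$. Consequently there is $\rho_1>0$ such that the plaques $\cW^{cs}_{\rho_1}(\cdot)$ and $\cW^u_3(\cdot)$ assemble into a uniform local product structure; taking $\rho_0>0$ small enough — and, shrinking it if necessary, equal to the constant of Lemma~\ref{l.lemadorho0} — it then follows that for $x\in\cW^{cs}_{\rho_0}(y)$ every point of $\cW^u_2(x)$ slides, along its own $\cW^{cs}$-leaf, to a well-defined nearby point of $\cW^u(y)$, so that $H^{cs}_{x,y}$ is defined on all of $\cW^u_2(x)$.

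For the Hölder bound I would then invoke \cite[Theorem~A]{PSW}: for a $C^2$ (strongly) partially hyperbolic diffeomorphism the holonomies of $\cW^{cs}$ along transversals are uniformly Hölder, with an exponent $\theta^{cs}\in(0,1]$ and a constant that are explicit functions of the contraction/expansion rates — in our normalization, of the constants $\chi^s_j,\chi^c_j,\chi^u_j$ fixed in \S\ref{sss.hypestimates}. Concretely, $\theta^{cs}$ is the bunching exponent of $\cW^{cs}$: writing the local leaves as graphs over $E^{cs}$ with fibre along $E^u$, the family of such graphs is the fixed point of a fibred contraction run along $f^{-1}$ (which contracts the fibre $E^u$), and $\theta^{cs}$ is the largest exponent for which this contraction still beats the competing rates along the leaf direction $E^s$ (which $f^{-1}$ expands); in general $\theta^{cs}<1$. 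Since \cite{PSW} works on a fixed uniform scale and $\TT$ is compact, the constants are uniform in $x,y$, and the estimate extends from a single local plaque to the whole of $\cW^u_2(x)$ by composing a bounded (uniformly in $x,y$) number of local holonomies, the resulting factor being absorbed into $C^{cs}$.

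I do not expect a serious obstacle: the statement is essentially a black-box application of \cite[Theorem~A]{PSW}. The one point deserving care — and the reason the conclusion is merely Hölder, which is precisely what forces the \emph{matching argument for $Y$-configurations} used later — is that, along a $\cW^{cs}$-leaf, the dynamics expands one of $E^s,E^c$ and contracts the other \emph{in both time directions}, so there is no clean ``iterate until the base points collide'' reduction as for the strong stable foliation; one instead has to control the accumulation of errors in the fibred-section estimate, optimising the number of iterates against $\chi^s_j,\chi^c_j,\chi^u_j$. For the write-up I would simply quote \cite[Theorem~A]{PSW} and record the uniformity of the constants and the domain of definition as above.
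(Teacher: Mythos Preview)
Your proposal is correct and matches the paper's approach exactly: the paper does not give a proof of this lemma but simply states it as a restatement of \cite[Theorem~A]{PSW} ``in a convenient way for our purposes,'' so your plan to invoke that theorem as a black box---with the brief verification of hypotheses and uniformity you outline---is precisely what is intended. If anything, you have provided more detail than the paper does.
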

We remark that the roles of $\rho_0$ and $2$ are ``exchanged'' when compared with the role of these constants in Lemma \ref{l.lemadorho0}. Here, the transversal we are looking at for the center-stable holonomy has size $2$, while in Lemma \ref{l.lemadorho0} the transversal has size $\rho_0$.

\subsection{Basic distortion estimates}\label{sss.distortionbasic}

The goal of this subsection is to collect some classical distortion estimates and fix once and for all some constants which are going to be very important all along our constructions. 

\begin{lemma}[Basic distortion lemma]
	\label{l.distortionbasic}
	Let $\varphi\colon\TT\to\R$ be a Hölder continuous function. Then, there exists a constant $C=C(\varphi)>0$ such that 
	\begin{itemize}
		\item[(a)] If $y\in \cW^s_1(x)$ and $n>0$ or
		\item[(b)]  if $f^n(y)\in \cW^{cu}_1(f^n(x))$, for $n\in \N$,  then 
	\end{itemize} 
	\[
	\left|\sum_{\ell=0}^{n-1}\varphi(f^\ell(x))-\varphi(f^\ell(y))\right|\leq C.
	\]
	
\end{lemma}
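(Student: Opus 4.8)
The plan is to prove the classical bounded distortion estimate by a telescoping/summation argument exploiting the exponential contraction along the relevant leaves together with the Hölder continuity of $\varphi$.

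\medskip

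\textbf{Setup.} Let $\varphi$ be $\beta$-Hölder with Hölder constant $H_\varphi$, so that $|\varphi(p)-\varphi(q)|\le H_\varphi\, d(p,q)^\beta$ for all $p,q$ near each other (on all of $\TT$ up to the diameter, since $\TT$ is compact). In case (a), set $p_\ell = f^\ell(x)$, $q_\ell=f^\ell(y)$ with $y\in \cW^s_1(x)$; in case (b), run the estimate ``from the top'': put $p_\ell = f^\ell(x)$, $q_\ell=f^\ell(y)$ where now we know $f^n(y)\in\cW^{cu}_1(f^n(x))$, and note that $\cW^{cu}$ is contracted by backward iteration. In both cases the key point is to control $d(p_\ell,q_\ell)$ along the orbit segment $\ell=0,\dots,n-1$.

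\medskip

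\textbf{Key step: exponential decay of the pairwise distances.} First I would observe that in case (a), since $y$ and $x$ lie on a common local stable leaf of length $\le 1$, and the stable foliation has $C^1$ (indeed $C^r$) leaves with uniformly contracting dynamics in the adapted metric, one has $d_s(p_\ell,q_\ell)\le \lambda^s_x(\ell)\,d_s(x,y)$, and hence, using the hyperbolic estimate $\lambda^s_x(\ell)<e^{\chi^s_2\ell}$ from \S\ref{sss.hypestimates} with $\chi^s_2<0$, we get $d(p_\ell,q_\ell)\le C_0\, e^{\chi^s_2\ell}$ for a uniform constant $C_0$ (comparing the leaf-wise distance with the ambient distance, which costs only a uniform multiplicative constant as long as the points stay within a uniform size, which they do since distances are decreasing). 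In case (b), the analogous estimate runs backward from time $n$: writing $k=n-\ell$, the points $f^n(x),f^n(y)$ are on a common local center-unstable leaf of size $\le 1$, and backward iteration contracts along $\cW^{cu}$ uniformly (this is exactly the mechanism used in Lemmas~\ref{l.coerenciaum} and \ref{l.coorenciadois}); quantitatively, pulling back $k$ steps contracts center-unstable distances by a factor that is $\le e^{-c k}$ for some uniform $c>0$ (one can take $c=\min(\chi^c_2,\chi^u_2)>0$, possibly after a fixed number of initial steps absorbed into the constant). Thus $d(p_\ell,q_\ell)=d(f^{-k}(f^n(x)),f^{-k}(f^n(y)))\le C_0\, e^{-c k}=C_0\,e^{-c(n-\ell)}$.

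\medskip

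\textbf{Conclusion.} Now I would simply sum: in case (a),
\[
\left|\sum_{\ell=0}^{n-1}\bigl(\varphi(p_\ell)-\varphi(q_\ell)\bigr)\right|\le \sum_{\ell=0}^{n-1} H_\varphi\, d(p_\ell,q_\ell)^\beta\le H_\varphi\, C_0^\beta \sum_{\ell=0}^{\infty} e^{\beta\chi^s_2\ell} = \frac{H_\varphi\, C_0^\beta}{1-e^{\beta\chi^s_2}}=:C(\varphi),
\]
which is finite and independent of $n$, $x$, $y$ since $\beta\chi^s_2<0$. In case (b), the same computation with $e^{-c(n-\ell)}$ in place of $e^{\chi^s_2\ell}$ gives a geometric series $\sum_{k\ge 1} e^{-\beta c k}<\infty$, again yielding a uniform bound $C(\varphi)$. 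This proves the lemma.

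\medskip

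\textbf{Main obstacle.} The only genuinely non-routine point is justifying the \emph{uniform} exponential decay of the pairwise distances $d(f^\ell(x),f^\ell(y))$ along the whole orbit segment — in particular making sure the points never leave the region where the leaf-wise and ambient metrics are comparably equivalent and where the contraction rates of \S\ref{sss.hypestimates} apply. For case (a) this is immediate because stable-leaf distances are monotonically decreasing. For case (b) one must be slightly careful: one knows only that the endpoints at time $n$ are close along $\cW^{cu}$, so one argues that backward iterates stay close (the local center-unstable leaf through $f^n(x)$ of size $1$ maps under $f^{-k}$ into a leaf of exponentially small size), and only then invokes the Hölder bound; a fixed finite number of initial iterations may need to be handled separately and absorbed into the constant $C(\varphi)$, which is harmless.
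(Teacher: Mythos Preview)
Your proposal is correct and is precisely the standard classical argument the paper has in mind; indeed, the paper omits the proof entirely, stating only ``We omit the proof as it is quite classical.'' Your telescoping/geometric-series argument using the H\"older exponent and the uniform contraction rates from \S\ref{sss.hypestimates} is exactly what is intended.
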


We omit the proof as it is quite classical. Applying the lemma to the functions $\varphi=\log\|Df(.)|_{E^*}\|$, $*=c,u$ we obtain a constant $C_0=C_0(f)\geq 1$ such that if $y\in\cW^s_1(x)$ and $n>0$ or if $f^\ell(y)\in\cW^{cu}_1(f^\ell(x))$ for every $\ell=0,\dots,n$, then
\begin{equation}
\label{e.distortionbasic}
C_0^{-1}\leq\frac{\|Df^n(x)|_{E^*}\|}{\|Df^n(y)|_{E^*}\|}\leq C_0.
\end{equation}
Moreover, up to enlarging $C_0$, we also have that if $y\in\cWcu_1(x)$ then
\begin{equation}
\label{e.distortionbasicpasse}
C_0^{-1}\leq\frac{\|Df^{-n}(x)|_{E^*}\|}{\|Df^{-n}(y)|_{E^*}\|}\leq C_0.
\end{equation}
for every $n\in\N$.
Another important application is obtained by considering the function $\psi(x)=\log\frac{\|Df(x)|_{E^c}\|}{\|Df(x)|_{E^u}\|}$. We can assume that the constant $C_0$ also satisfies the following
\begin{corollary}
	\label{c.distortionbasic}
	Given $\ell\in\N$ recall from \eqref{e.domonationalongy} that $d^\ell_x=\frac{\|Df^\ell(f^{-\ell}(x))|_{E^c}\|}{\|Df^\ell(f^{-\ell}(x))|_{E^u}\|}$. If $f^{-\ell}(y)\in \cW^s_1(f^{-\ell}(x))$, then, 
	\[
	C_0^{-1}\leq\frac{d^\ell_x}{d^\ell_y}\leq C_0.
	\]
\end{corollary}   

\subsubsection{Distortion for quadrilaterals}

One of the key dynamical configuration for our strategy are the \emph{quadrilaterals}. A \textit{quadrilateral} is a quadruple $(x,x^u,y,y^u)\in(\TT)^4$ such that $x^u \in \cWu_1(x)$, $y\in \cWs_1(x)$, and $y^u\in \cWu_1(y)\cap\cWcs(x^u)$. For such a quadrilateral, we define the point $z^u\eqdef H^{s}_{x,y}(x^u)$, so that $z^u\in \mathcal{W}^s(x^u)\cap \cWc(y^u)$.

\begin{figure}[h!]
	\centering
	\begin{tikzpicture}[scale=.83]
	\fill[green!80!white, opacity=.2] (-2,2) .. controls (-1.5,1.8) and (-0.35,0.5) .. (0,0) .. controls (0.35,-0.5) and (1.5,-1.8) .. (2,-2) -- (5,-1.5).. controls (4.5,-1.3) and (3.35,0) .. (3,0.5).. controls (2.65,1) and (1.5,2.3) .. (1,2.5)--(-2,2);
	\draw[red!80!black, thick] (-2,2)  .. controls (-1.5,1.8) and (-0.35,0.5) .. (0,0) .. controls (0.35,-0.5) and (1.5,-1.8) .. (2,-2);
	\draw[thick, orange] (1,2.5) -- (1,2);
	\draw (-2,2) node{$\bullet$};
	\draw (-2,2) node[left]{$x^u$};
	\draw[green!40!black, thick] (0,0)--(3,0.5) node[midway, below]{$\cW^s_{1}(x)$};
	\draw (0,0) node{$\bullet$};
	\draw (0,0) node[left]{$x$};
	\draw[red!80!black] (1.8,-2) node[left]{$\cW^u_{1}(x)$};
	\draw[violet, thick] (5,-1.5) .. controls (4.5,-1.3) and (3.35,0) .. (3,0.5).. controls (2.65,1) and (1.5,2.3) .. (1,2.5) node[above]{\color{black} $z^u$};
	\draw (1,2.5) node{$\bullet$};
	\draw[red!80!black, thick] (1,2) .. controls (1.5,1.75) and (2,0.75) .. (3,0.5) .. controls (3.5,0.35) and (4.5,-.55) .. (5,-1) node[right]{$\cW^u_{1}(y)$}; 
	\draw[dotted] (5,-1) -- (5,-1.5);
	\draw (3,0.5) node{$\bullet$};
	\draw (3,.75) node[right]{$y$};
	\draw (1,2) node{$\bullet$};
	\draw (1,2) node[left]{$y^u$};
	\end{tikzpicture}
	\caption{\label{fig.quadrilatero}A quadrilateral.}
\end{figure}
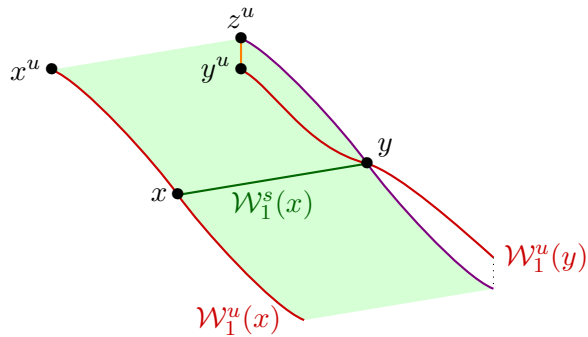

From our previous distortion results, we can take a larger constant $C_0=C_0(f)$ in order to have the following.

\begin{corollary}
	\label{c.distortionbasic2}
	Now, assume that $y\in \cW^s_1(x)$ and that $x^u\in \cW^u_1(x)$ and $y^u\in \cW^u_1(y)$ are such that $y^u=H^{cs}_{x,y}(x^u)$. Let $z^u=H_{x,y}^s(x^u)$. If $n\in\N$ satisfies $d(f^n(z^u),f^n(y^u))\leq 1$ then 
	\[
	C_0^{-1}\leq\frac{\|Df^\ell(x^u)|_{E^c}\|}{\|Df^\ell(y^u)|_{E^c}\|}\leq C_0,
	\]
	for every $\ell=0,\dots,n$.
\end{corollary}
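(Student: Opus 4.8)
\textbf{Proof proposal for Corollary~\ref{c.distortionbasic2}.}

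The plan is to reduce the statement to the basic distortion estimate \eqref{e.distortionbasic} by controlling the orbit of the pair $(x^u,y^u)$ along the center-unstable foliation. First I would observe that $x^u$ and $y^u$ lie on a common center-stable leaf: by definition of a quadrilateral, $y^u\in\cW^u_1(y)\cap\cW^{cs}(x^u)$, so $y^u=H^{cs}_{x,y}(x^u)$ indeed places both points on the same $\cW^{cs}$-leaf. The point $z^u=H^s_{x,y}(x^u)$ lies on $\cW^s(x^u)\cap\cW^c(y^u)$; thus $z^u$ and $y^u$ are on the same center leaf, hence on the same center-unstable leaf, and $z^u$ and $x^u$ are on the same strong-stable leaf. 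The key structural fact is that $z^u$ interpolates between $x^u$ and $y^u$ in a way that separates the $cu$-direction from the $s$-direction.

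The main step is to estimate $\|Df^\ell(x^u)|_{E^c}\|/\|Df^\ell(y^u)|_{E^c}\|$ by inserting $z^u$: write it as the product
\[
\frac{\|Df^\ell(x^u)|_{E^c}\|}{\|Df^\ell(z^u)|_{E^c}\|}\cdot\frac{\|Df^\ell(z^u)|_{E^c}\|}{\|Df^\ell(y^u)|_{E^c}\|}.
\]
For the first factor, $z^u\in\cW^s_1(x^u)$ (the stable holonomy of a quadrilateral moves points a uniformly bounded stable distance, so this is within scale $1$ after possibly shrinking the defining scale of quadrilaterals), so case (a) of Lemma~\ref{l.distortionbasic} applied to $\varphi=\log\|Df(\cdot)|_{E^c}\|$ gives a bound by $C_0$ for all $\ell\ge 0$. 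For the second factor, I need $f^\ell(z^u)\in\cW^{cu}_1(f^\ell(y^u))$ for every $\ell=0,\dots,n$, so that case (b) applies. Here is where the hypothesis $d(f^n(z^u),f^n(y^u))\le 1$ enters: since $z^u$ and $y^u$ are on the same center leaf, which sits inside a $\cW^{cu}$-leaf, and since backward iteration under $f$ contracts distances uniformly along $\cW^{cu}$ (the domination $E^c\oplus E^u$ together with expansion of $E^{cu}$), the $\cW^{cu}$-leafwise distance $d_{cu}(f^\ell(z^u),f^\ell(y^u))$ is monotone enough that controlling it at time $n$ controls it at all intermediate times $0\le\ell\le n$. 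More precisely, the points $f^\ell(z^u)$ and $f^\ell(y^u)$ stay on a common center leaf for all $\ell$, and the center leafwise distance satisfies $d_c(f^\ell(z^u),f^\ell(y^u))\le C\, d_c(f^n(z^u),f^n(y^u))$ for $\ell\le n$ by the distortion estimate \eqref{e.distortionbasicpasse} applied along the center (using that $\lambda^c$ is expanding, so backward iterates shrink with bounded distortion); hence $d_{cu}(f^\ell(z^u),f^\ell(y^u))\le 1$ after adjusting constants, which is exactly the hypothesis of case (b). Combining the two factors and enlarging $C_0$ to absorb the product of the two constants yields the claim.

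The hard part will be making precise the claim that controlling $d(f^n(z^u),f^n(y^u))$ at the final time $n$ forces $f^\ell(z^u)$ to remain in $\cW^{cu}_1(f^\ell(y^u))$ for all intermediate $\ell$ — that is, the synchronization/monotonicity of the center-leafwise distance under the dynamics. This needs a clean use of the one-dimensionality of $E^c$ and the uniform expansion $\lambda^c_x>e^{\chi_2^c}>1$: on the one-dimensional center leaf through $y^u$, the map $f^{n-\ell}$ expands center-leafwise distances with bounded distortion, so $d_c(f^\ell(z^u),f^\ell(y^u))\le C_0\,(\lambda^c)^{-(n-\ell)}d_c(f^n(z^u),f^n(y^u))\le C_0$, which stays below the scale $1$ provided we took the quadrilateral scale small enough initially; the remaining routine point is that $\cW^c$-closeness plus $\cW^u$-closeness give $\cW^{cu}$-closeness, which is standard local product structure. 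Everything else is a direct invocation of Lemma~\ref{l.distortionbasic}.
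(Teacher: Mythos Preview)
Your proposal is correct and follows exactly the same approach as the paper: insert $z^u$ and factor the ratio as $\frac{\|Df^\ell(x^u)|_{E^c}\|}{\|Df^\ell(z^u)|_{E^c}\|}\cdot\frac{\|Df^\ell(z^u)|_{E^c}\|}{\|Df^\ell(y^u)|_{E^c}\|}$, then apply case~(a) of Lemma~\ref{l.distortionbasic} to the first factor and case~(b) to the second. The paper's proof is a one-liner that leaves implicit the monotonicity step you spell out (that $d(f^n(z^u),f^n(y^u))\le 1$ forces $f^\ell(z^u)\in\cW^{cu}_1(f^\ell(y^u))$ for all $0\le\ell\le n$, since $E^{cu}$ is uniformly expanded so backward iterates contract); your extra detail is correct and not overkill.
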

\begin{proof}
	The result follows from \eqref{e.distortionbasic} since 
	\[
	\frac{\|Df^\ell(x^u)|_{E^c}\|}{\|Df^\ell(y^u)|_{E^c}\|}=\frac{\|Df^\ell(x^u)|_{E^c}\|}{\|Df^\ell(z^u)|_{E^c}\|}\times\frac{\|Df^\ell(z^u)|_{E^c}\|}{\|Df^\ell(y^u)|_{E^c}\|}.\qedhere
	\] 
\end{proof}

\subsection{Subordinate partitions and disintegrations}\label{sub.subord partitions disint}

We move now to the \emph{ergodic theory} of Anosov diffeomorphisms of $\TT$ with expanding center. Before giving the main definitions, we give some preliminaries on measurable partitions, disintegration and invariant measures.

\subsubsection{Measurable partitions}
Let $\mu$ be a probability measure of some standard Borel set $X$. Let $\xi_1,\xi_2$ be two partitions (mod $0$) of $X$ into measurable subsets. Say that $\xi_1$ \emph{is finer than} (or \emph{refines}) $\xi_2$,  denote $\xi_2\prec\xi_1$, if for $\mu$-a.e. $x\in X$ we have $\xi_1(x)\subset\xi_2(x)$ mod $0$. 

The \emph{join} of $\xi_1$ and $\xi_2$ is the partition defined as $\xi_1\vee\xi_2=\{\xi_1(x)\cap\xi_2(x):\,x\in X\}$.

A partition $\xi$ of $X$ is \emph{measurable} whenever there exists a sequence $(\xi_n)_{n\in\N}$ of finite partitions of $X$ by Borel subsets such that
$$\xi=\bigvee_{n=0}^\infty \xi_n.$$

Rokhlin proved the following fundamental result in \cite{Rok_meas-th}. The measure $\mu$ can be \emph{disintegrated} into atoms of any measurable partition $\xi$. It means that there exists a family of probability measures $\{\mu^\xi_x\}_{x}$, called a \emph{family of conditional measures of $\mu$} relative to $\xi$ defined for $\mu$-a.e. $x\in X$ and satisfying for $\mu$-a.e. $x\in X$:
\begin{enumerate}
	\item $\mu^\xi_x$ is a probability measure on $X$ satisfying $\mu^\xi_x(\xi(x))=1$;
	\item if $y\in\xi(x)$ then $\mu^\xi_y=\mu^\xi_x$.
\end{enumerate}
Moreover for every Borel subset $A\dans X$,
\begin{enumerate}
	\setcounter{enumi}{2}
	\item $x\mapsto \mu^\xi_x(A)$ is measurable;
	\item $\mu(A)=\int \mu^\xi_x(A)\,d\mu(x).$
\end{enumerate}

Moreover such a family is unique modulo a null set of $\mu$.

\subsubsection{Disintegration of invariant measures} Assume now that $\mu$ is invariant by some invertible measurable transformation $f\colon X\to X$. Let $\xi$ be a measurable partition and, for $n\in\Z$, let $\xi_n\eqdef f^n\xi$. Let $\{\mu_x\}_x\eqdef \{\mu_x^\xi\}_x$ and $\{\mu_{n,x}\}_x\eqdef\{\mu_x^{\xi_n}\}_x$ be families of conditional measures of $\mu$ relative to $\xi$ and $\xi_n$ respectively. The following lemma will be useful for our purposes.

\begin{lemma}
	\label{l_inv_con_meas}
	For every $n\in\N$ and $\mu$-almost every $x\in X$, we have
	$$\mu_{n,x}=f^n_\ast \mu_{f^{-n}(x)}.$$
\end{lemma}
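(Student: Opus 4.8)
The plan is to unwind the definitions of conditional measures and use the essential uniqueness in Rokhlin's disintegration theorem. First I would recall that $\{\mu_{n,x}\}_x$ is by definition a family of conditional measures of $\mu$ relative to $\xi_n = f^n\xi$, so it is characterized (mod $0$) by the three defining properties: $\mu_{n,x}$ is supported on $\xi_n(x)$, the map $x \mapsto \mu_{n,x}(A)$ is measurable, and $\mu(A) = \int \mu_{n,x}(A)\, d\mu(x)$ for all Borel $A$. By the uniqueness clause of the disintegration theorem, it therefore suffices to show that the candidate family $x \mapsto f^n_* \mu_{f^{-n}(x)}$ also satisfies these three properties.

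The verification runs as follows. For the support property: since $\mu_{f^{-n}(x)}$ is supported on $\xi(f^{-n}(x))$, its pushforward $f^n_*\mu_{f^{-n}(x)}$ is supported on $f^n(\xi(f^{-n}(x)))$; but $f^n(\xi(f^{-n}(x))) = (f^n\xi)(f^n(f^{-n}(x))) = \xi_n(x)$, since applying $f^n$ to the partition $\xi$ and to the point $f^{-n}(x)$ commutes appropriately. For measurability: $x \mapsto (f^n_*\mu_{f^{-n}(x)})(A) = \mu_{f^{-n}(x)}(f^{-n}(A))$ is a composition of the measurable map $x \mapsto f^{-n}(x)$ with the measurable map $y \mapsto \mu_y(f^{-n}(A))$, hence measurable. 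For the integral identity, I would use $f$-invariance of $\mu$ (and hence of $\mu$ under $f^{-n}$):
\[
\int (f^n_*\mu_{f^{-n}(x)})(A)\, d\mu(x) = \int \mu_{f^{-n}(x)}(f^{-n}(A))\, d\mu(x) = \int \mu_y(f^{-n}(A))\, d\mu(y) = \mu(f^{-n}(A)) = \mu(A),
\]
where the second equality is the change of variables $y = f^{-n}(x)$ together with $(f^{-n})_*\mu = \mu$, the third is the disintegration formula for $\{\mu_x\}_x$ relative to $\xi$ applied to the set $f^{-n}(A)$, and the last is again invariance of $\mu$.

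Having checked all three properties, uniqueness of the disintegration gives $\mu_{n,x} = f^n_*\mu_{f^{-n}(x)}$ for $\mu$-a.e. $x$, which is the claim. I do not expect any serious obstacle here; the only point requiring a little care is the bookkeeping identity $f^n(\xi(f^{-n}(x))) = \xi_n(x)$ for the support, i.e., making sure the pushforward of the partition $\xi$ under $f^n$ really is $f^n\xi = \xi_n$ as partitions and that its atoms are the images of the atoms of $\xi$ — this is immediate from the definition $f^n\xi \eqdef \{f^n(C) : C \in \xi\}$ but should be stated explicitly. One should also note in passing that $\xi_n$ is itself a measurable partition (being the image of a measurable partition under the bimeasurable map $f^n$), so that the disintegration $\{\mu_{n,x}\}_x$ exists in the first place.
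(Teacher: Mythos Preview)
Your proof is correct and follows essentially the same approach as the paper: both verify that $\{f^n_*\mu_{f^{-n}(x)}\}_x$ is a system of conditional measures of $\mu$ relative to $\xi_n$ and then invoke uniqueness. The paper carries out the integral identity with test functions $\varphi\in L^1$ rather than indicator sets, and leaves the support and measurability checks implicit, but the argument is the same.
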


\begin{proof}
	Let $\varphi\in L^1(X,\mu)$. We first disintegrate $\mu$ along  $\xi$ and use twice the $f$-invariance:
	\begin{align*}
	\int \varphi(x)\, d\mu(x)&=\int \varphi\circ f^n(x)\, d\mu(x)
	=\int\left(\int_{\xi(x)} \varphi\circ f^n(y)\, d\mu_{x}(y)\right) d\mu(x)\\
	&=\int\left(\int_{\xi(f^{-n}(x))} \varphi\circ f^n(y)\, d\mu_{f^{-n}(x)}(y)\right) d\mu(x)\\
	&=\int\left(\int_{\xi_n(x)} \varphi(y)\, d\big(f^n_{\ast}\mu_{f^{-n}(x)}\big)(y)\right)d\mu(x). 
	\end{align*}
	
	We deduce that $\{f^n_\ast\mu_{f^{-n}(x)}\}_x$ is a system of conditional measures of $\mu$ with respect to $\xi_n$. The lemma follows by uniqueness $\mu$-a.e. of conditional measures. 
\end{proof}

\subsubsection{Partitions subordinate to expanded and contracted foliations}
\label{s.subordinate}

Let $M$ be a closed manifold, $f\colon M\to M$ be a diffeomorphism of $M$, and $\mu$ be an ergodic invariant measure of $f$. Assume that $\cW^+$ is a foliation invariant by $f$. Assume furthermore that it is uniformly expanded, that is $\|Df^{-1}|_{T\cW^+}\|\leq \lambda$ for some constant and $0<\lambda<1$.

A measurable partition $\xi$ is \emph{subordinate to} $\cW^+$ if the following properties hold for $\mu$-a.e. $x\in M$:

\begin{enumerate}
	\item $\xi(x)$ is a subset of $\cW^+(x)$ of diameter less than $1$;
	\item $\xi(x)$ contains an open (in the internal topology) neighbourhood of $x$ in $\cW^+(x)$;
	\item $\xi\prec f^{-1} \xi$ (we say that $\xi$ is \emph{increasing});
	\item $\bigvee_{n=0}^\infty f^{-n}\xi$ is the partition into points.
\end{enumerate}
The existence of subordinate partitions was proven in \cite{LedStr} in a more general context (see also \cite{Yang_exp} and \cite[Appendix D]{Brown_ensaios}). 

\begin{remark}\label{rem_subfoliation_subordinate}
	Assume that $\cW'\subset\cW^+$ is an $f$-invariant subfoliation with expansion constant $0<\lambda'\leq\lambda$ and that $\xi$ is subordinate to $\cW^+$. Then it follows from the proofs given in \cite{Brown_ensaios,LedStr,Yang_exp} that the partition $\xi'=\xi\vee\cW'$ is subordinate to $\cW'$.
\end{remark}

Similarly, assume $\cW^-$ is invariant and \emph{uniformly contracted} by $f$. This means that $\|Df|_{T\cW^-}\|<\lambda$ for some $0<\lambda<1$. We say that $\xi$ is \emph{subordinate to $\cW^-$}  if properties $(1)$ through $(4)$ above holds replacing $f$ by $f^{-1}$.  We then say that $\xi$ is \emph{decreasing} for $f$.

%if the following properties hold for $\mu$-a.e. $x\in M$:
%
%\begin{enumerate}
%	\item $\xi(x)$ is subset of  $\cW^-(x)$ of diameter less than $1$;
%	\item $\xi(x)$ contains an open neighbourhood of $x$ in $\cW^-(x)$;
%	\item $\xi\prec f\xi$ (we say that $\xi$ is \emph{decreasing});
%	\item $\bigvee_{n=0}^\infty f^n\xi$ is the partition into points.
%\end{enumerate}

\begin{remark}\label{rem_subord_interval}
		If $\cW^+$ (resp. $\cW^-$) has dimension $1$ then atoms of the subordinate partition constructed in \cite{Brown_ensaios,LedStr,Yang_exp} are intervals. 
\end{remark}

\subsubsection{Superposition property of subordinate partitions}\label{subsec.superpo}

Let $M,f,\mathcal{W}^+$ be as in the above \S~\ref{s.subordinate}. 
Let $\xi$ be a partition subordinate to $\mathcal{W}^+$. Then, for every $n \geq 0$, $\xi_{-n}\eqdef f^{-n}\xi$ is also a partition subordinate to $\mathcal{W}^+$. Since $\xi_{-n}(x)$ contains an open neighbourhood of $x$ for $\mu$-a.e. $x \in M$, an atom of $\xi$ contains at most countably many atoms of $\xi_{-n}$. 
In the terminology of \cite[Definition 5.15]{EinsLind}, we say that $\xi$ and $\xi_{-n}$ are \textit{countably equivalent}. 

%Let $f\in\cA^2(\TT)$ an Anosov diffeomorphism with expanding center. Fix $\xi^s$ a measurable partition subordinate to the stable foliation $\cWs$. Consider the sequence $\xi^s_n\eqdef f^n(\xi^s)$. This defines a decreasing sequence of subordinate partitions whose corresponding $\sigma$-algebras are increasing. By Remark~\ref{rem_subord_interval}, we have that, for every $n\in\N$, each atom of $\xi^s$ can be covered by at most countably many atoms of $\xi^s_n$. 
From \cite{EinsLind}, Proposition 5.17 we obtain the following \emph{superposition property}.

\begin{lemma}[Superposition property]
	\label{l.superposition}
	For $\mu$ a.e. $x\in M$ it holds that %\annotation{ Why are we calling this superposition property?
	%Seb:	I don't remember: I thought that Einsiedler and Lindenstrauss were calling this property like this. }
	\[
	\mu_x(\xi_{-n}(x))>0.
	\]
	Moreover, if $\mu_{-n,x}$ is the disintegration of $\mu$ with respect to the partition $\xi_{-n}$, that is, for any measurable set $A$,
	\[
	\mu_{-n,x}(A)=\frac{\mu_{x}(A \cap \xi_{-n}(x))}{\mu_x(\xi_{-n}(x))}.
	\]
\end{lemma}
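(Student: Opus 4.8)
The plan is to deduce the statement from the \emph{countable equivalence} of $\xi$ and $\xi_{-n}$ recorded just above, together with Rokhlin's tower property for iterated disintegrations; this is precisely the content of \cite[Proposition 5.17]{EinsLind}, so I will mainly indicate why the hypotheses apply. First I would record two structural facts. Since $\xi$ is increasing, one has the chain of refinements $\xi\prec f^{-1}\xi\prec\cdots\prec f^{-n}\xi=\xi_{-n}$, so $\xi_{-n}(x)\dans\xi(x)$ for $\mu$-a.e. $x$. Second, for $\mu$-a.e. $x$ the atom $\xi_{-n}(x)$ contains an open neighbourhood of $x$ inside the leaf $\mathcal{W}^+(x)$; since a leaf is a separable metric space, any family of pairwise disjoint open subsets of it is countable, so a fixed atom $\xi(x)$ contains (mod $0$) at most countably many atoms of $\xi_{-n}$, i.e. $\xi$ and $\xi_{-n}$ are countably equivalent.

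Next I would fix $x$ in a full-measure set and analyse the restriction of $\mu_x=\mu_x^\xi$ to $\xi(x)$. Write $\xi(x)=\bigsqcup_{i\in I}P_i$ (mod $0$, with $I$ countable) where the $P_i$ are the atoms of $\xi_{-n}$ meeting $\xi(x)$. Then $\sum_{i\in I}\mu_x(P_i)=\mu_x(\xi(x))=1$, so the union $Z_x$ of those $P_i$ with $\mu_x(P_i)=0$ satisfies $\mu_x(Z_x)=0$; hence for $\mu_x$-a.e. $y\in\xi(x)$ we have $\mu_x(\xi_{-n}(y))>0$. To upgrade this to ``$\mu_x(\xi_{-n}(x))>0$ for $\mu$-a.e. $x$'', set $B\eqdef\{x:\mu_x(\xi_{-n}(x))=0\}$ and use $\mu(B)=\int\mu_x(B)\,d\mu(x)$ together with $\mu_x(B)=\mu_x(\{y\in\xi(x):\mu_x(\xi_{-n}(y))=0\})=\mu_x(Z_x)=0$ (using $\mu_y=\mu_x$ for $y\in\xi(x)$), whence $\mu(B)=0$. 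This proves the first assertion.

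For the formula I would combine two elementary facts. (i) If a measurable partition $\eta$ of a probability space $(Y,\nu)$ is such that $\nu$-a.e. atom has positive $\nu$-measure (and there are countably many of them), then $\nu=\sum_{P}\nu(P)\,\frac{\nu(\cdot\cap P)}{\nu(P)}$ is visibly a disintegration, so by uniqueness the conditional measures are the normalised restrictions $\nu_y^\eta(\cdot)=\nu(\cdot\cap\eta(y))/\nu(\eta(y))$. Applying this with $(Y,\nu)=(\xi(x),\mu_x)$ and $\eta$ the trace of $\xi_{-n}$ on $\xi(x)$ (legitimate by the previous paragraph) gives $(\mu_x)_y^{\xi_{-n}}(\cdot)=\mu_x(\cdot\cap\xi_{-n}(y))/\mu_x(\xi_{-n}(y))$ for $\mu_x$-a.e. $y$. (ii) Rokhlin's tower property: since $\xi\prec\xi_{-n}$, for $\mu$-a.e. $x$ one has $\mu_x^{\xi_{-n}}=(\mu_x^\xi)_x^{\xi_{-n}}$. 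Evaluating (i) at $y=x$ — valid for $\mu$-a.e. $x$, again by the integration argument above — and substituting into (ii) yields $\mu_{-n,x}(A)=\mu_x(A\cap\xi_{-n}(x))/\mu_x(\xi_{-n}(x))$, as claimed.

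The main obstacle — in fact the only non-formal point — is the passage from ``the property holds for $\mu_x$-a.e. $y$ in the fibre $\xi(x)$'' to ``the property holds at the basepoint $x$ for $\mu$-a.e. $x$''; this is handled by the Fubini-type identity $\mu(\cdot)=\int\mu_x(\cdot)\,d\mu(x)$, and it is here that the countability of $I$ is essential, since it guarantees that a countable union of $\mu_x$-null atoms is $\mu_x$-null. Everything else is routine bookkeeping with Rokhlin's disintegration theorem.
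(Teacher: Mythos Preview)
Your proposal is correct and follows the same approach as the paper: both invoke \cite[Proposition~5.17]{EinsLind} via the countable equivalence of $\xi$ and $\xi_{-n}$ established just before the statement. The paper simply cites that reference without further comment, whereas you unpack the argument (countable decomposition of $\xi(x)$ into $\xi_{-n}$-atoms, the Fubini-type passage from ``$\mu_x$-a.e.\ $y$'' to ``$\mu$-a.e.\ $x$'', and the tower property for iterated disintegrations); your self-contained sketch is a faithful rendering of what that proposition says in this setting.
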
 

By the definition of a subordinate partition, for $\mu$-a.e. $x\in M$, $\{\xi_{-n}(x)\}_{n \geq 0}$ contains a basis of open neighbourhoods of $x$. 
\begin{corollary}\label{coro.superposition}
	For $\mu$-a.e. $x \in M$, $\mu_x$ charges every open neighbourhood of $x$. 
\end{corollary}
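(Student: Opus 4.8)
The plan is to obtain Corollary~\ref{coro.superposition} as an immediate consequence of the superposition property (Lemma~\ref{l.superposition}) together with the fact, recorded just above its statement, that for $\mu$-a.e.\ $x\in M$ the family $\{\xi_{-n}(x)\}_{n\geq 0}$ forms a basis of open neighbourhoods of $x$ in the intrinsic topology of the leaf $\cW^+(x)$.

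First I would isolate the relevant full-measure set. For each fixed $n\geq 0$, Lemma~\ref{l.superposition} yields $\mu_x(\xi_{-n}(x))>0$ for $\mu$-a.e.\ $x$; since there are only countably many values of $n$, intersecting these sets produces a single full-measure set $X_1$ on which $\mu_x(\xi_{-n}(x))>0$ holds simultaneously for every $n\in\N$. Intersecting $X_1$ with the full-measure set $X_2$ on which the neighbourhood-basis property holds gives a full-measure set $X_0\eqdef X_1\cap X_2$.

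Then I would argue pointwise. Fix $x\in X_0$ and let $U\subseteq M$ be open with $x\in U$. Since each leaf of $\cW^+$ is an immersed submanifold, the inclusion $\cW^+(x)\hookrightarrow M$ is continuous for the intrinsic topology, so $U\cap\cW^+(x)$ is an open neighbourhood of $x$ inside $\cW^+(x)$. By the basis property there is some $n\geq 0$ with $\xi_{-n}(x)\subseteq U\cap\cW^+(x)\subseteq U$, whence $\mu_x(U)\geq\mu_x(\xi_{-n}(x))>0$. Since $\mu_x$ is concentrated on $\xi(x)\subset\cW^+(x)$, the same computation also shows that $\mu_x$ charges every neighbourhood of $x$ taken within the leaf.

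There is no serious obstacle here: the entire dynamical content sits in Lemma~\ref{l.superposition}, which in turn rests on \cite[Proposition 5.17]{EinsLind}, so the corollary is essentially a formal unwinding. The only two points that deserve a line of care are the passage from ``for each $n$, $\mu$-a.e.\ $x$'' to ``$\mu$-a.e.\ $x$, for all $n$'' via a countable intersection, and the elementary topological remark that an open neighbourhood of $x$ in $M$ restricts to an open neighbourhood of $x$ in the leaf, so that the basis property of $\{\xi_{-n}(x)\}_{n\geq0}$ can be invoked.
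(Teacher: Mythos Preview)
Your proof is correct and follows exactly the approach intended by the paper: combine the superposition property $\mu_x(\xi_{-n}(x))>0$ from Lemma~\ref{l.superposition} with the fact that $\{\xi_{-n}(x)\}_{n\geq 0}$ contains a basis of open neighbourhoods of $x$. Your extra care with the countable intersection and the intrinsic-versus-ambient topology is welcome but not strictly needed beyond what the paper already sketches.
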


\begin{remark}\label{remark.superposition}
	If $\mathcal{W}^-$ is contracting, and $\xi$ is subordinate to $\mathcal{W}^-$, then the analogous statements to Lemma \ref{l.superposition} and Corollary \ref{coro.superposition} hold for $\mathcal{W}^-$, replacing  $\xi_{-n}$ with $\xi_{n}$, for $n\geq 0$. 
\end{remark}

\subsubsection{Uniform growth property}

Another useful property of the sequence of partitions $\xi_n$ subordinate to a uniformly expanded foliation $\cW^+$ is the following \emph{uniform growth property}.

\begin{lemma}
	\label{lem.af1}
Given a real number $R>0$, for $\mu$ almost every $x\in M$ there exists $n_0=n_0(x,R)>0$ such that if $n>n_0$ then 

$$\cW^+_{R}(x)\subset\xi_n(x)$$
\end{lemma}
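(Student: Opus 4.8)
The plan is to use two structural facts about subordinate partitions. First, the atoms $\xi_n(x)=f^n\big(\xi(f^{-n}(x))\big)$ are \emph{nested}: $\xi_n(x)\subseteq\xi_{n+1}(x)$ for $\mu$-a.e. $x$ and all $n$. This follows by applying $f^n$ to the increasing property $\xi\prec f^{-1}\xi$ (property $(3)$), which says that $\xi(x)\subseteq f\big(\xi(f^{-1}(x))\big)$ for $\mu$-a.e. $x$. Second, since $\cW^+$ is uniformly expanded with $\|Df^{-1}|_{T\cW^+}\|\le\lambda<1$, the map $f^{-n}$ shrinks $\cW^+$-leafwise distances by a factor at most $\lambda^n$, so $f^{-n}\big(\cW^+_R(x)\big)\subseteq \cW^+_{\lambda^nR}(f^{-n}(x))$. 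Because of nestedness, it is enough to produce, for $\mu$-a.e. $x$, a \emph{single} time $n_0$ with $\cW^+_R(x)\subseteq\xi_{n_0}(x)$: the inclusion then automatically holds for every $n\ge n_0$, and we take $n_0(x,R)=n_0$.

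Unwinding the definition, $\cW^+_R(x)\subseteq\xi_n(x)$ is equivalent to $f^{-n}\big(\cW^+_R(x)\big)\subseteq\xi(f^{-n}(x))$, and by the contraction estimate this holds as soon as $\cW^+_{\lambda^nR}(f^{-n}(x))\subseteq\xi(f^{-n}(x))$. Here lies the one genuinely delicate point: the ``inradius'' $\rho(y):=\sup\{\rho>0:\cW^+_\rho(y)\subseteq\xi(y)\}$ is positive for $\mu$-a.e. $y$ by property $(2)$, but there is \emph{no} uniform lower bound, and in particular $\rho(f^{-n}(x))$ may degenerate as $n\to\infty$ — so one cannot simply take $n$ large. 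The remedy is that we only need $\rho(f^{-n}(x))>\lambda^nR$, and the right-hand side tends to $0$; hence it suffices that $f^{-n}(x)$ land in the \emph{fixed} set $G_{\delta_0}:=\{y\in M:\cW^+_{\delta_0}(y)\subseteq\xi(y)\}$ for some fixed $\delta_0>0$ and some $n$ with $\lambda^nR<\delta_0$.

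Finally, $\mu(G_{\delta_0})>0$ for $\delta_0$ small enough, since the sets $G_{\delta_0}$ increase to a full-measure set as $\delta_0\downarrow0$ (by property $(2)$ again). By ergodicity of $\mu$ and Birkhoff's theorem applied to $f^{-1}$ and $\mathbf{1}_{G_{\delta_0}}$, for $\mu$-a.e. $x$ the backward orbit $(f^{-n}(x))_{n\ge0}$ visits $G_{\delta_0}$ with frequency $\mu(G_{\delta_0})>0$, hence infinitely often, hence at some time $n_0$ with $\lambda^{n_0}R<\delta_0$. For that $n_0$ we get $f^{-n_0}\big(\cW^+_R(x)\big)\subseteq\cW^+_{\lambda^{n_0}R}(f^{-n_0}(x))\subseteq\cW^+_{\delta_0}(f^{-n_0}(x))\subseteq\xi(f^{-n_0}(x))$, that is $\cW^+_R(x)\subseteq\xi_{n_0}(x)$; together with nestedness this finishes the proof. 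The main obstacle to handle carefully is precisely the non-uniformity of the inradii of the atoms $\xi(f^{-n}(x))$; once that is dealt with by balancing $\lambda^nR$ against positive-measure recurrence to $G_{\delta_0}$, the remaining ingredients (the leafwise contraction bound, the measurability of $G_{\delta_0}$, and the nestedness of the $\xi_n$) are routine, and no compactness argument or integrability of $\log\rho$ is needed.
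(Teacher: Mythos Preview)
Your proof is correct and follows essentially the same line as the paper's: both define the ``inradius'' sets $G_{\delta_0}=\{y:\cW^+_{\delta_0}(y)\subseteq\xi(y)\}$ (the paper writes $\Lambda(\eps)$), use the leafwise contraction $f^{-n}(\cW^+_R(x))\subseteq\cW^+_{\lambda^nR}(f^{-n}(x))$, and then argue that the backward orbit eventually lands in such a set at a time large enough. The only real difference is in how full measure is obtained: you fix a single $\delta_0$ with $\mu(G_{\delta_0})>0$ and invoke ergodicity via Birkhoff to get infinitely many backward visits, whereas the paper takes a sequence $\Lambda(1/k)$ with $\mu(\Lambda(1/k))\to1$, picks for each $k$ a single time $n_k$ with $\lambda^{n_k}R<1/k$, and notes that $\bigcup_k f^{n_k}(\Lambda(1/k))$ has full measure by invariance alone---so the paper's argument does not actually need ergodicity. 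Your version is slightly more explicit in one respect: you spell out the nestedness $\xi_n(x)\subseteq\xi_{n+1}(x)$ coming from property~(3), which is what upgrades the inclusion at the single time $n_0$ to all $n\ge n_0$; the paper leaves this step implicit.
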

\begin{proof}
Define $\Lambda(\eps)=\{x\in\TT;\cW^+_{\eps}(x)\subset\xi(x)\}$, and observe that $\mu(\Lambda(\eps))\to 1$ as $\eps\to 0$, due to property (2) in the definition of subordinate partitions. The conclusion holds whenever $f^{-n_k}(x)\in\Lambda(1/k)$ where $n_k>\frac{\log(kR)}{-\log\lambda}$ (recall that $\|Df^{-1}|_{T\cW^+}\|<\lambda<1$). So the conclusion of the lemma holds in the set $\bigcup_k f^{n_k}(\Lambda(1/k))$ which has measure $1$ by $f$-invariance of $\mu$.
\end{proof}

\begin{remark}
	\label{rem.af1}
The above lemma has an analogous version for measurable partitions subordinate to uniformly contracted foliations: the integer $n_0$ is then negative and the conclusion holds for all $n<n_0$.
\end{remark}

\subsection{SRB and $u$-Gibbs measures}\label{subsec.uandsrb}

We return now to our partially hyperbolic setting where $f\in\cA^2(\TT)$. Let $\xi^{cu}$ be a measurable partition of $\TT$ subordinate to the center unstable foliation $\cWcu$. Let $\mu$ be an ergodic invariant measure for $f$. We say that $\mu$ is an \emph{SRB measure} if its disintegration $\{\mu^{cu}_x\}_{x\in\TT}$ with respect to the partition $\xi^{cu}$ satisfies that $\mu^{cu}_x$ is absolutely continuous with respect to the inner Lebesgue measure $\operatorname{Leb}^{cu}_x$ of $\cWcu(x)$ for $\mu$ a.e. $x\in\TT$.

Consider now $\xi^u$ a measurable partition subordinate to the unstable foliation $\cWu$ and let $\{\mu^u_x\}_{x\in\TT}$ denote the disintegration. We say that $\mu$ is a \emph{$u$-Gibbs measure} provided that $\mu^u_x$ is absolutely continuous with respect to the inner Lebesgue (length) measure $\operatorname{Leb}^u_x$ of $\cWu(x)$ for $\mu$ a.e. $x\in\TT$.

\begin{remark}
	By Ledrappier-Young \cite{LYI} none of the above definitions depend on the respective particular choice of subordinate partition.
\end{remark}

\subsubsection{Conditional measures along unstable leaves of an $u$-Gibbs measure}\label{sub.sub.conditional}

We can (and we shall) consider the particular case in which the atoms of $\xi^u$ are $\xi^u(x)=\cW^u(x)\cap\xi^{cu}(x)$. By Remark~\ref{rem_subfoliation_subordinate} we know that this defines a partition subordinate to the unstable foliation $\cW^u$.  This is specially important in our setting, in which we want to prove that a given $u$-Gibbs measure is SRB. In a similar fashion, we consider the partition $\xi^c$ whose atoms are $\xi^c(x)=\cW^c(x)\cap\xi^{cu}(x)$. 

Denote by $\xi^{*}_n\eqdef f^n(\xi^{*})$, for $*=u,cu,c$ and $n\geq 0$,  with $\xi^*_0=\xi^{*}$. Consider $\mu$ a $u$-Gibbs measure and let $\mu^{cu}_{n,x}$ denote the conditional measures along the partition $\xi^{cu}_n$.  Notice that $\{\xi^u_n(y)\}_{y\in\xi^{cu}_n(x)}$ is a measurable partition of the probability space $(\xi^{cu}_n(x),\mu^{cu}_{n,x},\cB|_{\xi^{cu}_n(x)})$, where $\cB|_{\xi^{cu}_n(x)}$ denotes the Borel sigma algebra of $\TT$ restricted to the atom $\xi^{cu}_n(x)$. Rokhlin's disintegration theorem in this case give us a probability measure $\mu^c_{n,x}$ defined on $\xi^c_n(x)$ and a family of probability measures $\{\mu^u_{n,y}\}_{y\in\xi^c_n(x)}$ such that for every $A\subset\xi^{cu}_n(x)$ Borel measurable set we have
\[
\mu^{cu}_{n,x}(A)=\int_{A\cap\cW^c(x)}\mu^u_{n,y}(A\cap\cW^u(y))d\mu^c_{n,x}(y).
\]  

\begin{remark}
	\label{rem.absolutevalue}
	For simplicity of the exposition in many situations, where no confusion may arise, given a set $A\subset\cW^*(x)$, we shall denote $|A|\eqdef\operatorname{Leb}^*_x(A)$ for $*\in\{s,c,u\}$.
\end{remark}

An easy consequence of the $u$-Gibbs property is the following.
\begin{lemma}\label{l.density_u-gibbs}
	There exists $\beta>1$ depending only on the diffeomorphism $f$ such that for $\mu$-almost every $x\in\TT$, 
	$$\frac{1}{\beta\,\Leb^u_x\left[\xi^u(x)\right]}<\frac{d\mu^u_x}{d\Leb^u_x}<\frac{\beta}{\Leb^u_x\left[\xi^u(x)\right]}$$
	inside $\xi^u(x)$.
\end{lemma}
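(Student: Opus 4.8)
The plan is to exploit the $u$-Gibbs property together with the bounded distortion estimate \eqref{e.distortionbasic} along unstable leaves. Recall that $\mu$ being $u$-Gibbs means that for $\mu$-a.e.\ $x$, the conditional measure $\mu^u_x$ on the atom $\xi^u(x)\subset\cW^u(x)$ is absolutely continuous with respect to $\Leb^u_x$, with some density $\psi_x\eqdef d\mu^u_x/d\Leb^u_x$. The content of the lemma is that this density is \emph{essentially constant}, up to a multiplicative factor $\beta$, on the atom $\xi^u(x)$ — so it must be comparable to $1/\Leb^u_x[\xi^u(x)]$ since $\mu^u_x$ is a probability measure. The key point is that one has a formula for the density coming from pushing forward under $f^{-n}$: since $\mu$ is $f$-invariant and $\xi^u$ is increasing, the density at two points $y,y'$ in the same atom $\xi^u(x)$ is controlled by the ratio of unstable Jacobians $\prod_{\ell\geq 0}\frac{\|Df^{-1}(f^{-\ell}(y))|_{E^u}\|}{\|Df^{-1}(f^{-\ell}(y'))|_{E^u}\|}$, which converges and is uniformly bounded by the distortion estimate.

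More precisely, first I would fix a measurable partition $\xi^{cu}$ subordinate to $\cW^{cu}$ and take $\xi^u$ with atoms $\xi^u(x)=\cW^u(x)\cap\xi^{cu}(x)$, which is subordinate to $\cW^u$ by Remark~\ref{rem_subfoliation_subordinate}. Since $\mu$ is $u$-Gibbs, by the Remark following the definition of SRB/$u$-Gibbs measures (Ledrappier--Young), the $u$-Gibbs property does not depend on the choice of subordinate partition, so $\mu^u_x\ll\Leb^u_x$ with density $\psi_x$. The standard way to identify $\psi_x$ is the following: using Lemma~\ref{l_inv_con_meas}, the conditional measures along $\xi^u_n=f^n\xi^u$ satisfy $\mu^u_{n,x}=f^n_*\mu^u_{f^{-n}(x)}$, and because $\bigvee_n f^{-n}\xi^u$ is the partition into points, one obtains by a martingale/backward-iteration argument (as in Ledrappier--Young \cite{LYI}, or the classical construction of SRB densities) that for $\Leb^u_x$-a.e.\ $y,y'\in\xi^u(x)$,
\[
\frac{\psi_x(y)}{\psi_x(y')}=\prod_{\ell=0}^{\infty}\frac{\|Df(f^{-\ell-1}(y'))|_{E^u}\|^{-1}\cdot(\text{Jacobian factors})}{\cdots}=\lim_{n\to\infty}\frac{\|Df^{-n}(y)|_{E^u}\|}{\|Df^{-n}(y')|_{E^u}\|}\cdot\frac{\psi_{f^{-n}(x)}(f^{-n}(y'))}{\psi_{f^{-n}(x)}(f^{-n}(y))}\cdot(\cdots),
\]
where the infinite product converges because the terms $\log\|Df(f^{-\ell}(y))|_{E^u}\|-\log\|Df(f^{-\ell}(y'))|_{E^u}\|$ are summable — indeed $f^{-\ell}(y),f^{-\ell}(y')$ stay in the same unstable leaf and get exponentially close, and $\log\|Df(\cdot)|_{E^u}\|$ is Hölder. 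This gives $C_0^{-1}\leq \psi_x(y)/\psi_x(y')\leq C_0$ for the distortion constant $C_0$ from \eqref{e.distortionbasic}, since $y,y'\in\cW^u_1(x)$ implies (applying the distortion lemma to $\varphi=\log\|Df(\cdot)|_{E^u}\|$ along backward orbits, which stay in $\cW^u_1$) that $\|Df^{-n}(y)|_{E^u}\|/\|Df^{-n}(y')|_{E^u}\|$ is uniformly pinched.

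Granting that $\psi_x$ oscillates by at most a factor $\beta\eqdef C_0$ on $\xi^u(x)$, the conclusion is immediate: writing $m_x\eqdef \operatorname*{ess\,inf}_{\xi^u(x)}\psi_x$ and $M_x\eqdef\operatorname*{ess\,sup}_{\xi^u(x)}\psi_x$, we have $M_x\leq \beta m_x$, and integrating $\psi_x$ over $\xi^u(x)$ against $\Leb^u_x$ gives $1=\mu^u_x(\xi^u(x))=\int_{\xi^u(x)}\psi_x\,d\Leb^u_x$, so $m_x\,\Leb^u_x[\xi^u(x)]\leq 1\leq M_x\,\Leb^u_x[\xi^u(x)]\leq \beta m_x\,\Leb^u_x[\xi^u(x)]$; hence $\frac{1}{\beta\,\Leb^u_x[\xi^u(x)]}\leq m_x\leq\psi_x\leq M_x\leq\frac{\beta}{\Leb^u_x[\xi^u(x)]}$ a.e.\ on $\xi^u(x)$, which is the claimed double inequality. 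I expect the main obstacle to be writing down cleanly the backward-iteration identity for the density and justifying the convergence of the infinite product and the exchange of limits — this is the technical heart, and in the paper it is presumably either invoked from \cite{LYI} or proved via the superposition property (Lemma~\ref{l.superposition}) applied to the increasing sequence $\xi^u_{-n}$, expressing $\psi_x$ as a limit of normalized ratios $\mu_x(\xi^u_{-n}(\cdot))/\Leb^u$ and controlling those by distortion. The remaining combinatorics (the final integration argument) is entirely routine.
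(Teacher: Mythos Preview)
Your proposal is correct and follows essentially the same route as the paper: the paper's proof is a one-liner stating that $\mu^u_x$ is a probability measure with a uniformly log-H\"older continuous density with respect to $\Leb^u_x$, which is precisely what your distortion argument establishes (the infinite product of Jacobian ratios converges and is uniformly bounded by $C_0$). Your final integration step---bounding $m_x$ and $M_x$ via $\int_{\xi^u(x)}\psi_x\,d\Leb^u_x=1$---is exactly the content of the paper's remark that the bounds follow from $\mu^u_x$ being a probability.
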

\begin{proof}
	This follows from the fact that $\mu^u_x$ is a probability measure that has  a uniformly log-Hölder continuous density with respect to $\Leb^u_x$.
\end{proof}

\section{Heuristics of the proof}\label{heuristics}
This section can be used to get an overview of the proof of Theorem \ref{mainthm.dicotomia} and also as a guide to read our paper.

From now on let $f \in \cA^2(\TT)$ be a diffeomorphism that is close to a conservative one. In this case, stable holonomies $H_{x,y}^s$ are $C^1$, and we can define angles $\alpha^s(x,y)$ for two points $x,y$ in the same stable manifold (see Definition~\ref{def_angle-function}). The vanishing of the angle $\alpha^s$ can be seen as a kind of infinitesimal joint integrability. 

Let $\mu$ be an ergodic $u$-Gibbs measure for $f$. 

\subsection*{Starting point: a zero-one law}%\marginpar{S. J'ai viré la dépendance en $\mu$ dans le bad set parce que dans la section plus loin je l'ai pas mise et qu'ils ont l'air d'être relou avec la cohérence des notations. J'ai pas voulu la mettre plus tard parce que ça faisait bizarre de juste ne pas mettre la dépendance par rapport à la partition... J'ai cru bien faire.}

We first use Theorem \ref{thm.zeroonelawintro}, which yields the following dichotomy. 
\begin{itemize}
	\item[\it Case 1:] $\alpha^s(x,y)=0$, for $\mu$-a.e. every pair $(x,y)$ in the same stable manifold;
	\item[\it Case 2:] $\alpha^s(x,y)>0$, for $\mu$-a.e. every pair $(x,y)$ in the same stable manifold. 
\end{itemize} 
The proof of this theorem will be given in Section \ref{s.zeroouum} and relies on a martingale argument.  
In words, we either have positive angles almost everywhere along the stable manifold of almost every point, or we have zero angles almost everywhere along the stable manifold of almost every point. 

The rest of the proof is divided into two parts:
\begin{itemize}
	\item $[$Case 1 + $\mathrm{supp}(\mu)=\TT]\implies$ joint integrability;
	\item Case 2$\implies\mu$ is SRB.
\end{itemize}   %Our proof can be splitted in two parts: the first part is a dichotomy about joint integrability of $E^s$ and $E^u$ and a type of ``transversality'' condition; the second part is about using this transversality condition to prove that $\mu$ is SRB. 

\subsection*{Part 1: joint integrability.} % and the transversality condition.}

 Given a point $x\in \TT$ we define 
%\[
%\mathcal{P}(x)\eqdef\left\{y\in\cW^s(x):\alpha^s(x,y)=0\right\},%\quad %\mathcal{N}(x)\eqdef\left\{y\in\cW^s(x):\alpha^s(x,y)>0\right\}
%\]
%where $DH^s_{x,y}$ is the derivative of the stable holonomy between the local center stable of $x$ and of $y$ at the point $x$ (recall \S\ref{sss.holo}). This derivative allows us to study how $E^s$ and $E^u$ jointly integrate or not in an infinitesimal level.  
the \emph{Bad set} as
\[
\mathbf{B}\eqdef \{x \in \TT:\mu_x^s\{\alpha^s_x>0\}=0\},
\]
where $\alpha_x^s\eqdef [y \in \mathcal{W}^s(x)\mapsto \alpha^s(x,y)]$, and $\mu^s_x$ is the conditional measure of $x$ along a stable manifold (see \S\ref{ss.badset} for more details). The \textit{Bad set} is the set of points such that there is an ``infinitesimal'' joint integrability with almost every other point in its stable manifold.  

The first part is an argument by contraposition that goes as follows (see  Proposition \ref{p.mesuredubadset}):
\begin{itemize}
	\item if there is no joint integrability, then for any $x\in \TT$, the set $\{\alpha_x^s>0\}$ is open and dense within the stable manifold $\mathcal{W}^s(x)$ (see Lemma \ref{lemm dicht});
	\item if, furthermore, the \textit{Bad set} $\mathbf{B}$ has measure $1$, the continuity of the angles implies that whenever $\alpha^s(x_0,y_0)>0$, there exists a small foliated chart $\mathcal{V}$ around $y_0$ for $\mathcal{W}^s$ of measure $0$ (see Lemma \ref{lemme Bruno});
	\item hence, combining the two previous points, if there is no joint integrability and $\mathbf{B}$ has full measure, then the support of $\mu$ must have empty interior. 
\end{itemize}
The last point is the only place in the paper where the support condition on $\mu$ is used. 
%In Proposition \ref{p.mesuredubadset} we prove that if $\mu$ has full support and $\mu(B_{\mu}) >0$ then $E^s$ and $E^u$ are jointly integrable.  In particular,  if $E^s$ and $E^u$ are not jointly integrable, then for any fully supported $u$-Gibbs measure $\mu$ there are ``angles'' almost everywhere (i.e., $\mu(B_{\mu}) = 0$).  The proofs related to this part are completed in Section~\ref{s.main_thm}. The transversality we will use in the paper is given by these angles.

%\subsubsection*{A zero or one law for spreading transversality}

%Still regarding the kind of transversality between $E^s$ and $E^u$ required for our arguments, an important technical step is achieved in Section~\ref{s.zeroouum} where we use an idea from \cite{BRH} to prove a \emph{zero or one law for the existence of angles} between $DH^s_{x,y}E^u(x)$ and $E^u(y)$. This argument uses a martingale convergence theorem to prove a dichotomy: either one has $\mu^s_x(\cP(x))=0$ for a.e. $x$ or $\mu^s_x(\cP(x))=1$ for a.e. $x$.

\subsection*{Part 2: transversality implies SRB}

Most of this paper is dedicated to proving that if $\mu$ is a $u$-Gibbs measure such that $\mu(\mathbf{B}) = 0$ then $\mu$ is SRB (Theorem \ref{mainthm.technique}).

To check that a $u$-Gibbs measure $\mu$ is SRB, one can consider the disintegration of this measure along center-unstable manifolds $\{\mu^{cu}_x\}$ and then quotient it by the strong unstable manifolds $\{\hat{\mu}^c_x\}$. These are called the transverse measures. Then $\mu$ is SRB if and only if the transverse measures are equivalent to Lebesgue.  In this approach, it is really convenient to consider certain parameterizations of center-unstable manifolds that simplify the dynamics, the so-called \emph{normal forms} (see Section \ref{section formes normales}).  These coordinates allow us to identify the quotient measures $\{\hat{\mu}^c_x\}$ with measures $\{\hat{\nu}^c_x\}$ in $\mathbb{R}$.  To conclude that the measures $\{\hat{\mu}^c_x\}$ are equivalent to Lebesgue, we will show that the measures $\{ \hat{\nu}^c_x\}$ are proportional to the Lebesgue measure on $\mathbb{R}$, where we say that two locally finite Borel measures $\nu,\eta$ are proportional, and we indicate it by $\nu\propto\eta$, if $\nu=c\eta$ for some $c>0$.   

To do so, it is enough to prove that for $\mu$-a.e. $x\in\TT$, $\hat{\nu}^c_x$ is invariant by translation. Actually, thanks to an argument due originally to Katok-Spatzier \cite{KatokSpa} and Kalinin-Katok \cite{KalininKatok} (see also \cite[Proposition 7.1]{BRH}), which is a beautiful mixture of ergodic and Lie theoretic arguments, it is enough to prove something weaker: \emph{for a set $G$ of points $x\in\TT$ with positive measure, there exist affine maps $\psi$ with uniformly bounded derivatives and arbitrarily small translational parts such that $\hat \nu_x^c\propto\psi_\ast\hat\nu_x^c$}. This is Lemma \ref{l.lema6.1} (largely inspired by \cite[Lemma 6.1]{BRH_Little}).

%This can be done by proving that $\hat{\nu}^c_x$ is,  for many points $x$, ``invariant'' by many affine maps (invariant here means that the measure class is preserved). The construction of these affine maps is at the core of this paper. The goal is to prove that there is a set of positive measure of points $\hat{x}$ such that for any $\varepsilon>0$ there exists an affine map $\psi:\mathbb{R} \to \mathbb{R}$ such that $\hat{\nu}^c_{\hat{x}}$ is ``invariant'' by $\psi$. Moreover, $\psi$ has slope bounded by a constant independent of $\varepsilon$ and  $|\psi(0)| \approx \varepsilon$ (Lemma \ref{l.lema6.1}, which is largely inspired by Lemma 7.1 from Brown-Rodriguez Hertz \cite{BRH}). This idea of getting invariance by translations through invariance under many affine maps with controlled slope and small translational part goes back to the works of Katok-Saptzier \cite{KatokSpa} and Kalinin-Katok \cite{KalininKatok}.

The construction of these affine maps is where our key arguments are located. Our strategy is to use the \emph{$Y$-configurations} introduced by Eskin-Mirzakhani \cite{EskinMirzakhani} (see also Eskin-Lindenstrauss \cite{EskinLind}). Below we outline this proof, splitting the explanation in two parts, for the sake of clarity. First, we describe $Y$-configurations. Then we explain how to use them in order to get invariance by affine maps in the way we described in last paragraph.

\subsubsection*{$Y$-configurations} 

See Figure~\ref{f.coupledconfig}. Fixing $\ell\in \mathbb{N}$ we find points $x$ and $y$ that are typical for $\mu$ such that:
\begin{enumerate}
\item $y\in \cW^s_{\mathrm{loc}}(x)$ and  $d(x,y) \approx \|Df^\ell|_{E^s}\|$;
\item if $x_{-\ell} = f^{-\ell}(x)$ and $y_{-\ell} = f^{-\ell}(y)$,  then $d(x_{-\ell},y_{-\ell}) \approx 1$ and the angle $\alpha^s(x_{-\ell},y_{-\ell})$ is more than some constant $\frac{1}{C}$.
\end{enumerate} 
As a consequence of points (1) and (2), $\alpha^s(x,y)\geq \frac{1}{C} \frac{\|Df^\ell(x_{-\ell})|_{E^c}\|}{\|Df^\ell(x_{-\ell})|_{E^u}\|}$. Normal forms help a lot here. This computation is performed inside the proof of Lemma~\ref{control centre expa}.

We can then find points $x^u\in \cW^u_{\mathrm{loc}}(x)$ and $y^u \in \cW^u_{\mathrm{loc}}(y)$ such that:
\begin{itemize}
\item there exists a point $z^u$ with $\{z^u\} = \cW^s_{\mathrm{loc}}(x^u) \cap \cW^c_{\mathrm{loc}}(y^u)$;
\item $d(z^u, y^u) \approx \frac{\|Df^\ell(x_{-\ell})|_{E^c}\|}{\|Df^\ell(x_{-\ell})|_{E^u}\|}$.
\end{itemize}

So far we obtained points $x^u$ and $y^u$ with some estimate of the displacement in the center direction that we get when projecting $x^u$ to $\cW^u_{\mathrm{loc}}(y)$ by center stable holonomy. Next, fixing $\eps>0$, we define the stopping time 
\[
\tau(\ell)= \tau(x,x^u,\varepsilon,\ell)\eqdef \inf \left\{n\in \mathbb{N}: \frac{\|Df^\ell(x_{-\ell})_{E^c}\|}{\|Df^\ell(x_{-\ell})|_{E^u}\|} \|Df^n(x^u)|_{E^c}\| \geq \varepsilon\right\}
\]
and consider the points $f^{\tau(\ell)}(x^u)$ and $f^{\tau(\ell)}(y^u)$.  The choice of the stopping time $\tau(\ell)$ is such that 
\begin{equation}
\label{eq.distanceestimate}
d(f^{\tau(\ell)}(x^u), f^{\tau(\ell)}(y^u)) \approx \varepsilon,
\end{equation}
see \S\ref{sss.drifestimates}.  Essentially, these points are related to the translational part of size $\varepsilon$ of the affine maps that we desire to construct. To control the derivative of the affine maps and to actually obtain an ``invariance'' of the measures $\hat{\nu}^c_x$, as we mentioned before, we also define another stopping time
\[
t(\ell)=t(x,x^u,\varepsilon,\ell) \eqdef \inf \left\{n\in \mathbb{N} : \frac{\|Df^{n}(x)|_{E^c}\|}{\|Df^{\tau(\ell)}(x^u)|_{E^c}\|} \geq 1\right\}.
\]
In particular, this condition implies that
\begin{equation}
\label{e.tempomaroto}
\|Df^{t(\ell)}(x)|_{E^c}\| \approx \|Df^{\tau(\ell)}(x^u)|_{E^c}\|,
\end{equation}
see \S \ref{subsec.stoppingtimes}.
\begin{figure}[h]
	\centering
	\begin{tikzpicture}[scale=.6]
	\fill[black!30!white, opacity=.6] (10,0) -- (9.5,0.5) to[bend left] (9.5,1) -- (10,0);
	\fill[green!30!white, opacity=.5] (-1,1)--(1,-1)--(11,-1)--(9,1)--(-1,1);
	\draw[green!40!black, thick] (0,0)--(10,0) node[midway,below]{\tiny $\cW^s_{\loc}(x_{-\ell})$};
	\draw[red!80!black,thick] (-1,1)--(1,-1) node[left]{\tiny $\cW^u_{\loc}(x_{-\ell})$};
	\draw[violet, thick] (9,1)--(11,-1) node[right]{\tiny $H^s_{x_{-\ell},y_{-\ell}}(\cW^u_{\loc}(x_{-\ell}))$};
	\fill[fill=black] (9,1) circle (2pt);
	\draw (9.1,1.4) node[left]{\small $z^u_{-\ell}$};
	\draw[red!80!black, thick] (11,-2)node[right]{\tiny $\cW^u_{\loc}(y_{-\ell})$}--(9,2);
	\fill[fill=black] (9,2) node[right]{$y^u_{-\ell}$} circle (2pt); 
	\draw[dotted] (11,-2)--(11,-1);
	\draw[dotted] (9,1)--(9,2);
	\fill[green!30!white, opacity=.5] (4,4)--(1,4.5)--(3,4.5)--(6,4);
	\draw[green!40!black, thick] (4,4)--(6,4);
	\draw[green!40!black, thick] (4.7,10)--(5.3,10);
	\draw[green!40!black, thick] (-0.2,14)--(0.2,14);
	%	\draw[blue!40!black, thick] (4.8,6)--(4.9,6);
	\draw[red!80!black,thick] (4,4)--(1,4.5); %node[midway, below]{$\alpha^u$};
	\draw[red!80!black, thick] (6,4)--(3,5);
	%\draw[blue!40!black, thick] (0.5,7)--(0.75,7);
	\draw[orange, thick] (3,4.5)--(3,5);
	%\draw[red!70!white, thick] (0.75,7)--(0.75,9) node[midway, left]{$\varepsilon\sim$};
	%\draw[black!70!white, opacity=.6,->] (1,4.5)--(0.5,7);
	%\draw[black!70!white, opacity=.6,->](3,4.5)--(0.75,7);
	%\draw[black!70!white, opacity=.6,->] (3,5)--(0.75,9);
	\draw[densely dotted, black!70!white, opacity=.6,->] (0,0).. controls (2,1) and (3.7,3) .. (4,4) node[midway, right]{$f^\ell$};
	\draw[densely dotted, black!70!white, opacity=.6,->] (4,4)--(4.7,10) node[midway, left]{$f^{t(\ell)}$};
	\draw[densely dotted, black!70!white, opacity=.6,->] (1,4.5)--(-0.1,14) node[near end, left]{$f^{\tau(\ell)}$};
	\draw[densely dotted, black!70!white, opacity=.6,->] (3,4.5)--(0.2,14) node[near end, right]{$f^{\tau(\ell)}$};
	\draw[densely dotted, black!70!white, opacity=.6,->] (3,5)--(0.2,16) node[near end, right]{};
	\draw[densely dotted, black!70!white, opacity=.6,->] (10,0).. controls (9,0) and (6,3) .. (6,4) node[midway, left]{$f^\ell\ $};
	\draw[densely dotted, black!70!white, opacity=.6,->] (6,4)--(5.3,10) node[midway, right]{$f^{t(\ell)}$};
	%\draw[black!70!white, opacity=.6,->] (4,4)--(4.8,6);
	%\draw[black!70!white, opacity=.6,->] (6,4)--(4.9,6);
	\fill[fill=black] (0,0) node[left]{\small $x_{-\ell}$} circle (2pt);
	\draw[violet, thick] (6,4)--(3,4.5); 
	\draw[violet,thick]  (9.7,0.8) node[right]{\tiny $\alpha^s(x_{-\ell},y_{-\ell})$};
	\fill[fill=black] (10,0) node[right]{\small $y_{-\ell}$} circle (2pt);
	\fill[fill=black] (1,4.5) node[left]{\small $x^u$} circle (2pt);
	\draw[orange, thick] (0.2,14)--(0.2,16) node[left, midway]{\small $\asymp \varepsilon$};
	%\fill[fill=orange] (2.2,15) node[right]{\small $\asymp \varepsilon$};
	\fill[fill=black] (-0.1,14) node[left]{\small $f^{\tau(\ell)}(x^u)$} circle (2pt);
	\fill[fill=black] (0.2,14) node[right]{\small $f^{\tau(\ell)}(z^u)$} circle (2pt);
	\fill[fill=black] (0.2,16) node[left]{\small $f^{\tau(\ell)}(y^u)$} circle (2pt);
	\fill[fill=black] (-1,1) node[left]{\small $x^u_{-\ell}$} circle (2pt);
	\fill[fill=black] (6,4) node[right]{\small $y$} circle (2pt);
	\fill[fill=black] (4,4) circle (2pt);
	\draw (4,4) node[below]{\small $x$};
	\fill[fill=black] (4.7,10) circle (2pt);
	\draw (4.8,10) node[left]{\small $f^{t(\ell)}(x)$};
	\fill[fill=black] (5.3,10) circle (2pt);
	\draw (5.3,10) node[right]{\small $f^{t(\ell)}(y)$};
	%\fill[fill=black] (4.8,6) node[left]{\small $f^{t(\ell)}(x)$} circle (2pt);
	%\fill[fill=black] (4.9,6) node[right]{\small $f^{t(\ell)}(y)$} circle (2pt);
	\draw[green!40!black, thick] (1,4.5)--(3,4.5);
	\fill[fill=black] (3,4.5) circle (2pt);
	\draw (3.1,4.8) node[left]{\small $z^u$};
	\fill[fill=black] (3,5)  circle (2pt);
	\draw (3,5.3) node[right]{\small $y^u$};
	%\fill[fill=black] (0.75,7) node[right]{\tiny $f^{\tau(\ell)}(z)$} circle (2pt); 
	%\fill[fill=black] (0.75,9) node[right]{\tiny $f^{\tau(\ell)}(y^u)$} circle (2pt);
	%\fill[fill=black] (0.5,7) node[left]{\tiny $f^{\tau(\ell)}(x^u)$} circle (2pt);  
	%\draw (9.5,0.5) to[bend left] (9.5,1);
	\end{tikzpicture}
	\caption{\label{f.coupledconfig} The points $x,x^u, x_{-\ell}, f^{\tau(l)}(x^u)$ and $f^{t(\ell)}(x)$ are called a $Y$-configuration (similarly the points involving $y$'s).} 
\end{figure}

\subsubsection*{How to use $Y$-configurations to get invariance by affine maps}

By construction of measures $\{\hat{\nu}^c_p\}_{p\in\TT}$ we understand how they change under three basic moves.
\begin{enumerate}
\item \emph{Applying the dynamics}. For $\mu$-a.e. $p\in\TT$,  
$\hat{\nu}^c_{f^n(p)}\propto\Lambda_*\hat{\nu}^c_p$, for every $n\in\N$, where $\Lambda(s)=\|Df^n(p)|_{E^c}\| s$ is a linear map of $\R$ (see Lemma~\ref{chang dyna}). 
\item \emph{Moving along unstable manifolds}. For $q\in \cW^u(p)$ then $\hat{\nu}^c_q\propto L_\ast \hat{\nu}^c_p$ where $L(s)=\beta s$ is a linear map,  for some $C_u^{-1}\leq\beta\leq C_u$, and $C_u=C_u(d_u(p,q))>1$ is a number which is bounded from above and it depends on the unstable distance $d_u(p,q)$ (see Lemma~\ref{changt inst}).
\item \emph{Moving along center manifolds}. For $q\in \cW^c(p)$ then $\hat{\nu}^c_q\propto \Psi_\ast \hat{\nu}^c_p$ where $\Psi(s)=as+b$, with $C_c^{-1}\leq a\leq C_c$ for some constant $C_c=C_c(d_c(p,q))>1$ bounded from above with $d_c(p,q)$, and with $b\approx d_c(p,q)$ (see Lemma~\ref{chanfemt centrale}).
\end{enumerate}

Now we look at the points on the top part of Figure~\ref{f.coupledconfig} and deduce two facts.

\begin{enumerate}[label=(\alph*)]
\item $\hat{\nu}^c_{f^{\tau(\ell)}(x^u)}\propto(\Lambda_1)_*\hat{\nu}^c_{f^{t(\ell)}(x)}$ for some \emph{linear} map $\Lambda_1\colon\R\to\R$ with derivative bounded independently of $\ell$.
\item $\hat{\nu}^c_{f^{\tau(\ell)}(y^u)}\propto(\Lambda_2)_*\hat{\nu}^c_{f^{t(\ell)}(y)}$ for some \emph{linear} map $\Lambda_2\colon\R\to\R$ with derivative bounded independently of $\ell$.
%\item $\hat{\nu}^c_{f^{\tau(\ell)}(y^u)}\propto \Psi_*\hat{\nu}^c_{f^{\tau(\ell)}(z^u)}$ for some \emph{affine} map $\Lambda_1:_\R\to\R$ with derivative bounded independently of $\ell$ and translational part $\approx\eps$.
\end{enumerate}

There is a subtlety here: stopping times $\tau(\ell)$ and $t(\ell)$ depend on $x,x^u$ and not on $y,y^u$. This is treated by our \emph{synchronization estimates} (Lemma \ref{lemme synchro}). Now note that as $\ell\to\infty$, $\tau(\ell),t(\ell)\to\infty$ so $d(f^{\tau(\ell)}(x^u),f^{\tau(\ell)}(z^u))\to 0$ and $d(f^{t(\ell)}(x),f^{t(\ell)}(y))\to 0$. If we knew that the family of measures $\hat \nu^c_z$ were continuous with $z$ then we could hope to compare $\hat{\nu}^c_{f^{\tau(\ell)}(x^u)}$ with $\hat{\nu}^c_{f^{\tau(\ell)}(y^u)}$ and take accumulation points to construct the desired set $G$.

%Also, when we move the base point $p$ along a strong unstable manifold the changing on the measure $\hat{\nu}^c_p$ is also linear . In Figure~\ref{f.coupledconfig} we can consider this latter linear map with slope independent of $\ell$. This implies that the measures $\hat{\nu}^c_{f^{\tau(\ell)}(x^u)}$ and $\hat{\nu}^c_{f^{t(\ell)}(x)}$ are proportional up to a linear map, whose slope is proportional to $1$ (recall \eqref{e.tempomaroto}). The same can be said about $\hat{\nu}^c_{f^{\tau(\ell)}(y^u)}$ and $\hat{\nu}^c_{f^{t(\ell)}(y)}$.

%
%Since we want to control and compare the measures $\hat{\nu}^c_z$ for different points, and many of 
But the objects we are working with are only measurable, so we must first fix a large Lusin set for which the map $z\mapsto \hat{\nu}^c_z$ is continuous (among other dynamical objects that appear in the proof). We want to do the constructions of all of the points mentioned above, in a way that all of them belong to this Lusin set. For this, it is essential to obtain \emph{quasi-isometric estimates} for the functions $\tau(\cdot)$ and $t(\cdot)$ (see Lemma \ref{l_qi_estimates}; see also Lemma~\ref{l.lemadosaci} where the quasi-isometric estimates for stopping times are used in a crucial way).

By continuity, we have that $\hat{\nu}^c_{f^{t(\ell)}(x)}\approx\hat{\nu}^c_{f^{t(\ell)}(y)}$ and $\hat{\nu}^c_{f^{\tau(\ell)}(x^u)}\approx\hat{\nu}^c_{f^{\tau(\ell)}(z^u)}$. We conclude from this that $\hat{\nu}^c_{f^{\tau(\ell)}(x^u)}$ is \emph{almost proportional to} $\hat{\nu}^c_{f^{\tau(\ell)}(y^u)}$, \emph{up to some linear map with controlled derivative}.

After considering a subsequence $\ell_k$ we obtain points $p$ and $q$ (obtained as the limit of $f^{\tau(\ell_k)}(x^u)$ and $f^{\tau(\ell_k)}(y^u))$) with the properties that:
\begin{itemize}
\item $q\in \mathcal{W}^c_{\mathrm{loc}}(p)$;
\item$d(p,q) \approx \varepsilon$;
\item $p$ and $q$ belong to the Lusin set;
\item $\hat{\nu}^c_q\propto \hat \Lambda_\ast \hat{\nu}^c_p$, $\hat \Lambda$ being linear with uniformly bounded derivative (see \S\ref{ss.mainreduction}).
\end{itemize}   

Finally we explained that $\hat{\nu}^c_p\propto \Psi_\ast\hat{\nu}^c_q$ for an affine map $\Psi$ with uniformly bounded derivative and a translational part of order $\eps$: we find $\hat{\nu}^c_p\propto(\Psi\hat \Lambda)_\ast \hat{\nu}^c_p$: $\Psi\hat \Lambda$ is the desired affine map (with bounded slope and translational part $\approx\eps$).

%
%Since normal forms along one dimensional expanding foliations provide us with an \emph{affine structure} (see Proposition~\ref{p.kk}, which comes from \cite{KalininKatok}) the family $\{\hat{\nu}^c_p\}_{p\in\TT}$ also has the property for points on the same center leaf, the changing is affine. Thus for the points $p$ and $q$ constructed above, $\hat{\nu}^c_p$ and $\hat{\nu}^c_q$ are also proportional to each other up to an affine map, whose translational part is proportional to $d_c(p,q)$ (see Lemma~\ref{chanfemt centrale}). Combining the two informations we deduce that there exists an affine map $\psi :\mathbb{R} \to \mathbb{R}$ such that $\hat{\nu}^c_p$ is ``invariant'' by $\psi$ and $\psi$ has bounded slope and translational part of order $\varepsilon$. Actually, when viewed with normal forms, $\psi$ take $p$ to $q$, that is, $0$ corresponds to $p$ and $\psi(0)$ corresponds to $q$.

\subsubsection*{A technical difficulty}

There is one delicate point in the above argument. Since the center stable foliation is not absolutely continuous, in general, we cannot choose the points $x^u$ and $y^u$ at the same time in the Lusin set and such that $y^u \in \cW^{cs}_{\mathrm{loc}}(x^u)$.  To overcome this difficulty, we introduce the notion of matched $Y$-configurations in Section~\ref{s.coupled} (the picture changes just a little bit from the one described above) and use it to prove our theorem (see Section \ref{s.end_proof}).

\bigskip

Let us also emphasize that the control we obtain on the distance between the points $z^u$ and $y^u$ is possible because $H^s$ is $C^1$, so essentially we can control this distance by looking at the angle between $DH^s_{x,y} E^u(x)$ and $E^u(y)$. This allows us to explicitly define the stopping time $\tau$ in our construction. This is not possible in Katz’s proof since the holonomies are only Hölder. To get around this problem, he defines a much more complicated stopping time built from an operator using ideas from Eskin-Mirzakhani.

\section{A zero-one law for angles}\label{s.zeroouum}

The goal of this section is to prove Theorem \ref{thm.zeroonelawintro}. In a slightly more general context, i.e., for some $f\in\mathcal{PH}^2(\TT)$ with $C^1$ stable holonomies (for instance, if $f$ is close to some $f_0\in\cA^2_m(\TT)$, as we saw in Lemma~\ref{c one holon}), we introduce the angle function $\alpha^s$, which measures the ``twist'' of unstable manifolds along stable manifolds.
\begin{defi}[The angle function]\label{def_angle-function}
	Let $x\in\TT$ and $y\in \mathcal{W}^s(x)$. The \emph{angle function} is the assignment $(x,y)\mapsto \alpha^s(x,y)\eqdef\angle(DH^s_{x,y}(x)E^u(x),E^u(y))$.
\end{defi} 
%establish a dichotomy for the existence of angles between $DH^s_{x,y}E^u(x)$ and $E^u(y)$ for \emph{almost every pair} $y\in\cWs(x)$. More precisely, given an ergodic invariant measure $\mu$ for some $f\in\cA^2(\TT)$ with $C^1$ stable holonomies (for instance, if $f$ is close to some $f_0\in\cA^1_m(\TT)$, as we saw in Lemma~\ref{c one holon}) then either for $\mu$ almost every $x$ and $\mu^s_x$ almost every $y\in\cWs(x)$ the holonomy map $DH^s_{x,y}$ twists the bundle $E^u$, and we have infinitesimal transversality, or not. 

%The place of this technical result in our global strategy is fundamental but subtle: in the absence of joint integrability, it 

%In this section we establish Theorem \ref{thm.zeroonelawintro} in a slightly more general form; it
In this context, we have the following zero-one law, whose proof is inspired by the work of Brown-Hertz \cite[Lemma 7.1]{BRH_Little}. 
\begin{theorem}[A zero-one law for angles]\label{th_0-1}
	Let $f\in \mathcal{PH}^2(\TT)$ be %an Anosov diffeomorphism with a one-dimensional stable bundle $E^s$ and a two-dimensional unstable bundle $E^{cu}=E^{c}\oplus E^{u}$, and whose stable holonomies $H^s$ are $C^1$. 
a partially hyperbolic diffeomorphism with a %one-dimensional stable bundle $E^s$ and a two-dimensional unstable bundle $E^{cu}=E^{c}\oplus E^{u}$
	splitting $T\TT=E^s\oplus E^c \oplus E^u$, whose stable holonomies $H^s$ along the strong stable foliation $\cWs$ are $C^1$.  Fix an ergodic $f$-invariant measure $\mu$. Let $\xi^s$ be a measurable partition subordinate to $\cW^s$ and $\{\mu^s_x\}_x$ be a system of conditional measures relative to $\xi^s$. Then the following dichotomy holds:
	\begin{enumerate}
		\item either for $\mu$-almost every $x\in \TT$,
		$$\mu^s_x\left\{y\in\xi^s(x):\alpha^s(x,y)=0\right\}=1;$$
		
		\item or for $\mu$-almost every $x\in \TT$,
		$$\mu^s_x\left\{y\in\xi^s(x):\alpha^s(x,y)=0\right\}=0.$$
	\end{enumerate}
\end{theorem}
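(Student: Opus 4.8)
The plan is to prove a zero-one law via a martingale/invariance argument, exploiting the fact that the event $\{\alpha^s(x,y)=0\}$ behaves well under the dynamics and along stable manifolds. First I would define, for $\mu$-a.e. $x$, the quantity $\phi(x)\eqdef \mu^s_x\{y\in\xi^s(x):\alpha^s(x,y)=0\}\in[0,1]$; this is well-defined by the disintegration theorem and, as remarked in the introduction, depends only on the full/zero-measure class of $\mu^s_x$, so it does not depend on the choice of subordinate partition. The key structural fact is the \emph{cocycle-like behaviour of the angle function under holonomy composition}: for $x$, $y$, $z$ on the same stable leaf, $DH^s_{x,z}(x) = DH^s_{y,z}(y)\circ DH^s_{x,y}(x)$, so $\alpha^s(x,z)=0$ and $\alpha^s(y,z)=0$ together force $\alpha^s(x,y)=0$ (the three unstable lines at $z$ obtained by pushing forward $E^u(x)$, $E^u(y)$, $E^u(z)$ all coincide). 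More precisely, $DH^s_{x,y}(x)E^u(x)=E^u(y)$ exactly when the push-forward of $E^u(x)$ to $z$ equals $E^u(z)$ simultaneously with that of $E^u(y)$; combined with $f$-equivariance of stable holonomies ($H^s_{f(x),f(y)}\circ f = f\circ H^s_{x,y}$, hence $Df(y)\circ DH^s_{x,y}(x) = DH^s_{f(x),f(y)}(f(x))\circ Df(x)$) and the $Df$-invariance of $E^u$, this yields $\alpha^s(x,y)=0\iff \alpha^s(f(x),f(y))=0$.

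Next I would package this into an invariance statement for $\phi$. Using Lemma~\ref{l_inv_con_meas} (the relation $\mu_{n,x}=f^n_\ast\mu_{f^{-n}(x)}$ between conditional measures along $\xi^s$ and its iterates $f^n\xi^s$), together with the superposition property (Lemma~\ref{l.superposition} and Remark~\ref{remark.superposition}, which express $\mu^s_x$ restricted to the smaller atoms as normalized restrictions), the $f$-invariance of the event $\{\alpha^s=0\}$ shown above translates into $\phi\circ f = \phi$ $\mu$-a.e.; by ergodicity $\phi\equiv c$ for some constant $c\in[0,1]$. It remains to show $c\in\{0,1\}$. Here I would run the martingale argument in the style of \cite[Lemma 7.1]{BRH_Little}: fix a generating sequence of the measurable partition $\xi^s$, or rather use the increasing sequence $\xi^s\prec f^{-1}\xi^s\prec f^{-2}\xi^s\prec\cdots$ whose join is the partition into points. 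For a point $y$ on $\cW^s(x)$, consider the conditional probabilities $\mathbb{E}[\mathbf 1_{\{\alpha^s(x,\cdot)=0\}}\mid f^{-n}\xi^s](y)$; by the martingale convergence theorem these converge $\mu^s_x$-a.e. to $\mathbf 1_{\{\alpha^s(x,y)=0\}}\in\{0,1\}$. On the other hand, each conditional expectation over an atom $f^{-n}\xi^s(y)$ equals (after translating by $f^n$ and using equivariance of the angle condition and Lemma~\ref{l_inv_con_meas}) the corresponding $\phi$-type average over a rescaled stable atom, which by the invariance just established is $c$. Hence the martingale is $\mu^s_x$-a.s.\ equal to $c$ for all $n$, so its limit is $c$; but the limit takes values in $\{0,1\}$, forcing $c\in\{0,1\}$. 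Combined with $\phi\equiv c$ this gives exactly the stated dichotomy.

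I would expect the main obstacle to be making the martingale step fully rigorous, namely identifying the conditional expectation of $\mathbf 1_{\{\alpha^s(x,\cdot)=0\}}$ over the atom $f^{-n}\xi^s(y)$ with the constant $c$. This requires carefully combining three ingredients: (i) the exact $f$-equivariance $\alpha^s(f^n x', f^n y')=0\iff \alpha^s(x',y')=0$, so that the indicator function is "untwisted" correctly under $f^n$; (ii) the identification $\mu_{-n,y}$ (the normalized restriction of $\mu^s_x$ to $f^{-n}\xi^s(y)$) with $(f^{-n})_\ast$ of a conditional measure of $\mu$ along $\xi^s$ at $f^{-n}(y)$, via Lemma~\ref{l_inv_con_meas} and the superposition property; and (iii) the fact that $\phi=c$ $\mu$-a.e.\ transfers to $\mu^s_x$-a.e.\ statements along the relevant leaves (a Fubini-type argument on $\int\phi\,d\mu = c$). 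The secondary technical point is checking that $\{\alpha^s(x,\cdot)=0\}$ is a measurable set in the appropriate product $\sigma$-algebra and that $(x,y)\mapsto\alpha^s(x,y)$ is jointly measurable — this follows from continuity of $E^u$ and the $C^1$ (in particular continuous) dependence of stable holonomies, but should be stated. Everything else — ergodic decomposition, martingale convergence — is standard once the equivariance and disintegration bookkeeping is set up.
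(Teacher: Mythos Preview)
Your overall strategy (equivariance of the angle condition, disintegration bookkeeping, martingale convergence, ergodicity) is the same as the paper's, but there is a genuine gap at the step ``$\phi\circ f=\phi$ $\mu$-a.e.''. What Lemma~\ref{l_inv_con_meas} together with the superposition property actually yields is
\[
\phi_n(x)=\phi(f^{-n}(x)),\qquad \phi_n(x)\eqdef\mu^s_{n,x}\{y\in\xi^s_n(x):\alpha^s(x,y)=0\},
\]
where $\phi_n$ is computed with respect to the \emph{different} subordinate partition $\xi^s_n=f^n\xi^s$ (for the stable foliation the refining sequence is $f^n\xi^s$, not $f^{-n}\xi^s$ as you wrote). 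To pass from $\phi_n=\phi\circ f^{-n}$ to $\phi=\phi\circ f^{-1}$ you would need $\phi_n=\phi$, i.e.\ partition-independence of $\phi$. The remark you invoke only guarantees that \emph{full/zero} measure sets for $\mu^s_x$ are partition-independent; an intermediate value $\phi(x)\in(0,1)$ can change with the partition. So this step is circular: it presupposes $\phi\in\{0,1\}$ a.e., which is exactly the conclusion.

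The paper avoids this by never claiming $\phi$ is $f$-invariant. It first shows only that $\{\phi>0\}$ is $f$-invariant (positivity \emph{does} survive the renormalization in the superposition property), hence has measure $0$ or $1$. In the nontrivial case, your martingale computation (which is correct) gives, for $\mu^s_x$-a.e.\ $y$ in the equivalence class of $x$, that $\phi_n(y)=\phi(f^{-n}(y))\to 1$; this produces a \emph{positive-measure} set on which $\phi\circ f^{-n}\to 1$. Cesàro averaging and Birkhoff's theorem then force $\int\phi\,d\mu=1$, hence $\phi=1$ a.e. So ergodicity enters via Birkhoff applied to the (non-invariant) function $\phi$, not via constancy of an invariant function. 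The fix to your outline is to replace ``$\phi$ constant $\Rightarrow$ martingale equals $c$'' by ``martingale limit $\Rightarrow\phi\circ f^{-n}\to 1$ on a positive-measure set $\Rightarrow\int\phi=1$''.
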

The second alternative will allow us to build ``twisted'' quadrilaterals (such as the one depicted in Figure~\ref{fig.quadrilatero}) with points $x,y$ in some good Lusin set while for $x^u$ and $y^u$ we will find points of the Lusin set arbitrarily close to them, so that the twisted quadrilateral will be part of a \emph{matched $Y$-configurations} associated to $x$ and $y$.    

\subsection{Conditional expectation, $\sigma$-algebras and martingales}\label{sss_martitou}
The proof of Theorem \ref{th_0-1}  requires some preliminaires about martingales. 
If $\xi$ is a measurable partition of a measurable space $X$ then we let $\cF_\xi$ denote the $\sigma$-algebra generated by unions of atoms of $\xi$. Let $\{\mu_x\}_x$ be a system of conditional measures of $\mu$ with respect to $\xi$. We define the \emph{conditional expectation} of $\varphi\in L^1(X,\mu)$ as the following $L^1$-function 
$$\E_\mu[\varphi\,|\,\cF_\xi]\colon x \mapsto\int_{\xi(x)}\varphi\,d\mu^\xi_x.$$

Note that if $(\xi_n)_n$ is an increasing sequence of partitions (in the sense that $\xi_n\prec\xi_{n+1}$) then we have $\cF_{\xi_n}\dans\cF_{\xi_{n+1}}$.

The following result is a consequence of the increasing martingale theorem (for which we refer to \cite[Theorem 5.5, p.126]{EinsWar}).

\begin{theorem}[Increasing martingale theorem]\label{th.martingette}
	Let $(\xi_n)_{n\in\N}$ be an increasing sequence of measurable partitions of $X$ such that $\bigvee_{n=0}^\infty\xi_n$ is the partition into points.
	Then, for every $\varphi\in L^1(X,\mu)$ and $\mu$-almost every $x\in X$, we have
	$$\lim_{n\to\infty}\E_\mu\big[\varphi\,\big|\,\cF_{\xi_n}\big](x)=\varphi(x).$$
\end{theorem}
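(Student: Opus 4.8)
\textbf{Plan for the proof of Theorem \ref{th.martingette} (Increasing martingale theorem).}
The plan is to deduce this from the classical increasing martingale convergence theorem as stated in \cite[Theorem 5.5, p.126]{EinsWar}, by translating the language of measurable partitions into the language of $\sigma$-algebras and conditional expectations. The only real content is the identification of the limit as $\varphi$ itself, which uses the hypothesis that $\bigvee_{n}\xi_n$ is the point partition.

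First I would record the setup: set $\cG_n\eqdef\cF_{\xi_n}$. Since $(\xi_n)_n$ is increasing ($\xi_n\prec\xi_{n+1}$), we have $\cG_n\subset\cG_{n+1}$, so $(\cG_n)_n$ is a filtration of sub-$\sigma$-algebras of the Borel $\sigma$-algebra $\cF$ of $X$. For $\varphi\in L^1(X,\mu)$, the function $\E_\mu[\varphi\mid\cF_{\xi_n}](x)=\int_{\xi_n(x)}\varphi\,d\mu^{\xi_n}_x$ is, by the defining properties of Rokhlin disintegration (measurability in $x$ and the averaging identity $\mu(A)=\int\mu^{\xi_n}_x(A)\,d\mu(x)$), a version of the conditional expectation $\E_\mu[\varphi\mid\cG_n]$ in the measure-theoretic sense: it is $\cG_n$-measurable (being constant on atoms of $\xi_n$, i.e.\ on atoms of $\cG_n$) and satisfies $\int_B\E_\mu[\varphi\mid\cF_{\xi_n}]\,d\mu=\int_B\varphi\,d\mu$ for every $B\in\cG_n$ (this is immediate for $B$ a union of atoms, by Fubini against the disintegration, and extends to all of $\cG_n$). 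Hence the sequence $(\E_\mu[\varphi\mid\cF_{\xi_n}])_n$ is precisely the martingale to which \cite[Theorem 5.5, p.126]{EinsWar} applies, and that theorem yields $\mu$-a.e.\ and $L^1$ convergence
\[
\lim_{n\to\infty}\E_\mu[\varphi\mid\cF_{\xi_n}](x)=\E_\mu[\varphi\mid\cG_\infty](x),\qquad \cG_\infty\eqdef\bigvee_{n=0}^\infty\cG_n .
\]

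The remaining step is to show $\E_\mu[\varphi\mid\cG_\infty]=\varphi$ $\mu$-a.e., which is equivalent to $\varphi$ being $\cG_\infty$-measurable (mod $\mu$) for every $\varphi\in L^1$, i.e.\ to $\cG_\infty$ coinciding $\mu$-a.e.\ with $\cF$. Here I would invoke the hypothesis: $\bigvee_{n}\xi_n$ is the partition of $X$ into points. A standard fact (see e.g.\ Rokhlin's theory, or \cite{EinsWar}) is that for a measurable partition $\xi$ one has $\cF_{\bigvee_n\xi_n}=\bigvee_n\cF_{\xi_n}$ mod $\mu$; since $\bigvee_n\xi_n$ is the point partition, its associated $\sigma$-algebra is (mod $\mu$, on the standard Borel space $X$) the full Borel $\sigma$-algebra $\cF$. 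Therefore $\cG_\infty=\cF$ mod $\mu$, so $\E_\mu[\varphi\mid\cG_\infty]=\varphi$ $\mu$-a.e., and the theorem follows. Concretely, one can verify $\cG_\infty=\cF$ mod $\mu$ by checking it separates points mod $\mu$: the point partition refines any countable generating family of Borel sets, so each such set is, mod $\mu$, a union of atoms of $\bigvee_n\xi_n$ and hence $\cG_\infty$-measurable.

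The main obstacle is not any single hard estimate but rather the careful bookkeeping in matching the two formalisms: verifying that the partition-theoretic conditional expectation $x\mapsto\int_{\xi_n(x)}\varphi\,d\mu^{\xi_n}_x$ really is a bona fide version of $\E_\mu[\varphi\mid\cG_n]$, and that $\bigvee_n\cF_{\xi_n}$ agrees mod $\mu$ with the $\sigma$-algebra generated by the join $\bigvee_n\xi_n$. Both are standard consequences of Rokhlin's disintegration theorem on standard probability spaces (already quoted in the excerpt) together with the fact that on such spaces a measurable partition is determined mod $0$ by its associated $\sigma$-algebra; once these identifications are in place, the convergence is exactly the content of \cite[Theorem 5.5, p.126]{EinsWar} applied to the constant-in-$n$ integrand $\varphi$.
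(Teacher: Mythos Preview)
Your proposal is correct and matches the paper's approach: the paper does not give a proof but simply states that the result is a consequence of the increasing martingale theorem and refers to \cite[Theorem 5.5, p.126]{EinsWar}. Your writeup just makes explicit the routine translation between the partition and $\sigma$-algebra formalisms that the paper leaves implicit.
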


%\subsection{Superposition property of subordinate partitions}

%Let $\xi$ be a measurable partition subordinate to $\cW_f^s$. We let $\xi^s_n$ denote $f^n\xi^s$. 

\subsection{Proof of the zero-one law}\label{ss_proof_01}

We are now ready to prove Theorem \ref{th_0-1}.  
	We fix $\xi^s$, a measurable partition subordinate to $\cW^s$. For $n\in\N$, set $\xi^s_n\eqdef f^n\xi^s$. Systems of conditional measures relative to $\xi^s$ and $\xi^s_n$ are denoted respectively by $\{\mu^s_x\}_x$ and $\{\mu^s_{n,x}\}_x$. For $\mu$-almost every $x\in \TT$ and every $n\in\N$, set
	\begin{itemize}
		\item $\cP^{\xi^s}(x)\eqdef\left\{y\in\xi^s(x):\alpha^s(x,y)=0\right\}$,
		\item $\cP^{\xi^s}_n(x)\eqdef\left\{y\in\xi_n^s(x):\alpha^s(x,y)=0\right\}$.
	\end{itemize}
	By definition of $\xi^s_n$ and invariance of the unstable bundle and of the stable foliation, we have the following commutation relation for $\mu$-almost every $x\in \TT$:
	\begin{equation}\label{eq_commutation_relation_px}
	f^n\cP^{\xi^s}\left(f^{-n}(x)\right)=\cP^{\xi^s}_n(x).
	\end{equation}

	Let $A\eqdef\{x\in \TT:\mu^s_x[\cP^{\xi^s}(x)]>0\}$. This is a Borel set. We must prove the following dichotomy:
	\begin{enumerate}
		\item\label{premi item} either $\mu(A)=1$, and for $\mu$-almost every $x\in \TT$, $\mu^s_x[\cP^{\xi^s}(x)]=1$;
		\item or $\mu(A)=0$.
	\end{enumerate}
	
	So let us suppose $\mu(A)>0$ and prove \eqref{premi item}. With that goal in mind, we claim that $A$ is an $f$-invariant set (mod 0). Indeed, take a $\mu$ generic point $x\in A$. Notice that $\cP^{\xi^s}_1(x)\subset\cP^{\xi^s}(x)$, since $\xi^s_1(x)\subset\xi^s(x)$. Also, recall from Lemma~\ref{l_inv_con_meas} that 
	\[
	\mu^s_{1,f(x)}=f_*\mu^s_x.
	\]
	Combining this with \eqref{eq_commutation_relation_px} one obtains that 
	\[
	\mu^s_{1,f(x)}(\cP^{\xi^s}_1(f(x)))=\mu^s_x(f^{-1}(\cP^{\xi^s}_1(f(x))))=\mu^s_x(\cP^{\xi^s}(x))>0.	
	\]
	Thus, from the superposition property (Lemma~\ref{l.superposition} and Remark~\ref{remark.superposition}) one gets
	\[
	\mu^s_{f(x)}(\cP^{\xi^s}(f(x)))\geq\mu^s_{f(x)}(\cP^{\xi^s}_1(f(x)))=\mu^s_{1,f(x)}(\cP^{\xi^s}_1(f(x)))\mu^s_{f(x)}(\xi^s_1(f(x)))>0,
	\]
	which implies that $f(x)\in A$, proving our claim.
	
	From the ergodicity of $\mu$, we have $\mu(A)=1$. We define functions in $L^1(\TT,\mu)$ by
	\begin{itemize}
		\item $\phi\colon x \mapsto \mu^s_x[\cP^{\xi^s}(x)]$;
		\item $\phi_{n}\colon x \mapsto \mu^s_{n,x}[\cP^{\xi^s}_n(x)]=
		\mu^s_{n,x}[\cP^{\xi^s}(x)]$, for each $n \in \N$. 
	\end{itemize}

For $\mu$-almost every $x\in\TT$ and every $n\in\N$ we consider
	
	\begin{itemize}
		\item $\cF^s_{x}=\cF^s_{\xi^s(x)}=\{\emptyset,\xi^s(x)\}$, the trivial $\sigma$-algebra over $\xi^s(x)$;
		\item $\cF^s_{n,x}=\cF^s_{n,\xi^s(x)}$, the $\sigma$-algebra generated by unions of atoms $\xi_n^s(y)$, $y \in \xi^s(x)$ (note that $\cF^s_{n,x}\dans\cF^s_{n+1,x}$);
		\item $\cF^s_{\infty,x}=\cF^s_{\infty,\xi^s(x)}$, the smallest $\sigma$-algebra containing $\bigcup_{n=0}^\infty\cF^s_{n,x}$. This is the Borel $\sigma$-algebra of $\xi^s(x)$.
	\end{itemize}

	For $\mu$-almost every $x  \in \T^3$ and $n \in \N$, we define the function $\psi_{n,x}\in L^1(\xi^s(x),\mu_x^s)$ by the following formula, for $\mu^s_x$-almost every $y\in\xi^s(x)$:
	\begin{equation*} 
	\psi_{n,x}(y)=\E_{\mu_x^s}\big[\mathbf{1}_{\cP^{\xi^s}(x)}\,\big|\,\cF^s_{n,x}\big](y)=\mu^s_{n,y}\big[\cP^{\xi^s}(x)\big].
	\end{equation*}
	Note that $\psi_{n,x}(y)=\phi_n(y)$ for all $n$ and $\mu^s_x$-almost every $y\in\cP^{\xi^s}(x)$ (note that in that case $\cP^{\xi^s}(x)=\cP^{\xi^s}(y)$).

	On the one hand, for $\mu$-almost every $x \in \T^3$, by the increasing martingale theorem (we apply Theorem \ref{th.martingette} to the probability space $(\xi^s(x),\mu_x^s)$), $\psi_{n,x}$ converges to $\mathbf{1}_{\cP^{\xi^s}(x)}$ $\mu^s_{x}$-almost surely as $n \to +\infty$. In particular,  
	\begin{equation}\label{psi n x y}
	\text{for }\mu^s_{x}\text{-a.e. } y \in \cP^{\xi^s}(x),\quad \psi_{n,x}(y)=\phi_{n}(y)\to 1\text{ as }n \to +\infty.
	\end{equation} 
	Let us define the set 
	$$
	\mathcal{S}\eqdef\{x \in \T^3:\phi_n(x)\to 1\text{ as }n\to +\infty\}.
	$$ 
	%Note that $\mathcal{S}=\cap_{m \geq 1}\cup_{n_0 \in \N}\cap_{n\geq n_0}\phi_n^{-1}\big(\big[1-\frac 1m,1\big]\big)$; moreover, f
	For each $n \in \N$, $\phi_n$ is measurable, hence $\mathcal{S}$  %=\cap_{m \geq 1}\cup_{n_0 \in \N}\cap_{n\geq n_0}\phi_n^{-1}\big(\big[1-\frac 1m,1\big]\big)$  
	is a Borel set. Assume that  $\mu[\mathcal{S}]=0$. Since $\mu[\mathcal{S}]=\int \mu_x^s[\mathcal{S}] d\mu(x)$, we would then have $\mu_x^s[\mathcal{S}]=0$, for $\mu$-a.e. $x \in \T^3$. But \eqref{psi n x y} implies that for $\mu$-a.e. $x \in A$,  $\mu_x^s[\mathcal{S}]\geq \mu_x^s[\mathcal{S}\cap\cP^{\xi^s}(x)]=\mu_x^s[\cP^{\xi^s}(x)]>0$, and by our assumption that $\mu[A]>0$, %the function $x \mapsto \mu_x^s[\mathcal{S}]$ is positive on a positive measure set, 
	we reach a contradiction. Therefore, %it holds 
	\begin{equation*}%\label{mesure s}
	\mu[\mathcal{S}]=\mu\{x \in \T^3:\phi_n(x)\to 1\text{ as }n\to +\infty\}>0.
	\end{equation*} 
	
	On the other hand, we deduce from \eqref{eq_commutation_relation_px} and from Lemma \ref{l_inv_con_meas} that
	$$\phi_n(x)=\mu^s_{n,x}[\cP^{\xi^s}_n(x)]=f^n_\ast\mu^s_{f^{-n}(x)}[f^n\cP(f^{-n}(x))]=\phi(f^{-n}(x)).$$
	
	The latter proves that $\phi\circ f^{-k}$ converges to $1$ $\mu$-almost surely on $\mathcal{S}$ as $k \to +\infty$. Therefore, by considering Ces\`aro averages $\frac 1n\sum_{k=0}^{n-1} \phi \circ f^{-k}(x)$ for a $\mu$-generic point $x \in  \mathcal{S}$, and by Birkhoff's theorem, we conclude that $\int_\TT\phi\, d\mu=1$. As $\phi$ takes values in $[0,1]$, we must have $\phi(x)=1$, for $\mu$-almost every $x\in \TT$.
\qed

\begin{remark}\label{remark general zero un}
It is clear that $\alpha^s(x,y)=0$ is an equivalence relation on stable leaves, so 
Theorem \ref{th_0-1} can be generalized as follows. Let $f$ be as in Theorem \ref{th_0-1}. Let $\mathcal{R}$ be a measurable equivalence relation on stable leaves (i.e., such that $x\mathcal{R}y\,\Rightarrow\, y \in \cWs(x)$) such that $x\mathcal{R}y\,\Rightarrow\, f^{n}(x)\mathcal{R}f^{n}(y)$, for any $n \in \N$. Fix a measurable partition $\xi^s$ subordinate to $\cW^s$, a system $\{\mu^s_x\}_x$ of conditional measures relative to $\xi^s$, and for $x \in \TT$, let $\cP^{\xi^s}(x)$ be its $\mathcal{R}$-equivalence class. Then the following dichotomy holds:
\begin{enumerate}
	\item either for $\mu$-almost every $x\in M$, 
	$\mu^s_x[\cP^{\xi^s}(x)]=1$;
	
	\item or for $\mu$-almost every $x\in M$,
	$\mu^s_x[\cP^{\xi^s}(x)]=0$.
\end{enumerate} 
\end{remark}

\section{Joint integrability and the \emph{Bad set}}\label{s.main_thm}

We start in this section the formal proof of Theorem~\ref{mainthm.dicotomia}. So we let $f\in\cA^2(\TT)$ be an Anosov diffeomorphism, strongly partially hyperbolic with expanding center and $C^1$ stable bundle. Recall from Lemma~\ref{c one holon} that this is always satisfied when $f$ is close to a conservative map $f_0$. Let $\mu$ denote an ergodic $u$-Gibbs measure, with full support.

We are going to reduce the proof of Theorem~\ref{mainthm.dicotomia} to a more technical version of the result by studying the set of points $x$ for which one sees almost no twist of the bundle $E^u$ by the application of the stable holonomy. Our main technical result says that if the measure of this \emph{Bad set} is zero then $\mu$ is SRB.

\subsection{The \emph{Bad set}}\label{ss.badset}

%We can formalize in a quantitative way the idea of the stable holonomy map twisting the unstable bundle.
%Let us introduce the following relation on $\TT$.
%\begin{align*}
%x \sim y \quad&\Leftrightarrow\quad y \in \cWs(x)\text{ and } \alpha^s(x,y)= 0,\\
%&\Leftrightarrow\quad y \in \cWs(x)\text{ and } DH_{x,y}^s E^u(x)=E^u(y). 
%\end{align*} 

For any $x \in \TT$, we denote 
$$
\cP(x)\eqdef\Big\{y \in \cWs(x):\alpha^s(x,y)=0\Big\},\quad \mathcal{N}(x)\eqdef \Big\{y\in \cWs(x):\alpha^s(x,y)>0 \Big\}.
$$ 

Observe that given any measurable partition $\xi^s$ subordinate to $\cW^s$ we have that for $\mu$-a.e. $x\in\TT$, $\cP^{\xi^s}(x)=\cP(x)\cap\xi^s(x)$, where $\cP^{\xi^s}(x)$ was defined in the proof of Theorem~\ref{th_0-1}, our zero-one law: see \S \ref{ss_proof_01}.

\begin{remark}\label{claim equiv}
	Since the unstable bundle $E^u$ is invariant under $Df$, and holonomy maps are equivariant with the dynamics, i.e.,
	\begin{equation}
	\label{e.equivariancia}
	f\circ H^s_{x,y}=H^s_{f(x),f(y)}\circ f,
	\end{equation}  
	we have $\forall\, x,y,$ 
%	 
%The following property also follows easily from \eqref{e.equivariancia}:
	\begin{equation}
	\alpha^s(x,y)=0\quad \Longleftrightarrow\quad \forall\, n \in \Z,\ \alpha^s(f^n(x),f^n(y))=0. 
	\end{equation}
	%Notice that in the above equation we have used our notation for orbits \eqref{e.orbita}.
\end{remark}

For any $x \in \TT$, the function $\alpha^s(x,\cdot)$ on $\mathcal{W}^s(x)$ is continuous. Therefore, the sets $\cP(x)$ and $\mathcal{N}(x)$ are respectively closed and open. 
Moreover, by Remark \ref{claim equiv}, it holds 
\begin{equation}\label{eq invar}
f^n(\cP(x))=\cP\big(f^n(x)\big),\quad f^n(\mathcal{N}(x))=\mathcal{N}(f^n(x)),\quad \forall\, n \in \Z.
\end{equation}

\begin{defi}[Bad set]\label{def bad set}	Let $\xi^s$ be a measurable partition subordinate to the stable foliation $\cW^s$, and let $\{\mu_x^s\}_{x \in \TT}$ be a system of conditional measures relative to $\xi^s$. The \emph{Bad set} $\mathbf{B}=\mathbf{B}(\xi^s,\mu)$ is defined as
$$
\mathbf{B}\eqdef\big\{x\in\TT:\mu^s_x(\mathcal{N}(x))=0\big\}.
$$
\end{defi}

Using our zero-one law, and the results established in \S \ref{ss_proof_01}, we have the following properties.

\begin{enumerate}
\item $\mathbf{B}$ is equal to the set $A$ introduced in \S \ref{ss_proof_01};
\item $\mathbf{B}$ is measurable and $f$-invariant, and it satisfies that for all $\ell\in\Z$
\begin{equation}\label{eq_changing_partition}
\mathbf{B}(\xi^s,\mu)=\mathbf{B}(\xi^s_\ell,\mu),
\end{equation}
where we recall $\xi^s_\ell=f^\ell(\xi^s)$ is still a measurable partition subordinate to $\cW^s$;
\item $\mathbf{B}$ has measure $0$ or $1$.
\end{enumerate}

A priori the \emph{Bad set} $\mathbf{B}=\mathbf{B}(\xi^s,\mu)$ depends on the particular choice of the measurable partition subordinate to $\cW^s$. Nevertheless, the next lemma shows that the Bad set associated to another subordinate partition is equal to $\mathbf{B}$ modulo sets of measure $0$.

\begin{lemma}
\label{l.badsetinvariante}
For every measurable partition $\eta^s$ subordinate to $\cW^s$ we have 
\[
\mu\left(\mathbf{B}(\xi^s,\mu)\right)=\mu\left(\mathbf{B}(\eta^s,\mu)\right)\in\{0,1\}.
\]
\end{lemma}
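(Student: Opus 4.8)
The plan is to reduce to the case where one of the two partitions refines the other, and then to use the superposition property (Lemma \ref{l.superposition}, Remark \ref{remark.superposition}) to transfer the defining property of the \emph{Bad set} between the two. First I would observe that it suffices to show that for any two measurable partitions $\xi^s, \eta^s$ subordinate to $\cW^s$, the common refinement $\zeta^s \eqdef \xi^s \vee \eta^s$ is again subordinate to $\cW^s$ and $\mu(\mathbf{B}(\zeta^s,\mu)) = \mu(\mathbf{B}(\xi^s,\mu))$; by symmetry the same holds with $\eta^s$ in place of $\xi^s$, and then we can chain the equalities. That the join of two subordinate partitions is subordinate is routine: properties (1)-(4) in \S\ref{s.subordinate} are all stable under finite joins (the diameter only decreases, the open-neighbourhood property is preserved since a finite intersection of internal neighbourhoods is an internal neighbourhood, the increasing property $\zeta^s \prec f^{-1}\zeta^s$ follows from $\xi^s \prec f^{-1}\xi^s$ and $\eta^s \prec f^{-1}\eta^s$, and $\bigvee_n f^{-n}\zeta^s$ refines $\bigvee_n f^{-n}\xi^s$ which is already the point partition).

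So the heart of the matter is the case $\xi^s \prec \zeta^s$, i.e. $\zeta^s(x) \subset \xi^s(x)$ for $\mu$-a.e. $x$. Here I would disintegrate $\mu_x^{\xi^s}$ along the finer partition $\zeta^s$: for $\mu$-a.e. $x$ and $\mu_x^{\xi^s}$-a.e. $y \in \xi^s(x)$, we have $\mu_y^{\zeta^s} = (\mu_x^{\xi^s})_y^{\zeta^s}$ (the conditional of the conditional), and hence $\mu_x^{\xi^s}(\cN(x)) = \int_{\xi^s(x)} \mu_y^{\zeta^s}(\cN(x))\, d\mu_x^{\xi^s}(y)$. Since $\cN(x)$ is $\cW^s$-saturated in the sense that $\cN(x) \cap \cW^s(y) = \cN(y)$ for $y \in \cW^s(x)$ (both equal $\{z \in \cW^s(x) : \alpha^s(x,z) > 0\}$, using that $\alpha^s(x,\cdot) = 0$ is an equivalence relation — Remark \ref{remark general zero un}), we get $\mu_y^{\zeta^s}(\cN(x)) = \mu_y^{\zeta^s}(\cN(y))$. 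Therefore $\mu_x^{\xi^s}(\cN(x)) = \int_{\xi^s(x)} \mu_y^{\zeta^s}(\cN(y))\, d\mu_x^{\xi^s}(y)$, which shows $x \in \mathbf{B}(\xi^s,\mu)$ forces $\mu_y^{\zeta^s}(\cN(y)) = 0$ for $\mu_x^{\xi^s}$-a.e. $y$, i.e. $\mathbf{B}(\xi^s,\mu)$ is, up to a null set, a union of atoms of $\xi^s$ each of which lies (mod $0$) in $\mathbf{B}(\zeta^s,\mu)$; integrating gives $\mu(\mathbf{B}(\xi^s,\mu)) \leq \mu(\mathbf{B}(\zeta^s,\mu))$.

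For the reverse inequality I would run the superposition property in the other direction. By Lemma \ref{l.superposition} and Remark \ref{remark.superposition} applied to the pair $\zeta^s = \xi^s \vee \eta^s$ — or more directly, since each atom $\zeta^s(x)$ sits inside $\xi^s(x)$ with positive $\mu_x^{\xi^s}$-measure and $\mu_x^{\zeta^s}(A) = \mu_x^{\xi^s}(A \cap \zeta^s(x))/\mu_x^{\xi^s}(\zeta^s(x))$ — we get that if $\mu_x^{\xi^s}(\cN(x)) = 0$ then $\mu_x^{\zeta^s}(\cN(x)) = 0$ as well, hence $x \in \mathbf{B}(\xi^s,\mu) \Rightarrow x \in \mathbf{B}(\zeta^s,\mu)$ (mod $0$), giving $\mu(\mathbf{B}(\xi^s,\mu)) \leq \mu(\mathbf{B}(\zeta^s,\mu))$ again — and conversely the same relation $\mu_x^{\zeta^s}(\cN(x)) = \mu_x^{\xi^s}(\cN(x) \cap \zeta^s(x))/\mu_x^{\xi^s}(\zeta^s(x))$ combined with the first paragraph's computation yields that $\mathbf{B}(\zeta^s,\mu) \subset \mathbf{B}(\xi^s,\mu)$ mod $0$. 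Thus $\mu(\mathbf{B}(\xi^s,\mu)) = \mu(\mathbf{B}(\zeta^s,\mu))$, and symmetrically $= \mu(\mathbf{B}(\eta^s,\mu))$. That this common value lies in $\{0,1\}$ is already recorded: $\mathbf{B}$ coincides with the set $A$ from \S\ref{ss_proof_01}, which is $f$-invariant mod $0$, so ergodicity of $\mu$ gives the dichotomy. The main obstacle I anticipate is purely bookkeeping: carefully justifying the ``conditional of a conditional'' identity $\mu_y^{\zeta^s} = (\mu_x^{\xi^s})_y^{\zeta^s}$ for a.e. $y$ and checking that the null sets line up, together with verifying that $\cN(x)$ behaves well under restriction to subatoms (which is exactly the equivalence-relation remark); none of this is deep, but it must be done without circularity.
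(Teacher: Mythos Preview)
There is a genuine gap. Your claim that ``$\cN(x)\cap\cW^s(y)=\cN(y)$ for $y\in\cW^s(x)$'' is false. The relation $\alpha^s(\cdot,\cdot)=0$ is indeed an equivalence relation on each stable leaf (Remark~\ref{remark general zero un}), but this means precisely that $\cP(x)$ is the equivalence \emph{class} of $x$; hence $\cP(y)=\cP(x)$ holds only when $y\in\cP(x)$, i.e.\ when $\alpha^s(x,y)=0$. For a generic $y\in\xi^s(x)$ the classes $\cP(x)$ and $\cP(y)$ are disjoint, so $\cN(x)\neq\cN(y)$. Your integral identity $\mu_x^{\xi^s}(\cN(x))=\int\mu_y^{\zeta^s}(\cN(y))\,d\mu_x^{\xi^s}(y)$ therefore fails, and with it the implication $\mathbf{B}(\zeta^s,\mu)\subset\mathbf{B}(\xi^s,\mu)$. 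Indeed, assuming $\mu(\mathbf{B}(\zeta^s,\mu))=1$ and disintegrating $\mu_x^{\xi^s}(\cP(x))$ over $\zeta^s$-atoms gives integrand $\mu_y^{\zeta^s}(\cP(x))$, which equals $1$ for $y\in\cP(x)$ and $0$ for $y\in\cN(x)$ (since then $\cP(x)\cap\cP(y)=\emptyset$ and $\mu_y^{\zeta^s}(\cP(y))=1$); the integral collapses to the tautology $\mu_x^{\xi^s}(\cP(x))=\mu_x^{\xi^s}(\cP(x))$. Your superposition argument does correctly yield $\mathbf{B}(\xi^s,\mu)\subset\mathbf{B}(\zeta^s,\mu)$ and $\mathbf{B}(\eta^s,\mu)\subset\mathbf{B}(\zeta^s,\mu)$ mod~$0$, but since $\zeta^s$ refines both partitions this is only one direction and you cannot close the loop by symmetry.

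The paper avoids the join altogether and uses the dynamics. The point is the invariance $\mathbf{B}(\xi^s,\mu)=\mathbf{B}(f^m\xi^s,\mu)$ for all $m\in\Z$ (property~\eqref{eq_changing_partition}), together with the uniform growth property of Remark~\ref{rem.af1}: the atoms $(f^m\xi^s)(x)$ grow without bound as $m\to-\infty$, so for $\mu$-a.e.\ $x$ one can choose $m<0$ with $\eta^s(x)\subset(f^m\xi^s)(x)$. Now the superposition formula runs in the useful direction---from the \emph{larger} atom to the \emph{smaller} one---and gives $\mu_x^{\eta^s}(\cN(x))\leq\mu_x^{f^m\xi^s}(\cN(x))/\mu_x^{f^m\xi^s}(\eta^s(x))=0$. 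Thus $\mu(\mathbf{B}(\xi^s,\mu))=1\Rightarrow\mu(\mathbf{B}(\eta^s,\mu))=1$, and swapping the roles of $\xi^s$ and $\eta^s$ gives the converse. The asymmetry you need is an atom \emph{containing} $\eta^s(x)$, not one contained in it; the dynamical iterates $f^m\xi^s$ furnish exactly this, while the static join does not.
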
	
\begin{proof}
For $\ell\in\Z$, let us denote $\xi^s_{\ell}\eqdef f^\ell(\xi^s)$. Then, all the partitions $\xi^s_{\ell}$ are measurable partitions subordinate to $\cW^s$ and by \eqref{eq_changing_partition} we have that for all $\ell\in\Z$, $\mathbf{B}(\xi^s,\mu)=\mathbf{B}(\xi^s_\ell,\mu)$. These sets have measure $0$ or $1$; assume they have measure $1$. Consider a $\mu$-generic point $x$ so that
\begin{itemize}
\item $\eta^s(x)$ contains an open neighbourhood of $x$ in $\cW^s(x)$ so $\mu^{\xi^s_m}_x(\eta^s(x))>0$ for all $m$ (see Corollary \ref{coro.superposition});
\item $\eta^s(x)$ has diameter less than $1$ and;
\item $\mu_x^{\xi^s_m}(\cN(x))=0$.
\end{itemize}

Using Lemma \ref{lem.af1} (specifically Remark \ref{rem.af1}) there exists $m<0$ such that $\eta^s(x)\dans\xi^{s}_m(x)$ so we can apply the superposition property and
\[
\mu^{\eta^s}_x(\cN(x))=\frac{\mu^{\xi^s_m}_{x}(\cN(x)\cap\eta^s(x))}{\mu^{\xi^s_m}_{x}(\eta^s(x))}\leq\frac{\mu^{\xi^s_m}_{x}(\cN(x))}{\mu^{\xi^s_m}_{x}(\eta^s(x))}=0.
\]
As a result, $x\in \mathbf{B}\left(\eta^s,\mu\right)$. This proves that $\mu\left( \mathbf{B}(\eta^s,\mu)\right)=1$, confirming the second assertion and concluding.
\end{proof}

Although we always need to fix a partition to speak about the bad set, Lemma~\ref{l.badsetinvariante} allows us to speak about \emph{the measure of the Bad set} without fixing a particular choice of a partition. 

\begin{remark}\label{rem_other_partitions}
The conclusion of Lemma \ref{l.badsetinvariante} remains valid even if the measurable partition $\eta^s$ is only required to have the following properties for $\mu$-a.e. $x\in\TT$
\begin{enumerate}
\item $\eta^s(x)\dans\cW^s(x)$;
\item $\eta^s(x)$ contains an open neighbourhood of $\cW^s(x)$ in the internal topology.
\item $\eta^s(x)$ has uniformly bounded diameter;
\end{enumerate}
Indeed the proof of Lemma \ref{l.badsetinvariante} does not use that $\eta^s$ is decreasing. 

There are many (non decreasing) measurable partitions satisfying these three properties. For example, consider any finite foliated atlas $\cA$ for $\cW^s$ composed of open charts $U_i$ such that $\mu(\partial U_i)=0$. Define a finite partition mod $0$ of $\TT$ as
$$\mathcal{Q}=\bigvee_i\left\{U_i,\TT\setminus \overline{U_i}\right\},$$
Each atom $\mathcal{Q}(x)$ is an open set included inside a foliated chart for $\cW^s$. As a result it is trivially foliated by stable plaques: this defines a measurable partition of $\mathcal{Q}(x)$. We obtain a refinment of $\mathcal{Q}$, denoted by $\eta^s$, that satisfies the desired properties, and is referred to as the \emph{measurable partition associated to} $\cA$.
\end{remark}

\subsection{Joint integrability}

When the Bad set has positive measure, by ergodicity we have that for almost every $x$, we see almost no twist of $E^u$ along $\cWs(x)$. This can be read as an infinitesimal form of joint integrability between $E^s$ and $E^u$. In this paragraph we shall improve this to actual joint integrability.

\begin{prop}
	\label{p.mesuredubadset}
Let $f\colon\mathbb{T}^3\to\mathbb{T}^3$ be a $C^2$ Anosov diffeomorphism, strongly partially hyperbolic with expanding center and $C^1$ stable holonomies. Let $\mu$ be a fully supported ergodic $f$-invariant measure. If the \emph{Bad set} has full measure  then $E^s$ and $E^u$ are jointly integrable.	
\end{prop}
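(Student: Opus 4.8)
The plan is to use the two lemmas sketched in the heuristics (referred to there as "Lemma~\ref{lemm dicht}" and "Lemma~\ref{lemme Bruno}") and combine them with full support in a contrapositive argument. First I would suppose that $E^s$ and $E^u$ are \emph{not} jointly integrable and derive a contradiction with the hypothesis that $\mathbf B$ has full $\mu$-measure. The first step is a dichotomy at the level of a single stable leaf: I claim that for every $x\in\TT$, either $\alpha^s(x,\cdot)\equiv 0$ on $\cW^s(x)$ (which forces an integral surface through $x$ and hence, by minimality of $\cW^s$ and the equivariance relation \eqref{e.equivariancia}, global joint integrability), or the open set $\mathcal N(x)=\{y\in\cW^s(x):\alpha^s(x,y)>0\}$ is \emph{dense} in $\cW^s(x)$. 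This is where the $C^1$ regularity of stable holonomies is essential: the map $y\mapsto DH^s_{x,y}(x)E^u(x)$ is continuous, so $\cP(x)=\{\alpha^s(x,\cdot)=0\}$ is closed; if it had nonempty interior on $\cW^s(x)$, one could integrate $E^u$ along a stable plaque using the holonomy (the holonomy images of a fixed $\cW^u$-plaque would have tangent $E^u$ at each point of a stable plaque), producing a $C^1$ surface tangent to $E^s\oplus E^u$ on an open set, which one then spreads around by $f$-equivariance and density of stable leaves to contradict non-integrability. So under our standing assumption, $\mathcal N(x)$ is open and dense in $\cW^s(x)$ for every $x$.

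The second step localizes the consequence of $\mu(\mathbf B)=1$. Fix a measurable partition $\xi^s$ subordinate to $\cW^s$ (or, more conveniently, the non-decreasing partition associated to a foliated atlas as in Remark~\ref{rem_other_partitions}, whose atoms are honest foliated plaques). For $\mu$-a.e.\ $x$ one has $\mu_x^s(\mathcal N(x))=0$, i.e.\ $\mu_x^s$ is supported on the closed nowhere-dense set $\cP(x)\cap\xi^s(x)$. Now pick any pair $(x_0,y_0)$ with $\alpha^s(x_0,y_0)>0$; by continuity of the angle function there is a small plaque neighbourhood $\cV$ of $y_0$ inside $\cW^s(x_0)$, and a corresponding small foliated chart $\widehat{\cV}\subset\TT$ saturated by such plaques, on which $\alpha^s$ stays positive along the stable direction relative to the base point. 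The point is that for $\mu$-a.e.\ base point $x$ whose stable plaque meets $\widehat{\cV}$, the conditional measure $\mu_x^s$ gives zero mass to $\widehat{\cV}\cap\xi^s(x)$ (since that piece lies in $\mathcal N(x)$, which is $\mu_x^s$-null). Integrating over $x$ via Rokhlin's formula $\mu(\widehat{\cV})=\int \mu_x^s(\widehat{\cV})\,d\mu(x)$, together with an argument that every relevant plaque is seen from some full-measure base point (this is where one uses that $\mathbf B$ has full measure for \emph{every} subordinate partition, via Lemma~\ref{l.badsetinvariante} / Remark~\ref{rem_other_partitions}, and the countable equivalence / superposition property of \S\ref{subsec.superpo} to pass between partitions), one concludes $\mu(\widehat{\cV})=0$.

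Finally I would assemble the contradiction. By Step 1, for a typical $x_0$ the set $\mathcal N(x_0)$ is dense in $\cW^s(x_0)$; choosing any $y_0\in\mathcal N(x_0)$ and applying Step 2 gives a nonempty open set $\widehat{\cV}\subset\TT$ with $\mu(\widehat{\cV})=0$. Hence $\mathrm{supp}(\mu)\neq\TT$, contradicting the hypothesis that $\mu$ is fully supported. Therefore the assumption that $E^s$ and $E^u$ are not jointly integrable is untenable, which is exactly the assertion of the proposition. The main obstacle I anticipate is making Step 2 rigorous: conditional measures along stable manifolds are not globally defined, only full/zero-measure sets are (as emphasized in \S\ref{sub.subord partitions disint}), so one must carefully choose the subordinate partition (or work with the atlas-associated partition of Remark~\ref{rem_other_partitions}) and invoke the invariance of the Bad set under change of partition (Lemma~\ref{l.badsetinvariante}) to ensure that the nowhere-density of $\cP(x)$ really forces a genuinely $\mu$-positive open set to be $\mu$-null; keeping track of "mod $0$" issues and of the fact that one open chart may fail to be seen by some null set of base points is the delicate bookkeeping here.
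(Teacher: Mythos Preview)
Your proposal is correct and follows essentially the same approach as the paper: the proposition is deduced from the two lemmas you identify (Lemma~\ref{lemm dicht} and Lemma~\ref{lemme Bruno}), combined via contraposition with the full-support hypothesis. Your anticipation of the technical obstacle in Step~2 is accurate, and the paper resolves it exactly as you suggest, by working with the atlas-associated partition of Remark~\ref{rem_other_partitions} and invoking Lemma~\ref{l.badsetinvariante}; for Step~1 the paper additionally passes through backward iteration to upgrade an interval in $\cP(x_0)$ to $\cP(y)=\cW^s(y)$ for some $y$, and then invokes Didier's local-to-global criterion (Lemma~\ref{l.didier}) rather than directly ``spreading around'' a local integral surface.
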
 

It is worth to point out that the proof of this proposition is the only place in our argument towards Theorem~\ref{mainthm.dicotomia} where the full support assumption of the $u$-Gibbs measure is used. Moreover, in Proposition~\ref{p.mesuredubadset} the ergodic invariant measure $\mu$ does not need to be $u$-Gibbs.

\subsubsection{Local joint integrability}

In the proof of Proposition~\ref{p.mesuredubadset} we shall apply a criterion for joint integrability that comes from \cite{didier2003stability}. See also \cite[\S 2.3]{HHUBook}.

\begin{defi}
Given $x\in\TT$ we say that the bundles $E^s$ and $E^u$ are \emph{jointly integrable at $x$} if there exists $\delta,\eps>0$ such that for each $z\in\cWs_\delta(x)$ and $y\in\cWu_\delta(x)$ it holds
\[
\cWu_\eps(z)\cap\cWs(y)\neq\emptyset.
\]
\end{defi}

The result below follows directly from \cite[Lemma 5]{didier2003stability} (see also \cite[Lemma 2.3.7]{HHUBook}). 

\begin{lemma}
	\label{l.didier}
Let $f\in\cA^2(\TT)$. Assume that $E^s$ and $E^u$ are jointly integrable at each $x\in\TT$, with uniform constants $\delta,\eps$. Then, $f$ is jointly integrable (in the sense of Definition~\ref{def.joint}).
\end{lemma}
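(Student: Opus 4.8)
This is essentially \cite[Lemma 5]{didier2003stability} (see also \cite[Lemma 2.3.7]{HHUBook}); the plan is to pass from pointwise local joint integrability to a global topological foliation by producing a coherent family of plaques, and then to upgrade the leaves to class $C^1$. \emph{Local plaques.} For each $x\in\TT$, using local joint integrability at $x$ with the uniform constants $\delta,\eps$, I would set
\[
\Sigma_x\eqdef\bigcup_{z\in\cWs_\delta(x)}\cWu_\eps(z).
\]
The product-structure hypothesis says precisely that, after slightly shrinking, $\Sigma_x$ is also saturated by strong-stable plaques: for every $y\in\cWu_\delta(x)$ a neighbourhood of $y$ in $\cWs(y)$ lies in $\Sigma_x$. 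Hence $\Sigma_x$ is a topological $2$-disc through $x$ carrying two transverse continuous sub-foliations, one by $C^2$ arcs of strong-stable leaves and one by $C^2$ arcs of strong-unstable leaves.

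\emph{Coherence and gluing.} The key point — and the only place where the uniformity of $\delta,\eps$ in $x$ enters — is that if $x'\in\Sigma_x$ then $\Sigma_{x'}$ and $\Sigma_x$ agree on a neighbourhood of $x'$. I would establish this exactly as in \cite{didier2003stability}: propagate the local product structure along strong-stable and strong-unstable arcs contained in $\Sigma_x$, using that every plaque has size bounded below independently of its centre, and then run a connectedness argument along $\Sigma_x$. It follows that the $\Sigma_x$ are the plaques of a continuous foliation $\cW^{su}$, with
\[
\cW^{su}(x)=\bigcup_{z\in\cWs(x)}\cWu(z)=\bigcup_{y\in\cWu(x)}\cWs(y)
\]
(the two unions coinciding by the same propagation), each leaf being an injectively immersed topological surface and nearby leaves staying uniformly close — all of this because $\cWs$ and $\cWu$ are themselves foliations and the local product structure is uniform.

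\emph{$C^1$ regularity of the leaves.} Fixing $x$ and a chart around $x$ in which $E^s(x),E^u(x),E^c(x)$ are the coordinate axes, and using that $\cW^{su}(x)$ is everywhere transverse to $\cWc$, a neighbourhood of $x$ in $\cW^{su}(x)$ is the graph of a continuous function $h$ of the $(E^s,E^u)$-coordinates. By the previous step this graph is foliated by $C^2$ strong-stable arcs and, transversally, by $C^2$ strong-unstable arcs, so $h$ is $C^2$ along the leaves of two transverse continuous foliations with uniformly $C^2$ leaves; by Journ\'e's regularity lemma, $h$ is therefore $C^1$. Thus every leaf of $\cW^{su}$ is a $C^1$ submanifold, and at any of its points its tangent plane contains the tangent lines to the strong-stable and strong-unstable leaves through that point, hence equals $E^s\oplus E^u$ there. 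This is precisely joint integrability in the sense of Definition~\ref{def.joint}.

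The step I expect to be the main obstacle is the local-to-global passage, i.e.\ checking that the germs of the $\Sigma_x$ genuinely fit together: this is where uniformity of $\delta$ and $\eps$ is indispensable and is the whole content of Didier's lemma. Once the bi-foliated structure of the plaques is in hand, the $C^1$ conclusion is a routine application of Journ\'e's lemma, and the passage from plaques to a global foliation is soft.
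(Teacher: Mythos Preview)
Your proposal is correct and aligns with the paper's treatment: the paper does not give an independent proof but simply cites \cite[Lemma 5]{didier2003stability} (and \cite[Lemma 2.3.7]{HHUBook}), and your sketch is precisely an outline of Didier's argument --- construct local $su$-plaques from the uniform local product structure, glue them coherently using the uniformity of $\delta,\eps$, and invoke Journ\'e's lemma on the bi-foliated plaques to upgrade to $C^1$ leaves tangent to $E^s\oplus E^u$. There is nothing to add; you have supplied the details the paper outsourced.
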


\subsubsection{Proof of Proposition~\ref{p.mesuredubadset}}

This Proposition follows directly from Lemma~\ref{lemm dicht} and Lemma~\ref{lemme Bruno} below. 

On the topological level, we have:
\begin{lemma}\label{lemm dicht}
Let $f\colon\mathbb{T}^3\to\mathbb{T}^3$ be a $C^2$ Anosov diffeomorphism, strongly partially hyperbolic with expanding center and $C^1$ stable holonomies.

	Then the following dichotomy holds:
	\begin{itemize}
		\item either $E^s$ and $E^u$ are jointly integrable;
		\item or for any $x \in \TT$, the set $\mathcal{N}(x)$ is open and dense in $ \cWs(x)$. 
	\end{itemize}
\end{lemma}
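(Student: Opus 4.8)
\textbf{Proof plan for Lemma~\ref{lemm dicht}.}

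The plan is to argue by contraposition: assuming that $\mathcal{N}(x_0)$ fails to be dense in $\cW^s(x_0)$ for some $x_0\in\TT$, I want to produce a neighbourhood on which $E^s$ and $E^u$ are jointly integrable at every point, and then invoke Lemma~\ref{l.didier}. If $\mathcal{N}(x_0)$ is not dense in $\cW^s(x_0)$, then by definition its complement $\cP(x_0)$ (which is closed) has nonempty interior in $\cW^s(x_0)$: there is a relatively open arc $J\subset\cW^s(x_0)$ on which $\alpha^s(x_0,\cdot)\equiv 0$, i.e.\ $DH^s_{x_0,y}E^u(x_0)=E^u(y)$ for all $y\in J$. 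The first step is to upgrade this ``vanishing angle along an arc'' into genuine \emph{local} joint integrability near $x_0$, meaning that the $cu$-plaque of $x_0$ is coherently foliated by a two-parameter family of pieces of unstable leaves sitting over $J$, and that inside each such plaque the unstable holonomy from $\cW^u(x_0)$ to $\cW^u(y)$ is exactly the stable holonomy restricted there; concretely, for $z\in\cW^u_\delta(x_0)$ and $y\in J$ close to $x_0$ I want $\cW^u_\e(H^s_{x_0,y}(z))\cap\cW^s(y)\ne\emptyset$ and this intersection point to equal $H^s_{z,?}$\,-type image, so that the local $su$-lamination closes up. This is essentially a uniqueness/ODE argument along $\cW^{cu}(x_0)$: the $C^1$ strong unstable foliation inside $\cW^{cu}(x_0)$ and the image under $DH^s$ of $E^u$ integrate the same line field along $J$, hence agree, so the stable holonomy carries unstable plaques to unstable plaques exactly on that arc.

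The second step is to spread this from the single arc $J$ to the whole stable leaf, and in fact to all of $\TT$. Here I would use two ingredients already available: (i) the equivariance $f\circ H^s_{x,y}=H^s_{f(x),f(y)}\circ f$ from \eqref{e.equivariancia}, which gives $\alpha^s(x,y)=0\iff \alpha^s(f^n x,f^n y)=0$ (Remark~\ref{claim equiv}); and (ii) the fact that $f^{-1}$ contracts $\cW^s$, so any arc $J\subset\cW^s(x_0)$ on which $\alpha^s$ vanishes has preimages $f^{-n}(J)\subset\cW^s(f^{-n}x_0)$ that also carry vanishing angle and become shorter, while forward images $f^n(J)$ stretch and eventually exhaust $\cW^s(x_0)$ in the leaf topology. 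Since $\cW^s(x_0)$ is dense in $\TT$ (Lemma~\ref{claim min}), and the set $\{x:\alpha^s(x,y)=0 \text{ for all }y\text{ in a fixed-size stable neighbourhood of }x\}$ will turn out to be open (by continuity of $\alpha^s$ in both variables and the $C^1$-continuity of stable holonomies, $DH^s$ depending continuously on the endpoints, which is where the hypothesis ``$C^1$ stable holonomies'' is used) and invariant, it is all of $\TT$. Thus $E^s$ and $E^u$ are jointly integrable at every point with uniform constants $\delta,\e$, and Lemma~\ref{l.didier} finishes the first alternative.

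I expect the main obstacle to be the first step: carefully turning the infinitesimal statement $\alpha^s(x_0,y)=0$ for $y$ in an arc into the \emph{finite} statement of joint integrability at $x_0$ (existence of the local $su$-surface through $x_0$), and checking that the constants $\delta,\e$ can be taken uniform. The subtlety is that $\alpha^s$ compares $DH^s_{x_0,y}$ applied to $E^u(x_0)$ with $E^u(y)$ only \emph{at the point $y$}, whereas joint integrability requires matching the whole unstable plaque; bridging this gap requires integrating along the arc and using that the holonomy $H^s$ is a $C^1$ diffeomorphism between $cu$-plaques that maps the $C^1$ foliation $\cW^u|_{\cW^{cu}(x_0)}$ to the $C^1$ foliation $\cW^u|_{\cW^{cu}(y)}$ exactly when the tangent lines agree everywhere along $J$, which is a Frobenius/flow-box type argument in two dimensions (so no integrability obstruction arises once the two line fields coincide). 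The uniformity of $\delta,\e$ then comes from compactness of $\TT$ together with the invariance and density arguments of the second step, and the continuity of $x\mapsto\cW^{cu}(x)$ and of the $C^1$-norms of the relevant holonomies, all quoted from Section~\ref{section definitions notations}.
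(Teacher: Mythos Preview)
Your overall strategy (contraposition, use $f$-invariance of the angle condition, density of stable leaves, and finish with Lemma~\ref{l.didier}) matches the paper's. However, the order in which you propose to carry out the two steps creates exactly the obstacle you worry about, and the paper sidesteps it completely by reversing that order.

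Concretely: you try to pass from ``$\alpha^s(x_0,y)=0$ for $y$ in an arc $J$'' directly to a local $su$-surface through $x_0$, and only afterwards spread. As you correctly note, the hypothesis only gives $DH^s_{x_0,y}E^u(x_0)=E^u(y)$ \emph{at the single point $y$}; to know that $H^s_{x_0,y}(\cW^u_\delta(x_0))$ is tangent to $E^u$ at every point $y'=H^s_{x_0,y}(z)$ along it, you would need $\alpha^s(z,y')=0$ for $z\in\cW^u_\delta(x_0)$, which is not part of your hypothesis at this stage. Your ``Frobenius/flow-box'' remark does not close this gap: both line fields are integrable, but the question is whether the two resulting foliations \emph{coincide}, and matching them at one transversal is not enough. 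The paper instead first spreads the zero-angle property to \emph{all} pairs: from the arc $J\ni x$ one gets $\cP(f^{-n}(x))\supset f^{-n}(\cW^s_\varepsilon(x))$, and since $f^{-1}$ \emph{expands} stable leaves (you have the direction reversed), a subsequential limit $q$ of $f^{-n}(x)$ satisfies $\cP(q)=\cW^s(q)$. Then density of $\cW^s(q)$ and continuity of $\alpha^s$ give $\cP(z)=\cW^s(z)$ for every $z\in\TT$. Only now does one deduce joint integrability, and at this point it is immediate: for any $x$ and $z\in\cW^s_\delta(x)$, the curve $H^s_{x,z}(\cW^u_\delta(x))$ is tangent to $E^u$ at each of its points $y'$ (because $\alpha^s(y,y')=0$ for the corresponding $y\in\cW^u_\delta(x)$), hence by unique integrability of $E^u$ it lies inside a single unstable leaf. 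Lemma~\ref{l.didier} then applies.

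Two smaller issues in your Step~2: (i) you write ``$f^{-1}$ contracts $\cW^s$'' and that forward images $f^n(J)$ stretch; it is the opposite, and moreover $f^n(J)\subset\cW^s(f^n(x_0))$ is not contained in $\cW^s(x_0)$, so the ``exhaust $\cW^s(x_0)$'' claim needs the limiting argument above rather than a direct exhaustion; (ii) your assertion that the set $\{x:\alpha^s(x,y)=0\text{ for all }y\in\cW^s_r(x)\}$ is open is not justified by continuity of $\alpha^s$ alone (zero-sets of continuous functions are closed, not open). The paper's route via a single leaf $\cW^s(q)$ with $\cP(q)=\cW^s(q)$, followed by the density/continuity argument, avoids needing any such openness claim.
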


\begin{proof}	
	Let us assume that there exists $x_0 \in\T^3$ such that $\cP(x_0)$  contains a non-trivial open interval, i.e., that for some $x \in \mathcal{W}^s(x_0)$, and $\varepsilon>0$, it holds $\mathcal{W}_\varepsilon^s(x)\subset \cP(x_0)$. 
	Since $\cP(x)=\cP(x_0)$, we also have $\cP(x)\supset \mathcal{W}_\varepsilon^s(x)$, hence by \eqref{eq invar}, for any integer $n \geq 1$, % it holds 
	\begin{equation}\label{dynamique passeee}
	\cP\big(f^{-n}(x)\big)\supset f^{-n}\big(\mathcal{W}_\varepsilon^s(x)\big).
	\end{equation} 
	By compactness, we can take a subsequence $(f^{-n_k}(x))_{k \geq 0}$ such that $f^{-n_k}(x) \to y$ as $k \to +\infty$, for some point $y \in \T^3$. Since the restriction of $f^{-1}$ to stable leaves is uniformly expanding, we deduce from \eqref{dynamique passeee} that 
	\begin{equation}\label{dynamique passeee hehe}
	\cP(y)=\mathcal{W}^s(y).  
	\end{equation} 
	
	Let us now show that the same holds for any point, i.e., that $\cP(z)=\mathcal{W}^s(z)$, for any $z \in \T^3$. 
	Fix $z \in \T^3$ and $z' \in \mathcal{W}^s(z)$. By minimality of the stable foliation, the leaf $\mathcal{W}^s(y)$ is dense in $\T^3$, hence there exists a sequence $(y_n)_{n \geq 0}\in (\mathcal{W}^s(y))^{\N}$ such that $\lim_{n \to +\infty} y_n=z$. For each integer $n \geq 0$, we let $y_n'\eqdef H_{z,z'}^s(y_n)\in \mathcal{W}^{cu}(z')\cap \mathcal{W}^s(y_n)$. In particular, we also have $\lim_{n \to +\infty} y_n'=H_{z,z'}^s(\lim_{n} y_n)=z'$. By continuity of the angle function $\alpha^s$, we deduce that 
	$$
	\alpha^s(z,z')=\lim_{n \to +\infty}\alpha^s(y_n,y_n')=0,
	$$
	i.e., $z' \in \mathcal{P}(z)$.  Since $z' \in \mathcal{W}^s(z)$ was chosen arbitrarily within $\mathcal{W}^s(z)$, we deduce that $\mathcal{W}^s(z)=\mathcal{P}(z)$, for all $z \in \T^3$.

	Now, fix arbitrarily $x \in \T^3$. Let $0<\delta<\eps$ be chosen so that the stable holonomy map $H^s_{x,z}\colon\cWcu_\delta(x)\to\cWcu_\eps(z)$ is well defined for every $z\in\cWs_\delta(x)$.
	Notice that for any $y \in  \mathcal{W}_\delta^u(x)$, for any $z \in \mathcal{W}_\delta^s(x)$, if we set $y'\eqdef H_{x,z}^s(y)\in \mathcal{W}_\eps^s(y)$, we have 
	$$
	\angle \big(DH_{x,z}^s E^u(y),E^u(y')\big)=\alpha^s(y,y')=0. 
	$$
	In particular, $H_{x,z}^s\big(\mathcal{W}_\delta^u(x)\big)$ is a $C^1$ curve that is everywhere tangent to $E^u$ and at {\color{blue} $y'$} it is tangent to $E^u(y')$. By the unique integrability of the $E^u$ bundle, we conclude that 	
	
	% $\mathcal{W}^u(v')$, and hence, 
	$$
	H_{x,z}^s\big(\mathcal{W}_{\mathrm{loc}}^u(x)\big) \subset \mathcal{W}^u(y'). 
	$$
	
Hence, we obtain that for any $x\in \T^3$,  any $z\in \cW^s_{\delta}(x)$ and $y\in \cW^u_{\delta}(x)$ we have
\[
\cW^u_{\eps}(z) \cap \cW^s_{\eps}(y) \neq \emptyset. 
\]
This proves that $f$ fulfils the assumption of Lemma~\ref{l.didier}. Therefore, $f$ is jointly integrable.\qedhere

\end{proof}

%Let us fix an ergodic $u$-Gibbs measure $\mu$ for $f$. By definition, the support $\mathrm{supp}(\mu)$ is $u$-saturated, i.e., 
%\begin{equation}\label{saturraaaaion}
%x \in \mathrm{supp}(\mu)\ \implies\ \cW^u(x)\subset \mathrm{supp}(\mu).
%\end{equation}

%\begin{defi}[\textit{Bad} set]
%	We let $\mathbf{B}\subset \T^3$ be the set of all points $x \in \T^3$ such that the conditional measure $\mu_x^{s}$ gives zero measure to $\mathcal{N}_x$, i.e., for any $y \in \mathcal{W}_f^s(x)$ and  for any foliation box $\mathcal{B}\ni y$ for $\mathcal{W}_f^s$, we have  $\mu_{\mathcal{B},y}^s(\mathcal{N}_x)=0$. By a slight abuse of notation, we write this as $\mu_x^{s}(\mathcal{N}_x)=0$. 
%\end{defi}

%\begin{remark}
%	As the measure $\mu$ is $f$-invariant, it follows from \eqref{inv desting} and \eqref{eq invar} that for any $x \in\mathbf{B}$, and for any integer $n \in \Z$,
%	$$
%	\mu_{f^{n}(x)}^{s}(\mathcal{N}_{f^n(x)})=\mu_{x}^{s}(\mathcal{N}_{x})=0,
%	$$
%	hence $\mathbf{B}$ is $f$-invariant. By the ergodicity of $\mu$, we deduce that $\mu(\mathbf{B})=0$ or $1$. 
%\end{remark}

The lemma below concludes the proof of Proposition~\ref{p.mesuredubadset}.

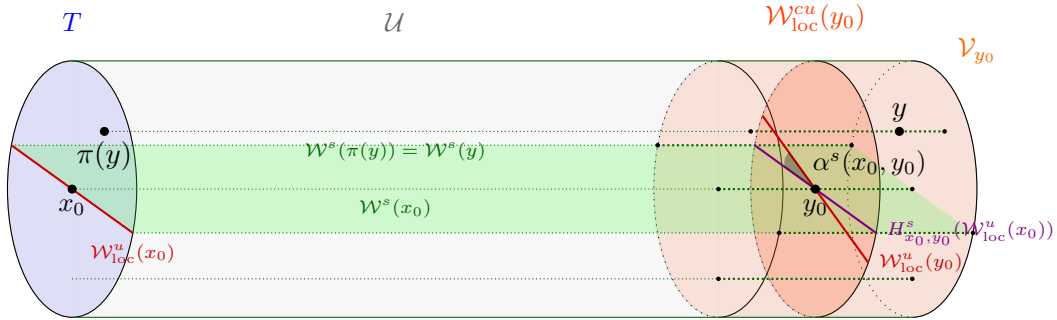
\begin{figure}[h]
	\centering
	\begin{tikzpicture}[scale=.85] 
	\fill[black!30!white, opacity=.1] (-10,-2)--(3,-2)--(3,2)--(-10,2)--(-10,-2);
	\draw[fill=black!30!white, opacity=.1] ($(-10, 0) + (90:1cm and 2cm)$(P) arc
	(90:270:1cm and 2cm);
	\draw[fill=black!30!white, opacity=.1] ($(3, 0) + (-90:1cm and 2cm)$(S) arc
	(-90:90:1cm and 2cm);
	\draw[fill=blue,opacity=.1] (-10,0) ellipse (1cm and 2cm);
	\draw (-10,0) ellipse (1cm and 2cm);
	%\draw (0,0) ellipse (1cm and 2cm);
	\coordinate (P) at ($(0, 0) + (90:1cm and 2cm)$);
	\draw[fill=red!50!orange,opacity=.12] ($(0, 0) + (90:1cm and 2cm)$(P) arc
	(90:270:1cm and 2cm);
	\draw[dotted] ($(0, 0) + (90:1cm and 2cm)$(P) arc
	(90:270:1cm and 2cm);
	\coordinate (Q) at ($(0, 0) + (270:1cm and 2cm)$);
	\draw ($(0, 0) + (270:1cm and 2cm)$(Q) arc
	(-90:90:1cm and 2cm);
	\coordinate (S) at ($(3, 0) + (90:1cm and 2cm)$);
	\draw[dotted] ($(3, 0) + (90:1cm and 2cm)$(S) arc
	(90:270:1cm and 2cm);
	\coordinate (R) at ($(3, 0) + (270:1cm and 2cm)$);
	\draw[fill=red!50!orange,opacity=.12] ($(3, 0) + (270:1cm and 2cm)$(Q) arc
	(-90:90:1cm and 2cm);
	\draw ($(3, 0) + (270:1cm and 2cm)$(Q) arc
	(-90:90:1cm and 2cm);
	\coordinate (Z) at ($(1.5, 0) + (90:1cm and 2cm)$);
	\draw[fill=red!50!orange,opacity=.2] ($(1.5, 0) + (90:1cm and 2cm)$(S) arc
	(90:270:1cm and 2cm);
	\draw[dotted] ($(1.5, 0) + (90:1cm and 2cm)$(S) arc
	(90:270:1cm and 2cm);
	\coordinate (Y) at ($(1.5, 0) + (270:1cm and 2cm)$);
	\draw[fill=red!50!orange,opacity=.2] ($(1.5, 0) + (270:1cm and 2cm)$(Q) arc
	(-90:90:1cm and 2cm);
	\draw ($(1.5, 0) + (270:1cm and 2cm)$(Q) arc
	(-90:90:1cm and 2cm);
	\fill[fill=red!50!orange,opacity=.12] (0,2)--(3,2)--(3,-2)--(0,-2)--(0,2);
	\draw[green!40!black] (-10,2)--(3,2);
	\draw[green!40!black] (-10,-2)--(3,-2);
	\draw[densely dotted, green!40!black] (-10,0)--(0,0);
	\fill[green!80!white, opacity=.2] ($(-10, 0) + (160:1cm and 2cm)$)--($(-10, 0) + (340:1cm and 2cm)$)--($(3, 0) + (340:1cm and 2cm)$)--($(3, 0) + (160:1cm and 2cm)$)--($(-10, 0) + (160:1cm and 2cm)$);
	\draw[densely dotted, green!40!black] ($(-10, 0) + (340:1cm and 2cm)$)--($(0, 0) + (340:1cm and 2cm)$);
	\draw[densely dotted, thick, green!40!black] ($(0, 0) + (340:1cm and 2cm)$)--($(3, 0) + (340:1cm and 2cm)$);
	\draw[densely dotted, green!40!black] ($(-10, 0) + (160:1cm and 2cm)$)--($(0, 0) + (160:1cm and 2cm)$);
	\draw[densely dotted, thick, green!40!black] ($(0, 0) + (160:1cm and 2cm)$)--($(3, 0) + (160:1cm and 2cm)$);
	\draw[densely dotted, thick, green!40!black] (0,0)--(3,0); 
	\draw[densely dotted, green!40!black] (-10,-1.4)--(0,-1.4);
	\draw[densely dotted, thick, green!40!black] (0,-1.4)--(3,-1.4);
	\draw[densely dotted, green!40!black] (-9.5,0.9)--(0.5,0.9);
	\draw[densely dotted, thick, green!40!black] (0.5,0.9)--(3.5,0.9);
	\draw[thick, red!80!black] ($(-10, 0) + (160:1cm and 2cm)$)--($(-10, 0) + (340:1cm and 2cm)$) node[below]{\tiny $\mathcal{W}_{\mathrm{loc}}^u(x_0)$};
	\draw[thick, violet] ($(1.5, 0) + (160:1cm and 2cm)$)--($(1.5, 0) + (340:1cm and 2cm)$)  node[right]{\tiny $H^s_{x_0,y_0}(\mathcal{W}_{\mathrm{loc}}^u(x_0))$};
	\draw[thick, red!80!black] ($(1.5, 0) + (145:1cm and 2cm)$)--($(1.5, 0) + (325:1cm and 2cm)$) node[right]{\tiny $\mathcal{W}_{\mathrm{loc}}^u(y_0)$};
	\fill[black, opacity=0.3] (1.5,0) -- ($(1.5, 0) + (160:0.5cm and 1cm)$) to[bend left] ($(1.5, 0) + (145:0.5cm and 1cm)$) -- (1.5,0);
	\fill[fill=black] (-10,0) node[below]{\small $x_0$} circle (2pt); 
	\fill[fill=black] (0,0) node[below]{} circle (1pt); 
	\fill[fill=black] (3,0) node[below]{} circle (1pt); 
	\fill[fill=black] (-9.5,0.9) node[below]{$\pi(y)$} circle (2pt); 
	\fill[fill=black] (2.8,0.9) node[above]{$y$} circle (2pt); 
	\fill[fill=black] (0,-1.4) node[below]{} circle (1pt);
	\fill[fill=black] (3,-1.4) node[below]{} circle (1pt); 
	\fill[fill=black] (0.5,0.9) node[below]{} circle (1pt);
	\fill[fill=black] (3.5,0.9) node[below]{} circle (1pt);
	\fill[fill=black] ($(0, 0) + (160:1cm and 2cm)$) node[below]{} circle (1pt); 
	\fill[fill=black] ($(3, 0) + (160:1cm and 2cm)$) node[below]{} circle (1pt); 
	\fill[fill=black] ($(0, 0) + (340:1cm and 2cm)$) node[below]{} circle (1pt); 
	\fill[fill=black] ($(3, 0) + (340:1cm and 2cm)$) node[below]{} circle (1pt); 
	\fill[fill=black] (-5,0) node[below]{\tiny \color{green!40!black} $\mathcal{W}^s(x_0)$}; 
	\fill[fill=black] (-5,2.3) node[above]{\small \color{white!40!black} $\mathcal{U}$}; 
	\fill[fill=black] (-5,0.9) node[below]{\tiny \color{green!40!black} $\mathcal{W}^s(\pi(y))=\mathcal{W}^s(y)$}; 
	\fill[fill=black] (-10,2.3) node[above]{\small \color{blue}$T$};
	\fill[fill=black] (4,1.8) node[above]{\small \color{orange!80!red}$\mathcal{V}_{y_0}$};
	\fill[fill=black] (1.5,2.3) node[above]{\small \color{orange!50!red}$\mathcal{W}^{cu}_{\mathrm{loc}}(y_0)$};
	\fill[fill=black] (1.5,0) node[below]{\small $y_0$} circle (2pt); 
	\fill[fill=black] (1.3,0.4) node[right]{\small $\alpha^s(x_0,y_0)$}; 
	\end{tikzpicture}
	\caption{\label{dich angle supp} Case where $E^s\oplus E^u$ is not integrable and $\nu(\mathbf{B}_\nu)=1$.}
\end{figure}

\begin{lemma}\label{lemme Bruno}
Let $f\colon\mathbb{T}^3\to\mathbb{T}^3$ be a $C^2$ Anosov diffeomorphism, strongly partially hyperbolic with expanding center and $C^1$ stable holonomies.
Assume that $E^s$ and $E^u$ are not jointly integrable. Then, for any ergodic 
	$f$-invariant Borel probability measure $\nu$ on $\TT$, we have the  dichotomy: 
	\begin{itemize}
		\item either the \emph{Bad set} associated to $\nu$ has zero measure;
		\item or $\mathrm{supp}(\nu)$ has empty interior. 
	\end{itemize}
	%If $\mu(\mathbf{B})=1$, then $E_f^s$ and $E_f^u$ are jointly integrable. 
\end{lemma}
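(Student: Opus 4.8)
Suppose the \emph{Bad set} $\mathbf{B}=\mathbf{B}(\xi^s,\nu)$ has positive $\nu$-measure. Applying the zero--one law (Theorem~\ref{th_0-1}) to the ergodic measure $\nu$ --- equivalently, invoking property $(3)$ of $\mathbf{B}$ from \S\ref{ss.badset} --- we get $\nu(\mathbf{B})=1$, and by Lemma~\ref{l.badsetinvariante} this holds for every subordinate partition; in particular we are free to replace $\xi^s$ by any $\xi^s_{-n}=f^{-n}\xi^s$, whose atoms grow along $\cW^s$ (Remark~\ref{rem.af1}). We must prove that $\mathrm{supp}(\nu)$ has empty interior, so we argue by contradiction and assume $\mathrm{supp}(\nu)$ contains a non-empty open set $\mathcal{O}$. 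Since $E^s$ and $E^u$ are not jointly integrable, Lemma~\ref{lemm dicht} tells us that $\cN(x)$ is open and dense in $\cW^s(x)$ for every $x\in\TT$. Picking $x_0\in\mathcal{O}$ and intersecting the non-empty open subset $\cW^s_{\mathrm{loc}}(x_0)\cap\mathcal{O}$ of $\cW^s(x_0)$ with the dense set $\cN(x_0)$, we fix $y_0\in\cW^s(x_0)\cap\mathcal{O}$ with $\alpha^s(x_0,y_0)>0$ and $r\eqdef d_s(x_0,y_0)<1$; both $x_0,y_0\in\mathrm{supp}(\nu)$.

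The heart of the matter is to build out of the pair $(x_0,y_0)$ a \emph{foliated chart $\mathcal{V}$ for $\cW^s$ around $y_0$ with $\nu(\mathcal{V})=0$}; this contradicts $y_0\in\mathrm{supp}(\nu)$ and finishes the proof. Following Figure~\ref{dich angle supp}, the plan is: using the $C^1$-regularity of the stable holonomies and the global transversality of $\cW^c$ and $\cW^u$ inside $\cW^{cu}$ (Lemmas~\ref{l.coerenciaum}--\ref{l.coorenciadois}), choose a small transversal $T\subset\cW^{cu}(x_0)$ through $x_0$ and a foliated chart $\mathcal{U}=\bigcup_{q\in T}\cW^s_{r+\delta}(q)$ joining a box around $x_0$ to a neighbourhood of $y_0$, together with the holonomy projection $\pi\colon\mathcal{U}\to T$ along the stable plaques. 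By uniform continuity of the angle function $\alpha^s$, shrinking $T$ and $\delta$ we may assume that for every $w$ in the part $\mathcal{V}$ of $\mathcal{U}$ near $y_0$ one has $\alpha^s(\pi(w),w)>\tfrac12\alpha^s(x_0,y_0)>0$, and more generally that every point of $\cW^s(w)$ lying in the part of $\mathcal{U}$ near $x_0$ is \emph{twisted} relative to $w$, i.e.\ lies in $\cN(w)$.

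To exploit $\nu(\mathbf{B})=1$ I would use the following observation: for any subordinate partition $\xi^s$ to $\cW^s$ and $\nu$-a.e.\ $x$, one has $\mathbf{B}\cap\xi^s(x)=\cP(x)\cap\xi^s(x)$. Indeed, if $x\in\mathbf{B}$ and $y\in\xi^s(x)$, then $\mu^s_y=\mu^s_x$ (property of conditional measures) and $\cP(x),\cP(y)$ are either equal or disjoint (the relation $\alpha^s=0$ being an equivalence relation on stable leaves), so $\mu^s_x(\cP(y))=\mu^s_y(\cP(y))=1$ forces $\cP(y)=\cP(x)$, i.e.\ $\alpha^s(x,y)=0$; conversely $\alpha^s(x,y)=0$ gives $\cP(y)=\cP(x)$ and hence $y\in\mathbf{B}$. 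Since $\mathbf{B}$ is the same set for all the partitions $\xi^s_{-n}$ and their atoms exhaust $\cW^s(x)$, this yields $\mathbf{B}\cap\cW^s(x)=\cP(x)$ for $\nu$-a.e.\ $x$, a closed, \emph{nowhere dense} subset of $\cW^s(x)$. Combining this with the twist property of $\mathcal{V}$ --- the plaques of $\cW^s$ sweeping out $\mathcal{V}$ are twisted relative to the $x_0$-part of $\mathcal{U}$ --- together with the superposition property (Lemma~\ref{l.superposition}, Corollary~\ref{coro.superposition}), the plan is to deduce that $\nu$ cannot charge $\mathcal{V}$: any Bad point near $y_0$ would, by the displayed identity, be $\alpha^s$-equivalent to its $x_0$-representative $\pi(\cdot)$, contradicting $\alpha^s(\pi(w),w)>0$; so $\mathbf{B}$ must avoid $\mathcal{V}$ along every stable plaque it meets, forcing $\nu(\mathcal{V})=0$.

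The main obstacle is precisely this last deduction. Because $\nu$ is only assumed ergodic --- not $u$-Gibbs --- its conditionals along $\cW^s$ need not be absolutely continuous, and the assertion ``$\mathbf{B}$ avoids a plaque along a leaf'' only controls the conditional of $\nu$ on leaves entering $\mathcal{V}$ from the $x_0$-side, not the conditionals on the leaves through $\mathcal{V}$ themselves; turning the pointwise statement ``$\pi$ sends Bad points of $\mathcal{U}$ to non-Bad points, so $\pi^{-1}(\mathbf{B}\cap\mathcal{V})$ is $\nu$-null'' into the measure statement $\nu(\mathcal{V})=0$ is where the work lies, and it is here that the global interplay between the long chart $\mathcal{U}$, the projection $\pi$, and the $3$-dimensional spreading of $\nu$ forced by $\mathrm{supp}(\nu)$ having interior must be used --- this is the crux of Figure~\ref{dich angle supp} and the only point where the support hypothesis enters. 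Once $\nu(\mathcal{V})=0$ is established, the contradiction with $y_0\in\mathrm{supp}(\nu)$ is immediate.
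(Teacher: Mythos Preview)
Your setup --- the long foliated chart $\mathcal{U}$, the neighbourhood $\mathcal{V}$ of $y_0$, the projection $\pi$ along stable plaques to a transversal $T$ near $x_0$, and the twist condition $\alpha^s(\pi(w),w)>0$ on $\mathcal{V}$ --- is exactly what the paper builds, and your observation that $\mathbf{B}\cap\cW^s(x)=\cP(x)$ for $\nu$-a.e.\ $x$ is correct. But the proof is not finished: as you yourself say, the crucial step $\nu(\mathcal{V})=0$ is only \emph{described} as an obstacle, not carried out. Your final paragraph is a diagnosis, not an argument.

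The device you are missing is to disintegrate $\nu$ along a measurable partition $\eta$ \emph{adapted to the chart $\mathcal{U}$}, rather than along the abstract subordinate partitions $\xi^s_{-n}$. Following Remark~\ref{rem_other_partitions}, the paper takes a finite foliated atlas for $\cW^s$ with one chart $\mathcal{U}'\supset\overline{\mathcal{U}}$ and all other charts disjoint from $\mathcal{U}$, and lets $\eta$ be the associated plaque partition. By Lemma~\ref{l.badsetinvariante} (extended in Remark~\ref{rem_other_partitions}) one still has $\nu(\mathbf{B}(\eta,\nu))=1$, so $\nu^{\eta}_x(\cN(x))=0$ for $\nu$-a.e.\ $x$. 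The gain is geometric: every atom of $\eta$ meeting $\mathcal{V}$ is an entire plaque of $\mathcal{U}$, parametrised by its intersection with $T$, and the twist condition then reads $\eta(x)\cap\mathcal{V}\subset\cN(x)$; integrating the conditional measures over atoms gives $\nu(\mathcal{V})=0$. Your route via $\xi^s_{-n}$ cannot reproduce this, because those atoms, though eventually long, are centred at arbitrary points and bear no canonical relation to the transversal of $\mathcal{U}$, so there is no uniform way to feed the twist condition into the conditionals. Note also that the paper does not argue by contradiction on $\mathrm{supp}(\nu)$: it proves $\nu(\mathcal{V}_{y_0})=0$ for \emph{every} $y_0\in\cN(x_0)$ and every $x_0\in\TT$, and the empty-interior conclusion follows only afterwards from the density of $\cN(x_0)$ in $\cW^s(x_0)$ and of $\cW^s(x_0)$ in $\TT$.
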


\begin{proof}
	Fix $\nu$ as in the statement of the lemma, and let $x_0 \in \T^3$. Assume the first possibility does not occur. Then, the \emph{Bad set} has full measure. As we assume that $E^s$ and $E^u$ are not jointly integrable, by Lemma \ref{lemm dicht}, the set $\mathcal{N}(x_0)$ is open and dense in $\cWs(x_0)$. Fix $y_0 \in \mathcal{N}(x_0)$, i.e., such that $\alpha^s(x_0,y_0)\neq 0$. % Without loss of generality, we may assume that $y_0$ is close to $x_0$; indeed, for each $n \in \N$, $\alpha^s(f^n (x_0),f^n (y_0))\neq 0$, and $\lim_{n \to+\infty}d_{\cW^s}(f^n (x_0),f^n (y_0))= 0$. Moreover, b
	By the continuity of the angle function $\alpha^s$, there exist:
\begin{itemize}
	\item a foliated chart $\mathcal{U}$ for $\mathcal{W}^s$ containing $x_0,y_0$; stable plaques of $\mathcal{U}$ are denoted by $\{\mathcal{U}(x)\}_x$;
	\item a transversal $T$ for $\mathcal{W}^s$ at $x_0$;
	\item a neighbourhood $\mathcal{V}_{y_0}\subset \mathcal{U}$ of $y_0$;
	\item a projection $\pi\colon \mathcal{V}_{y_0} \ni y\to \pi(y)\in T$ along stable plaques of $\mathcal{U}$; 
\end{itemize}
such that 
$$
\forall\, y \in \mathcal{V}_{y_0},\quad \alpha^s(\pi(y),y)>0. 
$$
%$\sigma>0$ and a small neighbourhood $\mathcal{U}$ of $x_0$ such that for any $x \in \mathcal{U}$, if $y(x)\eqdef H_{x_0,y_0}^s(x)\in \mathcal{W}_{\mathrm{loc}}^{cu}(y_0)$ denotes the projection along stable leaves of $x$ on $\mathcal{W}_{\mathrm{loc}}^{cu}(y_0)$, then the set  $\mathcal{J}_x\eqdef\mathcal{W}_{\sigma}^s(y(x))$ satisfies $\mathcal{J}_x\subset \mathcal{N}(x)$. We set $\mathcal{V}_{y_0}\eqdef\cup_{x \in \mathcal{U}} \mathcal{J}_x$. Let us consider a small foliation box $\mathcal{B}$ for $\cWs$, with $(\mathcal{U}\cup\mathcal{V}_{y_0})\subset \mathcal{B}$, and l
The open set $\cU$ is obtained as a sufficiently small neighbourhood of a path inside $\cW^s$ from $x_0$ to $y_0$ (which must be trivially foliated by $\cW^s$ by Reeb's stability theorem). See Figure \ref{dich angle supp}. 

We claim that $\nu(\cV_{y_0})=0$. To see this, we consider a finite foliated atlas $\cA$ for $\cW^s$ such that
\begin{enumerate}
\item $\overline{\cU}$ is included inside a foliated chart $\cU'$ of $\cA$ with $\mu(\partial\cU')=0$;
\item every other foliated chart of $\cA$ is disjoint from $\mathcal{\cU}$, and its boundary has measure $0$.
\end{enumerate}
To obtain such an atlas, first observe that from our construction, $\cU$ can be obtained so that its closure lies inside an open set $\cU'$ trivially foliated by $\cW^s$. Next, we cover the compact set $\TT\setminus\cU'$ by finitely many open balls $B_i$ that trivialize $\cW^s$, have $\mu(\partial B_i)=0$ and are disjoint from $\cU$. Let $\eta$ be the measurable partition associated to $\cA$ (see Remark \ref{rem_other_partitions}). By construction, stable plaques $\cU(x)$ are contained inside atoms of $\eta$ and we have that for almost every $x\in T$, 
$$\eta(x)\cap\cV_{y_0}=\cU(x)\cap\cV_{y_0}\dans\cN(x),$$
hence $\nu_x^\eta(\eta(x)\cap\cV_{y_0})\leq\nu_x^\eta(\cN(x))=0$ for $\mu$-a.e. $x$. By the definition of conditional measures, this implies that $\nu(\cV_{y_0})=0$.
In other words, any point $y \in \mathcal{N}(x_0)$ has an open neighbourhood $\mathcal{V}_{y}\ni y$ such that $\T^3 \setminus \mathcal{V}_{y}$ has full measure
	therefore,
	$$
	\mathrm{supp}(\nu)\subset \T^3 \setminus \bigcup_{y\in \mathcal{N}(x_0)}\mathcal{V}_{y}.
	$$
	As $\mathcal{N}(x_0)$ is open and dense in $\cWs(x_0)$, and $\cWs(x_0)$ is dense in $\TT$, the set $\bigcup_{y\in \mathcal{N}(x_0)}\mathcal{V}_{y}$ is open and dense in $\TT$. Thus, $\mathrm{supp}(\nu)$ has empty interior. 
\end{proof}

\subsection{Main technical theorem}

We shall now show that our main result, Theorem~\ref{mainthm.dicotomia}, can be reduced to a more technical statement involving the \emph{Bad set}.

\begin{theorem}
	\label{mainthm.technique}
	Let $f\colon\mathbb{T}^3\to\mathbb{T}^3$ be a $C^2$ Anosov diffeomorphism, strongly partially hyperbolic %so that $f$ expands uniformly the central bundle.
	with expanding center and $C^1$ stable holonomies. Let $\mu$ be an ergodic $u$-Gibbs measure. If the \emph{Bad set} of $\mu$ has zero measure then $\mu$ is an SRB measure. 	
\end{theorem}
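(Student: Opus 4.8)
The plan is to show that a $u$-Gibbs measure $\mu$ whose Bad set has zero measure must be SRB, by analyzing the transverse (quotient) measures along the strong unstable foliation inside center-unstable leaves. The first step is to set up the right coordinates: using the non-stationary $C^1$ normal forms $\{\Phi_x\}_{x\in\TT}$ from Theorem~\ref{thm.normalformsintro}, one pushes the disintegration $\{\mu^{cu}_x\}$ of $\mu$ along $\cWcu$ to the plane $\R^2$, and then quotients by the horizontal lines (which correspond to $\cWu$-leaves) to obtain a family of locally finite measures $\{\hat\nu^c_x\}_{x\in\TT}$ on $\R$. The point of this construction is twofold: by the $u$-Gibbs property (Lemma~\ref{l.density_u-gibbs}) the conditionals along $\cWu$ are already equivalent to Lebesgue, so $\mu$ is SRB \emph{if and only if} each $\hat\nu^c_x$ is proportional to Lebesgue on $\R$; and the normal-form equivariance $f\circ\Phi_x=\Phi_{f(x)}\circ N_x$ with $N_x$ diagonal gives clean transformation rules for the $\hat\nu^c_x$ under the three ``elementary moves'' — applying the dynamics, sliding along $\cWu$, and sliding along $\cWc$ — each of which acts on $\R$ by an explicit affine map with controlled slope (and, for the center move, controlled translation part $\approx d_c(p,q)$).

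The second step is the standard Katok–Spatzier / Kalinin–Katok / Brown–Hertz reduction: to prove that each $\hat\nu^c_x$ is Lebesgue it suffices (Lemma~\ref{l.lema6.1}, modeled on \cite[Lemma 6.1]{BRH_Little}) to exhibit a positive-measure set $G\subset\TT$ of points $x$ for which there exist affine maps $\psi$ of $\R$, with slopes bounded above and below uniformly and with arbitrarily small but nonzero translation parts, such that $\hat\nu^c_x\propto\psi_\ast\hat\nu^c_x$. So the entire weight of the argument shifts to producing these ``almost-invariances.'' This is where the hypothesis $\mu(\mathbf{B})=0$ enters: by Theorem~\ref{th_0-1} and the definition of the Bad set, $\mu(\mathbf{B})=0$ forces the second alternative of the zero-one law, i.e., for $\mu$-a.e.\ $x$ and $\mu^s_x$-a.e.\ $y\in\cWs(x)$ we have $\alpha^s(x,y)>0$. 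One then fixes a large Lusin set on which $x\mapsto\hat\nu^c_x$ (and the other measurable data) is continuous, and for each large $\ell\in\N$ builds a $Y$-configuration as in Figure~\ref{f.coupledconfig}: points $x,y$ typical for $\mu$ with $y\in\cWs_{\loc}(x)$, $d(x,y)\approx\lambda^s_x(\ell)$, $d(x_{-\ell},y_{-\ell})\approx 1$ and $\alpha^s(x_{-\ell},y_{-\ell})>\tfrac1C$, together with $x^u\in\cWu_{\loc}(x)$, $y^u\in\cWu_{\loc}(y)$, $z^u$ with $\{z^u\}=\cWs_{\loc}(x^u)\cap\cWc_{\loc}(y^u)$. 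The $C^1$-ness of $H^s$ is what lets us define the two stopping times $\tau(\ell),t(\ell)$ so that $d(f^{\tau(\ell)}(x^u),f^{\tau(\ell)}(y^u))\approx\eps$ while $\|Df^{t(\ell)}(x)|_{E^c}\|\approx\|Df^{\tau(\ell)}(x^u)|_{E^c}\|$; quasi-isometric and synchronization estimates for these stopping times guarantee that, along a subsequence $\ell_k$, \emph{all} the relevant points land in the Lusin set simultaneously, so one can pass to the limit, obtaining $p,q$ with $q\in\cWc_{\loc}(p)$, $d(p,q)\approx\eps$, and $\hat\nu^c_q\propto\hat\Lambda_\ast\hat\nu^c_p$ for a linear $\hat\Lambda$ with uniformly bounded slope. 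Combining with the center-move rule $\hat\nu^c_p\propto\Psi_\ast\hat\nu^c_q$ (affine, bounded slope, translation $\approx\eps$) yields $\hat\nu^c_p\propto(\Psi\hat\Lambda)_\ast\hat\nu^c_p$, which is exactly the almost-invariance needed; letting $\eps\to0$ and collecting the resulting base points into $G$ finishes the reduction.

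The last step is to confront the technical obstruction that the center-stable holonomy is only Hölder, so one \emph{cannot} generally arrange $x^u$ and $y^u$ to lie in the Lusin set simultaneously with $y^u\in\cWcs_{\loc}(x^u)$. The fix — and I expect this to be the hardest part — is the \emph{matched $Y$-configuration} of Sections~\ref{s.coupled} and \ref{ss.matching}: instead of insisting on the exact center-stable relation, one chooses $x^u,y^u$ in the Lusin set that are merely \emph{close} to a genuine center-stable pair, and controls the discrepancy using the quantitative Hölder bound on $H^{cs}$ (Lemma~\ref{thm_regularity}) together with the distortion estimates for quadrilaterals (Corollary~\ref{c.distortionbasic2}) and Corollary~\ref{c.distortionbasic} for the ratio $d^\ell_x/d^\ell_y$. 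The key quantitative point is that the stopping times grow linearly in $\ell$ while the Hölder error and the distortion terms stay bounded, so the ``matching error'' does not destroy the limit; the synchronization lemma (Lemma~\ref{lemme synchro}) then shows the $y$-side stopping times, although defined via the $x$-side, differ by a bounded amount, so the affine maps produced on the $y$-side still have controlled slope. Granting Proposition~\ref{p.drift} (the drift statement extracting the almost-invariance from a matched $Y$-configuration), the theorem follows by assembling these pieces: $\mu(\mathbf{B})=0\Rightarrow$ positive angles a.e.\ $\Rightarrow$ matched $Y$-configurations exist for all large $\ell$ $\Rightarrow$ the hypothesis of Lemma~\ref{l.lema6.1} holds $\Rightarrow$ each $\hat\nu^c_x$ is proportional to Lebesgue $\Rightarrow$ $\mu$ is SRB.
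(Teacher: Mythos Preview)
Your proposal is correct and follows essentially the same architecture as the paper's proof: the reduction via Theorem~\ref{t.leafwise} and Proposition~\ref{p.basicmoves} to showing $\hat\nu^c_x\propto\Leb$, the Kalinin--Katok/Brown--Hertz criterion (Lemma~\ref{l.lema6.1} $\Leftarrow$ Proposition~\ref{p.drift}), and the construction of matched $Y$-configurations to produce the required affine almost-invariance. One small correction: in the matching step the quadrilateral points $x^u,y^u$ \emph{do} satisfy $y^u=H^{cs}_{x,y}(x^u)$ exactly, and it is auxiliary points $a\in J(x^u,\eps,\ell)$, $b\in J(y^u,\eps,\ell)$ in the unstable dynamical balls (Definition~\ref{d.unstaball}) that are placed in the Lusin set --- but this is only a naming discrepancy and your description of the mechanism is otherwise accurate.
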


\subsubsection{Proof of Theorem~\ref{mainthm.dicotomia} assuming Theorem~\ref{mainthm.technique}}
Recall that by Lemma~\ref{c one holon} for every $f_0\in\cA^2_m(\TT)$ there exists a small neighbourhood $\cU(f_0)$ in $\dif$ so that every $f\in\cU(f_0)$ is an Anosov diffeomorphism, strongly partially hyperbolic with expanding center and $C^1$ stable holonomies. Let $\cU\eqdef\cup_{f_0\in\cA^2_m(\TT)}\cU(f_0)$. Take $f\in\cU$. Then $f$ fulfils the assumptions of Theorem~\ref{mainthm.technique}. Assume that $E^s$ and $E^u$ are not jointly integrable and let $\mu$ be a fully supported ergodic $u$-Gibbs measure. Then, Lemma~\ref{lemme Bruno} implies that the \emph{Bad set} has zero measure. By Theorem~\ref{mainthm.technique} it follows that $\mu$ is SRB, concluding.\qed

\bigskip

It is important to remark that in this main technical result we \emph{do not assume} that our $u$-Gibbs measure is fully supported. The result says that the existence of positive angles $\alpha^s(x,y)$ for almost every $y\in\cW^s(x)$ and almost every $x\in\TT$ suffices to convert $\mu$ into an SRB measure. 

%From now on, we let $f$ denote a map satisfying the assumptions of Theorem~\ref{mainthm.technique} and $\mu$ is an ergodic $u$-Gibbs measure whose Bad set has zero measure. Our goal is to show that $\mu$ is SRB.

\section{Normal forms}\label{section formes normales}

In this section, our goal is to prove Theorem~\ref{thm.normalformsintro}. This section is independent of the rest of the paper and we want to emphasize that we do not require any regularity of holonomies. We begin by revisiting the normal forms along one dimensional expanding foliations, as developped by Kalinin-Katok \cite{KalininKatok}. Then, using strongly that $E^u$ is $C^1$ along the center-unstable manifold, we extend this construction to two dimensions. In the final part of the section, we study the change of coordinates for points on the same leaf of $\cW^{cu}$.

Let $f\colon\T^3 \to \T^3$ be a $C^2$ Anosov diffeomorphism with a decomposition $T \T^3 = E^s \oplus E^c \oplus E^u$, where $E^c$ is uniformly expanded.  

Let us recall some notations that will be frequently used in this section.  
\begin{itemize}
%	\color{teal}
	\item Given a point $x\in \mathbb{T}^3$ we will write $\lambda^*_x \eqdef \|Df(x)|_{E^*}\|$, for $* = c, u$. Observe that $\|Df^{-1}(x)|_{E^*}\|=(\lambda^*_{x_{-1}})^{-1}$.
	
	\item Given $\ell\in \Z$ we write $x_\ell= f^\ell(x)$. 
	
	\item Given $n \in \Z$, and $*= c, u$, we write 
	\begin{equation}\label{recall equation cocycle}
		\lambda_x^*(n)\eqdef \|Df^n(x)|_{E^*}\|=\left\{
		\begin{array}{ll}
			\prod_{\ell=0}^{n-1}\lambda_{x_\ell}^*,&\text{ when }n \geq 0;\\
			\prod_{\ell=1}^{-n}\frac{1}{\lambda_{x_{-\ell}}^*}=\frac{1}{\lambda_{x_{n}}^*(-n)},&\text{ when }n < 0.
		\end{array}
		\right.
	\end{equation}
%	\color{black}
\end{itemize}

\subsection{One dimensional normal forms}

Given $x\in \TT$ and $y\in\cW^{cu}(x)$, we consider the functions 
\begin{equation}\label{eq.rhonormalforms}
	\rho_x^*(y)\eqdef\prod_{\ell=1}^{+\infty}\frac{\lambda^*_{x_{-\ell}}}{\lambda^*_{y_{-\ell}}}=\lim_{n \to +\infty}\frac{\lambda_y^*(-n)}{\lambda_x^*(-n)},	\:\:\textrm{for}\:\:*=c,u.
\end{equation}
It follows readily from the definition that $\rho_y^*(x)=(\rho_x^*(y))^{-1}$. Furthermore, for any $z \in \cW^{cu}(x)=\cW^{cu}(y)$, we have
\begin{equation}\label{quotient rho x y rho x z}
	\rho_x^*(z)=\rho_y^*(z)\rho_x^*(y).
\end{equation}
The basic distortion result (Lemma~\ref{l.distortionbasic}) implies that $\rho_x^*\colon \cW^{cu}(x)\to(0,+\infty)$ is a continuous map that depends continuously on the base point (for further details we refer to \cite{KalininKatok}). 

We then define
\begin{equation}\label{eq.kkunstable}
	\cH^*_x(y)\eqdef\int_x^y\rho_x^*(\hat{y})\,d\hat{y},
\end{equation}
which gives a $C^1$ diffeomorphism from $\cW^*(x)$ to the real line. Now, consider the inverse map $\Phi^*_x=(\cH^*_x)^{-1}$. Here's the proposition.

\begin{prop}[Kalinin-Katok -- see \cite{KalininKatok}, Section 3.1]
	\label{p.kk}
  For $\ast=c,u$, The family $\{\Phi^*_x\}_{x\in\TT}$ of $C^1$ diffeomorphisms $\Phi^*_x\colon\R\to\cW^*(x)$ satisfies  
	\begin{enumerate}
		\item $f\circ\Phi^*_x(s)=\Phi^*_{x_1}(\lambda^*_xs)$, for all $s \in \R$;
		\item $\Phi^*_x(0)=x$;
		\item $D\Phi^*_x(0)e_1=v^*(x)$, where $e_1$ denotes the unitary tangent vector field of the real line $\R$.
%		\item\label{point quatre prop six un} If $y\in\cW^*(x)$ then $(\Phi^*_y)^{-1}\circ\Phi^*_x\colon s \mapsto {\color{red}a_{x,y}^*s+b_{x,y}^*}$, for some real numbers {\color{red}$a_{x,y}^*,b_{x,y}^*$} (see Lemma \ref{lemme changt coords affine} below for the expression of $a_{x,y}^*$). 
	\end{enumerate}
\end{prop}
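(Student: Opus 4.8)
The plan is to check properties (1)--(3) directly from the defining formula $\Phi^*_x=(\cH^*_x)^{-1}$, using the multiplicativity identities for $\rho_x^*$ recorded in \eqref{quotient rho x y rho x z} together with the cocycle equation \eqref{recall equation cocycle}. First I would verify that $\cH^*_x$ is a genuine $C^1$-diffeomorphism $\cW^*(x)\to\R$: it is $C^1$ because $\rho_x^*$ is continuous and strictly positive on the $C^1$-leaf $\cW^*(x)$ (it is the integral of a continuous positive density along a one-dimensional $C^1$ curve, so it is $C^1$ with nonvanishing derivative, and it is onto $\R$ because $\rho_x^*$ is bounded away from $0$ and $\infty$ on bounded pieces by the basic distortion lemma, forcing $\cH_x^*(y)\to\pm\infty$ as $y$ leaves every compact subleaf). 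Hence $\Phi_x^*$ is a well-defined $C^1$-diffeomorphism of $\R$ onto $\cW^*(x)$. Property (2), $\Phi_x^*(0)=x$, is immediate since $\cH_x^*(x)=\int_x^x=0$. Property (3) follows by differentiating $\cH_x^*$ at $x$: $D\cH_x^*(x)v^*(x)=\rho_x^*(x)=1$ (the empty product), so $D\Phi_x^*(0)e_1=v^*(x)$.

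The crux is the conjugacy relation (1). The key computation is the transformation rule for $\rho$ under $f$. Using \eqref{recall equation cocycle} and the telescoping in \eqref{eq.rhonormalforms}, for $y\in\cW^{cu}(x)$ one gets
\[
\rho_{x_1}^*(y_1)=\lim_{n\to+\infty}\frac{\lambda_{y_1}^*(-n)}{\lambda_{x_1}^*(-n)}
=\lim_{n\to+\infty}\frac{\lambda_{y}^*(-n)\,\lambda_{x}^*(1)}{\lambda_{x}^*(-n)\,\lambda_{y}^*(1)}
=\frac{\lambda_x^*}{\lambda_y^*}\,\rho_x^*(y),
\]
where I used $\lambda_{y_1}^*(-n)=\lambda_y^*(1-n)=\lambda_y^*(-n)\lambda_{y_{-n}}^*\cdots$ carefully via the cocycle identity, i.e.\ $\lambda_{y_1}^*(-n)=\lambda_y^*(-n+1)=\lambda_y^*(-n)\cdot\lambda_{y_{-n}}^*$ matched against the same shift for $x$; the surviving ratio is exactly $\lambda_x^*/\lambda_y^*$ after cancelling. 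Then, performing the change of variables $\hat y = f^{-1}(\hat z)$ (so $d\hat z = \lambda^*_{\hat y}\,d\hat y$ along the $C^1$-leaf, since $f|_{\cW^*}$ is a $C^1$ map with derivative $\lambda^*_{\hat y}$ in the arclength parametrization) in the integral defining $\cH_{x_1}^*$:
\[
\cH_{x_1}^*(f(y))=\int_{x_1}^{y_1}\rho_{x_1}^*(\hat z)\,d\hat z
=\int_x^y \rho_{x_1}^*(f(\hat y))\,\lambda_{\hat y}^*\,d\hat y
=\int_x^y \frac{\lambda_x^*}{\lambda_{\hat y}^*}\,\rho_x^*(\hat y)\,\lambda_{\hat y}^*\,d\hat y
=\lambda_x^*\,\cH_x^*(y).
\]
This says $\cH_{x_1}^*\circ f=\lambda_x^*\cdot\cH_x^*$ on $\cW^*(x)$, which upon inverting is precisely $f\circ\Phi_x^*(s)=\Phi_{x_1}^*(\lambda_x^* s)$.

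The main obstacle — really the only point requiring care rather than bookkeeping — is justifying the change of variables and the differentiation of $\cH_x^*$ rigorously in the setting where $\cW^*(x)$ is merely an immersed $C^r$ (here $C^2$, so at least $C^1$) curve: one must fix an arclength (or any fixed $C^1$) parametrization of the leaf, interpret $\int_x^y \rho_x^*\,d\hat y$ as a line integral in that parametrization, and note that the measure $\rho_x^*\,d\hat y$ is independent of the chosen $C^1$-parametrization because it transforms as a density, so the conjugacy relation is parametrization-free. The continuity of $x\mapsto\Phi_x^*$ in the local $C^1$-topology, if needed here, follows from the continuous dependence of $\rho_x^*$ on the base point (which is part of the cited Kalinin--Katok input and rests on the uniform distortion bound in Lemma~\ref{l.distortionbasic}) together with continuity of the leaves; everything else is the telescoping/cocycle algebra carried out above.
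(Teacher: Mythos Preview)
The paper does not give its own proof of this proposition: it simply states it, attributing it to Kalinin--Katok \cite{KalininKatok}, and then moves on to the change-of-chart formula. So there is nothing to compare your argument against.

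That said, your direct verification is correct and is exactly the standard computation. Properties (2) and (3) are immediate from $\cH_x^*(x)=0$ and $\rho_x^*(x)=1$. For (1), the identity $\rho_{x_1}^*(y_1)=\dfrac{\lambda_x^*}{\lambda_y^*}\,\rho_x^*(y)$ is right, and your change-of-variables computation of $\cH_{x_1}^*\circ f=\lambda_x^*\cdot\cH_x^*$ is clean. One small slip: in your parenthetical justification you wrote $\lambda_{y_1}^*(-n)=\lambda_y^*(-n+1)$, which is off by a factor of $\lambda_y^*$; the correct cocycle identity is $\lambda_y^*(1-n)=\lambda_{y_1}^*(-n)\,\lambda_y^*$, so $\lambda_{y_1}^*(-n)=\lambda_y^*(1-n)/\lambda_y^*$. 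Matching this against the analogous expression for $x$ and taking the limit does indeed yield the surviving ratio $\lambda_x^*/\lambda_y^*$ times $\rho_x^*(y)$, so your displayed conclusion stands.
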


For any $x\in \TT$ and $y \in \mathcal{W}^*(x)$, % $q=\Phi_x^*(s)=\Phi_{x'}^*(s') \in \cW^*(x)=\cW^*(y)$,  $s,s'=s'(s)\in \mathbb{R}$, where
we define the \emph{change of charts} as
\begin{equation}\label{defi holono h phi}
	\mathcal{H}_{x,y}^*\eqdef \mathcal{H}^*_y\circ(\mathcal{H}^*_x)^{-1}=(\Phi^*_y)^{-1}\circ\Phi^*_x\colon \mathbb{R}\to \mathbb{R}.
\end{equation}

\begin{lemma}\label{lemme changt coords affine}
	For any $x\in \TT$ and $y \in \mathcal{W}^*(x)$, % $q=\Phi_x^*(s)=\Phi_{x'}^*(s') \in \cW^*(x)=\cW^*(y)$,  $s,s'=s'(s)\in \mathbb{R}$, where
	%let us define $\mathcal{H}_{x,y}^*\eqdef(\Phi^*_y)^{-1}\circ\Phi^*_x\colon \mathbb{R}\to \mathbb{R}$. Then,
	and for any $s\in \mathbb{R}$, it holds 
	\begin{equation}\label{equ vingt deux}
		\mathcal{H}_{x,y}^*(s)-\mathcal{H}_{x,y}^*(0)=\rho_{y}^*(x)\cdot s. %, \quad \text{i.e., }a_{x,y}^*=\rho_y^*(x).%,\, b_{x,y}^*=h^2_{x,y}^*(0).
	\end{equation}
	In other words, the change of one-dimensional normal form coordinates is affine with derivative $\rho^*_y(x)$. 
\end{lemma}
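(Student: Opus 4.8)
The plan is to differentiate the change of charts $\mathcal{H}_{x,y}^*$ directly along the leaf and show that its derivative is the constant $\rho_y^*(x)$; integrating then yields the claimed affine formula. First, I would recall that $\mathcal{H}_x^*\colon \mathcal{W}^*(x)\to\R$ is the $C^1$ diffeomorphism defined in \eqref{eq.kkunstable} as the integral of the continuous positive density $\rho_x^*$ along the leaf $\mathcal{W}^*(x)$. Writing derivatives along the leaf with respect to arc length, this means $(\mathcal{H}_x^*)'(\hat y)=\rho_x^*(\hat y)$ for every $\hat y\in\mathcal{W}^*(x)$, and similarly $(\mathcal{H}_y^*)'(\hat y)=\rho_y^*(\hat y)$ since $\mathcal{W}^*(y)=\mathcal{W}^*(x)$. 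Since $\Phi_x^*=(\mathcal{H}_x^*)^{-1}$, the inverse function theorem gives $(\Phi_x^*)'(s)=1/\rho_x^*(\Phi_x^*(s))$ for all $s\in\R$.

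Next, I would apply the chain rule to $\mathcal{H}_{x,y}^*(s)=\mathcal{H}_y^*\circ\Phi_x^*(s)$, obtaining
$$
(\mathcal{H}_{x,y}^*)'(s)=(\mathcal{H}_y^*)'\big(\Phi_x^*(s)\big)\cdot(\Phi_x^*)'(s)=\frac{\rho_y^*(\Phi_x^*(s))}{\rho_x^*(\Phi_x^*(s))}.
$$
Setting $z\eqdef\Phi_x^*(s)\in\mathcal{W}^*(x)$ and invoking the cocycle relation \eqref{quotient rho x y rho x z}, namely $\rho_x^*(z)=\rho_y^*(z)\,\rho_x^*(y)$, together with the identity $\rho_y^*(x)=(\rho_x^*(y))^{-1}$ noted right after \eqref{eq.rhonormalforms}, we get $\rho_y^*(z)/\rho_x^*(z)=1/\rho_x^*(y)=\rho_y^*(x)$, which is independent of $s$. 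Hence $(\mathcal{H}_{x,y}^*)'\equiv\rho_y^*(x)$ on all of $\R$.

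Finally, integrating this constant derivative from $0$ to $s$ gives $\mathcal{H}_{x,y}^*(s)-\mathcal{H}_{x,y}^*(0)=\rho_y^*(x)\cdot s$, which is exactly \eqref{equ vingt deux}. There is no serious obstacle in this argument; the only point requiring a bit of care is to express all leaf-wise derivatives with respect to the same (arc-length) parametrization of $\mathcal{W}^*(x)=\mathcal{W}^*(y)$ when differentiating the integrals defining $\mathcal{H}_x^*$ and $\mathcal{H}_y^*$, so that the inverse function theorem step and the chain rule combine consistently. (Alternatively, one could deduce the same statement from the conjugacy relation in Proposition~\ref{p.kk} and the uniqueness of such linearising coordinates, but the direct computation above is the most economical.)
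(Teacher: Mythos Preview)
Your proof is correct and follows essentially the same approach as the paper: both differentiate $\mathcal{H}_{x,y}^*=\mathcal{H}_y^*\circ(\mathcal{H}_x^*)^{-1}$ via the chain rule, identify the derivative at $s$ as $\rho_y^*(z)/\rho_x^*(z)$ with $z=\Phi_x^*(s)$, use the cocycle relation \eqref{quotient rho x y rho x z} to see this equals the constant $\rho_y^*(x)$, and then integrate. Your write-up is just slightly more explicit about the inverse function theorem step and the unpacking of the cocycle identity.
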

\begin{proof}
	%For each $s \in \mathbb{R}$, let us denote $p^*(s)\eqdef \Phi_x^*(s)
	%By definition, for any $s\in \mathbb{R}$, we have $h^2_{x,y}^*(s)=\mathcal{H}_{y}^*\circ (\mathcal{H}_x^*)^{-1}(s)$. 
	Fix $s \in \mathbb{R}$, and let $z\eqdef (\mathcal{H}_x^*)^{-1}(s)=(\mathcal{H}_y^*)^{-1}(\cH_{x,y}^*(s))\in \cW^*(x)=\cW^*(y)$. By \eqref{eq.kkunstable}. By differentiating $\mathcal{H}_{x,y}^*=\mathcal{H}_{y}^*\circ (\mathcal{H}_x^*)^{-1}$ at $s$, we get
	$$
	\frac{d\mathcal{H}_{x,y}^*}{ds}(s)=\frac{(\mathcal{H}_{y}^*)'(z)}{(\mathcal{H}_{x}^*)'(z)}=\frac{\rho_y^*(z)}{\rho_x^*(z)}=\rho_y^*(x),
	$$
	thus $\mathcal{H}_{x,y}^*(s)-\mathcal{H}_{x,y}^*(0)=\int_{0}^s \rho_y^*(x)\, dt=\rho_{y}^*(x)\cdot s$. 
\end{proof}

 %Since $E^c$ is also uniformly expanding the construction of $\{\Phi^c_x\}_{x\in\TT}$ is identical, with the obvious changes. 

%In the following, for $*=c,u,cu$, we denote 
%\begin{equation}\label{def pi etoile}
%\Pi^*:=\{(x,y):x \in \TT,\, y \in \mathcal{W}^{*}(x)\}\subset (\TT)^2.
%\end{equation}

\subsection{Two dimensional normal forms}

Recall that there are unitary vector fields $\TT\ni x\mapsto v^*(x)\in E^*(x)$ such that 
\[
Df(x)v^*(x)=\lambda^*_xv^*(x_1).
\] 
Notice that since $f$ is $C^2$, the bundle $E^u$ is $C^1$ inside $\cW^{cu}$ and since $\|v^u(x)\|>0$ the map $x\mapsto\lambda^u_x\in\R$ is $C^1$ in restriction to center unstable manifolds. 
Moreover, for a fixed $R>0$ the $C^1$ norm 
\[
\sup\{\|D(\lambda^u|_{\cW^{cu}(x)})(y)\|:y\in \cW^{cu}_R(x),\:x\in M\}
\]
is bounded by some uniform constant $C=C(R,f)>0$. These results follow from \cite{PSW}.

We restate here Theorem~\ref{thm.normalformsintro}, which gives a center-unstable version of Proposition~\ref{p.kk}.

\begin{theorem}
	\label{thm.normalforms}
	There exists a family of $C^1$ diffeomorphisms $\Phi_x \colon\R^2 \to \mathcal{W}^{cu}(x)$, depending continuously on $x$ such that:
	\begin{enumerate}
		\item\label{ppprop un} $f \circ \Phi_x(t,s) = \Phi_{f(x)}(\lambda^u_x t,\lambda^c_x s)$, for all $(s,t)\in \R^2$;
		\item\label{ppprop deux} $\Phi_x(0,0) = x$;
		\item\label{ppprop trois} $D\Phi_x(0,0)e_1=v^u(x)$ and $D\Phi_x(0,0)e_2=v^c(x)$;
		\item\label{ppprop quatre} $\Phi_x(\cdot,\cdot)$ depends continuously with the choice of $x$ in the $C^1$-topology; %;\marginpar{Affine structure?}
		\item\label{ppprop cinq} for all $s\in \R$, $\Phi_x(\R\times \{s\})=\mathcal{W}^u(\Phi_x(0,s))$, and $\Phi_x(\{0\}\times \R)=\mathcal{W}^c(x)$ (see Figure \ref{2D NF}).
		%\item\label{affine structures on w cu} \color{teal} for any $x \in \TT$ and $y \in \mathcal{W}^{cu}(x)$, the change of coordinates $\Phi_{y}^{-1}\circ \Phi_x$ is affine (see Proposition \ref{propo affine struct} below for a more precise statement).\color{black}
	\end{enumerate}
\end{theorem}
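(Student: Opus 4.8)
The plan is to build $\Phi_x$ by combining the one-dimensional normal forms along $\cW^c$ and along the $\cW^u$-leaves, exploiting that the unstable holonomy inside $\cW^{cu}(x)$ is $C^1$ (Lemma~\ref{l.lemadorho0} and \cite{PSW}). First I would fix the one-dimensional center normal form $\Phi^c_x\colon\R\to\cW^c(x)$ from Proposition~\ref{p.kk}, which already handles the ``vertical'' direction and satisfies $f\circ\Phi^c_x(s)=\Phi^c_{x_1}(\lambda^c_xs)$. For the ``horizontal'' direction, for each $s\in\R$ set $y_s\eqdef\Phi^c_x(0,s)\cdot$... more precisely $y_s\eqdef\Phi^c_x(s)\in\cW^c(x)$, and take the one-dimensional unstable normal form $\Phi^u_{y_s}\colon\R\to\cW^u(y_s)$, again from Proposition~\ref{p.kk}. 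Then I would simply define
\[
\Phi_x(t,s)\eqdef \Phi^u_{y_s}(t),\qquad y_s=\Phi^c_x(s).
\]
By Lemma~\ref{l.coerenciaum}, $\cW^{cu}(x)=\bigcup_{y\in\cW^c(x)}\cW^u(y)$, and by Lemma~\ref{l.coorenciadois} this union is ``disjoint'' in the sense that $\cW^c(x)$ meets each $\cW^u(y_s)$ only at $y_s$; together with the global transversality of $\cW^c$ and $\cW^u$ inside each center-unstable leaf, this shows $\Phi_x$ is a bijection onto $\cW^{cu}(x)$, and property~\eqref{ppprop cinq} (foliated chart) is immediate from the construction, as are \eqref{ppprop deux} and \eqref{ppprop trois} (using the corresponding normalizations of $\Phi^c_x$ and $\Phi^u_x$).

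The two substantive points are the equivariance~\eqref{ppprop un} and the $C^1$-regularity~\eqref{ppprop quatre}. For equivariance, note $f(y_s)=f\circ\Phi^c_x(s)=\Phi^c_{x_1}(\lambda^c_xs)=(x_1)_{\lambda^c_xs}$, so $f$ maps $\cW^u(y_s)$ to $\cW^u$ of the point with center-coordinate $\lambda^c_xs$; then I would use the equivariance of the unstable normal form, $f\circ\Phi^u_{y}(t)=\Phi^u_{f(y)}(\lambda^u_{y}t)$, but with care: the scaling factor is $\lambda^u_{y_s}$, not $\lambda^u_x$. This discrepancy is exactly why the resulting object is \emph{non-stationary} and does not define an affine structure (as warned in the remark after Theorem~\ref{thm.normalformsintro}); to get the clean statement~\eqref{ppprop un} as written, I expect one actually re-parametrizes the horizontal coordinate — i.e. one should define $\Phi_x(t,s)\eqdef\Phi^u_{y_s}\big(\rho^u_{y_s}(x)\,t\big)$ or similar, absorbing the holonomy-type factor $\rho^u_{y_s}(x)$ from Lemma~\ref{lemme changt coords affine} so that the horizontal scaling becomes $\lambda^u_x$ uniformly along the leaf. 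I would verify, using the cocycle relation for $\lambda^u$ and the product formula \eqref{quotient rho x y rho x z} for $\rho^u$, that with this normalization $f\circ\Phi_x(t,s)=\Phi_{x_1}(\lambda^u_x t,\lambda^c_x s)$ holds on the nose.

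The main obstacle, and the place where $C^1$ smoothness of $E^u$ along $\cW^{cu}$ is indispensable, is continuity of $\Phi_x$ in the $C^1$-topology and joint $C^1$-regularity in $(t,s)$ — in particular smoothness in the $s$-variable, which mixes the center normal form with the family of unstable normal forms based at moving points $y_s$. I would argue as follows: the densities $\rho^u_{y}(\hat y)$ in \eqref{eq.rhonormalforms} and hence $\cH^u_{y}$ in \eqref{eq.kkunstable}, as well as the center densities $\rho^c_x$, depend continuously on the base point in the $C^0$ sense by the distortion Lemma~\ref{l.distortionbasic}; upgrading $s\mapsto\Phi^u_{y_s}(t)$ to a $C^1$ dependence requires differentiating the infinite products $\prod_\ell \lambda^u_{x_{-\ell}}/\lambda^u_{y_{-\ell}}$ in the direction of the center, which is legitimate precisely because $x\mapsto\lambda^u_x$ is $C^1$ along $\cW^{cu}$ with uniformly bounded $C^1$-norm on compact pieces (the estimate quoted from \cite{PSW} just before the theorem), and because backward contraction along $\cW^{cu}$ makes the series of derivatives converge uniformly. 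Assembling these estimates, $D\Phi_x$ exists, is continuous in $(t,s)$ and in $x$, and the inverse $\cH_x=\Phi_x^{-1}$ is likewise $C^1$ since $D\Phi_x$ is everywhere invertible (its columns span $E^u$ and a direction transverse to it inside $\cW^{cu}$, by construction and the global transversality Lemmas~\ref{l.coerenciaum}–\ref{l.coorenciadois}). This completes the proof. \qed
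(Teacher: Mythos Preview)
Your proposal is correct and follows essentially the same route as the paper: the paper first builds the preliminary chart $\Psi_x(t,s)=\Phi^u_{\Phi^c_x(s)}(t)$, then introduces the reparametrization $\beta_x(s)=\rho^u_{\Phi^c_x(s)}(x)$ (exactly your factor $\rho^u_{y_s}(x)$) and sets $\Phi_x(t,s)=\Psi_x(\beta_x(s)t,s)$, verifying the equivariance via the cocycle identity $\lambda^u_{y_s}\beta_x(s)=\lambda^u_x\beta_{x_1}(\lambda^c_xs)$ and the $C^1$ regularity by termwise differentiation of the infinite product using the $C^1$-smoothness of $\lambda^u$ along $\cW^{cu}$ and backward contraction. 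The only organizational difference is that the paper inserts an auxiliary flow-based chart $\Gamma_x(t,s)=\phi^t(\Phi^c_x(s))$ to streamline the proof that $\Psi_x$ is a $C^1$ diffeomorphism, but the analytic content is the same as what you outline.
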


The drawback of our construction is that the natural generalization of Lemma~\ref{lemme changt coords affine} is no longer true, as the remark below demonstrates.

\begin{remark}
	\label{rem.martinsage}
	Note that in general, we cannot expect the change of normal charts $\mathcal{H}_{x,y}\eqdef\Phi_y^{-1}\circ \Phi_x\colon \R^2 \to \R^2$ to be affine for any $x\in \TT$ and $y \in \mathcal{W}^{cu}(x)$. 
	
	Suppose it were the case. We claim that it implies that for any $x \in \TT$, the center foliation $\mathcal{W}^c$ is $C^1$ within the center-unstable leaf $\mathcal{W}^{cu}(x)$. Indeed, for any $y \in \mathcal{W}^{cu}(x)$, by Theorem \ref{thm.normalforms}\eqref{ppprop cinq}, we have
	$$
	\Phi_x^{-1}(\mathcal{W}^c(y))=\mathcal{H}_{y,x}\circ \Phi_y^{-1}(\mathcal{W}^c(y))=\mathcal{H}_{y,x}(\{0\}\times \R),
	$$
	which is a straight line since the map $\mathcal{H}_{y,x}$ is affine. Since two distinct center leaves cannot cross, and $\Phi_x^{-1}(\mathcal{W}^c(x))=\{0\}\times \R$ is vertical, it follows that $\Phi_x^{-1}(\mathcal{W}^c(y))$ is vertical for any $y \in \mathcal{W}^{cu}(x)$. In particular, since the normal chart $\Phi_{x}$ is $C^1$, it does imply that $\mathcal{W}^c$ is $C^1$ within $\mathcal{W}^{cu}(x)$. 
	
	However, regularity of the center foliation $\mathcal{W}^c$ is a rare phenomenon. For instance, in \cite[Lemma 3]{GogolevPath}, it is shown that for $r \geq 2$, there exsits a $C^1$-open and $C^r$-dense set $\mathcal{V}\subset \mathcal{A}_m^r(\TT)$ of the set of $C^r$ conservative Anosov diffeomorphisms on $\TT$ considered in the present work such that for $f\in \mathcal{A}_m^r(\TT)$, the center foliation $\mathcal{W}^c$ is Lipschitz inside the center-unstable foliation $\mathcal{W}^{cu}$ if and only if $f \in \mathcal{A}_m^r(\TT)\setminus \mathcal{V}$. 
\end{remark}

We now proceed to the proof of Theorem~\ref{thm.normalforms}.

\subsubsection{A foliated chart}

A natural way of obtaining a parametrization $(t,s)$ of $\cWcu(x)$ is to consider the arc-length parameter $t$ along the curve $\cWu(\Phi^c_x(s))$. As we shall see, it is not hard to prove that this indeed gives a $C^1$ identification with nice properties (for instance horizontal lines are mapped onto unstable manifolds). The difficulty for proving Theorem~\ref{thm.normalforms} is that in these coordinates the map $f$ \emph{does not acts linearly}. For this reason in the forthcoming paragraphs we will perform suitable reparametrizations.

More formally, let $\phi\colon\R\times\cW^{cu}(x)\to\cW^{cu}(x)$ denote the flow of the vector field $x\mapsto v^u(x)$. Notice that $v^u|_{\cW^{cu}(x)}$ is $C^1$, with its $C^1$ norm depending continuously on $x$ on compact subsets of $\cW^{cu}(x)$. With this flow at hand we define a map $\Gamma_x\colon\R^2\to\cW^{cu}(x)$ by
\[
\Gamma_x(t,s)\eqdef\phi^t(\Phi^c_x(s)),\quad \forall\, (t,s)\in \R^2,
\] 
where $\{\Phi^c_x\}_{x\in\TT}$ are Kalinin-Katok's normal forms in $\cW^c$ (see Theorem \ref{thm.normalforms}).
\begin{lemma}
	\label{l.cartasparavoce}
For each $x\in\TT$, the map $\Gamma_x$ is a $C^1$ diffeomorphism whose $C^1$ norm depends continuously with $x$.
\end{lemma}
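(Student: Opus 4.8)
The map $\Gamma_x$ is defined as $\Gamma_x(t,s)=\phi^t(\Phi^c_x(s))$, where $\phi$ is the flow of the unit vector field $v^u$ restricted to $\cW^{cu}(x)$. The key structural facts are: the vector field $v^u|_{\cW^{cu}(x)}$ is $C^1$ with $C^1$-norm locally bounded uniformly in $x$ (by \cite{PSW}, as recalled above), and $\Phi^c_x$ is a $C^1$ diffeomorphism from $\R$ onto $\cW^c(x)$ depending continuously on $x$ in the $C^1$-topology (Proposition~\ref{p.kk}).

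Let me sketch the three things to check.

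First, $\Gamma_x$ is $C^1$. The flow $\phi^t$ of a $C^1$ vector field is $C^1$ jointly in $(t,p)$; composing with the $C^1$ map $s\mapsto \Phi^c_x(s)$ keeps it $C^1$. Explicitly, $\partial_t\Gamma_x(t,s)=v^u(\Gamma_x(t,s))$, which is continuous; and $\partial_s\Gamma_x(t,s)=D\phi^t(\Phi^c_x(s))\big(v^c(\Phi^c_x(s))\big)$ — here $\partial_s\Phi^c_x(s)$ is a (positive) multiple of $v^c(\Phi^c_x(s))$ by Proposition~\ref{p.kk}(3) and the definition of $\cH^c_x$ — which is also continuous in $(t,s)$ since the spatial derivative of the flow of a $C^1$ field is continuous.

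Second, $\Gamma_x$ is a diffeomorphism onto $\cW^{cu}(x)$. Injectivity and surjectivity follow from the trivialization of $\cW^{cu}(x)$ by the one-dimensional foliation $\cW^u$: by Lemma~\ref{l.coerenciaum}, $\cW^{cu}(x)=\bigcup_{y\in\cW^c(x)}\cW^u(y)$, and by Lemma~\ref{l.coorenciadois} the center leaf $\cW^c(x)$ meets each unstable leaf through it in a single point. Since $\Phi^c_x\colon\R\to\cW^c(x)$ is a bijection and, for fixed $s$, $t\mapsto\phi^t(\Phi^c_x(s))$ is a bijection onto $\cW^u(\Phi^c_x(s))$ (the flow of a nonvanishing complete vector field on a line), $\Gamma_x$ is a bijection; one should note the unstable leaves fill out $\cW^{cu}(x)$ and are pairwise disjoint, so there is no overlap. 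To see $\Gamma_x$ is a diffeomorphism it suffices to show $D\Gamma_x(t,s)$ is invertible everywhere: $\partial_t\Gamma_x=v^u$ spans $E^u$ along the image, while $\partial_s\Gamma_x=D\phi^t(v^c)$ is tangent to the image of the center leaf $\cW^c(x)$ under $\phi^t$; since $\phi^t$ maps $\cW^{cu}(x)$ to itself preserving $\cW^u$, the curve $\phi^t(\cW^c(x))$ is transverse to $\cW^u$ (it is a $C^1$ curve that is nowhere tangent to $E^u$, otherwise it would coincide locally with an unstable leaf, contradicting injectivity of $\phi^t$ on the $s$-variable), so $\partial_s\Gamma_x\notin E^u$ and the two partials are linearly independent. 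Hence $D\Gamma_x$ is an isomorphism at each point and $\Gamma_x$ is a $C^1$ diffeomorphism.

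Third, the $C^1$-norm of $\Gamma_x$ (and of $\Gamma_x^{-1}$, on compact sets) depends continuously on $x$. This reduces to continuity in $x$ of: the vector field $v^u|_{\cW^{cu}(x)}$ in the $C^1$-topology on compact sets (from \cite{PSW}), the normal form $\Phi^c_x$ in the $C^1$-topology (Proposition~\ref{p.kk}), and hence of the flow $\phi^t$ together with its spatial derivative $D\phi^t$ on compact sets — the latter by continuous dependence of solutions of ODEs (and of the variational equation) on the $C^1$-data. Composing these continuous dependences gives continuity of $x\mapsto\Gamma_x$ in the local $C^1$-topology.

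\textbf{Main obstacle.} The routine part is the $C^1$-regularity and the ODE continuity estimates; the genuinely delicate point is the transversality argument showing $\partial_s\Gamma_x$ never lies in $E^u$, i.e.\ that $\phi^t(\cW^c(x))$ stays transverse to the unstable foliation for all $t$. This is where one must invoke the dominated splitting $E^c\oplus E^u$ (as in Lemma~\ref{l.coorenciadois}): iterating by $f$ and using invariance of the flow up to time reparametrization, a tangency of $\phi^t(\cW^c(x))$ to $E^u$ would propagate backward to a near-tangency between an almost-vertical center curve and the unstable direction at a small scale, contradicting the uniform positive angle between $E^c$ and $E^u$. So the proof of this point will closely mirror the argument already given for Lemma~\ref{l.coorenciadois}.
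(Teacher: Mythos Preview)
Your outline is correct and matches the paper's proof in the bijectivity step (via Lemmas~\ref{l.coerenciaum} and~\ref{l.coorenciadois}) and in the $C^1$-regularity and continuous-dependence parts. The divergence is in what you flag as the ``main obstacle'': the transversality of $\partial_s\Gamma_x$ to $E^u$.

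This is not delicate at all, and the paper dispatches it in one line of linear algebra. The point you are missing is that the flow $\phi^t$ of $v^u$ satisfies $D\phi^t(p)\,v^u(p)=v^u(\phi^t(p))$ (the flow carries its own generator). Hence \emph{both} columns of the Jacobian of $\Gamma_x$ are the images, under the single linear isomorphism $D\phi^t(\Phi^c_x(s))$, of the two vectors $v^u(\Phi^c_x(s))\in E^u$ and $\frac{d}{ds}\Phi^c_x(s)\in E^c$. These are linearly independent at $t=0$ by the splitting, and a linear isomorphism preserves linear independence; done. No dynamical argument, no dominated splitting, no appeal to the proof of Lemma~\ref{l.coorenciadois} is needed.

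Your proposed route --- iterate by $f$ and use domination to rule out a tangency of $\phi^t(\cW^c(x))$ with $E^u$ --- is both unnecessary and somewhat awkward to make precise, since $\phi^t$ is the arc-length flow along $\cW^u$, not the dynamics: relating $f^n\circ\phi^t$ to $\phi^{t'}\circ f^n$ requires tracking a nontrivial reparametrisation, and the transversality you want is for a \emph{fixed} $t$, not an asymptotic statement. It can presumably be made to work, but it trades a one-line argument for a page of bookkeeping.
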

\begin{proof}
Let us show that this defines a $C^1$ diffeomorphism. Indeed, $\Gamma_x$ is injective and surjective due to Lemmas~\ref{l.coerenciaum}~and~\ref{l.coorenciadois} respectively. Therefore it suffices to prove that $\Gamma_x$ is $C^1$ with invertible derivative at each point.

Choosing an arbitrary coordinate system locally in $\cW^{cu}(x)$ we can think of $\Gamma_x$ as a map from $\R^2$ to $\R^2$. Now, observe that the Jacobian matrix of the map $(t,s)\mapsto\phi^t(\Phi^c_x(s))$ is the $2\times 2$ matrix whose columns are the vectors $v^u(\phi^t(\Phi^c_x(s)))$ and $D\phi^t(\Phi^c_x(s))\frac{d}{ds}\Phi^c_x(s)$. Since $\phi^t$ is a $C^1$ flow and $\Phi^c_x$ is a $C^1$ curve this proves that $\Gamma_x$ has continuous derivative. Moreover, since $v^u(\Phi^c_x(s))$ and $\frac{d}{ds}\Phi^c_x(s)$ are transverse and since $D\phi^t(\Phi^c_x(s))$ is a linear isomorphism, we have that the vectors
\[
v^u(\phi^t(\Phi^c_x(s)))=D\phi^t(\Phi^c_x(s))v^u(\Phi^c_x(s))\:\:\:\textrm{and}\:\:\:D\phi^t(\Phi^c_x(s))\frac{d}{ds}\Phi^c_x(s)
\]  
are also transverse. 
Thus the Jacobian matrix of $\Gamma_x$ is invertible, proving the assertion. The continuous dependence of the $C^1$ norm follows from that of $v^u$ and $\Phi^c$.
\end{proof}

\subsubsection{Preliminary construction}

We shall start the construction of the map $\Phi$ with a slight modification of the construction of the foliated chart $\Gamma_x$. Instead of using the arc-length parameter along unstable manifolds we shall use normal forms. This choice will allow us to perform the required reparametrization for linearizing the action of $f|_{\cWcu(x)}$.    

\begin{lemma}
	\label{l.obagulhoehcum}
	The map $\Psi_x\colon\R^2\to\cW^{cu}(x)$ given by $\Psi_x(t,s)\eqdef\Phi^u_{\Phi^c_x(s)}(t)$ is a $C^1$ diffeomorphism, with $C^1$ norm depending continuously from $x$.
\end{lemma}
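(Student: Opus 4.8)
The plan is to imitate the proof of Lemma~\ref{l.cartasparavoce}: the only genuinely new feature is that the arc-length parameter along unstable leaves is replaced by the Kalinin--Katok parameter coming from the family $\{\Phi^u_y\}$, so the chart $\Psi_x$ differs from $\Gamma_x$ by a fiberwise reparametrization, and the crux will be to show that this reparametrization is jointly $C^1$.

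First I would check that $\Psi_x$ is a bijection, by the same argument used for $\Gamma_x$: if $\Psi_x(t_1,s_1)=\Psi_x(t_2,s_2)=:z$, then setting $y_i\eqdef\Phi^c_x(s_i)\in\cW^c(x)$ one gets $z\in\cW^u(y_1)\cap\cW^u(y_2)$, hence $\cW^u(y_1)=\cW^u(y_2)$; since $y_1,y_2$ also lie in $\cW^c(x)$, Lemma~\ref{l.coorenciadois} forces $y_1=y_2$, so $s_1=s_2$ by injectivity of $\Phi^c_x$ and then $t_1=t_2$ by injectivity of $\Phi^u_{y_1}$. Surjectivity follows from Lemma~\ref{l.coerenciaum} together with the surjectivity of $\Phi^c_x$ and of each $\Phi^u_y$.

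The heart of the matter is joint $C^1$-regularity. I would factor $\Psi_x=\Gamma_x\circ\Theta_x$, where $\Theta_x(t,s)\eqdef(T(t,s),s)$ and $T(t,s)$ is the signed arc-length along $\cW^u(\Phi^c_x(s))$ from $\Phi^c_x(s)$ to $\Phi^u_{\Phi^c_x(s)}(t)$; since $\Gamma_x$ is a $C^1$ diffeomorphism by Lemma~\ref{l.cartasparavoce}, it suffices to show $\Theta_x$ is a $C^1$ diffeomorphism of $\R^2$. Its inverse is $(\ell,s)\mapsto(\tilde G(\ell,s),s)$ with $\tilde G(\ell,s)\eqdef\cH^u_{\Phi^c_x(s)}\big(\phi^\ell(\Phi^c_x(s))\big)=\int_0^\ell\rho^u_{\Phi^c_x(s)}\big(\Gamma_x(\hat\ell,s)\big)\,d\hat\ell$, so by the inverse function theorem I only need $\tilde G$ to be of class $C^1$ in $(\ell,s)$: for fixed $s$, $\ell\mapsto\tilde G(\ell,s)$ is an increasing diffeomorphism of $\R$ with $\partial_\ell\tilde G=\rho^u_{\Phi^c_x(s)}(\Gamma_x(\cdot,s))>0$ continuous, so the Jacobian of $(\ell,s)\mapsto(\tilde G(\ell,s),s)$ is everywhere invertible, and the only remaining point is continuity of $\partial_s\tilde G$. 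Differentiating under the integral sign, that reduces to showing $(\hat\ell,s)\mapsto\rho^u_{\Phi^c_x(s)}(\Gamma_x(\hat\ell,s))$ is $C^1$ in $s$, and since $\Gamma_x$ and $\Phi^c_x$ are $C^1$ and $\rho^u_w(z)=\rho^u_x(z)/\rho^u_x(w)$ for $w,z\in\cW^{cu}(x)$ by \eqref{quotient rho x y rho x z}, everything comes down to the claim that $\rho^u_x\colon\cW^{cu}(x)\to(0,+\infty)$ is of class $C^1$ --- not merely continuous, as recorded so far --- with $C^1$-norm locally uniform in $x$.

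I expect that last claim to be the main obstacle. I would prove it from \eqref{eq.rhonormalforms}, i.e. $\log\rho^u_x(y)=\sum_{\ell\ge 1}\big(\log\lambda^u_{x_{-\ell}}-\log\lambda^u_{y_{-\ell}}\big)$: by \cite{PSW} the map $\lambda^u$ is $C^1$ along $\cW^{cu}$ with $C^1$-norm uniformly bounded on each $\cW^{cu}_R(x)$, locally uniformly in $x$, while, the splitting $E^s\oplus E^{cu}$ being Anosov, $Df^{-\ell}$ contracts $E^{cu}$ uniformly exponentially; hence the formally differentiated series $y\mapsto-\sum_{\ell\ge 1}D\big(\log\lambda^u|_{\cW^{cu}}\big)(y_{-\ell})\circ Df^{-\ell}(y)$ converges absolutely and uniformly on $\cW^{cu}_R(x)$, locally uniformly in $x$, which yields that $\rho^u_x$ is $C^1$ along $\cW^{cu}(x)$ with the required uniformity and continuous dependence on $x$. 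Feeding this back, $\tilde G$ is $C^1$, hence $\Theta_x$ is a $C^1$ diffeomorphism, and therefore so is $\Psi_x=\Gamma_x\circ\Theta_x$; and since $\Gamma_x$, $\Phi^c_x$ and $\rho^u_x$ all depend $C^1$-continuously on $x$, so does $\Psi_x$, which completes the plan.
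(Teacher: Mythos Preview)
Your proposal is correct and follows essentially the same route as the paper: the paper constructs the explicit inverse $\Xi_x(y)=(\cH^u_{y^c}(y),\cH^c_x(y^c))$ and shows that $\Xi_x\circ\Gamma_x$ is a $C^1$ diffeomorphism, which is precisely your map $\Theta_x^{-1}$, and both arguments boil down to proving that $(t,s)\mapsto\rho^u_{\Gamma_x(0,s)}(\Gamma_x(t,s))$ is $C^1$ via uniform convergence of the termwise-differentiated series for $\log\rho^u$. Your use of the cocycle relation \eqref{quotient rho x y rho x z} to reduce to the single statement that $\rho^u_x$ is $C^1$ on $\cW^{cu}(x)$ is a mild streamlining of the paper's direct computation, but the substance is identical.
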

\begin{proof}
	Notice that $\Psi_x$ is bijective because we can define directly an inverse map. Indeed, given $y\in\cW^{cu}(x)$ we consider the point $y^c\in\cW^c(x)$, whose existence is ensured by Lemmas~\ref{l.coerenciaum}~and~\ref{l.coorenciadois}, such that 
	\[
	\{y^c\}=\cW^u(y)\cap\cW^c(x).
	\]
	Define then a map $\Xi_x\colon\cW^{cu}(x)\to\R^2$ by 
	\[
	\Xi_x(y)\eqdef(\cH^u_{y^c}(y),\cH^c_{x}(y^c)).
	\] 
	From the definitions we have
	\[
	\Psi_x\circ\Xi_x(y)=\Phi^u_{\Phi^c_x(\cH^c_x(y^c))}(\cH^u_{y^c}(y))=\Phi^u_{y^c}(\cH^u_{y^c}(y))=y.
	\]
	In a similar way one sees that $\Xi_x\circ\Psi_x(t,s)=(t,s)$. It suffices then to check that $\Xi_x$ is $C^1$ with invertible derivative at each point. 
		
	To prove that $\Xi_x$ is a $C^1$ diffeomorphism then it suffices to establish that $\Xi_x\circ\Gamma_x\colon \R^2\to\R^2$ is a $C^1$ diffeomorphism. To compute the derivative of this map fix a point $y=\Gamma_x(t,s)\in\cW^{cu}(x)$. Then, since $\Gamma_x$ is a foliated chart, we have $y^c=\Gamma_x(0,s)$. Thus,
	\[
	\Xi_x\circ\Gamma_x(t,s)=(\cH^u_{\Gamma_x(0,s)}(\Gamma_x(t,s)),\cH^c_x(\Gamma_x(0,s))).
	\]  
	Note that $(t,s)\mapsto\cH^c_x(\Gamma_x(0,s))=s$ is a $C^1$ function. Moreover, as
	\[
	\cH^u_{\Gamma_x(0,s)}(\Gamma_x(t,s))=\int_{\Gamma_x(0,s)}^{\Gamma_x(t,s)}\rho_{\Gamma_x(0,s)}(\Gamma_x(r,s))dr,
	\]
	Leibniz rule will imply that $\Xi_x\circ\Gamma_x$ is a $C^1$ map as long as we prove that  $(t,s)\mapsto\rho_{\Gamma_x(0,s)}(\Gamma_x(t,s))\in(0,+\infty)$ is $C^1$. To verify this assertion, recall that
	\[
	\rho_{\Gamma_x(0,s)}^u(\Gamma_x(t,s))=\prod_{\ell=0}^{+\infty}\frac{\|Df^{-1}(f^{-\ell}(\Gamma_x(t,s)))|_{E^u}\|}{\|Df^{-1}(f^{-\ell}(\Gamma_x(0,s)))|_{E^u}\|}.
	\]
	Consider the auxiliary function $g(t,s)=\log\rho_{\Gamma_x(0,s)}^u(\Gamma_x(t,s))$. Then we can write
	\[
	g(t,s)=\sum_{\ell=0}^{+\infty}\left(\log\lambda^u_{f^{-\ell-1}(\Gamma_x(0,s))}-\log\lambda^u_{f^{-\ell-1}(\Gamma_x(t,s))}\right).
	\]  
	First notice that as $\lambda^u_{(.)}\colon\cW^{cu}\to\R$ is $C^1$ and since $d(f^{-\ell}(\Gamma_x(0,s)),f^{-\ell}(\Gamma_x(t,s)))\to 0$ exponentially fast (with uniform rate, depending only on $f$), we deduce that $g$ is the uniform limit on compact sets of the sequence of partial sums. Consider the function $g_{\ell}(t,s)=\log\lambda^u_{f^{-\ell-1}(\Gamma_x(t,s))}$. By the chain rule,
	\[
	Dg_{\ell}(t,s)=\frac{D\lambda^u_{f^{-\ell-1}(\Gamma_x(t,s))}Df^{-\ell-1}(\Gamma_x(t,s))D\Gamma_x(t,s)}{\lambda^u_{f^{-\ell-1}(\Gamma_x(t,s))}}.
	\]
	Therefore, for $(t,s)\in B_R(0)$ there is a uniform constant $C=C(R,f)>0$ so that 
	\[
	\|Dg_{\ell}(t,s)\|\leq  C\|Df^{-\ell-1}(\Gamma_x(t,s))|_{E^{cu}}\|.
	\]
	The right hand side above has a uniform bound decreasing exponentially as $\ell$ increases for $\|Df^{-\ell}(x)|_{E^{cu}}\|\leq e^{-\chi^c_2\ell}$, for every $x\in\TT$, with $\chi_2^c>0$ (recall our notations from \S\ref{sss.hypestimates}). Thus,
	\[
	\|Dg_{\ell}(t,s)\|\leq  Ce^{-\chi^c_2\ell}.
	\] 
	An analogous estimate holds for the function $\tilde{g}_\ell(t,s)=\log\lambda^u_{\Gamma_x(0,s)}$. This proves that the derivative of the truncated series defining $g$ also converges uniformly on compact sets. By elementary calculus (see for instance Proposition 1.41 of \cite{barreira2012ordinary}) this proves that $g$ is $C^1$, and therefore we conclude that $(t,s)\mapsto\rho_{\Gamma_x(0,s)}^u(\Gamma_x(t,s))\in(0,+\infty)$ is $C^1$, as desired. To complete the proof of the lemma, observe that the derivative of $\Xi_x\circ\Gamma_x$ is an upper triangular matrix with non-zero entries in the diagonal. By the inverse function theorem, as $\Xi_x\circ\Gamma_x$ is bijective, this proves that this map is in fact a $C^1$ diffeomorphism. This ends the proof of the lemma.  
\end{proof}

%For simplicity, suppose that $f$ preserves the orientation of all the bundles $E^*$, for $* = s,c, u$. Let $\{\Phi^c_x\colon\R \to \mathcal{W}^{c}(x)\}_{x\in \T^3}$ be the family of one normal forms for center leaves, i.e., such that for each $x\in \TT$ and $s\in \R$, $f \circ \Phi_x^c(s) = \Phi_{f(x)}^c(\lambda^c_x s)$. Fix a continuous unitary vector field $v^c(\cdot)$. Using $v^c(x)$ we may identify $E^c(x)$ with $\R$.  Thus, we have a family of parametrizations $\Phi^c_x\colon \R \to \mathcal{W}^c(x)$ that verifies:
%\begin{itemize}
%	\item for any $s\in \R$, $f\circ \Phi^c_x(s) = \Phi^c_{x_1}(\lambda^c_x s)$;
%	\item $\Phi^c_x(0) = x$ and $D\Phi^c_x(0) = \mathrm{Id}$;
%	\item for any $y \in \mathcal{W}^c(x)$, the map $(\Phi^c_y)^{-1} \circ \Phi^c_x$ is affine. 
%\end{itemize}

%Let $v^u(\cdot)$ be a unitary vector field that generates the bundle $E^u$.  Consider the vector field $v^u_x\colon \R\ni s \mapsto v^u(\Phi^c_x(s))\in E^u(x)$.  In what follows, for $j \geq 1$, we will write $s_{-j}(x) = \Phi^c_{x_{-j}}\left((\lambda^c_{x_{-j}} \cdots \lambda^c_{x_{-1}})^{-1} s\right)\in \mathcal{W}^c(x_{-j})$.

\subsubsection{A reparametrization function}

Here again, the diffeomorphism $\Psi_x$ constructed in Lemma~\ref{l.obagulhoehcum} does not satisfy all the requirements we need. Indeed, to obtain condition \eqref{ppprop un} from Theorem~\ref{thm.normalforms} it is necessary to perform a suitable reparametrization. The design of such a map is the content of next lemma.

\begin{lemma}
	\label{lem.changeofubasis}
	For each $x\in \T^3$, there exists a $C^1$ function $\beta_x\colon \R \to \R^+$ such that 
	\begin{equation}
	\label{eq.oneiterate}
	Df(\Phi^c_x(s))\cdot \beta_x(s) v^u\left(\Phi^c_x(s)\right)= \lambda^u_x\beta_{x_1}(\lambda^c_x s) v^u\left(\Phi^c_{x_1}(\lambda^c_x s)\right).  
	\end{equation}
	
\end{lemma}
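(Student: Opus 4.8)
The plan is to define $\beta_x$ explicitly in terms of the one-dimensional normal forms. For $x\in\T^3$ and $s\in\R$, set
\[
\beta_x(s)\eqdef \rho^u_{\Phi^c_x(s)}(x)=\big(\rho^u_x(\Phi^c_x(s))\big)^{-1}=\prod_{\ell=1}^{+\infty}\frac{\lambda^u_{f^{-\ell}(\Phi^c_x(s))}}{\lambda^u_{x_{-\ell}}}=\lim_{n\to+\infty}\frac{\|Df^{-n}(x)|_{E^u}\|}{\|Df^{-n}(\Phi^c_x(s))|_{E^u}\|},
\]
which is meaningful since $\Phi^c_x(s)\in\cW^c(x)\subset\cW^{cu}(x)$, is manifestly $>0$, and satisfies $\beta_x(0)=\rho^u_x(x)=1$.

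First I would check that this choice solves \eqref{eq.oneiterate}. Since $f(\Phi^c_x(s))=\Phi^c_{x_1}(\lambda^c_x s)$ by Proposition~\ref{p.kk}(1) and $Df(p)v^u(p)=\lambda^u_p\,v^u(f(p))$, the left-hand side of \eqref{eq.oneiterate} equals $\lambda^u_{\Phi^c_x(s)}\,\beta_x(s)\,v^u(\Phi^c_{x_1}(\lambda^c_x s))$, so \eqref{eq.oneiterate} is equivalent to the scalar identity $\lambda^u_{\Phi^c_x(s)}\,\beta_x(s)=\lambda^u_x\,\beta_{x_1}(\lambda^c_x s)$. Writing $p\eqdef\Phi^c_x(s)$, so that $f(p)=\Phi^c_{x_1}(\lambda^c_x s)$, a shift of index in the product defining $\rho^u$ (the $f$-equivariance $\rho^u_{f(x)}(f(p))=\tfrac{\lambda^u_x}{\lambda^u_p}\rho^u_x(p)$, to be compared with \eqref{quotient rho x y rho x z}) and inversion give
\[
\beta_{x_1}(\lambda^c_x s)=\rho^u_{f(p)}(x_1)=\frac{\lambda^u_p}{\lambda^u_x}\,\rho^u_p(x)=\frac{\lambda^u_p}{\lambda^u_x}\,\beta_x(s),
\]
which is exactly the required identity. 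Note this also recovers $\beta_\bullet\equiv 1$ being the trivial solution at $s=0$, consistent with the defining relation of $v^u$.

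The remaining point — and the only real work — is to prove that $s\mapsto\beta_x(s)$ is $C^1$, with $C^1$-norm depending continuously on $x$ on compact sets (which will also be needed later); this parallels the argument in the proof of Lemma~\ref{l.obagulhoehcum}. I would take logarithms, $\log\beta_x(s)=\sum_{\ell\ge1}\big(\log\lambda^u_{f^{-\ell}(\Phi^c_x(s))}-\log\lambda^u_{x_{-\ell}}\big)$, and argue term by term. Because $E^c$ is uniformly expanded, $f^{-1}$ contracts $\cW^c$ uniformly, so for $s$ in a fixed bounded interval the points $f^{-\ell}(\Phi^c_x(s))$ and $x_{-\ell}$ lie on the common center leaf $\cW^c(x_{-\ell})$ at leaf-distance $\le Ce^{-\chi^c_2\ell}$, uniformly in $x$ (with $\chi^c_2>0$ as in \S\ref{sss.hypestimates}); combined with the $C^1$-regularity of $\lambda^u$ along center-unstable leaves and the uniform bound on its $C^1$-norm over bounded regions quoted from \cite{PSW}, this gives uniform convergence of the series on compact sets. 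Differentiating the $\ell$-th term by the chain rule, $\tfrac{d}{ds}f^{-\ell}(\Phi^c_x(s))=Df^{-\ell}(\Phi^c_x(s))\,D\Phi^c_x(s)e_1\in E^c$ has norm $\le Ce^{-\chi^c_2\ell}$ (since $D\Phi^c_x(s)e_1\in E^c$ and $\|Df^{-\ell}|_{E^c}\|\le e^{-\chi^c_2\ell}$), so the derivative of the $\ell$-th summand is $O(e^{-\chi^c_2\ell})$ uniformly, and the differentiated series converges uniformly on compact sets. By the classical term-by-term differentiation theorem (e.g.\ \cite[Proposition 1.41]{barreira2012ordinary}), $\log\beta_x$, hence $\beta_x=\exp(\log\beta_x)$, is $C^1$ and positive, with $C^1$-norm depending continuously on $x$. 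The main obstacle is precisely this uniform tail control of the differentiated series, which rests on the expansion of $E^c$ (giving exponential backward contraction along $\cW^c$) and on the $C^1$-regularity with uniform bounds of $E^u$ — equivalently of the function $\lambda^u$ — in restriction to center-unstable leaves.
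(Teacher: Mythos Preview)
Your proposal is correct and follows essentially the same approach as the paper: you define $\beta_x(s)=\rho^u_{\Phi^c_x(s)}(x)$ (which the paper also arrives at, see \eqref{eq.betadefinition}), verify the scalar identity $\lambda^u_{\Phi^c_x(s)}\beta_x(s)=\lambda^u_x\beta_{x_1}(\lambda^c_x s)$ via the $f$-equivariance of $\rho^u$, and prove $C^1$-regularity by term-by-term differentiation of the logarithmic series using exponential backward contraction along $\cW^c$ and the $C^1$-regularity of $\lambda^u$ along center-unstable leaves. The paper's argument is the same in substance, only written out with the auxiliary notation $\psi^x_{-\ell}(s)=f^{-\ell}(\Phi^c_x(s))$ and an explicit index shift in the product where you invoke equivariance.
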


\begin{proof}
	Let us fix a point $x\in\TT$. Consider the sequence of functions $\R \ni s\mapsto\psi^x_{-\ell}(s)\in\cW^c(x_{-\ell})$ where 
	\[
	\psi^x_{-\ell}(s)\eqdef\Phi^c_{x_{-\ell}}\left((\lambda^c_{x_{-\ell}}\cdots\lambda^c_{x_{-1}})^{-1}s\right).
	\]
	For coherence of notation also make the convention that $\psi^x_0(s)=\Phi^c_x(s)$. 

\begin{remark}\label{rem_true_orbit}
Note that by property of Kalinin-Katok's normal forms, we have that if $y=\psi^x_0(s)=\Phi^c_x(s)$ for some $s\in\R$ and $\ell\in\N$
$$f^{-\ell}(y)=\psi^x_{-\ell}(s).$$
\end{remark}

	Using this sequence we define $h_{n,x}\colon \R\to\R$,
	\[
	h_{n,x}(s) \eqdef\sum_{\ell=1}^n\left(\log\lambda^u_{\psi^x_{-\ell}(s)}-\log\lambda^u_{x_{-\ell}}\right).
	\]
	Notice that $y\mapsto\lambda^u_y$ is uniformly bounded from above and from below, and the lower bound is larger than $1$. Thus
	\[
	|\log\lambda^u_{\psi^x_{-\ell}(s)}-\log\lambda^u_{x_{-\ell}}|\leq |\lambda^u_{\psi^x_{-\ell}(s)}-\lambda^u_{x_{-\ell}}|.
	\]    
	Now, as $y\mapsto\lambda^c_y$ is also uniformly bounded with a lower bound larger than $1$, since the family $\{\Phi^c_x\}$ is continuous in the $C^1$ topology and $\Phi^c_{x_{-\ell}}(0)=x_{-\ell}$ it follows that $d(\psi^x_{-\ell}(s),x_{-\ell})\to 0$ exponentially fast with uniform constants (keep in mind Remark \ref{rem_true_orbit}). Moreover, as $y\mapsto\lambda^u_y$ is uniformly $C^1$ inside $\cW^{cu}$ there exists some constant $C>0$ such that 
	\[
	|\lambda^u_{\psi^x_{-\ell}(s)}-\lambda^u_{x_{-\ell}}|\leq Cd(\psi^x_{-\ell}(s),x_{-\ell}).
	\] 
	This proves that $h_{n,x}$ converges uniformly to a continuous function $h_x$. We define $\beta_{x}(s)\eqdef e^{h_x(s)}$. Observe that with Notation \eqref{eq.rhonormalforms},  we can write
	\begin{equation}\label{eq.betadefinition}
	\beta_{x}(s)=\prod_{\ell=1}^{+\infty}\frac{\lambda^u_{\psi^x_{-\ell}(s)}}{\lambda^u_{x_{-\ell}}}=\rho_{\Phi_x^c(s)}^u(x)=\frac{1}{\rho_{x}^u(\Phi_x^c(s))}.
	\end{equation}
	
	We claim that $\lambda^u_{\Phi^c_x(s)}\beta_x(s)=\lambda^u_x\beta_{x_1}(\lambda^c_xs)$. This formula immediately gives us \eqref{eq.oneiterate}. Indeed note that by property of Kalinin-Katok's normal forms
\begin{align*}
Df(\Phi^c_x(s))\cdot v^u(\Phi^c_x(s))&=\lambda^u_{\Phi^c_x(s)}v^u(f(\Phi^c_x(s)))\\
                                &=\lambda^u_{\Phi^c_x(s)}v^u(\Phi^c_{x_1}(\lambda^c_x s)),
\end{align*}
and combining this equality with the claim provides \eqref{eq.oneiterate}.	
	
To prove the claim we first remark that 
	\[
	\psi^{x_1}_{-\ell}(\lambda^c_xs)=\Phi^c_{x_{-\ell+1}}\left((\lambda^c_{x_{-\ell+1}}\cdots\lambda^c_{x_{-1}}\lambda^c_x)^{-1}\lambda^c_xs\right)=\psi^x_{-\ell+1}(s).
	\]
	We deduce that
	\[
	\beta_{x_1}(\lambda^c_xs)=\prod_{\ell=1}^{+\infty}\frac{\lambda^u_{\psi^{x_1}_{-\ell}(\lambda^c_xs)}}{\lambda^u_{x_{-\ell+1}}}=\prod_{\ell=1}^{\infty}\frac{\lambda^u_{\psi^x_{-\ell+1}(s)}}{\lambda^u_{x_{-\ell+1}}}=\frac{\lambda^u_{\Phi^c_x(s)}}{\lambda^u_x}\beta_x(s),
	\]
	proving our claim. 
	
	To finish the lemma it remains to show that $\beta_x$ is a $C^1$ function, depending continuously (with respect to the local $C^1$ topology) on $x$. With that goal in mind we compute
	\[
	\frac{d}{ds}\psi^x_{-\ell}(s)=(\lambda^c_{x_{-\ell}}\cdots\lambda^c_{x_{-1}})^{-1}D\Phi^c_{x_{-\ell}}\left((\lambda^c_{x_{-\ell}}\cdots\lambda^c_{x_{-1}})^{-1}s\right)e_1.
	\] 
	Applying thus the chain rule we obtain
	\begin{align*}
	\frac{d}{ds}\lambda^u_{\psi^x_{-\ell}(s)}&=D(\lambda^u_{\psi^x_{-\ell}(s)})\frac{d}{ds}\psi^x_{-\ell}(s)\nonumber\\
	&=(\lambda^c_{x_{-\ell}}\cdots\lambda^c_{x_{-1}})^{-1}D(\lambda^u_{\psi^x_{-\ell}(s)})D\Phi^c_{x_{-\ell}}\left((\lambda^c_{x_{-\ell}}\cdots\lambda^c_{x_{-1}})^{-1}s\right)e_1,\nonumber
	\end{align*}
	where $D(\lambda^u_y)$ denotes the derivative of the function $y\mapsto\lambda^u_y$ at $y$. As $(\lambda^c_{x_{-\ell}}\cdots\lambda^c_{x_{-1}})^{-1}\to 0$ exponentially fast with uniform constants, the left hand side above also vanishes exponentially fast because $\lambda^u_y$ is $C^1$ and $x\mapsto\max_{|s|\leq 1}\|D\Phi^c_x(s)\|$ is uniformly bounded. As a consequence the sequence 
	\[
	h_{n,x}^{\prime}(s)=\sum_{\ell=1}^n\frac{\frac{d}{ds}\lambda^u_{\psi^x_{-\ell}(s)}}{\lambda^u_{_{\psi^x_{-\ell}(s)}}}
	\]
	is uniformly bounded by a convergent geometric series, because $\lambda^u_y$ is lower bounded by a constant larger than $1$.
	Since $h_{n,x}\to h_x$ uniformly on compact sets, by elementary calculus (see for instance \cite{barreira2012ordinary} Proposition 1.41) this proves that $h_x$ is $C^1$ with $h_x^\prime=\lim_{n \to +\infty}h^\prime_{n,x}$. 
	Moreover, the uniform estimates obtained show that the $C^1$ norm of $f$ changes continuously with $x$. This completes the proof of the lemma.
\end{proof}

\subsubsection{Defining the normal form}\label{sss_def_normal_form}

With Lemmas~\ref{l.obagulhoehcum} and \ref{lem.changeofubasis} at hand we are in position to define our two dimensional parametrization of center-unstable manifolds $\cW^{cu}(x)$. For each $x\in\TT$ we consider
\begin{equation*}
\Phi_x\colon\left\{
\begin{array}{rcl}
\R^2&\to&\cW^{cu}(x),\nonumber\\
(t,s)&\mapsto&\Psi_x(\beta_x(s)t,s)=\Phi^u_{\Phi_x^c(s)}(\beta_x(s)t).
\end{array} 
\right.
\end{equation*}
Notice that, by Lemma~\ref{lem.changeofubasis}, $s\mapsto\beta_{x}(s)$ is a $C^1$ positive function. It follows that $(t,s)\mapsto(\beta_{x}(s)t,s)$ is a $C^1$ diffeomorphism of $\R^2$. Therefore, applying Lemma~\ref{l.obagulhoehcum} we obtain that $\Phi_x$ is a $C^1$ diffeomorphism. We denote $\cH_x\eqdef\Phi_x^{-1}$.
\begin{proof}[Proof of Theorem~\ref{thm.normalforms}]
Properties \eqref{ppprop deux}-\eqref{ppprop quatre} are automatic from the construction of $\Phi_x$. To prove \eqref{ppprop un} we apply Proposition~\ref{p.kk} and obtain 
\[
f(\Phi_x(t,s))=f\left(\Phi^u_{\Phi^c_x(s)}(\beta_{x}(s)t)\right)=\Phi^u_{f\circ\Phi^c_x(s)}\left(\lambda^u_{\Phi^c_x(s)}\beta_{x}(s)t\right).
\]
Now, equality \eqref{eq.oneiterate} implies that 
\[
\beta_{x}(s)\lambda^u_{\Phi^c_x(s)}v^u(f\circ\Phi^c_x(x))=\lambda^u_x\beta_{x_1}(\lambda^c_x s) v^u\left(\Phi^c_{x_1}(\lambda^c_x s)\right).
\]
Proposition~\ref{p.kk} applied with $\Phi^c_x$ shows that $f\circ\Phi^c_x(s)=\Phi^c_{x_1}(\lambda^c_xs)$, where $x_1=f(x)$. Combining we get $\beta_x(s)\lambda_{\Phi^c_x(s)}=\lambda^u_x\beta_{x_1}(\lambda^c_xs)$. One deduces then
\[
f(\Phi_x(t,s))=\Phi^u_{\Phi^c_{x_1}(\lambda^c_xs)}\left(\lambda^u_x\beta_{x_1}(\lambda^c_xs)t\right).
\]
The very definition of $\Phi_x$ says that 
\[
\Phi^u_{\Phi^c_{x_1}(\lambda^c_xs)}\left(\lambda^u_x\beta_{x_1}(\lambda^c_xs)t\right)=\Phi_{x_1}(\lambda^u_xt,{\color{blue}\lambda^c_xs}).\qedhere
\]

\end{proof}

\begin{remark}
	\label{rem.horizontal}
	It follows readily from our construction that the inverse map $\cH_x\colon\cWcu(x)\to\R^2$ sends each unstable manifold $\cWu(y)$, for $y\in\cWc(x)$ onto the horizontal line $\{s=\cH_x^c(y)\}$. 
\end{remark}

\subsection{Change of normal form coordinates}

%Recall that for $*=c,u,cu$, we denote $\Pi^*:=\{(x,y):x \in \TT,\, y \in \mathcal{W}^{*}(x)\}$. 
As in \eqref{defi holono h phi} and Remark~\ref{rem.martinsage}, for any $x\in \TT$ and $y \in \mathcal{W}^{cu}(x)$, % $q=\Phi_x^*(s)=\Phi_{x'}^*(s') \in \cW^*(x)=\cW^*(y)$,  $s,s'=s'(s)\in \mathbb{R}$, where
we define the change of charts
\begin{equation}\label{defi holono h phi bis}
\mathcal{H}_{x,y}\eqdef \mathcal{H}_y\circ \mathcal{H}_x^{-1}=\Phi_y^{-1}\circ\Phi_x\colon
\left\{
\begin{array}{rcl}
\mathbb{R}^2&\to&\mathbb{R}^2,\\
(t,s)&\mapsto&(h^1_{x,y}(t,s),h^2_{x,y}(t,s)).
\end{array} 
\right.
\end{equation}
The function $h^2_{x,y}$ only depends on the second coordinate,  as demonstrated by the following lemma.
\begin{lemma}\label{preserv hori fol}
	For any $x\in \TT$ and $y \in \mathcal{W}^{cu}(x)$, the diffeomorphism $\mathcal{H}_{x,y}$ preserves the foliation of $\mathbb{R}^2$ into horizontal lines $\{s=\mathrm{cst}\}$. 
\end{lemma}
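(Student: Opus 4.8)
The plan is to trace through the construction of $\Phi_x$ and $\Phi_y$ and observe that both send horizontal lines to unstable manifolds, so that the change of coordinates $\mathcal H_{x,y}=\Phi_y^{-1}\circ\Phi_x$ maps the horizontal foliation to itself.

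First I would recall from Remark~\ref{rem.horizontal} (and the construction in \S\ref{sss_def_normal_form}) that for each point $p\in\TT$ the chart $\cH_p=\Phi_p^{-1}$ sends every unstable leaf $\cWu(z)$, with $z\in\cWc(p)$, onto a horizontal line $\{s=\cH_p^c(z)\}$; equivalently, $\Phi_p(\R\times\{s\})=\cWu(\Phi_p(0,s))$ by Theorem~\ref{thm.normalforms}\eqref{ppprop cinq}. Now fix $x\in\TT$ and $y\in\cWcu(x)$, so that $\cWcu(x)=\cWcu(y)$, and fix $s\in\R$. Set $\gamma\eqdef\Phi_x(\R\times\{s\})$, which is the unstable manifold $\cWu(q)$ where $q=\Phi_x(0,s)\in\cWcu(x)$. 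Since $q\in\cWcu(y)$, by Lemmas~\ref{l.coerenciaum} and~\ref{l.coorenciadois} there is a unique point $q^c\in\cWc(y)$ with $q^c\in\cWu(q)$, and hence $\cWu(q)=\cWu(q^c)$. By Remark~\ref{rem.horizontal} applied to the chart $\Phi_y$, the set $\Phi_y^{-1}(\cWu(q^c))$ is a horizontal line $\{s'=\cH_y^c(q^c)\}$. Therefore
\[
\mathcal H_{x,y}(\R\times\{s\})=\Phi_y^{-1}\big(\Phi_x(\R\times\{s\})\big)=\Phi_y^{-1}\big(\cWu(q)\big)=\{s'=\cH_y^c(q^c)\},
\]
which is again a horizontal line. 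Since $\mathcal H_{x,y}$ is a bijection of $\R^2$ carrying each horizontal line onto a horizontal line, it preserves the foliation into horizontal lines; equivalently, the second component $h^2_{x,y}(t,s)$ is independent of $t$, which is exactly the stated conclusion (and the parenthetical remark preceding the lemma).

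I do not expect any genuine obstacle here: the only thing to be careful about is that $q=\Phi_x(0,s)$ need not lie on $\cWc(y)$, so one cannot directly write it as $\Phi_y(0,s')$; this is handled by passing through the auxiliary point $q^c=\cWu(q)\cap\cWc(y)$, whose existence and uniqueness are guaranteed by Lemmas~\ref{l.coerenciaum} and~\ref{l.coorenciadois}, together with the fact that $\Phi_y$ foliates $\cWcu(y)$ by unstable leaves (Remark~\ref{rem.horizontal}). Writing $s'=\cH_y^c(q^c)$ one then has, for all $t\in\R$, $h^2_{x,y}(t,s)=s'$, a function of $s$ alone.
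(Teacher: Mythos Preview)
Your proof is correct and follows essentially the same route as the paper: both arguments fix $s$, identify $\Phi_x(\R\times\{s\})$ with an unstable leaf via Theorem~\ref{thm.normalforms}\eqref{ppprop cinq}, pass to the intersection point $q^c=\cWu(q)\cap\cWc(y)$ (your $q^c$ is the paper's $\hat y$), and then apply the same property for $\Phi_y$ to conclude that the image is horizontal. The only cosmetic difference is that you cite both Lemmas~\ref{l.coerenciaum} and~\ref{l.coorenciadois} for existence and uniqueness of $q^c$, whereas the paper only cites the latter.
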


\begin{proof}
This is a straightforward consequence of the fact that normal forms sends horizontal lines to unstable manifolds (see Remark~\ref{rem.horizontal}). More precisely, given $s\in\R$, using item (5) of Theorem~\ref{thm.normalforms} we have
\[
\cW^u(\Phi^c_x(s))=\Phi_x(\R\times\{s\}).
\]
Now define $\hat{y}=\cW^c(y)\cap\cW^u(\Phi^c_x(s))$ (recall Lemma~\ref{l.coorenciadois}).  Applying item (5) of Theorem~\ref{thm.normalforms} once more we obtain
\[
\cH_y(\cW^u(\hat{y}))=\R\times\{\cH^c_y(\hat{y})\}
\]
Since $\cW^u(\hat{y})=\cW^u(\Phi^c_x(s))$, this shows that 
\[
\cH_{x,y}(\R\times\{s\})=\R\times\{\cH^c_y(\hat{y})\},
\]  
which concludes the proof.
\end{proof}

\begin{lemma}\label{c one dep h x y}
	The diffeomorphism $\mathcal{H}_{x,y}(\cdot,\cdot)$ depends continuously in the $C^1$ topology with the pair $(x,y)$, $x\in \TT$ and $y \in \mathcal{W}^{cu}(x)$. Moreover, for any $\bar x \in \TT$, and for $(\bar t,\bar s)\in \mathbb{R}^2$, 
	\begin{enumerate}
		\item\label{permier point six trois} $%\lim_{\Pi^{cu}\ni (x,y)\to (\bar x,\bar x),\, \mathbb{R}^2\ni(t,s)\to (\bar t,\bar s)} 
		\mathcal{H}_{x,y}(t,s)\to(\bar t,\bar s)$ as $(x,y)\to (\bar x,\bar x)$ and $(t,s)\to (\bar t,\bar s)$;
		\item\label{deuxierme point six trois} %\lim_{\Pi^{cu}\ni (x,y)\to (\bar x,\bar x),\, \mathbb{R}^2\ni(t,s)\to (\bar t,\bar s)} 
		$D\mathcal{H}_{x,y}(t,s)\to\mathrm{Id}_{\mathbb{R}^2}$ as $(x,y)\to (\bar x,\bar x)$ and $(t,s)\to (\bar t,\bar s)$.
	\end{enumerate}
%and the convergence is uniform on compact subsets of $\Pi^{cu}\times \R^2$. 
\end{lemma}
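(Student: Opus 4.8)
The plan is to write $\mathcal{H}_{x,y}$ explicitly in terms of the one-dimensional normal forms of Kalinin--Katok and then read off the three assertions from the continuity of the ingredients, all of which have been established earlier in this section. Recall from Lemma~\ref{preserv hori fol} that $\mathcal{H}_{x,y}(t,s)=\big(h^1_{x,y}(t,s),h^2_{x,y}(s)\big)$, the second component depending on $s$ only. First I fix $x\in\TT$, $y\in\cWcu(x)$, $s\in\R$, and set $p\eqdef\Phi^c_x(s)\in\cWc(x)$ and $\{\hat y\}\eqdef\cWc(y)\cap\cWu(p)$, which is well defined by Lemmas~\ref{l.coerenciaum} and~\ref{l.coorenciadois}. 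Unravelling the proof of Lemma~\ref{preserv hori fol}, the horizontal line $\{s=\mathrm{cst}\}$ is sent to the horizontal line at height $\cH^c_y(\hat y)$, so $h^2_{x,y}(s)=\cH^c_y(\hat y)\eqdef s'$.

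Next I would compute $h^1_{x,y}(\cdot,s)$. By definition $\Phi_x(t,s)=\Phi^u_p(\beta_x(s)t)$ and $\Phi_y(t',s')=\Phi^u_{\hat y}(\beta_y(s')t')$ (using $\Phi^c_y(s')=\hat y$), and both points lie on the common unstable leaf $\cWu(p)=\cWu(\hat y)$; hence $(t',s')=\mathcal{H}_{x,y}(t,s)$ is characterised by $\beta_y(s')\,t'=\big((\Phi^u_{\hat y})^{-1}\circ\Phi^u_p\big)(\beta_x(s)t)$. By the affineness of the one-dimensional change of normal-form coordinates (Lemma~\ref{lemme changt coords affine}), the right-hand side equals $\cH^u_{\hat y}(p)+\rho^u_{\hat y}(p)\,\beta_x(s)t$, whence
\[
h^1_{x,y}(t,s)=\frac{\rho^u_{\hat y}(p)\,\beta_x(s)}{\beta_y(s')}\,t+\frac{\cH^u_{\hat y}(p)}{\beta_y(s')} .
\]
In particular $t\mapsto h^1_{x,y}(t,s)$ is affine. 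When $x=y$ one has $\hat y=p$ by Lemma~\ref{l.coorenciadois}, so $\rho^u_{\hat y}(p)=1$, $\cH^u_{\hat y}(p)=0$, $s'=\cH^c_x(p)=s$ and the factors $\beta$ cancel; thus $\mathcal{H}_{\bar x,\bar x}=\mathrm{Id}_{\R^2}$.

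I would then verify that every quantity in the formula above is $C^1$ in $s$ with $C^1$-norm depending continuously on $(x,y)$, $y\in\cWcu(x)$. The one-dimensional normal forms $\Phi^c,\cH^c,\cH^u$ and the factor $\rho^u$ are continuous in the base point (see the discussion around~\eqref{eq.kkunstable}) and $C^1$ along center-unstable leaves, via the convergent-series arguments used in the proofs of Lemmas~\ref{l.obagulhoehcum} and~\ref{lem.changeofubasis}; the reparametrisations $\beta_x$ are $C^1$ and depend continuously on $x$ in the $C^1$-topology (Lemma~\ref{lem.changeofubasis}); and $\hat y=\hat y(x,y,s)$ is obtained from $p=\Phi^c_x(s)$ by the unstable holonomy onto the center curve $\cWc(y)$, which is $C^1$ between center curves by the regularity of $E^u$ (see the paragraph preceding Lemma~\ref{l.lemadorho0}), so $s\mapsto\hat y$ is $C^1$ and depends continuously on $(x,y)$. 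Feeding this into the formula yields that $\mathcal{H}_{x,y}(\cdot,\cdot)$ depends continuously on $(x,y)$ in the $C^1$-topology, which is the first assertion. For~\eqref{permier point six trois} and~\eqref{deuxierme point six trois}, fix a compact neighbourhood $K$ of $(\bar t,\bar s)$; the first assertion gives $\mathcal{H}_{x,y}\to\mathcal{H}_{\bar x,\bar x}=\mathrm{Id}_{\R^2}$ in $C^1(K)$ as $(x,y)\to(\bar x,\bar x)$, so for $(t,s)\in K$ with $(t,s)\to(\bar t,\bar s)$,
\[
|\mathcal{H}_{x,y}(t,s)-(\bar t,\bar s)|\le\|\mathcal{H}_{x,y}-\mathrm{Id}\|_{C^0(K)}+|(t,s)-(\bar t,\bar s)|\to 0 ,
\]
\[
\|D\mathcal{H}_{x,y}(t,s)-\mathrm{Id}_{\R^2}\|\le\|D\mathcal{H}_{x,y}-\mathrm{Id}_{\R^2}\|_{C^0(K)}\to 0 .
\]

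I expect the main obstacle to be the $C^1$-regularity in $s$, and its continuous dependence on $(x,y)$, of the coefficients $\rho^u_{\hat y(x,y,s)}(\Phi^c_x(s))$ and $\cH^u_{\hat y(x,y,s)}(\Phi^c_x(s))$ in $h^1_{x,y}$: this requires combining the $C^1$-regularity of the unstable holonomy $s\mapsto\hat y(x,y,s)$ between center curves with the $C^1$-regularity of $\rho^u$ and $\cH^u$ along center-unstable leaves, both available only through the series estimates already carried out for Lemmas~\ref{l.obagulhoehcum} and~\ref{lem.changeofubasis}. Alternatively, one may bypass the explicit formula by a soft argument: the joint continuity of $(x,t,s)\mapsto D\Phi_x(t,s)$ together with the invertibility of $D\Phi_x(t,s)$ gives a uniform bound on $\|D\Phi_x(t,s)^{-1}\|$ over $\TT$ on compact sets, so the inverse function theorem with domains of definite size shows that $y\mapsto\Phi_y^{-1}$ depends continuously on $y$ in the local $C^1$-topology; then differentiating $\Phi_y\circ\mathcal{H}_{x,y}=\Phi_x$ to obtain $D\mathcal{H}_{x,y}(t,s)=D\Phi_y(\mathcal{H}_{x,y}(t,s))^{-1}D\Phi_x(t,s)$ reduces the $C^1$-statement to the $C^0$-continuity of $\mathcal{H}_{x,y}$, which follows once one checks that $\mathcal{H}_{x,y}(t,s)$ does not escape to infinity.
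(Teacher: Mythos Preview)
Your proposal is correct, but the paper's proof is precisely the ``soft argument'' you sketch at the end as an alternative: it simply writes $\mathcal{H}_{x,y}=\mathcal{H}_y\circ\mathcal{H}_x^{-1}$ and $D\mathcal{H}_{x,y}(t,s)=D\mathcal{H}_y(\mathcal{H}_x^{-1}(t,s))\circ D\mathcal{H}_x^{-1}(t,s)$, then invokes Theorem~\ref{thm.normalforms}\eqref{ppprop quatre} (the continuous dependence of $\Phi_x$ in the local $C^1$ topology) to conclude all three assertions at once. No explicit formula for $h^1_{x,y}$ or $h^2_{x,y}$ is needed, nor any inverse-function-theorem argument with quantitative domains; the $C^1$ continuity of $\Phi_x^{-1}$ in $x$ is taken as part of the content of item~\eqref{ppprop quatre}.

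Your main route---computing $h^1_{x,y}$ and $h^2_{x,y}$ explicitly in terms of $\rho^u$, $\beta$, $\cH^c$, $\cH^u$ and the unstable holonomy, then checking that each ingredient is $C^1$ in $s$ with $C^1$-norm depending continuously on $(x,y)$---is a genuinely different and considerably more laborious path. It does work, and the obstacle you flag (the $C^1$-in-$s$ regularity of $\rho^u_{\hat y(x,y,s)}(\Phi^c_x(s))$ and $\cH^u_{\hat y(x,y,s)}(\Phi^c_x(s))$) is real but can be handled by the same series estimates used in Lemmas~\ref{l.obagulhoehcum} and~\ref{lem.changeofubasis}. The payoff of the explicit approach is that it simultaneously yields structural information about $\mathcal{H}_{x,y}$ (affineness of $t\mapsto h^1_{x,y}(t,s)$, the identification of $h^2_{x,y}$ with an unstable holonomy in normal-form coordinates) that the paper establishes separately in Lemmas~\ref{l.change_center_charts}--\ref{lem.calculholonomie}. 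For the present lemma alone, however, the soft argument is by far the more economical choice.
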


\begin{proof}
	It follows immediately from Theorem \ref{thm.normalforms}\eqref{ppprop quatre}, since $\mathcal{H}_{x,y}=\mathcal{H}_{y}\circ \mathcal{H}_{x}^{-1}$, and 
	$D\mathcal{H}_{x,y}(t,s)=D\mathcal{H}_{y}(\mathcal{H}_{x}^{-1}(t,s))\circ D\mathcal{H}_{ x}^{-1}(t,s)$, with 
	\begin{equation*}
	%\lim_{\TT\ni x\to \bar x,\, \mathbb{R}^2\ni(t,s)\to (\bar t,\bar s)} 
	D\mathcal{H}_{x}(t,s)\to D\mathcal{H}_{\bar x}(\bar t,\bar s),
	\end{equation*}
	as $x\to \bar x$ and $(t,s)\to (\bar t,\bar s)$, and 
	%\lim_{\Pi^{cu}\ni (x,y)\to (\bar x,\bar x),\, \mathbb{R}^2\ni(t,s)\to (\bar t,\bar s)} 
	$$
	D\mathcal{H}_{y}(\mathcal{H}_{x}^{-1}(t,s))\to D\mathcal{H}_{\bar x}(\mathcal{H}_{\bar x}^{-1}(\bar t,\bar s))=(D\mathcal{H}_{\bar x}^{-1}(\bar t,\bar s))^{-1},
	$$
	as $(x,y)\to (\bar x,\bar x)$ and $(t,s)\to (\bar t,\bar s)$. 
\end{proof} 
\subsubsection{Change of normal charts for two points in the same center leaf}

We first study the change of normal charts between two points of the same center leaf.
%The next lemma explicits the map $\mathcal{H}_{x,y}$ when $y \in \mathcal{W}^c(x)$.
\begin{lemma}\label{l.change_center_charts}
	Let $x\in \TT$ and $y\in \mathcal{W}^{c}(x)$. For any $(t,s)\in\R^2$, it holds
	\begin{enumerate}
		\item $h^1_{x,y}(t,s)=\rho^u_y(x)t$;
		\item $h^2_{x,y}(s)=\cH^c_{x,y}(s)=\rho^c_y(x)s+\cH^c_y(x)$.
	\end{enumerate}
%	\begin{equation*}
%	\mathcal{H}_{x,y}(t,s)=\begin{bmatrix}
%	\rho_{y}^u(x) & 0\\
%	0 & \rho_{y}^c(x)
%	\end{bmatrix}
%	\begin{bmatrix}
%	t\\
%	s
%	\end{bmatrix}+\begin{bmatrix}
%	0\\
%	\mathcal{H}_{y}^c(x)
%	\end{bmatrix},
%	\end{equation*}
%	\begin{enumerate}
%		\item $h^1_{x,y}(t,s)=\rho_{y}^u(x)\cdot t$,
%		\item $h^2_{x,y}(s)=\rho_y^c(x)\cdot s+\mathcal{H}_{y}^c(x)$,
%	\end{enumerate} 
\end{lemma}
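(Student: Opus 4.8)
The plan is to reduce everything to the one-dimensional normal forms along $\cW^c$ and $\cW^u$, for which the change-of-charts maps are already known to be affine by Lemma~\ref{lemme changt coords affine}. I would begin with item (2). By Lemma~\ref{preserv hori fol} the map $h^2_{x,y}$ depends only on the second coordinate, so it suffices to evaluate $\mathcal{H}_{x,y}$ on the vertical axis $\{t=0\}$. There, $\Phi_x(0,s)=\Phi^u_{\Phi^c_x(s)}(0)=\Phi^c_x(s)$ lies in $\cW^c(x)=\cW^c(y)$, and likewise $\Phi_y(0,\cdot)=\Phi^c_y(\cdot)$; hence $\mathcal{H}_{x,y}(0,s)=\Phi_y^{-1}\big(\Phi^c_x(s)\big)=\big(0,\cH^c_y(\Phi^c_x(s))\big)$, which identifies $h^2_{x,y}$ with the one-dimensional change of charts $\cH^c_{x,y}=\cH^c_y\circ(\cH^c_x)^{-1}$ of \eqref{defi holono h phi}. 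Applying Lemma~\ref{lemme changt coords affine} with $*=c$ then gives $\cH^c_{x,y}(s)=\rho^c_y(x)s+\cH^c_{x,y}(0)$, and $\cH^c_{x,y}(0)=\cH^c_y(\Phi^c_x(0))=\cH^c_y(x)$ by Proposition~\ref{p.kk}, which is exactly item (2).

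For item (1), I would fix $(t,s)\in\R^2$, put $z\eqdef\Phi^c_x(s)$ and $\sigma\eqdef h^2_{x,y}(s)=\cH^c_y(z)$, so that $\Phi^c_y(\sigma)=z$ by part (2). The crucial observation is that the one-dimensional unstable normal form $\Phi^u_p$ depends only on its base point $p$; since $\Phi^c_x(s)=z=\Phi^c_y(\sigma)$, the two maps $\Phi^u_{\Phi^c_x(s)}$ and $\Phi^u_{\Phi^c_y(\sigma)}$ are literally the same parametrization of $\cW^u(z)$. Comparing $\Phi_x(t,s)=\Phi^u_z(\beta_x(s)t)$ with $\Phi_y(t',\sigma)=\Phi^u_z(\beta_y(\sigma)t')$ and using injectivity of $\Phi^u_z$, one gets $\mathcal{H}_{x,y}(t,s)=(t',\sigma)$ with $\beta_y(\sigma)t'=\beta_x(s)t$, so $h^1_{x,y}(t,s)=\big(\beta_x(s)/\beta_y(\sigma)\big)t$.

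It then remains to identify this ratio using \eqref{eq.betadefinition}, namely $\beta_x(s)=\rho^u_{\Phi^c_x(s)}(x)=\rho^u_z(x)$ and $\beta_y(\sigma)=\rho^u_{\Phi^c_y(\sigma)}(y)=\rho^u_z(y)$. Since $x,y,z$ all lie in the common leaf $\cW^{cu}(x)=\cW^{cu}(y)$, the relation $\rho^u_w(z)=(\rho^u_z(w))^{-1}$ together with the cocycle identity $\rho^u_x(z)=\rho^u_y(z)\rho^u_x(y)$ from \eqref{quotient rho x y rho x z} yields $\beta_x(s)/\beta_y(\sigma)=\rho^u_z(x)/\rho^u_z(y)=\rho^u_y(z)/\rho^u_x(z)=1/\rho^u_x(y)=\rho^u_y(x)$, which gives item (1). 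The only genuinely delicate point — and the place where one uses $y\in\cW^c(x)$ rather than merely $y\in\cW^{cu}(x)$ — is the identification $\Phi^u_{\Phi^c_x(s)}=\Phi^u_{\Phi^c_y(\sigma)}$: one needs $\Phi^c_x(s)$ to lie on a center leaf through $y$, so that it is hit by $\Phi^c_y$ and the two unstable normal forms share a base point; the remaining manipulations are just bookkeeping with the cocycle $\rho^u$.
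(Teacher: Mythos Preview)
Your proof is correct and follows essentially the same route as the paper's own argument: identify $h^2_{x,y}$ with the one-dimensional change $\cH^c_{x,y}$ by restricting to $\{t=0\}$ and invoking Lemma~\ref{lemme changt coords affine}, then use the common base point $\Phi^c_x(s)=\Phi^c_y(\sigma)$ to equate $\beta_x(s)t=\beta_y(\sigma)t'$ and conclude via the cocycle identity for $\rho^u$. The only differences are cosmetic (you name the intermediate point $z$ and spell out why the two unstable normal forms coincide, which the paper leaves implicit).
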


\begin{proof}
	We start by considering the function $h^1_{x,y}$. As $x$ and $y$ belong to the same center leaf, for any $s \in \mathbb{R}$, it holds $h^1_{x,y}(0,s)=0$, and 
	$$
	\Phi^c_x(s)=\Phi_x(0,s)=\Phi_y(h^1_{x,y}(0,s),h^2_{x,y}(s))=\Phi_{y}(0,h^2_{x,y}(s))=\Phi^c_y(h^2_{x,y}(s)),
	$$
	hence by \eqref{equ vingt deux}, 
	$$
	h^2_{x,y}(s)=(\Phi_y^c)^{-1} \circ \Phi_x^c(s)=\mathcal{H}_{x,y}^c(s).
	$$
	The second claimed equality follows from Lemma~\ref{lemme changt coords affine}.
	
	Now, let us fix $(t,s)\in \mathbb{R}^2$, and denote $t'\eqdef h^1_{x,y}(t,s)$ and $s'\eqdef h^2_{x,y}(s)$ for simplicity. 
	By definition, and as $\Phi^c_y(h^2_{x,y}(s))=\Phi^c_x(s)$, we obtain
	\begin{equation*}%\label{eq defi phi x phi y}
	\Phi^u_{\Phi^c_x(s)}(\beta_x(s)t)=\Phi_x(t,s)=\Phi_y(t',s')=\Phi^u_{\Phi^c_y(s')}(\beta_y(s')t'),
	\end{equation*}
	hence 
	\begin{equation}\label{egalite beta s t}
	\beta_x(s)t=\beta_y(s')t'.
	\end{equation}
On the other hand, by \eqref{eq.betadefinition} we have 
\begin{align}
	\beta_y(s')&=\rho^u_{\Phi^c_y(s')}(y)=\rho^u_{\Phi^x_x(s)}(y)\nonumber\\
&=\rho^u_{\Phi^c_x(s)}(x)\rho^u_x(y)=\beta_x(s)\rho^u_x(y),	
\end{align}
where in the second line we applied \eqref{quotient rho x y rho x z}. Combining this with \eqref{egalite beta s t} we deduce $t'=\rho^u_y(x)t$, concluding.
%
%
%	By \eqref{eq.betadefinition}, we also have 
%	$\beta_{x}(s)=(\rho_x^u(y(s)))^{-1}$, $\beta_{y}(s')=(\rho_y^u(y(s)))^{-1}$, hence \eqref{egalite beta s t}-\eqref{quotient rho x y rho x z} yield
%	%(recall that with the notation of \S \ref{sss_def_normal_form} $\psi^x_{-\ell}(s)=\psi^x_{-\ell}(s)=y'_{-\ell}$)
%	$$
%	h^1_{x,y}(t,s)=t'=\frac{\beta_x(s)}{\beta_y(s')}\cdot t=\frac{\rho_y^u(y(s))}{\rho_x^u(y(s))}\cdot t=\rho_y^u(x) \cdot t.%=\prod_{\ell=1}^\infty\frac{\lambda^u_{x_{-\ell}}}{\lambda^u_{y_{-\ell}}}\cdot t.
%	$$
\end{proof}
\subsubsection{Change of normal forms for two points in the same unstable leaf}
Let us now study the change of normal charts $\mathcal{H}_{x,x'}\colon (t,s)\mapsto (h^1_{x,x'}(t,s),h^2_{x,x'}(s))$ between two points $x,x'$ of the same unstable leaf. Unlike the previous case, here we do not get an affine map. 
\begin{lemma}\label{chang of coord}
	Let $x\in \TT$ and $x'\in \mathcal{W}^u(x)$. Then, there exists a $C^1$ function $a_{x,x'}:\R\to\R$ such that  for any $(t,s)\in \mathbb{R}^2$, %recall that $(h^1_{x,x'}(t,s),h^2_{x,x'}(s))=\mathcal{H}_{x,x'}(t,s)$. I
	it holds %, and let $z\eqdef \Phi_x(t,s)=\Phi_x'(t',s')\in \cWcu(x)$. Then
\[
h^1_{x,x'}(t,s)=\rho^u_{x'}(x)t+a_{x,x'}(s).
\]

%	\begin{equation*}
%	\mathcal{H}_{x,x'}(t,s)=\begin{bmatrix}
%	\rho_{x'}^u(x) & 0\\
%	0 & \rho_{x'}^c(x)
%	\end{bmatrix}\begin{bmatrix}
%	t \\ 
%	s
%	\end{bmatrix}+\begin{bmatrix}
%	a_{x,x'}(s) \\
%	0
%	\end{bmatrix},
%	\end{equation*}
%	\begin{enumerate}
%		\item\label{popoint uno} $h^1_{x,x'}(t,s)= \rho_{x'}^u(x)\cdot t + a_{x,x'}(s)$,
%		\item\label{popoint duo} $h^2_{x,x'}(s)=\rho_{x'}^c(x)\cdot s$,
%	\end{enumerate} 
%	with $\rho_{x'}^*(x)=\prod_{\ell=1}^{+\infty} \frac{\lambda^u_{x'_{-\ell}}}{\lambda^u_{x_{-\ell}}}$, $*=c,u$, as in \eqref{eq.rhonormalforms}, and 
%	$$
%	a_{x,x'}\colon s \mapsto\int_{\mathcal{W}^u(y',y)} \rho_{x'}^u(\hat y) \, d\hat{y},\quad a_{x,x'}(0)=\mathcal{H}_{x'}^u(x),
%	$$ 
%	where in the integral, $\hat y$ ranges over the segment $\mathcal{W}^u(y',y)\subset \mathcal{W}^u(y)$ of unstable manifold connecting the points $y'\eqdef\Phi_{x'}^c(\rho_{x'}^c(x)s)$ and $y\eqdef \Phi_x^c(s)$.
\end{lemma}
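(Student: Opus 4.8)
The strategy is to iterate the relation $f\circ\Phi_x=\Phi_{x_1}\circ N_x$ forward and use the fact that moving backward along $\cW^u$ contracts. First I would write down, using Theorem~\ref{thm.normalforms}\eqref{ppprop un} for both $x$ and $x'$, how $\mathcal{H}_{x,x'}$ transforms under the dynamics: since $f\circ\Phi_x(t,s)=\Phi_{x_1}(\lambda^u_x t,\lambda^c_x s)$ and likewise for $x'$ (note $x'_1=f(x')\in\cW^u(x_1)$), one gets the conjugacy relation
\begin{equation*}
\mathcal{H}_{x_1,x'_1}(\lambda^u_x t,\lambda^c_x s)=N_{x'}\circ \mathcal{H}_{x,x'}(t,s),
\end{equation*}
and the same holds with $f^{-1}$, i.e. $\mathcal{H}_{x_{-n},x'_{-n}}$ is conjugate to $\mathcal{H}_{x,x'}$ by the diagonal cocycle. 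The plan is to run this backward: $x_{-n}$ and $x'_{-n}$ are on the same unstable leaf and $d_u(x_{-n},x'_{-n})\to 0$ exponentially, so by Lemma~\ref{c one dep h x y}\eqref{deuxierme point six trois} the map $\mathcal{H}_{x_{-n},x'_{-n}}$ is $C^1$-close to the identity near any fixed compact set, with explicit geometric-rate control coming from the uniform $C^1$ bounds on $\Phi_{(\cdot)}$ and the distortion estimates of \S\ref{sss.distortionbasic}.

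Next I would exploit the structural information already available: by Lemma~\ref{preserv hori fol} the second coordinate of $\mathcal{H}_{x,x'}$ depends only on $s$, so $\mathcal{H}_{x,x'}(t,s)=(h^1_{x,x'}(t,s),h^2_{x,x'}(s))$; and restricting to a horizontal line $\{s=\text{cst}\}$ — which $\Phi_x$ maps onto an unstable manifold — the map $t\mapsto h^1_{x,x'}(t,s)$ is, for each fixed $s$, a change of one-dimensional normal-form coordinates along $\cW^u$ between the points $\Phi^c_x(s)$ and its image under the holonomy-type identification on $\cW^u(x')$. Here Lemma~\ref{lemme changt coords affine} (the one-dimensional case, $*=u$) applies on that leaf and shows $t\mapsto h^1_{x,x'}(t,s)$ is \emph{affine} in $t$ with slope $\rho^u_{\tilde x'(s)}(\tilde x(s))$, where $\tilde x(s)=\Phi^c_x(s)$ and $\tilde x'(s)$ is the corresponding point on $\cW^u$ parametrizing the target leaf. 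The key algebraic point is then that this slope equals $\rho^u_{x'}(x)$ independently of $s$: this follows from the cocycle identity \eqref{quotient rho x y rho x z} for $\rho^u$ together with the fact that $\tilde x(s)\in\cW^u(x)$ and $\tilde x'(s)\in\cW^u(x')$ lie on the same unstable leaves as $x,x'$ respectively, so the $s$-dependence cancels (exactly as in the computation at the end of the proof of Lemma~\ref{l.change_center_charts}). This gives $h^1_{x,x'}(t,s)=\rho^u_{x'}(x)\,t+a_{x,x'}(s)$ with $a_{x,x'}(s)\eqdef h^1_{x,x'}(0,s)$.

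It then remains to check that $a_{x,x'}$ is $C^1$. This is where I expect the only real work: $a_{x,x'}(s)=h^1_{x,x'}(0,s)$ is the first coordinate of $\mathcal{H}_{x,x'}(0,s)=\Phi_{x'}^{-1}(\Phi^c_x(s))$, i.e. it records, in the $\Phi_{x'}$-chart, how the center leaf $\cW^c(x)$ sits inside $\cW^{cu}(x')$. Since $\Phi_{x'}$ is a $C^1$ diffeomorphism (Theorem~\ref{thm.normalforms}) and $s\mapsto\Phi^c_x(s)$ is a $C^1$ curve, the composition $s\mapsto\Phi_{x'}^{-1}(\Phi^c_x(s))$ is $C^1$, hence so is its first component $a_{x,x'}$; the continuous dependence on $(x,x')$ is inherited from Lemma~\ref{c one dep h x y}. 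The main obstacle, if any, is bookkeeping: one must be careful that the "target point" $\tilde x'(s)$ on $\cW^u(x')$ used in invoking Lemma~\ref{lemme changt coords affine} is precisely $\cW^u(x')\cap\cW^c(\Phi^c_x(s))$ (well-defined by Lemmas~\ref{l.coerenciaum}–\ref{l.coorenciadois}), and that with this choice the horizontal line $\{s=\mathrm{const}\}$ in the source maps into the single horizontal line $\{h^2_{x,x'}(s)=\mathrm{const}\}$ in the target — which is exactly Lemma~\ref{preserv hori fol} — so that the one-dimensional reduction is legitimate.
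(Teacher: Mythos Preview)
Your approach is essentially the paper's: both reduce to the one--dimensional affine structure of $u$--normal forms (Lemma~\ref{lemme changt coords affine}) on each horizontal line, and then use the cocycle identity \eqref{quotient rho x y rho x z} to show the slope is independent of $s$. The opening paragraph on backward iteration is not used and can be dropped; the $C^1$ regularity of $a_{x,x'}$ is exactly as you say.

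There is, however, a gap in your slope computation. Restricted to $\{s=\mathrm{const}\}$, the map $t\mapsto h^1_{x,x'}(t,s)$ is \emph{not} the raw one--dimensional change of $u$--normal forms between $y=\Phi^c_x(s)$ and $y'=\Phi^c_{x'}(s')$: recall $\Phi_x(t,s)=\Phi^u_y(\beta_x(s)t)$, so the true relation is
\[
\beta_{x'}(s')\,h^1_{x,x'}(t,s)=\cH^u_{y,y'}\big(\beta_x(s)t\big)=\rho^u_{y'}(y)\,\beta_x(s)\,t+\cH^u_{y'}(y),
\]
and the slope is $\rho^u_{y'}(y)\,\beta_x(s)/\beta_{x'}(s')$, not $\rho^u_{y'}(y)$. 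Your claimed reason that this equals $\rho^u_{x'}(x)$ --- namely ``$\tilde x(s)\in\cW^u(x)$'' --- is false: $y=\Phi^c_x(s)\in\cW^c(x)$, not $\cW^u(x)$. The correct cancellation (which is what the paper writes out) uses \eqref{eq.betadefinition}: $\beta_x(s)=\rho^u_y(x)$ and $\beta_{x'}(s')=\rho^u_{y'}(x')$, whence by \eqref{quotient rho x y rho x z},
\[
\rho^u_{y'}(y)\,\frac{\beta_x(s)}{\beta_{x'}(s')}=\rho^u_{y'}(y)\,\rho^u_y(x)\,\rho^u_{x'}(y')=\rho^u_{y'}(x)\,\rho^u_{x'}(y')=\rho^u_{x'}(x).
\]
Once you insert this correction your argument is complete and coincides with the paper's.
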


\begin{proof}
	Consider $(t,s)\in \mathbb{R}^2$, and let us abbreviate $t'\eqdef h^1_{x,x'}(t,s)$, $s'\eqdef h^2_{x,x'}(s)$,  $y=y(s)\eqdef \Phi_x^c(s)$, $y'=y'(s)\eqdef \Phi_{x'}^c(s')$, and $z=z(t,s)\eqdef \Phi_x(t,s)=\Phi_{x'}(t',s')$. 
	Recall that $z=\Phi_x(t,s) = \Phi^u_{y}(\beta_x(s) t)$. By \eqref{eq.kkunstable}, we thus obtain 
	\begin{equation}\label{eq.cceq1}
	\beta_x(s) t = \int_y^z \rho_y^u(\hat{y})\, d\hat{y}.
	\end{equation}
	Similarly, $z=\Phi_{x'}(t',s') = \Phi^u_{y'}(\beta_{x'}(s')t')$, and 
	\begin{equation}\label{eq.cceq2}
	\beta_x(s') t' = \int_{y'}^z \rho_{y'}^u(\hat{y})\, d\hat{y},
	\end{equation} 
	Observe that $y'\in \cWu(y)$. 
	%Recall also that for any pair $(p,q)\in \Pi^u$, we have  
	%\[
	%\rho_p^u(q) = \displaystyle \prod_{l=1}^{+\infty} \frac{\lambda^u_{p_{-l}}}{\lambda^u_{q_{-l}}}.
	%\]
	By \eqref{quotient rho x y rho x z}, for any point $\hat{y}\in \cWu(y)=\cWu(y')$, we have $\rho_{y'}^u(\hat{y}) = \rho_y^u(\hat{y})  \rho_{y'}^u(y)$. Hence,
	\begin{align*}
	\beta_{x'}(s') t' & = \int_{y'}^z \rho_{y'}^u(\hat{y})\, d\hat{y} = \int_{y'}^z \rho_{y'}^u(y) \rho_y^u(\hat{y}) \, d\hat{y} \\
	& =  \rho_{y'}^u(y) \left( \int_{y'}^z \rho_y^u(\hat{y})\,  d\hat{y}\right) = \rho_{y'}^u(y) \left( \int_{y'}^y \rho_y^u(\hat{y}) \, d\hat{y} + \int_y^z \rho_y^u(\hat{y})\,  d\hat{y} \right)\\
	& = \rho_{y'}^u(y) \beta_x(s) t + \rho_{y'}^u(y) \int_{y'}^y \rho_y^u(\hat{y})\,  d\hat{y},
	\end{align*} 
	where in the last equality we used \eqref{eq.cceq1}.  We conclude that
	\[
	t' = \rho_{y'}(y) \frac{\beta_x(s)}{\beta_{x'}(s')} \cdot t + \frac{\rho_{y'}^u(y)}{\beta_{x'}(s')} \int_{y'}^y \rho_y^u(\hat{y}) \, d\hat{y}.
	\]
	Set $a_{x,x'}(s)\eqdef  \frac{\rho_{y'}^u(y)}{\beta_{x'}(s')} \int_{y'}^y \rho_y^u(\hat{y}) \, d\hat{y}$. This is the translation part of the change of coordinates.  Recall that
	\[
	\displaystyle \beta_x(s) = \rho_{\Phi_x^u(s)}^u(x)=\rho_{y}^u(x).
	\]
	Similarly,
	\[
	\beta_{x'}(s') =\rho_{\Phi_{x'}^u(s')}^u(x')=\rho_{y'}^u(x')=(\rho_{x'}^u(y'))^{-1}.% \displaystyle \prod_{l=1}^{+\infty} \frac{\lambda^u_{y_{-l}'}}{\lambda^u_{x_{-l}'}}.
	\]
	Therefore, by \eqref{quotient rho x y rho x z}, it holds 
	\[
	\displaystyle \rho_{y'}^u(y) \frac{\beta_x(s)}{\beta_{x'}(s')}=\rho_y^u(x)\rho_{y'}^u(y)\rho_{x'}^u(y')=\rho_y^u(x)\rho_{x'}^u(y)=\rho_{x'}^u(x). %\left(\prod_{l=1}^{+\infty} \frac{\lambda^u_{y_{-l}'}}{\lambda^u_{y_{-l}}}\right).  \left(\prod_{l=1}^{+\infty} \frac{\lambda^u_{y_{-l}} \lambda^u_{x_{-l}'}}{\lambda^u_{x_{-l}} \lambda^u_{y_{-l}'}} \right) = \prod_{l=1}^{+\infty} \frac{\lambda^u_{x_{-l}'}}{\lambda^u_{x_{-l}}} = \rho_{x'}(x).
	\]
	We conclude that $t' = \rho_{x'}^u(x) \cdot t + a_{x,x'}(s)$. Moreover,  
	\begin{align*}
	a_{x,x'}(s) &\eqdef\frac{\rho_{y'}^u(y)}{\beta_{x'}(s')} \int_{y'}^y \rho_y^u(\hat{y}) \, d\hat{y}=\frac{\rho_{x'}^u(x)}{\beta_{x}(s)} \int_{y'}^y \rho_y^u(\hat{y}) \, d\hat{y}\\
	&=\frac{\rho_{x'}^u(x)}{\rho_y^u(x)} \int_{y'}^y \rho_y^u(\hat{y}) \, d\hat{y}=\rho_{x'}^u(y)\int_{y'}^y \rho_y^u(\hat{y}) \, d\hat{y}=\int_{y'}^y \rho_y^u(\hat{y})\rho_{x'}^u(y) \, d\hat{y}\\
	&=\int_{\mathcal{W}^u(y',y)} \rho_{x'}^u(\hat y) \, d\hat{y},
	\end{align*}
	where in the last integral, $\hat y$ ranges over the segment $\mathcal{W}^u(y',y)\subset  \mathcal{W}^u(y)$ of unstable manifold connecting the points $y'$ and $y$.
	\end{proof}
	
	Let us now study the map $h^2_{x,x'}\colon \R \to \R$. The first step is to characterize it in terms of unstable holonomies. 
	
	\begin{lemma}
	\label{lem.lindoefacil}
Fix $x\in\TT$ and $x'\in\cW^u(x)$ and let  $H^u_{x,x'}:\cW^c(x)\to\cW^c(x')$ denote the unstable holonomy map. Define the map $L^u_{x,x'}:\R\to\R$ by
\[
L^u_{x,x'}(s)\eqdef\cH^c_{x'}\circ H^u_{x,x'}\circ\Phi^c_x(s).
\]
Then, we have $h^2_{x,x'}(s)=L^u_{x,x'}(s)$.	
	\end{lemma}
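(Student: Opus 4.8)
The plan is to unwind both sides of the claimed identity $h^2_{x,x'}(s)=L^u_{x,x'}(s)$ using the geometric meaning of the normal charts, namely that horizontal lines correspond to unstable manifolds (Remark~\ref{rem.horizontal} and Theorem~\ref{thm.normalforms}\eqref{ppprop cinq}), and then read off the second coordinate. Concretely, fix $s\in\R$ and set $y\eqdef\Phi^c_x(s)$, so that by item (5) of Theorem~\ref{thm.normalforms} the horizontal line $\R\times\{s\}$ is mapped by $\Phi_x$ onto $\cW^u(y)$. By Lemma~\ref{preserv hori fol}, $\mathcal{H}_{x,x'}$ sends $\R\times\{s\}$ onto a horizontal line $\R\times\{s'\}$ with $s'=h^2_{x,x'}(s)$; pushing forward by $\Phi_{x'}$, the line $\R\times\{s'\}$ is sent onto $\cW^u(\Phi^c_{x'}(s'))$. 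Since $\Phi_{x'}\circ\mathcal{H}_{x,x'}=\Phi_x$ on $\R^2$, we get the equality of curves $\cW^u(\Phi^c_{x'}(s'))=\Phi_{x'}(\R\times\{s'\})=\Phi_x(\R\times\{s\})=\cW^u(y)$, i.e. $\cW^u(\Phi^c_{x'}(s'))=\cW^u(y)$.

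The next step is to identify the point $\Phi^c_{x'}(s')$ explicitly. It lies on $\cW^c(x')$ (by Theorem~\ref{thm.normalforms}\eqref{ppprop cinq}, since $\Phi^c_{x'}(s')=\Phi_{x'}(0,s')$, which is on the center leaf of $x'$) and, by the previous paragraph, it lies on $\cW^u(y)$ with $y\in\cW^c(x)$. But $y\in\cW^c(x)$ means $\cW^u(y)$ is exactly the unstable leaf used in the definition of the unstable holonomy $H^u_{x,x'}\colon\cW^c(x)\to\cW^c(x')$ applied to $y$: by Lemma~\ref{l.coorenciadois} the intersection $\cW^c(x')\cap\cW^u(y)$ is a single point, and by definition that point is $H^u_{x,x'}(y)$. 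Hence $\Phi^c_{x'}(s')=H^u_{x,x'}(y)=H^u_{x,x'}(\Phi^c_x(s))$. Applying $\cH^c_{x'}=(\Phi^c_{x'})^{-1}$ to both sides yields $s'=\cH^c_{x'}\circ H^u_{x,x'}\circ\Phi^c_x(s)$, which is precisely $L^u_{x,x'}(s)$ by definition, while $s'=h^2_{x,x'}(s)$ by construction; this is the desired identity.

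I do not anticipate a serious obstacle here: the proof is essentially a diagram chase through the defining properties already established. The one point that requires a little care is the well-definedness of $H^u_{x,x'}(y)$ for the relevant value of $y$ — a priori the unstable holonomy between center leaves is only a local object (it is defined on $\cW^c_{\rho_0}(x)$ by Lemma~\ref{l.lemadorho0}), whereas $\Phi^c_x(s)$ ranges over the whole leaf $\cW^c(x)$. This is handled by noting that $H^u$ between center leaves of points in the same unstable leaf extends globally along the center by iterating the local holonomy and using invariance/equivariance of the foliations together with Lemmas~\ref{l.coerenciaum} and~\ref{l.coorenciadois} (the leaf $\cW^u(y)$ always meets $\cW^c(x')$ in exactly one point, for every $y\in\cW^c(x)$), so the composition $\cH^c_{x'}\circ H^u_{x,x'}\circ\Phi^c_x$ makes sense on all of $\R$; alternatively, one restricts attention to a bounded range of $s$ as needed in the applications. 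With this understood, the chain of equalities above is routine and the lemma follows.
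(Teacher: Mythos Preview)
Your proof is correct and follows essentially the same approach as the paper: both arguments use that $\Phi_x$ and $\Phi_{x'}$ send horizontal lines onto unstable leaves (Remark~\ref{rem.horizontal}/Theorem~\ref{thm.normalforms}\eqref{ppprop cinq}), so the second coordinate of $\mathcal{H}_{x,x'}$ is read off from the identification $\cW^u(\Phi^c_x(s))=\cW^u(\Phi^c_{x'}(s'))$ together with the uniqueness of the intersection point on $\cW^c(x')$. The paper runs the chase from the $L^u_{x,x'}$ side while you run it from the $h^2_{x,x'}$ side, but the content is identical; your extra remark on the global well-definedness of $H^u_{x,x'}$ via Lemmas~\ref{l.coerenciaum}--\ref{l.coorenciadois} is a welcome clarification that the paper leaves implicit.
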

	\begin{proof}
Consider an $s\in\R$ and define $y=\Phi^c_x(s)$ and $y'=H^u_{x,x'}(y)$. Set $s'\eqdef\cH^c_{x'}(y')$.  Notice that, by definition, $s'=L^u_{x,x'}(s)$. By Remark~\ref{rem.horizontal} we know that $\Phi_x(\R\times\{s\})=\cW^u(y)$, and similarly $\Phi_{x'}$ sends $\R\times\{s'\}$ onto $\cW^u(y')$. Since $\cW^u(y)=\cW^u(y')$ this shows that 
\[
\cH_{x'}\circ\Phi_x(\R\times\{s\})=\R\times\{s'\},
\] 
and therefore $h^2_{x,x'}(s)=s'$. 
	\end{proof}

We now prove that unstable holonomies, when conjugated by normal forms (as in the previous lemma), become a linear maps of the real line. 

\begin{lemma}
	\label{lem.calculholonomie}
Fix $x\in\TT$ and $x'\in\cW^u(x)$. Then, $L^u_{x,x'}(s)=\rho^c_{x'}(x)s.$
\end{lemma}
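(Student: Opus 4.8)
The plan is to use the equivariance of the one-dimensional normal forms and of the unstable holonomies under $f$ to obtain a self-similarity relation for $L^u$, and then to ``zoom in'' with the dynamics until the holonomy becomes close to the identity. Throughout, recall that $L^u_{x,x'}$ is $C^1$, being the composition $\cH^c_{x'}\circ H^u_{x,x'}\circ\Phi^c_x$ of $C^1$ diffeomorphisms (this is also $h^2_{x,x'}$ by Lemma~\ref{lem.lindoefacil}).

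\emph{Step 1: a functional equation.} Since $f$ permutes the leaves of $\cW^u$ and of $\cW^c$, the unstable holonomy is equivariant: $f\circ H^u_{x,x'}=H^u_{x_1,x'_1}\circ f$. Combining this with Proposition~\ref{p.kk} applied to $\Phi^c$ (which gives $f\circ\Phi^c_x(s)=\Phi^c_{x_1}(\lambda^c_x s)$, hence $\cH^c_{x_1}\circ f=\lambda^c_x\,\cH^c_x$ on $\cW^c(x)$), one obtains $L^u_{x_1,x'_1}(\lambda^c_x s)=\lambda^c_{x'}\,L^u_{x,x'}(s)$ for all $s\in\R$. Iterating this identity backwards $n$ times yields, for all $v\in\R$,
\[
L^u_{x,x'}(v)=\Bigl(\prod_{\ell=1}^{n}\lambda^c_{x'_{-\ell}}\Bigr)\,L^u_{x_{-n},x'_{-n}}\!\Bigl(\frac{v}{\prod_{\ell=1}^{n}\lambda^c_{x_{-\ell}}}\Bigr).
\]
Set $a_n:=\prod_{\ell=1}^{n}\lambda^c_{x_{-\ell}}$ and $b_n:=\prod_{\ell=1}^{n}\lambda^c_{x'_{-\ell}}$. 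Since $x'\in\cW^u(x)\subset\cW^{cu}(x)$, the basic distortion lemma (Lemma~\ref{l.distortionbasic}) makes the infinite product converge and gives $b_n/a_n\to\rho^c_{x'}(x)$ as in~\eqref{eq.rhonormalforms}; moreover $a_n\to+\infty$ and $d_u(x_{-n},x'_{-n})\to0$, both exponentially fast, by the estimates of \S\ref{sss.hypestimates}.

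\emph{Step 2: the key local estimate.} Differentiating the functional equation in $v$ gives $(L^u_{x,x'})'(v)=\tfrac{b_n}{a_n}\,(L^u_{x_{-n},x'_{-n}})'(v/a_n)$. I claim that for every $\varepsilon>0$ there is $\delta>0$ such that $|(L^u_{p,p'})'(u)-1|<\varepsilon$ whenever $p\in\TT$, $p'\in\cW^u_\delta(p)$ and $|u|<\delta$. Writing $w:=\Phi^c_p(u)$ and $w':=H^u_{p,p'}(w)$, the chain rule together with the definition of the one-dimensional normal forms (so that $D\Phi^c_p(u)e_1=\rho^c_w(p)\,v^c(w)$ and $D\cH^c_{p'}(w')\,v^c(w')=\rho^c_{p'}(w')$) gives
\[
(L^u_{p,p'})'(u)=\rho^c_w(p)\cdot J^u_{p,p'}(w)\cdot\rho^c_{p'}(w'),
\]
where $J^u_{p,p'}(w)>0$ is the scalar by which $DH^u_{p,p'}(w)$ maps $v^c(w)$ to $v^c(w')$. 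As $\delta\to0$ one has $w,w'\to p$, so $\rho^c_w(p)\to1$ and $\rho^c_{p'}(w')\to1$ by continuity of $(a,b)\mapsto\rho^c_a(b)$, while $J^u_{p,p'}(w)\to1$ because $E^u$ is of class $C^1$ along center-unstable leaves with uniformly bounded $C^1$-norm (\cite{PSW}), so the unstable holonomy between two center curves in a common center-unstable leaf tends to the identity in the $C^1$ topology as the curves merge; compactness of $\TT$ makes all these convergences uniform.

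\emph{Step 3: conclusion.} Applying Step~2 with $p=x_{-n}$, $p'=x'_{-n}$, $u=v/a_n$, and letting $n\to+\infty$, we get $(L^u_{x_{-n},x'_{-n}})'(v/a_n)\to1$, hence $(L^u_{x,x'})'(v)=\lim_n\tfrac{b_n}{a_n}(L^u_{x_{-n},x'_{-n}})'(v/a_n)=\rho^c_{x'}(x)$ for every $v\in\R$. Finally, $x'\in\cW^u(x)$ forces $H^u_{x,x'}(x)=x'$ (the intersection $\cW^u(x)\cap\cW^c(x')$ being a single point by Lemma~\ref{l.coorenciadois}), so $L^u_{x,x'}(0)=\cH^c_{x'}(x')=0$; integrating the constant derivative gives $L^u_{x,x'}(s)=\rho^c_{x'}(x)\,s$, as claimed. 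The main obstacle is the uniform convergence $J^u_{p,p'}(w)\to1$ of the unstable-holonomy Jacobian as the center transversals merge, which is precisely where the $C^1$-regularity of $E^u$ along $\cW^{cu}$ furnished by \cite{PSW} is essential.
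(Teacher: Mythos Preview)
Your proof is correct and follows essentially the same strategy as the paper's: use the equivariance of both the center normal forms (Proposition~\ref{p.kk}) and the unstable holonomies with $f$, differentiate, then let the backward time go to $-\infty$ so that the unstable holonomy collapses to the identity in $C^1$ and only the ratio $\prod_{\ell}\lambda^c_{x'_{-\ell}}/\lambda^c_{x_{-\ell}}=\rho^c_{x'}(x)$ survives. The only organizational difference is that you first isolate the scalar functional equation $L^u_{x,x'}(v)=b_n\,L^u_{x_{-n},x'_{-n}}(v/a_n)$ and then differentiate, whereas the paper differentiates the conjugacy $\Phi^c_{x'_k}\circ g'_k\circ L=H^u_{x_k,x'_k}\circ\Phi^c_{x_k}\circ g_k$ directly at each time $k$ and passes to the limit (along a subsequence where $x_k,x'_k\to p$); both routes rely on the same $C^1$ convergence of $H^u_{x_{-n},x'_{-n}}$ to the identity provided by \cite{PSW}.
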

\begin{proof}
For ease of notation in this proof, we will denote $L(s)=L^u_{x,x'}(s)$. By definition, we have that
\[
\Phi^c_{x'}\circ L(s)=H^u_{x,x'}\circ\Phi^c_x(s).
\]	
Now, consider an integer $k\in\Z$ and define the functions $g_k\colon \mathbb{R} \to \mathbb{R},s \mapsto \lambda_{x}^c(k) \cdot s$ (recall Notation \eqref{recall equation cocycle}). Similarly let $g'_k:\R\to\R, s\mapsto\lambda^c_{x'}(k) \cdot s$. 	By applying $f^k$ to both sides of the above equation and using the equivariance properties of normal forms and holonomy maps, we obtain
\[
\Phi^c_{x'_k}(g'_k\circ L(s))=H^u_{x_k,x'_k}\circ\Phi^c_{x_k}(g_k(s)).
\]
Taking derivatives with respect to $s\in \mathbb{R}$ on both sides we get
\begin{equation}\label{eg diff at x k}
D\Phi_{x_k'}^c(g_k'(s))\lambda_{x'}^c(k) \frac{dL}{ds}(s)=	DH_{x_k,x_k'}^u(\Phi_{x_k}^c(g_k(s)))D\Phi_{x_k}^c(g_k(s))\lambda_x^c(k).
\end{equation}
Recall that $\det D\Phi_{x_k}^c(g_k(s))=\frac{1}{\rho_{x_k}^c(\Phi_{x_k}^c(g_k(s)))}$, and $\det D\Phi_{x_k'}^c(g_k'(s))=\frac{1}{\rho_{x_k'}^c(\Phi_{x_k'}^c(g_k'(s)))}$. 
Passing to the determinant in \eqref{eg diff at x k}, we deduce that 
\begin{equation}\label{expression derivee s' a s}
	\frac{dL}{ds}(s)=\frac{\rho_{x_k'}^c(\Phi_{x_k'}^c(g_k'(s)))}{\rho_{x_k}^c(\Phi_{x_k}^c(s_k(s)))} \cdot \frac{\lambda_x^c(k)}{\lambda_{x'}^c(k)} \cdot \det DH_{x_k,x_k'}^u(\Phi_{x_k}^c(g_k(s))). 
\end{equation}
We now let $k\to-\infty$. By compactness, up to considering a subsequence, we can assume that $x_k,x_k'\to p\in \TT$ as $k \to -\infty$. Moreover, $g_k(s),g_k'(s)\to 0$ as $k \to-\infty$, and by the properties of $\Phi_{(\cdot)}^c(\cdot)$ stated in Proposition \ref{p.kk}, we have $\Phi_{x_k}^c(g_k(s)),\Phi_{x_k'}^c(g_k'(s))\to \Phi_q^c(0)=q$ as $k \to -\infty$. By continuity of $\rho_{(\cdot)}^c(\cdot)$, we deduce that 
$$
\lim_{k \to-\infty}\frac{\rho_{x_k'}^c(\Phi_{x_k'}^c(g_k'(s)))}{\rho_{x_k}^c(\Phi_{x_k}^c(g_k(s)))}= \frac{\rho_q^c(q)}{\rho_q^c(q)}=1. 
$$
Since the holonomy maps $H_{x_k,x_k'}^u$ converge uniformly to $H_{q,q}^u=\mathrm{Id}|_{\mathcal{W}^c(q)}$ in the $C^1$ topology as $k \to -\infty$, we also have $\lim_{k \to -\infty}\det DH_{x_k,x_k'}^u(\Phi_{x_k}^c(g_k(s)))=1$. 

By \eqref{expression derivee s' a s}, \eqref{recall equation cocycle} and \eqref{eq.rhonormalforms}, we conclude that 
$$
\frac{dL}{ds}(s)=\lim_{k \to -\infty}\frac{\lambda_x^c(k)}{\lambda_{x'}^c(k)}=\prod_{\ell=1}^{+\infty}
\frac{\lambda_{x_{-\ell}'}^c}{\lambda_{x_{-\ell}}^c}=\rho_{x'}^c(x).
$$
This completes the proof.
\end{proof}

\section{Leaf-wise quotient measures}\label{s.leafwise}

Let us denote by $\cM(\R)$ the set of locally finite Borel measures on the real line. Using the families of parametrizations $\{\Phi_x\},\{\Phi^c_x\}_{x\in\TT}$ we will construct a family of elements of $\cM(\R)$ which will be the main object of study in this paper. We summarize the result of the construction with the following statement. 

\begin{theorem}
\label{t.leafwise}
Let $f\in\cA^2(\TT)$ be an Anosov diffeomorphism with expanding center. Let $\mu$ be an ergodic $u$-Gibbs measure of $f$. Then, there exists a family $\{\hat{\nu}^c_x\}_{x\in\TT}\subset\cM(\R)$ of lacally finite Borel measures on the real line with the following property: if, for $\mu$ almost every $x$, the measure $\hat{\nu}^c_x$ is proportional to the Lebesgue measure of $\R$ then $\mu$ is an SRB measure. 
\end{theorem}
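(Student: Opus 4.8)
The plan is to construct the family $\{\hat\nu^c_x\}$ by disintegrating $\mu$ along center-unstable leaves, quotienting by the unstable foliation, and transporting the resulting transverse measures to $\R$ via the normal forms of Theorem~\ref{thm.normalforms}; then check that the equivalence-to-Lebesgue property in the normal-form coordinates is exactly equivalent to the SRB property.

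\begin{proof}[Proof of Theorem~\ref{t.leafwise}]
Fix a measurable partition $\xi^{cu}$ subordinate to $\cW^{cu}$, and consider the subordinate partitions $\xi^u$, $\xi^c$ with atoms $\xi^u(x)=\cW^u(x)\cap\xi^{cu}(x)$ and $\xi^c(x)=\cW^c(x)\cap\xi^{cu}(x)$, as in \S\ref{sub.sub.conditional}. Let $\{\mu^{cu}_x\}_{x\in\TT}$ be the disintegration of $\mu$ along $\xi^{cu}$, and let $\{\hat\mu^c_x\}$ be the quotient measure on $\xi^c(x)$ obtained by Rokhlin's theorem when disintegrating $\mu^{cu}_x$ along the partition $\{\xi^u(y)\}_{y\in\xi^{cu}(x)}$ of $\xi^{cu}(x)$ (this is the measure $\mu^c_{0,x}$ of \S\ref{sub.sub.conditional}). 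Since $\xi^c(x)\subset\cW^c(x)$, one can push $\hat\mu^c_x$ forward to $\R$ by the one-dimensional normal form chart: set $\hat\nu^c_x\eqdef (\cH^c_x)_*\hat\mu^c_x$, which is a finite Borel measure on $\R$; extending by $0$ (or rather working with the germ at the atom, and noting that later we only care about proportionality to Lebesgue) we regard it as an element of $\cM(\R)$. The measurability in $x$ of $x\mapsto\hat\nu^c_x$ follows from the measurability of the disintegrations and the continuity of $x\mapsto\cH^c_x$ in the local $C^1$ topology (Proposition~\ref{p.kk}).

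The content of the theorem is the implication: if $\hat\nu^c_x\propto\Leb_\R$ for $\mu$-a.e.\ $x$, then $\mu$ is SRB. For this, first note that by Lemma~\ref{l.density_u-gibbs} the conditional measures $\mu^u_y$ along unstable plaques have densities with respect to $\Leb^u_y$ that are bounded above and below (in terms of $\Leb^u_y[\xi^u(y)]$); equivalently, in the normal-form coordinates of Theorem~\ref{thm.normalforms}, where unstable manifolds are horizontal lines $\{s=\mathrm{cst}\}$ and $f$ acts linearly, these conditionals are comparable to Lebesgue on the corresponding horizontal segments. On the other hand, the chart $\Phi_x$ is a $C^1$ diffeomorphism, so $(\Phi_x)_*(\Leb_{\R^2})$ is equivalent to the inner Lebesgue measure $\Leb^{cu}_x$ on $\cW^{cu}(x)$, with continuous positive Radon--Nikodym derivative. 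Now decompose $\mu^{cu}_x=\int \mu^u_y\, d\hat\mu^c_x(y)$; pushing everything to $\R^2$ by $\cH_x=\Phi_x^{-1}$, the measure $(\cH_x)_*\mu^{cu}_x$ disintegrates over horizontal lines, with base measure (the $s$-marginal) equal to $\hat\nu^c_x$ and conditionals on each line comparable to one-dimensional Lebesgue. If $\hat\nu^c_x\propto\Leb_\R$, then by Fubini $(\cH_x)_*\mu^{cu}_x$ is equivalent to $\Leb_{\R^2}$ restricted to $\cH_x(\xi^{cu}(x))$, hence $\mu^{cu}_x$ is equivalent to $\Leb^{cu}_x$ on $\xi^{cu}(x)$. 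This holds for $\mu$-a.e.\ $x$, which is precisely the definition of $\mu$ being an SRB measure (and by the Ledrappier--Young remark the conclusion is independent of the choice of $\xi^{cu}$).

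The main point requiring care is the compatibility of the two disintegrations --- that is, that quotienting $\mu^{cu}_x$ by $\cW^u$ and then reading off the $s$-coordinate in the chart $\Phi_x$ really does produce a well-defined base measure $\hat\nu^c_x$ which, together with the (uniformly comparable to Lebesgue) unstable conditionals, reconstitutes $\mu^{cu}_x$ via Fubini in the $\R^2$ coordinates. This uses that $\Phi_x$ sends the foliation of $\R^2$ by horizontal lines exactly onto $\cW^u$ (Theorem~\ref{thm.normalforms}\eqref{ppprop cinq} and Remark~\ref{rem.horizontal}), so the measurable partition of $\xi^{cu}(x)$ into unstable plaques corresponds, under $\cH_x$, to the partition of $\cH_x(\xi^{cu}(x))$ into horizontal segments; Rokhlin's uniqueness of disintegrations then identifies the transverse factor measure with $\hat\nu^c_x$ up to the chart, and the $u$-Gibbs property supplies the two-sided density bounds on the fibers. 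Everything else is the elementary observation that a $C^1$ change of coordinates preserves the class of Lebesgue measure. I expect this bookkeeping with nested disintegrations and Rokhlin uniqueness to be the only genuinely delicate step; the rest is a direct computation.
\end{proof}
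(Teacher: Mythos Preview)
Your Fubini argument for the implication ``$\hat\nu^c_x\propto\Leb\Rightarrow\mu$ is SRB'' is the right idea, and is essentially what the paper does in the end. But your construction of the family has a real gap, and it is not the same as the paper's.

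As you define it, $\hat\nu^c_x=(\cH^c_x)_*\mu^c_{0,x}$ is a probability measure supported in the bounded set $\cH^c_x(\xi^c(x))$. Extending it by zero, it is \emph{never} proportional to Lebesgue on $\R$, so the hypothesis of the theorem is vacuous and the statement you prove is empty. Your parenthetical ``working with the germ at the atom'' does not repair this: the theorem, and the whole strategy of the paper that follows it (Proposition~\ref{p.basicmoves}, Lemma~\ref{l.lema6.1}, the drift argument), genuinely needs a family of locally finite measures defined on all of $\R$, normalized by $\hat\nu^c_x([-1,1])=1$, which can in principle equal Lebesgue and which transforms by honest linear and affine maps under the basic moves.

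The paper's construction is accordingly more elaborate. First, it proves (Lemma~\ref{Lebesg}) that in the normal-form chart $\cH_x$ the unstable conditionals are \emph{exactly} proportional to Lebesgue on horizontal lines, $\nu^u_{n,s}=\gamma_{n,x}(s)\Leb$; this is strictly stronger than the ``comparable to Lebesgue'' you extract from Lemma~\ref{l.density_u-gibbs}, and it is precisely the purpose of the one-dimensional normal forms $\Phi^u$. Using this, the paper defines for a bounded Borel $B\subset\R$ the ratio
\[
\hat\nu^c_{n,x}(B)=\frac{\nu^{cu}_{n,x}(I\times B)}{\nu^{cu}_{n,x}(I\times[-1,1])},
\]
and shows it is independent of the interval $I$ (Lemma~\ref{l.indep_on_I}) and stabilizes in $n$ by the superposition property (Lemma~\ref{l.superposition bis}) once $n$ is large enough that $I\times B\subset\zeta^{cu}_n(x)$. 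Letting $B$ exhaust $\R$ and $n\to\infty$ yields a genuine locally finite measure $\hat\nu^c_x$ on all of $\R$. Only then does the Fubini argument you sketch (which the paper carries out in the paragraph ``Obtaining the SRB property'') give the desired implication. Your approach misses both the exact-proportionality lemma and the stabilization step, and without them neither the statement nor the subsequent basic-move calculus can be made to work.
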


The family $\{\hat{\nu}^c_x\}_{x\in\TT}$ is referred to as \emph{leaf-wise quotient measures}. Once Theorem~\ref{t.leafwise} is established, our main result is reduced to prove that, for $\mu$ almost every $x$, each $\hat{\nu}^c_x$ is a translation-invariant measure on the real line. The proposition below summarizes the properties of the family $\{\hat{\nu}^c_x\}_{x\in\TT}\subset\cM(\R)$ that enable us to achieve this result.

\begin{prop}[Basic moves]
	\label{p.basicmoves}
The family $\{\hat{\nu}^c_x\}_{x\in\TT}\subset\cM(\R)$ satisfies the following properties on a full measure subset $\Lambda$ of $\TT$, which is $\cW^u$-saturated. For every $x\in\Lambda$
\begin{enumerate}
\item if $y\in\cW^c(x)\cap\Lambda$ then $\hat{\nu}^c_x\propto(\cH^c_{x,y})_*\hat{\nu}^c_y$, where $\cH^c_{x,y}:\R\to\R$ is the change of normal form coordinates on the manifold $\cW^c(x)$, which is an affine map. 
\item $\hat{\nu}^c_{f(x)}\propto(\Lambda^c_x)_*\hat{\nu}^c_x$, where $\Lambda^c_x:\R\to\R$ is the linear map $\Lambda^c_x(s)=\lambda^c_xs$.
\item if $x^{\prime}\in\cW^u(x)$ then $\hat{\nu}^c_{x^{\prime}}\propto (L_{x,x^{\prime}})_*\hat{\nu}^c_x$, where $L_{x,x^{\prime}}:\R\to\R$ is the linear map $L_{x,x^{\prime}}(s)=\rho^c_{x^{\prime}}(x)s$.
\end{enumerate} 
\end{prop}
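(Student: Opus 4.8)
The plan is to construct the leaf-wise quotient measures $\{\hat\nu^c_x\}$ by disintegrating a $u$-Gibbs measure $\mu$ twice — first along $\cWcu$ and then along $\cWu$ — and to push the resulting transverse (quotient) measure on center plaques onto $\R$ using the normal form charts $\Phi^c_x$ from Section~\ref{section formes normales}. More precisely, I would fix a measurable partition $\xi^{cu}$ subordinate to $\cWcu$ and the induced partitions $\xi^u(x)=\cWu(x)\cap\xi^{cu}(x)$, $\xi^c(x)=\cWc(x)\cap\xi^{cu}(x)$ as in \S\ref{sub.sub.conditional}. Rokhlin's theorem gives conditional measures $\mu^{cu}_x$ and a disintegration of $\mu^{cu}_x$ into $\mu^u_y$ ($y\in\xi^c(x)$) over a quotient measure $\mu^c_x$ on $\xi^c(x)$. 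Transporting $\mu^c_x$ through $\cH^c_x=(\Phi^c_x)^{-1}$ yields a measure on a bounded interval of $\R$; to get a \emph{locally finite} (not necessarily finite) measure on all of $\R$ and to kill the dependence on the choice of atom, I would take the limit of the rescaled pushforwards of $\mu^c_{-n,x}$ along $\xi^c_{-n}$ (equivalently, renormalize using the superposition property, Lemma~\ref{l.superposition}), exactly the standard construction of ``global'' leaf-wise measures; the result $\hat\nu^c_x$ is then well-defined up to a multiplicative constant, i.e.\ as an element of $\cM(\R)/\!\propto$.

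For Theorem~\ref{t.leafwise} I would argue the contrapositive-flavoured statement directly: assuming $\hat\nu^c_x\propto\Leb$ for $\mu$-a.e.\ $x$, I must show $\mu$ is SRB, i.e.\ that $\mu^{cu}_x\ll\Leb^{cu}_x$. By Lemma~\ref{l.density_u-gibbs} we already know $\mu^u_y$ has density bounded above and below w.r.t.\ $\Leb^u_y$ on its atom. Since $\Phi_x$ is a $C^1$ diffeomorphism (Theorem~\ref{thm.normalforms}) sending horizontal lines to unstable manifolds (item (5) / Remark~\ref{rem.horizontal}), in the chart $\Phi_x$ the measure $\mu^{cu}_x$ disintegrates as (a measure equivalent to Lebesgue on horizontal segments) $\times$ (the measure $\hat\nu^c_x$ on the vertical parameter $s$, up to the $C^1$ change of variables $\beta_x$). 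If $\hat\nu^c_x\propto\Leb$, then the product is absolutely continuous w.r.t.\ two-dimensional Lebesgue in the chart, hence $\mu^{cu}_x\ll\Leb^{cu}_x$; by the Ledrappier--Young remark the conclusion is independent of the partition, so $\mu$ is SRB. I would make this rigorous via Fubini, using that the $C^1$ charts have Jacobians bounded away from $0$ and $\infty$ on compact sets and that $\beta_x$ is $C^1$ positive.

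For Proposition~\ref{p.basicmoves}, the three ``basic moves'' are essentially the equivariance of the construction under the three natural operations, translated through the change-of-chart lemmas of \S\ref{section formes normales}. For (2): $f$ maps $\xi^{cu}$-atoms to $\xi^{cu}_1$-atoms and by Lemma~\ref{l_inv_con_meas} conditional measures transform by pushforward; since $f\circ\Phi^c_x(s)=\Phi^c_{f(x)}(\lambda^c_x s)$ (Proposition~\ref{p.kk}(1)), in normal-form coordinates the pushforward is the linear map $s\mapsto\lambda^c_x s$, giving $\hat\nu^c_{f(x)}\propto(\Lambda^c_x)_*\hat\nu^c_x$. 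For (1): two points $x,y$ on the same center leaf (both in a full-measure set $\Lambda$) have proportional center quotient measures because the atoms $\xi^c(x)$, $\xi^c(y)$ eventually agree under backward iteration, and $\cH^c_{x,y}=\cH^c_y\circ(\cH^c_x)^{-1}$ is affine by Lemma~\ref{lemme changt coords affine}; this is where one needs the renormalized/global version of $\hat\nu^c$ so that ``proportional'' makes sense independently of atoms. For (3): for $x'\in\cWu(x)$, the unstable holonomy $H^u_{x,x'}\colon\cWc(x)\to\cWc(x')$ relates the center quotient measures (the $u$-Gibbs disintegration along $\cWu$ is what makes this holonomy the correct identification of transversals), and by Lemmas~\ref{lem.lindoefacil}--\ref{lem.calculholonomie} the conjugate $L^u_{x,x'}=\cH^c_{x'}\circ H^u_{x,x'}\circ\Phi^c_x$ is the \emph{linear} map $s\mapsto\rho^c_{x'}(x)s$, yielding $\hat\nu^c_{x'}\propto(L_{x,x'})_*\hat\nu^c_x$. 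The set $\Lambda$ is taken to be the intersection of the full-measure sets where all these disintegration and renormalization statements hold, and it is $\cWu$-saturated because the $u$-Gibbs property and move (3) are compatible with moving along whole unstable leaves.

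The main obstacle, I expect, is move (3): proving that the unstable holonomy is genuinely the map identifying the transverse measures $\mu^c_x$ and $\mu^c_{x'}$ for $x'\in\cWu(x)$. This is precisely the place where the $u$-Gibbs hypothesis is used in an essential way — the absolute continuity (with controlled densities) of $\mu^u_y$ along unstable plaques must be combined with a Fubini/limit argument to show that projecting $\mu^{cu}_x$ along unstable plaques to a center transversal, then comparing with the analogous object for $x'$, produces measures related exactly by $(H^u_{x,x'})_*$ up to a constant; subtleties about the holonomy being defined only on local center plaques (Lemma~\ref{l.lemadorho0}) and about matching the renormalization constants across atoms will need care. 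The second most delicate point is making the global measure $\hat\nu^c_x$ genuinely well-defined (locally finite, atom-independent up to $\propto$), for which I would lean on the superposition property (Lemma~\ref{l.superposition}) and the uniform growth property (Lemma~\ref{lem.af1}) exactly as in the standard theory of leaf-wise measures.
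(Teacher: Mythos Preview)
Your overall strategy is sound and would work, but it differs from the paper's in one structural way that is worth highlighting, and you slightly misattribute the role of the $u$-Gibbs hypothesis in move (3).

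\textbf{The paper's construction versus yours.} You propose to build $\hat\nu^c_x$ by pushing the one-dimensional quotient measure $\mu^c_x$ on $\xi^c(x)$ through the 1D chart $\cH^c_x$ and then globalizing by the usual limit/superposition procedure. The paper instead works with the \emph{two-dimensional} normal forms $\Phi_x$ of Theorem~\ref{thm.normalforms}: it pushes $\mu^{cu}_{n,x}$ to $\nu^{cu}_{n,x}=(\cH_x)_*\mu^{cu}_{n,x}$ on $\R^2$ and defines
\[
\hat\nu^c_x(B)=\lim_{n\to\infty}\frac{\nu^{cu}_{n,x}(I\times B)}{\nu^{cu}_{n,x}(I\times[-1,1])}
\]
for an interval $I$ centered at $0$. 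The crucial technical input is Lemma~\ref{Lebesg}: in the 2D normal coordinates the unstable conditionals $\nu^u_{n,s}$ are \emph{exactly} (a constant times) Lebesgue on each horizontal line, not merely equivalent to it. This is precisely where the $u$-Gibbs property enters, and it makes the ratio independent of $I$ (Lemma~\ref{l.indep_on_I}) and stabilizing in $n$ (Lemma~\ref{l.superposition bis}). All three basic moves then become mechanical computations with the 2D change-of-chart formulas (Lemmas~\ref{l.change_center_charts}, \ref{chang of coord}, \ref{lem.lindoefacil}--\ref{lem.calculholonomie}): the first coordinate transforms by a map of the form $t\mapsto\rho^u\cdot t+a(s)$, and since Lebesgue is translation- and scale-invariant up to a constant, this factor cancels in the ratio, leaving only the action $s\mapsto h^2(s)$ on the vertical variable.

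\textbf{Where you misplace the $u$-Gibbs hypothesis.} In your sketch of move (3) you write that ``the $u$-Gibbs disintegration along $\cWu$ is what makes this holonomy the correct identification of transversals.'' That is not quite right: for $x'\in\xi^{cu}(x)\cap\cWu(x)$ the quotient measures on the two center transversals $\xi^c(x)$ and $\xi^c(x')$ are related by $H^u_{x,x'}$ as a general fact about Rokhlin disintegration---no absolute continuity of $\mu^u_y$ is needed for this step. What does require the $u$-Gibbs property in the paper's route is Lemma~\ref{Lebesg}, which underlies both the well-definedness of $\hat\nu^c_x$ and the cancellation of the horizontal factor in each basic move; in your route it is needed only for Theorem~\ref{t.leafwise}. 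So your identification of move~(3) as ``the main obstacle'' is a bit off: in the paper, the $s$-dependent translation $a_{x,x'}(s)$ in $h^1_{x,x'}$ is the only new wrinkle in move~(3), and Lemma~\ref{Lebesg} disposes of it in one line (Lemma~\ref{changt inst}).

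Both approaches are valid. Yours is closer to the standard ``leaf-wise measure'' formalism and would make the basic moves independent of the $u$-Gibbs assumption; the paper's 2D ratio definition bakes the $u$-Gibbs property into the construction and thereby turns Proposition~\ref{p.basicmoves} into three nearly identical change-of-variable computations.
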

 
The remainder of this section is dedicated to proving Theorem~\ref{t.leafwise} and Proposition~\ref{p.basicmoves}. Thus, we fix $f\in\cA^2(\TT)$ and $\mu$ an ergodic $u$-Gibbs measure. 
 
\subsection{Conditional measures of $\mu$ in normal forms}

We will show that when we push the conditional measures of $\mu$ along center unstable manifolds using the normal forms and subsequently disintegrate this measure along the horizontal lines (corresponding to unstable manifolds), we obtain measures \emph{proportional} to the Lebesgue measure. his fact plays a fundamental role in the proofs of Theorem~\ref{t.leafwise} and Proposition~\ref{p.basicmoves}.

\subsubsection{Some preliminary facts about  the conditional measures $\mu^{cu}_{n,x}$}

Let us start by recalling the notation from \S\ref{sub.sub.conditional} and stating some basic lemmas that will be useful throughout this section. Start with $\xi^{cu}_0$, an increasing measurable partition subordinate to the uniformly expanded foliation $\mathcal{W}^{cu}$. When we apply the dynamics, we obtain a sequence $\xi^{cu}_n\eqdef f^n(\xi^{cu}_0)$ of measurable partitions that remain subordinate to $\mathcal{W}^{cu}$. Moreover, for every $n\in\N$, we have $\xi^{cu}_{n+1}\prec \xi^{cu}_n$. Now consider $\{\mu^{cu}_{n,x}\}_{x\in\mathbb{T}^3}$ to be the disintegration of $\mu$ with respect to the partition $\xi^{cu}_n$. Similar to Lemma~\ref{l.superposition}, we find that the following emph{superposition property} holds for $\mu$-a.e. $x\in \TT$:
\begin{equation}
	\label{e.superposition bis}
	\textrm{if}\:m>n,\:\textrm{then}\:\mu^{cu}_{n,x}(A)=\frac{\mu^{cu}_{m,x}(A)}{\mu^{cu}_{m,x}(\xi^{cu}_n(x))},
\end{equation}
for every measurable set $A\subset\xi^{cu}_n(x)$. 
Furthermore, as $\mu$ is $f$-invariant, the distintegrations $\{\mu^{cu}_{n,x}\}_{n\in \N,\,x\in\mathbb{T}^3}$ satisfy:
\begin{lemma}
	\label{l_inv_con_meas deux}
	For every $n\in\N$ and for $\mu$-almost every $x\in \TT$, we have
	$$\mu^{cu}_{n+1,f(x)}=f_* \mu^{cu}_{n,x}.$$
\end{lemma}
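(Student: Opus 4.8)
The plan is to recognize this statement as a direct instance of Lemma~\ref{l_inv_con_meas}. Indeed, by construction $\xi^{cu}_{n+1}=f(\xi^{cu}_n)$, so, setting $\xi\eqdef\xi^{cu}_n$ and applying Lemma~\ref{l_inv_con_meas} with the single iterate of $f$ (i.e.\ with exponent $1$ in place of $n$), one obtains for $\mu$-a.e.\ $x$ that $\mu^{\xi^{cu}_{n+1}}_{x}=f_*\mu^{\xi^{cu}_n}_{f^{-1}(x)}$. Substituting $f(x)$ for $x$ gives $\mu^{cu}_{n+1,f(x)}=f_*\mu^{cu}_{n,x}$ for $\mu$-a.e.\ $x$, which is precisely the claim.

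Alternatively, and essentially reproducing the proof of Lemma~\ref{l_inv_con_meas} in this setting, I would argue directly. First I would record the geometric fact that $f(\xi^{cu}_n(x))=\xi^{cu}_{n+1}(f(x))$ for $\mu$-a.e.\ $x$, so that $f_*\mu^{cu}_{n,x}$ is a probability measure carried by the atom $\xi^{cu}_{n+1}(f(x))$, and that $y\mapsto f_*\mu^{cu}_{n,f^{-1}(y)}$ is constant along atoms of $\xi^{cu}_{n+1}$ (since $\mu^{cu}_{n,\cdot}$ is constant along atoms of $\xi^{cu}_n$ and these map bijectively onto atoms of $\xi^{cu}_{n+1}$ under $f$). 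Then, for any $\varphi\in L^1(\TT,\mu)$, using the $f$-invariance of $\mu$ and disintegrating along $\xi^{cu}_n$,
\[
\int\varphi\,d\mu=\int\varphi\circ f\,d\mu=\int\Big(\int_{\xi^{cu}_n(x)}\varphi\circ f\,d\mu^{cu}_{n,x}\Big)d\mu(x)=\int\Big(\int_{\xi^{cu}_{n+1}(y)}\varphi\,d\big(f_*\mu^{cu}_{n,f^{-1}(y)}\big)\Big)d\mu(y),
\]
the last equality being another use of $f$-invariance together with the change of variables $y=f(x)$. This exhibits $\{f_*\mu^{cu}_{n,f^{-1}(y)}\}_y$ as a system of conditional measures of $\mu$ relative to $\xi^{cu}_{n+1}$, and the conclusion follows from the $\mu$-a.e.\ uniqueness of the disintegration (Rokhlin).

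There is no real obstacle here: the statement is a specialization of Lemma~\ref{l_inv_con_meas}, and the only points needing (routine) care are that the partitions $\xi^{cu}_n$ are genuinely measurable and increasing — already established in the surrounding discussion — and that one correctly tracks the transport of atoms under $f$, namely $f(\xi^{cu}_n(x))=\xi^{cu}_{n+1}(f(x))$ for $\mu$-a.e.\ $x$.
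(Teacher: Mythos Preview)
Your proposal is correct and matches the paper's approach exactly: the paper simply states that the proof follows a similar approach to that of Lemma~\ref{l_inv_con_meas}, and you have both identified this reduction and spelled out the argument in full.
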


\begin{proof}
The proof follows a similar approach to that of Lemma \ref{l_inv_con_meas}. 
\end{proof}

By intersecting with the center foliation $\mathcal{W}^c$, or unstable foliation $\mathcal{W}^u$, we can refine each partition $\xi^{cu}_n$, $n \in \N$. More precisely, for $\star=c,u$, we consider a partition $\xi^{cu}_n\prec \xi^{\star}_n=\{\xi^{\star}_n(y)\}_{y \in \TT}$, with  
	\[
	\xi^{\star}_{n}(y)\eqdef\mathcal{W}^{\star}(y)\cap\xi^{cu}_n(y),\quad \forall\, y\in \TT.
	\] 
By Remark \ref{rem_subfoliation_subordinate}, we obtain: 
	\begin{lemma}
		\label{l.poincarebendixon}
		For $\star=c,u$, the family of sets $\xi^{\star}_n$ is a measurable partition subordinate to $\mathcal{W}^{\star}$.
	\end{lemma}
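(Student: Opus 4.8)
The plan is to prove Lemma~\ref{l.poincarebendixon} by a direct application of Remark~\ref{rem_subfoliation_subordinate}, which already does all the heavy lifting. First I would recall the setup: $\xi^{cu}_n$ is a measurable partition subordinate to $\mathcal{W}^{cu}$, which is uniformly expanded with some expansion constant $0<\lambda<1$ (i.e. $\|Df^{-1}|_{T\mathcal{W}^{cu}}\|\leq\lambda$), and for $\star=c,u$ the foliation $\mathcal{W}^{\star}$ is an $f$-invariant subfoliation of $\mathcal{W}^{cu}$.

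The key observations are: (1) $\mathcal{W}^{\star}$ is uniformly expanded with an expansion constant $\lambda'$ satisfying $0<\lambda'\leq\lambda$ — indeed, for $\star=u$ this is the strong unstable rate, and for $\star=c$ the center expansion rate also dominates $\lambda$ since $\lambda^c_x>1$ and $\|Df^{-1}|_{E^c}\|=(\lambda^c_{x_{-1}})^{-1}$ is uniformly bounded by the corresponding center-unstable quantity (using the hyperbolic estimates from \S\ref{sss.hypestimates}, so one can take $\lambda' = e^{\chi^c_2}{}^{-1}$ or $e^{\chi^u_2}{}^{-1}$ accordingly, both $\le \lambda$); and (2) by construction $\xi^{\star}_n(y)=\mathcal{W}^{\star}(y)\cap\xi^{cu}_n(y)$ is precisely the join $\xi^{cu}_n\vee\mathcal{W}^{\star}$ in the notation of Remark~\ref{rem_subfoliation_subordinate}. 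Therefore, applying Remark~\ref{rem_subfoliation_subordinate} with $\cW^+ = \mathcal{W}^{cu}$, $\cW' = \mathcal{W}^{\star}$ and $\xi = \xi^{cu}_n$ immediately yields that $\xi^{\star}_n = \xi^{cu}_n\vee\mathcal{W}^{\star}$ is subordinate to $\mathcal{W}^{\star}$. Since this holds for every $n\in\N$, the lemma follows.

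For completeness I would also briefly note that one should check the hypotheses of Remark~\ref{rem_subfoliation_subordinate} are met, in particular that $\mathcal{W}^{\star}$ is $f$-invariant (which is part of the standing hypotheses on the invariant foliations of $f\in\cA^2(\TT)$, see \S\ref{sss.holo} and the dynamical coherence discussion) and that its leaves are uniformly expanded (verified above). There is essentially no obstacle here — the statement is a bookkeeping consequence of the general machinery of subordinate partitions recalled in \S\ref{s.subordinate}, and the only mild point to be careful about is verifying the comparison $\lambda'\le\lambda$ of expansion constants, which follows from the adapted metric and the estimates $(3)$–$(4)$ of \S\ref{sss.hypestimates}. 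I expect the proof to be two or three sentences long in the final text, simply pointing to Remark~\ref{rem_subfoliation_subordinate}.
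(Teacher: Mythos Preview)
Your proposal is correct and follows exactly the paper's approach: the paper simply invokes Remark~\ref{rem_subfoliation_subordinate} immediately before stating the lemma, with no further argument. Your write-up is in fact more detailed than the paper's, which gives no explicit verification of the hypotheses.
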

	
	For every $n\in\N$ and for $\mu$-almost every $x\in \TT$, we define $\zeta^\star_n(x)\eqdef\mathcal{H}_x(\xi^\star_n(x))\dans\R^2$, and $\nu_{n,x}^{cu}\eqdef(\mathcal{H}_x)_* \mu^{cu}_{n,x}$. Note that due to item \eqref{ppprop un} of Theorem~\ref{thm.normalforms}, for each $x \in \TT$, it holds 
\begin{equation}\label{conjug forme normale}
	N_x\circ \mathcal{H}_x|_{\mathcal{W}^{cu}(x)}=\mathcal{H}_{f(x)}\circ f|_{\mathcal{W}^{cu}(x)},\quad \text{where }N_x\colon (t,s)\mapsto (\lambda_x^u t,\lambda_x^c s). 
\end{equation}

As a result of \eqref{conjug forme normale} and Lemma \ref{l_inv_con_meas deux}, we conclude:
\begin{lemma}
	\label{l_inv_con_meas trois}
	For every $n\in\N$ and for $\mu$-almost every $x\in \TT$, it holds
	$$\nu^{cu}_{n+1,f(x)}=(N_x)_* \nu^{cu}_{n,x}.$$
\end{lemma}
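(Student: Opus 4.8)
The statement to prove is Lemma~\ref{l_inv_con_meas trois}, asserting that $\nu^{cu}_{n+1,f(x)}=(N_x)_*\nu^{cu}_{n,x}$ for $\mu$-a.e.\ $x$. The plan is to simply chase through the definitions, combining the already-established invariance of the $cu$-conditionals (Lemma~\ref{l_inv_con_meas deux}) with the conjugacy relation \eqref{conjug forme normale}.

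First I would recall that by definition $\nu^{cu}_{n,x}=(\mathcal{H}_x)_*\mu^{cu}_{n,x}$, and likewise $\nu^{cu}_{n+1,f(x)}=(\mathcal{H}_{f(x)})_*\mu^{cu}_{n+1,f(x)}$. By Lemma~\ref{l_inv_con_meas deux}, for $\mu$-a.e.\ $x$ we have $\mu^{cu}_{n+1,f(x)}=f_*\mu^{cu}_{n,x}$, so that
\[
\nu^{cu}_{n+1,f(x)}=(\mathcal{H}_{f(x)})_*\big(f_*\mu^{cu}_{n,x}\big)=\big(\mathcal{H}_{f(x)}\circ f\big)_*\mu^{cu}_{n,x}.
\]
Here $\mathcal{H}_{f(x)}\circ f$ should be read as restricted to $\mathcal{W}^{cu}(x)$, which is legitimate since $\mu^{cu}_{n,x}$ is supported on $\xi^{cu}_n(x)\subset\mathcal{W}^{cu}(x)$. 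Then I would invoke \eqref{conjug forme normale}, which gives $\mathcal{H}_{f(x)}\circ f|_{\mathcal{W}^{cu}(x)}=N_x\circ\mathcal{H}_x|_{\mathcal{W}^{cu}(x)}$, hence
\[
\nu^{cu}_{n+1,f(x)}=\big(N_x\circ\mathcal{H}_x\big)_*\mu^{cu}_{n,x}=(N_x)_*\big((\mathcal{H}_x)_*\mu^{cu}_{n,x}\big)=(N_x)_*\nu^{cu}_{n,x},
\]
which is exactly the claim.

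There is essentially no obstacle here; the only mild point of care is the ``$\mu$-a.e.'' bookkeeping: Lemma~\ref{l_inv_con_meas deux} holds for $\mu$-a.e.\ $x$, and all the objects ($\mu^{cu}_{n,x}$, $\mathcal{H}_x$, $\xi^{cu}_n(x)$) are defined for $\mu$-a.e.\ $x$, so the composition of finitely many full-measure conditions is still full measure, and the identity holds on that set. One should also note that $\mathcal{H}_x$ depends only on the leaf $\mathcal{W}^{cu}(x)$ (it is a normal-form chart of that leaf), so pushing forward a measure supported on a sub-atom is unambiguous. I would therefore simply write the above chain of equalities as the proof, perhaps inside a short \begin{proof}\dots\end{proof} environment, exactly as the authors do for the parallel Lemma~\ref{l_inv_con_meas deux}.
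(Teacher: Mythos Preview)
Your proof is correct and follows exactly the approach indicated in the paper: the authors simply state that the lemma is a consequence of \eqref{conjug forme normale} and Lemma~\ref{l_inv_con_meas deux}, and your chain of equalities spells out precisely this deduction.
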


\subsubsection{Disintegration of the conditional measures in normal forms}

As in \S\ref{sub.sub.conditional} when we disintegrate $\mu^{cu}_{n,x}$ along the atoms $\{\xi^u_{n}(y)\}_{y\in\xi^{cu}_n(x)}$ we get a probability measure $\mu^c_{n,x}$ over $\xi^c_{n}(x)$ and a family $\{\mu^u_{n,y}\}_{y\in\xi^{cu}_n(x)}$, each of which is a probability measure on $\xi^u_n(y)$.

Now, let $\cH_x:\cW^{cu}(x)\to\R^2$ be the inverse map of the normal form coordinate chart. Define the measures 
\[
\nu^{\star}_{n,x}\eqdef(\cH_x)_*\mu^{\star}_{n,x},\:\:\:\textrm{for}\:\:\:\star=c,cu,
\]
and
\[
\nu^{u}_{n,s}\eqdef(\cH_x)_*\mu^{u}_{n,s},\:\:\:\textrm{for each}\:\:\:s=\cH^c_x(y),\:\:\:\textrm{with}\:\:\: y\in\xi^c_n(x). 
\]

A remarkable property of $u$-Gibbs measures is that their conditional measures along unstable manifolds in normal forms are \emph{proportional} to Lebesgue.
	
	\begin{lemma}\label{Lebesg}
		For $\mu$-a.e. $x\in \TT$ and $\nu_{n,x}^{cu}$-a.e. $(t,s)\in\zeta_n^{cu}(x)$, there exists a constant $\gamma_{n,x}(s)>0$ (independent of $t$) such that for every Borel set $B\subset\zeta^u_n(\Phi^c_x(s))$, the following holds
		\[
		\nu^u_{n,s}(B)=\gamma_{n,x}(s)\operatorname{Leb}(B).
		\]
		\end{lemma}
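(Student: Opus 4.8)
The plan is to exploit the defining property of a $u$-Gibbs measure—that its conditional measures along strong unstable leaves are absolutely continuous with log-Hölder density (Lemma~\ref{l.density_u-gibbs})—and transport this information through the normal form charts $\Phi_x$. The key observation is that, by item~\eqref{ppprop cinq} of Theorem~\ref{thm.normalforms}, the chart $\Phi_x$ sends each horizontal line $\R\times\{s\}$ onto the unstable leaf $\cW^u(\Phi^c_x(s))$, and moreover by construction $\Phi_x(\cdot,s) = \Phi^u_{\Phi^c_x(s)}(\beta_x(s)\,\cdot)$ is, up to the scaling constant $\beta_x(s)>0$, exactly the Kalinin--Katok one-dimensional normal form along that unstable leaf. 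Since the one-dimensional normal form $\Phi^u_y = (\cH^u_y)^{-1}$ is built from the density $\rho^u_y$ via \eqref{eq.kkunstable}, the push-forward $(\cH^u_y)_* \Leb^u_y$ is a constant multiple of Lebesgue on $\R$: indeed $\cH^u_y$ has derivative $\rho^u_y$, so $(\cH^u_y)_*(\rho^u_y \, dy)= \Leb$, i.e. $(\cH^u_y)_* \Leb^u_y$ has density $1/\rho^u_y\circ(\cH^u_y)^{-1}$ with respect to Lebesgue, which is a smooth positive function but not constant. This is the point where one must be slightly careful: the claim is that $\nu^u_{n,s}$ is \emph{exactly} proportional to Lebesgue, not merely equivalent to it.

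The resolution is that the conditional measure $\mu^u_{n,s}$ is not $\Leb^u$ itself but the $u$-Gibbs conditional, whose density against $\Leb^u$ is, up to normalization, the \emph{reciprocal} of the unstable Jacobian product, i.e. precisely $\rho^u_y(\cdot)$ up to a multiplicative constant. More precisely: the $u$-Gibbs density is characterized (Ledrappier, \cite{LYI}) by $\frac{d\mu^u_{n,y}}{d\Leb^u_y}(z) \propto \rho^u_y(z)$ along $\cW^u(y)$, since both sides satisfy the same cocycle relation under $f$ and the density is uniquely determined by it. Hence, for a Borel set $B\subset \cW^u(y)$, writing $z = \Phi^u_y(\beta_x(s)^{-1}\cdot)$... — more cleanly, one computes directly:
\[
\nu^u_{n,s}(B) = \mu^u_{n,y}\big((\Phi^u_y)(\beta_x(s)\cdot B)\big) \propto \int_{\beta_x(s)\cdot B} \rho^u_y\big(\Phi^u_y(\tau)\big)\, |D\Phi^u_y(\tau)|\, d\tau = \int_{\beta_x(s)\cdot B} d\tau = \beta_x(s)\,\Leb(B),
\]
where the middle equality uses that $D\Phi^u_y(\tau) = (\rho^u_y(\Phi^u_y(\tau)))^{-1}$, which is immediate from $\cH^u_y{}' = \rho^u_y$. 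Thus $\gamma_{n,x}(s)$ is $\beta_x(s)$ times the normalization constant of $\mu^u_{n,y}$, which depends on $s$ (through $y=\Phi^c_x(s)$ and through $\xi^u_n(y)$) but not on $t$, exactly as claimed.

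The main steps, in order: (i) recall from \S\ref{sub.sub.conditional} the Rokhlin disintegration $\mu^{cu}_{n,x} = \int \mu^u_{n,y}\,d\mu^c_{n,x}(y)$ and that each $\mu^u_{n,y}$ is a $u$-Gibbs conditional, hence has density proportional to $\rho^u_y$ with respect to $\Leb^u_y$ (this uses \cite{LYI}, Lemma~\ref{l.density_u-gibbs}, and the cocycle relation \eqref{e.cocycle}); (ii) recall that $\Phi_x$ restricted to $\R\times\{s\}$ equals $\Phi^u_{\Phi^c_x(s)}\circ(\beta_x(s)\cdot)$ and that $D\Phi^u_y = 1/(\rho^u_y\circ\Phi^u_y)$ from \eqref{eq.kkunstable}; (iii) push $\mu^u_{n,y}$ forward by $\cH_x$ and change variables, observing the density $\rho^u_y$ cancels exactly against the Jacobian, leaving a constant; (iv) identify $\gamma_{n,x}(s)$ as that constant and note its independence from $t$ is automatic since $\zeta^u_n(\Phi^c_x(s))$ is a horizontal segment and the whole computation takes place at fixed $s$. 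The expected main obstacle is step (i): making precise, with only log-Hölder (not smooth) densities, that the $u$-Gibbs conditional density along $\cW^u(y)$ is \emph{exactly} (a constant multiple of) $\rho^u_y$ and not merely comparable to it. This is where Ledrappier's characterization of $u$-Gibbs states via the Margulis--Ruelle-type formula must be invoked carefully; once it is in place, the rest is a clean change-of-variables computation in the normal form chart.
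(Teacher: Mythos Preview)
Your proposal is correct and follows essentially the same route as the paper's proof: both identify the $u$-Gibbs conditional density along $\cW^u(y)$ as (a constant multiple of) $\rho^u_y$, use that $\Phi_x$ restricted to $\R\times\{s\}$ is $\Phi^u_{\Phi^c_x(s)}(\beta_x(s)\,\cdot)$, and then observe that in the change-of-variables the density $\rho^u_y$ cancels exactly against the Jacobian $D\Phi^u_y = (\rho^u_y\circ\Phi^u_y)^{-1}$, leaving a constant depending only on $s$. The paper treats your flagged ``obstacle'' (that the density is \emph{exactly} $\rho^u_y$ up to constant, not merely comparable) as a standard fact about $u$-Gibbs conditionals and simply asserts it, so your invocation of Ledrappier's characterization is appropriate but not a genuine difficulty here.
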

	
	\begin{proof}
		Given that $\mu$ is a $u$-Gibbs measure, for $\mu$-a.e. $x\in \TT$, and $\mu_{n,x}^{cu}$-a.e. $y\in \xi_n^{cu}(x)$, the measure $\mu_{n,y}^u$ is absolutely continuous. Its density, up to a constant, is given by (referring to Notation \eqref{eq.rhonormalforms})
		\begin{equation*} 
			\rho_y^u \colon \xi_n^ u(y)\ni z \mapsto \rho_y^u(z)=\prod_{j=0}^{+\infty} \frac{\|Df^{-1}(f^{-j}(z))|_{E^u}\|}{\|Df^{-1}(f^{-j}(y))|_{E^u}\|}.
		\end{equation*}
		Let $(t,s)\eqdef \mathcal{H}_x(y)$, so that $\nu_{n,s}^u=(\mathcal{H}_x)_*\mu_{n,y}^u$. Restricted to $\R\times\{s\}$, $\Phi_x=\mathcal{H}_x^{-1}$ is given by the map  $\Phi^u_{\Phi_x^c(s)}(\beta_x(s)t)$, for  some $\beta_x(s)\in \R$. Therefore, up to a constant that depends only on $s$, the density of $\nu_{n,s}^u=(\mathcal{H}_x)_*\mu_{n,y}^u$ is given by 
		\begin{equation*} 
			\tilde \rho_s^u \colon t \mapsto \rho_y^u(\Phi_{\Phi_x^c(s)}^u(t)) D\Phi_{\Phi_x^c(s)}^u(t).
		\end{equation*}
		Since $D\Phi_{\Phi_x^c(s)}^u(t)=(D\mathcal{H}_y^u)^{-1}(\Phi_{\Phi_x^c(s)}^u(t))=\rho_y^u(\Phi_{\Phi_x^c(s)}^u(t))^{-1}$, hence $\tilde \rho_s^u$ is constant. 
	\end{proof}

\subsection{Construction of leaf-wise quotient	 measures}
   The measures we define here are similar to the quotient measures $\nu^c_{n,x}$, but we will use unstable segments of a fixed length together with a normalization that allows for stabilization when $n$ is sufficiently large large enough (see Lemma~\ref{l.superposition bis} below). Let $I\dans\R$ be any bounded interval centered at $0$. It determines the length of unstable segments used in defining the leaf-wise quotient measures. Given a bounded Borel set $B\dans\R$ and a $\mu$-typical point $x\in\TT$ there exists $n_0=n_0(x,I,B)$ such that for every $n\geq n_0$, $I\times B\dans\zeta_n^{cu}(x)$ and $I\times [-1,1]\dans\zeta_n^{cu}(x)$: see Lemma \ref{lem.af1}.  We first show that the choice of length of unstable segments, and thus the interval $I\subset\R$ will not modify the definition. 
	 
	\begin{lemma}\label{l.indep_on_I}
		For any other interval $J\subset\R$ centered at $0$, we have
		$$\frac{\nu_{n,x}^{cu} (I \times B)}{\nu_{n,x}^{cu} (I \times [-1,1])}=\frac{\nu_{n,x}^{cu} (J \times B)}{\nu_{n,x}^{cu} (J \times [-1,1])},$$
		for every sufficiently large $n$. 
	\end{lemma}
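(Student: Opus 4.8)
The key structural input is Lemma~\ref{Lebesg}: in normal form coordinates, for $\mu$-a.e. $x$ and $\nu^{cu}_{n,x}$-a.e. leaf $\R\times\{s\}$, the conditional measure $\nu^u_{n,s}$ is a constant $\gamma_{n,x}(s)$ times Lebesgue measure on the relevant horizontal interval. Combining this with the disintegration formula from \S\ref{sub.sub.conditional}, written in normal form coordinates, we get for any Borel set $B\subset\R$ and any interval $I$ centered at $0$ with $I\times B\subset\zeta^{cu}_n(x)$:
\[
\nu^{cu}_{n,x}(I\times B)=\int_{\{s\,:\,(0,s)\in\zeta^{cu}_n(x),\ I\times\{s\}\subset\zeta^{cu}_n(x)\}} \gamma_{n,x}(s)\,\Leb(I)\,\mathbf{1}_{B}(s)\,d\nu^c_{n,x}(s),
\]
and similarly for $I\times[-1,1]$. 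The plan is thus: first pick $n$ large enough (using Lemma~\ref{lem.af1}, the uniform growth property) so that both $I\times B$, $J\times B$, $I\times[-1,1]$ and $J\times[-1,1]$ are all contained in $\zeta^{cu}_n(x)$; this is possible because each of $B$ and $[-1,1]$ is bounded and the atoms $\xi^{cu}_n(x)$ eventually contain any prescribed compact piece of $\cW^{cu}(x)$ around $x$. Then write out the above integral expression for all four quantities.

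The second step is to observe that, with $n$ large enough that the domain-of-integration constraints are vacuous on a fixed neighbourhood (i.e. the set of admissible $s$ is the same whether we use $I$ or $J$, once $n\geq n_0(x,I,J,B)$), the factor $\Leb(I)$ in the numerator and denominator of $\dfrac{\nu^{cu}_{n,x}(I\times B)}{\nu^{cu}_{n,x}(I\times[-1,1])}$ cancels, leaving
\[
\frac{\nu^{cu}_{n,x}(I\times B)}{\nu^{cu}_{n,x}(I\times[-1,1])}=\frac{\int \gamma_{n,x}(s)\,\mathbf{1}_B(s)\,d\nu^c_{n,x}(s)}{\int \gamma_{n,x}(s)\,\mathbf{1}_{[-1,1]}(s)\,d\nu^c_{n,x}(s)},
\]
and the right-hand side visibly does not involve $I$ at all. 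The same computation with $J$ in place of $I$ gives the identical right-hand side, so the two ratios agree. One should be slightly careful that the denominators are nonzero: this follows from the superposition property (Corollary~\ref{coro.superposition}, in the $\cWcu$ form \eqref{e.superposition bis}) which guarantees $\mu^{cu}_{n,x}$ charges every neighbourhood of $x$, hence $\nu^{cu}_{n,x}(I\times[-1,1])>0$.

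The only genuinely delicate point is \emph{bookkeeping of the admissible set of $s$}: for a given finite $n$ the set of $s$ for which $I\times\{s\}\subset\zeta^{cu}_n(x)$ may differ from the corresponding set for $J$ (if, say, $J$ is much longer than $I$). This is handled by quantifying over $n$ large enough: by Lemma~\ref{lem.af1} applied with $R=\max(\length(I),\length(J),\sup|B|,1)$ (and its $\cWcu$-analogue), for $\mu$-a.e.\ $x$ there is $n_0(x)$ so that for $n\geq n_0$ the atom $\zeta^{cu}_n(x)$ contains the full box $[-R,R]^2$ (in normal-form coordinates, using that $\cH_x$ is a homeomorphism and $\mathcal{W}^{cu}_R(x)$ maps into a fixed bounded set); then the admissible $s$-sets for $I$ and $J$ both contain $\{s:|s|\le R\}\supset B\cup[-1,1]$, making the constraint irrelevant for all four quantities simultaneously. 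This is precisely the regime in which the statement of the lemma is asserted (``for every sufficiently large $n$''), so no further uniformity is needed.
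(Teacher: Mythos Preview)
Your proof is correct and follows essentially the same approach as the paper: disintegrate $\nu^{cu}_{n,x}$ along horizontals, invoke Lemma~\ref{Lebesg} so that $\nu^u_{n,s}(I)=\gamma_{n,x}(s)\Leb(I)$, and observe that the constant $\Leb(I)$ (resp.\ $\Leb(J)$) cancels in the ratio, leaving an expression independent of the interval. The paper phrases this via an affine map $\psi$ with $\psi(I)=J$, but this is cosmetic; your version is in fact slightly more direct, and your explicit handling of the admissible-$s$ constraint via Lemma~\ref{lem.af1} makes the ``for $n$ sufficiently large'' clause more transparent than in the paper.
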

	
	\begin{proof}
	Consider real numbers $\alpha$ and  $\beta$ such that the affine map $\psi(t)=\alpha t+\beta$ satisfies $\psi(I)=J$. Then, applying the disintegration in conditional measures and using Lemma~\ref{Lebesg} we obtain
	\begin{align}
		\frac{\nu_{n,x}^{cu} (J \times B)}{\nu_{n,x}^{cu} (J \times [-1,1])}&=\frac{\int_{B}\nu^u_{n,s}(J)d\nu^c_{n,x}(s)}{\int_{[-1,1]}\nu^u_{n,s}(J)d\nu^c_{n,x}(s)}\nonumber\\
		&=\frac{\int_{B}\gamma_{n,x}(s)\operatorname{Leb}(\alpha I+\beta)d\nu^c_{n,x}(s)}{\int_{[-1,1]}\gamma_{n,x}(s)\operatorname{Leb}(\alpha I+\beta)d\nu^c_{n,x}(s)}\nonumber\\
		&=\frac{\int_{B}\gamma_{n,x}(s)\operatorname{Leb}(I)d\nu^c_{n,x}(s)}{\int_{[-1,1]}\gamma_{n,x}(s)\operatorname{Leb}(I)d\nu^c_{n,x}(s)}\nonumber\\
		&=\frac{\int_{B}\nu^u_{n,s}(I)d\nu^c_{n,x}(s)}{\int_{[-1,1]}\nu^u_{n,s}(I)d\nu^c_{n,x}(s)}\nonumber\\
		&=\frac{\nu_{n,x}^{cu} (I \times B)}{\nu_{n,x}^{cu} (I \times [-1,1])}\nonumber.
	\end{align} 		
This equality holds for sufficiently large $n$, ensuring that $J\times B$ and  $J\times [-1,1]$ are both subsets of $\zeta_n^{cu}(x)$. Notice that this can always be achieved due to Lemma~\ref{lem.af1}. This completes the proof.
	\end{proof}

Given $B\subset\R$ a bounded Borel measurable subset, and given $n\in\N$ we define the following number

	\begin{equation}
		\label{eq.les_nunu}
\hat \nu_{n,x}^c(B)\eqdef\frac{\nu_{n,x}^{cu} (I \times B)}{\nu_{n,x}^{cu} (I \times [-1,1])}.
	\end{equation}
	
We now use the superposition property to show that this sequence stabilizes  for large $n$.
	
	\begin{lemma}
		\label{l.superposition bis}
		For $\mu$-almost every $x\in\mathbb{T}^3$, every bounded Borel set $B\subset \R$, and every pair of integers $m>n\geq n_0(x,I,B)\in\N$ we have $\hat{\nu}^c_{m,x}(B)=\hat{\nu}^c_{n,x}(B)$.
	\end{lemma}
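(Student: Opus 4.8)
The statement asserts that the normalized quantity $\hat\nu^c_{n,x}(B)$ from \eqref{eq.les_nunu} is eventually independent of $n$. The natural approach is to exploit the superposition property \eqref{e.superposition bis} for the disintegrations $\mu^{cu}_{n,x}$, transported through the normal form charts. First I would fix $\mu$-a.e. $x\in\TT$ and a bounded Borel set $B\subset\R$, and choose $n_0=n_0(x,I,B)$ via Lemma~\ref{lem.af1} so that, for every $n\geq n_0$, both $I\times B$ and $I\times[-1,1]$ are contained in $\zeta^{cu}_n(x)=\mathcal{H}_x(\xi^{cu}_n(x))$; shrinking $\xi^{cu}_n$ further enlarges $n_0$ but causes no trouble. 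Since $\nu^{cu}_{n,x}=(\mathcal{H}_x)_*\mu^{cu}_{n,x}$ and $\mathcal{H}_x$ is a fixed diffeomorphism (not depending on $n$), the superposition property \eqref{e.superposition bis} pushes forward to: for $m>n\geq n_0$ and any Borel $A\subset\zeta^{cu}_n(x)$,
\[
\nu^{cu}_{n,x}(A)=\frac{\nu^{cu}_{m,x}(A)}{\nu^{cu}_{m,x}(\zeta^{cu}_n(x))}.
\]

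\textbf{Main computation.} Applying this identity with $A=I\times B$ and with $A=I\times[-1,1]$, both of which lie in $\zeta^{cu}_n(x)$ for $n\geq n_0$ by our choice of $n_0$, the common normalization factor $\nu^{cu}_{m,x}(\zeta^{cu}_n(x))$ cancels in the ratio:
\[
\hat\nu^c_{n,x}(B)=\frac{\nu^{cu}_{n,x}(I\times B)}{\nu^{cu}_{n,x}(I\times[-1,1])}
=\frac{\nu^{cu}_{m,x}(I\times B)\big/\nu^{cu}_{m,x}(\zeta^{cu}_n(x))}{\nu^{cu}_{m,x}(I\times[-1,1])\big/\nu^{cu}_{m,x}(\zeta^{cu}_n(x))}
=\frac{\nu^{cu}_{m,x}(I\times B)}{\nu^{cu}_{m,x}(I\times[-1,1])}=\hat\nu^c_{m,x}(B).
\]
This is exactly the desired stabilization. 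One must take a tiny bit of care that the superposition identity \eqref{e.superposition bis} holds simultaneously for the (at most countably many) relevant sets on a single full-measure set of $x$; this follows because \eqref{e.superposition bis} is asserted to hold for $\mu$-a.e.\ $x$ for \emph{every} measurable $A\subset\xi^{cu}_n(x)$, and intersecting over the countably many pairs $(n,m)$ keeps full measure. Also $\nu^{cu}_{m,x}(I\times[-1,1])>0$ for $m\geq n_0$ since $I\times[-1,1]$ contains a neighbourhood of the origin inside $\zeta^{cu}_m(x)$ and $\mu^{cu}_{m,x}$ charges every neighbourhood of $x$ by Corollary~\ref{coro.superposition} (transported through $\mathcal{H}_x$), so all ratios are well defined.

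\textbf{Expected obstacle.} There is essentially no deep obstacle here: the proof is a direct unwinding of the superposition property combined with the fact that the chart $\mathcal{H}_x$ is independent of $n$. The only mildly delicate point is bookkeeping — ensuring the full-measure set on which everything holds is chosen uniformly (over $n$, $m$, and a countable generating family of bounded Borel sets $B$, e.g.\ intervals with rational endpoints), and then extending to all bounded Borel $B$ by noting that for each \emph{fixed} $B$ the inequality $n\geq n_0(x,I,B)$ from Lemma~\ref{lem.af1} is what makes both sets sit inside $\zeta^{cu}_n(x)$. This is routine measure-theoretic care rather than a genuine difficulty. The payoff is that one may then \emph{define} the leaf-wise quotient measure $\hat\nu^c_x(B)\eqdef\lim_{n\to\infty}\hat\nu^c_{n,x}(B)=\hat\nu^c_{n,x}(B)$ for $n\geq n_0(x,I,B)$, which is the object used throughout the rest of the paper.
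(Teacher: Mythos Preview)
Your proposal is correct and follows essentially the same route as the paper: both arguments reduce immediately to the superposition property \eqref{e.superposition bis} for the disintegrations $\mu^{cu}_{n,x}$ (pushed forward by the fixed chart $\mathcal{H}_x$) and observe that the common normalizing factor $\mu^{cu}_{m,x}(\xi^{cu}_n(x))=\nu^{cu}_{m,x}(\zeta^{cu}_n(x))$ cancels in the ratio defining $\hat\nu^c_{n,x}(B)$. Your additional remarks on the positivity of the denominator and on selecting a single full-measure set over all countable pairs $(n,m)$ are exactly the bookkeeping the paper also records.
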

	\begin{proof}
		Firstly, observe that for each pair $m>n$, there exists a $\mu$-full measure set of points $x\in \TT$ for which $\mu^{cu}_{m,x}(\xi^{cu}_{n,x}(x))>0$. This follows from the superposition property \eqref{e.superposition bis}. By taking the countable intersection of all these sets, and applying \eqref{e.superposition bis} again, we deduce that for every bounded Borel set $B\subset \R$, and $m,n\geq n_0(x,I,B)$, so that $I\times B,I\times [-1,1]\dans\zeta_n^{cu}(x)\dans\zeta_m^{cu}(x)$, the following holds
		\begin{align*}
			\hat{\nu}^c_{n,x}(B)&=\frac{\mu^{cu}_{n,x}(\Phi_x(I\times B))}{\mu^{cu}_{n,x}(\Phi_x(I\times [-1,1]))} \\
			&=\frac{\mu^{cu}_{m,x}(\Phi_x(I\times B))}{\mu^{cu}_{m,x}(\xi^{cu}_{n,x}(x))}\times\frac{\mu^{cu}_{m,x}(\xi^{cu}_{n,x}(x))}{\mu^{cu}_{m,x}(\Phi_x(I\times B))}=\hat{\nu}^c_{m,x}(B).\qedhere
		\end{align*} 
	\end{proof}
	
	Hence, the construction above defines a family of measures on $\R$.
	
	\begin{defi}[Leaf-wise quotient measure]
		For $\mu$-a.e. $x \in \TT$, let $\hat{\nu}^c_x$ be the unique locally finite Borel measure defined in $\R$ so that, for each bounded Borel $B\subset \R$, one has 
		\[
		\hat{\nu}^c_x(B)\eqdef\hat{\nu}^c_{n_0(x,B),x}(B).
		\]
	\end{defi}
	By Lemma~\ref{l.superposition bis} the right hand side above is well defined.

\begin{remark}[Choice of normalization]
	\label{rem.normaliza}
	By construction, we have that $\hat{\nu}^c_x([-1,1])=1$.
\end{remark}

\begin{remark}\label{remark.superposition2}
	By construction, for $\mu$-a.e. $x \in \TT$, $\hat \nu_x^c$ gives positive measure to any open neighbourhood of $0$. This follows from the definition and Corollary \ref{coro.superposition}
\end{remark}

\subsection{Obtaining the SRB property: Proof of Theorem~\ref{t.leafwise}}
	To complete the proof of Theorem \ref{t.leafwise} we assume that for $\mu$ almost every $x\in\TT$ the measure $\hat{\nu}^c_x$ is proportional to Lebesgue on $\R$.  To conclude that $\mu$ is an SRB measure it suffices to show that the conditional measures $\mu^{cu}_{0,x}$ are absolutely continuous with respect to $\operatorname{Leb}_{\cW^{cu}(x)}$ for $\mu$ almost every $x$. To achieve this goal, consider $a<0<b$ with $b-a$ large enough so that, denoting $I=[a,b]$, we have 
	\[
	\xi^{cu}_0(x)\subset\Phi_x(I\times I),\:\textrm{for}\:\mu\:\textrm{almost every}\:x\in\TT.
	\] 
	This can be done because the atoms of $\xi^{cu}_0$ have uniformly bounded diameter and the $C^1$ norm of $\Phi_x$ depends continuously on $x$.  Let $R=\max_{x\in\TT}\operatorname{diam}(\Phi_x(I\times I))$. Then, applying Lemma~\ref{lem.af1} with this number we get that for $\mu$ almost every $x$, there exists $n_0(x,I)$ such that if $n\geq n_0(x,I)$ then 
	\begin{enumerate}
		\item $I\subset\zeta^c_n(x)$
		\item $I\times\{s\}\subset\zeta^u_{n}(s)$, for every $s\in I$.		
	\end{enumerate}
Now, given such $x$ and $n\geq n_0(x,I)$ take $s\in I$. We observe
\begin{align}
1=\nu^u_{n,s}(\zeta^u_n(s))&\geq\nu^u_{n,s}(I\times\{s\})\nonumber\\
&=\gamma_{n,x}(s)\operatorname{Leb}(I)\nonumber\\
&=\gamma_{n,x}(s)(b-a).
\end{align}
Therefore,
\[
0<\gamma_{n,x}(s)<\frac{1}{b-a}.
\]
We claim that $\nu^c_{n,x}|_I$ is absolutely continuous with respect to the leaf-wise quotient measure $\hat{\nu}^c_x|_I$. Indeed, let us denote for simplicity $\alpha=(\nu^{cu}_{n,x}(I\times [-1,1]))^{-1}$. Then, if $B\subset I$ we have
\begin{align}
\hat{\nu}^c_x(B)&=\alpha\nu^{cu}_{n,x}(I\times B)=\alpha\int_B\nu^u_{n,s}d\nu^c_{n,x}(s)\nonumber\\
&=\alpha\operatorname{Leb}(I)\int_B\gamma_{n,x}(s)d\nu^c_{n,x}(s).
\end{align}
Since $\gamma_{n,x}>0$ if $\hat{\nu}^c_x(B)=0$ then $\nu^c_{n,x}(B)=0$, which proves our claim. Since, by assumption, $\hat{\nu}^c_x\propto\operatorname{Leb}$ we conclude that indeed $\nu^c_{n,x}|_I$ is absolutely continuous with $\operatorname{Leb}|_I$. As $d\nu^{cu}_{n,x}(t,s)=d\nu^u_{n,s}(t)d\nu^c_{n,x}(s)$, it follows immediately from Lemma~\ref{Lebesg} that $\nu^{cu}_{n,x}|_{I\times I}$ is absolutely continuous with respect to the two dimensional Lebesgue measure $\operatorname{Leb}|_{I\times I}$. Since $\Phi_x$ is $C^1$, we get that $\mu^{cu}_{n,x}|_{\Phi_x(I\times I)}$ is absolutely continuous with respect to $\operatorname{Leb}_{\cW^{cu}(x)}$. By the superposition property we deduce that $\mu^{cu}_{0,x}$ is absolutely continuous, as desired. The theorem is proved. \qed

	%	Recall that $\mu$ is SRB if and only if, for $\mu$-a.e. $x\in \TT$ $\mu_{x}^{cu}$ is absolutely continuous with respect to Lebesgue on $\xi^{cu}(x)$. Recall also that the definition of leaf-wise measures is independent of the initial choice of an interval $I$ (by Lemma \ref{l.indep_on_I}) and that atoms of $\xi^{cu}$ have uniform diameter. Hence we may as well choose $I$ large enough in such a way that for $\mu$-a.e. $x\in\TT$ we have 
		%$$\zeta^{cu}(x)=\mathcal{H}_x(\xi^{cu}(x))\dans I\times I.$$
		
		%Let us use this $I$ in the definition of leaf-wise measures. Let $n$ large enough so that $I\times I\dans\zeta^{cu}_{n}(x)$. Using disintegration if $\hat\nu^c_x$ is Lebesgue measure then $\nu^{cu}_{n,x}$ restricted to $I\times I$ is equivalent to Lebesgue measure (recall Lemma \ref{Lebesg}). Hence, $\nu^{cu}_{n,x}$, when restricted to $\zeta^{cu}(x)\dans I\times I$ is equivalent to Lebesgue. But the superposition property implies that $\nu^{cu}_{n,x}|_{\zeta^{cu}(x)}$ is equivalent to $\nu^{cu}_x$, which as a consequence must be equivalent to Lebesgue. Using that $\Phi_x$ is a $C^1$-diffeomorphism so preserve Lebesgue class, we deduce that $\mu^{cu}_x$ is equivalent to Lebesgue for $\mu$-a.e. $x\in\TT$. This proves that $\mu$ is SRB.

\subsection{Basic moves for leaf-wise measures}\label{ss.basic}

We now proceed with the proof of  Proposition~\ref{p.basicmoves}. Our goal is to investigate how the measures $\hat{\nu}^c_x$ change as we move the base point $x$. In this analysis, the understanding of how the normal form coordinates change with the base point on the same center-unstable leaf plays a crucial role. 
%In the sequence of lemmas below we shall see that the proportionality class of leaf-wise measures remains the same \emph{up to some linear or affine map} when we move appropriately the base point.  

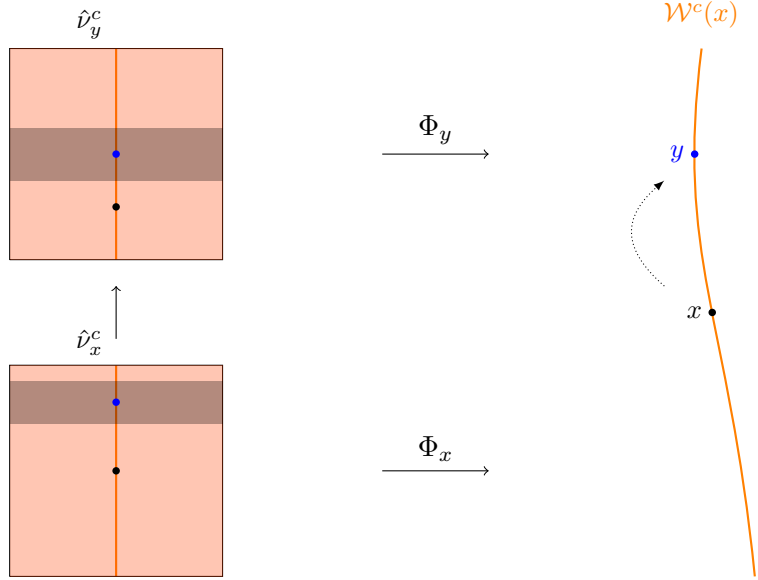
\begin{figure}[h]
	\centering
	\begin{tikzpicture}[scale=.7] 
		\draw[draw=orange, thick] (-8,-2)--(-8,2);
		\draw[draw=orange, thick] (-8,-8)--(-8,-4);
		\draw[->,draw=black] (-8,-3.5)--(-8,-2.5);
		\draw (-10,-2)--(-10,2)--(-6,2)--(-6,-2)--(-10,-2);
		\fill[red!50!orange,opacity=.3] (-10,-2)--(-10,2)--(-6,2)--(-6,-2)--(-10,-2); 
		\fill[black,opacity=.3] (-10,-0.5)--(-10,0.5)--(-6,0.5)--(-6,-0.5)--(-10,-0.5); 
		\draw (-10,-8)--(-10,-4)--(-6,-4)--(-6,-8)--(-10,-8);
		\fill[red!50!orange,opacity=.3] (-10,-8)--(-10,-4)--(-6,-4)--(-6,-8)--(-10,-8);
		\fill[black,opacity=.3] (-10,-5.1)--(-10,-4.3)--(-6,-4.3)--(-6,-5.1)--(-10,-5.1);
		\draw[->,draw=black] (-3,0)--(-1,0) node[midway,above]{$\Phi_y$};
		\draw[->,draw=black] (-3,-6)--(-1,-6) node[midway,above]{$\Phi_x$};
		\draw[draw=orange, thick] (4,-8)  .. controls (3.5,-3) and (2.5,-2) .. (3,2);
		\draw[draw=black, densely dotted, -latex] (2.3,-2.5)  .. controls (1.5,-1.8) and (1.5,-1.2) .. (2.3,-0.5);
		\fill[fill=blue] (2.87,0) node[left]{\color{blue}\small $y$} circle (2pt); 
		\fill[fill=black] (3.2,-3) node[left]{\small $x$} circle (2pt); 
		\fill[fill=black] (3,2.2) node[above]{\color{orange}\small $\mathcal{W}^c(x)$};
		\fill[fill=black] (-8.5,2) node[above]{\small $\hat \nu_y^c$};  
		\fill[fill=black] (-8.5,-4) node[above]{\small $\hat \nu_x^c$};  
		\fill[fill=blue] (-8,0) node[left]{} circle (2pt);
		\fill[fill=black] (-8,-1) node[left]{} circle (2pt);
		\fill[fill=black] (-8,-6) node[left]{} circle (2pt);
		\fill[fill=blue] (-8,-4.7) node[left]{} circle (2pt);
	\end{tikzpicture}
	\caption{\label{first basic move} Moving the base point within a central leaf changes the leaf-wise measure by an affine map up to renormalization.}
\end{figure}

\subsubsection{First basic move: moving along center manifolds}\label{sss.center}

In the following lemma we show that for points on the same center leaf there exists an affine map that makes the corresponding leaf-wise quotient measure proportional to each other: see Figure \ref{first basic move}. It is important to note that the derivative of this affine map is determined by normal form coordinates along the center manifold.
	\begin{lemma}
		\label{chanfemt centrale}
		For $\mu$-a.e. $x\in \TT$, and $y\in \mathcal{W}^c(x)$, we have
		$$
		%\propto 
		\hat \nu_{y}^c\propto  (h^2_{x,y})_* \hat \nu_{x}^c,
		$$ 
where $h^2_{x,y}\colon \R\to \R$ is the affine map defined by $s\mapsto \rho_y^c(x)\cdot s+\mathcal{H}_{y}^c(x)$, corresponding to the action of the change of normal forms on the second coordinate given by Lemma \ref{l.change_center_charts}. 
	\end{lemma}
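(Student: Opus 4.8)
\textbf{Proof plan for Lemma~\ref{chanfemt centrale}.}
The plan is to track how the leaf-wise quotient measures transform under the change of normal charts $\mathcal{H}_{x,y}$ when $y\in\mathcal{W}^c(x)$, using the explicit form of $\mathcal{H}_{x,y}$ given by Lemma~\ref{l.change_center_charts} together with the definition \eqref{eq.les_nunu} of $\hat\nu^c_{n,x}$. Recall from \S\ref{section formes normales} that $\mathcal{H}_{x,y}(t,s)=(h^1_{x,y}(t,s),h^2_{x,y}(s))=(\rho^u_y(x)t,\ \rho^c_y(x)s+\mathcal{H}^c_y(x))$; in particular this map sends the vertical strip $I\times B$ to a strip of the form $(\rho^u_y(x)I)\times h^2_{x,y}(B)$. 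The conditional measures themselves transform naturally: since $\mathcal{H}_{x,y}=\mathcal{H}_y\circ\mathcal{H}_x^{-1}$ and $\xi^{cu}_n(x)=\xi^{cu}_n(y)$ (the partition atom is intrinsic to the $\cW^{cu}$-leaf once $n$ is large, by Lemma~\ref{lem.af1}), we have $\nu^{cu}_{n,y}=(\mathcal{H}_{x,y})_*\nu^{cu}_{n,x}$ up to a positive normalizing constant coming from the superposition property \eqref{e.superposition bis} (the base points $x,y$ may lie in different atoms at small scales, but this only affects an overall proportionality factor).

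The main computation is then: for a bounded Borel set $B\subset\R$ and $n$ large enough (so that all relevant strips lie inside $\zeta^{cu}_n(x)$, using Lemma~\ref{lem.af1} and Lemma~\ref{l.indep_on_I} to freely change the horizontal interval), write
\[
\hat\nu^c_{n,y}(h^2_{x,y}(B))=\frac{\nu^{cu}_{n,y}\big(I\times h^2_{x,y}(B)\big)}{\nu^{cu}_{n,y}\big(I\times[-1,1]\big)}
=\frac{\nu^{cu}_{n,x}\big(\mathcal{H}_{x,y}^{-1}(I\times h^2_{x,y}(B))\big)}{\nu^{cu}_{n,x}\big(\mathcal{H}_{x,y}^{-1}(I\times[-1,1])\big)}.
\]
Now $\mathcal{H}_{x,y}^{-1}(I\times h^2_{x,y}(B))=(\rho^u_y(x)^{-1}I)\times B=:J\times B$ with $J$ an interval centered at $0$, and similarly $\mathcal{H}_{x,y}^{-1}(I\times[-1,1])=J\times (h^2_{x,y})^{-1}([-1,1])$. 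The horizontal interval $J$ can be replaced by $I$ again via Lemma~\ref{l.indep_on_I}. This reduces the ratio to $\nu^{cu}_{n,x}(I\times B)/\nu^{cu}_{n,x}(I\times B')$ where $B'=(h^2_{x,y})^{-1}([-1,1])$, which by definition \eqref{eq.les_nunu} equals $\hat\nu^c_{n,x}(B)/\hat\nu^c_{n,x}(B')$; since $\hat\nu^c_{n,x}(B')$ is a positive constant independent of $B$, we obtain $\hat\nu^c_y(h^2_{x,y}(B))= c\cdot\hat\nu^c_x(B)$ for a constant $c>0$, i.e.\ $\hat\nu^c_y\propto (h^2_{x,y})_*\hat\nu^c_x$. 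The affineness and the exact formula for $h^2_{x,y}$ are exactly Lemma~\ref{l.change_center_charts}, so nothing further is needed there.

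The main obstacle I anticipate is bookkeeping around the normalization: one must be careful that the ``up to proportionality'' constants relating $\nu^{cu}_{n,y}$ to $(\mathcal{H}_{x,y})_*\nu^{cu}_{n,x}$ are genuinely independent of the Borel set $B$ and stable in $n$, which is where the superposition property \eqref{e.superposition bis} and Lemma~\ref{l.superposition bis} must be invoked on a common full-measure set of base points (intersecting countably many conull sets, as in the proof of Lemma~\ref{l.superposition bis}). A secondary point is to ensure $x$ is chosen generic enough that $y\mapsto\hat\nu^c_y$ is defined and the uniform-growth statement Lemma~\ref{lem.af1} applies simultaneously along $\cW^c(x)$; this is handled by restricting to the $\cW^u$-saturated full-measure set $\Lambda$ and noting $\cW^c(x)$ meets it in a full $\mu^c$-measure set. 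Apart from these measure-theoretic hygiene issues, the argument is a direct change-of-variables computation identical in spirit to Lemma~\ref{l.indep_on_I}.
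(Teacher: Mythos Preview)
Your proposal is correct and follows essentially the same route as the paper: a direct change-of-variables using the explicit form $\mathcal{H}_{x,y}(t,s)=(\rho^u_y(x)t,\,h^2_{x,y}(s))$ from Lemma~\ref{l.change_center_charts}, combined with the fact that the horizontal interval can be replaced freely. The only cosmetic difference is that the paper invokes Lemma~\ref{Lebesg} directly to cancel the $\rho^u_y(x)$ dilation in the first coordinate, whereas you quote its corollary Lemma~\ref{l.indep_on_I}; and your worry about an ``up to proportionality'' constant between $\nu^{cu}_{n,y}$ and $(\mathcal{H}_{x,y})_*\nu^{cu}_{n,x}$ is unnecessary, since once $n$ is large enough that $y\in\xi^{cu}_n(x)$ the atoms coincide exactly, hence $\mu^{cu}_{n,x}=\mu^{cu}_{n,y}$ and $(\mathcal{H}_{x,y})_*\nu^{cu}_{n,x}=\nu^{cu}_{n,y}$ on the nose.
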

	
	\begin{proof}
		Let $I$ be a bounded  interval centered at $0$ and $B$ be a bounded Borel subset of $\R$. Consider $n\in\N$, large enough so that $y\in\xi^{cu}_n(x)$ and 
		\[
		\Phi_y(I\times B)\cup\Phi_y(I\times [-1,1])\dans\xi^{cu}_n(y)=\xi^{cu}_n(x).
		\]
       This is possible after Lemma \ref{lem.af1}. Recall the normal form coordinate change $\cH_{x,y}(t,s)=\cH_y\circ\Phi_x=(h^1_{x,y}(t,s),h^2_{x,y}(s))$, which is given by (see Lemma~\ref{l.change_center_charts})
       \[
       h^1_{x,y}(t,s)=\rho^u_y(x)t\:\:\:\textrm{and}\:\:\:h^2_{x,y}(s)=\rho^c_y(x)s+\cH^c_y(x).
       \]
       Let $\tilde{B}=h^2_{x,y}(B)$, $I_1=[-1,1]$ and $\tilde{I}_1=h^2_{x,y}(I_1)$. Then, we can apply Lemma~\ref{Lebesg} as before to argue that
       \begin{align}
       	\hat{\nu}^c_x(B)&=\frac{\mu^{cu}_{n,x}(\Phi_x(I\times B))}{\mu^{cu}_{n,x}(\Phi_x(I\times I_1))}=\frac{\mu^{cu}_{n,y}(\Phi_y\circ\cH_{x,y}(I\times B))}{\mu^{cu}_{n,y}(\Phi\circ\cH_{x,y}(I\times I_1))}\nonumber\\
       	&=\frac{\nu^{cu}_{n,y}(\rho^u_y(x)I\times\tilde{B})}{\nu^{cu}_{n,y}(\rho^u_y(x)I\times\tilde{I}_1)}=\frac{\int_{\tilde{B}}\nu^u_{n,s}(\rho^u_y(x)I)d\nu^c_{n,x}(s)}{\int_{\tilde{I}_1}\nu^u_{n,s}(\rho^u_y(x)I)d\nu^c_{n,x}(s)}\nonumber\\
       	&=\frac{\int_{\tilde{B}}\nu^u_{n,s}(I)d\nu^c_{n,x}(s)}{\int_{\tilde{I}_1}\nu^u_{n,s}(I)d\nu^c_{n,x}(s)}=\frac{\nu^{cu}_{n,x}(I\times\tilde{B})}{\nu^{cu}_{n,x}(I\times\tilde{I}_1)}\nonumber\\
       	&=\alpha(x,y)\hat{\nu}^c_y(\tilde{B}),\nonumber
       \end{align}	 
		where $\alpha(x,y)=\frac{\nu^{cu}_{n,x}(I\times I_1)}{\nu^{cu}_{n,x}(I\times\tilde{I}_1)}$. This is exactly the desired conclusion. 
		\end{proof}

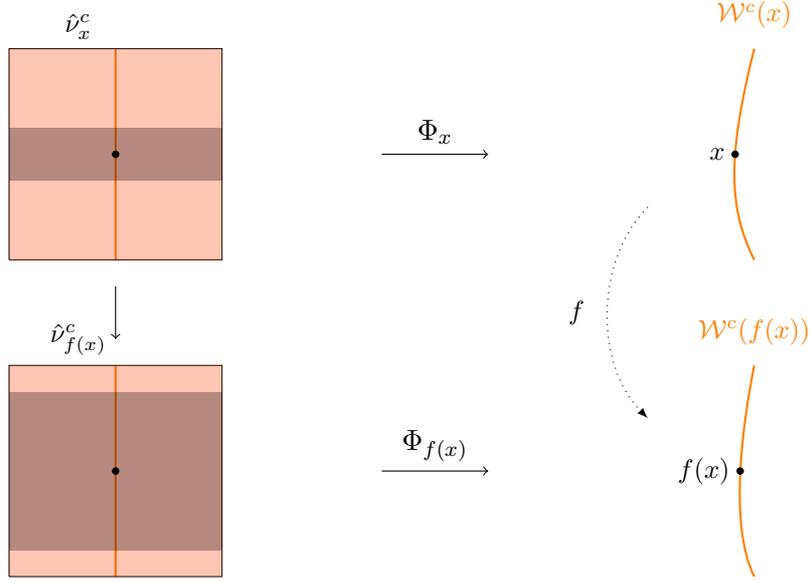
\begin{figure}[h]
	\centering
	\begin{tikzpicture}[scale=.7] 
		\draw[draw=orange, thick] (-8,-2)--(-8,2);
		\draw[draw=orange, thick] (-8,-8)--(-8,-4);
		\draw[->,draw=black] (-8,-2.5)--(-8,-3.5);
		\draw (-10,-2)--(-10,2)--(-6,2)--(-6,-2)--(-10,-2);
		\fill[red!50!orange,opacity=.3] (-10,-2)--(-10,2)--(-6,2)--(-6,-2)--(-10,-2); 
		\fill[black,opacity=.3] (-10,-0.5)--(-10,0.5)--(-6,0.5)--(-6,-0.5)--(-10,-0.5); 
		\draw (-10,-8)--(-10,-4)--(-6,-4)--(-6,-8)--(-10,-8);
		\fill[red!50!orange,opacity=.3] (-10,-8)--(-10,-4)--(-6,-4)--(-6,-8)--(-10,-8);
		\fill[black,opacity=.3] (-10,-7.5)--(-10,-4.5)--(-6,-4.5)--(-6,-7.5)--(-10,-7.5);
		\draw[->,draw=black] (-3,0)--(-1,0) node[midway,above]{$\Phi_x$};
		\draw[->,draw=black] (-3,-6)--(-1,-6) node[midway,above]{$\Phi_{f(x)}$};  
		\draw[draw=orange, thick] (4,-2)  .. controls (3.5,-1) and (3.5,0) .. (4,2);
		\fill[fill=black] (3.64,0) node[left]{\small $x$} circle (2pt); 
		\fill[fill=black] (4,2.2) node[above]{\color{orange}\small $\mathcal{W}^c(x)$};
		\draw[draw=orange, thick] (4,-8)  .. controls (3.5,-7) and (3.8,-5) .. (4,-4);
		\fill[fill=black] (3.735,-6) node[left]{\small $f(x)$} circle (2pt); 
		\fill[fill=black] (1,-3) node[left]{\small $f$};
		\fill[fill=black] (4,-3.8) node[above]{\color{orange}\small $\mathcal{W}^c(f(x))$};
		\draw[draw=black, dotted, -latex] (2,-1)  .. controls (1,-2) and (1,-4) .. (2,-5);
		\fill[fill=black] (-8.7,2) node[above]{\small $\hat \nu_x^c$};  
		\fill[fill=black] (-8.7,-4) node[above]{\small $\hat \nu_{f(x)}^c$};  
		\fill[fill=black] (-8,0) node[left]{} circle (2pt);
		\fill[fill=black] (-8,-6) node[left]{} circle (2pt); 
	\end{tikzpicture}
	\caption{\label{second basic move} Applying the dynamics to the base point changes the leaf-wise measure by a linear map given by the differential of $f$ along $E^c$ up to renormalization.}
\end{figure}

\subsubsection{Second basic move: applying the dynamics}\label{sss.appl_dyn}
When we push $x$ to $f(x)$ the change on the measures is linear and the slope of the linear map is given by the derivative at $x$ along the center direction.   Recall that for $*=c,u$, we denote by $\Lambda_x^*$ the linear map $ t \mapsto \lambda_x^*\cdot t$ (where $\lambda_x^*\eqdef  \|Df(x)|_{E^*}\|$). 
\begin{lemma}\label{chang dyna}
	For $\mu$-a.e. $x \in \TT$, we have 
	$$ 
	\hat \nu_{f(x)}^c\propto (\Lambda_x^c)_* \hat \nu_{x}^c. 
	$$ 
\end{lemma}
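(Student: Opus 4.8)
The plan is to mimic the proof of Lemma~\ref{chanfemt centrale}, replacing the change of charts on the same center-unstable leaf by the conjugation~\eqref{conjug forme normale} of $f$ with the normal forms. The starting point is that $f$ sends $\cW^{cu}(x)$ to $\cW^{cu}(f(x))$ and, by Lemma~\ref{l_inv_con_meas deux}, $\mu^{cu}_{n+1,f(x)}=f_\ast\mu^{cu}_{n,x}$. Pushing both sides forward by the normal form charts $\cH_{f(x)}$ and using \eqref{conjug forme normale} (i.e. $\cH_{f(x)}\circ f=N_x\circ\cH_x$ on $\cW^{cu}(x)$), we get $\nu^{cu}_{n+1,f(x)}=(N_x)_\ast\nu^{cu}_{n,x}$, which is exactly Lemma~\ref{l_inv_con_meas trois}. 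Here $N_x(t,s)=(\lambda^u_xt,\lambda^c_xs)$, so $N_x$ acts diagonally: on the first (unstable) coordinate by $\Lambda^u_x$ and on the second (center) coordinate by $\Lambda^c_x$.

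Next I would track this through the definition~\eqref{eq.les_nunu} of $\hat\nu^c$. Fix a bounded interval $I\subset\R$ centered at $0$ and a bounded Borel set $B\subset\R$, and pick $n$ large enough (using Lemma~\ref{lem.af1}) so that the relevant rectangles lie inside the atoms $\zeta^{cu}_n(x)$ and, after applying $N_x$, inside $\zeta^{cu}_{n+1}(f(x))$; this is where I would invoke the quasi-isometric/uniform-growth control so that $n_0(f(x),\cdot)$ can be taken comparable to $n_0(x,\cdot)$. Writing $\tilde B=\lambda^c_x B=\Lambda^c_x(B)$ and $\tilde I_1=\lambda^c_x[-1,1]$, and using $\nu^{cu}_{n+1,f(x)}=(N_x)_\ast\nu^{cu}_{n,x}$ together with $N_x(I\times B)=(\lambda^u_xI)\times\tilde B$, we get
\[
\hat\nu^c_{f(x)}(\tilde B)=\frac{\nu^{cu}_{n+1,f(x)}(I\times\tilde B)}{\nu^{cu}_{n+1,f(x)}(I\times[-1,1])}
\]
and, after a change of the unstable interval from $I$ to $\lambda^u_xI$ (harmless by Lemma~\ref{l.indep_on_I}, since that lemma shows the ratio does not depend on the choice of centered interval in the unstable direction) and using the disintegration $d\nu^{cu}=d\nu^u_sd\nu^c$ with Lemma~\ref{Lebesg} (so that $\nu^u_{n,s}(\lambda^u_xI)=\gamma_{n,x}(s)\lambda^u_x\,\Leb(I)$, the extra factor $\lambda^u_x$ cancelling in the ratio), one obtains
\[
\hat\nu^c_{f(x)}(\tilde B)=\frac{\nu^{cu}_{n,x}(I\times B)}{\nu^{cu}_{n,x}(I\times\tilde I_1)}=\alpha(x)\,\hat\nu^c_x(B),
\]
with $\alpha(x)=\nu^{cu}_{n,x}(I\times[-1,1])/\nu^{cu}_{n,x}(I\times\tilde I_1)>0$ a finite positive constant independent of $B$. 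Since $\tilde B=\Lambda^c_x(B)$ ranges over all bounded Borel sets as $B$ does, this says $\hat\nu^c_{f(x)}\circ\Lambda^c_x=\alpha(x)\hat\nu^c_x$ as measures, i.e. $\hat\nu^c_{f(x)}=\alpha(x)\,(\Lambda^c_x)_\ast\hat\nu^c_x\propto(\Lambda^c_x)_\ast\hat\nu^c_x$, which is the claim.

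The only genuinely delicate point is the bookkeeping of the index $n$ and the base point: the normalization interval $[-1,1]$ and the truncation interval $I$ must simultaneously sit inside the correct atoms at $x$ and at $f(x)$, and one must know the stabilized values $\hat\nu^c_{m,x}$, $\hat\nu^c_{m,f(x)}$ agree with $\hat\nu^c_x$, $\hat\nu^c_{f(x)}$ for the chosen $n$; this is handled by Lemma~\ref{l.superposition bis} together with the uniform growth property (Lemma~\ref{lem.af1}, Remark~\ref{rem.af1}), exactly as in the proof of Lemma~\ref{chanfemt centrale}. Everything else is the diagonal structure of $N_x$ plus the proportionality-to-Lebesgue of the unstable conditionals (Lemma~\ref{Lebesg}) and the independence of the construction on the chosen unstable interval (Lemma~\ref{l.indep_on_I}). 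Note this is the special case of the general principle that will also give the third basic move, with $N_x$ replaced by the (diagonal, by Lemmas~\ref{chang of coord} and~\ref{lem.calculholonomie}) change of charts along an unstable leaf.
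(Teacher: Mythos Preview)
Your approach is essentially identical to the paper's: both use Lemma~\ref{l_inv_con_meas trois} (i.e.\ $\nu^{cu}_{n+1,f(x)}=(N_x)_\ast\nu^{cu}_{n,x}$) together with the diagonal form of $N_x$ and Lemma~\ref{Lebesg} to eliminate the dependence on the unstable interval, arriving at $\hat\nu^c_{f(x)}(B)=\alpha(x)\,\hat\nu^c_x\big((\Lambda^c_x)^{-1}B\big)$. One small slip: since you are pulling back through $N_x^{-1}$, the normalization interval $[-1,1]$ becomes $(\lambda^c_x)^{-1}[-1,1]$, not $\tilde I_1=\lambda^c_x[-1,1]$; with that correction your $\alpha(x)$ agrees with the paper's $\alpha(x)=\big(\hat\nu^c_x((\Lambda^c_x)^{-1}[-1,1])\big)^{-1}$.
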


\begin{proof}
	
		Let $I$ be a bounded interval centered at $0$ and $B$ be a bounded Borel subset of $\R$. Fix $n \in \N$ large so that $I\times B,I\times[-1,1]\dans\zeta^{cu}_{n+1}(f(x))$ (see Lemma \ref{lem.af1}). By definition, we have 
		$$
		\hat\nu_{n+1,f(x)}^c(B)=\frac{\nu_{n+1,f(x)}^{cu} (I\times B)}{\nu_{n+1,f(x)}^{cu} (I\times  [-1,1])}, 
		$$ 
		and by Lemmas \ref{l_inv_con_meas trois} and \ref{Lebesg}
		\begin{align}
		\frac{\nu_{n+1,f(x)}^{cu} (I\times B)}{\nu_{n+1,f(x)}^{cu} (I\times  [-1,1])}&=\frac{\nu_{n,x}^{cu} (N_x^{-1}(I\times B))}{\nu_{n,x}^{cu} (N_x^{-1}(I\times [-1,1]))}\nonumber\\
		&=\frac{\int_{(\Lambda^c_x)^{-1}(B)}\nu^u_{n,s}((\lambda^u_x)^{-1}I)d\nu^c_{n,x}(s)}{\int_{(\Lambda^c_x)^{-1}([-1,1])}\nu^u_{n,s}((\lambda^u_x)^{-1}I)d\nu^c_{n,x}(s)}\nonumber\\
		&=\frac{\nu^{cu}_{n,x}(I\times(\Lambda^c_x)^{-1}(B)}{\nu^{cu}_{n,x}(I\times(\Lambda^c_x)^{-1}([-1,1]))}\nonumber\\
		&=\alpha(x)\hat{\nu}^c_x((\Lambda^c_x)^{-1}(B)),
		\end{align}
		where $\alpha(x)=\hat{\nu}^c_x((\Lambda^c_x)^{-1}([-1,1]))^{-1}$. We deduce that
		$$\hat \nu_{f(x)}^c(B)\propto (\Lambda_x^c)_\ast \hat \nu_{x}^c(B).\qedhere$$
\end{proof}

\begin{figure}[h!]
	\centering
	\begin{tikzpicture}[scale=.7] 
		\draw[draw=orange, thick] (-8,-2)--(-8,2); 
		\draw[draw=orange, thick] (4,-2)--(4,2);
		\draw[draw=red!80!black, thick] (-10,0)--(-6,0);  
		\draw[draw=red!80!black, thick] (2,0)--(6,0); 
		\draw[->,draw=black] (-8,-3.5)--(-8,-4.5) node[midway, left]{$\Phi_x$};
		\draw[->,draw=black] (4,-3.5)--(4,-4.5) node[midway, right]{$\Phi_{x'}$};
		\draw (-10,-2)--(-10,2)--(-6,2)--(-6,-2)--(-10,-2);
		\fill[red!50!orange,opacity=.3] (-10,-2)--(-10,2)--(-6,2)--(-6,-2)--(-10,-2); 
		\fill[black,opacity=.3] (-10,-0.5)--(-10,0.5)--(-6,0.5)--(-6,-0.5)--(-10,-0.5); 
		\fill[red!50!orange,opacity=.3] (2,-2)--(2,2)--(6,2)--(6,-2)--(2,-2); 
		\fill[black,opacity=.3] (2,-0.6)--(2,0.6)--(6,0.6)--(6,-0.6)--(2,-0.6); 
		\draw (2,-2)--(2,2)--(6,2)--(6,-2)--(2,-2); 
		\draw[->,draw=black] (-3,0)--(-1,0); 
		\fill[fill=blue] (4,0) node[left]{} circle (2pt); 
		\fill[fill=black] (2.7,0) node[left]{} circle (2pt); 
		\fill[fill=black] (-8.5,2) node[above]{\small $\hat \nu_x^c$};  
		\fill[fill=black] (3.5,2) node[above]{\small $\hat \nu_{x'}^c$};   
		\draw[draw=red!80!black, thick, densely dotted] (-10,-6.7)  .. controls (-7,-6.1) and (0,-6.3) .. (5,-6.1);
		\draw[draw=red!80!black, thick] (-10,-7)  .. controls (-7,-6.5) and (0,-6.3) .. (5,-7);
		\draw[draw=red!80!black, thick, densely dotted] (-10,-7.3)  .. controls (-7,-6.7) and (0,-6.8) .. (5,-7.7);
		\fill[fill=black] (-6,-7) node[left]{$x$}; 
		\fill[fill=blue] (1,-7) node[left]{\color{blue} $x'$}; 
		\draw[draw=black, densely dotted, -latex] (-5,-6)  .. controls (-3,-4) and (-2,-4) .. (0,-6);
		\fill[fill=red!80!black] (-10,-7) node[left]{\color{red!80!black} \small $\mathcal{W}^u(x)$}; 
		\draw[draw=green!40!black, thick] (-6.3,-6)  .. controls (-5.8,-7) and (-5.8,-7) .. (-5.9,-7.7);
		\draw[draw=green!40!black, thick] (0.8,-6)  .. controls (1.15,-7) and (1.15,-7) .. (1,-7.7);
		\fill[fill=black] (-8,0) node[left]{} circle (2pt);
		\fill[fill=blue] (-7,0) node[left]{} circle (2pt); 
		\fill[fill=black] (-6,-6.62) circle (2pt); 
		\fill[fill=blue] (1,-6.63) circle (2pt); 
	\end{tikzpicture}
	\caption{\label{third basic move} Moving the base point within an unstable leaf changes the leaf-wise measure by a linear map %given by unstable holonomy 
		up to renormalization.}
\end{figure}
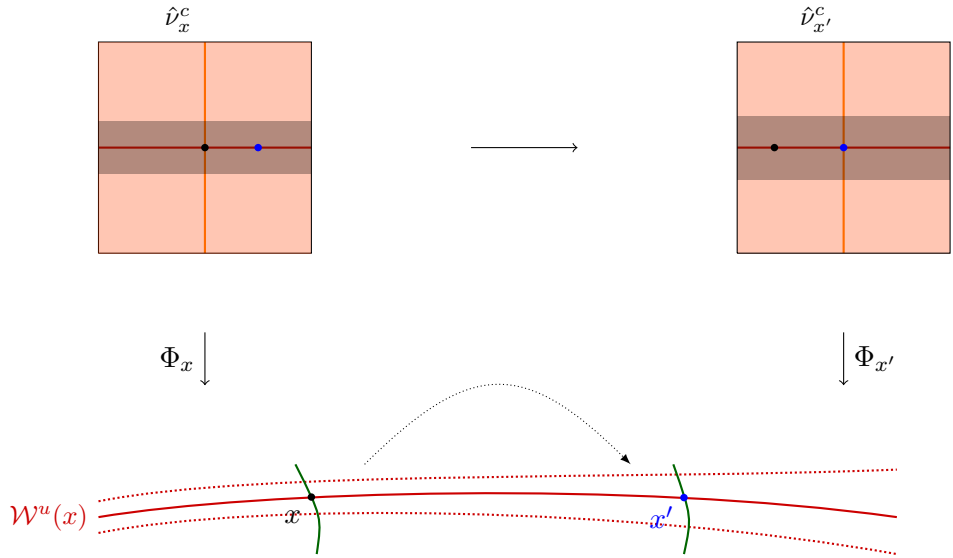

\subsubsection{Third basic move: moving along unstable manifolds} For points on the same strong unstable leaf the change is also linear, but the slope is determined by the derivative of the unstable holonomy map, which coincides with the derivative of the linear map giving the action of the change in normal form coordinates on the second variable. The lemma below completes the proof of Proposition~\ref{p.basicmoves}. 

\begin{lemma}\label{changt inst}
		For $\mu$-a.e. $x \in \TT$, and $x'\in  \cW^u(x)$, it holds 
		$$
		\hat \nu_{x}^c\propto(L^u_{x,x'})_* \hat \nu_{x'}^c,
		$$
		for the linear map $L^u_{x,x'}\colon s\mapsto \rho_{x'}^c(x)\cdot s$ corresponding to the action of the change of normal coordinates on the second variable given by Lemmas~\ref{lem.lindoefacil} and \ref{lem.calculholonomie}.
	\end{lemma}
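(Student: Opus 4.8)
The plan is to follow the template of the proofs of Lemma~\ref{chanfemt centrale} and Lemma~\ref{chang dyna}: I will transport the defining ratio \eqref{eq.les_nunu} of $\hat\nu^c_x$ through the change of normal–form coordinates $\mathcal H_{x,x'}=\Phi_{x'}^{-1}\circ\Phi_x$. Fix $x$ on which the construction of Section~\ref{s.leafwise} is valid, and $x'\in\cW^u(x)$ likewise (the set of such $x$ has full $\mu$–measure, and $\hat\nu^c$ extends to a $\cW^u$–saturated full–measure set — this is part of Proposition~\ref{p.basicmoves}). Since $\cW^u$ subfoliates the uniformly expanded foliation $\cW^{cu}$, Lemma~\ref{lem.af1} ensures that for all large $n$ one has $x'\in\xi^{cu}_n(x)$, hence $\xi^{cu}_n(x)=\xi^{cu}_n(x')$, $\mu^{cu}_{n,x}=\mu^{cu}_{n,x'}$, and therefore $\nu^{cu}_{n,x}(A)=\nu^{cu}_{n,x'}(\mathcal H_{x,x'}(A))$ for every Borel $A\subset\zeta^{cu}_n(x)$.

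By Lemmas~\ref{chang of coord}, \ref{lem.lindoefacil} and \ref{lem.calculholonomie}, this change of coordinates is
\[
\mathcal H_{x,x'}(t,s)=\bigl(\rho^u_{x'}(x)\,t+a_{x,x'}(s),\ \rho^c_{x'}(x)\,s\bigr)=\bigl(\rho^u_{x'}(x)\,t+a_{x,x'}(s),\ L^u_{x,x'}(s)\bigr).
\]
I would then fix a bounded interval $I\ni 0$ and a bounded Borel set $B\subset\R$, and take $n$ large enough (by Lemma~\ref{lem.af1}, using the $C^1$–continuity of $x\mapsto\Phi_x$ from Theorem~\ref{thm.normalforms}\eqref{ppprop quatre} to bound diameters uniformly) so that $I\times B$, $I\times[-1,1]$ and their $\mathcal H_{x,x'}$–images all lie in $\zeta^{cu}_n(x')$. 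Writing $\Phi_x=\Phi_{x'}\circ\mathcal H_{x,x'}$,
\[
\hat\nu^c_x(B)=\frac{\mu^{cu}_{n,x}(\Phi_x(I\times B))}{\mu^{cu}_{n,x}(\Phi_x(I\times[-1,1]))}=\frac{\nu^{cu}_{n,x'}(\mathcal H_{x,x'}(I\times B))}{\nu^{cu}_{n,x'}(\mathcal H_{x,x'}(I\times[-1,1]))}.
\]
The set $\mathcal H_{x,x'}(I\times B)$ is a sheared stripe: it lies over $L^u_{x,x'}(B)$ in the vertical coordinate, and its slice at height $s'=L^u_{x,x'}(s)$ is the interval $\rho^u_{x'}(x)\,I+a_{x,x'}(s)$. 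Disintegrating $\nu^{cu}_{n,x'}$ along horizontal lines and invoking Lemma~\ref{Lebesg}, the $\nu^u_{n,s'}$–measure of that slice equals $\gamma_{n,x'}(s')\,\rho^u_{x'}(x)\operatorname{Leb}(I)$ — the translation $a_{x,x'}(s)$ disappears by translation invariance of Lebesgue, and the dilation factor $\rho^u_{x'}(x)$ is independent of $s$. Hence $\nu^{cu}_{n,x'}(\mathcal H_{x,x'}(I\times B))=\rho^u_{x'}(x)\,\nu^{cu}_{n,x'}(I\times L^u_{x,x'}(B))$, and likewise with $B$ replaced by $[-1,1]$; the common factor cancels in the quotient, leaving $\hat\nu^c_x(B)=\alpha(x,x')\,\hat\nu^c_{x'}(L^u_{x,x'}(B))$ with $\alpha(x,x')=\nu^{cu}_{n,x'}(I\times[-1,1])/\nu^{cu}_{n,x'}(I\times L^u_{x,x'}([-1,1]))$ independent of $B$, which is exactly $\hat\nu^c_x\propto(L^u_{x,x'})_*\hat\nu^c_{x'}$.

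The only step requiring care — and the one I expect to be the main obstacle — is the disintegration over the sheared stripe $\mathcal H_{x,x'}(I\times B)$: unlike the center move of Lemma~\ref{chanfemt centrale}, here one cannot directly use the product structure of $\nu^{cu}_{n,x'}$ on a rectangle, since $\mathcal H_{x,x'}$ mixes the two coordinates through $a_{x,x'}(s)$. The resolution is precisely the slice–wise computation above, which works because (i) the horizontal scaling $\rho^u_{x'}(x)$ of $\mathcal H_{x,x'}$ does not depend on the height $s$, so it factors out of the $s$–integral, and (ii) by Lemma~\ref{Lebesg} each vertical conditional $\nu^u_{n,s'}$ is a constant multiple of Lebesgue, which makes the height–dependent horizontal translation $a_{x,x'}(s)$ invisible. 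Finally, one checks as before (Lemma~\ref{l.superposition bis} together with the argument of Lemma~\ref{l.indep_on_I}) that the value obtained is independent of the choice of large $n$ and of the interval $I$, so the identity descends to the measures $\hat\nu^c_x$ and $\hat\nu^c_{x'}$ themselves.
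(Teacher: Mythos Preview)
Your proof is essentially the paper's own argument: both transport the defining ratio \eqref{eq.les_nunu} through the change of charts $\cH_{x,x'}$, disintegrate along horizontals, and use Lemma~\ref{Lebesg} to make the height-dependent translation $a_{x,x'}(s)$ disappear while the $s$-independent dilation $\rho^u_{x'}(x)$ cancels in the quotient. The paper starts from $\hat\nu^c_{x'}(B)$ and pushes to the $x$-chart; you start from $\hat\nu^c_x(B)$ and push to the $x'$-chart --- this is a symmetric choice and all the ingredients (Lemmas~\ref{chang of coord}, \ref{lem.lindoefacil}, \ref{lem.calculholonomie}, \ref{Lebesg}, \ref{l.superposition bis}, \ref{l.indep_on_I}) are identical.

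One cosmetic slip: from your identity $\hat\nu^c_x(B)=\alpha\,\hat\nu^c_{x'}(L^u_{x,x'}(B))$ one reads $\hat\nu^c_x\propto\big((L^u_{x,x'})^{-1}\big)_\ast\hat\nu^c_{x'}=(L^u_{x',x})_\ast\hat\nu^c_{x'}$, not $(L^u_{x,x'})_\ast\hat\nu^c_{x'}$ as you write in the last clause. This is only a direction-of-pushforward bookkeeping issue (and the paper itself is not fully consistent here: compare the statement of Lemma~\ref{changt inst} with item~(3) of Proposition~\ref{p.basicmoves}); either way the content --- that the two leaf-wise measures differ by the linear rescaling $s\mapsto\rho^c_{x'}(x)^{\pm1}s$ --- is exactly what is needed in Lemma~\ref{coro controle affine var measu} and downstream.
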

	\begin{proof}
		For $\mu$-a.e. $x\in \TT$, and $x'\in  \cW^u(x)$, we proceed as follows. Let $I$ be any bounded interval centered at $0$ and $B$ be any bounded Borel subset of $\R$. Then, there exists $n$ large so that $x'\in\xi_n^{cu}(x)$ and $I\times B,I\times[-1,1]\times B\dans\zeta^{cu}_n(x)$. This is possible by Lemma \ref{lem.af1}.
		As before, let $\cH_{x,x'}(t,s)=(h^1_{x,x'}(t,s),h^2_{x,x'}(s))$ be the change in normal form coordinate. By Lemmas~\ref{chang of coord}, \ref{lem.lindoefacil} and \ref{lem.calculholonomie} we have
		\[
		h^1_{x,x'}(t,s)=\rho^u_{x'}(x)t+a_{x,x'}(s),\:\:\:\textrm{and}\:\:\:h^2_{x,x'}(s)=\rho^c_{x'}(x)s.
		\]
		Now, introduce $\tilde{B}=h^2_{x,x'}(B)$, $\tilde{I}_1=h^2_{x,x'}(I_1)$, where $I_1=[-1,1]$. By Lemma~\ref{Lebesg}:
		\begin{align}
			\hat{\nu}^c_{x'}(B)&=\frac{\mu^{cu}_{n,x}(\Phi_x\circ\cH_{x,x'}(I\times B))}{\mu^{cu}_{n,x}(\Phi_x\circ\cH_{x,x'}(I\times I_1))}\nonumber\\
			&=\frac{\nu^{cu}_{n,x}(h^1_{x,x'}(I\times B)\times\tilde{B})}{\nu^{cu}_{n,x}(h^1_{x,x'}(I\times I_1)\times\tilde{I}_1)}\nonumber\\
			&=\frac{\int_{\tilde{B}}\nu^u_{n,s}(\rho^u_{x'}(x)I+a_{x,x'}(\rho^c_x(x')s))d\nu^c_{n,x}(s)}{\int_{\tilde{I}_1}\nu^u_{n,s}(\rho^u_{x'}(x)I+a_{x,x'}(\rho^c_x(x')s))d\nu^c_{n,x}(s)}\nonumber\\
			&=\frac{\int_{\tilde{B}}\nu^u_{n,s}(I)d\nu^c_{n,x}(s)}{\int_{\tilde{I}_1}\nu^u_{n,s}(I)d\nu^c_{n,x}(s)}\nonumber\\
			&=\frac{\nu^{cu}_{n,x}(I\times\tilde{B})}{\nu^{cu}_{n,x}(I\times\tilde{I}_1)}\nonumber\\
			&=\alpha(x,x')\hat{\nu}^c_x(\tilde{B}),
		\end{align}
	where $\alpha(x,x')=\left(\hat{\nu}^c_x(\tilde{I}_1)\right)^{-1}$. This completes the proof of the lemma.
	\end{proof}

\section{Invariance by affine maps}\label{s.Main_texh_lemma}
%
%Theorem \ref{mainthm.fullsupport} is a consequence of Theorem \ref{mainthm.technique} and Proposition~\ref{p.mesuredubadset}.
%
%\subsubsection{Proof of Theorem~\ref{mainthm.fullsupport}}\label{sub subs proof}
% 
% Recall that by Lemma \ref{c one holon}, for every $g\in\cA^2_m(\TT)$, there exists a small neighbourhood $\cU(g)$ in $\dif$ so that every $f\in\cU(g)$ is an Anosov diffeomorphism, strongly partially hyperbolic with expanding center and $C^1$ stable holonomies. 
%Let $\cU=\cup_{g\in\cA^2_m(\TT)}\cU(g)$. Take $f\in\cU$. Then, $f$ fulfils the assumptions of Theorem~\ref{mainthm.technique}. Assume that $E^s$ and $E^u$ are not jointly integrable, and let $\mu$ be a fully supported ergodic $u$-Gibbs measure. Then, Proposition~\ref{p.mesuredubadset} implies $\mu(\mathbf{B}_\mu)=0$ and Theorem~\ref{mainthm.technique} shows that $\mu$ is SRB, concluding the proof. \qed 
%

Once we have constructed the \textit{leaf-wise quotient measures} $\{\hat{\nu}^c_x\}_{x\in\TT}$, Theorem~\ref{t.leafwise} tells us that the proof of Theorem~\ref{mainthm.technique} reduces to show that $\hat{\nu}^c_x$ is a multiple of the Lebesgue measure of the real line.  As in \cite{BRH} this can be achieved by proving that $\hat{\nu}^c_x$ is, for many points $x$, invariant by affine maps with controlled slope and small translational part.  More precisely, the lemma below is the analogue of \cite[Proposition  7.1]{BRH} in our context.

\begin{lemma}
	\label{l.lema6.1}
	There exist constants $M_0>0$ and $\delta_0\in(0,1)$ such that for every $\eps>0$ sufficiently small one can find a compact set $G(\eps)\subset\mathbb{T}^3$ so that $\mu(G(\eps))\geq\delta_0$ and for every $p\in G(\eps)$ there exists an affine map $\psi\colon\R\to\R$ satisfying 
	\begin{enumerate}
		\item $\frac{1}{M_0}<|\psi^{\prime}(0)|<M_0$;
		\item $\frac{\eps}{M_0}<|\psi(0)|<\eps M_0$;
		\item $\psi_{*}\hat \nu_p^c\propto\hat \nu_p^c$.
	\end{enumerate} 
Furthermore, writing $G_0\eqdef \{p \in \TT: p \in G(\frac 1N)\text{ for infinitely many } N \in \N\}$, we have $\mu(G_0)\geq\delta_0$. 
\end{lemma}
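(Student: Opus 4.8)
The plan is to produce the affine maps $\psi$ by means of the \emph{matched $Y$-configurations} outlined in Section~\ref{heuristics}: one transports the leaf-wise quotient measures $\hat\nu^c_{(\cdot)}$ through such a configuration using the three basic moves of Proposition~\ref{p.basicmoves}, and then extracts a limit along a large Lusin set. Throughout, recall the normalisation $\hat\nu^c_x([-1,1])=1$ (Remark~\ref{rem.normaliza}), so that proportionality of two normalised leaf-wise measures is equality; we topologise the space of locally finite Borel measures on $\R$ by vague convergence. Since the \emph{Bad set} is assumed to have zero measure, Theorem~\ref{th_0-1} places us in its second alternative: for $\mu$-a.e. $x$ and $\mu^s_x$-a.e. $y\in\cWs(x)$ one has $\alpha^s(x,y)>0$. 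First I would fix a compact set $\Lambda_0\subset\TT$ with $\mu(\Lambda_0)$ close to $1$ on which $z\mapsto\hat\nu^c_z$ and all auxiliary data (local stable/unstable/center plaques, conditional measures, the normal-form charts $\Phi_z,\Phi^c_z$) vary continuously; combining positivity of $\alpha^s$ with Birkhoff's theorem for $\mathbf 1_{\Lambda_0}$ along backward orbits produces a fixed $\delta_0>0$ and a set of measure $\ge 2\delta_0$ of base points $x$ for which, along a positive-frequency set of $\ell$, there is $y\in\cWs_{\loc}(x)$ with $d(x_{-\ell},y_{-\ell})\approx 1$, $\alpha^s(x_{-\ell},y_{-\ell})\ge\tfrac1C$, and $x,y,x_{-\ell},y_{-\ell}\in\Lambda_0$. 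By the $C^1$-regularity of $H^s$ (Lemma~\ref{c one holon}) and a normal-form computation one then gets $\alpha^s(x,y)\approx d^\ell_x$, exponentially small in $\ell$; one next selects $x^u\in\cWu_{\loc}(x)$, $y^u\in\cWu_{\loc}(y)$ — in the matched sense of Section~\ref{s.coupled}, approximating points of $\Lambda_0$ arbitrarily well so as to bypass the non-absolute-continuity of $\cWcs$ — and sets $z^u\eqdef H^s_{x,y}(x^u)\in\cWs(x^u)\cap\cWc(y^u)$, whence $d_c(z^u,y^u)\approx d^\ell_x$.

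Next, fixing $\eps>0$, I would introduce the stopping times $\tau(\ell),t(\ell)$ so that $d(f^{\tau(\ell)}(x^u),f^{\tau(\ell)}(y^u))\approx\eps$ and $\|Df^{t(\ell)}(x)|_{E^c}\|\approx\|Df^{\tau(\ell)}(x^u)|_{E^c}\|$. The synchronisation estimates guarantee that stopping times defined along the $x$-branch also govern the $y$-branch up to bounded error, and quasi-isometric estimates for $\tau(\cdot),t(\cdot)$ let all four points $f^{t(\ell)}(x),f^{t(\ell)}(y),f^{\tau(\ell)}(x^u),f^{\tau(\ell)}(y^u)$ lie in $\Lambda_0$ for a positive-frequency set of $\ell$. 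Transporting the measures by Proposition~\ref{p.basicmoves} — move~(2) for the iterates, move~(3) to slide from $x$ to $x^u$ and from $y$ to $y^u$, move~(1) to slide from $y^u$ to $z^u$ along $\cWc$ — and using the cocycle property with the distortion estimates of \S\ref{sss.distortionbasic} and Corollary~\ref{c.distortionbasic2}, the $\lambda^c$- and $\rho^c$-factors telescope against the definition of the stopping times, giving
\[
\hat\nu^c_{f^{\tau(\ell)}(x^u)}\propto(\Lambda_1)_*\hat\nu^c_{f^{t(\ell)}(x)},\qquad \hat\nu^c_{f^{\tau(\ell)}(y^u)}\propto(\Lambda_2)_*\hat\nu^c_{f^{t(\ell)}(y)},
\]
with $\Lambda_1,\Lambda_2$ linear of derivative bounded independently of $\ell$ and $\eps$.

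Then, along a subsequence $\ell_k$, take $f^{t(\ell_k)}(x)\to x_\infty$ and $f^{t(\ell_k)}(y)\to y_\infty$ (these coincide, since $d(f^{t(\ell_k)}(x),f^{t(\ell_k)}(y))\to 0$), and $f^{\tau(\ell_k)}(x^u)\to p$, $f^{\tau(\ell_k)}(y^u)\to q$, all limits in $\Lambda_0$. As $d(f^{\tau(\ell_k)}(x^u),f^{\tau(\ell_k)}(z^u))\to 0$ while $z^u\in\cWc(y^u)$ with $d_c(f^{\tau(\ell_k)}(z^u),f^{\tau(\ell_k)}(y^u))\approx\eps$, continuity of $z\mapsto\hat\nu^c_z$ on $\Lambda_0$ yields $q\in\cWc_{\loc}(p)$, $d_c(p,q)\approx\eps$, and $\hat\nu^c_q\propto\hat\Lambda_*\hat\nu^c_p$ for a \emph{linear} map $\hat\Lambda$ (the limit of $\Lambda_2\circ\Lambda_1^{-1}$) with $\tfrac1{M_0}<|\hat\Lambda'|<M_0$. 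Finally, basic move~(1) read from $q$ to $p$ (Lemma~\ref{chanfemt centrale}, using $p\in\cWc(q)$) gives $\hat\nu^c_p\propto A_*\hat\nu^c_q$ with $A(s)=\rho^c_p(q)s+\cH^c_p(q)$ affine, $\rho^c_p(q)$ uniformly bounded since $d_c(p,q)\le 1$, and $|\cH^c_p(q)|\approx d_c(p,q)\approx\eps$ (the center normal-form coordinate being comparable to arc length at bounded scale, by Proposition~\ref{p.kk}). Hence $\psi\eqdef A\circ\hat\Lambda$ is affine with $|\psi'(0)|=|\rho^c_p(q)|\,|\hat\Lambda'|$ bounded above and below, $|\psi(0)|=|\cH^c_p(q)|\approx\eps$, and $\psi_*\hat\nu^c_p\propto\hat\nu^c_p$. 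Taking $G(\eps)$ to be the set of such base points $p$ — of measure $\ge\delta_0$ uniformly for all small $\eps$, since it is produced from the $\eps$-independent good set via the $Y$-configuration construction; this is where the drift estimate Proposition~\ref{p.drift} enters — and setting $G_0\eqdef\limsup_N G(1/N)$, reverse Fatou for $\mathbf 1_{G(1/N)}$ gives $\mu(G_0)\ge\delta_0$, completing the argument.

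The main obstacle lies in the previous two steps together with the construction of the matched $Y$-configurations: one must establish the synchronisation and quasi-isometric estimates for the stopping times with constants uniform in $\ell$ and $\eps$ — precisely what makes the $\rho^c$-telescoping output linear maps of uniformly bounded slope — and one must arrange that \emph{both} feet $x^u,y^u$ and \emph{both} limit points $p,q$ lie in the Lusin set $\Lambda_0$ simultaneously, despite $\cWcs$ failing to be absolutely continuous; the matching device of Section~\ref{s.coupled} is designed for exactly this.
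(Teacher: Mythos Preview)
Your outline follows the paper's strategy, but it collapses several layers that the paper keeps separate. In the paper, the proof of Lemma~\ref{l.lema6.1} is only the short reduction (\S8.1.1) to Proposition~\ref{p.drift}: given $p\in G$ and $q\in\cW^c_1(p)$ with $\hat\nu^c_q\propto B_*\hat\nu^c_p$ for a linear $B$ of bounded slope and $|\cH^c_p(q)|\asymp\eps$, Lemma~\ref{chanfemt centrale} gives $\hat\nu^c_q\propto(\cH^c_{p,q})_*\hat\nu^c_p$, so $\psi\eqdef B^{-1}\circ\cH^c_{p,q}$ works. What you sketch is essentially the proof of Proposition~\ref{p.drift} (via Proposition~\ref{p.levraidrift} and Lemma~\ref{l.fatality}), not of the lemma as stated.

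Within that sketch there are two genuine gaps. First, you claim the limits $p=\lim f^{\tau(\ell_k)}(x^u)$ and $q=\lim f^{\tau(\ell_k)}(y^u)$ lie in the Lusin set $\Lambda_0$. They need not: only the \emph{approximating} points $a\in J(x^u)$, $b\in J(y^u)$ from the matched configuration have $f^{\tau}(a),f^{\tau}(b)\in\Lambda_0$, and it is their limits $a_\infty,b_\infty$ that land in $\Lambda_0$. The paper (\S\ref{ss.mainreduction}) therefore shows that $b_\infty$, not $p$, belongs to $G(\eps,M)$, passing from the pair $(p,q)$ to a pair $(\hat b,b_\infty)$ on the same center leaf via a short unstable holonomy (Lemmas~\ref{l.lemadorho0} and~\ref{changt inst}); this costs only a bounded factor in slope. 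Second, you do not explain why $\mu(G(\eps))\ge\delta_0$: producing one limit point per $\eps$ gives no lower bound on the measure. The paper closes this by a contrapositive (Proposition~\ref{p.levraidrift}): for an \emph{arbitrary} compact $K_{00}$ with $\mu(K_{00})>1-2\delta_0$ one runs the whole matched-$Y$ construction inside $K_0=K_{00}\cap\cL$ and lands $b_\infty\in K_{00}\cap G(\eps,M)$; since every such $K_{00}$ meets $G(\eps,M)$, outer regularity forces $\mu(G(\eps,M))\ge\delta_0$. Your phrase ``produced from the $\eps$-independent good set'' does not supply this step.
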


Exactly as in \cite{BRH}, the lemma above implies Theorem~\ref{mainthm.technique}. The proof  is a direct adaptation of \cite[Lemma 3.10]{KalininKatok} and \cite[Lemma 7.3]{BRH}; for the convenience of the reader, we provide this beautiful argument below and we refer to \cite{BRH,KalininKatok} for other applications of the same argument.    
\begin{proof}[Proof of Theorem~\ref{mainthm.technique} assuming Lemma~\ref{l.lema6.1}]
	%The proof is a direct adaptation of \cite[Lemma 3.10]{KK} and \cite[Lemma 7.3]{BRH}, and we refer to these papers for the details. 
	%; for the convenience of the reader, let us outline the main steps of the proof. 
	Let $\mathrm{Aff}(\R)$ denote the group of invertible affine maps of $\R$, and for $p \in \TT$, let  $\mathcal{A}(p)\subset \mathrm{Aff}(\R)$ be the subgroup of affine maps $\psi\colon \R \to \R$ such that $\psi_* \hat \nu_p^c \propto {\color{blue}\hat \nu_p^c}$. 
	
	We claim that $\cA(p)$ is a closed subgroup of $\mathrm{Aff}(\R)$. Let $\psi_n\in\cA(p)$ converge to $\psi\in\mathrm{Aff}(\R)$. Since each element of $\mathrm{Aff}(\R)$ is a homeomorphism of the real line and since the convergence in $\mathrm{Aff}(\R)$ implies converge in the compact-open topology we have, for each continuous function with compact support $\phi\colon\R\to\R$ that 
	\[
	\int\phi\circ\psi_n\, d\hat{\nu}^c_p\to\int\phi\circ\psi\, d\hat{\nu}^c_p.
	\] 
	This implies that $(\psi_n)_*\hat{\nu}^c_p\to\psi_*\hat{\nu}^c_p$. On the other hand, for each $n$ we have
	\begin{equation}
	\label{e.psin}
	(\psi_n)_*\hat{\nu}^c_p=c_n\hat{\nu}^c_p,
	\end{equation}
	for some constant $c_n$, which can therefore be obtained by 
	\[
	c_n=\frac{\hat{\nu}^c_p(\psi_n^{-1}(K))}{\hat{\nu}^c_p(K)},
	\]
	for any measurable set $K\subset\R$ with finite positive measure with respect to both $\hat{\nu}^c_p$ and $(\psi_n)_*\hat{\nu}^c_p$. Now, as all the measures are locally finite we can require further that such compact set $K$ is also a continuity set (of finite measure) for $\psi_*\hat{\nu}^c_p$. In particular, since $(\psi_n)_*\hat{\nu}^c_p(K)\to\psi_*\hat{\nu}^c_p(K)$, we deduce that $c_n$ converges to some positive real number $c$. We deduce from \eqref{e.psin} and uniqueness of limits that $\psi_*\hat{\nu}^c_p=c\hat{\nu}^c_p$. This proves that $\psi\in\cA(p)$ and establishes our claim. This implies in particular that $\cA(p)$ is a Lie group.
	
	By Lemma \ref{l.lema6.1}, for every $p \in G_0$, $\mathcal{A}(p)$ contains a sequence $(\psi_j)_{j \in \N}$ of elements $\psi_j \colon t \mapsto \lambda_j t + v_j$, with $v_j \neq 0$, $\lim_{j \to +\infty}v_j=0$, and $\lim_{j \to +\infty}\lambda_j=\lambda$, for some $\lambda \neq 0$. %Considering a converging subsequence $(\psi_{\varphi(j)})_{j \in \N}$, we deduce that 
	In particular, $\mathcal{A}(p)$ contains the homothety $h_\lambda \colon t \mapsto \lambda  t$, and $( h_\lambda^{-1}\circ\psi_{j})_{j \in \N}$ converges to the identity within $\mathcal{A}(p)$. Therefore, $\mathcal{A}(p)$ is not discrete and must be of dimension $1$ or $2$. This implies that the identity component $\mathcal{A}^0(p)\subset \mathcal{A}(p)$ contains a one-parameter subgroup of $\mathrm{Aff}(\R)$: such a group consists of translations, or is conjugate to homothety.
	
	We now claim that the groups $\cA^0(p)$ are isomorphic for a.e $p\in\TT$. Indeed, recall the linear map $\Lambda^c_p\colon \R\to\R$ given by $\Lambda^c_p(t)=\lambda^c_pt$, where $\lambda^c_p=\|Df(p)|_{E^c}\|$. Then, it follows from Lemma~\ref{chang dyna} that $\psi\in\cA^0(p)$ if, and only if,
	$\Lambda^c_p\circ\psi\circ(\Lambda^c_p)^{-1}\in\cA^0(f(p))$, proving our claim.
	
    Since isomorphisms classes of closed subgroups of $\mathrm{Aff}(\R)$ form a separable space and since $\mu(G_0)>0$, the claim then follows {\color{red}by} ergodicity. In particular, for $\mu$-a.e. $p\in \TT$, $\mathcal{A}^0(p)$ contains a one-parameter subgroup of $\mathrm{Aff}(\R)$. 
	
	Assume by contradiction that $\mathcal{A}^0(p)$ were conjugate to homothety for a positive measure set of $p\in \TT$. Then, by ergodicity, for $\mu$-a.e. $p \in\TT$, the action of $\mathcal{A}^0(p)$ on $\R$ would have a unique fixed point $t(p)\in \R$. Since $\cA^0(p)$ contains affine maps with arbitrarily small (and non-zero) translational part, we have $t(p)\neq 0$ for a set of positive measure. 
	
	Besides that, as we observed above
	$
	\cA^0(f^n(p))=\{\Lambda^c_{p,n}\circ\psi\circ(\Lambda^c_{p,n})^{-1}:\psi\in\cA^0(p)\},
	$	
	where $\Lambda^c_{p,n}(t)=\lambda^c_p(n)t$ (recall \eqref{e.derivada}). As a consequence we deduce
	\begin{equation}
	\label{e.explodecoração}
	|t(f^n(p))|=\lambda^c_p(n)\cdot |t(p)|.
	\end{equation}
	Consider a positive measure compact subset $K$ of $G_0$ where the measurable function $p\mapsto t(p)$ is continuous and bounded. By Poincaré recurrence for some $p\in K$ we have $p_{n_k}=f^{n_k}(p)\in K$ for infinitely many iterates $n_k\in\N$. By compactness, we can assume $p_{n_k}\to q\in K$. However, \eqref{e.explodecoração} is incompatible with the boundedness of $t|_K$. We have thus reached a contradiction.
	
	Therefore, $\mathcal{A}^0(p)$ contains the group of translations, for $\mu$-a.e. $p\in \TT$. For $t \in \R$, $p \in \TT$, we let $g=g_t\colon s \mapsto s+t$ and $c(p,t)\eqdef \hat \nu_p^c([-t-1,-t+1])$, so that 
	\begin{equation}\label{action transl derivee Radon Niko}
	\frac{dg_*\hat\nu_p^c}{d\hat\nu_p^c}=c(p,t). 
	\end{equation}
	
 Let us see some properties of the function $(p,t)\mapsto c(p,t)$. 
Firstly, for $\mu$-a.e. $p\in \TT$, $\mathcal{A}^0(p)$ contains all translations. This implies that $\hat\nu_p^c$ has no atom, which implies that $c(p,\cdot)$ is continuous. We claim that 
\begin{equation}
\label{e.ospesnochaoeacabeçanasnuvens}
c(p,t)=c(p_n,\lambda^c_p(n)t),
\end{equation}	
where $p_n=f^n(p)$. To prove the claim, consider $\psi=\Lambda^c_{p,n}\circ g\circ(\Lambda^c_{p,n})^{-1}$ and observe that $\psi(s)=s+\lambda^c_p(n)t$. From the definition it follows that
\[
c(p_n,\lambda^c_p(n))=\hat{\nu}^c_{p_n}(\psi^{-1}([-1,1]))=\frac{\hat{\nu}^c_{p_n}(\psi^{-1}([-1,1]))}{\hat{\nu}^c_{p_n}([-1,1])}.
\] 
In the last equality we have used our normalization choice $\hat{\nu}^c_{p_n}([-1,1])=1$ for leaf-wise quotient measures. Applying now Lemma~\ref{chanfemt centrale} we deduce
\begin{align}
\frac{\hat{\nu}^c_{p_n}(\psi^{-1}([-1,1]))}{\hat{\nu}^c_{p_n}([-1,1])}&=\frac{\hat{\nu}^c_{p}((\lambda^c_p(n))^{-1}\times\psi^{-1}([-1,1]))}{\hat{\nu}^c_{p}((\lambda^c_p(n))^{-1}\times[-1,1])}\nonumber\\
&=\frac{\hat{\nu}^c_{p}(g^{-1}(\lambda^c_p(n))^{-1}\times([-1,1])))}{\hat{\nu}^c_{p}((\lambda^c_p(n))^{-1}\times[-1,1])}\nonumber\\
&=\frac{g_*\hat{\nu}^c_{p}(\lambda^c_p(n))^{-1}\times([-1,1]))}{\hat{\nu}^c_{p}((\lambda^c_p(n))^{-1}\times[-1,1])}\nonumber\\
&=c(p,t),
\end{align}
where on the second equality we applied the definition of $\psi$ and in the last equality we have used the fact that the measures $g_*\hat{\nu}^c_p$ and $\hat{\nu}^c_p$ are proportional by a factor precisely equal to $c(p,t)$. This establishes \eqref{e.ospesnochaoeacabeçanasnuvens}.	

We now give the final argument for completing the proof. For each $\varepsilon>0$, let $r>0$ be chosen such that the set
	$$
	B_{r,\varepsilon}\eqdef \big\{p\in \TT: |c(q,t)-1|<\varepsilon,\quad \forall\, |t|<r\big\}
	$$
	satisfies $\mu(B_{r,\varepsilon})>0$. By ergodicity, $\mu$-a.e. point $p\in \TT$ visits $B_{r,\varepsilon}$ infinitely many times both in future and past; but \eqref{e.ospesnochaoeacabeçanasnuvens} implies that $|c(p,t)-1|<\varepsilon$ for all {\color{blue}$t\in \R$} and $\mu$-a.e. $p \in \TT$. Letting $\varepsilon\to 0$, we conclude that $c(p,t)=1$ for all $t\in \R$ and $\mu$-a.e. $p \in \TT$, hence, by \eqref{action transl derivee Radon Niko}, $\hat\nu_p^c$ is invariant under the group of translations, for $\mu$-a.e. $p\in \TT$, hence it is proportional to Lebesgue measure.  
\end{proof}

\subsection{Drift along the center}

Recall that the normal forms $\{\Phi^c_x\colon\R\to\cW^c(x)\}_{x\in\TT}$ give us parametrizations of the center manifolds whose change of coordinates are affine maps. Using these changes of coordinates one can build the maps $\psi\colon\R\to\R$ promised in Lemma~\ref{l.lema6.1}. Indeed, we claim that it suffices to prove the result below.

\begin{prop}
	\label{p.drift}
	There exist constants $M>0$ and $\delta_0\in(0,1)$ such that for every $\eps>0$ sufficiently small one can find a compact set $G=G(\eps)\subset\mathbb{T}^3$ so that $\mu(G)\geq\delta_0$ and for every $p\in G$ there exists a point $q\in \cW_1^c(p)$, so that 
	\begin{equation}
	\label{e.drift}
	M^{-1}\eps\leq |\cH^c_p(q)|\leq M\eps\:\:\textrm{and}\:\:\:\hat \nu^c_{q}\propto B_{*}\hat \nu^c_{p},
	\end{equation}
	for some linear map $B\colon\R\to\R$ of the form $s\mapsto \beta \cdot s$ so that $M^{-1}<|\beta|<M$. 
	
\end{prop}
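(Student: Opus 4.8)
The plan is to use the $Y$-configuration machinery described in the Introduction and assembled in Sections~\ref{s.Yconfigurations}--\ref{s.end_proof}, together with the three basic moves of Proposition~\ref{p.basicmoves}, to manufacture, for $\mu$-typical $p$, a nearby point $q$ on its center leaf with a controlled proportionality between $\hat\nu^c_q$ and $\hat\nu^c_p$. First I would fix a large Lusin set $\Lambda_0\subset\TT$ of positive (in fact $\geq\delta_0$ for a uniform $\delta_0$) measure on which the map $z\mapsto\hat\nu^c_z$ is continuous in the vague topology, on which the various dynamical quantities (angles $\alpha^s$, normal forms $\Phi_z$, $\Phi^c_z$, the densities $\rho^*$, the constant $\beta$ from Lemma~\ref{l.density_u-gibbs}) vary continuously, and on which the quasi-isometric and synchronization estimates for the stopping times $\tau(\ell),t(\ell)$ hold uniformly. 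Because we are in Case~2 (the Bad set has measure zero), Theorem~\ref{thm.zeroonelawintro} gives that for $\mu$-a.e.\ $x$ and $\mu^s_x$-a.e.\ $y\in\cW^s(x)$ the angle $\alpha^s(x,y)$ is positive; combined with the distortion estimates of \S\ref{sss.distortionbasic} this lets us, for each large $\ell$, produce a matched $Y$-configuration with base points $x,y\in\cW^s_{\loc}(x)$ both in $\Lambda_0$, with $\alpha^s(x_{-\ell},y_{-\ell})\geq 1/C$, and auxiliary points $x^u\in\cW^u_{\loc}(x)$, $y^u\in\cW^u_{\loc}(y)$ with $y^u$ lying on $\cW^{cs}_{\loc}(x^u)$ up to the matching correction of Section~\ref{s.coupled}.

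Next I would track how $\hat\nu^c$ transforms along the arms of the $Y$-configuration using Proposition~\ref{p.basicmoves}: moving from $x$ to $x^u$ along $\cW^u$ (basic move~3) introduces a linear factor $\rho^c_{x^u}(x)$ which is bounded above and below by constants depending only on $d_u(x,x^u)\leq 1$; applying $f^{t(\ell)}$ (basic move~2) multiplies by $\|Df^{t(\ell)}(x)|_{E^c}\|$, and similarly applying $f^{\tau(\ell)}$ along the $x^u$-arm multiplies by $\|Df^{\tau(\ell)}(x^u)|_{E^c}\|$; the very definition of the stopping time $t(\ell)$ via \eqref{e.tempomaroto} forces these two scaling factors to agree up to a bounded multiplicative error, so that $\hat\nu^c_{f^{\tau(\ell)}(x^u)}\propto(\Lambda_1)_*\hat\nu^c_{f^{t(\ell)}(x)}$ with $\Lambda_1$ linear of derivative bounded independently of $\ell$ (item~(a) of the heuristics), and likewise $\hat\nu^c_{f^{\tau(\ell)}(y^u)}\propto(\Lambda_2)_*\hat\nu^c_{f^{t(\ell)}(y)}$ with the same kind of control, the $y$-side estimates being valid because of the synchronization Lemma~\ref{lemme synchro} that compares the $x$-defined stopping times against what they would be along the $y$-arm. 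By the distance estimate \eqref{eq.distanceestimate} and the choice of $\tau(\ell)$ we have $d(f^{\tau(\ell)}(x^u),f^{\tau(\ell)}(z^u))\to 0$ and $d(f^{t(\ell)}(x),f^{t(\ell)}(y))\to 0$ as $\ell\to\infty$, where $z^u=H^s_{x,y}(x^u)$; hence, after passing to a subsequence $\ell_k$ along which $f^{\tau(\ell_k)}(x^u)\to p'$, $f^{\tau(\ell_k)}(y^u)\to q'$, $f^{\tau(\ell_k)}(z^u)\to p'$ with $p',q'\in\Lambda_0$, continuity of $\hat\nu^c_{(\cdot)}$ on the Lusin set gives $q'\in\cW^c_{\loc}(p')$ with $d(p',q')\asymp\eps$ and $\hat\nu^c_{q'}\propto\hat\Lambda_*\hat\nu^c_{p'}$ for a linear $\hat\Lambda$ of uniformly bounded derivative. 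Finally, the first basic move (Lemma~\ref{chanfemt centrale}) describes $\hat\nu^c_{q'}$ in terms of $\hat\nu^c_{p'}$ via the affine change of center charts $h^2_{p',q'}(s)=\rho^c_{q'}(p')s+\cH^c_{q'}(p')$ whose translational part $\cH^c_{q'}(p')$ is of order $d_c(p',q')\asymp\eps$ and whose slope is bounded; composing the two relations yields $\hat\nu^c_{p'}\propto B_*\hat\nu^c_{p'}$ with $B$ linear of the stated form. Renaming $(p',q')$ as $(p,q)$ — equivalently, reading the conclusion at the point $q'$ with $q$ its $\cH^c$-preimage — and collecting all such $p$ over the subsequence into the compact set $G(\eps)$ (whose measure is $\geq\delta_0$ because the construction works for $\mu$-a.e.\ base point and the Lusin set has measure $\geq\delta_0$), we obtain exactly Proposition~\ref{p.drift}; the affine map $\psi$ of Lemma~\ref{l.lema6.1} is then $\Psi\hat\Lambda$ where $\Psi$ realizes the inverse center-chart change, which has translational part $\asymp\eps$.

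The main obstacle, as flagged in the "technical difficulty" paragraph of the Introduction, is that the center-stable holonomy is only Hölder and not absolutely continuous, so one cannot simultaneously demand $x^u\in\Lambda_0$, $y^u\in\Lambda_0$ and $y^u\in\cW^{cs}_{\loc}(x^u)$ exactly; the resolution is the \emph{matching argument for $Y$-configurations} of Section~\ref{s.coupled}, where one instead picks $y^u$ in $\Lambda_0$ merely close to $\cW^{cs}_{\loc}(x^u)$ and controls the resulting error in the center displacement using Lemma~\ref{thm_regularity} (Hölder regularity of $cs$-holonomies) together with the distortion Corollary~\ref{c.distortionbasic2} — this is what forces the slightly modified picture and is the place where the quasi-isometric control of the stopping times (Lemma~\ref{l_qi_estimates}) is genuinely needed, since we must be sure that after running the dynamics for time $\tau(\ell)$ the accumulated Hölder error has not swamped the target translational scale $\eps$. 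A secondary but real difficulty is the uniform lower bound $\mu(G(\eps))\geq\delta_0$ independent of $\eps$: this requires that the Lusin set and all the uniform estimates be chosen \emph{before} $\eps$, and that for every sufficiently large $\ell$ (hence every small enough target scale) the $Y$-configuration construction succeeds starting from a fixed positive-measure set of base points, which is where one invokes the recurrence of the measure together with the fact that the defining conditions of the $Y$-configuration are open conditions on the Lusin set.
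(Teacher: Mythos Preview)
Your overall picture of the drift argument via $Y$-configurations, basic moves, synchronization, and matching is correct and matches the paper's strategy. But there is a genuine gap in how you obtain the lower bound $\mu(G(\eps))\geq\delta_0$.

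You write that $G(\eps)$ has measure $\geq\delta_0$ ``because the construction works for $\mu$-a.e.\ base point and the Lusin set has measure $\geq\delta_0$''. This does not follow. The points that land in $G(\eps)$ are \emph{limit points} of sequences $f^{\tau(\ell_k)}(a_k)$ (or $f^{\tau(\ell_k)}(b_k)$) as $\ell_k\to\infty$; they are not the base points $x,x^u,y,y^u$ themselves. Even if the base data range over a set of positive measure, the set of accumulation points produced this way has no reason to have positive measure --- a priori it could be a single point. So the direct approach you sketch does not give any measure lower bound on $G(\eps)$.

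The paper resolves this by a contrapositive trick (Proposition~\ref{p.levraidrift}): one shows that for a suitable $M$, the set $G(\eps,M)$ intersects \emph{every} compact set $K_{00}$ with $\mu(K_{00})>1-2\delta_0$. This forces $\mu(G(\eps,M))\geq\delta_0$, since otherwise an open neighbourhood of $G(\eps,M)$ of measure $<\delta_0$ would have a compact complement violating the intersection property. Concretely, given such $K_{00}$, one sets $K_0=K_{00}\cap\cL$ and runs the \emph{entire} matched $Y$-configuration construction requiring all the ``good'' points to lie in $K_0$, not merely in $\cL$ (this is Lemma~\ref{l.fatality}). The limit points $a_\infty,b_\infty$ then lie in the compact set $K_0\subset K_{00}$, and the drift argument (Lemma~\ref{coro depl mes} and \S\ref{ss.mainreduction}) shows $b_\infty\in G(\eps,\tilde M)$, yielding $K_{00}\cap G(\eps,\tilde M)\neq\emptyset$.

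A related confusion: note that the limits $p=\lim f^{\tau_n}(x^u_n)$ and $q=\lim f^{\tau_n}(y^u_n)$ need not lie in the Lusin set at all; it is the auxiliary points $a_\infty=\lim f^{\tau_n}(a_n)$ and $b_\infty=\lim f^{\tau_n}(b_n)$ from the matching (Definition~\ref{d.matched}) that do. The paper then transfers the relation $\hat\nu^c_{a_\infty}\propto B_*\hat\nu^c_{b_\infty}$ to a relation of the form required by Proposition~\ref{p.drift} via an extra unstable holonomy step (third basic move), see Figure~\ref{fig.nolimite} and \S\ref{ss.mainreduction}. Your proposal blurs the distinction between $(p,q)$ and $(a_\infty,b_\infty)$ and also conflates the conclusion of Proposition~\ref{p.drift} (a relation between $\hat\nu^c_p$ and $\hat\nu^c_q$ for \emph{distinct} $p,q$) with that of Lemma~\ref{l.lema6.1} (self-invariance $\psi_*\hat\nu^c_p\propto\hat\nu^c_p$), which is derived from Proposition~\ref{p.drift} only afterwards.
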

\subsubsection{Proof of Lemma~\ref{l.lema6.1} assuming Proposition~\ref{p.drift}}

We only need to show that the set $G$ of Proposition~\ref{p.drift} satisfies the claims in Lemma~\ref{l.lema6.1} for a suitably chosen constant $M$ (which will be perhaps a bit larger than the one already given by the proposition). To see this, take $q\in \cW_1^c(p)$ for some $p\in\mathbb{T}^3$ and let us, for the sake of this proof, denote by $\xi=\cH^c_{p,q}=\cH^c_p\circ\Phi^c_{q}\colon\R\to\R$ the affine change of normal form coordinates along the center direction.
Because normal forms depends continuously with respect to the base point, we
may enlarge $M$ if necessary so that $|\xi'(0)|\in (M^{-1},M)$. 

Now, take $G$ the set given by Proposition~\ref{p.drift} and let $p\in G$. We shall construct the affine map $\psi\colon\R\to\R$ claimed by the lemma. For this take $q\in \cW^c_1(p)$ given by Proposition~\ref{p.drift} and notice that, at one hand, Lemma~\ref{chanfemt centrale} gives us that $\hat{\nu}^c_{q}\propto\xi_{*}\hat{\nu}^c_p$. On the other hand, Proposition~\ref{p.drift} says that for some linear map $B\colon\R\to\R$ with derivative $\beta$ bounded in between $M^{-1}$ and $M$ we have $\hat{\nu}^c_{q}\propto B_{*}\hat{\nu}^c_p$. These two properties give us that $\hat{\nu}^c_p\propto\psi_{*}\hat{\nu}^c_p$, where $\psi=B^{-1}\xi$ is an affine map. The bounds we have on $\beta$ and on $|\xi'(0)|$ give
\[
|\psi'(0)|\in(M^{-2},M^2).
\]
Moreover, by Proposition~\ref{p.drift}
\[
|\psi(0)|=|B^{-1}\xi(0)|=|B^{-1}\cH^c_p(q)|\in(M^{-2}\eps,M^2\eps).
\]
This shows that $\psi$ satisfies all the requirements in Lemma~\ref{l.lema6.1} with $M_0=M^2$, thus completing the proof.\qed

\subsubsection{The Lusin set}\label{Lusin lusin}
We now move on to the proof of Proposition~\ref{p.drift}, where our key arguments are concentrated. Our approach here draws inspiration from Eskin-Lindenstrauss' work \cite{EskinLind}. To offer some insight of our reasonning, recall from Lemma~\ref{chang dyna} that the measures $\hat{\nu}^c_x$ change linearly when we shift the base point from $x$ to $f^n(x)$, with the slope of this linear map being $\|Df^n(x)|_{E^c}\|$. Our strategy involves identifying specific dynamical configurations in which points, \emph{almost} on the same center leaf, drift apart by a center distance proportional to $\eps$. In these configurations, their corresponding leaf-wise quotient measures are \emph{almost} proportional to each other, modulo a linear map whose slope can be controlled using the dynamics. To pinpoint the precise set where ``almost'' turns into ``equality'', we take limits. To achieve this we must consider points belonging to compact sets restricted to which the assignment $x\mapsto\hat{\nu}^c_x$ has good properties.  

%\begin{notation}\label{notation constantes k j}
%	Let us introduce some notation, based on Lemmas \ref{chanfemt centrale}-\ref{chang dyna}-\ref{changt inst}. For $\mu$-a.e. $x \in  \TT$, we let $J_x>0$ be the constant such that (recall Lemma \ref{chang dyna}) 
%	\begin{equation}\label{constant k x}
%	\hat \nu_{f(x)}^c=J_x\times (\Lambda_x^c)_* \hat \nu_{x}^c. 
%	\end{equation}
%	For $\mu$-a.e. $x\in \TT$, and $y=\Phi_x(s)\in \mathcal{W}^c(x)$ with $|s|$ small, we let $K_{x,y}^c>0$ be the constant such that (recall Lemma \ref{chanfemt centrale})
%	$$ 
%	\hat \nu_{y}^c=K_{x,y}^c \times (\psi_{x,y})_* \hat \nu_{x}^c. 
%	$$ 
%	Finally, for $\mu$-a.e. $x \in  \TT$, and for $x'\in \cW^u(x)$, we let $K_{x,x'}^u>0$ be the constant such that (recall Lemma \ref{changt inst}) 
%	\begin{equation}\label{constant j x x prime}
%	\hat \nu_{x'}^c=K_{x,x'}^u \times(L_{x,x'}^c)_* \hat \nu_{x}^c. 
%	\end{equation}
%\end{notation}

With that goal in mind, we denote by $C^0_c(\R)$ the space of continuous functions with compact support of the real line. 

\begin{lemma}
	\label{l.lusin}
For every $\delta>0$ there exists a compact set $A\subset\TT$ with $\mu(A)>1-\delta$ such that for all $\{x_n\}_{n\in\N}\dans A$ converging to $x\in A$ the following holds 
\[
\int\varphi d\hat{\nu}^c_{x_n}\to\int\varphi d\hat{\nu}^c_x,\:\:\:\:\textrm{for every}\:\:\varphi\in C^0_c(\R).
\]
\end{lemma}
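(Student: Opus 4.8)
The statement is a routine application of Lusin's theorem to the assignment $x\mapsto\hat\nu^c_x$, and the proof will proceed by first recasting this assignment as a genuine measurable map into a separable metric space, and then quoting Lusin. The plan is to fix a countable family $\{\varphi_k\}_{k\in\N}\subset C^0_c(\R)$ which is dense in $C^0_c(\R)$ for the topology of uniform convergence with uniform control on supports (e.g.\ taking $\varphi_k$ supported in $[-k,k]$ and using separability of $C^0([-k,k])$, arranged into a single sequence). Using this family, one equips the space $\cM(\R)$ of locally finite Borel measures on $\R$ with a metric, for instance
\[
d(\nu,\eta)\eqdef\sum_{k\in\N}2^{-k}\min\left\{1,\left|\int\varphi_k\,d\nu-\int\varphi_k\,d\eta\right|\right\},
\]
so that $d(\nu_n,\nu)\to 0$ is equivalent to $\int\varphi\,d\nu_n\to\int\varphi\,d\nu$ for every $\varphi\in C^0_c(\R)$ (here one uses that the $\hat\nu^c_x$ have uniformly bounded mass on compact sets — indeed $\hat\nu^c_x([-1,1])=1$ by Remark~\ref{rem.normaliza} and the basic moves give locally uniform bounds — so that convergence along the dense family upgrades to convergence against all of $C^0_c(\R)$).

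\textbf{Key steps.} First I would verify that $x\mapsto\hat\nu^c_x$ is Borel measurable as a map $\TT\to(\cM(\R),d)$; equivalently, that for each fixed $\varphi\in C^0_c(\R)$, the function $x\mapsto\int\varphi\,d\hat\nu^c_x$ is measurable. This follows from the explicit construction of the leaf-wise quotient measures in Section~\ref{s.leafwise}: for $\mu$-a.e.\ $x$ and any bounded Borel $B\subset\R$, $\hat\nu^c_x(B)$ is obtained from the ratios $\nu^{cu}_{n,x}(I\times B)/\nu^{cu}_{n,x}(I\times[-1,1])$, which stabilize for $n\geq n_0(x,I,B)$ (Lemma~\ref{l.superposition bis}); measurability of $x\mapsto\mu^{cu}_{n,x}(A)$ is part of Rokhlin's disintegration theorem, and composing with the continuously-varying $C^1$ charts $\Phi_x$ preserves measurability. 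Hence $x\mapsto\int\varphi\,d\hat\nu^c_x$ is a pointwise limit of measurable functions, so it is measurable, and therefore $x\mapsto\hat\nu^c_x$ is a measurable map into the separable metric space $(\cM(\R),d)$. Second, I would apply Lusin's theorem: given $\delta>0$, there is a compact set $A\subset\TT$ with $\mu(A)>1-\delta$ such that the restriction $x\mapsto\hat\nu^c_x$ is continuous on $A$ for the metric $d$. Third, unwinding the definition of $d$: if $\{x_n\}_{n\in\N}\subset A$ converges to $x\in A$, then $d(\hat\nu^c_{x_n},\hat\nu^c_x)\to 0$, which by construction of $d$ means $\int\varphi\,d\hat\nu^c_{x_n}\to\int\varphi\,d\hat\nu^c_x$ for every $\varphi$ in the dense family; the uniform mass bound on compacts then promotes this to all $\varphi\in C^0_c(\R)$, which is exactly the claim.

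\textbf{Main obstacle.} There is no serious obstacle here; the only points requiring a little care are (i) checking that the metric $d$ really metrizes the desired mode of convergence on the relevant subset of $\cM(\R)$ — this needs the a priori local uniform bound on $\hat\nu^c_x$, which I would extract from the normalization $\hat\nu^c_x([-1,1])=1$ together with the control on how the measures transform under the basic moves of Proposition~\ref{p.basicmoves} (or simply restrict attention, after intersecting with a further full-measure set, to $x$ in a set where $\hat\nu^c_x([-R,R])$ is bounded for each $R$, which is automatic since this is a measurable function of $x$ that is $\mu$-a.e.\ finite); and (ii) making sure the measurability argument in the first step is applied on a $\mu$-full-measure set where all the objects $\mu^{cu}_{n,x}$, $\Phi_x$, $\hat\nu^c_x$ are defined, which is harmless. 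Thus the lemma is essentially ``measurable $\Rightarrow$ continuous on a large compact set,'' and the proof is short.
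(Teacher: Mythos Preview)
Your proposal is correct and follows essentially the same approach as the paper: establish measurability of $x\mapsto\int\varphi\,d\hat\nu^c_x$, apply Lusin over a countable dense family in $C^0_c(\R)$, and then upgrade convergence from the dense family to all of $C^0_c(\R)$ via a local mass bound. The only notable difference is in how that last upgrade is handled: the paper resolves your ``main obstacle'' cleanly by explicitly adjoining to the dense family, for each $n\in\N$, a continuous bump function equal to $1$ on $[-n,n]$ and vanishing outside $(-n-1,n+1)$; since Lusin is applied to this enlarged family, continuity of $x\mapsto\int\psi_n\,d\hat\nu^c_x$ on $A$ automatically gives the uniform bound $\sup_n\hat\nu^c_{x_n}(\hat K)<\infty$ for any compact $\hat K$, with no appeal to the basic moves or to further measure-theoretic trimming.
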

\begin{proof}
Let us denote $K\eqdef\operatorname{supp}(\varphi)$. We first claim that, given  $\varphi\in C^0_c(\R)$, the function $x\in\TT\mapsto\int\varphi d\hat{\nu}^c_x\in\R$ is measurable.  Indeed, consider the set
\[
\Lambda(\eps)=\{x\in\TT;\cW^c_{\eps}(x)\subset\xi^c_0(x)\}.
\]
Since $\mu(\Lambda(\eps))\to 1$ as $\eps\to 0$ it suffices to check that, for a given $\eps>0$, the restriction of the function to the corresponding set $\Lambda(\eps)$ is measurable (notice that the set $\Lambda(\eps)$ itself is measurable). By compactness of $K$ and uniform continuity of the $C^1$-norm of our normal forms in compact sets, Lemma \ref{lem.af1} ensures the existence of an $n\in\N$ such that
\begin{equation}
\label{e.knozeta}
K\subset\zeta^c_n(x),\:\:\:\textrm{for every}\:\:\:x\in\Lambda(\eps).
\end{equation}
Define $\hat{\varphi}:\R^2\to\R$ as $\hat{\varphi}(t,s)=\varphi(s)$. Then, denoting $\alpha(x)=1/\nu^{cu}_{n,x}(I\times[-1,1])$, it follows from (\ref{e.knozeta}) that
\[
\int\varphi d\hat{\nu}^c_x=\alpha(x)\int\hat{\varphi}\circ\cH_x(z)d\mu^{cu}_{n,x}(z),
\]
for every $x\in\Lambda(\eps)$. Since the right-hand side above depends measurably on $x\in\Lambda(\eps)$ our claim is proved.

As a second step in our proof, we claim the existence of a countable dense subset $\{\varphi_\ell\}_{\ell\in\N}\subset C^0_c(\R)$ with the following property: for a sequence $\{x_n\}_{n\in\N}\dans\TT$ converging to a point $x\in\TT$, if
\[
\int\varphi_\ell d\hat{\nu}^c_{x_n}\to\int\varphi_\ell d\hat{\nu}^c_x,
\] 
for all $\ell\in\N$, then we can conclude that
\[
\int\varphi d\hat{\nu}^c_{x_n}\to\int\varphi d\hat{\nu}^c_x,
\] 
for every $\varphi\in C^0_c(\R)$. 

To construct such a dense subset, start with a countable dense subset $\cF$ of $C^0_c(\R)$ with the following property: if $\varphi$ is in $C^0_c(\R)$ and $K=\operatorname{supp}(\varphi)$ then define $\hat{K}=[\inf K-1,\sup K+1]$: for every $\eps>0$ small enough there exists $\psi\in\cF$ such that $\|\varphi-\psi\|_{\infty}<\eps$ and $\operatorname{supp}(\psi)\subset\hat{K}$. The existence of $\cF$ follows by standard arguments. 

Now, augment $\cF$ by adding, for each positive integer $n$, a continuous bump function that equals $1$ inside the interval $[-n,n]$ and $0$ outside $(-n-1,n+1)$. The resulting set, still denoted as $\cF$, remains countable and dense, but now has the property that for any compact set $K\subset\R$, there exists some function $\psi\in\cF$ such that $\psi(x)\geq 1$ for every $x\in K$. To show that $\cF$ meets the claim's requirements, consider $\varphi$ in $ C^0_c(\R)$ and let $K=\operatorname{supp}(\varphi)$. Choose $\psi\in\cF$ such that $\psi|_{\hat{K}}\geq 1$, where $\hat{K}=[\inf K-1,\sup K+1]$. Then,
\[
\hat{\nu}^c_{x_n}(\hat{K})\leq\int\psi d\hat{\nu}^c_{x_n}.
\]      
By assumption, the right-hand side above converges to $\int\psi d\hat{\nu}^c_x$. Thus, $\hat{\nu}^c_{x_n}(\hat{K})$ is bounded by some number $\beta(K)>0$, independent of $n$. Now, given $\eps>0$, choose $\hat{\psi}$ in the countable dense family such that 
\[
\|\hat{\psi}-\varphi\|_\infty<\frac{\eps}{3\beta(K)},
\]
and with $\operatorname{supp}(\hat{\psi})\subset\hat{K}$. 
Since $\hat{\psi}$ is in $\cF$ there exists $n_0\in\N$ such that if $n\geq n_0$  
\[
\left|\int\hat{\psi}d\hat{\nu}^c_{x_n}-\int\hat{\psi}d\hat{\nu}^c_x\right|<\frac{\eps}{3}.
\]
Thus, if $n\ge n_0$ we have
\begin{align}
	\left|\int\varphi d\hat{\nu}^c_{x_n}-\int\varphi d\hat{\nu}^c_x\right|&\leq\left|\int\varphi d\hat{\nu}^c_{x_n}-\int\hat{\psi}d\hat{\nu}^c_{x_n}\right|+\left|\int\hat{\psi}d\hat{\nu}^c_{x_n}-\int\hat{\psi}d\hat{\nu}^c_x\right|\nonumber\\
	&+\left|\int\hat{\psi}d\hat{\nu}^c_x-\int\varphi d\hat{\nu}^c_x\right|\nonumber\\
	&<\frac{\eps \hat{\nu}^c_{x_n}(\hat{K})}{3\beta(K)}+\frac{\eps}{3}+\frac{\eps\hat{\nu}^c_x(\hat{K})}{3\beta(K)}\nonumber\\
	&\leq\eps\nonumber.
\end{align}
This proves our second claim. To conclude the proof of the lemma, take $\cF=\{\varphi_\ell\}_{\ell\in\N}$ as the countable dense family of our previous claim. For each $\ell$, according to our first claim, the function $g_\ell:x\in\TT\mapsto\int\varphi_\ell d\hat{\nu}^c_x$ is measurable. By Lusin's theorem there exists a compact set $A_\ell\subset\TT$ with measure $\mu(A_\ell)>1-\delta/2^{\ell+1}$ and such that  $g_\ell|_{A_\ell}$ is continuous. Define the compact set $A=\bigcap_{\ell=0}^{\infty}A_\ell$. Then, $\mu(A)>1-\delta$ and if a sequence $\{x_n\}_{n\in\N}\subset A$ converges to $x$, we have $g_\ell(x_n)\to g_\ell(x)$ for every $\ell$, and the lemma follows from the above claims. 
\end{proof}
	
Let us take $\cL\subset\TT$ a compact set with large measure (we shall give precise estimates below) such that the conclusion of Lemma~\ref{l.lusin} holds for $A=\cL$. We shall require further the following properties:	
	
\begin{enumerate}
	\item For $\sigma=s,c,u,cu$ given $\xi^{\sigma}$, a measurable partition subordinate to $\cW^\sigma$, there exists $r_0>0$ so that for every $x\in\cL$, 
	$$\cW^\sigma_{r_0}(x)\dans\xi^\sigma(x).$$
	\item For every $x\in\cL$, $\xi^u(x)$ is an interval whose size $r^u(x)$ {\color{blue}varies} continuously with $x\in\cL$ (see Remark \ref{rem_subord_interval}).
%	\item the functions $x \mapsto J_x$, $(x,y)\mapsto K^c_{x,y}$, and $(x,x^{\prime})\mapsto K_{x,x'}^u$ introduced in Notation~\ref{notation constantes k j} 
	%$x\mapsto K_x$ where $K_x$ was introduced in Lemma~\ref{chang dyna} and $(x,y)\mapsto K^c_{x,y}$ from Lemma~\ref{chanfemt centrale} 
%	are all continuous in restriction to $\cL$.  
%	\item there exists $r_1>0$ such that is $0<s<r_1$ then $\hat{\nu}^c_x([-s,s])>0.$
\end{enumerate}

The continuity of the function $\cL\ni x\mapsto\hat{\nu}^c_x$ together with Remark \ref{remark.superposition2} imply the following.
\begin{corollary}
	\label{c.lusin}
For each $M>1$ there exists $c=c(M)>0$ such that if $B:\R\to\R$ is a linear map with derivative in $[M^{-1},M]$ then $\hat{\nu}^c_x(B^{-1}[-1,1])\in[c^{-1},c]$ for each $x\in\cL$.
\end{corollary}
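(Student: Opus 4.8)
The plan is to reduce the statement to a uniform two‑sided control of the $\hat\nu^c_x$‑mass of the intervals $[-r,r]$ with $r$ ranging over the fixed compact interval $[M^{-1},M]$, and then to extract these bounds from continuity of $x\mapsto\hat\nu^c_x$ over the compact set $\cL$ (as provided by Lemma~\ref{l.lusin}), together with the fact that each $\hat\nu^c_x$ charges every neighbourhood of $0$ (Remark~\ref{remark.superposition2}). First I would observe that a linear map $B\colon\R\to\R$ with derivative $\beta$, $|\beta|\in[M^{-1},M]$, satisfies $B^{-1}([-1,1])=[-|\beta|^{-1},|\beta|^{-1}]$, an interval centred at $0$ whose half‑length again lies in $[M^{-1},M]$; hence it suffices to bound $\hat\nu^c_x([-M^{-1},M^{-1}])$ from below and $\hat\nu^c_x([-M,M])$ from above, uniformly in $x\in\cL$.

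For the upper bound I would fix $\varphi\in C^0_c(\R)$ with $0\le\varphi\le1$ and $\varphi\equiv1$ on $[-M,M]$, so that $\hat\nu^c_x([-M,M])\le\int\varphi\,d\hat\nu^c_x$ for all $x$; Lemma~\ref{l.lusin} applied with $A=\cL$ makes $x\mapsto\int\varphi\,d\hat\nu^c_x$ continuous on the compact set $\cL$, hence bounded above by some $c_1<\infty$. For the lower bound I would fix $\psi\in C^0_c(\R)$ with $\operatorname{supp}(\psi)\subset(-M^{-1},M^{-1})$, $0\le\psi\le1$, and $\psi(0)=1$, so that $\hat\nu^c_x([-M^{-1},M^{-1}])\ge\int\psi\,d\hat\nu^c_x$. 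Since $\psi\ge\tfrac12$ on some $[-\eta,\eta]$, Remark~\ref{remark.superposition2} gives $\int\psi\,d\hat\nu^c_x\ge\tfrac12\,\hat\nu^c_x([-\eta,\eta])>0$ for every $x$; and again by Lemma~\ref{l.lusin} the map $x\mapsto\int\psi\,d\hat\nu^c_x$ is continuous and strictly positive on the compact set $\cL$, hence bounded below by some $c_2>0$.

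Finally I would set $c\eqdef\max\{c_1,c_2^{-1}\}$; since $M\ge1$ and $\hat\nu^c_x([-1,1])=1$ (Remark~\ref{rem.normaliza}) one has $[-M^{-1},M^{-1}]\subset[-1,1]\subset[-M,M]$, whence $c_2\le1\le c_1$ and $c\ge1$, and therefore $\hat\nu^c_x(B^{-1}([-1,1]))\in[\hat\nu^c_x([-M^{-1},M^{-1}]),\hat\nu^c_x([-M,M])]\subset[c_2,c_1]\subset[c^{-1},c]$ for every $x\in\cL$. There is no substantial obstacle: the only points deserving care are that the "continuity of $x\mapsto\hat\nu^c_x$" in Lemma~\ref{l.lusin} is exactly sequential continuity of the real‑valued functionals $x\mapsto\int\varphi\,d\hat\nu^c_x$, which on the metrizable compact $\cL$ is ordinary continuity so that the extreme‑value theorem applies, and that the lower bound genuinely requires Remark~\ref{remark.superposition2} to exclude $\int\psi\,d\hat\nu^c_x=0$.
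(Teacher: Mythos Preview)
Your proposal is correct and follows essentially the same approach as the paper: both reduce to bounding $\hat\nu^c_x([-M^{-1},M^{-1}])$ from below and $\hat\nu^c_x([-M,M])$ from above uniformly over the compact set $\cL$, using the continuity of $x\mapsto\hat\nu^c_x$ (Lemma~\ref{l.lusin}) together with Remark~\ref{remark.superposition2} and local finiteness. The only cosmetic difference is that the paper argues by contradiction via sequences and a Portmanteau-type inequality, whereas you sandwich with test functions and apply the extreme value theorem directly; your version is arguably cleaner in that it sidesteps any open-versus-closed interval issue.
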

\begin{proof}
Assume by contradiction that a lower bound does not hold. Denote $J_0=[-M^{-1},M^{-1}]$. Then, for each $n$ there must exist $x_n\dans\cL$ such that $\hat{\nu}^c_{x_n}(J_0)\leq 1/n$. We can assume, by compactness of $\cL$, that $x_n\to x\in\cL$. Since $y\in\cL\mapsto\hat{\nu}^c_y$ is continuous, we have that 
\[
\hat{\nu}^c_x(J_0)\leq\liminf_{n\to\infty}\hat{\nu}^c_{x_n}(J_0)=0,
\] 
which violates Remark \ref{remark.superposition2}. By a similar reasoning, considering $J_1=[-M,M]$ and assuming that the upper bound does not hold we would obtain a point $x\in\cL$ for which $\hat{\nu}^c_x(J_1)=\infty$, which is also impossible by local finiteness. 
\end{proof}

\subsubsection{The main proposition}\label{sss_exp-drift}

Let us fix two numbers $\delta$ and $\delta_0$ such that
$$0<\delta_0<\frac {1}{10},\,\,\,\,\,\,\,\,\,\,\,\text{and}\,\,\,\,\,\,\,\,\,\,\,\delta=3\delta_0.$$
We will require $\mu(\cL)> 1-\delta_0$. The constant $\delta_0$ is the constant appearing in Lemma \ref{l.lema6.1} and Proposition \ref{p.drift}. Further assumption will be given on $\delta$ and $\delta_0$ in Section \ref{s.end_proof}, and more precisely in \S \ref{delta_delta0}.

Given $M,\eps>0$ set
\begin{equation} \label{eq.definitiongem}
G(\eps,M)\eqdef
\left\{p\in\TT:
\begin{array}{l}\exists\, q\in\cW^c_1(p),\,\exists\, B(s)=\beta\cdot s\text{ linear map such that}\\
\hat \nu^c_{q}\propto B_{*}\hat \nu^c_{p}\\
 M^{-1}\eps\leq|\cH^c_p(q)|\leq M\eps\\
  M^{-1}\leq|\beta|\leq M
\end{array}
\right\}.
\end{equation}
Notice that $G(\eps,M)$ is nothing but the set of points $p\in\TT$ for which the conclusion of Proposition~\ref{p.drift} holds with constants $\eps,M$. Recall that in order to prove Proposition \ref{p.drift}, and thus Lemma \ref{l.lema6.1}, we must show that $\mu(G(\eps,M))\geq\delta_0$.

Given this choice of parameters, Proposition~\ref{p.drift} follows from the statement below. 

\begin{prop}
	\label{p.levraidrift}
	There exists $M>0$ such that for $\eps>0$ small enough and every compact set $K_{00}$ with $\mu(K_{00})>1-2\delta_0$,
	$$K_{00}\cap\cL\cap G(\eps, M)\neq\emptyset.$$
\end{prop}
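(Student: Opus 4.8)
The strategy is the one outlined in the heuristics section: we produce a matched $Y$-configuration starting from a pair of points $x,y$ on the same stable manifold with $\alpha^s(x_{-\ell},y_{-\ell})$ bounded below, push everything forward by the stopping times $\tau(\ell)$ and $t(\ell)$, and take limits along a subsequence $\ell_k$ to obtain the point $q\in\cW^c_1(p)$ with the desired linear relation between $\hat\nu^c_p$ and $\hat\nu^c_q$ and the drift estimate $M^{-1}\eps\le|\cH^c_p(q)|\le M\eps$. The point $p$ will be the limit of $f^{\tau(\ell_k)}(x^u)$ and $q$ the limit of $f^{\tau(\ell_k)}(y^u)$ (or rather of the matched version of these points), and we must guarantee that both limits fall inside $K_{00}\cap\cL$. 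The whole subtlety is in making \emph{simultaneous} typicality and Lusin-set membership requirements compatible across all the auxiliary points and stopping times, which is exactly why the quasi-isometric and synchronization estimates referenced in the heuristics (Lemmas~\ref{l_qi_estimates}, \ref{lemme synchro}, \ref{l.lemadosaci}) are needed.

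\textbf{Key steps, in order.} First I would fix a ``good'' full-measure set on which $\mu$ is typical, the zero-one law gives Case~2 (positive angles $\mu$-a.e.; this is where the hypothesis that the Bad set has measure zero enters, via Theorem~\ref{thm.zeroonelawintro} / Theorem~\ref{mainthm.technique}), the basic moves of Proposition~\ref{p.basicmoves} hold, and the leaf-wise measures $\hat\nu^c_x$ and the partitions behave as prescribed. Second, I would choose $x$ (and hence $y\in\cW^s_{\loc}(x)$) in this good set, intersected with $\cL$, and such that the \emph{backward} orbit $x_{-\ell}$ returns infinitely often to a compact set on which $\alpha^s(\cdot,\cdot)$ is uniformly positive and $d(x_{-\ell},y_{-\ell})\approx 1$; by Poincaré recurrence and a Borel--Cantelli / positive-density argument, this happens for a positive-measure set of $x$, and one arranges $\mu(\{$such $x\})>1-\delta_0$ after shrinking. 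Third, for each such $\ell$ I build the matched $Y$-configuration of Section~\ref{s.coupled}, giving points $x^u\in\cW^u_{\loc}(x)$, $y^u$, $z^u$ with the displacement estimate $d(z^u,y^u)\approx d^\ell_x$ (via the $C^1$ regularity of $H^s$ and the angle estimate, as in Lemma~\ref{control centre expa}), and the stopping times $\tau(\ell),t(\ell)$ controlling the push-forward; the drift estimate \eqref{eq.distanceestimate}–\eqref{e.tempomaroto} then gives $d(f^{\tau(\ell)}(x^u),f^{\tau(\ell)}(y^u))\approx\eps$. Fourth, using the quasi-isometric estimates for $\tau,t$ and the synchronization estimates, I arrange that a definite proportion of $\ell$'s (in density) yield configurations \emph{all of whose relevant points lie in $\cL$ and in $K_{00}$} — this is the combinatorial heart and the step I expect to be the main obstacle. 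Fifth, along such an $\ell_k\to\infty$ pass to a subsequence so that $f^{\tau(\ell_k)}(x^u)\to p$, $f^{\tau(\ell_k)}(y^u)\to q$, both in $\cL\cap K_{00}$, with $q\in\cW^c_{\loc}(p)$ and $d(p,q)\approx\eps$; apply the basic moves (Lemmas~\ref{chang dyna}, \ref{changt inst}, \ref{chanfemt centrale}) to express $\hat\nu^c_{f^{\tau(\ell_k)}(x^u)}$ and $\hat\nu^c_{f^{\tau(\ell_k)}(y^u)}$ in terms of $\hat\nu^c_{f^{t(\ell_k)}(x)}$, $\hat\nu^c_{f^{t(\ell_k)}(y)}$ via linear maps of controlled slope; use $d(f^{t(\ell_k)}(x),f^{t(\ell_k)}(y))\to 0$ and $d(f^{\tau(\ell_k)}(x^u),f^{\tau(\ell_k)}(z^u))\to 0$ together with the Lusin continuity of $x\mapsto\hat\nu^c_x$ on $\cL$ (Lemma~\ref{l.lusin}) to pass to the limit and obtain $\hat\nu^c_q\propto B_*\hat\nu^c_p$ with $B$ linear, $M^{-1}\le|\beta|\le M$. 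Finally, combining with the estimate $M^{-1}\eps\le|\cH^c_p(q)|\le M\eps$ (which follows from $d(p,q)\approx\eps$ and continuity of normal forms on $\cL$), we get $p\in G(\eps,M)\cap\cL\cap K_{00}$, proving the proposition.

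\textbf{The main obstacle.} The delicate point, as already flagged in the heuristics, is the non-absolute-continuity of $\cW^{cs}$: one cannot freely demand $y^u\in\cW^{cs}_{\loc}(x^u)$ \emph{and} have $x^u,y^u$ both in the Lusin set $\cL$. This is resolved by the matching construction of Section~\ref{s.coupled}, where instead of one configuration we consider a whole family of candidate points $x^u$ (ranging in $\cW^u_{\loc}(x)\cap\cL$, a set of positive $\Leb^u$-measure since $\mu$ is $u$-Gibbs and $x\in\cL$, using Lemma~\ref{l.density_u-gibbs}), and correspondingly a family for $y^u$; a Fubini / pigeonhole argument across these families produces a \emph{matched} pair for which both endpoints land in $\cL$, at the cost of slightly weakening the geometric constraints (the center-stable holonomy is replaced by an approximate one, which is still fine because of the Hölder bound in Lemma~\ref{thm_regularity} and the controlled stopping times). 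I would carry out the measure-theoretic bookkeeping so that the proportion of bad $\ell$'s (those for which no good matched configuration exists, or for which the forward iterates escape $K_{00}$) is at most, say, $\delta_0$, while the $\ell$'s with $x$ good and recurrent have density close to $1$; then a positive-density set of $\ell$'s survives, which suffices to extract the limiting configuration. The rest — invoking the basic moves and Lusin continuity to upgrade ``almost proportional'' to ``proportional'' in the limit — is then routine given the estimates already established in the earlier sections.
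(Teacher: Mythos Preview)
Your overall plan is exactly the paper's: reduce to the existence of arbitrarily long $(K_0,C,T)$-matched $Y$-configurations (Lemma~\ref{l.fatality}), then take limits along $\ell_k\to\infty$ and use the basic moves together with Lusin continuity (Lemma~\ref{coro depl mes}, \S\ref{ss.mainreduction}). The combinatorial core you describe --- density arguments for stopping times via the quasi-isometric estimates, the angle condition from $\mu(\mathbf{B})=0$, and the Fubini/pigeonhole matching to beat the non-absolute-continuity of $\cW^{cs}$ --- is precisely what the paper does in Section~\ref{s.end_proof}.

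There is one genuine slip in your Step~5. You write that the limits $p=\lim f^{\tau(\ell_k)}(x^u)$ and $q=\lim f^{\tau(\ell_k)}(y^u)$ lie in $\cL\cap K_{00}$ \emph{and} satisfy $q\in\cW^c_{\loc}(p)$. But the matching does not give this: the points whose forward iterates are forced into $K_0=K_{00}\cap\cL$ are the matched points $a\in J(x^u)$, $b\in J(y^u)$, not $x^u,y^u$ themselves, so it is the limits $a_\infty,b_\infty$ that lie in $K_0$. These two points are on the same center-unstable leaf but not on the same center leaf; conversely $p,q$ satisfy $q\in\cW^c(p)$ with the drift $|\cH^c_p(q)|\asymp\eps$, but need not lie in $K_0$. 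The paper closes this gap in \S\ref{ss.mainreduction} by one more application of the unstable basic move (Lemma~\ref{changt inst}): since $a_\infty\in\cW^u_1(p)$ and $b_\infty\in\cW^u_1(q)$, one sets $\hat b=\cW^u(p)\cap\cW^c(b_\infty)$, uses the Lipschitz bound on unstable holonomy (Lemma~\ref{l.lemadorho0}) to get $|\cH^c_{b_\infty}(\hat b)|\asymp\eps$, and then combines $\hat\nu^c_{a_\infty}\propto B_*\hat\nu^c_{b_\infty}$ with $\hat\nu^c_{\hat b}\propto L_*\hat\nu^c_{a_\infty}$ to conclude $b_\infty\in G(\eps,\tilde M)\cap K_0$. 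This extra holonomy step is short but essential; without it your argument does not actually produce a point of $K_{00}\cap\cL\cap G(\eps,M)$.
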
 

\begin{proof}[Proof that Proposition \ref{p.levraidrift} $\Longrightarrow$ Proposition \ref{p.drift}] Let $M>0$ be the constant given in the statement above. Assume by contradiction that Proposition~\ref{p.drift} does not hold with the constants $M$ and $\delta_0$. Then, for some small $\eps>0$ we must have $\mu(G(\eps,M))<\delta_0$. By regularity of $\mu$ there exists an open neighbourhood $U$ of $G(\eps,M)$ such that $\mu(U)<\delta_0$. Let $K_{00}\eqdef\TT\setminus U$. This set is compact and satisfies $\mu(K_{00})\geq 1-\delta_0>1-2\delta_0$. Proposition~\ref{p.levraidrift} then implies that $K_{00}\cap G(\eps,M)\neq\emptyset$, which is absurd. 
\end{proof}

In order to start the proof of Proposition~\ref{p.levraidrift}, fix $\eps>0$ (independently of $\delta$: notice that while $\delta$ is fixed we need to take $\eps\to 0$). More assumptions on $\eps$ will be given later on. This is the expected size of the drift we want to see along the center direction. Given a compact set $K_{00}$ whose measure is larger than $1-2\delta_0$, notice that the compact set
\[
K_0\eqdef K_{00}\cap\cL.
\] 
has measure $\mu(K_0)>1-3\delta_0=1-\delta$. We shall prove that $K_0\cap G(\eps,M)\neq\emptyset$. This will occupy the rest of the paper.

\section{Stopping times, $Y$-configurations, quadrilaterals and synchronization}\label{s.Yconfigurations}
The goal of this section is to introduce the key dynamical ingredients involved in the proof of Proposition~\ref{p.levraidrift}, which is the main part of our implementation of an exponential drift argument. We also use this section to derive some estimates relating these ingredients.

\subsection{Stopping times}\label{subsec.stoppingtimes}  
We introduce below the stopping time functions. They are devised to measure the appropriate time length of the top part of Figure~\ref{f.coupledconfig} so that we get the precise drift we want along the center direction.   

\subsubsection{Definition of stopping times}
Recall our concise notations for derivatives
\[
\lambda^\star_x(n)=\|Df^n(x)|_{E^\star}\|\:\:\:\:\star=c,s,u,
\]
and 
\[
d^\ell_x=\frac{\lambda^c_{x_{-\ell}}(\ell)}{\lambda^u_{x_{-\ell}}(\ell)}
\]
introduced in \eqref{e.derivada} of \S\ref{sss.notaderiva} and \eqref{e.domonationalongy} of \S\ref{sss.hypestimates}, respectively, where $x_n=f^n(x)$ is our concise notation for orbit points introduced in \eqref{e.orbita} of \S\ref{sss.notaderiva}.
Given $x\in \T^3$, $x^u\in \cW^u(x)$, $\varepsilon>0$, and  $\ell \in \N$, we define
\[
\tau(\ell)=\tau(x,x^u,\varepsilon,\ell)\eqdef\inf\Big\{n\in\N:d^\ell_x\times\lambda^c_{x^u}(n)\geq \varepsilon\Big\}.
\]
We also define
\[
t(\ell)=t(x,x^u,\varepsilon,\ell)\eqdef\inf\Big\{n\in\N:\frac{\lambda^c_x(n)}{\lambda^c_{x^u}(\tau(\ell))}\geq 1\Big\}.
 \]
These functions are called the \textit{stopping times}.  

%\begin{remark}\label{rem_stop-times-tend-infty}
%Using that the expansion along center manifolds is uniform and strictly smaller than the expansion along strong unstable manifolds it is easy to see from the definition of stopping times that $\tau(\ell)\to\infty$ and $t(\ell)\to\infty$ as $\ell\to\infty$. We go further in Lemma \ref{l_qi_estimates}, by proving quasi-isometric estimates.
%\end{remark}

\subsubsection{Quasi-isometric estimates}

In the following, we fix $\eps>0$. Recall that for $x\in \TT$, $x^u \in \cWu(x)$, we abbreviate $\tau(\ell)=\tau(x,x^u,\eps,\ell)$ and $t(\ell)=t(x,x^u,\eps,\ell)$. 
\begin{lemma}[Quasi-isometric estimates]\label{l_qi_estimates}
	There exists $\Theta=\Theta(f)>1$ and $A=A(f)>0$ so that given $\ell,m\in\N$, $x\in\TT$ and $x^u\in\cW^u(x)$ the following holds 
	\begin{enumerate}
		\item $\Theta^{-1}m-A<\tau(\ell+m)-\tau(\ell)<\Theta m+A$ and 
		\item $\Theta^{-1}m-A<t(\ell+m)-t(\ell)<\Theta m+A$.
	\end{enumerate}
\end{lemma}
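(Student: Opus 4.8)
The key observation is that both stopping times are defined by threshold conditions on products of the center/unstable derivatives, and the logarithms of these derivatives are comparable to the number of iterates, up to uniform multiplicative and additive constants. Concretely, set $\Theta\eqdef \max\{\chi_1^c/\chi_2^c,\chi_1^u/\chi_2^u\}>1$ using the uniform exponential bounds from \S\ref{sss.hypestimates}: for every $x\in\TT$ and every $n\in\N$ one has $e^{\chi_2^c n}<\lambda^c_x(n)<e^{\chi_1^c n}$, and likewise for the unstable rates. The plan is to unwind the infimum defining $\tau(\ell)$ as an (almost) explicit formula in terms of $\log(\varepsilon/d^\ell_x)$ and then compare $\tau(\ell+m)$ with $\tau(\ell)$ by controlling how $d^\ell_x$ changes with $\ell$.

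First I would record the quasi-additivity of $\log d^\ell_x$ in $\ell$: since $d^{\ell+m}_x=d^\ell_{x_{-m}}\cdot\frac{\lambda^c_{x_{-\ell-m}}(m)}{\lambda^u_{x_{-\ell-m}}(m)}$ by the cocycle property \eqref{e.cocycle}, and the last factor lies in $(e^{\chi_1^d m},e^{\chi_2^d m})$ (note $\chi^d_i<0$), we get $\chi_1^d m<\log d^{\ell+m}_x-\log d^\ell_{x_{-m}}<\chi_2^d m$. Hmm — one must be slightly careful: the base point shifts from $x$ to $x_{-m}$; but $d^\ell_{x_{-m}}$ also lies in $(e^{\chi_1^d\ell},e^{\chi_2^d\ell})$, so in all cases $d^\ell_x,d^{\ell+m}_x\in(e^{\chi_1^d\ell},e^{\chi_2^d(\ell+m)})$ wait that's not tight enough. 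The cleaner route: by definition of $\tau$, one has
\[
d^\ell_x\cdot\lambda^c_{x^u}(\tau(\ell))\geq\varepsilon>d^\ell_x\cdot\lambda^c_{x^u}(\tau(\ell)-1),
\]
so that $\lambda^c_{x^u}(\tau(\ell))\in\big[\varepsilon/d^\ell_x,\ (\varepsilon/d^\ell_x)\cdot\lambda^c_{x^u}(1)\big)$, whence taking logs and using $\chi_2^c n<\log\lambda^c_{x^u}(n)<\chi_1^c n$,
\[
\frac{1}{\chi_1^c}\log\frac{\varepsilon}{d^\ell_x}<\tau(\ell)<\frac{1}{\chi_2^c}\log\frac{\varepsilon}{d^\ell_x}+\frac{\log\lambda^c_{x^u}(1)}{\chi_2^c}.
\]
Thus $\tau(\ell)=\frac{1}{\chi^c_\bullet}\log(\varepsilon/d^\ell_x)+O(1)$ with the slope $\tfrac1{\chi^c_\bullet}$ pinned between $\tfrac1{\chi^c_1}$ and $\tfrac1{\chi^c_2}$. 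Subtracting the analogous bounds for $\tau(\ell+m)$ and using $-\chi_2^d(\ell+m)<-\log d^{\ell+m}_x$ together with $-\log d^\ell_x<-\chi_1^d\ell$ — or more simply the relation $d^{\ell+m}_x/d^\ell_x\in(e^{\chi_1^d m},e^{\chi_2^d m})$ up to a bounded base-point correction, which I would justify via Corollary~\ref{c.distortionbasic} or directly via \eqref{e.cocycle} and the uniform bounds — one obtains $\tau(\ell+m)-\tau(\ell)=\frac{1}{\chi^c_\bullet}\log(d^\ell_x/d^{\ell+m}_x)+O(1)$, and since the logarithm here lies in $(-\chi_2^d m,-\chi_1^d m)=(|\chi_2^d|m,|\chi_1^d|m)$, this is bounded above and below by positive multiples of $m$ plus a uniform additive error, giving claim (1) with $\Theta$ and $A$ depending only on the $\chi$-constants (hence only on $f$).

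For claim (2), I would argue similarly: by definition $\lambda^c_x(t(\ell))\geq\lambda^c_{x^u}(\tau(\ell))>\lambda^c_x(t(\ell)-1)$, so $t(\ell)=\frac{1}{\chi^c_\bullet}\log\lambda^c_{x^u}(\tau(\ell))+O(1)$; combined with $\log\lambda^c_{x^u}(\tau(\ell))=\chi^c_{\bullet}\tau(\ell)+O(1)$ (i.e. $\log\lambda^c_{x^u}(\tau(\ell))$ is comparable to $\tau(\ell)$), this yields $t(\ell)=\Theta'\tau(\ell)+O(1)$ for some $\Theta'\in[\chi_2^c/\chi_1^c,\chi_1^c/\chi_2^c]$, and then claim (2) follows from claim (1) after adjusting $\Theta$ and $A$. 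The main obstacle — really the only delicate point — is bookkeeping the base-point shift in $d^\ell_x\mapsto d^{\ell+m}_x$ and in $\lambda^c_{x^u}(\tau(\ell))$ cleanly so that all error terms are genuinely uniform in $x$, $x^u$, $\ell$, $m$ (and in particular do not secretly depend on $\varepsilon$, which they don't, since $\varepsilon$ enters $\tau(\ell)$ and $\tau(\ell+m)$ through the same additive $\log\varepsilon$ term that cancels in the difference). Everything else is a routine manipulation of the uniform exponential estimates from \S\ref{sss.hypestimates} and the cocycle relation \eqref{e.cocycle}.
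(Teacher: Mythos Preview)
Your ingredients are right (the uniform exponential bounds from \S\ref{sss.hypestimates} and the cocycle relation \eqref{e.cocycle}), but the specific route ``solve for $\tau(\ell)$, then subtract'' has a gap. From the threshold inequality you correctly get
\[
\frac{1}{\chi_1^c}\log\frac{\varepsilon}{d^\ell_x}\ <\ \tau(\ell)\ <\ \frac{1}{\chi_2^c}\log\frac{\varepsilon}{d^\ell_x}+O(1),
\]
but the upper and lower bounds carry \emph{different} slopes $1/\chi_2^c$ and $1/\chi_1^c$. When you subtract to estimate $\tau(\ell+m)-\tau(\ell)$, you must pair an upper bound at $\ell+m$ with a lower bound at $\ell$ (and vice versa), and the slope mismatch leaves an error of size $\big(\tfrac{1}{\chi_2^c}-\tfrac{1}{\chi_1^c}\big)\log\tfrac{\varepsilon}{d^\ell_x}$, which grows linearly in $\ell$. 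So the claimed equality $\tau(\ell+m)-\tau(\ell)=\tfrac{1}{\chi^c_\bullet}\log(d^\ell_x/d^{\ell+m}_x)+O(1)$ is not justified, and the same issue recurs in your treatment of $t(\ell)$ via ``$t(\ell)=\Theta'\tau(\ell)+O(1)$''.

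The repair --- and this is what the paper does --- is to avoid solving for each $\tau(\ell)$ separately and instead take the \emph{ratio} of the two threshold inequalities. From $\varepsilon\le d^\ell_x\lambda^c_{x^u}(\tau(\ell))<e^{\chi_1^c}\varepsilon$ at both $\ell$ and $\ell+m$ one gets that $\frac{d^{\ell+m}_x\lambda^c_{x^u}(\tau(\ell+m))}{d^\ell_x\lambda^c_{x^u}(\tau(\ell))}$ lies in a fixed compact interval; then the cocycle identity $\frac{\lambda^c_{x^u}(\tau(\ell+m))}{\lambda^c_{x^u}(\tau(\ell))}=\lambda^c_{f^{\tau(\ell)}(x^u)}\big(\tau(\ell+m)-\tau(\ell)\big)$ and $d^{\ell+m}_x/d^\ell_x=d^m_{x_{-\ell}}\in(e^{\chi_1^d m},e^{\chi_2^d m})$ (so no ``base-point correction'' is needed) yield bounds on $\tau(\ell+m)-\tau(\ell)$ that depend only on $m$. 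The argument for $t$ is analogous, again by passing to the ratio $\lambda^c_x(t(\ell+m))/\lambda^c_x(t(\ell))$ and using the already-established bound on $|\tau(\ell+m)-\tau(\ell)|$.
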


\begin{proof}
	In this proof we shall make use of the constants of hyperbolicity of $f$ introduced in \S\ref{sss.hypestimates}.	
	With these constants at hand, we can now develop the argument for the \emph{quasi-isometric estimates}. For this, fix $x\in\TT$ and $x^u\in\cW^u(x)$. Given some $\eps>0$, let us consider $\tau(\ell)=\tau(x,x^u,\eps,\ell)$ and $t(\ell)=t(x,x^u,\eps,\ell)$. From the definition of $\tau$, we have that 
	\[
	d^\ell_x\lambda^c_{x^u}\left(\tau(\ell)\right)\geq\eps\:\:\:\textrm{and}\:\:\:d^\ell_x\lambda^c_{x^u}\left(\tau(\ell)-1\right)<\eps,\:\:\:\textrm{for each}\:\:\ell\in\N.
	\]
	Thus, 
	\begin{equation}
	\label{e.odriftchegou}
	\eps\leq d^\ell_x\lambda^c_{x^u}\left(\tau(\ell)\right)<e^{\chi^c_1}\eps,\:\:\:\textrm{for each}\:\:\ell\in\N.
	\end{equation}
	Fix $\ell,m\in\N$. On the one hand, by the cocycle property \eqref{e.cocycle} we have that $d^{\ell+m}_x=d^\ell_xd^{m}_{x_{\ell}}$ and therefore we can use \eqref{e.odriftchegou} to obtain
	\[
	\eps e^{m\chi^d_1}<d^{\ell+m}_x\lambda^c_{x^u}\left(\tau(\ell)\right)<\eps e^{\chi^c_1+m\chi^d_2}.
	\]
	On the other hand, we can use the cocycle property once more to write $\lambda^c_{f^{\tau(\ell)}(x^u)}\left(\tau(\ell+m)-\tau(\ell)\right)=\frac{\lambda^c_{x^u}\left(\tau(\ell+m)\right)}{\lambda^c_{x^u}\left(\tau(\ell)\right)}$ and thus {\color{blue}obtain}
	\[
	e^{\chi^c_2\left(\tau(\ell+m)-\tau(\ell)\right)}<\frac{\lambda^c_{x^u}\left(\tau(\ell+m)\right)}{\lambda^c_{x^u}\left(\tau(\ell)\right)}<e^{\chi^c_1\left(\tau(\ell+m)-\tau(\ell)\right)}.
	\]
    Therefore, since $d^{\ell+m}_x\lambda^c_{x^u}\left(\tau(\ell+m)\right)=d^{\ell+m}_x\lambda^c_{x^u}\left(\tau(\ell)\right)\frac{\lambda^c_{x^u}\left(\tau(\ell+m)\right)}{\lambda^c_{x^u}\left(\tau(\ell)\right)}$ we can combine the two above inequalities and obtain
    \[
    \eps e^{m\chi_1^d+\chi_2^c\left(\tau(\ell+m)-\tau(\ell)\right)}<d^{\ell+m}_x\lambda^c_{x^u}\left(\tau(\ell+m)\right)<\eps e^{\chi_1^c+m\chi_2^d+\chi_1^c\left(\tau(\ell+m)-\tau(\ell)\right)}.
    \]
    Putting $\ell+m$ instead of $\ell$ in \eqref{e.odriftchegou} and combining with this we get the inequalities
    \[
    \eps e^{\chi_1^c+m\chi_2^d+\chi_1^c\left(\tau(\ell+m)-\tau(\ell)\right)}>\eps\:\:\:\textrm{and}\:\:\:\eps e^{-\chi_1^c+m\chi_1^d+\chi_2^c\left(\tau(\ell+m)-\tau(\ell)\right)}<\eps.
    \]
    Dividing by $\eps$ and taking logarithms we deduce that 
    \begin{equation}
    \label{e.quasiisometric}
    m\left[\frac{-\chi_2^d}{\chi_1^c}\right]-1<\tau(m+\ell)-\tau(\ell)<m\left[\frac{-\chi_1^d}{\chi_2^c}\right]+\frac{\chi_1^c}{\chi_2^c}.
    \end{equation}
    Let us deal with the function $\ell\mapsto t(\ell)$. From its very definition we have an inequality analogous to \eqref{e.odriftchegou}:
    \begin{equation}
    \label{e.omundoehredondo}
    1\leq\frac{\lambda^c_x\left(t(\ell)\right)}{\lambda^c_{x^u}\left(\tau(\ell)\right)}<e^{\chi_1^c},\:\:\:\textrm{for every}\:\:x\in\TT,\:\ell\in\N.
    \end{equation}
    To simplify the remainder of the exposition, we shall denote 
    $\tau_{\ell,m}\eqdef\tau(\ell+m)-\tau(\ell)$ and $t_{\ell,m}=t(\ell+m)-t(\ell)$.
    Notice that 
    \[
    e^{\chi_2^ct_{\ell,m}-\chi_1^c\tau_{\ell,m}}<\frac{\lambda^c_{f^{t(\ell)}(x)}\left(t(\ell+m)-t(\ell)\right)}{\lambda^c_{f^{\tau(\ell)}(x^u)}\left(\tau(\ell+m)-\tau(\ell)\right)}<e^{\chi_1^ct_{\ell,m}-\chi_2^c\tau_{\ell,m}}.
    \]  
    Combining this with \eqref{e.omundoehredondo}, we can use the decomposition 
    \[
    \frac{\lambda^c_x\left(t(\ell+m)\right)}{\lambda^c_{x^u}\left(\tau(\ell+m)\right)}=\frac{\lambda^c_x\left(t(\ell)\right)\lambda^c_{f^{t(\ell)}(x)}\left(t(\ell+m)-t(\ell)\right)}{\lambda^c_{x^u}\left(\tau(\ell)\right)\lambda^c_{f^{\tau(\ell)}(x^u)}\left(\tau(\ell+m)-\tau(\ell)\right)}
    \]
    and conclude that 
    \[
    e^{\chi_2^ct_{\ell,m}-\chi_1^c\tau_{\ell,m}}<\frac{\lambda^c_x\left(t(\ell+m)\right)}{\lambda^c_{x^u}\left(\tau(\ell+m)\right)}<e^{\chi_1^ct_{\ell,m}-\chi_2^c\tau_{\ell,m}+\chi_1^c}.
    \]
    Now, using \eqref{e.omundoehredondo} with $\ell+m$ instead of $\ell$, the above inequality implies that
    \[
    e^{\chi_2^ct_{\ell,m}-\chi_1^c\tau_{\ell,m}}<e^{\chi_1^c}\:\:\:\textrm{and}\:\:\:1<e^{\chi_1^ct_{\ell,m}-\chi_2^c\tau_{\ell,m}+\chi_1^c},
    \]
    and thus
    \[
    e^{\chi_2^ct_{\ell,m}-\chi_1^c\tau_{\ell,m}-\chi_1^c}<1<e^{\chi_1^ct_{\ell,m}-\chi_2^c\tau_{\ell,m}+\chi_1^c}.
    \]
    Taking logarithms leads us to 
    \[
    \left[\frac{\chi_2^c}{\chi_1^c}\right]\tau_{\ell,m}-1<t_{\ell,m}<\left[\frac{\chi_1^c}{\chi_2^c}\right]\tau_{\ell,m}+\frac{\chi_1^c}{\chi_2^c},
    \]
    which combined with \eqref{e.quasiisometric} ends the proof.
\end{proof}

\subsection{$Y$-configurations}

We introduce below a dynamical ingredient inspired by \cite{EskinLind,EskinMirzakhani}. They allow us to ``decompose'' Figure~\ref{f.coupledconfig} in the ``$x$-side'' and in the ``$y$-side''. Notice that each side has a kind of $Y$-shape. The idea from \cite{EskinLind} for getting the points inside the Lusin set is to try to prove the existence of a large amount of these $Y$-shaped dynamical configurations and then try to find some of them which are linked through stable manifolds, as it appears in Figure~\ref{f.coupledconfig}. These measure theoretical arguments will be developed in Section~\ref{s.end_proof}. In this section, we introduce these configurations and establish \emph{synchronization and (center) drift estimates for them}.  

\subsubsection{Definition}
 Given $\eps>0$ and $\ell\in \N$, a \emph{$Y$-configuration} $Y=Y(x,x^u,\ell)$ is a quintuple of points $(x,x^u,x_{-\ell},x^u_{\tau},x_t)$, that depends on parameters $x,x^u\in\mathbb{T}^3$ and $\ell$ (the dependence on $\eps$ is implicit throughout the text), and such that
\begin{enumerate}
	\item $x^u\in\cW^u(x)$;
	\item $x_{-\ell}=f^{-\ell}(x)$;
	\item $x^u_{\tau}=f^{\tau}(x^u)$, where $\tau=\tau(x,x^u,\eps,\ell)$;
	\item $x_t=f^{t}(x^u)$, where $t=t(x,x^u,\eps,\ell)$.
\end{enumerate}
where $\tau(x,x^u,\eps,\ell)$ and $t(x,x^u,\eps,\ell)$ are the stopping times defined above. 

We call $\ell$ the \textit{length} of the $Y$-configuration. Moreover, given a set $\Lambda \subset \TT$, we say that a $Y$-configuration $(x,x^u,x_{-\ell},x^u_\tau,x_t)$ is $\Lambda$\emph{-good} if $x,x^u,x^u_\tau,x_t\in \Lambda$.

\begin{figure}[h!]
	\begin{tikzpicture}
	\draw[black, ->] (-4,-1)--(-4,7)node[right]{\text{Time}};
	\fill[fill=black] (-4,3) node[left]{\color{black}\small $0$} circle (1pt);
	\fill[fill=black] (-4,0) node[left]{\color{black}\small $-\ell$} circle (1pt);
	\fill[fill=black] (-4,5.5) node[left]{\color{black}\small $\tau(\ell)$} circle (1pt);
	\fill[fill=black] (-4,5) node[left]{\color{black}\small $t(\ell)$} circle (1pt);
	\draw[red!80!black, thick] (0,3)--(2,3)node[midway, below]{\color{red!80!black}\small $\mathcal{W}^u(x)$};
	\draw[black, densely dotted, ->] (0,3)--(-1,5.5)node[midway, left]{\tiny $\tau(\ell)=\tau(x,x^u,\varepsilon,\ell)$};
	\draw[black, densely dotted, ->] (2,3)--(3,5)node[midway, right]{\tiny $t(\ell)=t(x,x^u,\varepsilon,\ell)$};
	\draw[black, densely dotted, ->] (0,0)--(2,3)node[midway, right]{\tiny $\ell$};
	\fill[fill=black] (0,3) node[left]{\color{black}\small $x^u$} circle (1pt);
	\fill[fill=black] (0,0) node[left]{\color{black}\small $x_{-\ell}$} circle (1pt);
	\fill[fill=black] (2,3) node[right]{\color{black}\small $x$} circle (1pt);
	\fill[fill=black] (3,5) node[right]{\color{black}\small $f^{t(\ell)}(x)$} circle (1pt);
	\fill[fill=black] (-1,5.5) node[left]{\color{black}\small $f^{\tau(\ell)}(x^u)$} circle (1pt);
	\end{tikzpicture}
	\caption{\label{yconf} A $Y$-configuration.}
\end{figure}
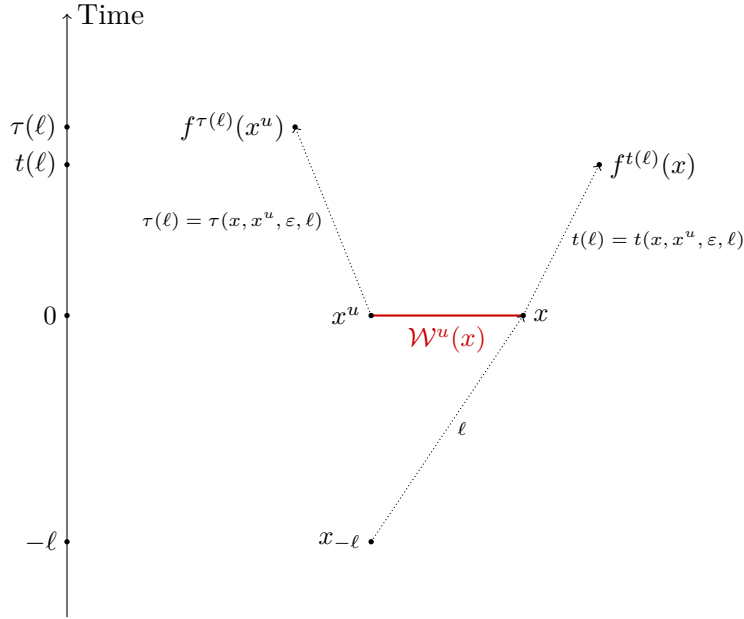

%To avoid overload of notation we leave implicit the dependence of the configuration on the parameters $\eps$ and $\ell$. Moreover, with the same purpose we use capital letter notation $X=(x,x^ux,\ell)$ as a short cut for the longer description with 5 points. In this notation the points $x_{-\ell}$, $b_x$ and $c_x$ are implicit. 
In \cite{EskinLind}, the authors defined a notion of \emph{pairs of coupled} $Y$-configurations. For technical reasons we shall replace this notion by that of pairs of \emph{matched} $Y$-configurations that will appear in the next section.

\subsection{Quadrilaterals and synchronization}\label{sss.quadri_couple} 
Another important aspect of Figure~\ref{f.coupledconfig} is located at its middle, where we have a kind of a quadrilateral, twisted along the unstable direction. This, indeed, is the third dynamical ingredient in our implementation of an exponential drift argument. It will allow us to use the angle condition and to define the aforementioned notion of matching of $Y$-configurations.

\subsubsection{Quadrilaterals}

A \textit{quadrilateral} is a quadruple $(x,x^u,y,y^u)\in(\TT)^4$ such that
\begin{enumerate}
\item $y\in \cWs(x)$;
\item $x^u\in\cW^u(x)$ belongs to the domain of a center-holonomy map $H^{cs}_{x,y}$;
\item $y^u=H^{cs}_{x,y}{\color{blue}(x^u)}\in\cWu(y)\cap\cWcs(x^u)$.
\end{enumerate}
For such a quadrilateral, we define the point $z^u\eqdef H^{s}_{x,y}(x^u)$, so that $z^u\in \mathcal{W}^s(x^u)\cap \cWc(y^u)$. Moreover, given $C>1$, $\ell\in \N$, 
we say that $(x,x^u,y,y^u)$ is a \emph{$(C,\ell)$-quadrilateral} if, besides, 
\begin{enumerate}
	\item[(4)] $C^{-1}<d(x_{-\ell},y_{-\ell})<1$,
	\item[(5)] $C^{-1}<\alpha^s(x_{-\ell},y_{-\ell})<C$ and
	\item[(6)] $C^{-1}<d(x,x^u)<C$, 
\end{enumerate}
where, as before, $x_{-\ell}=f^{-\ell}(x)$,  $y_{-\ell}=f^{-\ell}(y)$. 

\begin{figure}[h]
	\centering
	\begin{tikzpicture}[scale=.6]
	\fill[black!30!white, opacity=.6] (10,0) -- (9.5,0.5) to[bend left] (9.5,1) -- (10,0);
	\fill[green!30!white, opacity=.5] (-1,1)--(1,-1)--(11,-1)--(9,1)--(-1,1);
	\draw[green!40!black, thick] (0,0)--(10,0) node[midway,below]{\tiny $\cW^s_{\loc}(x_{-\ell})$};
	\draw[red!80!black,thick] (-1,1)--(1,-1) node[left]{\tiny $\cW^u_{\loc}(x_{-\ell})$};
	\draw[violet, thick] (9,1)--(11,-1) node[right]{\tiny $H^s_{x_{-\ell},y_{-\ell}}(\cW^u_{\loc}(x_{-\ell}))$};
	\fill[fill=black] (9,1) circle (2pt);
	\draw (9.1,1.4) node[left]{\small $z^u_{-\ell}$};
	\draw[red!80!black, thick] (11,-2)node[right]{\tiny $\cW^u_{\loc}(y_{-\ell})$}--(9,2);
	\fill[fill=black] (9,2) node[right]{$y^u_{-\ell}$} circle (2pt); 
	\draw[dotted] (11,-2)--(11,-1);
	\draw[dotted] (9,1)--(9,2);
	\fill[green!30!white, opacity=.5] (4,4)--(1,4.5)--(3,4.5)--(6,4);
	\draw[green!40!black, thick] (4,4)--(6,4);
	%	\draw[blue!40!black, thick] (4.8,6)--(4.9,6);
	\draw[red!80!black,thick] (4,4)--(1,4.5) node[midway, below]{$\gamma^u$};
	\draw[red!80!black, thick] (6,4)--(3,5);
	%\draw[blue!40!black, thick] (0.5,7)--(0.75,7);
	\draw[orange, thick] (3,4.5)--(3,5);
	%\draw[red!70!white, thick] (0.75,7)--(0.75,9) node[midway, left]{$\varepsilon\sim$};
	%\draw[black!70!white, opacity=.6,->] (1,4.5)--(0.5,7);
	%\draw[black!70!white, opacity=.6,->](3,4.5)--(0.75,7);
	%\draw[black!70!white, opacity=.6,->] (3,5)--(0.75,9);
	%\draw[densely dotted, black!70!white, opacity=.6,->] (0,0)--(4,4) node[midway, right]{$f^\ell$};
	\draw[densely dotted, black!70!white, opacity=.6,->] (0,0).. controls (2,1) and (3.7,3) .. (4,4) node[midway, right]{$f^\ell$};
	\draw[violet,thick]  (9.7,0.8) node[right]{\tiny $\alpha^s(x_{-\ell},y_{-\ell})$};
	\draw[densely dotted, black!70!white, opacity=.6,->] (10,0).. controls (9,0) and (6,3) .. (6,4) node[midway, left]{$f^\ell\ $};
	%\draw[black!70!white, opacity=.6,->] (4,4)--(4.8,6);
	%\draw[black!70!white, opacity=.6,->] (6,4)--(4.9,6);
	\fill[fill=black] (0,0) node[left]{\small $x_{-\ell}$} circle (2pt);
	\draw[violet, thick] (6,4)--(3,4.5); 
	\fill[fill=black] (10,0) node[right]{\small $y_{-\ell}$} circle (2pt);
	\fill[fill=black] (1,4.5) node[left]{\small $x^u$} circle (2pt);
	\fill[fill=black] (-1,1) node[left]{\small $x^u_{-\ell}$} circle (2pt);
	\fill[fill=black] (6,4) node[right]{\small $y$} circle (2pt);
	\fill[fill=black] (4,4) circle (2pt);
	\draw (4,4) node[below]{\small $x$};
	%\fill[fill=black] (4.8,6) node[left]{\small $f^{t(\ell)}(x)$} circle (2pt);
	%\fill[fill=black] (4.9,6) node[right]{\small $f^{t(\ell)}(y)$} circle (2pt);
	\draw[green!40!black, thick] (1,4.5)--(3,4.5);
	\fill[fill=black] (3,4.5) circle (2pt);
	\draw (3.1,4.8) node[left]{\small $z^u$};
	\fill[fill=black] (3,5)  circle (2pt);
	\draw (3.2,5) node[above]{\small $y^u$};
	%\fill[fill=black] (0.75,7) node[right]{\tiny $f^{\tau(\ell)}(z)$} circle (2pt); 
	%\fill[fill=black] (0.75,9) node[right]{\tiny $f^{\tau(\ell)}(y^u)$} circle (2pt);
	%\fill[fill=black] (0.5,7) node[left]{\tiny $f^{\tau(\ell)}(x^u)$} circle (2pt);  
	%\draw (9.5,0.5) to[bend left] (9.5,1);
	\end{tikzpicture}
	\caption{\label{f.quadrilatero} A quadrilateral and its pre-image by $f^\ell$.}
\end{figure}
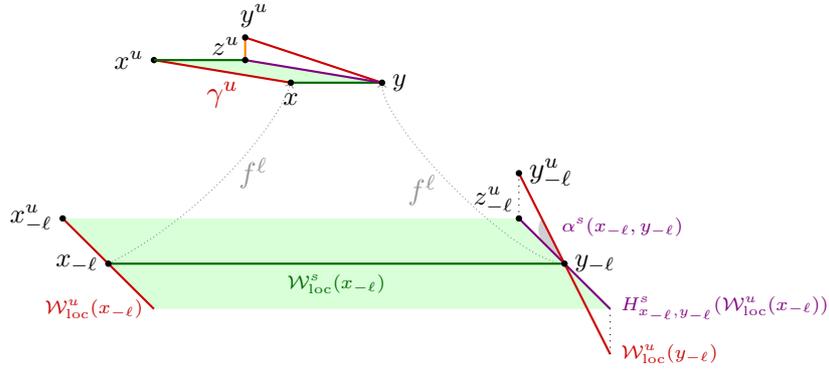

\subsubsection{Drift estimates}\label{sss.drifestimates}

%{\color{blue!50!red}
%	\begin{lemma}\label{control centre expa2}
%		Given $C>1$ and $T_0>0$ there exists a constant $\gamma>1$ depending only on $f,C,T_0$ such that for any pair %modified by at most $T_0$ and 
%		$(C,T_0)$-coupled $Y$-configurations $Y(x,x^u,\varepsilon,\ell)$, $Y(y,y^u,\varepsilon,\ell)$ of length $\ell$, it holds
%		\begin{equation}\label{derive vert2}
%		\frac{\varepsilon}{\gamma} < d_{\mathcal{W}^{c}}(f^{\tau'(\ell)}(y^u),f^{\tau'(\ell)}(z^u)) < \gamma \varepsilon.
%		\end{equation}
%	\end{lemma}}

In Figure~\ref{f.coupledconfig} we claim that the displacement along the center at the left top of the configuration is proportional to the parameter $\eps$. We shall now make this assertion precise in terms of normal form coordinates. We first introduce a useful notation that will be used throughout the text, and in particular in the next proof.

Given a set of real parameters $c_1,\ldots, c_n$ and two quantities $a$ and $b$ (that may or may not depend on other variables) we denote $a\asymp_{c_1,\ldots,c_n} b$ if there exists a real valued function $\rho=\rho(c_1,\ldots,c_n)\geq 1$ such that
\[
\rho^{-1} a\leq b\leq\rho a. 
\]

\begin{lemma}\label{control centre expa}
	Fix $\eps>0$. Given $C>1$ and $T>0$, there exists a constant $\kappa=\kappa(C,T)>1$ such that for every $\ell\in \N$ large enough, every 
 $(C,\ell)$-quadrilateral $(x,x^u,y,y^u)$, and every $\tau\geq 0$ such that $|\tau-\tau(y,y^u,\eps,\ell)|\leq T$ we have
	\begin{equation}\label{derive vert}
	\frac{\varepsilon}{\kappa} < \left|\cH^c_{f^{\tau}(z^u)}(f^{\tau}(y^u))\right| < \kappa \varepsilon.
	\end{equation}
\end{lemma}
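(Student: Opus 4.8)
\textbf{Proof plan for Lemma~\ref{control centre expa}.} The quantity $\cH^c_{f^\tau(z^u)}(f^\tau(y^u))$ measures, in normal form coordinates along the center manifold $\cW^c(f^\tau(y^u))=\cW^c(f^\tau(z^u))$, the center displacement between $z^u$ and $y^u$ after applying $f^\tau$. The overall strategy is to track this displacement through the dynamics using the equivariance of the normal forms (Proposition~\ref{p.kk}\,(1)), reducing it to a product of center expansion rates times the initial center displacement $\cH^c_{z^u}(y^u)$, and then to relate that initial displacement to $\eps$ via the angle hypothesis and the quasi-conformality/distortion estimates. More precisely, by the equivariance $f\circ \Phi^c_w(s)=\Phi^c_{f(w)}(\lambda^c_w s)$ applied iteratively, one has for each $n\ge 0$
\[
\cH^c_{f^n(z^u)}(f^n(y^u))=\lambda^c_{z^u}(n)\cdot \cH^c_{z^u}(y^u),
\]
so in particular $\bigl|\cH^c_{f^\tau(z^u)}(f^\tau(y^u))\bigr|=\lambda^c_{z^u}(\tau)\cdot \bigl|\cH^c_{z^u}(y^u)\bigr|$. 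The first task is thus to estimate $\bigl|\cH^c_{z^u}(y^u)\bigr|$, the center distance between $z^u$ and $y^u$ (up to the bounded factors coming from $D\Phi^c$, which are controlled by the continuous dependence in Proposition~\ref{p.kk}), and the second is to estimate $\lambda^c_{z^u}(\tau)$.

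For the initial displacement: going back $\ell$ steps, $z^u_{-\ell}=H^s_{x_{-\ell},y_{-\ell}}(x^u_{-\ell})$ lies on $\cW^s(x^u_{-\ell})$ and on $\cW^c(y^u_{-\ell})$, while $y^u_{-\ell}$ lies on $\cW^u(y_{-\ell})$. Since $H^s$ is $C^1$ and $\alpha^s(x_{-\ell},y_{-\ell})\in(C^{-1},C)$ with $d(x_{-\ell},y_{-\ell})\in(C^{-1},1)$ and $d(x^u_{-\ell},x_{-\ell})$ uniformly bounded (this is where one exploits that $x^u\in\cW^u_1(x)$ pulls back to uniformly small $x^u_{-\ell}$), the center distance $d_c(z^u_{-\ell},y^u_{-\ell})$ is comparable to the angle, hence $d_c(z^u_{-\ell},y^u_{-\ell})\asymp_C 1$; this is essentially the computation alluded to in the heuristics (``inside the proof of Lemma~\ref{control centre expa}''), and one should carry it out carefully using the $C^1$-norm bounds on $H^s$ and on $v^u|_{\cWcu}$ from Lemma~\ref{l.lemadorho0} and \cite{PSW}. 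Then, pushing forward by $f^\ell$ and using that $f^\ell$ acts on $\cW^c(y^u_{-\ell})$ with derivative comparable to $\lambda^c_{y^u_{-\ell}}(\ell)$ together with the distortion estimate of Corollary~\ref{c.distortionbasic2} (which says $\|Df^\ell(x^u)|_{E^c}\|\asymp \|Df^\ell(y^u)|_{E^c}\|$, and similarly comparing with $z^u$ via \eqref{e.distortionbasic} since $z^u\in\cW^s(x^u)$), we obtain
\[
\bigl|\cH^c_{z^u}(y^u)\bigr|\asymp_C d_c(z^u,y^u)\asymp_C \lambda^c_{x^u_{-\ell}}(\ell)\cdot d_c(z^u_{-\ell},y^u_{-\ell})\cdot \frac{1}{\lambda^u_{x_{-\ell}}(\ell)}\cdot\lambda^u_{x_{-\ell}}(\ell)\asymp_C \frac{\lambda^c_{x_{-\ell}}(\ell)}{\lambda^u_{x_{-\ell}}(\ell)}= d^\ell_x,
\]
where the factor $(\lambda^u_{x_{-\ell}}(\ell))^{-1}$ enters because the point $x^u$ (hence $z^u$, on the same stable leaf) sits at unstable distance $\asymp 1$ from $x$, so its preimage $x^u_{-\ell}$ is at unstable distance $\asymp (\lambda^u_{x_{-\ell}}(\ell))^{-1}$ from $x_{-\ell}$, which localizes the whole configuration in a region of size governed by $d^\ell_x$. (I would in fact organize this step by first establishing $d_c(z^u_{-\ell},y^u_{-\ell})\asymp_C \alpha^s(x_{-\ell},y_{-\ell})\cdot d_u(x^u_{-\ell},x_{-\ell})$ and $d_u(x^u_{-\ell},x_{-\ell})\asymp_C (\lambda^u_{x_{-\ell}}(\ell))^{-1}$, then iterate $f^\ell$.)

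Combining, $\bigl|\cH^c_{f^\tau(z^u)}(f^\tau(y^u))\bigr|\asymp_C d^\ell_x\cdot \lambda^c_{z^u}(\tau)$, and by \eqref{e.distortionbasic} (distortion along stable leaves, since $z^u\in\cW^s(x^u)$) this is $\asymp d^\ell_x\cdot \lambda^c_{x^u}(\tau)$ up to a factor depending only on $f$. Now invoke the definition of the stopping time: if $\tau=\tau(y,y^u,\eps,\ell)$ then by \eqref{e.odriftchegou} (applied with the pair $(y,y^u)$) one has $d^\ell_y\lambda^c_{y^u}(\tau)\asymp_f \eps$; and Corollary~\ref{c.distortionbasic} gives $d^\ell_x\asymp d^\ell_y$ (as $x_{-\ell},y_{-\ell}$ are on the same stable leaf), while Corollary~\ref{c.distortionbasic2} (or \eqref{e.distortionbasic}) gives $\lambda^c_{x^u}(\tau)\asymp\lambda^c_{y^u}(\tau)$ — so $d^\ell_x\lambda^c_{x^u}(\tau)\asymp_f\eps$. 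Finally, to pass from $\tau(y,y^u,\eps,\ell)$ to a general $\tau$ with $|\tau-\tau(y,y^u,\eps,\ell)|\le T$, observe that the ratio $\lambda^c_{x^u}(\tau)/\lambda^c_{x^u}(\tau(y,y^u,\eps,\ell))$ is a product of at most $T$ factors each in $[m(Df),\|Df\|]$, hence bounded above and below by a constant depending only on $T$ and $f$. Assembling all these comparisons yields $\eps/\kappa<\bigl|\cH^c_{f^\tau(z^u)}(f^\tau(y^u))\bigr|<\kappa\eps$ for $\kappa=\kappa(C,T)$, provided $\ell$ is large enough that all the asymptotic estimates (in particular the exponential contraction on $\cWcu$ under $f^{-\ell}$ and the size constraints for the holonomies to be defined) are in force. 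The main obstacle is the first step — the careful bookkeeping of the $C^1$-geometry of the twisted quadrilateral to prove $d_c(z^u_{-\ell},y^u_{-\ell})\asymp_C \alpha^s(x_{-\ell},y_{-\ell})\cdot d_u(x^u_{-\ell},x_{-\ell})$ uniformly — since this is exactly where the $C^1$-regularity of $H^s$ (Lemma~\ref{c one holon}) and of $E^u|_{\cWcu}$ (Lemma~\ref{l.lemadorho0}) is used in an essential, quantitative way, and one must be careful that the ``angle'' comparison is uniform over all admissible configurations and does not degenerate as $\ell\to\infty$.
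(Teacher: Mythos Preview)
Your overall strategy is correct and matches the paper's: establish $|\cH^c_{z^u}(y^u)|\asymp_C d^\ell_y$, then use the equivariance $\cH^c_{f^n(w)}\circ f^n=\lambda^c_w(n)\,\cH^c_w$ together with the definition of $\tau(y,y^u,\eps,\ell)$, and finally absorb the $T$-window by a factor depending only on $T$. However, the execution of the geometric step is muddled. You assert that $d_c(z^u_{-\ell},y^u_{-\ell})\asymp_C 1$, which is false: both $z^u_{-\ell}$ and $y^u_{-\ell}$ lie at unstable distance $\asymp(\lambda^u_{x_{-\ell}}(\ell))^{-1}$ from $y_{-\ell}$, so the center gap is (angle)$\times$(unstable displacement)$\asymp_C(\lambda^u_{x_{-\ell}}(\ell))^{-1}$, exponentially small. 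Your parenthetical states the correct relation, but the displayed chain of $\asymp_C$'s --- with its gratuitous multiply-and-divide by $\lambda^u_{x_{-\ell}}(\ell)$ --- does not follow from the claim preceding it and should be rewritten.

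The paper handles this step differently and more cleanly, via the \emph{two-dimensional} normal form $\cH_{y_{-\ell}}$ of Theorem~\ref{thm.normalforms}, in which $f^\ell$ acts as the diagonal linear map $(t,s)\mapsto(\lambda^u_{y_{-\ell}}(\ell)t,\lambda^c_{y_{-\ell}}(\ell)s)$. The angle hypothesis places the short curve $\cH_{y_{-\ell}}\bigl(H^s_{x_{-\ell},y_{-\ell}}(f^{-\ell}[x,x^u])\bigr)$ inside a cone of aperture $\asymp_C 1$ about the origin; the linear map squeezes this cone to one of slope $\asymp_C d^\ell_y$. Since $\cH_y(z^u)$ lies in the squeezed cone at horizontal distance $\asymp_C 1$ from the origin while $\cH_y(y^u)$ lies on the horizontal axis, the vertical gap is $\asymp_C d^\ell_y$, whence $|\cH^c_{y^u}(z^u)|\asymp_C d^\ell_y$. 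This cone argument absorbs all the $C^1$ bookkeeping into properties of the 2D normal forms already established in Section~\ref{section formes normales}, and avoids tracking $d_c$ at the backward time. Note also that basing the forward iteration at $y^u$ rather than $z^u$ (as the paper does) lets you compare directly with $d^\ell_y\,\lambda^c_{y^u}(\tau)$, which is exactly what the stopping time controls, without the detour through $\lambda^c_{x^u}$ and Corollary~\ref{c.distortionbasic2}.
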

\begin{proof}
Let $\gamma^u=[x,x^u]\subset\cW^u(x)$ be the segment of strong unstable manifold connecting the points $x$ and $x^u$. See Figure~\ref{f.quadrilatero}. Then, $f^{-\ell}\circ H^s_{x,y}(\gamma^u)$ is a $C^1$ curve joining $y_{-\ell}$ and $z^u_{-\ell}$.\footnote{Recall our notation convention for orbit points: $p_n=f^n(p)$ for $n\in\Z$.} Since, $f^{-\ell}\circ H^s_{x,y}(\gamma^u)=H^s_{x_{-\ell},y_{-\ell}}\circ f^{-\ell}(\gamma^u)$ the assumption $C^{-1}<\alpha^s(x_{-\ell},y_{-\ell})<C$ implies that the vector tangent to the curve $f^{-\ell}\circ H^s_{x,y}(\gamma^u)$ at $y_{-\ell}$ and the strong unstable direction $E^u(y_{-\ell})$ are transverse with an angle between $C^{-1}$ and $C$. Recall that $\cH_{y_{-\ell}}\colon\cW^{cu}(y_{-\ell})\to\R^2$ sends $\cW^u(y_{-\ell})$ onto the horizontal axis (see Theorem~\ref{thm.normalforms}). As the length of $f^{-\ell}\circ H^s_{x,y}(\gamma^u)$ is exponentially small with $\ell$ and the bundle $E^u$ is continuous, a standard compactness argument then ensures that for some constant $c_1=c_1(C)>0$ and for $\ell$ large enough, the curve $\cH_{y_{-\ell}}\circ f^{-\ell}\circ H^s_{x,y}(\gamma^u)$ on $\R^2$ is contained in the cone 
\[
\cC_1=\left\{(v_1,v_2)\in\R^2:c_1^{-1}|v_1|\leq|v_2|\leq c_1|v_1|\right\}.
\]
Now, observe that $\cH_y\circ H^s_{x,y}(\gamma^u)=\cH_y\circ f^{\ell}\circ\Phi_{y_{-\ell}}\circ \cH_{y_{-\ell}}\circ f^{-\ell}\circ H^s_{x,y}(\gamma^u)$ and that $\cH_y\circ f^{\ell}\circ\Phi_{y_{-\ell}}$ is the linear map $(t,s)\in\R^2\mapsto(\lambda_{y_{-\ell}}^u(\ell)t,\lambda_{y_{-\ell}}^c(\ell)s)$. These two observations put together imply that the curve $\cH_y\circ H^s_{x,y}(\gamma^u)$ is contained in the cone%\footnote{\color{teal}This argument is reminiscent of the proof of Lemma \ref{new lemme a x x prime}, where quotients of the form $\frac{\lambda_{*}^c(\ell)}{\lambda_{*}^u(\ell)}$ also appear, which describe how certain angles scale under the dynamics in normal coordinates.}
\[
\cC_2=\left\{(v_1,v_2)\in\R^2:c_1^{-1}\frac{\lambda_{y_{-\ell}}^c(\ell)}{\lambda_{y_{-\ell}}^u(\ell)}|v_1|\leq|v_2|\leq c_1\frac{\lambda_{y_{-\ell}}^c(\ell)}{\lambda_{y_{-\ell}}^u(\ell)}|v_1|\right\}.
\]

Now let $\gamma^c$ denote the piece of center manifold connecting $z^u$ to $y^u$. Then $\cH_y\circ\gamma^c$ connects the point $\cH_y(z^u)$, whose distance to the origin only depends on the constant $C$ and is bounded from above and below, to the point $\cH_y(y^u)$, which lies on the horizontal axis. Since $\cH_y(z^u)\in\cC_2$, we deduce that (see Figure~\ref{f.actionlineaire})
$$\length(\cH_y\circ\gamma^c)\asymp_C \frac{\lambda_{y_{-\ell}}^c(\ell)}{\lambda_{y_{-\ell}}^u(\ell)}.$$

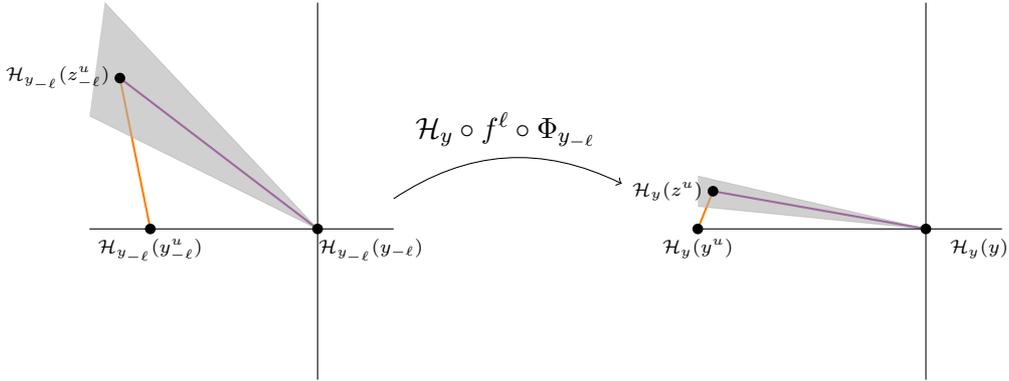
\begin{figure}[h!]
	\centering
	\begin{tikzpicture}
	\draw[orange, thick] (-2.6,2)--(-2.2,0);
	\draw[violet, thick] (0,0)--(-2.6,2);
	\draw[black!30!white, opacity=.6] (0,0)--(-3,1.5);
	\draw[black!30!white, opacity=.6] (0,0)--(-2.8,3);
	\fill[black!30!white, opacity=.6] (0,0)--(-3,1.5)--(-2.8,3)--(0,0);
	\draw (-3,0)--(1,0);
	\draw (0,-2)--(0,3);
	\fill[fill=black] (0,0) circle (2pt);
	\draw (0.7,0) node[below]{\tiny $\cH_{y_{-\ell}}(y_{-\ell})$};
	\fill[fill=black] (-2.2,0) node[below]{\tiny $\cH_{y_{-\ell}}(y^u_{-\ell})$}  circle (2pt);
    \fill[fill=black] (-2.6,2) node[left]{\tiny $\cH_{y_{-\ell}}(z^u_{-\ell})$}  circle (2pt);
    \begin{scope}[xshift=8cm]
   \draw[orange, thick] (-3,0)--(-2.8,0.5);
   \draw[violet, thick] (0,0)--(-2.8,0.5);
   \draw[black!30!white, opacity=.6] (0,0)--(-3,0.3);
   \draw[black!30!white, opacity=.6] (0,0)--(-3,0.7);
   \fill[black!30!white, opacity=.6] (0,0)--(-3,0.3)--(-3,0.7)--(0,0);
   \draw (-3,0)--(1,0);
   \draw (0,-2)--(0,3);
   \fill[fill=black] (0,0) circle (2pt);
   \draw (0.7,0) node[below]{\tiny $\cH_{y}(y)$};
   \fill[fill=black] (-3,0) node[below]{\tiny $\cH_{y}(y^u)$}  circle (2pt);
   \fill[fill=black] (-2.8,.5) node[left]{\tiny $\cH_{y}(z^u)$}  circle (2pt);
    \end{scope}
    \draw[->] (1,.4) to[bend left] node[midway, above]{$\cH_y\circ f^{\ell}\circ\Phi_{y_{-\ell}}$} (4,.6) ;
	\end{tikzpicture}
	\caption{\label{f.actionlineaire}On normal form coordinates the dynamics on $\cW^{cu}$ acts as a diagonal matrix, with stronger expansion on the horizontal.}
\end{figure} 

Since $C^{-1}<d^u(x,x^u)<C$ and we have $C^1$ holonomies, we also have that $C^{-1}<d(z^u,y)<C$ (upon enlarging the constant $C$ if necessary in order to take into account the action of holonomy maps). Recall that normal forms change continuously in the $C^1$ topology, so another compactness argument ensures that the $C^1$ norm of $\cH_{z^u}\circ\Phi_y|_{W^{cu}_{2C}(y)}$ is bounded by some constant depending on $C$. Thus
$$\length(\cH_{z^u}\circ\gamma^c)\asymp_C \frac{\lambda_{y_{-\ell}}^c(\ell)}{\lambda_{y_{-\ell}}^u(\ell)}.$$

On the other hand, by the construction of normal forms, $\cH_{z^u}\circ\gamma^c\subset\R^2$ is a vertical segment with length $|\cH^c_{z^u}(y^u)|$. This proves that
\[
\left|\cH^c_{z^u}(y^u)\right|\asymp_C \frac{\lambda_{y_{-\ell}}^c(\ell)}{\lambda_{y_{-\ell}}^u(\ell)}.
\]  
As we did before it is possible to bound uniformly from above the $C^1$-norms  of all $\cH^c_\ast$ in restriction to center segments of uniform radius so (upon enlarging $\ell$ if necessary) we also have

\[
d_c(y^u,z^u)\asymp_C \frac{\lambda_{y_{-\ell}}^c(\ell)}{\lambda_{y_{-\ell}}^u(\ell)},\,\,\,\,\,\,\,\,\text{and}\,\,\,\,\,\,\,\,\left|\cH^c_{y^u}(z^u)\right|\asymp_C \frac{\lambda_{y_{-\ell}}^c(\ell)}{\lambda_{y_{-\ell}}^u(\ell)}.
\]

Since the dynamics on normal forms acts linearly, this implies 
$$\left|\cH^c_{f^{\tau(\ell)}(y^u)}{(f^{\tau(\ell)}(z^u))}\right|\asymp_C \frac{\lambda_{y_{-\ell}}^c(\ell)}{\lambda_{y_{-\ell}}^u(\ell)}\times\lambda^c_{y^u}(\tau(\ell))$$
where $\tau(\ell)=\tau(y,y^u,\eps,\ell)$ is the stopping time. By definition this implies that
\begin{equation}
\label{e.gapdoseba}
\left|\cH^c_{f^{\tau(\ell)}(y^u)}{(f^{\tau(\ell)}(z^u))}\right|\asymp_C\eps.
\end{equation}
Since $|\tau-\tau(\ell)|\leq T$  we obtain

$$\left|\cH^c_{f^{\tau}(y^u)}(f^{\tau}(z^u))\right|\asymp_{C,T}\eps.\qedhere$$ 
\end{proof}

\begin{corollary}
\label{c.gapdoseba}
There exists a constant $\kappa=\kappa(C,T)$ such that for every $\tau$ satisfying $|\tau-\tau(y,y^u,\eps,\ell)|\leq T$ it holds 
\[
d_c(f^{\tau}(y^u),f^{\tau}(z^u))\leq\kappa\eps.
\]
\end{corollary}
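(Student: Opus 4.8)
\textbf{Proof plan for Corollary~\ref{c.gapdoseba}.} The statement asserts a one-sided (upper) bound on the center-leaf distance $d_c(f^{\tau}(y^u),f^{\tau}(z^u))$, and the plan is to deduce it directly from the two-sided estimate on the normal-form coordinate $\cH^c$ established in Lemma~\ref{control centre expa}. The key point is that the normal form charts $\Phi_x$ (equivalently their inverses $\cH_x$) depend continuously on the base point in the $C^1$ topology (Theorem~\ref{thm.normalforms}\eqref{ppprop quatre}), so on center segments of uniformly bounded radius the restriction of $\cH^c_{(\cdot)}$ to a center leaf, and hence its inverse $\Phi^c_{(\cdot)}$, have uniformly bounded $C^1$-norm by a compactness argument.

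First I would recall that by the definition of the stopping time $\tau(\ell)=\tau(y,y^u,\eps,\ell)$ together with the hyperbolicity bounds of \S\ref{sss.hypestimates} (as in \eqref{e.odriftchegou} in the proof of Lemma~\ref{l_qi_estimates}), we have $\eps\leq d^\ell_y\,\lambda^c_{y^u}(\tau(\ell)) < e^{\chi_1^c}\eps$, and since $|\tau-\tau(\ell)|\le T$ the ratio $\lambda^c_{y^u}(\tau)/\lambda^c_{y^u}(\tau(\ell))$ lies between $e^{-T\chi_1^c}$ and $e^{T\chi_1^c}$; in particular the relevant quantities remain $\asymp_{C,T}\eps$, which is why the constant $\kappa$ in the corollary is allowed to depend on both $C$ and $T$. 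Next I would invoke Lemma~\ref{control centre expa}, which gives $\kappa^{-1}\eps < |\cH^c_{f^{\tau}(z^u)}(f^{\tau}(y^u))| < \kappa\eps$ for a constant $\kappa=\kappa(C,T)>1$ and all sufficiently large $\ell$ and all $(C,\ell)$-quadrilaterals $(x,x^u,y,y^u)$.

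Then I would translate this coordinate bound into a genuine leaf-wise distance bound. The point $f^{\tau}(y^u)$ lies on $\cW^c(f^{\tau}(z^u))$ (indeed $z^u\in\cW^c(y^u)$ by definition of the quadrilateral, and $\cW^c$ is $f$-invariant), and in the center normal form chart $\Phi^c_{f^{\tau}(z^u)}$ the point $f^{\tau}(z^u)$ corresponds to $0$ while $f^{\tau}(y^u)$ corresponds to the real number $\cH^c_{f^{\tau}(z^u)}(f^{\tau}(y^u))$, whose absolute value is $<\kappa\eps$. For $\eps$ small (so that this number stays in a fixed bounded interval around $0$), the uniform upper bound on the $C^1$-norm of $\Phi^c_{(\cdot)}$ in restriction to such a bounded interval — obtained by the compactness argument above, exactly as used repeatedly inside the proof of Lemma~\ref{control centre expa} — yields $d_c(f^{\tau}(y^u),f^{\tau}(z^u)) \le \mathrm{Lip}(\Phi^c)\cdot |\cH^c_{f^{\tau}(z^u)}(f^{\tau}(y^u))| \le \kappa'\eps$ for a possibly enlarged constant $\kappa'=\kappa'(C,T)$, which after renaming $\kappa'$ back to $\kappa$ is the desired inequality.

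There is essentially no obstacle here: the corollary is a soft consequence of Lemma~\ref{control centre expa}, and the only mild care needed is to ensure $\eps$ is small enough that all the points involved stay within the uniform radius on which the normal-form charts have controlled $C^1$-norm, and to absorb the extra $T$-dependence (from replacing $\tau(\ell)$ by $\tau$) and the Lipschitz constant of $\Phi^c$ into the single constant $\kappa(C,T)$. The one thing worth double-checking in writing it out is the orientation of the bound — we only need the upper estimate, so the lower bound in \eqref{derive vert} is not used, and we must keep in mind that $\kappa$ is allowed to grow from line to line as long as it remains a function of $C$ and $T$ alone.
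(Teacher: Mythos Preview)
Your proposal is correct and follows essentially the same approach as the paper: invoke the normal-form coordinate estimate from Lemma~\ref{control centre expa} and convert it to a genuine center-leaf distance using the uniform $C^1$ control on the normal forms $\{\Phi^c_x\}_{x\in\TT}$ (equivalently $\{\cH^c_x\}_{x\in\TT}$) from Theorem~\ref{thm.normalforms}\eqref{ppprop quatre}. The paper's proof is literally a one-line reference to the proof of Lemma~\ref{control centre expa} together with this $C^1$ continuity; your write-up simply makes the conversion step and the $T$-dependence explicit.
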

\begin{proof}
This follows from the proof of Lemma~\ref{control centre expa} and the fact that normal forms $\{\cH^c_x\}_{x\in\TT}$ are $C^1$ maps with continuously varying $C^1$ norm on compact sets.
\end{proof}

\subsubsection{Synchronization estimates}
 The last set of estimates we need concerns the oscillation of the stopping times from one side to the other when we have a $(C,\ell)$-quadrilateral.

\begin{lemma}[Synchronization for quadrilaterals]\label{lemme synchro}
	For any $C>1$, there exists a constant $T_0>0$ depending only on $f,C$ such that for any $\ell\in \N$, and any $(C,\ell)$-quadrilateral $(x,x^u,y,y^u)$, it holds 
	$$
	|\tau(y,y^u,\varepsilon,\ell)-\tau(x,x^u,\varepsilon,\ell)|<T_0,\quad |t(y,y^u,\varepsilon,\ell)-t(x,x^u,\varepsilon,\ell)| < T_0. 
	$$ 
\end{lemma}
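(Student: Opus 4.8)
The goal is to control the difference of the stopping times $\tau$ and $t$ computed along the two sides of a $(C,\ell)$-quadrilateral, i.e., comparing $\tau(x,x^u,\eps,\ell)$ with $\tau(y,y^u,\eps,\ell)$ and similarly for $t$. The key observation is that both stopping times are defined by threshold conditions on products of the form $d^\ell_\cdot \cdot \lambda^c_\cdot(n)$, and the relevant quantities on the $x$-side and $y$-side differ only by multiplicative factors that are \emph{uniformly bounded} in terms of $C$. Once this bounded-ratio fact is established, the hyperbolic estimates from \S\ref{sss.hypestimates} (in particular the uniform bounds $e^{\chi^c_2 n}<\lambda^c_x(n)<e^{\chi^c_1 n}$ with $\chi^c_2>0$) immediately convert a bounded multiplicative discrepancy into a bounded \emph{additive} discrepancy of the stopping times, which is exactly $T_0$.

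\textbf{Step 1: comparing $d^\ell_x$ with $d^\ell_y$.} Since $y\in\cW^s(x)$ and $C^{-1}<d(x_{-\ell},y_{-\ell})<1$, the point $f^{-\ell}(y)$ lies in $\cW^s_1(f^{-\ell}(x))$. Therefore Corollary~\ref{c.distortionbasic} applies and gives $C_0^{-1}\leq d^\ell_x/d^\ell_y\leq C_0$, with $C_0=C_0(f)$ independent of $\ell$ and of the quadrilateral.

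\textbf{Step 2: comparing $\lambda^c_{x^u}(n)$ with $\lambda^c_{y^u}(n)$.} Here I use the structure of the quadrilateral: $y^u=H^{cs}_{x,y}(x^u)$, and $z^u=H^s_{x,y}(x^u)\in\cW^s(x^u)\cap\cW^c(y^u)$. By the basic distortion estimate \eqref{e.distortionbasic} applied to $\varphi=\log\|Df(\cdot)|_{E^c}\|$ along the stable manifold joining $x^u$ and $z^u$, we get $C_0^{-1}\leq \|Df^n(x^u)|_{E^c}\|/\|Df^n(z^u)|_{E^c}\|\leq C_0$ for all $n\geq 0$ (this uses that $z^u\in\cW^s_1(x^u)$, which holds for $\ell$ large by the size estimate $d(x,x^u)<C$ and the $C^1$ holonomy bound — exactly the situation of Corollary~\ref{c.distortionbasic2}). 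For the pair $(z^u,y^u)$, which lie on the same center manifold at distance $\asymp_C d^\ell_{y}\cdot(\text{stuff})$ — but one must be slightly careful since the center distance is not a priori bounded by $1$ for all time. The clean way: by Corollary~\ref{c.gapdoseba} and the drift estimate (Lemma~\ref{control centre expa}), for all relevant $n$ the images $f^n(z^u)$ and $f^n(y^u)$ stay within a uniformly bounded center distance as long as $n\leq \tau(y,y^u,\eps,\ell)+O(1)$, so that \eqref{e.distortionbasic} (or rather the center-manifold version via $\cW^{cu}_1$) gives $C_0^{-1}\leq \|Df^n(z^u)|_{E^c}\|/\|Df^n(y^u)|_{E^c}\|\leq C_0$ in that range. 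Combining, $\|Df^n(x^u)|_{E^c}\|/\|Df^n(y^u)|_{E^c}\|$ is bounded above and below by $C_0^2$ for $n$ up to the relevant stopping times; similarly $\|Df^n(x)|_{E^c}\|/\|Df^n(y)|_{E^c}\|$ is bounded (since $y\in\cW^s_1(x)$).

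\textbf{Step 3: from bounded ratios to the synchronization bound.} Write $D\eqdef C_0^3$ (a uniform constant absorbing all the ratios above). Then for every $n$ in the relevant range, $D^{-1}\leq \dfrac{d^\ell_x\lambda^c_{x^u}(n)}{d^\ell_y\lambda^c_{y^u}(n)}\leq D$. By definition of $\tau$, at $n=\tau(x,x^u,\eps,\ell)$ one has $d^\ell_x\lambda^c_{x^u}(n)\geq\eps$, hence $d^\ell_y\lambda^c_{y^u}(n)\geq D^{-1}\eps$; and the analogous lower-threshold statement holds one step earlier. Using the uniform hyperbolicity bounds $e^{\chi^c_2 k}<\lambda^c_{y^u}(k)<e^{\chi^c_1 k}$, the value of $n$ at which $d^\ell_y\lambda^c_{y^u}(n)$ crosses $\eps$ can differ from the value at which it crosses $D^{-1}\eps$ (or $D\eps$) by at most $\frac{\log D}{\chi^c_2}+1$. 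This yields $|\tau(y,y^u,\eps,\ell)-\tau(x,x^u,\eps,\ell)|<\frac{\log D}{\chi^c_2}+2\eqdef T_0'$. The same argument, applied to the threshold condition defining $t$ (which compares $\lambda^c_x(n)$ with $\lambda^c_{x^u}(\tau(\ell))$, both of which we now control on each side up to bounded factors, using also the just-obtained bound on $|\tau(y,\cdot)-\tau(x,\cdot)|$ together with the quasi-isometric estimate Lemma~\ref{l_qi_estimates} to absorb the shift in $\tau$), gives $|t(y,y^u,\eps,\ell)-t(x,x^u,\eps,\ell)|<T_0''$. Taking $T_0\eqdef\max\{T_0',T_0''\}$ finishes the proof.

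\textbf{Main obstacle.} The delicate point is Step 2, specifically justifying the bounded-distortion estimate for the pair $(z^u,y^u)$ on the center manifold: a priori the center distance between $f^n(z^u)$ and $f^n(y^u)$ could grow, and the distortion lemma requires the orbit to stay in a ball of radius $1$. The resolution is that $\tau(\ell)$ is precisely designed so that this center gap reaches size $\asymp_C\eps$ at time $\tau(\ell)$ and no sooner (Lemma~\ref{control centre expa} and Corollary~\ref{c.gapdoseba}), so for $\eps$ small and $n\leq\tau(\ell)+O(1)$ the gap stays bounded by $1$; this is the only place where one genuinely uses the quadrilateral structure rather than just $y\in\cW^s(x)$, and it is where the synchronization and drift estimates feed back into each other. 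One must order the argument so that Lemma~\ref{control centre expa}'s proof (which only uses $|\tau-\tau(y,y^u,\eps,\ell)|\leq T$ as an input, not the synchronization lemma) is available first, then Step 2, then the synchronization conclusion.
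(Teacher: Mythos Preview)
Your proposal is correct and follows essentially the same approach as the paper: bound the ratios $d^\ell_x/d^\ell_y$ via Corollary~\ref{c.distortionbasic}, bound $\lambda^c_{x^u}(n)/\lambda^c_{y^u}(n)$ through the intermediate point $z^u$ using Corollary~\ref{c.distortionbasic2} (whose hypothesis is verified via Corollary~\ref{c.gapdoseba}, exactly as you flag in your ``main obstacle''), and then convert the bounded multiplicative discrepancy into an additive one using the uniform lower bound $e^{\chi^c_2 k}$ on center expansion. The only cosmetic difference is that for the $t$-bound the paper does not invoke Lemma~\ref{l_qi_estimates} but directly uses the just-established bound $|\tau-\tau'|\leq k$ to control the extra factor $\lambda^c_{x^u_{\tau'(\ell)}}(\tau(\ell)-\tau'(\ell))$; your route via the quasi-isometric estimate works equally well.
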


\begin{proof}
Recall the constants $\chi_2^c,\chi_1^c>0$ introduced in \S\ref{sss.hypestimates}. They satisfy $e^{k\chi_2^c}\leq\lambda^c_x(k)\leq e^{k\chi_1^c}$ for every $k\in\N$ and every $x\in\TT$ where $\lambda^c_x(k)=\|Df^k(x)|_{E^c}\|$ is our concise notation for derivatives from \eqref{e.derivada}. For the sake of simplicity, denote $\tau(\ell)=\tau(x,x^u,\eps,\ell)$ and $\tau^{\prime}(\ell)=\tau(y,y^u,\ell,\eps)$. Assume that $\tau(\ell)\geq\tau'(\ell)$. By definition
\begin{equation}
\label{e.tau}
d^\ell_y\lambda^c_{y^u}(\tau'(\ell))\geq\eps.
\end{equation}

Let us write for $k\in\N$
\begin{equation}\label{eq_telecscopouille1}
\frac{d^\ell_x\lambda^c_{x^u}(\tau'(\ell)+k)}{ d^\ell_y\lambda^c_{y^u}(\tau'(\ell))}=\frac{d^\ell_x}{d^\ell_y}\times\frac{\lambda^c_{x^u}(\tau'(\ell))}{\lambda^c_{y^u}(\tau'(\ell))}\times \lambda^c_{x^u_{\tau'(\ell)}}(k).
\end{equation}

We now bound from below each factor appearing in \eqref{eq_telecscopouille1}. We treat the third factor by observing that $\lambda^c_{x^u_{\tau'(\ell)}}(k)\geq e^{k\chi_2^c}$.  To treat the first factor, we use that $y_{-\ell}\in \cW^s_1(x_{-\ell})$ so the distortion control given by Corollary~\ref{c.distortionbasic} gives $\frac{d^\ell_y}{d^\ell_x}\geq C_0^{-1}$ %\annotation{Seb. Have to change the conditions of the distortion control (we need to go further the case $d(x,y)\leq 1$}.
Finally, in order to treat the second factor, we notice that Corollary~\ref{c.gapdoseba} provides that 
 \[
 d_c(f^j(z^u),f^j(y^u))\leq C\eps,
 \]
for some constant $C=C(f)>0$ and for every $j=0,\dots,\tau^\prime(\ell)$. With no loss of generality we can assume that $C\eps<1$. Applying the distortion control of Corollary~\ref{c.distortionbasic2} we obtain $\frac{\lambda^c_{x^u}(\tau'(\ell))}{\lambda^c_{y^u}(\tau'(\ell))}\geq C_0^{-1}$. These lower bounds together with  \eqref{eq_telecscopouille1} provide the following estimate
$$d^\ell_x\lambda^c_{x^u}(\tau'(\ell)+k)\geq \frac{e^{k\chi_2^c}}{C_0^2}\eps\geq\eps$$
as soon as we choose $k=k(f)$ such that
\begin{equation}\label{eq_choicek}
e^{k\chi_2^c}\geq C_0^2.
\end{equation}

Hence by definition of the stopping time, we obtain $\tau'(\ell)+k\geq\tau(\ell)$ so $\tau(\ell)-\tau'(\ell)\leq k$. A symmetric argument shows that $\tau'(\ell)-\tau(\ell)\leq k$ if $\tau'(\ell)\geq\tau(\ell)$.

We consider now the stopping time $t$. As above, we denote $t(\ell)=t(x,x^u,\eps,\ell)$ and $t^{\prime}(\ell)=t^{\prime}(y,y^u,\eps,\ell)$. By definition

\begin{equation}
\label{e.tau}
\frac{\lambda^c_{y}(t'(\ell)}{\lambda^c_{y^u}(\tau'(\ell))}\geq 1.
\end{equation}

Let us write for $k'\in\N$:
\begin{align*}\label{eq_telecscopouille2}
\frac{\lambda^c_{x}(t'(\ell)+k')}{\lambda^c_{x^u}(\tau(\ell))} {\frac{\lambda^c_{y^u}(\tau'(\ell))}{\lambda^c_{y}(t'(\ell)}}&=\frac{\lambda^c_{y^u}(\tau'(\ell))}{\lambda^c_{x^u}(\tau'(\ell))}\times\frac{\lambda^c_{x}(t'(\ell)}{\lambda^c_{y}(t'(\ell)}
\\
 &\quad\times  \lambda^c_{x_{t'(\ell)}}(k')\times\frac{1}{\lambda^c_{x^u_{\tau'(\ell)}}(\tau(\ell)-\tau'(\ell))}.
\end{align*}
We now bound from below each factor of the product above. We have already seen how to treat the first factor $\frac{\lambda^c_{x^u}(\tau'(\ell))}{\lambda^c_{y^u}(\tau'(\ell))}\geq C_0^{-1}$. To treat the second factor, we use that $y\in \cW^s_1(x)$ and apply Corollary~\ref{c.distortionbasic2} to get $\frac{\lambda^c_{x}(t'(\ell))}{\lambda^c_{y}(t'(\ell))}\geq C_0^{-1}$. We treat the third factor by observing that $\lambda^c_{x_{t'(\ell)}}(k')\geq e^{k'\chi_2^c}$. Note that $|\tau(\ell)-\tau'(\ell)|\leq k$ so the last bound follows from $\frac{1}{\lambda^c_{x^u_{\tau'(\ell)}}(\tau(\ell)-\tau'(\ell))}\geq e^{-k\chi_1^c}$. Finally we find
\begin{equation}
\frac{\lambda^c_{x}(t'(\ell)+k')}{\lambda^c_{x^u}(\tau(\ell))}\geq \frac{e^{k'\chi_2^c}}{C_0^2e^{k\chi_1^c}}\times\frac{\lambda^c_{y}(t'(\ell)}{\lambda^c_{y^u}(\tau'(\ell))}\geq 1,
\end{equation}
as soon as we choose $k'=k'(f)$ such that

\begin{equation}
\label{e.klinha}
e^{k'\chi_2^c}\geq C_0^2e^{k\chi_1^c}.
\end{equation}

Hence by definition we must have $t(\ell)\leq t'(\ell)+k'$ so $t(\ell)-t'(\ell)\leq k'$. Here again a symmetric argument yields $t'(\ell)-t(\ell)\leq k'$. Hence
\begin{equation}\label{eq_T0}
T_0=\max(k,k'),
\end{equation}
is the desired constant. This ends the proof.
\end{proof}

\section{Matching of $Y$-configurations}\label{s.coupled}

%In this section we shall perform a detailed analysis on the geometry (in normal form coordinates) of coupled $Y$-configurations and also on the stopping time functions. In particular, we will estimate the drift along the center on the top part of the configuration as well as their effect on leaf-wise quotient measures. 

In this section we develop a concept devised specifically to address the technical difficulty to implement the exponential drift idea we want to employ, a difficulty which was described at the end of Section~\ref{heuristics}. Namely, due to lack of absolute continuity of center stable holonomies we cannot ensure that the points $x^u$ and $y^u$ of Figure~\ref{f.coupledconfig} belong both to the Lusin set. What we can actually prove is that small perturbations of these two points can indeed be put inside the Lusin set. This will lead us to the notion of matched configurations.

\subsection{Matching of dynamical balls}

We start the formal definition of matching in our scenario by introducing intervals along the unstable manifold which measure the amount of perturbation of the points $x^u$ and $y^u$ which are allowed, without breaking the estimates we performed for quadrilaterals in the previous section. 

\subsubsection{Unstable dynamical balls}

Although the notion of dynamical balls is quite standard in ergodic theory, we use this name in this paper for a more specific object, adapted to our needs.   

\begin{defi}\label{d.unstaball}
Let $\eps>0$, $\ell\in\N$, $x\in\mathbb{T}^3$ and $x^u\in\cW^u(x)$. Let $\tau(\ell)=\tau(x,x^u,\eps,\ell)$. The \emph{$(\eps,\ell)$-unstable dynamical ball} at $x^u$ is defined as
\[
J(x^u)=J(x^u,\eps,\ell)\eqdef f^{-\tau(\ell)}\left(\cW^u_{1}(f^{\tau(\ell)}(x^u))\right).
\] 
\end{defi}

\begin{remark}\label{r.unstaball} By definition, for every $a\in J(x^u)$ and $j\in\{0,\ldots,\tau(x,x^u,\eps,\ell)\}$
$$d_u(f^j(x^u),f^j(a))<1.$$
\end{remark}

\subsubsection{Synchronization inside a dynamical ball} Before we define the notion of matching of $Y$-configuration it is useful to study the oscillation of stopping times inside an unstable dynamical ball.

Let $T_0>0$ be the constant obtained in Lemma~\ref{lemme synchro}.
\begin{lemma}
	\label{l.synchroinstable}
For every $a\in J(x^u)$ it holds $|\tau(x,a,\eps,\ell)-\tau(x,x^u,\eps,\ell)|<T_0$ and $|t(x,a,\eps,\ell)-t(x,x^u,\eps,\ell)|<T_0$.
\end{lemma}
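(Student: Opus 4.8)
The plan is to mimic the proof of Lemma~\ref{lemme synchro}, which is in fact simpler in the present situation: here the point $a$ lies on the same unstable leaf $\cW^u(x^u)=\cW^u(x)$ as $x^u$ (since $J(x^u)\subset\cW^u(x^u)$), the two $Y$-configurations share the point $x$ hence the same quantity $d^\ell_x$, and, crucially, by Remark~\ref{r.unstaball} the forward orbits of $a$ and $x^u$ stay within $u$-distance $1$ for all times $0,\dots,\tau(x,x^u,\eps,\ell)$. The first step is to feed this $u$-closeness into the basic distortion estimate \eqref{e.distortionbasic} (applied to $\varphi=\log\|Df(\cdot)|_{E^c}\|$): since $\cW^u$ subfoliates $\cW^{cu}$, one gets $C_0^{-1}\leq\lambda^c_{x^u}(n)/\lambda^c_a(n)\leq C_0$ for all $0\leq n\leq\tau(x,x^u,\eps,\ell)$, with $C_0$ the distortion constant of \S\ref{sss.distortionbasic}.

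Next I would bound $|\tau(x,a,\eps,\ell)-\tau(x,x^u,\eps,\ell)|$. Writing $\tau\eqdef\tau(x,x^u,\eps,\ell)$ and $\tau_a\eqdef\tau(x,a,\eps,\ell)$, and recalling that $n\mapsto d^\ell_x\lambda^c_{x^u}(n)$ is increasing (as $\lambda^c>1$) with $d^\ell_x\lambda^c_{x^u}(\tau)\geq\eps$, the point is that pushing the ``$a$-threshold'' a bounded number of steps past time $\tau$ already exceeds $\eps$: $d^\ell_x\lambda^c_a(\tau+n)=d^\ell_x\lambda^c_a(\tau)\,\lambda^c_{f^{\tau}(a)}(n)\geq C_0^{-1}\eps\,e^{n\chi^c_2}$, which is $\geq\eps$ once $e^{n\chi^c_2}\geq C_0$; this handles $\tau_a>\tau$. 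The symmetric estimate handles $\tau_a<\tau$: if one had $\tau_a+n_0<\tau$ one would get, applying the distortion bound at the time $\tau_a+n_0\leq\tau$ (legitimate, since it does not exceed $\tau$), $d^\ell_x\lambda^c_{x^u}(\tau_a+n_0)\geq C_0^{-1}\eps\,e^{n_0\chi^c_2}\geq\eps$, contradicting the minimality of $\tau$. This yields $|\tau_a-\tau|\leq n_0$ with $n_0\eqdef\lceil\log C_0/\chi^c_2\rceil$, using only the hyperbolicity constants $\chi^c_1>\chi^c_2>0$ of \S\ref{sss.hypestimates}. Note that distortion is only ever invoked up to time $\tau$, never beyond; the bounded overshoot is absorbed by the crude inequalities $e^{k\chi^c_2}\leq\lambda^c_y(k)\leq e^{k\chi^c_1}$.

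For the $t$-stopping times I would then argue exactly as in the corresponding part of the proof of Lemma~\ref{lemme synchro}: combining the distortion estimate with the crude exponential bounds over the window of length $\leq n_0$ between $\tau$ and $\tau_a$ gives $\lambda^c_a(\tau_a)\asymp_f\lambda^c_{x^u}(\tau)$; feeding this into \eqref{e.omundoehredondo}, which says $1\leq\lambda^c_x(t)/\lambda^c_{x^u}(\tau)<e^{\chi^c_1}$ and the analogue for $a$, gives $\lambda^c_x(t_a)\asymp_f\lambda^c_x(t)$; and since $\lambda^c_x(\cdot)$ grows with multiplicative increments in $[e^{\chi^c_2},e^{\chi^c_1}]$, this forces $|t_a-t|$ to be bounded by a constant depending only on $f$. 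Both bounds produced are thus $f$-dependent constants (independent of $x,x^u,a,\ell,\eps$); after possibly enlarging $T_0$ — which only strengthens Lemma~\ref{lemme synchro}, hence is harmless — we obtain the desired strict inequalities $<T_0$.

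The single place where one has to be careful — the ``main obstacle'', such as it is — is precisely that the distortion estimate \eqref{e.distortionbasic} is available only along the window $[0,\tau]$, whereas $\tau_a$ (or the intermediate times between $\tau$ and $\tau_a$) could a priori fall outside it; the resolution, as above and as in Lemma~\ref{lemme synchro}, is that the overshoot is a priori bounded and can be controlled by the crude exponential rates alone. Everything else is a direct unwinding of the definitions of the stopping times.
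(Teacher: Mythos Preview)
Your proposal is correct and follows essentially the same approach as the paper: both exploit that $a\in J(x^u)$ forces $f^j(a)\in\cW^u_1(f^j(x^u))$ for $0\le j\le\tau(\ell)$, apply the distortion bound \eqref{e.distortionbasic} to compare $\lambda^c_a(\tau(\ell))$ with $\lambda^c_{x^u}(\tau(\ell))$, and then use the crude exponential growth $\lambda^c_{(\cdot)}(k)\ge e^{k\chi^c_2}$ to bound the overshoot. The only cosmetic difference is that the paper deliberately reuses the \emph{same} integers $k,k'$ chosen in \eqref{eq_choicek} and \eqref{e.klinha}, so that the resulting bound is literally $T_0=\max(k,k')$; your constants are in fact slightly tighter (since only one factor of $C_0$ appears here instead of two), hence automatically $\le T_0$, and your remark about ``possibly enlarging $T_0$'' is unnecessary.
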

\begin{proof}
The proof is almost identical to that of Lemma \ref{lemme synchro}. In particular  constants $0<\chi_2^c<\chi_1^c$ are those defined in \S \ref{sss.hypestimates} and $k$ and $k'$, those defined by \eqref{eq_choicek} and \eqref{e.klinha}. Let us denote $\tau_a(l)\eqdef\tau(x,a,\eps,\ell)$. Suppose first that $\tau_a(\ell)\geq\tau(\ell).$

\begin{equation}\label{eq_telecscopouille2}
\frac{d^\ell_x\lambda^c_a(\tau(\ell)+k)}{ d^\ell_x\lambda^c_{x^u}(\tau(\ell))}=\frac{\lambda^c_a(\tau(\ell))}{\lambda^c_{x^u}(\tau(\ell))}\times \lambda^c_{a_{\tau(\ell)}}(k).
\end{equation}

We have $\lambda^c_{a_{\tau(\ell)}}(k)\geq e^{k\chi_2^c}$. Finally we have by definition $f^{\tau(\ell)}(a)\in \cW^u_1(f^{\tau(\ell)}(x^u))$ so by the distortion control \eqref{e.distortionbasic} we have $\frac{\lambda^c_a(\tau(\ell))}{\lambda^c_{x^u}(\tau(\ell))}\geq C_0^{-1}$ and, by choice of $k$,
$$d^\ell_x\lambda^c_a(\tau(\ell)+k)\geq \frac{e^{k\chi_2^c}}{C_0}d^\ell_x\lambda^c_{x^u}(\tau(\ell))\geq\eps.$$

Hence $\tau_a(\ell)-\tau(\ell)\leq k$. A symmetric argument yields $\tau(\ell)-\tau_a(\ell)\leq k.$

We consider now the stopping time $t$. As above, we denote $t_a(\ell)\eqdef\tau(x,a,\eps,\ell)$ and $t(\ell)=t(x,a,\eps,\ell)$. Let us first suppose $t_a(\ell)\geq t(\ell)$. We have
\begin{align*}\label{eq_telecscopouille2}
\frac{\lambda^c_x(t(\ell)+k')}{\lambda^c_a(\tau_a(\ell))}\times {\frac{\lambda^c_{x^u}(\tau(\ell))}{\lambda^c_x(t(\ell))}}&=\frac{\lambda^c_{x^u}(\tau(\ell))}{\lambda^c_a(\tau(\ell))}\times \lambda^c_{x_{t'(\ell)}}(k')
\\
  &\times \frac{1}{\lambda^c_{x^u_{\tau(\ell)}}(\tau(\ell)-\tau'(\ell))}.
\end{align*}
The first factor is $\geq C_0^{-1}$. The second one is $\geq e^{k'\chi_2^c}$. The third one is $\geq e^{-k{\chi_1^c}}$. Hence by our choice of $k'$,
$$\frac{\lambda^c_x(t(\ell)+k')}{\lambda^c_a(\tau_a(\ell))}\geq\frac{e^{k'\chi_2^c}}{C_0e^{k\chi_1^c}}\times \frac{\lambda^c_{x^u}(\tau(\ell))}{\lambda^c_x(t(\ell))}\geq 1.$$

This proves that $t_a(\ell)-t(\ell)\leq k'$. Again, a symmetric argument gives $t_a(\ell)-t(\ell)\leq k'$ if $t(\ell)\geq t_a(\ell)$. This ends the proof of the lemma because recall that $T_0=\max(k,k')$.
%
% For simplicity let us denote $\tau_a\eqdef\tau(x,a,\eps,\ell)$. Assume first that $\tau_a>\tau$ and set $n=\tau+1$. Notice that by definitio
%\[
%|f^n(J(x^u))|=|f\left(W^u_\rho(x_\tau)\right)|\leq 2d_1\rho.
%\]	
%We can assume without lost of generality that $\rho>0$ is small so that $2d_1\rho\leq 1$. We are thus allowed to apply the basic distortion estimate \eqref{e.distortionbasic} to get
%\[
%C_0^{-1}\leq\frac{\|Df^n(x^u)|_{E^c}\|}{\|Df^n(a)|_{E^c}\|}\leq C_0.
%\]
%By \eqref{e.tau} again, as $n>\tau$, we have
%\[
%d^\ell_x\|Df^n(x^u)|_{E^c}\|>\eps,
%\]
%and therefore
%\[
%\frac{\eps}{d^\ell_x}<\|Df^n(x^u)|_{E^c}\|\leq C_0\|Df^n(a)|_{E^c}\|,
%\]
%which implies that
%\[
%d^\ell_x\|Df^{n+k}(a)|_{E^c}\|\geq C_0^{-1}e^{\chi_3k}\eps>\eps.
%\]
%As before, this leads to $n+k>\tau_a$ and so
%\[
%\tau_a-\tau<k+1.
%\]
%If now $\tau>\tau_a$ we set $n=\tau_a+1$, interchange the roles of $a$ and $x^u$ in the above reasoning and conclude in the same way that 
%\[
%\tau-\tau_a<k+1.
%\]  
%Since $T_0>k+1$ the proof is complete.
%
%\textbf{MUST TREAT t}
\end{proof}

\subsubsection{Matching of dynamical balls and distortion control} We first define the notion of \emph{matched unstable dynamical balls}  and study their geometric properties.

\begin{defi}[Matched unstable dynamical balls]\label{d.match_db}
When $(x,x^u,y,y^u)$ is a $(C,\ell)$-quadrilateral, then we say that $J(x^u)=J(x^u,\eps,\ell)$ and $J(y^u)=J(y^u,\eps,\ell)$ are $(C,\ell)$-\emph{matched}.
\end{defi}

We will need the following distortion control for matched unstable dynamical balls.

\begin{prop}\label{p_matched-dist_control}
	For $\eps$ small enough, there exists $\kappa_3=\kappa_3(f,\eps)$ such that if $\ell$ is sufficiently large,  for every $(C,\ell)$-matched dynamical balls $J(x^u)$ and $J(y^u)$, and every $a\in J(x^u)$, $b\in J(y^u)$, we have
	$$\kappa_3^{-1}\leq\frac{\lambda^\ast_b(j)}{\lambda^\ast_a(j)}\leq\kappa_3,$$
	for every integer $0\leq j\leq\max(\tau(x,x^u,\eps,\ell),\tau(y,y^u,\eps,\ell))$ and $\ast=c,u$.
\end{prop}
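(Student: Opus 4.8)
The plan is to compare the orbits of an arbitrary $a\in J(x^u)$ and $b\in J(y^u)$ by routing the estimate through the three reference points attached to the quadrilateral: $x^u$, the point $z^u=H^s_{x,y}(x^u)\in\cW^s(x^u)\cap\cW^c(y^u)$, and $y^u$. Set $\tau=\tau(x,x^u,\eps,\ell)$, $\tau'=\tau(y,y^u,\eps,\ell)$ and $N=\max(\tau,\tau')$. By the synchronisation Lemma~\ref{lemme synchro} one has $|\tau-\tau'|<T_0$, so $N\le\min(\tau,\tau')+T_0$; the surplus of at most $T_0$ iterates will be absorbed into the comparison constants via the crude bounds $m(Df)^{T_0}\le\lambda^\ast_p(i)\le\|Df\|^{T_0}$ valid for $0\le i\le T_0$, $\ast=c,u$ and any $p$. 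Throughout, all comparison constants will depend only on $f$ and on the fixed constant $C$, which is harmless for the statement.

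First I would treat the two unstable dynamical balls. By Remark~\ref{r.unstaball}, for $a\in J(x^u)$ we have $f^j(a)\in\cW^u_1(f^j(x^u))\subset\cW^{cu}_1(f^j(x^u))$ for every $0\le j\le\tau$, so the distortion estimate \eqref{e.distortionbasic}(b) gives $C_0^{-1}\le\lambda^\ast_{x^u}(j)/\lambda^\ast_a(j)\le C_0$ for those $j$ and $\ast=c,u$; for the at most $T_0$ remaining indices $\tau<j\le N$ one writes $\lambda^\ast_a(j)=\lambda^\ast_a(\tau)\,\lambda^\ast_{f^\tau(a)}(j-\tau)$ and bounds the extra factor crudely. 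Applying the same reasoning to $b\in J(y^u)$ yields a two-sided bound for $\lambda^\ast_{y^u}(j)/\lambda^\ast_b(j)$ over all $0\le j\le N$. Thus the problem reduces to comparing $\lambda^\ast_{x^u}(j)$ with $\lambda^\ast_{y^u}(j)$.

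Next I would pass through $z^u$. Since the stable holonomy $H^s_{x,y}$ is defined on a neighbourhood of $x^u$ whose size is controlled by $C$ and is uniformly continuous, $d_s(x^u,z^u)$ is bounded by a constant depending only on $f$ and $C$; iterating forward a bounded number of times so as to enter $\cW^s_1$ and then applying \eqref{e.distortionbasic}(a) to the tail, we get that $\lambda^\ast_{x^u}(j)/\lambda^\ast_{z^u}(j)$ is two-sidedly bounded for \emph{every} $j\ge0$ and $\ast=c,u$. For the remaining comparison, recall that $z^u\in\cW^c(y^u)$; exactly as in the proof of Lemma~\ref{lemme synchro}, Corollary~\ref{c.gapdoseba} together with the fact that the center gap is amplified by $\lambda^c$ at each step and equals $\asymp_C\eps$ at time $\tau'$ shows that $d_c(f^j(z^u),f^j(y^u))\le\kappa\,\eps$ for all $0\le j\le\tau'$, hence $\le\kappa\|Df\|^{T_0}\eps$ for all $0\le j\le N$, for some $\kappa=\kappa(f,C)$. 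Choosing $\eps$ small enough this is $<1$, so $f^j(z^u)\in\cW^c_1(f^j(y^u))\subset\cW^{cu}_1(f^j(y^u))$ for $0\le j\le N$ and \eqref{e.distortionbasic}(b) gives a two-sided bound for $\lambda^\ast_{y^u}(j)/\lambda^\ast_{z^u}(j)$.

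Finally, for $a\in J(x^u)$, $b\in J(y^u)$, $0\le j\le N$ and $\ast=c,u$, I would factor
\[
\frac{\lambda^\ast_b(j)}{\lambda^\ast_a(j)}=\frac{\lambda^\ast_b(j)}{\lambda^\ast_{y^u}(j)}\cdot\frac{\lambda^\ast_{y^u}(j)}{\lambda^\ast_{z^u}(j)}\cdot\frac{\lambda^\ast_{z^u}(j)}{\lambda^\ast_{x^u}(j)}\cdot\frac{\lambda^\ast_{x^u}(j)}{\lambda^\ast_a(j)},
\]
and conclude that the product is bounded above and below by a constant $\kappa_3=\kappa_3(f,\eps)$ since each of the four factors is. The hard part is the center comparison through $z^u$: one must ensure that $d_c(f^j(z^u),f^j(y^u))$ stays below $1$ over the \emph{whole} window $0\le j\le N$, and not merely near the stopping time, which amounts to the backward propagation of the $\asymp_C\eps$ gap produced by Lemma~\ref{control centre expa}/Corollary~\ref{c.gapdoseba}; everything else is routine distortion bookkeeping together with the standard device of absorbing a bounded number of iterates and the action of the holonomies into the constants.
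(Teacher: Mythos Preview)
Your proposal is correct and follows essentially the same route as the paper: both decompose $\frac{\lambda^\ast_b(j)}{\lambda^\ast_a(j)}$ through the chain $a\to x^u\to z^u\to y^u\to b$, control the two outer ratios via membership in the unstable dynamical balls, the $x^u\to z^u$ ratio via stable contraction, and the $z^u\to y^u$ ratio via the center gap bound from Corollary~\ref{c.gapdoseba}, absorbing the at most $T_0$ extra iterates crudely. The only cosmetic difference is that the paper writes out the H\"older sum $\sum_i|\phi_\ast(f^i(a))-\phi_\ast(f^i(b))|$ explicitly rather than invoking the packaged estimate~\eqref{e.distortionbasic}, and it observes directly that $d_s(x^u,z^u)<1$ for $\ell$ large instead of iterating forward a bounded number of times.
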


\begin{proof}
	Let us assume that $\tau(\ell)\leq\tau'(\ell)$, where $\tau(\ell)=\tau(x,x^u,\eps,\ell)$ and $\tau'(\ell)=\tau(y,y^u,\eps,\ell)$. Let us first notice that by Lemma \ref{lemme synchro},  we have $\tau'\leq\tau+T_0$.
	
	Let $j\leq\tau(\ell)$ so in particular
	$$|f^{j}(J(x^u))|,|f^{j}(J(y^u))|<1.$$
Let $z^u=H^s_{x,y}(x^u)$ so $d(x^u,z^u)<1$, for $\ell$ large enough,  and, by Corollary~\ref{c.gapdoseba}, $d(f^{j}(y^u),f^{j}(z^u))\leq d(f^{\tau'}(y^u),f^{\tau'}(z^u))\asymp_{C,T_0}\eps<1$.
	
	Let $\phi_\ast\eqdef\log\|Df|_{E^{\ast}}\|$ and $c_\ast,\theta_\ast$ be the Hölder constant and exponent of $\phi_\ast$, for $\ast = c,u$. We have
	$$\left|\log\frac{\lambda^\ast_b(j)}{\lambda^\ast_a(j)}\right|\leq\sum_{i=0}^{j-1}|\phi_\ast(f^i(a))-\phi_\ast(f^i(b))|.$$
	Hence,
	\begin{align*}
		|\phi_\ast(f^i(a))-\phi_\ast(f^i(b))|&\leq  |\phi_\ast(f^i(a))-\phi_\ast(f^i(x^u))|+|\phi_\ast(f^i(x^u))-\phi_\ast(f^i(z^u))|
		\\
		&+ |\phi_\ast(f^i(z^u))-\phi_\ast(f^i(y^u))|+|\phi_\ast(f^i(y^u))-\phi_\ast(f^i(b))|,
	\end{align*}
	Note that for $i<j$, $d(f^i(a),f^i(x^u)),d(f^i(b),f^i(y^u))\leq e^{\chi^u_1(i-j)}$. On the other hand, we have $d(f^i(x^u),f^i(z^u))\leq e^{i\chi^s_2}$. Finally $d(f^i(z^u),f^i(y^u))\leq e^{\chi^c_1(i-j)}$. It follows that
	$$|\phi_\ast(f^i(a))-\phi_\ast(f^i(b))|\leq c_\ast\left(2e^{\theta_\ast\chi^u_1(i-j)}+e^{i\theta_\ast\chi^s_2}+e^{\theta_\ast\chi^c_1(i-j)}\right).$$
	
	By summing over $i$ we deduced that $\big|\log\frac{\lambda^\ast_b(j)}{\lambda^\ast_a(j)}\big|$ is uniformly bounded from above by a constant depending only on $f$ and $\rho$. Of course we can now bound these quotients for $j$ up to  $\tau'(\ell)$ by using that
	$$d_0^{-T_0}\frac{\lambda^\ast_b(j)}{\lambda^\ast_a(j)}\leq \frac{\lambda^\ast_b(j+T_0)}{\lambda^\ast_a(j+T_0)}\leq d_0^{T_0}\frac{\lambda^\ast_b(j)}{\lambda^\ast_a(j)},$$
where $d_0=\|Df\|/m(Df)$ (recall \S\ref{sss.notaderiva}).
\end{proof}

\begin{corollary}
	\label{c_size_match}
	There exists $\kappa_4=\kappa_4(f,\eps)$ such that if $\ell$ is sufficiently large, for every $(C,\ell)$ matched dynamical balls $J(x^u)$ and $J(y^u)$, 
	$$\kappa_4^{-1}\leq\frac{|f^j(J(y^u))|}{|f^j(J(x^u))|}\leq\kappa_4,$$
	for every integer $0\leq j\leq\max(\tau(x,x^u,\eps,\ell),\tau(y,y^u,\eps,\ell))$.
\end{corollary}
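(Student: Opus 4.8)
The plan is to deduce Corollary~\ref{c_size_match} directly from Proposition~\ref{p_matched-dist_control}, since the length of the iterate of an unstable dynamical ball is controlled by the unstable expansion cocycle along the orbit of its (central) endpoint. First I would recall that, by Definition~\ref{d.unstaball}, $J(x^u)=f^{-\tau(\ell)}(\cW^u_1(f^{\tau(\ell)}(x^u)))$ where $\tau(\ell)=\tau(x,x^u,\eps,\ell)$, and similarly for $J(y^u)$ with $\tau'(\ell)=\tau(y,y^u,\eps,\ell)$. By Lemma~\ref{lemme synchro} we have $|\tau(\ell)-\tau'(\ell)|<T_0$, so up to a bounded number of extra iterates (absorbed into a constant depending only on $f$, via $d_0=\|Df\|/m(Df)$ exactly as at the end of the proof of Proposition~\ref{p_matched-dist_control}) we may assume both balls are defined by the same large stopping time, say $N\eqdef\max(\tau(\ell),\tau'(\ell))$; so $J(x^u)=f^{-N}(\cW^u_1(f^N(x^u)))$ up to a factor bounded by $d_0^{T_0}$ in size, and likewise for $y^u$.

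Next I would write the size of $f^j(J(x^u))$ as an integral of the unstable derivative. Since $f^N(J(x^u))=\cW^u_1(f^N(x^u))$ has length $1$ (or $2$, depending on the normalization of $\cW^u_1$; in any case a fixed constant), and since $f^{j}(J(x^u))=f^{-(N-j)}(f^N(J(x^u)))$, we get
\[
|f^j(J(x^u))|=\int_{f^N(J(x^u))}\|Df^{-(N-j)}(w)|_{E^u}\|\,dw=\int_{f^N(J(x^u))}\frac{dw}{\lambda^u_{f^{-(N-j)}(w)}(N-j)}.
\]
Using the basic distortion estimate \eqref{e.distortionbasic} (the iterates of $\cW^u_1(f^N(x^u))$ stay inside unstable balls of radius $\leq 1$, since they are backward iterates of a radius-$1$ unstable ball), this integral is comparable, up to the uniform constant $C_0$, to $\length(f^N(J(x^u)))\cdot (\lambda^u_{x^u}(N-j))^{-1}$, hence $|f^j(J(x^u))|\asymp_{f} (\lambda^u_{x^u}(N-j))^{-1}$, and by the cocycle relation this equals $\lambda^u_{f^j(x^u)}(j)\big/\lambda^u_{x^u}(N)$ up to $C_0$; equivalently $|f^j(J(x^u))|\asymp_f |J(x^u)|\cdot\lambda^u_{x^u}(j)$, with $|J(x^u)|\asymp_f (\lambda^u_{x^u}(N))^{-1}$. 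The same computation applies to $y^u$, $b$ replaced by the central endpoint $y^u$.

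Then I would take the ratio. We obtain
\[
\frac{|f^j(J(y^u))|}{|f^j(J(x^u))|}\asymp_f \frac{|J(y^u)|}{|J(x^u)|}\cdot\frac{\lambda^u_{y^u}(j)}{\lambda^u_{x^u}(j)}.
\]
The second factor is bounded above and below by $\kappa_3$ and $\kappa_3^{-1}$ by Proposition~\ref{p_matched-dist_control} (applied with $\ast=u$, $a=x^u\in J(x^u)$, $b=y^u\in J(y^u)$, for $j\leq N$). For the first factor, $|J(x^u)|\asymp_f(\lambda^u_{x^u}(N))^{-1}$ and $|J(y^u)|\asymp_f(\lambda^u_{y^u}(N))^{-1}$ (after synchronizing $N$ as above, at the cost of $d_0^{T_0}$), so $|J(y^u)|/|J(x^u)|\asymp_f \lambda^u_{x^u}(N)/\lambda^u_{y^u}(N)$, which is again bounded by $\kappa_3^{\pm1}$ by Proposition~\ref{p_matched-dist_control} at the index $N$ (using $j=N\leq\max(\tau(\ell),\tau'(\ell))$, which is exactly the allowed range). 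Multiplying these bounds and absorbing all the $C_0$'s, $d_0^{T_0}$'s and $\kappa_3$'s into a single constant $\kappa_4=\kappa_4(f,\eps)$ gives the claim. The only mild subtlety — and the step I would be most careful about — is the bookkeeping of the synchronization $|\tau(\ell)-\tau'(\ell)|<T_0$: one must check that extending or truncating one of the two dynamical balls by $T_0$ iterates changes its length only by a bounded factor, which follows verbatim from the distortion argument already used at the end of the proof of Proposition~\ref{p_matched-dist_control}. Everything else is the distortion lemma plus the cocycle identity, so no genuine obstacle is expected.
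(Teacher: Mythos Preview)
Your proposal is correct and follows essentially the same route as the paper: both express $|f^j(J(x^u))|$ as an integral of the backward unstable derivative over $f^{\tau(\ell)}(J(x^u))$, use bounded distortion \eqref{e.distortionbasic} to replace the integral by a single derivative, apply Proposition~\ref{p_matched-dist_control} to control the ratio $\lambda^u_{x^u}(j)/\lambda^u_{y^u}(j)$, and absorb the synchronization defect $|\tau(\ell)-\tau'(\ell)|<T_0$ into a factor $d_0^{T_0}$. One small slip: in your intermediate step the derivative should read $\lambda^u_{f^j(x^u)}(N-j)$ rather than $\lambda^u_{x^u}(N-j)$, but your final formula $|f^j(J(x^u))|\asymp_f |J(x^u)|\cdot\lambda^u_{x^u}(j)$ is the correct one and the rest of the argument goes through unchanged.
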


\begin{proof}
	As before we denote $\tau(\ell)=\tau(x,x^u,\eps,\ell)$ and $\tau'(\ell)=\tau(y,y^u,\eps,\ell)$.
	Suppose $\tau(\ell)\leq\tau'(\ell)\leq\tau(\ell)+T_0$ (where $T_0$ is defined by Lemma \ref{lemme synchro}. In particular $m(Df)^{T_0}\leq |f^{\tau(\ell)}(J(y^u))|\leq 1$ and $|f^{\tau(\ell)}(J(x^u))|=1$.
	
	For $j\in\{0,\ldots,\tau(\ell)\}$, we notice that
	$$\frac{|f^j(J(x^u))|}{|f^j(J(y^u))|}=\frac{\int_{f^{\tau(\ell)}(J(x^u))}\lambda^u_q(k)dq}{\int_{f^{\tau(\ell)}(J(y^u))}\lambda^u_q(k)dq},$$
where $k=j-\tau(\ell)\leq 0$.	
By the bounded distortion along the strong unstable manifolds \eqref{e.distortionbasic} we find
\begin{align*}
C_0^{-2}\frac{\lambda^u_{f^{\tau(\ell)}(x^u)}(k)}{\lambda^u_{f^{\tau(\ell)}(y^u)}(k)}\frac{|f^{\tau(\ell)}(J(x^u))|}{|f^{\tau(\ell)}(J(y^u))|}&\leq \frac{|f^j(J(x^u))|}{|f^j(J(y^u))|}\\ &\leq C_0^2\frac{\lambda^u_{f^{\tau(\ell)}(x^u)}(k)}{\lambda^u_{f^{\tau(\ell)}(y^u)}(k)}\times\frac{|f^{\tau(\ell)}(J(x^u))|}{|f^{\tau(\ell)}(J(y^u))|}.
\end{align*}
Notice that 
\[
\lambda^u_{f^{\tau(\ell)}(x^u)}(k)=\frac{\lambda^u_{x^u}(-k)}{\lambda^u_{x^u}(\tau(\ell))},
\]
and a similar equation holds with $y^u$ instead of $x^u$. Therefore, we can apply Proposition \ref{p_matched-dist_control} to obtain
$$C_0^{-2}\kappa_3^{-2}\leq\frac{|f^j(J(x^u))|}{|f^j(J(y^u))|}\leq C_0^2\|Df\|^{T_0}(\kappa_3)^2.$$

We conclude the proof of the corollary by noting that 
	$$m(Df)^{2T_0}\frac{|f^j(J(x^u))|}{|f^j(J(y^u))|}\leq\frac{|f^{j+T_0}(J(x^u))|}{|f^{j+T_0}(J(y^u))|}\leq\|Df\|^{2T_0}\frac{|f^j(J(x^u))|}{|f^j(J(y^u))|}.\qedhere$$
\end{proof}

\subsection{Matching of $Y$-configurations}\label{match_Y_conf} We now define the notion of matched $Y$-configurations and show that our drift argument (Proposition \ref{p.levraidrift}) boils down to constructing arbitrarily long pairs of matched $Y$-configurations.
\subsubsection{Matching} We first give the main definition.
\begin{defi}[Matched $Y$-configurations]\label{d.matched} Let ${\color{blue}\cL}\dans\T^3$, $C>0$,
 $\ell\in\N$. Let $X=X(x,x^u,\ell)$ and $Y=Y(y,y^u,\ell)$ be two $Y$ configurations of length $\ell$. We say that $X$ and $Y$ are \emph{$({\color{blue}\cL},C,T)$-matched} if there exist $a,b\in\T^3$, and $\tau,t\in\N$ such that
\begin{enumerate}
\item $(x,x^u,y,y^u)$ is a $(C,\ell)$-quadrilateral;
\item $a\in J(x^u,\eps,\ell)$ and $b\in J(y^u,\eps,\ell)$;
\item\label{point point trois} $|\tau-\tau(x,x^u,\eps,\ell)|\leq T$ and $|t-t(x,x^u,\eps,\ell)|\leq T$;
\item $a,b,x,y,f^\tau(a),f^\tau(b),f^t(x),f^t(y)\in{\color{blue}\cL}$.
\end{enumerate}
\end{defi} 

\begin{figure}[h!]
\begin{tikzpicture}
\draw[thick, red!80!black] (2,4.5) .. controls (2.5,4.5) and (3,3)..(4,1.5).. controls (5,0) and (5,0) .. (6,-1) node[below]{\small$\cWu_1(y)$};
\fill[green!80!white, opacity=.2] (0,2)--(4,0)--(6,0.5)--(2,2.5)--(0,2);
\draw[thick, red!80!black] (0,2)--(4,0) node[below]{\small$\cWu_1(x)$};
\draw[thick, violet] (2,2.5)--(6,0.50);
\draw[dotted] (2,4.5)--(2,2.5);
\draw[dotted] (6,0.5)--(6,-1);
\draw[red!40!black, ultra thick] (2.65,4.2) node[above]{\small$J(y^u)$};
\draw[red!40!black, ultra thick] (2.35,4.25).. controls (2.47,4.3) and (3.4,2.5) .. (3.4,2.5);
\draw (3.2,2.85) node{$\bullet$};
\draw (3.2,2.85) node[right]{$b$};
\begin{scope}[xshift=-.4cm, yshift=.2cm]
\draw[red!40!black, ultra thick] (0.5,1.75)--(1.5,1.25) node[below]{\small$J(x^u)$};
\draw[thick, green!40!black] (1,1.5)--(3,2) node[midway,above]{\small$s$};
\draw[thick, orange] (3,2)--(3,3.7);
\draw (1,1.5) node{$\bullet$};
\draw (1,1.5) node[below]{$x^u$};
\draw (3,2) node{$\bullet$};
\draw (3,2) node[below]{$z^u$};
\draw (3,3.7) node{$\bullet$};
\draw (3,3.8) node[right]{$y^u$};
\draw (0.7,1.65) node{$\bullet$};
\draw (0.7,1.55) node[left]{$a$};
\end{scope}
\draw[thick, green!40!black] (2,1)--(4,1.5) node[midway, below]{$s$};
\draw (2,1) node{$\bullet$};
\draw (2,1) node[below]{$x$};
\draw (4,1.5) node{$\bullet$};
\draw (4,1.5) node[above]{$y$};
\draw[->, dashed, opacity=.5] (2.5,2.6) to[bend left] (1.5,3.5);
\draw[orange] (1.4,3.4) node[above]{\small$\cWc(y^u)$};
\end{tikzpicture}
\caption{\label{f.matchedconfig} For matched $Y$-configurations we can only put inside the Lusin set ``small perturbations'' $a$ and $b$ of the endpoints $x^y$ and $y^u$ (respectively) of the quadrilateral.}
\end{figure}

\begin{remark}\label{rem_symmetry}
At first sight this definition is not symmetric (see item \eqref{point point trois}). But the synchronization estimate (Lemma \ref{lemme synchro}) implies that,
$$|\tau-\tau(y,y^u,\eps,\ell)|\leq T+T_0,\text{          and                           }|t-t(y,y^u,\eps,\ell)|\leq T+T_0.$$
\end{remark}

\subsubsection{Finding pairs of long and matched $Y$-configurations} We are now ready to state our main technical lemma which, as we shall see, implies Proposition \ref{p.levraidrift}.

Let $\cL$ be the Lusin set defined in \S \ref{Lusin lusin} and $\delta=3\delta_0>0$ be the constant fixed in \S \ref{sss_exp-drift}: we shall give further assumption on $\delta$ later on.

\begin{lemma}
	\label{l.fatality}
Let $K_0\subset\cL$ be a compact set of measure $\mu(K_0)>1-\delta$.	There exist constants $C=C(\delta)$ and $T=T(\delta)$ and an infinite subset $\cD\subset\N$ such that for every $\ell\in\cD$ there exists a pair $(X,Y)$ of $(K_0,C,T)$-matched $Y$-configurations of length $\ell$.
\end{lemma}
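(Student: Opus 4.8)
The strategy is a two-step measure-theoretic argument in the spirit of \cite{EskinLind}: first produce, for infinitely many $\ell$, a large set of points which are origins of $\cL$-good $Y$-configurations of length $\ell$, and then, using the ergodic theorem along stable manifolds, find pairs of such configurations whose origins $x,y$ are stably connected and satisfy the angle and size constraints of a $(C,\ell)$-quadrilateral, with the endpoints of the attached unstable segments only approximable inside $\cL$ (hence the matching rather than coupling). I would set up the argument as follows.

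\emph{Step 1: abundance of $\cL$-good $Y$-configurations.} Fix a measurable partition $\xi^u$ subordinate to $\cW^u$ whose atoms are intervals (Remark~\ref{rem_subord_interval}), and for a $\mu$-typical $x$ recall from Lemma~\ref{l.density_u-gibbs} that the conditional $\mu^u_x$ has density comparable to $|\xi^u(x)|^{-1}$ times Lebesgue. By Birkhoff's theorem applied to $\mathbf 1_{K_0}$ and the uniform growth property (Lemma~\ref{lem.af1}, Remark~\ref{rem.af1}), for $\mu$-a.e.\ $x$ and all large $\ell$, the backward iterate $x_{-\ell}=f^{-\ell}(x)$ lies in $K_0$ with frequency $\geq 1-\delta$, and simultaneously the unstable dynamical ball $J(x^u,\eps,\ell)$ has a definite proportion of its Lebesgue measure spent in $K_0$ along the orbit up to time $\tau(\ell)$. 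Combining this with the $u$-Gibbs property of $\mu$ along $\cW^u$, one gets a set $E_\ell\subset\TT$ with $\mu(E_\ell)$ close to $1$ (uniformly in $\ell$, say $\geq 1-10\delta_0$, after shrinking $\delta_0$) such that each $x\in E_\ell$ admits $x^u\in\cW^u_{\mathrm{loc}}(x)$ with $d(x,x^u)$ bounded above and below and with $(x,x^u,x_{-\ell},x^u_{\tau(\ell)},x_t)$ being a $\cL$-good $Y$-configuration. The quasi-isometric estimates of Lemma~\ref{l_qi_estimates} guarantee that $\tau(\ell),t(\ell)\to\infty$ as $\ell\to\infty$ at a controlled rate, which is what makes the Lusin-continuity usable at the limiting stage and, more importantly here, ensures the frequencies above can be realized.

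\emph{Step 2: matching two good configurations.} Since $\mu$ is $u$-Gibbs, $\mu(E_\ell)\geq 1-10\delta_0>0$, and using Lemma~\ref{lemm dicht} together with the hypothesis that the \emph{Bad set} has zero measure (which is in force in Theorem~\ref{mainthm.technique}, hence here), for $\mu$-a.e.\ $x$ the set $\mathcal N(x_{-\ell})$ of points $w\in\cW^s(x_{-\ell})$ with $\alpha^s(x_{-\ell},w)>0$ has full $\mu^s$-measure; quantitatively, after passing to a further set of $x$ of large measure one finds $w\in\cW^s_1(x_{-\ell})\cap K_0$ with $C^{-1}<d(x_{-\ell},w)<1$ and $C^{-1}<\alpha^s(x_{-\ell},w)<C$, for a constant $C=C(\delta)$. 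Setting $y_{-\ell}=w$, $y=f^\ell(w)$, we recognize a $(C,\ell)$-quadrilateral $(x,x^u,y,y^u)$ with $y^u=H^{cs}_{x,y}(x^u)$. The point now is that $y^u$ need not lie in $\cL$, but by the same $u$-Gibbs/Birkhoff argument as in Step 1, applied along $\cW^u(y)$, one can choose $b\in J(y^u,\eps,\ell)$ with $b,f^\tau(b),f^t(b)\in\cL$; likewise $a\in J(x^u,\eps,\ell)$ with $a,f^\tau(a),f^t(a)\in\cL$ (using that $x^u$ itself is already an $\cL$-good endpoint, $a=x^u$ is allowed). Lemma~\ref{l.synchroinstable} ensures that the stopping times for $a$ and $b$ stay within $T_0$ of those for $x^u$, so choosing $T=T(\delta)\geq T_0$ makes item~\eqref{point point trois} of Definition~\ref{d.matched} hold, and Remark~\ref{rem_symmetry} gives the symmetric bound. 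This produces a pair $(X,Y)$ of $(K_0,C,T)$-matched $Y$-configurations.

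\emph{Organizing the double counting.} To make Step 2 rigorous one must ensure the various full-measure and large-measure sets intersect: the origins $x$ for which Step 1 works, the $x$ for which a stably connected partner $y$ with the right angle is available, and the requirement that $x^u$ (resp.\ perturbations $a$, $b$) land in $\cL$ under the relevant iterates. This is exactly the Fubini-type argument of \cite{EskinLind}: one integrates along $\cW^s$-leaves, where $\mu$ has no absolutely continuous structure but the angle function is continuous and the partition estimates of \S\ref{sub.subord partitions disint} apply; the lack of absolute continuity of $\cW^{cs}$ is precisely why we settle for $a,b$ near $x^u,y^u$ rather than at them. I expect \emph{the main obstacle} to be this bookkeeping of measures — quantifying, uniformly in $\ell$, that the intersection of all the good sets is nonempty (indeed of positive measure), which forces the relation $\delta=3\delta_0$ and the smallness $\delta_0<\tfrac1{10}$ fixed in \S\ref{sss_exp-drift}, and requires the uniform distortion control for matched dynamical balls (Proposition~\ref{p_matched-dist_control}, Corollary~\ref{c_size_match}) to transfer frequency-in-$K_0$ statements from $x^u$ to arbitrary $b\in J(y^u)$. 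The infinite set $\cD$ arises because for each $\ell$ the competing small-measure exceptional sets are only controlled in an averaged (Birkhoff) sense, so one extracts a subsequence along which all the estimates hold simultaneously.
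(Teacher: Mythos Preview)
Your outline captures the broad architecture (build many good $Y$-configurations via Birkhoff and the quasi-isometric estimates, then pair them through a stable connection with controlled angle), and you correctly flag the non-absolute-continuity of $\cW^{cs}$ as the reason for matching rather than coupling. But Step~2 contains a genuine gap at exactly this point.

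You write that ``by the same $u$-Gibbs/Birkhoff argument as in Step~1, applied along $\cW^u(y)$, one can choose $b\in J(y^u,\eps,\ell)$ with $b,f^\tau(b),f^t(b)\in\cL$''. This does not follow. Once $x^u$ is chosen, $y^u=H^{cs}_{x,y}(x^u)$ is \emph{determined}, and $J(y^u,\eps,\ell)$ is a fixed interval of length $\asymp e^{-\chi_2^u\tau(\ell)}$, exponentially small in $\ell$. The Birkhoff/$u$-Gibbs argument only tells you that the good set $Q^u(y,\ell)\subset\xi^u(y)$ has $\mu^u_y$-measure (equivalently Lebesgue proportion) $>1-\eta$; it says nothing about whether a \emph{specific} exponentially small subinterval meets it. Since $H^{cs}_{x,y}$ is not absolutely continuous, you cannot transport the good set from $\cW^u(x)$ to $\cW^u(y)$ either. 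The distortion estimates you cite (Proposition~\ref{p_matched-dist_control}, Corollary~\ref{c_size_match}) compare the sizes of $J(x^u)$ and $J(y^u)$ but do not produce good points inside $J(y^u)$.

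The paper resolves this with a covering/counting argument (Proposition~\ref{p.Eskin_Mirzakoukou}, adapted from \cite[Lemma~12.8]{EskinMirzakhani}): one covers $I^u(x)$ by dynamical balls $J_i=J(x_i,\eps,\ell)$, sets $J_i'=J(H^{cs}_{x,y}(x_i),\eps,\ell)$, and shows that the union of the ``bad'' $J_i$ (those with $J_i'\cap Q^u(y,\ell)=\emptyset$) has small Lebesgue measure. The key input you are missing is the overlap control of Lemma~\ref{l.overlap}: using only the \emph{H\"older} regularity of $H^{cs}$ (Lemma~\ref{thm_regularity}) together with the size comparison of matched balls, one bounds by a constant $m=m(f)$ the number of $J_i'$ that can contain a given point of $\xi^u(y)$. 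This converts the Lebesgue smallness of $\xi^u(y)\setminus Q^u(y,\ell)$ into Lebesgue smallness of $\bigcup_{i\in\mathcal J^y} J_i$, leaving room to pick $x^u=x_i$ with both $J_i\cap Q^u(x,\ell)\neq\emptyset$ and $J_i'\cap Q^u(y,\ell)\neq\emptyset$. This is not bookkeeping; it is the substantive idea that makes matching possible.

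A secondary gap: you need a \emph{single} pair $(\tau,t)$ with $f^\tau(a),f^\tau(b),f^t(x),f^t(y)\in K_0$ simultaneously. Lemma~\ref{l.synchroinstable} only gives $|\tau(x,a,\eps,\ell)-\tau(y,b,\eps,\ell)|\leq 3T_0$; it does not produce a common return time. The paper handles this by working with $K=(K_0)^\circ$ (points $(\delta,T)$-recurrent to $K_0$), so that once $f^{\tau(\ell)}(a),f^{\tau'(\ell)}(b)\in K$, a density argument inside an interval of length $T'\gg T_0$ yields a common $\tau$ (Lemma~\ref{constructio_Yconfiguration}). Your sketch does not set up this two-level recurrence.
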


The end of the section is devoted to proving that Lemma~\ref{l.fatality} implies Proposition~\ref{p.levraidrift}. The proof of Lemma \ref{l.fatality} will be the object of Section \ref{s.end_proof}.

\subsection{Asymptotic control of leaf-wise measures for matched configurations}\label{ss_control_leafwise_match}

The goal of this paragraph is to study how leaf-wise quotient measures change along a pair of matched $Y$-configurations, and what happens when we have sequences of longer and longer pairs. To simplify the exposition, we shall break this explanation in two parts. First we deal with a single $Y$-configuration, and then we treat the full situation.

\subsubsection{Control along a $Y$-configuration}\label{sss_controlY}

The lemma below is essentially an easy corollary of Lemmas~\ref{chang dyna} and \ref{changt inst}.

\begin{lemma}\label{coro controle affine var measu}
	For $x\in \T^3$, $x^u\in \mathcal{W}_r^u(x)$, for some $r>0$, $\varepsilon>0$, and $\ell \in \N$ as above, it holds 
	$$
	\hat \nu_{f^{\tau(\ell)}(x^u)}^c\propto (A_{x,x^u,\ell})_* \hat \nu_{f^{t(\ell)}(x)}^c,
	$$ 
%	for a constant (recall Notation \ref{notation constantes k j})
%	\begin{equation*}
%	C_{x,x^u,\ell} \eqdef \prod_{k=0}^{\tau(\ell)-1} J_{f^k(x^u)} K_{x,x^u}^u \prod_{k=0}^{t(\ell)-1} J_{f^k(x)}^{-1}>0,
%	\end{equation*} 
	for the linear map $A_{x,x^u,\ell} \colon s \mapsto a_{x,x^u,\ell} \cdot s$, with
	\begin{equation*}
	a_{x,x^u,\ell}\eqdef \rho_{x^u}^c(x)%\mathrm{Jac}(H_{x,x^u}^u) 
	 \frac{\lambda^c_x(t(\ell))}{\lambda^c_{x^u}(\tau(\ell))} .
	\end{equation*}
%	and for some  constant 
%	$$
%	C_{x,x^u,\ell} \eqdef \prod_{k=0}^{\tau(\ell)-1} K_{f^k(x^u)} J_{x,x^u} \prod_{k=0}^{t(\ell)-1} K_{f^k(x)}^{-1}>0.
%	$$ 
	Moreover, $|a_{x,x^u,\ell}|\in (a_0^{-1},a_0)$ for some constant $a_0=a_0(r)>1$ depending only on the upper bound $r>0$ on the distance along $\mathcal{W}^u$ between $x$ and $x^u$; in particular, the linear map $A_{x,x^u,\ell}$ is uniformly bounded away from $0$ and $\infty$, independently of $\ell$.  
\end{lemma}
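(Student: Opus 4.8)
The plan is to combine the three ``basic moves'' from Proposition~\ref{p.basicmoves} (in the form of Lemmas~\ref{chang dyna} and \ref{changt inst}) to track how the leaf-wise quotient measure transforms along the $Y$-configuration $Y(x,x^u,\ell)$, going from the base point $x$ down through $x^u\in\cW^u(x)$ and then following the two legs $f^{\tau(\ell)}(x^u)$ and $f^{t(\ell)}(x)$. Concretely, first I would apply Lemma~\ref{chang dyna} iterated $t(\ell)$ times (together with the cocycle property \eqref{e.cocycle}) to get $\hat\nu^c_{f^{t(\ell)}(x)}\propto(\Lambda^c_{x}(t(\ell)))_*\hat\nu^c_x$, where $\Lambda^c_x(t(\ell))$ is the linear map $s\mapsto\lambda^c_x(t(\ell))\,s$; inverting, $\hat\nu^c_x\propto(\Lambda^c_x(t(\ell))^{-1})_*\hat\nu^c_{f^{t(\ell)}(x)}$. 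Next, since $x^u\in\cW^u(x)$, Lemma~\ref{changt inst} gives $\hat\nu^c_x\propto(L^u_{x^u,x})_*\hat\nu^c_{x^u}$, equivalently $\hat\nu^c_{x^u}\propto(L^u_{x,x^u})_*\hat\nu^c_x$ where $L^u_{x,x^u}(s)=\rho^c_{x^u}(x)\,s$ (using $\rho^c_{x}(x^u)=(\rho^c_{x^u}(x))^{-1}$). Finally, iterating Lemma~\ref{chang dyna} again $\tau(\ell)$ times along the $f$-orbit of $x^u$ yields $\hat\nu^c_{f^{\tau(\ell)}(x^u)}\propto(\Lambda^c_{x^u}(\tau(\ell)))_*\hat\nu^c_{x^u}$ with $\Lambda^c_{x^u}(\tau(\ell))(s)=\lambda^c_{x^u}(\tau(\ell))\,s$. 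Composing these three proportionalities (and using that proportionality is preserved under pushforward by the linear maps involved, since each is a homeomorphism of $\R$) gives
\[
\hat\nu^c_{f^{\tau(\ell)}(x^u)}\propto\big(\Lambda^c_{x^u}(\tau(\ell))\circ L^u_{x,x^u}\circ\Lambda^c_x(t(\ell))^{-1}\big)_*\hat\nu^c_{f^{t(\ell)}(x)}=(A_{x,x^u,\ell})_*\hat\nu^c_{f^{t(\ell)}(x)},
\]
where $A_{x,x^u,\ell}$ is the linear map with slope $a_{x,x^u,\ell}=\lambda^c_{x^u}(\tau(\ell))\cdot\rho^c_{x^u}(x)\cdot\lambda^c_x(t(\ell))^{-1}=\rho^c_{x^u}(x)\,\frac{\lambda^c_x(t(\ell))}{\lambda^c_{x^u}(\tau(\ell))}$, matching the stated formula after reorganizing the product. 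One must of course be careful that all these $\mu$-almost-everywhere statements apply simultaneously: since the $Y$-configuration involves only finitely many iterates and countably many measure-$1$ conditions (one per iterate for Lemma~\ref{chang dyna} and one for Lemma~\ref{changt inst}), this holds on a $\cW^u$-saturated full measure set, exactly the set $\Lambda$ from Proposition~\ref{p.basicmoves}.

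It remains to bound $|a_{x,x^u,\ell}|$. The factor $\rho^c_{x^u}(x)=\prod_{j\ge1}\lambda^c_{x_{-j}}/\lambda^c_{(x^u)_{-j}}$ is controlled by the basic distortion estimate \eqref{e.distortionbasic}: since $x^u\in\cW^u_r(x)$, the backward orbits of $x$ and $x^u$ stay within bounded $\cW^{cu}$-distance and contract, so $\rho^c_{x^u}(x)\in(C_0(r)^{-1},C_0(r))$ — here I would invoke Lemma~\ref{l.distortionbasic}(b) with $\varphi=\log\|Df(\cdot)|_{E^c}\|$ applied to the forward orbits of $x_{-n},(x^u)_{-n}$, or equivalently observe $\rho^c_{x^u}(x)$ is the ratio of the two (convergent) infinite products and is uniformly log-bounded. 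The quotient $\frac{\lambda^c_x(t(\ell))}{\lambda^c_{x^u}(\tau(\ell))}$ is pinned down by the very definition of the stopping time $t(\ell)$: from \eqref{e.tempomaroto} (more precisely \eqref{e.omundoehredondo}, $1\le\lambda^c_x(t(\ell))/\lambda^c_{x^u}(\tau(\ell))<e^{\chi^c_1}$), this ratio lies in $[1,e^{\chi^c_1})$, a bound depending only on $f$. Multiplying the two bounds gives $|a_{x,x^u,\ell}|\in(a_0^{-1},a_0)$ with $a_0=a_0(r)\eqdef C_0(r)\,e^{\chi^c_1}$, independent of $\ell$, $\eps$, $x$ and $x^u$.

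I do not expect any serious obstacle here: the lemma is, as the text says, essentially an immediate corollary of the basic moves plus distortion, and the only mild subtleties are (i) keeping the direction of each proportionality and the orientation of each change-of-coordinates straight (using $\rho^*_y(x)=(\rho^*_x(y))^{-1}$ and the cocycle relation \eqref{quotient rho x y rho x z}), and (ii) making sure the finitely-many-iterates bookkeeping lands on a single full-measure $\cW^u$-saturated set. The slight cosmetic discrepancy between the composed slope $\lambda^c_{x^u}(\tau(\ell))\,\rho^c_{x^u}(x)\,\lambda^c_x(t(\ell))^{-1}$ and the formula $\rho^c_{x^u}(x)\,\lambda^c_x(t(\ell))/\lambda^c_{x^u}(\tau(\ell))$ as written is only a matter of whether one records the affine map conjugating $\hat\nu^c_{f^{t(\ell)}(x)}$ to $\hat\nu^c_{f^{\tau(\ell)}(x^u)}$ or its inverse; I would simply state the lemma consistently with whichever convention is used downstream in Section~\ref{s.end_proof}.
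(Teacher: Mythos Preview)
Your proposal is correct and follows essentially the same route as the paper: compose the basic moves of Lemmas~\ref{chang dyna} and \ref{changt inst} along the $Y$-configuration, then bound the slope using the definition of $t(\ell)$ (so that $\lambda^c_x(t(\ell))/\lambda^c_{x^u}(\tau(\ell))\in[1,e^{\chi^c_1})$) and the distortion control on $\rho^c_{x^u}(x)$ coming from $d_u(x,x^u)<r$. The only point to clean up is the line where you write the composed slope as an equality with the stated formula when they are in fact reciprocals---you already flag this as a convention issue with the direction of the proportionality, and indeed the paper's own proof has the same harmless mismatch; since all maps are linear and only the two-sided bound on the slope matters downstream, just pick one convention and stick with it.
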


\begin{proof}
	Applying Lemma \ref{chang dyna} and Lemma \ref{changt inst} we successively obtain:
	\begin{align*}
		\hat \nu_{f^{\tau(\ell)}(x^u)}^c&\propto (\Lambda_{\tau(\ell),x^u}^{c})_* \hat \nu_{x^u}^c\\
		&\propto   (L_{x,x^u}\circ \Lambda_{\tau(\ell),x^u}^{c})_* \hat \nu_{x}^c\\
		&\propto  (L_{x,x^u} \circ \Lambda_{\tau(\ell),x^u}^{c}\circ (\Lambda_{t(\ell),x}^{c})^{-1})_* \hat \nu_{f^{t(\ell)}(x)}^c,
	\end{align*}
%where for $w\in \TT$  and $n\in \N$, we let
%\begin{equation*}
%	J_{n,w}\eqdef  \prod_{k=0}^{n-1} J_{f^k(w)}.
%\end{equation*}
%	noting that one-dimensional linear maps commute, and
%	where, for each $y\in \TT$  and $n\in \N$, we define the multiplicative cocycles
%	\begin{align*}
%		K_{n,y}&\eqdef  \prod_{k=0}^{n-1} K_{f^k(y)},\\ 
%		\Lambda_{n,y}^{c}&\colon s \mapsto \|Df^n(y)|_{E_f^c}\| \cdot s= \Lambda_{f^{n-1}(y)}^c \circ \dots \circ \Lambda_{y}^c(s).
%	\end{align*}
where $\Lambda^c_{n,x}$ denotes the linear map
$s\mapsto\lambda^c_x(n)s$, and $L_{x,x^u}$ denotes the linear map $s \mapsto \rho_{x^u}^c(x) \cdot s$. Thus, $A_{x,x^u,\ell}\eqdef\Lambda_{\tau(\ell),x^u}^{c} \circ L_{x,x^u}\circ (\Lambda_{t(\ell),x}^{c})^{-1}$  is equal to 
	$$
	s \mapsto \rho_{x^u}^c(x)  \frac{\lambda^c_x(t(\ell))}{\lambda^c_{x^u}(\tau(\ell))} \cdot s.
	$$ 
	By the fact that $d_{u}(x,x^u)<r$, and by the definition of $t(\ell)$, $|A_{x,x^u,\ell}'(0)|$ is uniformly bounded, depending  only on $r$ and $f$, but not on $\ell$, which concludes the proof. 
\end{proof}

\subsubsection{Control for matched configurations}
We now deal with the full picture of sequences of pairs of good and matched configurations. 
For the next result, we refer to \S \ref{Lusin lusin} for the definition of the Lusin set $\mathcal{L}$, to \S \ref{sss.quadri_couple} and to \S \ref{match_Y_conf} for that of quadrilaterals and $(C,\ell)$-matched $Y$-configurations respectively.

The result of this subsection is the core of the proof that Lemma~\ref{l.fatality} implies Proposition~\ref{p.levraidrift}. \emph{Until the end of this section, the notation $x_n$ will NOT stand for $f^n(x)$ but for the usual notation of sequences}.

\begin{lemma}\label{coro depl mes}
		Let $\eps>0$ $(\ell_n)$ be an increasing sequence of integers. Suppose there exist constants $T,C>0$ (independent of $\eps$), and two sequences $X_n=(x_n,x^u_n,\ell_n)$ and $Y_n=(y_n,y^u_n,\ell_n)$ of $Y$-configurations of length $\ell_n$ that are $(\cL,C,T)$-matched. Let $\tau_n,t_n,a_n,b_n$ be the objects given by the condition of matching and  consider the sequences 
		\[
		f^{\tau_n}(a_n),f^{\tau_n}(b_n)\in\mathcal{L}, \text{           and           }
		f^{\tau_n}(x^u_n),f^{\tau_n}(y^u_n)\in\T^3.
		\]
		Assume all these sequences converge to points $a_{\infty}$, $b_{\infty}$, $p$ and $q$ respectively. Then $q\in \cW^c(p)$ and there exists $\gamma=\gamma(C,T)>1$ and $M=M(C,T)$ such that
		\begin{equation}\label{affine param centr}
		\frac{\varepsilon}{\gamma} \leq|\mathcal{H}_{p}^c(q)|\leq\gamma \varepsilon,
		\end{equation}
		and
		\begin{equation}\label{affine invar meas}
		\hat \nu_{a_{\infty}}^c\propto B_* \hat \nu_{b_{\infty}}^c.
		\end{equation}
where $B(s)= \beta \cdot s$, is a linear map that satisfies $\frac 1M<|\beta|<M$.

	\end{lemma}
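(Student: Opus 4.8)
The plan is to assemble the conclusions of this section's estimates and the basic moves for leaf-wise quotient measures, then pass to the limit using continuity on the Lusin set $\cL$.

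First I would set up the "$x$-side'' of the picture. For each $n$, consider the $Y$-configuration $X_n=(x_n,x^u_n,\ell_n)$ together with the perturbed point $a_n\in J(x^u_n,\eps,\ell_n)$. By Lemma~\ref{l.synchroinstable} (synchronization inside a dynamical ball), the stopping times $\tau(x_n,a_n,\eps,\ell_n)$ and $t(x_n,a_n,\eps,\ell_n)$ differ from $\tau(x_n,x^u_n,\eps,\ell_n)$, $t(x_n,x^u_n,\eps,\ell_n)$ by at most $T_0$; combined with condition~\eqref{point point trois} of the matching, the numbers $\tau_n,t_n$ differ from $\tau(x_n,a_n,\eps,\ell_n)$, $t(x_n,a_n,\eps,\ell_n)$ by at most $T+T_0$. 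Since $a_n\in\cW^u_{r}(x_n)$ for a uniform $r$ (because $a_n\in J(x^u_n)$ and $x^u_n\in\cW^u(x_n)$ with $C^{-1}<d(x_n,x^u_n)<C$, and unstable dynamical balls have uniformly bounded size at time $0$), applying Lemma~\ref{coro controle affine var measu} (with a mild extension absorbing the bounded discrepancy between $\tau_n$ and the genuine stopping time via Lemma~\ref{chang dyna}, which only changes the slope by a bounded factor) yields a linear map $A_n\colon s\mapsto a_n^\ast s$ with $|a_n^\ast|\in(a_0^{-1},a_0)$, $a_0=a_0(C,T)$, such that $\hat\nu^c_{f^{\tau_n}(a_n)}\propto (A_n)_*\hat\nu^c_{f^{t_n}(x_n)}$. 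Symmetrically, on the "$y$-side'', using Remark~\ref{rem_symmetry} to control $|\tau_n-\tau(y_n,y^u_n,\eps,\ell_n)|$ and $|t_n-t(y_n,y^u_n,\eps,\ell_n)|$ by $T+T_0$, one gets a linear map $A_n'\colon s\mapsto {a_n'}^\ast s$ with $|{a_n'}^\ast|\in(a_0^{-1},a_0)$ and $\hat\nu^c_{f^{\tau_n}(b_n)}\propto (A_n')_*\hat\nu^c_{f^{t_n}(y_n)}$.

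Next I would exploit the Lusin set. All eight points $x_n,y_n,f^{\tau_n}(a_n),f^{\tau_n}(b_n),f^{t_n}(x_n),f^{t_n}(y_n),a_n,b_n$ lie in $\cL$. Since $y_n\in\cW^s_1(x_n)$ (condition $(1)$ of a quadrilateral) and $\ell_n\to\infty$, the points $f^{t_n}(x_n)$ and $f^{t_n}(y_n)$ satisfy $d(f^{t_n}(x_n),f^{t_n}(y_n))\to 0$: indeed $d(x_n,y_n)\leq 1$ and $y_n\in\cW^s(x_n)$ so $d(f^{t_n}(x_n),f^{t_n}(y_n))\le \lambda^{t_n}$ for the uniform stable contraction, and $t_n\to\infty$ by the quasi-isometric estimates (Lemma~\ref{l_qi_estimates}) since $\ell_n\to\infty$. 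Passing to a subsequence, $f^{t_n}(x_n)$ and $f^{t_n}(y_n)$ converge to a common limit $w\in\cL$. Similarly $f^{\tau_n}(a_n),f^{\tau_n}(b_n)\to a_\infty,b_\infty\in\cL$, and $f^{\tau_n}(x^u_n),f^{\tau_n}(y^u_n)\to p,q$. By the continuity property of $x\mapsto\hat\nu^c_x$ on $\cL$ (Lemma~\ref{l.lusin}) and the compactness of the set of linear maps with slope in $[a_0^{-1},a_0]$, we may extract further so that $A_n\to A_\infty$, $A_n'\to A_\infty'$, both linear with slope in $[a_0^{-1},a_0]$, and obtain in the limit $\hat\nu^c_{a_\infty}\propto (A_\infty)_*\hat\nu^c_w$ and $\hat\nu^c_{b_\infty}\propto (A_\infty')_*\hat\nu^c_w$. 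Therefore $\hat\nu^c_{a_\infty}\propto (A_\infty\circ A_\infty'^{-1})_*\hat\nu^c_{b_\infty}$, which is \eqref{affine invar meas} with $B=A_\infty\circ A_\infty'^{-1}$, a linear map whose slope $\beta$ satisfies $a_0^{-2}<|\beta|<a_0^2$, so $M=a_0^2=M(C,T)$ works.

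Finally I would establish the center-drift estimate \eqref{affine param centr}. The point $z^u_n=H^s_{x_n,y_n}(x^u_n)$ satisfies $z^u_n\in\cW^s(x^u_n)\cap\cW^c(y^u_n)$, so $d(f^{\tau_n}(x^u_n),f^{\tau_n}(z^u_n))\to 0$ (stable contraction over $\tau_n\to\infty$ steps, using $d(x^u_n,z^u_n)<1$ for $\ell_n$ large), hence $p=\lim f^{\tau_n}(x^u_n)=\lim f^{\tau_n}(z^u_n)$; and $f^{\tau_n}(z^u_n)\in\cW^c(f^{\tau_n}(y^u_n))$ with, by Corollary~\ref{c.gapdoseba}, $d_c(f^{\tau_n}(y^u_n),f^{\tau_n}(z^u_n))\leq\kappa\eps$ for $\kappa=\kappa(C,T)$ — this uses $|\tau_n-\tau(y_n,y^u_n,\eps,\ell_n)|\le T+T_0$ from Remark~\ref{rem_symmetry}. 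Taking limits, $q\in\cW^c(p)$ with $d_c(p,q)\leq\kappa\eps$. For the lower bound, Lemma~\ref{control centre expa} gives $|\cH^c_{f^{\tau_n}(z^u_n)}(f^{\tau_n}(y^u_n))|\ge\eps/\kappa$; since the normal form coordinates $\cH^c_\bullet$ vary continuously in $C^1$ on compact sets and $f^{\tau_n}(z^u_n)\to p$, $f^{\tau_n}(y^u_n)\to q$, passing to the limit yields $|\cH^c_p(q)|\ge\eps/\gamma$ and, from the upper bound above, $|\cH^c_p(q)|\le\gamma\eps$ for a suitable $\gamma=\gamma(C,T)>1$. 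This completes the proof. I expect the only genuinely delicate point to be the bookkeeping of all the bounded stopping-time discrepancies (between $\tau_n$, the $x$-side stopping time, and the $y$-side stopping time) and checking that each such discrepancy only perturbs slopes and translation lengths by factors controlled by $C$ and $T$; all limits are then routine compactness arguments on the Lusin set.
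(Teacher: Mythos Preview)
Your approach is essentially identical to the paper's: relate $\hat\nu^c_{f^{\tau_n}(a_n)}$ to $\hat\nu^c_{f^{t_n}(x_n)}$ and $\hat\nu^c_{f^{\tau_n}(b_n)}$ to $\hat\nu^c_{f^{t_n}(y_n)}$ via Lemma~\ref{coro controle affine var measu} (adjusted by bounded stopping-time discrepancies), use stable contraction to make $f^{t_n}(x_n)$ and $f^{t_n}(y_n)$ share a limit $w\in\cL$, and pass to the limit using Lusin continuity; the drift estimate comes from Lemma~\ref{control centre expa} and continuity of normal forms exactly as you describe.

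The one point you gloss over, which the paper treats explicitly, is the passage to the limit in the \emph{proportionality} relations. Writing $\hat\nu^c_{f^{\tau_n}(a_n)}=C_n(A_n)_*\hat\nu^c_{f^{t_n}(x_n)}$, you need $C_n$ bounded away from $0$ and $\infty$ to conclude $\hat\nu^c_{a_\infty}\propto(A_\infty)_*\hat\nu^c_w$; otherwise proportionality is not a closed relation. Since $\hat\nu^c_{f^{\tau_n}(a_n)}([-1,1])=1$ by normalization, $C_n=\big(\hat\nu^c_{f^{t_n}(x_n)}(A_n^{-1}[-1,1])\big)^{-1}$, and the paper invokes Corollary~\ref{c.lusin} (which uses compactness of $\cL$ plus Remark~\ref{remark.superposition2}) to bound this uniformly. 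You should add this step; the ``bookkeeping'' you flag as delicate is indeed routine, but this constant control is a genuine ingredient.
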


%}

\begin{proof}
Note first that $\tau_n$ and $t_n$ tend to infinity: indeed, this is a consequence of the matching condition and of the fact that $\ell_n\to\infty$ so the stopping times also tend to infinity (by Lemma \ref{l_qi_estimates}).

Now suppose the sequences $f^{\tau_n}(a_n),f^{\tau_n}(b_n),f^{\tau_n}(x_n),f^{\tau_n}(y_n)$ converge to $a_\infty,b_\infty,p,q$ respectively. 
Let $z_n^u=H^s_{x_n,y_n}(x_n^u)\in\cW^s(x_n)$. The condition of matching implies that $d(z_n^u,x^u_n)$ is uniformly bounded so $d(f^{\tau_n}(z_n),f^{\tau_n}(x_n))\to 0$ as $n\to+\infty$ and $f^{\tau_n}(z_n^u)\to p$. Since normal forms changes continuously, we have that
\[
\cH^c_{f^{\tau_n}(z^u_n)}(f^{\tau_n}(y^u_n))\to\cH^c_p(q),\quad \text{as}\quad n \to +\infty. 
\]	
Thus \eqref{affine param centr} follows directly from Lemma~\ref{control centre expa} and Remark \ref{rem_symmetry}. Let us show \eqref{affine invar meas}. For this we use the matching condition: for all $n$, the points $f^{\tau_n}(a_n),f^{\tau_n}(b_n),f^{t_n}(x_n),f^{t_n}(y_n)$ all belong to the Lusin set $\cL$.

%For this notice that $\tilde{X}=(x_n,a_n,\ell_n)$ and $\tilde{Y}_n=(y_n,b_n,\ell_n)$ are also $\cL$-good $Y$-configurations. 
Combining Lemma~\ref{coro controle affine var measu} with Lemma~\ref{changt inst} and Remark \ref{rem_symmetry} we deduce that there exist linear maps $B_n\colon\R\to\R$ and $\tilde{B}_n\colon\R\to\R$ and constants $C_n,\tilde{C}_n>0$ such that the derivatives $B_n^\prime$ and $\tilde{B}_n^\prime$ lie inside $[M^{-1},M]$ for some constant $M=M(T,C)$, independent of $n$ and 
\[
\hat{\nu}^c_{f^{\tau_n}(a_n)}=C_n(B_n)_*\hat{\nu}^c_{f^{t_n}(x_n)}\:\:\:\textrm{and}\:\:\:\:\:\hat{\nu}^c_{f^{\tau_n}(b_n)}=\tilde{C}_n(\tilde{B}_n)_*\hat{\nu}^c_{f^{t_n}(y_n)}.
\] 
We claim that $C_n$ and $\tilde{C}_n$ are uniformly bounded away from $0$ and $\infty$. Indeed, let us consider $C_n$. Let $c=c(M)>0$ be the constant given by Corollary~\ref{c.lusin}. Then, denoting $I=[-1,1]$ we have that, by our choice of normalization (see Remark~\ref{rem.normaliza}) $\hat{\nu}^c_{f^{\tau_n}(a_n)}(I)=1$ and therefore
\[
C_n=\frac{1}{\hat{\nu}^c_{f^{t_n}(x_n)}(B_n^{-1}(I))}\in[c^{-1},c],
\]
for $f^{t_n}(x_n)\in\cL$ for each $n$. A similar argument treats the constants $\tilde{C}_n$. Therefore up to enlarging $M$, we can assume that $C_n,\tilde{C}_n\in[M^{-1},M]$.

The condition of matching implies that $y_n\in \cW^s(x_n)$ for every $n$ and that $d(x_n,y_n)$ is uniformly bounded. Now recall that $f^{t_n}(x_n),f^{t_n}(y_n)$ belong to the Lusin set $\cL$, which is compact. We can assume without loss of generality that there exists a point $x\in\cL$ such that $f^{t_n}(x_n)\to x$ and $f^{t_n}(y_n)\to x$. For simplicity, let us denote $\hat{a}_n=f^{\tau_n}(a_n)$, $\hat{b}_n=f^{\tau_n}(b_n)$, $\hat{x}_n=f^{t_n}(x_n)$ and $\hat{y}_n= f^{t_n}(y_n)$.  We can also assume that $C_n\to C$ and $\tilde{C}_n\to\tilde{C}$ and that there exist linear maps $B,\tilde{B}:\R\to\R$ such that $B_n\to B$ and $\tilde{B}_n\to\tilde{B}$ in the $C^1$ topology, i.e. the slopes of $B_n$ converge to the slope of $B$ and similarly for $\tilde{B}_n$ and $\tilde{B}$. 

Now, we claim that $\hat{\nu}^c_{a_{\infty}}=CB_*\hat{\nu}^c_x$ and $\hat{\nu}^c_{b_{\infty}}=\tilde{C}\tilde{B}_*\hat{\nu}^c_x$.  Once we prove this claim, the lemma will be established. To prove the claim, we first observe that 
\[
\int C_n(\varphi\circ B_n)d\hat{\nu}^c_{\hat{x}_n}\to\int C(\varphi\circ B)d\hat{\nu}^c_x,
\]
for every $\varphi\in C^0_c(\R)$. Indeed,  we can write
\begin{align}
\left|\int C_n(\varphi\circ B_n)d\hat{\nu}^c_{\hat{x}_n}-\int C(\varphi\circ B)d\hat{\nu}^c_x\right|&\leq\left|\int C_n(\varphi\circ B_n)d\hat{\nu}^c_{\hat{x}_n}-\int C(\varphi\circ B)d\hat{\nu}^c_{\hat{x}_n}\right|\nonumber\\
&+\left|\int C(\varphi\circ B)d\hat{\nu}^c_{\hat{x}_n}-\int C(\varphi\circ B)d\hat{\nu}^c_{x}\right|.\nonumber
\end{align}
The second term on the right-hand side above converges to zero as $n\to\infty$ because $\cL$ satisfies the conclusion of Lemma~\ref{l.lusin} and $\hat{x}_n\in\cL$ converges to $x$. The first term also converges to zero because $C_n(\varphi\circ B_n)$ converges uniformly to $C(\varphi\circ B)$ and $\hat{\nu}^c_{\hat{x}_n}(\operatorname{supp}(\varphi))$ is bounded independently of $n$. We use this observation to conclude the proof of the claim as follows. Given any $\varphi\in C^0_c(\R)$ we have that
\begin{align}
	\int\varphi d\hat{\nu}^c_{a_{\infty}}&=\lim_{n\to\infty}\int\varphi d\hat{\nu}^c_{\hat{a}_n}=\lim_{n\to\infty}\int C_n(\varphi\circ B_n) d\hat{\nu}^c_{\hat{x}_n}\nonumber\\
	&=\int C(\varphi\circ B)d\hat{\nu}^c_x,\nonumber
\end{align} 
proving that $\hat{\nu}^c_{a_{\infty}}=CB_*\hat{\nu}^c_x$. The proof that $\hat{\nu}^c_{b_{\infty}}=\tilde{C}\tilde{B}_*\hat{\nu}^c_x$ is similar so we omit it. 
The proof of the lemma is complete.
\end{proof}

With Lemma~\ref{coro depl mes} at hand we are now in position to reduce the proof of Theorem~\ref{mainthm.technique} to the proof of Lemma~\ref{l.fatality}.

\subsection{Proof that Lemma \ref{l.fatality} $\implies$ Proposition~\ref{p.levraidrift}}\label{ss.mainreduction}

Let $K_{00}$ be a compact set with $\mu(K_{00})>1-2\delta_0$. We apply Lemma \ref{l.fatality} to the compact set $K_0=K_{00}\cap\cL$ which has measure $\mu(K_0)>1-3\delta_0=1-\delta$. So let 
 $C=C(\delta)>0$, $T_0=T_0(\delta)>0$, $K_{0}=K_{00} \cap \mathcal{L}$,  and $\mathcal{D}$ be the objects given by Lemma \ref{l.fatality}.  

\begin{figure}[h!]
	\begin{tikzpicture}
	\begin{scope}[scale=.8]
  \fill[green!80!white, opacity=.2] (0,2)--(4,0)--(6,0.5)--(2,2.5)--(0,2);
  \draw[thick, red!80!black] (0,2)--(4,0) node[below]{\small$\cWu_1(x)$};
  \draw[thick, violet] (2,2.5)--(6,0.50);
  \draw[thick, red!80!black] (2,4.5) .. controls (2.5,4.5) and (3,3)..(4,1.5).. controls (5,0) and (5,0) .. (6,-1) node[below]{\small$\cWu_1(y)$};
  \draw[dotted] (2,4.5)--(2,2.5);
  \draw[dotted] (6,0.5)--(6,-1);
  \draw[red!40!black, ultra thick] (2.35,4.25).. controls (2.47,4.3) and (3.4,2.5) .. (3.4,2.5);
  \draw (3.2,2.85) node{$\bullet$};
  \draw (3.2,2.85) node[right]{$b$};
  \draw[red!40!black, ultra thick] (2.65,4.2) node[above]{\tiny$J(y^u)$};
  \begin{scope}[xshift=-.4cm, yshift=.2cm]
  \draw[red!40!black, ultra thick] (0.5,1.75)--(1.5,1.25);
  \draw[red!40!black] (0.5,1.75) node[left]{\tiny$J(x^u)$};
  \draw[thick, green!40!black] (1,1.5)--(3,2) node[midway,above]{\small$s$};
  \draw[thick, orange] (3,2)--(3,3.7);
  \draw (1,1.5) node{$\bullet$};
  \draw (1,1.5) node[below]{$x^u$};
  \draw (3,2) node{$\bullet$};
  \draw (3,2) node[below]{$z^u$};
  \draw (3,3.7) node{$\bullet$};
  \draw (3,3.8) node[right]{$y^u$};
  \draw (0.7,1.65) node{$\bullet$};
  \draw (0.7,1.55) node[left]{$a$};
  \end{scope}
  \draw[thick, green!40!black] (2,1)--(4,1.5) node[midway, below]{$s$};
  \draw (2,1) node{$\bullet$};
  \draw (2,1) node[below]{$x$};
  \draw (4,1.5) node{$\bullet$};
  \draw (4,1.5) node[above]{$y$};
  %\draw[->, dashed, opacity=.5] (2.5,2.6) to[bend left] (1.5,3.5);
  %\draw[orange] (1.4,3.4) node[above]{\small$\cWc(y^u)$};
	\end{scope}
	\fill[green!80!white, opacity=.2] (-1,7)--(0,7)--(1.5,10)--(.5,10)--(-1,7);
	\draw[orange, thick] (0.75,8.5)--(0.75,12);
    \draw[red!40!black, ultra thick] (-1,7)--(.5,10);
	\draw[thick, green!40!black] (-.25,8.5)--(.75,8.5) node[midway, below]{$s$};
	\draw (.25,9.5) node{$\bullet$};
	\draw (.25,9.5) node[left]{$a_{\tau}$};
	\fill[pink!60!white, opacity=.5] (0,7)-- (0,10.5)--(1.5,13.5)--(1.5,10)--(0,7);
 	\draw[red!40!black, ultra thick] (0,10.5)--(1.5,13.5);
 	\draw (0.75,12) node{$\bullet$};
 	\draw (0.75,12) node[right]{$y^u_{\tau}$};
 	\draw (.25,11) node{$\bullet$};
 	\draw (.25,11) node[right]{$b_{\tau}$};
 	\draw (0.75,8.5) node{$\bullet$};
 	\draw (0.75,8.5) node[right]{$z^u_{\tau}$};
 	\draw (-.25,8.5) node{$\bullet$};
 	\draw (-.25,8.5) node[left]{$x^u_{\tau}$};
 	\draw [->, dashed, opacity=.5] (1,2.5) to[bend left] node[midway, left]{$f^\tau$} (0,6.5);
 	\draw[red!40!black](-1,7) node[left]{\small$f^\tau(J(x^u))$};
 	\draw[red!40!black](1.5,13.5) node[left]{\small$f^\tau(J(y^u))$};
 	\draw[->, dashed, opacity=.5] (1.5,.8) to [bend right] node[midway, right]{$f^t$} (5,7);
 	\draw[->, dashed, opacity=.5] (3.3,1.3) to [bend right] node[midway, right]{$f^t$} (5.5,7);
 	\draw[green!40!black, thick] (5,7.25)--(5.5,7.25) node[midway, above]{$s$};
 	\draw (5,7) node[above]{$\bullet$};
 	\draw (5.5,7) node[above]{$\bullet$};
 	\draw (5,7.3) node[left]{$x_t$};
 	\draw (5.5,7.3) node[right]{$y_t$};
 	\draw[->, dotted, opacity=.6] (0.75,10) to[bend left] (2,11);
\draw[orange, thick] (2,11) node[right]{$\asymp\eps$};
   \draw[->, dotted, opacity=.6] (1.25,12) to[bend right] (2.5,13);
   \draw[pink!60!black] (2.5,13) node[above]{$\cWcu(z^u_{\tau})$};
   
 	\end{tikzpicture}
 	\caption{\label{fig.driftalongthecenter} For each $\ell_n$ we have a picture like this one (we have suppressed the dependence on $n$ for simplicity). The top-left part will converge to Figure~\ref{fig.nolimite}.}
\end{figure}
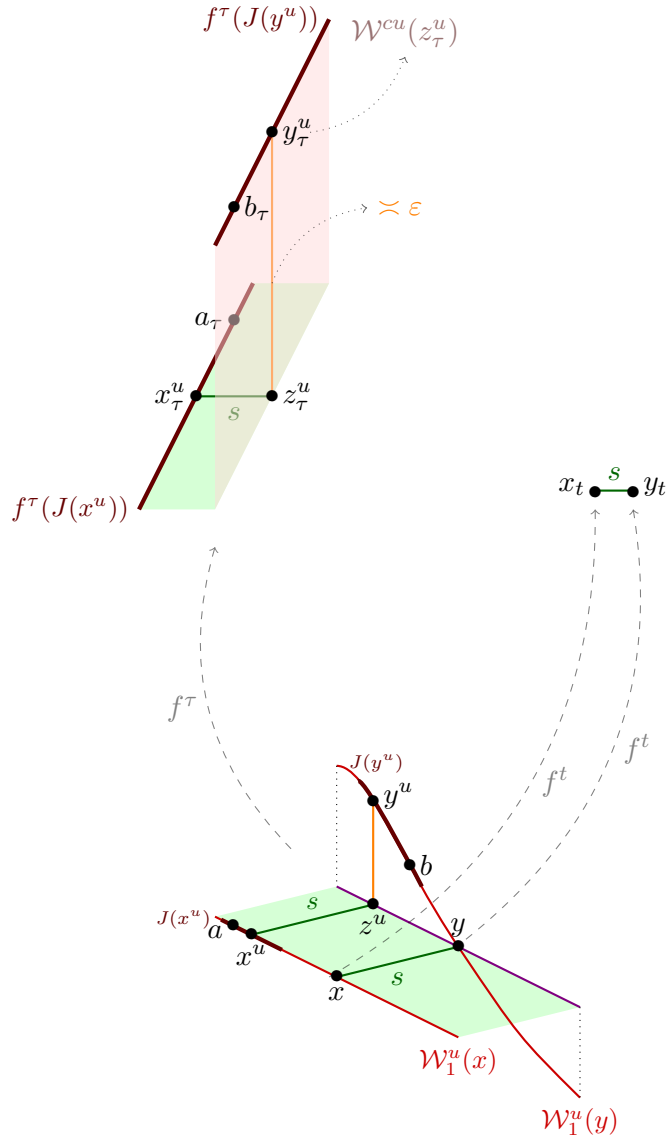

As the set $\cD$ is infinite, there exists a sequence $\ell_n\to+\infty$ of integers belonging to $\cD$. For each such integer, let $X_n=(x_n,x^u_n,\ell_n)$ and $Y_n=(y_n,y^u_n,\ell_n)$ be the pair of $(K_0,C,T)$-matched $Y$-configurations given by Lemma~\ref{l.fatality}. We let $a_n,b_n,\tau_n,t_n$ be the points and times corresponding to the pair $(X_n,Y_n)$ (see Definition \ref{d.matched}).  By definition, we have 
\[
f^{\tau_n}(a_n),f^{\tau_n}(b_n)\in K_0.
\]
We also consider the sequences 
\[
f^{\tau_n}(x^u_n),f^{\tau_n}(y^u_n).
\]

Upon extracting subsequences if necessary, we may assume that these four sequences converge respectively to points $a_{\infty}$, $b_{\infty}$, $p$ and $q$. Observe that $a_{\infty},b_{\infty}\in K_0$, but we do not know if the same holds for $p$ and $q$. However, we have good estimates for the distance between these points (see Figure~\ref{fig.driftalongthecenter}). Indeed, by Lemma~\ref{coro depl mes} there exists $\gamma=\gamma(C,T)>1$ and $M=M(C,T)$ such that

\begin{equation}
\frac{\varepsilon}{\gamma} \leq|\mathcal{H}_{p}^c(q)|\leq\gamma \varepsilon,
\end{equation}
and
\begin{equation}
\hat \nu_{a_{\infty}}^c\propto B_* \hat \nu_{b_{\infty}}^c.
\end{equation}
where $B(s)= \beta \cdot s$, satisfies $\frac 1M<|\beta|<M$. We shall prove that these conditions ensure that $b_{\infty}\in G(\eps,\tilde{M})$, for some constant $\tilde{M}$ to be defined later. This will show Proposition~\ref{p.levraidrift}.
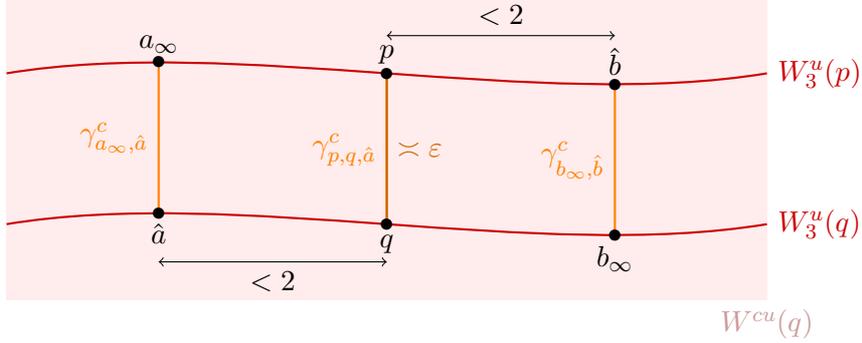
\begin{figure}[h!]
	\begin{tikzpicture}
	\fill[pink!60!white, opacity=.5] (-5,-2) rectangle (5,2);
	\draw[pink!80!black] (5,-2) node[below]{$W^{cu}(q)$};
	\draw[red!80!black, thick] (-5,1) .. controls (-2,1.5) and (2,0.5).. (5,1) node[right]{$W^u_{3}(p)$};
	\begin{scope}[yshift= -2cm]
    \draw[red!80!black, thick] (-5,1) .. controls (-2,1.5) and (2,0.5).. (5,1) node[right]{$W^u_{3}(q)$};
	\end{scope}
	\draw[orange, thick] (-3,1.14)--(-3,-.86) node[midway,left]{$\gamma^c_{a_{\infty},\hat{a}}$};
	\draw (-3,1.14) node{$\bullet$};
	\draw (-3,1.14) node[above]{$a_{\infty}$};
	\draw (-3,-.86) node{$\bullet$}; 
	\draw (-3,-.86) node[below]{$\hat{a}$};
	\begin{scope}[xshift=6cm, yshift=-0.3cm] 
	\draw[orange, thick] (-3,1.14)--(-3,-.86) node[midway,left]{$\gamma^c_{b_{\infty},\hat{b}}$};
	\draw (-3,1.14) node{$\bullet$};
	\draw (-3,1.14) node[above]{$\hat{b}$};
	\draw (-3,-.86) node{$\bullet$}; 
	\draw (-3,-.86) node[below]{$ b_{\infty}$};
	\end{scope}
	\begin{scope}[xshift=3cm, yshift=-0.15cm]
    \draw[orange, thick] (-3,1.14)--(-3,-.86) node[midway,left]{$\gamma^c_{p,q,\hat{a}}$};
    \draw[orange!80!black, thick] (-3,1.14)--(-3,-.86) node[midway,right]{$\asymp\eps$};
    \draw (-3,1.14) node{$\bullet$};
    \draw (-3,1.14) node[above]{$p$};
    \draw (-3,-.86) node{$\bullet$}; 
    \draw (-3,-.86) node[below]{$q$};
	\end{scope}
	\draw[<->] (0,1.5)--(3,1.5) node[midway,above]{$<2$};
    \draw[<->] (-3,-1.5)--(0,-1.5) node[midway,below]{$<2$};
	\end{tikzpicture}
\caption{\label{fig.nolimite} How to get invariance by an affine map using the exponential drift: we can move the leaf-wise measures from $b_{\infty}$ to $a_{\infty}$ with a linear map, and from $a_{\infty}$ to $\hat{b}$ with a linear map and from $\hat{b}$ back to $b_{\infty}$ with an affine map.}
\end{figure}
Notice that by construction the four points $a_{\infty},b_{\infty},p$ and $q$ belong to the same center unstable leaf. Let us consider the local strong unstable manifolds
$\cW^u_{3}(p)$ and $\cW^u_{3}(q)$. Notice that, also by construction we have $a_{\infty}\in\cW^u_{1}(p)$ and $b_{\infty}\in\cW^u_{1}(q)$. An application of Corollary \ref{c.gapdoseba} yields
\begin{equation}
\label{e.dcpq}
d_c(p,q)\leq\kappa\eps.
\end{equation}
Hence we can choose $\eps$ small enough so that the intersection point $\hat{a}=\cW^u(q)\cap \cW^c(a_\infty)$ given by Lemmas~\ref{l.coerenciaum} and \ref{l.coorenciadois} satisfies $d_u(\hat a,q)<2$. Similarly we can choose $\hat{b}=W^u(p)\cap \cW^c(b_\infty)$ so that $d_u(\hat b,p)<2$ (see Figure~\ref{fig.nolimite}). Denote by $\gamma^c_{a_{\infty},\hat{a}}$ the segment of center manifold joining the points $a_{\infty}$ and $\hat{a}$ and similarly consider the segments of center manifolds $\gamma^c_{p,q}$ and $\gamma^c_{b_{\infty},\hat{b}}$. By \eqref{e.dcpq} we have that
\[
\text{length}(\gamma^c_{p,q})=\text{length}(\Phi^c_p[0,\cH^c_p(q)])\asymp_{C,\gamma}\eps.
\]
Note that $d_u(a_{\infty},p)<2$, $d_u(b_{\infty},q)< 2$ and $d_c(p,q)\leq\kappa\eps<\rho_0$, where $\rho_0$ is the constant of  Lemma \ref{l.lemadorho0} (provided $\eps$ is choosen small enough). Hence Lemma \ref{l.lemadorho0} implies that the unstable holonomy maps $H^u_{p,a_{\infty}}$ and $H^u_{q,b_{\infty}}$ are bilipschitz with constants which depends only on $f$. Since $\gamma^c_{a_{\infty},\hat{a}}=H^u_{p,a_{\infty}}(\gamma^c_{p,q})$ and similarly $\gamma^c_{b_{\infty},\hat{b}}=H^u_{p,a_{\infty}}(\gamma^c_{p,q})$ we deduce that 
\[
\text{length}(\gamma^c_{a_{\infty},\hat{a}})\asymp_{C,\gamma}\eps\:\:\:\:\:\textrm{and}\:\:\:\:\:\text{length}(\gamma^c_{b_{\infty},\hat{b}})\asymp_{C,\gamma}\eps
\]   
Therefore, as $\eps$ is small we deduce that $\hat{b}\in \cW^c_1(b_{\infty})$. Using the uniform bound for the $C^1$ norm of the normal forms in segments of bounded length we have that
\begin{equation}
\label{e.dobchapeuprobinfinito}
\cH^c_{b_{\infty}}(\hat{b})\asymp_{C,\gamma}\eps.
\end{equation}
We apply Lemma~\ref{changt inst} to get a linear map $L\colon\R\to\R$ whose derivative satisfies $L^{\prime}(0)\in[C^{-1},C]$, for a constant $C=C(f)$ such that $\hat{\nu}_{\hat{b}}\propto L_*\hat{\nu}_{a_{\infty}}$. Therefore,
\[
\hat{\nu}_{\hat{b}}\propto (LB)_*\hat{\nu}_{b_{\infty}}
\]
and the linear map $L\circ B$ has a derivative bounded by $[C^{-1}M^{-1},CM]$. This together with \eqref{e.dobchapeuprobinfinito} implies that $b_{\infty}\in G(\eps,\tilde{M})$ for some constant $\tilde{M}=\tilde{M}(\gamma,C,M)$ (see \eqref{eq.definitiongem} for the definition of the set $G(\eps,\tilde{M})$), concluding the proof of Proposition~\ref{p.levraidrift}.\qed

\section{Construction of matched $Y$-configurations: end of the proof}\label{s.end_proof}
This section is devoted to the proof of Lemma \ref{l.fatality}. Recall that we reduced our main Theorem \ref{mainthm.dicotomia} to Lemma \ref{l.fatality}. Let us recall what we want to do. We assume that $\mu(\mathbf{B})=0$, where $\mathbf{B}=\mathbf{B}$ is the Bad set introduced in Definition \ref{def bad set}. In particular, it follows from the zero-one law (Theorem \ref{th_0-1}) that for $\mu$-a.e. $x\in \TT$, 
\begin{equation}\label{eq loi zero un}
\mu^s_x\left\{y\in\xi^s(x):\alpha^s(x,y)=0\right\}=0.
\end{equation}
where we recall that $\alpha^s(x,y)=\angle(DH^s_{x,y}(x)E^u(x),E^u(y))$ when $y\in\cW^s(x)$.  Given a large compact set $K_0$ we want to find arbitrarily long pairs of \emph{matched} $Y$-configurations which are $K_0$-good (meaning that their points belong to $K_0$).  

%\begin{theorem}[Joint integrability v.s. angles almost everywhere]\label{thm dich deux}
%	Let $f\colon \T^3\to \T^3$ be an Anosov diffeomorphism with a one-dimensional stable bundle $E^s$ and a two-dimensional unstable bundle $E^{cu}=E^{c}\oplus E^{u}$, and whose stable holonomies are $C^1$. The following dichotomy holds:
%\begin{equation}\label{eq loi zero un}
%%	\item either $\mu(\mathbf{B})=1$,
%%	\item or %for any ergodic $f$-invariant $u$-Gibbs measure $\mu$, it holds: 
%%\mu_x^s(\cP(x))=
%\mu^s_x\left\{y\in\xi^s(x):DH_{x,y}^s E^u(x)=E^u(y)\right\}=0.
%\end{equation}
%\end{theorem}

\subsection{Angle condition and absolute continuity}\label{ss.angle_condition}

We start this section by showing how to use the condition  $\mu(\mathbf{B})=0$ to obtain abundance of pairs of points $y\in\cWs_1(x)$ so that $\alpha^s(x,y)$ is uniformly bounded from below.

Before carrying on the proof recall that for $\ast = s,u,c,cu$, we have fixed measurable partitions $\xi^\ast$ subordinate to $\cW^\ast$  as well as disintegrations $\{\mu^{\ast}_x\}_x$ relative to $\xi^\ast$. We fixed a Lusin set $\cL$ of measure $\mu(\cL)>1-\delta$ (further assumptions on $\delta$ will be given later on) as given in \S \ref{Lusin lusin}. In particular there exists $r_0>0$ such that for every $x\in\cL$, 
\begin{equation}\label{eq_inner_}
\cW^\ast_{r_0}(x)\dans\xi^\ast(x).
\end{equation}

\subsubsection{A Markov type inequality}\label{sss.Markoukouille}
Our first ingredient will be a simple inequality \`a la Markov that will be used several times throughout the section.
\begin{lemma}
	\label{l.leminha}
	Let $(X,\cB,\mu)$ be a probability space, and $\eta\in (0,1)$. Let $\psi\colon X\to[0,1]$ be a measurable function with $\int \psi d\mu>1-\eta$. Let $B\eqdef\{x\in X:\psi(x)>1-\sqrt{\eta}\}$. Then, $\mu(B)>1-\sqrt{\eta}$. 
\end{lemma}
\begin{proof}
	We have	
	\[
	1-\eta<\int_B \psi d\mu+\int_{X\setminus B}\psi d\mu\leq \mu(B)+(1-\sqrt{\eta})(1-\mu(B))=1-\sqrt{\eta}(1-\mu(B)),
	\]
	which gives $\mu(B)>1-\sqrt{\eta}.$
\end{proof} 

%\subsubsection{Ergodic properties of $u$-Gibbs measures}\label{sss_Entropy_Lyap_dim} Recall that $f:\TT\to\TT$ has $1$-dimensional strong unstable and stable bundles. In what follows $\mu$ is an ergodic $u$-Gibbs measure of $f$. We let $h_\mu$ denote the \emph{$\mu$-entropy} of $f$; $\chi^u_\mu$, $\chi^s_\mu$ denote the \emph{Lyapunov exponents} of $\mu$ along $E^s$ and $E^u$ respectively; and 
%$d^s_\mu$ denote the dimension of $\mu$ in the \emph{stable dimension}. Recall that it is defined as
%\begin{equation}\label{def_stable_dimension}
%d^s_\mu=\lim_{r \to 0} \frac{\log \mu_{x}^s[\cWs_{r}(x)]}{\log r},
%\end{equation}
%for $\mu$-a.e. $x\in \TT$, where $\{\mu_x^s\}_x$ is a disintegration of $\mu$ with respect to any partition $\xi^s$ subordinate to $\cW^s$ (see \cite{LYII}).
%
%
%\begin{lemma}\label{l_entropy}
%We have $\chi^u_\mu>0,\,\chi^s_\mu<0,\,h_\mu>0$ and $d^s_\mu>0$
%\end{lemma}
%
%\begin{proof}
%That $\chi^u_\mu>0$ and $\chi^s_\mu<0$ follows from the uniform expansion and contraction of $\cW^u$ and $\cW^s$. In our context (where $\dim \cW^u=1$) have an inequality that holds for every $u$-Gibbs measures: $h_\mu\geq\chi_\mu^u>0$. We refer to \cite{LedStr} or to \cite[Proposition 2.3]{AY} for different approaches (this is also consequence of the general Ledrappier-Young formula: \cite{LYII}). This proves the first item.
%
%Since $f$ and $f^{-1}$ have the same $\mu$-entropy and since $-\chi^s_\mu$ is the unique positive Lyapunov exponent of $f^{-1}$, Ledrappier-Young's formula gives $0<h_\mu=-d^s_\mu\chi^s_\mu$. The second item follows.
%\end{proof}

\subsubsection{Bounding from below the angle function} We fix a measurable partition $\xi^s$ subordinate to $\cW^s$ and a disintegration $\{\mu^s_x\}_x$ of $\mu$ relative to $\xi^s$. Below we use the condition $\mu(\mathbf{B})=0$: by \eqref{eq loi zero un} it means that for $\mu$-a.e. $x\in\T^3$ and $\mu^s_x$-a.e. $y\in\xi^s(x)$, $\alpha^s(x,y)>0$.

%We fix a small constant $\eta$ so that
%\begin{equation}\label{eq_choice_eps}
%0<\delta<\eta<\left(\frac{1}{9}\right)^{\frac{1}{d_\mu^s}},
%\end{equation}
%where we recall that $d^s_\mu$ is the stable dimension of $\mu$. Note that $\d^s_\mu\leq 1$ so we clearly have $\eps<1/9$.

Until the end of the section we fix $r_0>0$ such that \eqref{eq_inner_} holds for all $x\in\cL$. We consider a constant $\eta=\eta(\delta)>0$ which goes to $0$ with $\delta$. This will be explicitly given later on.
\begin{lemma}\label{l.angle} Let $B\dans\cL$ of measure $\mu(B)>1-\eta$. Then there exists a measurable set $B'\dans B$ as well as a number $c=c(\eta)$ such that $0<c\leq r_0\eta$ and such that the following properties hold:

\begin{enumerate}
\item $\mu(B')>1-2\sqrt{\eta}$; and for $\mu$-a.e. $x\in B'$ there exists $y\in B\cap\xi^s(x)$ such that
%\item $d_{\cW^s}(x,y)>c$;
\item $\alpha^s(x,y)>c$.
\end{enumerate}
\end{lemma}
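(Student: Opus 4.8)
The plan is to extract the set $B'$ from $B$ via a standard averaging (Fubini/Markov) argument on the conditional measures along stable manifolds, using crucially the hypothesis $\mu(\mathbf{B})=0$ and the consequence \eqref{eq loi zero un}. First I would set up the relevant measurable function. For $\mu$-a.e. $x$, the condition $\mu(\mathbf{B})=0$ together with the zero-one law says $\mu^s_x\{y\in\xi^s(x):\alpha^s(x,y)=0\}=0$, i.e. $\alpha^s(x,y)>0$ for $\mu^s_x$-a.e. $y\in\xi^s(x)$. Define, for $c>0$,
\[
\phi_c(x)\eqdef \mu^s_x\big(\{y\in\xi^s(x)\cap B:\ \alpha^s(x,y)>c\}\big).
\]
As $c\downarrow 0$, the sets increase to $\{y\in\xi^s(x)\cap B:\alpha^s(x,y)>0\}$, which by the above has $\mu^s_x$-measure equal to $\mu^s_x(\xi^s(x)\cap B)$. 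So $\phi_c(x)\uparrow \mu^s_x(\xi^s(x)\cap B)$ as $c\downarrow 0$, for $\mu$-a.e. $x$.

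Next I would control the average of $x\mapsto \mu^s_x(\xi^s(x)\cap B)$. Since $\mu(B)=\int \mu^s_x(B)\,d\mu(x)$ and $\mu(B)>1-\eta$, and since for $\mu$-a.e. $x\in B$ the atom $\xi^s(x)$ contains $\mathcal W^s_{r_0}(x)$ (as $B\subset\cL$ and \eqref{eq_inner_} holds), we get $\int \mu^s_x(\xi^s(x)\cap B)\,d\mu(x)\geq \mu(B)-\mu(\TT\setminus\cL)$ — more simply, $\mu^s_x(\xi^s(x)\cap B)=\mu^s_x(B)$ for a.e. $x\in B$, because the conditional measure $\mu^s_x$ is supported on $\xi^s(x)$; hence $\int\mu^s_x(\xi^s(x)\cap B)d\mu(x)\ge\int_B\mu^s_x(B)d\mu(x)$. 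Combined with $\mu(B)>1-\eta$ and $\mu(B)=\int\mu^s_x(B)d\mu(x)$, one checks $\int \mu^s_x(\xi^s(x)\cap B)\,d\mu(x)>1-2\eta$ (being slightly careful, one can absorb this into the choice of $\eta$, or apply Lemma~\ref{l.leminha} first to get a full-measure-ish set where $\mu^s_x(B)$ is close to $1$). By monotone convergence in $c$, we may then pick $c=c(\eta)>0$ small enough — and, shrinking further if necessary, with $0<c\le r_0\eta$ — so that
\[
\int \phi_c(x)\,d\mu(x) > 1-2\eta.
\]
Now set $B'\eqdef\{x\in B:\ \phi_c(x)>1-2\sqrt{\eta}\}\cap B$; actually I would apply Lemma~\ref{l.leminha} to the function $\psi=\phi_c$ (valued in $[0,1]$, with integral $>1-2\eta$, so with $\eta$ replaced by $2\eta$ or after renaming $\eta$) to deduce $\mu(\{\phi_c>1-\sqrt{2\eta}\})>1-\sqrt{2\eta}$, and intersect with $B$: since $\mu(B)>1-\eta$, the intersection $B'$ still has $\mu(B')>1-2\sqrt{\eta}$ (adjusting the numerical constant if needed; the paper only needs the bound with some explicit $\sqrt{\eta}$-type quantity). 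For every $x\in B'$, since $\phi_c(x)>0$, there exists $y\in\xi^s(x)\cap B$ with $\alpha^s(x,y)>c$, which is exactly conclusion (2), and conclusion (1) is the measure estimate on $B'$.

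The main thing to be careful about — rather than a deep obstacle — is bookkeeping of the constants: making sure the threshold $c$ can be chosen uniformly (depending only on $\eta$, not on $x$), which is where monotone convergence of $\phi_c\uparrow$ and integration are used, and then tracking that the final measure loss is of the claimed order $\sqrt{\eta}$ after the Markov-type step. The constraint $c\le r_0\eta$ is harmless since we are free to shrink $c$, and it will be used later to guarantee that the partner point $y$, and small unstable perturbations of related points, stay inside the relevant inner neighbourhoods $\mathcal W^\ast_{r_0}$ of points of $\cL$. No regularity of holonomies beyond the $C^1$-ness of $H^s$ (already fixed in the standing hypotheses, ensuring $\alpha^s$ is well-defined and continuous) is needed here; the argument is purely measure-theoretic, combining $\mu(\mathbf B)=0$, the zero-one law, and Lemma~\ref{l.leminha}.
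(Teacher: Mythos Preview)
Your approach is correct and follows the same line as the paper's proof: combine the zero-one law with a Markov-type inequality on conditional measures. The paper organizes it slightly differently, defining separately $B_1=\{x:\mu_x^s(B)>1-\sqrt{\eta}\}$ and $B_2=\{x:\mu_x^s(\alpha^s(x,\cdot)>\alpha)>1-\eta\}$ for a threshold $\alpha=\alpha(\eta)$, then setting $B'=B\cap B_1\cap B_2$ and $c=\min(\alpha,r_0\eta)$.

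There is one point you glossed over which the paper's separation handles cleanly: the constant $c$ must depend only on $\eta$, \emph{not on $B$}. In your approach the monotone convergence $\int\phi_c\,d\mu\uparrow\mu(B)$ has a rate that a priori depends on $B$, so the $c$ you pick could too. This matters later: the lemma is applied to $B=f^{-\ell}(K(\ell))$ for varying $\ell$, and $c(\eta)$ is used as an $\ell$-independent lower bound (e.g.\ in the definition of $I^u(x)$ and in the $(C,\ell)$-quadrilateral condition). The fix is easy within your framework: bound $\phi_c(x)\geq\mu_x^s(B)-\mu_x^s\{y:\alpha^s(x,y)\leq c\}$, observe that the second term is independent of $B$ and tends to $0$ (by $\mu(\mathbf{B})=0$ and monotone convergence), and choose $c=c(\eta)$ so that it is $<\eta$. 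Then $\int\phi_c\,d\mu>\mu(B)-\eta>1-2\eta$ uniformly in $B$, and your Markov step gives $\mu(B')>1-\sqrt{2\eta}-\eta>1-2\sqrt{\eta}$ for $\eta<1$, matching the stated bound.
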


%\begin{lemma}[Sliding along the stable with uniform bounds on angles]
%	\label{l.angle}
%	For $\delta>0$ sufficiently small, there exists $c=c(\delta)>0$ (which tends to $0$ with $\delta$) such that given any set $L^{\radiusprime}\subset\mathbb{T}^3$ with $\mu(L^{\prime})>1-\delta$, there exists a subset $L\subset L^{\prime}$ with large measure ($\mu(L)>1-\delta'$, with $\delta'=\delta'(\delta)\to 0$ as $\delta\to 0$) satisfying: for every $q\in L$, there exists $q^{\prime}\in \mathcal{W}^s(q)\cap L^{\prime}$ such that $c(\delta)< d_{\mathcal{W}^s}(q,q^{\prime})<c(\delta)^{-1}$ and $\angle(DH^s_{q,q^{\prime}}(q)E^u(q),E^u(q^\prime))=|\alpha^s(q,q^{\prime})|> c(\delta)$.
%\end{lemma}

%\begin{remark}[{\color{blue} Pertinent?}]
%	Note that for some constant $c'(\delta)>0$, we also have $|\alpha^s(q,q^{\prime})|<c'(\delta)$, for all $q\in L$, and all $q^{\prime}\in \mathcal{W}^s(q)$ such that $c(\delta)< d_{\mathcal{W}^s}(q,q^{\prime})<c(\delta)^{-1}$. 
%\end{remark}

\begin{proof}
%First recall that by definition of the Lusin set $\cL$, if $A\dans\cL$ then we have $\cW^s_{r_0}(x)\dans\xi^s(x)$ for every $x\in A$.
	
	Let us consider the set $B_1\eqdef\{x \in \TT:\mu_x^s[B\cap\xi^s(x)]>1-\sqrt{\eta}\}$. It follows from Lemma \ref{l.leminha} that $\mu(B_1)>1-\sqrt{\eta}$. 
	
	Now, given $\alpha\geq 0$, we define for $\mu$-a.e. $x \in \TT$:
	\begin{align*}
		\mathscr{A}_\alpha(x)&\eqdef\{y \in \xi^s(x): \alpha^s(x,y)> \alpha\},\\
		\mathscr{A}_{\alpha,\eta}&\eqdef\{x\in \TT: \mu_x^s[\mathscr{A}_\alpha(x)]>1-\eta\}.
	\end{align*}
	Using that $\mu(\mathbf{B})=0$ we find $\mu_x^s[\mathscr{A}_0(x)]=1$,  for $\mu$-almost every $x\in \TT$. This implies that $\mu[\mathscr{A}_{0,\eta}]=1$. Then, there exists $\alpha=\alpha(\eta)>0$ such that the set $B_2\eqdef\mathscr{A}_{\alpha,\eta}$ has measure $\mu(B_2)>1-\eta$. 
	
%	Using the notations of \S \ref{sss_Entropy_Lyap_dim} as well as Lemma \ref{l_entropy} the stable dimension of $\mu$ satisfies $d^s_\mu>0$. Thus, 
%	for $\mu$-a.e. $x\in \TT$, there  exists $\rho(x)>0$, depending measurably on $x$, such that for $r\in (0,\rho(x))$, it holds $\mu_{x}^s[\cWs_r(x)]\leq r^{d^s_\mu/2}$. 
%	We then choose $\rho_0=\rho_0(\eta)>0$ sufficiently small such that the set $A_3\eqdef\{x\in \TT: \rho(x)>\rho_0\}$ has measure $\mu(A_3)>1-\eta$. By definition, for any $x \in A_3$, and for any $r\in (0,\rho_0)$, we have $\mu_x^s[\mathcal{W}_r^s(x)]\leq r^{d^s/2}$.
	
	Let us define the set 
	$$B'\eqdef B\cap B_1\cap B_2,$$
	so
	$$\mu(B')> 1- 2\eta-\sqrt{\eta}>1-2\sqrt{\eta}$$
	(provided $\eta<1/4$, so $2\eta<\sqrt{\eta}$). Then for any $x \in B'$, 
	\begin{itemize}
		\item $\mu_x^s(B\cap\xi^s(x))>1-\sqrt{\eta}$;
		\item $\mu_x^s\{y \in \xi^s(x): \alpha^s(x,y)> \alpha\}>1-\eta$;
	\end{itemize}
	Set $c=c(\eta)\eqdef\min\left(\alpha(\eta),r_0\eta\right)\in \big(0,r_0\eta\big]$. Then, for every $x \in B'$, we have
$$\mu_x^s\{y\in B\cap\xi^s(x): \alpha^s(x,y)>c\}\geq  1-\sqrt{\eta}-\eta>1-2\sqrt{\eta}>0.$$

So we conclude that for every $x\in B'$, then there exists $y\in\xi^s(x)$ such that $\alpha^s(x,y)>c$. This concludes the proof of the lemma.
\end{proof}

\subsection{Recurrence estimates: building long and good $Y$-configurations}\label{recurrence} In this subsection we fix a measurable set $K\dans\T^3$ of measure $\mu(K)>1-\delta$ for some small $\delta>0$. The main result of this paragraph is Proposition \ref{p.claim64}, that builds $K$-good $Y$-configurations of length $\ell$ for every integer $\ell$ inside a subset of $\N$ of positive density. We start by establishing some preliminary results from ergodic theory.

\subsubsection{An elementary quantitative recurrence estimate}\label{sss.elementary}

For the sake of clarity in the presentation, we introduce the following notion.

\begin{defi}
	Given $\gamma>0$, $n>0$ and $B$ a measurable set, we say that a point $x\in\mathbb{T}^3$ is \emph{$(\gamma,n)$-recurrent} to $B$ if $L>n$ implies that 
	\[
	\#\left\{\ell\in[0,L]:f^\ell(x)\in B\right\}> (1-\gamma)L.
	\]    
\end{defi} 

The following holds for any ergodic system $(f,\cB,\mu)$. 

\begin{lemma}
	\label{l.birkhorov}
	For every measurable set $B$ with $\mu(B)>1-\gamma$ there exist $T=T(\gamma)$ and a subset $B^{\circ}\subset B$ with $\mu(B^\circ)>1-\gamma$ so that any $x\in B^{\circ}$ is $(\gamma,T)$-recurrent to $B$.
	
\end{lemma}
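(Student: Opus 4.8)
The plan is to prove Lemma~\ref{l.birkhorov} via the Birkhoff ergodic theorem applied to the indicator function of the complement of $B$, combined with the elementary Markov-type inequality of Lemma~\ref{l.leminha}. First I would set $\varphi \eqdef \mathbf{1}_{\TT\setminus B}$, so that $\int \varphi\, d\mu = \mu(\TT\setminus B) < \gamma$. By the Birkhoff ergodic theorem, the Ces\`aro averages $\frac{1}{L}\sum_{\ell=0}^{L-1}\varphi(f^\ell(x))$ converge $\mu$-almost everywhere to $\int \varphi\, d\mu < \gamma$ as $L \to +\infty$. The key point is to upgrade this pointwise-in-the-limit statement into a \emph{uniform} statement valid for all $L$ beyond a fixed threshold $T$, on a set of measure close to $1$; this is where the obstacle lies, since almost-everywhere convergence alone gives a threshold $T(x)$ depending on the point.

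To handle this, I would use Egorov's theorem (or, equivalently, a direct measure estimate): the function $x \mapsto \sup_{L > n}\big|\frac{1}{L}\sum_{\ell=0}^{L-1}\varphi(f^\ell(x)) - \int\varphi\, d\mu\big|$ decreases to $0$ $\mu$-a.e.\ as $n\to\infty$, hence there exists $T = T(\gamma)$ such that the set
\[
E \eqdef \Big\{x \in \TT : \sup_{L > T}\frac{1}{L}\sum_{\ell=0}^{L-1}\varphi(f^\ell(x)) < \gamma\Big\}
\]
has $\mu(E) > 1 - \gamma^2$ (or any prescribed small deficiency; I will choose the bound so that the final intersection works). On $E$, for every $L > T$ we have $\#\{\ell \in [0,L] : f^\ell(x) \notin B\} < \gamma L$, which is precisely $(\gamma, T)$-recurrence to $B$. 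Actually the definition uses the half-open count over $[0,L]$; the same estimate holds up to adjusting $\gamma$ slightly, which I absorb into the threshold.

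Finally I would set $B^\circ \eqdef B \cap E$. Since $\mu(B) > 1 - \gamma$ and $\mu(E) > 1 - \gamma^2 > 1-\gamma$ for $\gamma < 1$, one might only get $\mu(B^\circ) > 1 - \gamma - \gamma^2$, which is slightly weaker than claimed. To recover exactly $\mu(B^\circ) > 1-\gamma$, I would instead apply the argument with a smaller auxiliary parameter: pick $\gamma' < \gamma$ and run Birkhoff on $\mathbf{1}_{\TT\setminus B}$ noting $\int \mathbf{1}_{\TT\setminus B}\, d\mu < \gamma$ still, then choose $T$ and the Egorov set $E$ with $\mu(E) > 1 - (\gamma - \gamma')$ so that every $x \in E$ is $(\gamma, T)$-recurrent to $B$ (using $\gamma' < \gamma$ to absorb the convergence error), and set $B^\circ = B \cap E$, giving $\mu(B^\circ) > 1 - \gamma$. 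The main obstacle is purely the passage from pointwise convergence to a uniform threshold, which Egorov resolves; everything else is bookkeeping of the measure deficiencies. Once $B^\circ$ is produced with the stated measure and recurrence property, the lemma follows.
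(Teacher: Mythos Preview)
Your proposal is correct and follows essentially the same approach as the paper: apply Birkhoff's ergodic theorem to (the indicator of) $B$ or its complement, then use Egorov's theorem to pass from almost-everywhere convergence to a uniform threshold $T$ on a set of measure $>1-\gamma$, and intersect with $B$. The paper's proof is terser about the measure bookkeeping you spell out, but the argument is the same; your mention of Lemma~\ref{l.leminha} is not actually needed here.
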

\begin{proof}
	Let $B$ be a measurable set with $\mu(B)>1-\gamma$. We consider the sequence $(\varphi_n)_{n \in \N}$ of $L^1$ functions given by $\varphi_n(x)\eqdef \frac{1}{n}\sum_{k=0}^{n-1} \mathbf{1}_{B}(f^k(x))$. By Birkhoff's Theorem, the sequence $(\varphi_n)_{n \in \N}$ converges almost surely to the constant function $\mu(B)$. Moreover, by Egorov's Theorem, there exists a measurable subset $B^\circ\subset B$ of measure $\mu(B^\circ)>1-\gamma$ such that the sequence $(\varphi_n|_{B^\circ})_{n \in \N}$ converges uniformly to $\mu(B)$. Then, there exists $T>0$ such that for any $x \in B^\circ$ and $n>T$, $\varphi_n(x)> 1-\gamma$. Such an $x$ is $(\gamma,T)$-recurrent to $B$ by definition. 
\end{proof}

%For simplicity let us assume that $\mu(K_0)>1-0.2\delta^8$. Applying Lemma~\ref{l.birkhorov} we obtain a set $K\subset K_0$, with measure larger than $1-0.5\delta^8$, and a time $T_0=T_0(\delta)$ so that every point in $K$ is $(\delta,T_0)$-recurrent to $K_0$. We apply Lemma~\ref{l.birkhorov} once more, now to the set $K$ itself, obtaining a set $K_1\subset K$ with measure larger than $1-\delta^8$ and a time $T_1=T_1(\delta)$ (which we may take larger than $T_0$ if needed) so that every point in $K_1$ is $(\delta,T_1)$-recurrent to $K$. 

\subsubsection{Stopping times and return times to $K$ for pairs $(x,x^u)$} 
Recall that we fixed a measurable set $K\dans \TT$ with measure $\mu(K)>1-\delta$. Applying Lemma \ref{l.birkhorov} to $K$ yields {\color{blue}an} integer $T_1=T_1(\delta)$ and a measurable subset $K^\circ\dans K$ with measure $\mu(K^\circ)>1-\delta$ consisting of $(\delta,T_1)$-recurrent points to $K$. Up to enlarging $T_1$ we can assume 
\begin{equation}
\label{e.otemponãopara}
T_1>4T_0
\end{equation}
where $T_0$ is the constant given by the synchronization estimates of Lemmas~\ref{lemme synchro} and \ref{l.synchroinstable}.

Given a pair $\omega=(x,x^u),$ with $x^u\in\xi^u(x)$ we set
\begin{equation}\label{eq_leslongueurs}
E(\omega)\eqdef\{\ell\in\N:f^{\tau(\ell)}(x^u)\in K\:\textrm{and}\:f^{t(\ell)}(x)\in K\}.
\end{equation}

The objects $T_1$ and $K^\circ$ appearing in the next statement are the ones constructed in the previous paragraph.

\begin{lemma}
	\label{l.lemadosaci}
	There exists a constant $r>0$ which only depends on the quasi isometric estimates so that for every $L>T_1$, $x\in K^\circ$ and $x^u\in\xi^u(x)\cap K^\circ$ and $\omega=(x,x^u)$,
	$$\# \left(E(\omega)\cap [0,L]\right) > (1-r\delta)L.$$
\end{lemma}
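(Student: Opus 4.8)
The goal is a counting estimate: for a pair $\omega = (x,x^u)$ with $x, x^u \in K^\circ$, the set $E(\omega)$ of lengths $\ell$ for which both $f^{\tau(\ell)}(x^u)$ and $f^{t(\ell)}(x)$ land in $K$ has density close to $1$ in $[0,L]$. The strategy is to transfer the recurrence property of $x$ and $x^u$ (as $(\delta,T_1)$-recurrent points to $K$, i.e. Birkhoff-type density statements along their $f$-orbits) through the stopping-time functions $\ell \mapsto \tau(\ell)$ and $\ell \mapsto t(\ell)$, using the quasi-isometric estimates of Lemma~\ref{l_qi_estimates}.

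First I would fix $x \in K^\circ$, $x^u \in \xi^u(x) \cap K^\circ$, and a length $L > T_1$. Since $x^u \in K^\circ$ is $(\delta, T_1)$-recurrent to $K$, the set $S_1 \eqdef \{n \in [0, \tau(L)] : f^n(x^u) \in K\}$ has cardinality at least $(1-\delta)\tau(L)$ (applying the recurrence property with horizon $\tau(L) > T_1$, which we may assume by taking $L$ large, or by noting $\tau$ is monotone and grows at least linearly via Lemma~\ref{l_qi_estimates}). Now I want to count $\ell \in [0,L]$ with $\tau(\ell) \in S_1$. The point is that $\ell \mapsto \tau(\ell)$ is, by Lemma~\ref{l_qi_estimates}, a quasi-isometry: $\Theta^{-1} m - A < \tau(\ell+m) - \tau(\ell) < \Theta m + A$. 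This means that the image $\tau([0,L])$ is a subset of $[0, \tau(L)]$ that is ``$\Theta$-coarsely dense'' — consecutive values $\tau(\ell), \tau(\ell+1)$ differ by at most $\Theta + A$, and $\tau$ is (weakly) monotone increasing. Consequently, if a point $n \in [0,\tau(L)]$ is \emph{not} in the image, it is within distance $\Theta + A$ of some $\tau(\ell)$; more usefully, each value $\tau(\ell)$ is hit by at most $\Theta + A + 1$ integers $\ell$ (since $\tau(\ell+m) - \tau(\ell) \geq \Theta^{-1}m - A \geq 1$ once $m$ exceeds $\Theta(A+1)$), and conversely the preimage under $\tau$ of $S_1$ has size at least $(\Theta + A)^{-1}(|S_1| - (\text{number of gaps})) \gtrsim (\Theta+A)^{-1}(1-\delta)\tau(L) - (\text{error})$. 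Comparing with $|[0,L]| = L+1$ and using the two-sided linear bound $\Theta^{-1}L - A < \tau(L) < \Theta L + A$, one gets $\#\{\ell \in [0,L] : f^{\tau(\ell)}(x^u) \in K\} > (1 - r_1 \delta) L$ for a constant $r_1$ depending only on $\Theta, A$ — provided $\delta$ is small and $L$ large; the additive constants $A$ are absorbed since $L > T_1$ and $T_1 = T_1(\delta)$ can be taken large.

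Next I would run the identical argument for the second condition: since $x \in K^\circ$ is $(\delta, T_1)$-recurrent to $K$, and $\ell \mapsto t(\ell)$ is also a quasi-isometry by Lemma~\ref{l_qi_estimates}(2), the same coarse-density/fiber-counting yields $\#\{\ell \in [0,L] : f^{t(\ell)}(x) \in K\} > (1 - r_2 \delta)L$. Taking the intersection of the two sets of good $\ell$'s (which is exactly $E(\omega) \cap [0,L]$), we lose at most the sum of the complements, so $\#(E(\omega) \cap [0,L]) > (1 - (r_1+r_2)\delta)L$; set $r \eqdef r_1 + r_2$, which depends only on the quasi-isometry constants $\Theta, A$ of $f$. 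The main obstacle — and the step requiring genuine care — is the combinatorial lemma converting ``$n$-orbit of $x^u$ is $(1-\delta)$-dense in $[0,\tau(L)]$'' into ``$\ell$-set with $\tau(\ell)$ in that dense set is $(1-O(\delta))$-dense in $[0,L]$'': one must be careful that the quasi-isometry $\tau$ can both skip integers (a value $n$ in the dense set may not be of the form $\tau(\ell)$) and repeat values (several $\ell$ may share the same $\tau(\ell)$), and that the additive constant $A$ does not spoil the density bound. This is handled precisely because $A$ is fixed while $L \to \infty$ (the relevant $L$ satisfy $L > T_1$ and we may further demand $T_1 \gg A, \Theta$, consistent with $T_1 = T_1(\delta)$), and because the fiber sizes of $\tau$ are bounded by a constant $\Theta(A+1)+1$ depending only on $f$, so the comparison of cardinalities is uniform. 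I do not expect any new dynamical input beyond Lemma~\ref{l_qi_estimates} and the recurrence set-up already in place.
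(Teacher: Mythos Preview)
Your plan is correct and follows the same approach as the paper: transfer the $(\delta,T_1)$-recurrence of $x$ and $x^u$ through the quasi-isometries $\ell\mapsto t(\ell)$ and $\ell\mapsto\tau(\ell)$ using the bounded-fiber property from Lemma~\ref{l_qi_estimates}, then intersect. One simplification worth noting: rather than lower-bounding $\#\{\ell:\tau(\ell)\in S_1\}$ directly (your formula $(\Theta+A)^{-1}(|S_1|-\text{gaps})$ is awkward precisely because $\tau$ can skip good values), the paper instead upper-bounds the complement: if $f^{\tau(\ell)}(x^u)\notin K$ then $\tau(\ell)$ lies in the bad set of size $\leq\delta(\Theta L+A)$, and each bad value has at most $O(A\Theta)$ preimages, giving the bound immediately without tracking gaps.
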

\begin{proof}
Recall that the stopping times satisfy $t(\ell)\geq 0$ as well as the quasi-isometric estimate
$$\Theta^{-1}|\ell-m|-A<|t(\ell)-t(m)|<\Theta|\ell-m|+A,$$
for some constants $\Theta>1$ and $A>0$ depending only on $f$: see Lemma \ref{l_qi_estimates}. In particular for every $L>0$ we have $t(L)\in[0,L']$ where $L'=\Theta L+A$. By hypothesis, $x\in K^\circ$ so whenever $L>T_1$ (and thus $L'>T_1$), we have
$$\#\{k\in[0,L']:f^k(x)\notin K\}\leq \delta L'.$$

On the other hand, the quasi-isometric estimate also implies that an integer in $[0,L']$ has at most $2A\Theta$ preimages by $t$ so we have
$$\#\{\ell\in[0,L]:f^{t(\ell)}(x)\notin K\} \leq 2A\Theta\#\{k\in[0,L']:f^k(x)\notin K\}\leq 2A\Theta\delta (\Theta L+A).$$

This proves that for every $L>T_1$, the set $E_t(x)\eqdef\{\ell\in\N:f^{t(\ell)}(x)\in K\}$ has density $>1-r_1\delta$ inside $[0,L]$ for some constant $r_1>0$ depending only on $A$ and $\Theta$. Now, since $x^u$ also belongs to $K^\circ$, the same property also holds for the set $E_\tau(x^u)\eqdef\{\ell\in\N:f^{\tau(\ell)}(x^u)\in K\}$: it has density $>1-r_2\delta$ inside $[0,L]$ for $r_2>0$ depending only on the quasi-isometry constants of $\tau$ from Lemma \ref{l_qi_estimates}.

Finally we can estimate from below the density of $E(\omega)=E(x,x^u)=E_\tau(x^u)\cap E_t(x)$ inside $[0,L]$ using the following inequality
$$\#(E(\omega)\cap[0,L])\geq \#(E_\tau(x^u)\cap[0,L])-\#([0,L]\setminus E_t(x))>(1-(r_1+r_2)\delta)L.$$
This ends the proof of the lemma.
\end{proof}

\subsubsection{Space of pairs $(x,x^u)$}\label{sss_spair}
It will be useful to consider a measurable structure on the space of pairs
$$\Omega\eqdef\{\omega=(x,x^u):x\in\TT\:\textrm{and}\:x^u\in\xi^u(x)\}.$$
Note that $\Omega$ is contained inside the continuous submanifold of $\TT\times \TT$ defined as $Y=\{(x,x^u):x\in\TT\:\textrm{and}\:x^u\in \cW^u_1(x)\}$ (recall that $1$ is a uniform upper bound of the diamaters of atoms of $\xi^u$). The topology on $Y$ induced by the product topology of $\TT\times \TT$ provides it with a Borel $\sigma$-algebra. Its restriction to $\Omega$ is denoted by $\cA$. Let $\pi\colon \Omega\to\mathbb{T}^3$ be the projection on the first coordinate. So we have $B\in\cA$ if and only if its projection $\pi(B)$ is a measurable subset of $\mathbb{T}^3$, and $B\cap(\{x\}\times\xi^u(x))$ is a measurable subset of $\{x\}\times\xi^u(x)$. Hence we can define a measure $\nu$ on $\Omega$ by

$$\nu(B)\eqdef \int_{\pi(B)}\mu^u_x\left[B\cap\left(\{x\}\times\xi^u(x)\right)\right] d\mu(x).$$

\subsubsection{Construction of many long and good $Y$-configurations}\label{sss.construction_longandgood} Assume $\ell\in E(\omega)$  for $\omega=(x,x^u)$ with $x,x^u\in K^\circ$. Then $(x,x^u,x_{-\ell},f^{\tau(\ell)}(x^u),f^{t(\ell)}(x))$ is a $K$-good $Y$-configuration of length $\ell$. So constructing many long and good $Y$-configurations means constructing many integers $\ell\in\N$ for which the set of pairs $\omega=(x,x^u)\in K^\circ\times K^\circ$ with $\ell\in  E(\omega)$ has large measure for $\nu$.

More precisely, let us fix a continuous function $\eta\colon[0,1]\to[0,\infty)$ vanishing at $0$ and write abusively $\eta=\eta(\delta)$. An explicit construction will be given in the proof of the next proposition. We will assume that $\delta$ is small enough so that $\eta<1$. For $\ell\in\N$ and $x\in K^\circ$ let us define

\begin{equation}\label{eq_juju}
Q^u(x)\eqdef K^\circ\cap \xi^u(x),
\end{equation}
and
\begin{equation}\label{eq_jujuell}
Q^u(x,\ell)\eqdef\{x^u\in\xi^u(x): x^u\in K^\circ,\,\textrm{and}\,\ell\in E(x,x^u)\}\dans Q^u(x).
\end{equation}
This yields a set of $K$-good $Y$-configurations with length $\ell$. Next we define
\begin{equation}\label{eq_kelkel}
K(\ell)\eqdef\{x\in K^\circ: \mu^u_x[Q^u(x,\ell)]>1-\eta\}.
\end{equation}
We want to construct a large set of integers $\ell$ with $\mu[K(\ell)]$ is large enough. This is provided by the following statement, inspired by Eskin-Lindestrauss' paper \cite{EskinLind} and which is essentially a Fubini-like argument.

\begin{prop}[see Claim 6.4 in Eskin-Lindenstrauss \cite{EskinLind}]\label{p.claim64}	Set 
\begin{equation}\label{dede}
\cD\eqdef\{\ell\in\N:\mu[K(\ell)]>1-\eta\}.
\end{equation}
	 Then for every  $L>T_1$ we have
	\[
	\# (\cD\cap [0,L])>(1-\eta)L.
	\] 	
\end{prop}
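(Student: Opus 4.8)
Looking at this, I need to prove Proposition~\ref{p.claim64}, a Fubini-type counting argument showing that for most lengths $\ell$, the set of base points $x$ where most unstable points $x^u$ give $K$-good $Y$-configurations of length $\ell$ has large measure.

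\textbf{Plan of proof.}

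The core idea is a double application of Fubini's theorem on the space $(\Omega, \cA, \nu)$ together with the recurrence estimate from Lemma~\ref{l.lemadosaci}. First I would start from Lemma~\ref{l.lemadosaci}, which tells us that for every $\omega = (x, x^u)$ with $x, x^u \in K^\circ$ and every $L > T_1$, the set $E(\omega) \cap [0,L]$ has at least $(1-r\delta)L$ elements. Writing this as an integral: for such $\omega$, $\frac{1}{L}\sum_{\ell=0}^{L-1} \mathbf{1}_{E(\omega)}(\ell) > 1 - r\delta$. Now I integrate over $\omega$ ranging in $K^\circ \times_{\xi^u} K^\circ$ (i.e.\ the set $\Omega^\circ \eqdef \{\omega = (x,x^u) : x \in K^\circ,\ x^u \in Q^u(x)\}$, whose $\nu$-measure is close to $1$, say $> 1 - c\delta$ for some constant $c$, using $\mu(K^\circ) > 1 - \delta$ and the definition of $\nu$ via the conditional measures $\mu^u_x$). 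Swapping the sum and the integral, I get
\[
\frac{1}{L}\sum_{\ell=0}^{L-1} \nu\big(\{\omega \in \Omega^\circ : \ell \in E(\omega)\}\big) > (1 - r\delta)\nu(\Omega^\circ) > (1 - r\delta)(1 - c\delta) > 1 - C_1\delta
\]
for a suitable constant $C_1$ depending only on $f$. This says that the \emph{average} over $\ell \in [0,L)$ of the quantity $\nu(\{\omega \in \Omega^\circ : \ell \in E(\omega)\})$ is at least $1 - C_1\delta$.

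Next I would apply a Markov-type inequality (Lemma~\ref{l.leminha}, or an elementary variant of it) twice. Since each term $\nu(\{\omega \in \Omega^\circ : \ell \in E(\omega)\})$ lies in $[0,1]$ and their average over $[0,L)$ exceeds $1 - C_1\delta$, the set of $\ell \in [0,L)$ for which $\nu(\{\omega \in \Omega^\circ : \ell \in E(\omega)\}) > 1 - \sqrt{C_1\delta}$ has cardinality at least $(1 - \sqrt{C_1\delta})L$. For each such good $\ell$, I apply Fubini again on the fibre structure of $\nu$: $\nu(\{\omega \in \Omega^\circ : \ell \in E(\omega)\}) = \int_{K^\circ} \mu^u_x[Q^u(x,\ell)]\, d\mu(x)$ (using $Q^u(x,\ell) = \{x^u \in Q^u(x) : \ell \in E(x,x^u)\}$ and the definitions \eqref{eq_juju}--\eqref{eq_jujuell}), so since $\mu^u_x[Q^u(x,\ell)] \le 1$ and the integral is $> 1 - \sqrt{C_1\delta}$ (noting $\mu(K^\circ) > 1-\delta$, the integrand is morally integrated against a near-probability measure), another application of Lemma~\ref{l.leminha} gives that $\mu\{x \in K^\circ : \mu^u_x[Q^u(x,\ell)] > 1 - (C_1\delta)^{1/4}\}$ — which is exactly $\mu[K(\ell)]$ with the choice $\eta = (C_1 \delta)^{1/4}$ — exceeds $1 - (C_1\delta)^{1/4} = 1 - \eta$.

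Finally, I set $\eta = \eta(\delta) \eqdef (C_1\delta)^{1/4}$ (a continuous function of $\delta$ vanishing at $0$, as required, and $< 1$ for $\delta$ small), and conclude: the set $\cD = \{\ell \in \N : \mu[K(\ell)] > 1 - \eta\}$ contains all the ``good'' $\ell \in [0,L)$ produced above, hence $\#(\cD \cap [0,L)) \ge (1 - \sqrt{C_1\delta})L \ge (1-\eta)L$ once the exponents are lined up correctly (replacing $\sqrt{C_1\delta}$ by $\eta$ throughout by taking $\eta$ to be the largest of the relevant powers, e.g.\ $\eta = (C_1\delta)^{1/4}$ dominates $\sqrt{C_1\delta}$ for small $\delta$, so I would be slightly careful about which root appears where and just take $\eta$ to be a fixed small power of $\delta$ that simultaneously bounds all error terms). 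The main technical care needed is bookkeeping with the measurable structure of $(\Omega, \cA, \nu)$ — verifying that $\{\omega : \ell \in E(\omega)\}$ is $\cA$-measurable (which follows since $\tau(\ell), t(\ell)$ depend measurably on $(x,x^u)$ and $K$ is measurable) and that Fubini applies — but this is routine given the setup in \S\ref{sss_spair}. I do not expect any serious obstacle; the only subtlety is propagating the constants through the two Markov steps and the definition of $\nu$ (which is not normalized but has total mass $1$ since $\int \mu^u_x[\xi^u(x)]\,d\mu(x) = 1$), so everything stays within probability-space Fubini.
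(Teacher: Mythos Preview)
Your proposal is correct and follows essentially the same approach as the paper: a Fubini argument on $\Omega\times([0,L]\cap\N)$ combining Lemma~\ref{l.lemadosaci} with two successive applications of the Markov-type inequality (Lemma~\ref{l.leminha}), yielding $\eta$ of order $\delta^{1/4}$. The only cosmetic difference is that the paper first restricts to the set $K_1=\{x\in K^\circ:\mu^u_x[Q^u(x)]>1-\sqrt{\delta}\}$ before forming the product set $B$, whereas you work directly with $\Omega^\circ$ and absorb the corresponding loss into the estimate $\nu(\Omega^\circ)>1-2\delta$; both routes give the same conclusion.
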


\begin{proof}
We start the proof by considering the set
\[
K_1\eqdef\left\{x\in K^\circ:\mu^u_x[Q^u(x)]>1-\sqrt{\delta}\right\},
\]
so that after applying Lemma~\ref{l.leminha} with $\psi\colon x\mapsto\mu^u_x[Q^u(x)]=\mu^u_x[K^\circ\cap\xi^u(x)]$ we see that $\mu(K_1)>1-\sqrt{\delta}$. The set
$$B\eqdef\{\omega=(x,x^u)\in \Omega: x\in K_1,\,x^u\in Q^u(x)\}$$
is a measurable subspace of $\Omega$ of measure
$$\nu(B)=\int_{K_1}\mu^u_x[Q^u(x)]d\mu(x)>(1-\sqrt{\delta})^2=1-\delta',$$
for some $\delta'=\delta'(\delta)>0$ tending to zero with $\delta$.

Now let $L>T_1$ and consider the space $(I_L,m)$, for the set $I_L\eqdef[0,L]\cap\N$ endowed with the counting measure that we denote by $m$. Let $F\subset\Omega\times I_L$ be the set of pairs $(\omega,\ell)$ such that $\omega\in B$ and $\ell\in E(\omega)$. It follows from Lemma~\ref{l.lemadosaci} that for all $\omega\in B$, $m[E(\omega)]>1-r\delta$. Hence
$$(\nu\times m)(F)=\int_B m[E(\omega)]\,d\nu(\omega)>(1-r\delta)(1-\delta')\eqdef1-\delta'',$$
for some $\delta''=\delta''(\delta)$ that tends to zero with $\delta$.

On the other hand set $B(\ell)=\{\omega\in B: \ell\in E(\omega)\}$ so we have
$$F=\{(\omega,\ell)\in\Omega\times I_L:\,\omega\in B(\ell)\}=\bigcup_{\ell\in I_L} B(\ell)\times\{\ell\}.$$
Applying Fubini's theorem we get
	\begin{eqnarray}
	(\nu\times m)(F)&=&\int_{\Omega}\int_{I_L}\textbf{1}_F(\omega,\ell)\, dm(\ell)\, d\nu(\omega)=\int_{I_L}\int_{\Omega}\textbf{1}_F(\omega,\ell)\, d\nu(\omega)\, dm(\ell)\nonumber\\
	&=&\int_{I_L}\nu(B(\ell))\, dm(\ell)\nonumber.
	\end{eqnarray}
It follows from Lemma~\ref{l.leminha} that the set of integers $\cD_L\eqdef\{\ell\in I_L:\nu(B(\ell))>1-\sqrt{\delta''}\}$ has measure $>1-\sqrt{\delta''}$ for $m$. Note that 
$$\nu(B(\ell))=\int_{K_1}\mu^u_x[Q^u(x,\ell)]\, d\mu(x),$$
hence, another application of Lemma~\ref{l.leminha} yields $\mu\{x\in K_1:\mu^u_x[Q^u(x,\ell)]>1-\eta\}>1-\eta$ where $\eta\eqdef\sqrt[4]{\delta''}$ is the function we were looking for. Now since by definition $\{x\in K_1:\mu^u_x[Q^u(x,\ell)]>1-\eta\}\dans K(\ell)$ we conclude that $\mu[K(\ell)]>1-\eta$. This implies that $\cD_L\dans \cD\cap[0,L]$ where $\cD$ is the set defined in \eqref{dede}. We deduce that $\#(\cD\cap[0,L])>(1-\eta)L$ as claimed.
\end{proof}

%\subsubsection{Moving along unstable manifolds}\label{sss.move_unstable} Fix $\xi^u$, a measurable partition subordinate to $\cW^u$ and $\{\mu^u_x\}_x$, a disintegration of $\mu$ relative to $\xi^u$. In particular the unstable dimension of $\mu$ is equal to one. Arguing as in Lemma \ref{l.angle} we get the following
%
%\begin{lemma}\label{l.slide_unstable}
%Let $A\dans\cL$ of measure $\mu(A)>1-\eps$. Then there exists $A''\dans A$ of measure $\mu(A'')>1-2\sqrt{\eps}$ as well as a number $c_1=c_1(\eps)$ such that for every $x\in A''$
%$$\mu_x^u\left\{x^u\in\xi^u(x): d_{\cW^u}(x,x^u)>c_1(\eps)\,\,\,\textrm{and}\,\,\,x^u\in A\right\}>1-2\sqrt{\eps}.$$
%\end{lemma}

\subsection{Construction of matched $Y$-configurations}\label{sss_coupling_sync}
We are now ready to finish the proof of Lemma \ref{l.fatality}, and thus that of our main Theorem. We will build arbitrarily long pairs of good and matched $Y$-configurations. 
%We will first build the tails of the $Y$-configurations, then we will build quadrilaterals, and finally we will use  recurrence properties to synchronize our pair of $Y$-configurations.

% In this paragraph, we show how to use the properties proved above in order to build coupled and synchronized $Y$-configurations which are long and good. This will finish the proof of Lemma 

Let $K_0$ be a compact set included inside the Lusin set $\cL$ defined in \S \ref{Lusin lusin} of measure $\mu(K_0)>1-\delta$. 
%We assume that $\delta$ is small enough so that the number $\eta$ defined in \S \ref{sss.construction_longandgood} satisfies \eqref{eq_choice_eps}. 

We consider the measurable set 
$$K=(K_0)^\circ,$$
so points of $K$ are $(\delta,T)$-recurrent in $K_0$ for some $T=T(\delta)$, and $\mu(K)>1-\delta$. We apply the results of \S \ref{sss.elementary} to $K$. They yield a set $\cD\dans\N$ as defined in \eqref{eq_kelkel} and Proposition~\ref{p.claim64}. In particular, by definition, $\cD$ has density $>1-\eta$ in $[0,L]$ for $L$ large enough and for every $\ell\in\cD$, $\mu[K(\ell)]>1-\eta$ (where $\eta=\eta(\delta)<1$).

\subsubsection{Constructing the tails} Recall that for $x,y\in\TT$ and $\ell\in\N$ we let $x_{-\ell}$ and $y_{-\ell}$ denote $f^{-\ell}(x)$ and $f^{-\ell}(y)$ respectively.

\begin{lemma}\label{l_construction_tails}
For every $\ell\in\cD$, there exists $K'(\ell)\dans K(\ell)$ such that $\mu[K'(\ell)]>1-2\sqrt{\eta}$ and for every $x\in K'(\ell)$, there exists $y\in K(\ell)\cap\xi^s(x)$ such that
$$\alpha^s(x_{-\ell},y_{-\ell})>c(\eta),$$
where $c(\eta)$ is the constant introduced in Lemma \ref{l.angle}.
\end{lemma}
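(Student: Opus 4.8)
The plan is to obtain Lemma~\ref{l_construction_tails} from Lemma~\ref{l.angle} together with a monotonicity property of the angle function under backward iteration. Fix $\ell\in\cD$. I would first apply Lemma~\ref{l.angle} with the set $B:=K(\ell)$: this is legitimate since, by the very definition~\eqref{dede} of $\cD$, one has $\mu(K(\ell))>1-\eta$, and $K(\ell)\subset K^{\circ}\subset K=(K_0)^{\circ}\subset K_0\subset\cL$, so the hypotheses hold with parameter $\eta=\eta(\delta)$. Lemma~\ref{l.angle} then yields a measurable set $K'(\ell)\subset K(\ell)$ with $\mu(K'(\ell))>1-2\sqrt{\eta}$ (shrinking it to a full-measure subset so that the pointwise statement holds for \emph{every}, not merely $\mu$-a.e., $x\in K'(\ell)$), such that for each $x\in K'(\ell)$ there is $y\in K(\ell)\cap\xi^s(x)$ with $\alpha^s(x,y)>c(\eta)$, the constant $c(\eta)$ being independent of $\ell$ (it comes from the $B$-independent part of the proof of Lemma~\ref{l.angle}).

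It then remains to transfer the lower bound from $(x,y)$ to $(x_{-\ell},y_{-\ell})$. Since $y\in\xi^s(x)\subset\cW^s(x)$, we have $y_{-\ell}\in\cW^s(x_{-\ell})$ and both angles are defined. By the equivariance relation~\eqref{e.equivariancia} of stable holonomies, $H^s_{x,y}=f^{\ell}\circ H^s_{x_{-\ell},y_{-\ell}}\circ f^{-\ell}$, so differentiating at $x$ and using the $Df$-invariance of $E^u$ gives $DH^s_{x,y}(x)E^u(x)=Df^{\ell}(y_{-\ell})\big(DH^s_{x_{-\ell},y_{-\ell}}(x_{-\ell})E^u(x_{-\ell})\big)$ and $E^u(y)=Df^{\ell}(y_{-\ell})E^u(y_{-\ell})$, all lines lying in $T\cW^{cu}$. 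Because the splitting $E^c\oplus E^u$ inside $E^{cu}$ is dominated (indeed $\lambda^c_z<\lambda^u_z$ for every $z$), the forward cocycle $Df^{\ell}|_{E^{cu}}$ contracts towards $E^u$ every line distinct from $E^c$: there are uniform $C_\star\ge1$, $\tilde\lambda\in(0,1)$ with $\angle\big(Df^{\ell}(z)L,E^u(f^{\ell}z)\big)\le C_\star\tilde\lambda^{\ell}\,\angle(L,E^u(z))$ for every such $L$. Applying this with $z=y_{-\ell}$ and $L=DH^s_{x_{-\ell},y_{-\ell}}(x_{-\ell})E^u(x_{-\ell})$ yields $\alpha^s(x,y)\le C_\star\tilde\lambda^{\ell}\,\alpha^s(x_{-\ell},y_{-\ell})$, hence $\alpha^s(x_{-\ell},y_{-\ell})\ge\alpha^s(x,y)>c(\eta)$ once $\ell$ is large enough that $C_\star\tilde\lambda^{\ell}\le1$. (In the degenerate case $L=E^c(y_{-\ell})$, the angle $\alpha^s(x_{-\ell},y_{-\ell})$ equals the full $E^c$--$E^u$ angle, which exceeds $c(\eta)\le r_0\eta$ for $\eta$ small, so the bound holds trivially there.) The threshold on $\ell$ is uniform, so removing finitely many integers from $\cD$ — which keeps it infinite, as needed in Lemma~\ref{l.fatality} — makes the conclusion hold for every remaining $\ell\in\cD$.

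The step I expect to be the crux is the contraction estimate for $\alpha^s$ under the $E^{cu}$-cocycle: one must check that comparing the genuine Riemannian angle used to define $\alpha^s$ with the ``algebraic angle to $E^u$'' in the adapted metric costs only a uniform multiplicative constant per step, so that over $\ell$ steps the net gain is still $C_\star\tilde\lambda^{\ell}$. This is standard for dominated splittings but deserves a careful write-up, since the adapted metric need not make $E^c$ and $E^u$ orthogonal; if one arranges $E^c\perp E^u$ pointwise in the adapted metric, the contraction is exact, with $C_\star=1$ and $\tilde\lambda=\sup_z\lambda^c_z/\lambda^u_z<1$, and then no integers need be discarded from $\cD$. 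All the remaining bookkeeping uses only constants already fixed in \S\ref{recurrence} and \S\ref{sss_coupling_sync}.
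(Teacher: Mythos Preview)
Your proof is correct, but it takes a longer route than the paper's. The paper applies Lemma~\ref{l.angle} not to $K(\ell)$ but to $A=f^{-\ell}(K(\ell))$, which also has measure $>1-\eta$ by $f$-invariance of $\mu$ (the hypothesis $B\subset\cL$ in the statement of Lemma~\ref{l.angle} is never actually used in its proof). This yields a set $A'$ such that for each $x'\in A'$ there is $y'\in f^{-\ell}(K(\ell))\cap\xi^s(x')$ with $\alpha^s(x',y')>c(\eta)$. Setting $K'(\ell):=f^\ell(A')$ and writing $x=f^\ell(x')$, $y=f^\ell(y')$, one has $x'=x_{-\ell}$, $y'=y_{-\ell}$, so the angle inequality at time $-\ell$ is immediate; the requirement $y\in\xi^s(x)$ follows from $y_{-\ell}\in\xi^s(x_{-\ell})$ and the decreasing property of $\xi^s$, namely $f^\ell(\xi^s(x_{-\ell}))\subset\xi^s(x)$.

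Your approach instead gets the angle at time $0$ and then pushes it backward via the domination of $E^c\oplus E^u$ inside $E^{cu}$. This is valid, but it costs you the angle-contraction estimate (which, as you note, requires some care with the metric) and forces you either to discard finitely many $\ell$ from $\cD$ or to arrange an adapted metric with $E^c\perp E^u$. The paper's simple time shift avoids all of this and works for every $\ell\in\cD$ at once.
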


\begin{proof}
We apply Lemma \ref{l.angle} to $A=f^{-\ell}(K(\ell))$ which has measure $\mu(K(\ell))>1-\eta$. Then we can define $K'(\ell)=f^\ell(A')$ where $A'$ is the set provided by that lemma. It is clear that this set satisfies the desired properties (recall that $\xi^s$ is decreasing).
\end{proof}

\subsubsection{Constructing the quadrilaterals: the matching argument}\label{ss.matching} For $x\in K'(\ell)$, so $\xi^u(x)$ is an interval of size $r(x)$, we set
\begin{equation}\label{eq_theIu}I^u(x)\eqdef\left\{z\in\xi^u(x):\,d^u(x,z)> c(\eta),\,\,\,\textrm{and}\,\,\,d^u(z,\partial\xi^u(x))> c(\eta)\right\}.
\end{equation}
Recall that $0<c(\eta)\leq r_0\eta\leq |\xi^u(x)|$, hence $|I^u(x)|>(1-4\eta)|\xi^u(x)|$. Note furthermore that if $y\in\cL$ is close enough to $x$ then $H^{cs}_{x,y}(I^u(x))\dans\xi^u(y)$ (we use here that $r(y)=|\xi^u(y)|$ is uniformly continuous in $\cL$ and that center-stable holonomy maps converge uniformly to the identity as $x$ tends to $y$).

Since $\mu^u_x$ is absolutely continuous continuous with respect to the inner Lebesgue length $|.|$ of $\cW^u(x)$, with a uniform bound $\beta$ on the densities (recall Lemma~\ref{l.density_u-gibbs}) it follows that
	$$\frac{|Q^u(x,\ell)|}{|\xi^u(x)|}>1-\beta\eta,$$

\begin{prop}[Matching argument]\label{p.Eskin_Mirzakoukou}
	There exists $L_0\in\N$ such that for every $\ell\geq L_0$ and $x\in K'(\ell)$, if $y\in K(\ell)\cap\xi^s(x)$ is given by Lemma \ref{l_construction_tails} then there exist $x^u\in\xi^u(x)$ and $y^u\in\xi^u(y)$ such that the following properties hold
	\begin{enumerate}
		\item $d_u(x,x^u)>c(\eta)$;
		\item $y^u=H^{cs}_{x,y}(x^u)$;
		\item $J(x^u,\eps,\ell)\cap Q^u(x,\ell)\neq\emptyset$ and $J(y^u,\eps,\ell)\cap Q^u(y,\ell)\neq\emptyset$.
	\end{enumerate}
\end{prop}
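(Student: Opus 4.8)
The plan is to prove Proposition~\ref{p.Eskin_Mirzakoukou} by a measure-theoretic pigeonhole argument inside the unstable leaf, of the kind used in the matching step of \cite{EskinLind,EskinMirzakhani}. The point is that we must simultaneously arrange three constraints: the endpoint $x^u$ of the quadrilateral must be far from $x$ (and from $\partial\xi^u(x)$), it must sit over a point $y^u$ of $\xi^u(y)$ via the center-stable holonomy, and both $x^u$ and $y^u$ must have \emph{dynamical balls} $J(x^u,\eps,\ell)$, $J(y^u,\eps,\ell)$ that intersect the ``good'' sets $Q^u(x,\ell)$ and $Q^u(y,\ell)$. The heuristic is that $Q^u(x,\ell)$ fills up a proportion $1-\beta\eta$ of $\xi^u(x)$, the set $I^u(x)$ (where $x^u$ is allowed to lie) fills up a proportion $>1-4\eta$, and the same holds for $y$; and the dynamical balls, while possibly tiny, are small enough that the above density estimates are not destroyed when we require a ball to \emph{avoid} a small bad set.

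First I would fix $\ell\geq L_0$ (with $L_0$ to be chosen), and $x\in K'(\ell)$, $y\in K(\ell)\cap\xi^s(x)$ as in Lemma~\ref{l_construction_tails}. Since $x,y\in\cL$ and they are stably connected, the center-stable holonomy $H^{cs}_{x,y}$ is defined (it converges uniformly to the identity as the stable distance shrinks, and $\xi^u$-atoms have uniformly bounded size), and by the remark following \eqref{eq_theIu}, $H^{cs}_{x,y}(I^u(x))\subset\xi^u(y)$ provided $x$ and $y$ are close; we may ensure closeness at the cost of enlarging the constant governing the Lusin set, or more simply by observing that the construction of $y$ in Lemma~\ref{l.angle}/\ref{l_construction_tails} keeps $y$ in $\xi^s(x)$, hence within uniformly bounded stable distance, and $H^{cs}$ is uniformly Hölder there by Lemma~\ref{thm_regularity}. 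The candidate set for $x^u$ is then
\[
P^u(x)\eqdef I^u(x)\cap Q^u(x,\ell)\cap (H^{cs}_{x,y})^{-1}\big(Q^u(y,\ell)\big).
\]
By the density estimates $|I^u(x)|/|\xi^u(x)|>1-4\eta$, $|Q^u(x,\ell)|/|\xi^u(x)|>1-\beta\eta$, and — here one must use that $H^{cs}_{x,y}$, although only Hölder, has controlled distortion on the relevant scale, or better, use that the measures $\mu^u_\cdot$ are uniformly absolutely continuous with bounded densities on both $\xi^u(x)$ and $\xi^u(y)$ and push the estimate on $Q^u(y,\ell)$ through the Hölder holonomy — one gets $|(H^{cs}_{x,y})^{-1}(Q^u(y,\ell))|/|\xi^u(x)|>1-\eta'$ for some $\eta'=\eta'(\eta)\to0$ as $\eta\to0$. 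Hence $|P^u(x)|/|\xi^u(x)|>1-\eta''$ with $\eta''=\eta''(\delta)\to 0$, so $P^u(x)$ is in particular nonempty.

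The remaining and genuinely delicate point is condition (3): having $x^u\in P^u(x)$ only guarantees $x^u\in Q^u(x,\ell)$, not that $J(x^u,\eps,\ell)$ meets $Q^u(x,\ell)$ — and likewise on the $y$-side with $y^u=H^{cs}_{x,y}(x^u)$, which a priori need not lie in $Q^u(y,\ell)$ at all (only in $\xi^u(y)$), so that we must \emph{find} a point of $Q^u(y,\ell)$ inside the dynamical ball $J(y^u,\eps,\ell)$. The strategy is: since $J(x^u,\eps,\ell)$ is a neighbourhood of $x^u$ in $\cW^u$ and $x^u$ was chosen in the large set $P^u(x)$, a Lebesgue-density / Vitali-type argument shows that for \emph{most} choices of $x^u$, a definite proportion of $J(x^u,\eps,\ell)$ lies in $Q^u(x,\ell)$; the subtle part is that $J(x^u,\eps,\ell)$ can be very small (its size is roughly $\lambda^u_{x^u}(\tau(\ell))^{-1}$), so this requires that the ``good'' set $Q^u(x,\ell)$ not only be large in measure but have small complement \emph{at all scales} — which is exactly why $Q^u(x,\ell)$ was defined via \emph{return times under $f^{t(\ell)}$ and $f^{\tau(\ell)}$} and why the quasi-isometric estimates for stopping times (Lemma~\ref{l_qi_estimates}) were established: the bad set at scale $\lambda^u_{x^u}(\tau(\ell))^{-1}$ is the $f^{-\tau(\ell)}$-pull-back of a set of small measure, and the dynamics distributes it with bounded multiplicity. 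Concretely, I would work on $f^{\tau(\ell)}(\xi^u(x))$ (a unit-size unstable segment), where $f^{\tau(\ell)}(J(x^u,\eps,\ell))=\cW^u_1(f^{\tau(\ell)}(x^u))$ has unit size, pull back the condition ``$f^{\tau(\ell)}(x^u)\in K$ and $f^{t(\ell)}(x)\in K$'', and use bounded distortion of $f^{\tau(\ell)}$ along $\cW^u$ (estimate~\eqref{e.distortionbasic}) together with a Fubini argument over the choice of $x^u$ in $P^u(x)$; since $\mu^u_x(Q^u(x,\ell))>(1-\beta\eta)|\xi^u(x)|$, the set of $x^u\in P^u(x)$ for which $J(x^u,\eps,\ell)\cap Q^u(x,\ell)=\emptyset$ must have measure $\leq C\beta\eta|\xi^u(x)|$, hence does not exhaust $P^u(x)$. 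Doing the same on the $y$-side — using Corollary~\ref{c_size_match} and Proposition~\ref{p_matched-dist_control} to compare the sizes and distortions of $J(x^u,\eps,\ell)$ and $J(y^u,\eps,\ell)$ once $(x,x^u,y,y^u)$ is a $(C,\ell)$-quadrilateral, and the synchronization Lemmas~\ref{lemme synchro}, \ref{l.synchroinstable} to relate $\tau(x,x^u,\eps,\ell)$ and $\tau(y,y^u,\eps,\ell)$ — one finds a further subset of $P^u(x)$ of positive measure whose image under $H^{cs}_{x,y}$ has dynamical balls meeting $Q^u(y,\ell)$. Intersecting the $O(1)$ many ``bad for a reason'' subsets of $P^u(x)$, each of measure $o(|\xi^u(x)|)$, leaves a nonempty set, and any $x^u$ in it, together with $y^u=H^{cs}_{x,y}(x^u)$, satisfies (1)--(3); this is where choosing $\eta=\eta(\delta)$ small (hence $\delta$ small) is used one last time. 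The main obstacle, as indicated, is controlling the good set $Q^u(\cdot,\ell)$ at the (possibly exponentially small) scale of the dynamical balls uniformly in $\ell$ — this is precisely the content of the quasi-isometric and matched-distortion estimates assembled in Sections~\ref{s.Yconfigurations} and \ref{s.coupled}, so the proof is really about correctly bookkeeping those.
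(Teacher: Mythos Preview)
Your proposal has a genuine gap at the crucial step, and it is precisely the obstacle the paper is organized around.

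You define
\[
P^u(x)=I^u(x)\cap Q^u(x,\ell)\cap (H^{cs}_{x,y})^{-1}\big(Q^u(y,\ell)\big)
\]
and then claim $|(H^{cs}_{x,y})^{-1}(Q^u(y,\ell))|/|\xi^u(x)|>1-\eta'$ by ``pushing the estimate on $Q^u(y,\ell)$ through the H\"older holonomy''. This does not work: $H^{cs}_{x,y}$ is only H\"older (Lemma~\ref{thm_regularity}) and \emph{not} absolutely continuous, so a set of nearly full Lebesgue measure in $\xi^u(y)$ can have a preimage of measure zero in $\xi^u(x)$. Neither of your two proposed fixes helps: ``controlled distortion on the relevant scale'' is not available for a merely H\"older map, and the fact that $\mu^u_x,\mu^u_y$ are both equivalent to Lebesgue on their respective leaves says nothing about how $H^{cs}_{x,y}$ relates them. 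This is exactly the ``technical difficulty'' flagged at the end of Section~\ref{heuristics} and in the introduction to Section~\ref{s.coupled}. (Incidentally, there is a small confusion: since $x^u\in J(x^u,\eps,\ell)$ always, $x^u\in Q^u(x,\ell)$ \emph{does} give $J(x^u)\cap Q^u(x,\ell)\neq\emptyset$; the whole difficulty is on the $y$-side.)

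The paper circumvents this by never pulling $Q^u(y,\ell)$ back through $H^{cs}$. Instead it covers $I^u(x)$ by finitely many dynamical balls $J_i=J(x_i,\eps,\ell)$ with overlap at most $2$, sets $y_i=H^{cs}_{x,y}(x_i)$ and $J_i'=J(y_i,\eps,\ell)$, and proves a \emph{bounded overlap} lemma for the $J_i'$: any $z\in\xi^u(y)$ lies in at most $m=m(f)$ of them. This is where the H\"older regularity of $H^{cs}$ is used---as a combinatorial input (two far-apart points cannot be sent too close), not as a measure-distortion bound. Combined with the size comparison $|J_i|\asymp_{\kappa_4}|J_i'|$ from Corollary~\ref{c_size_match}, one gets
\[
\sum_{J_i'\cap Q^u(y,\ell)=\emptyset}|J_i|\leq \kappa_4\sum_{J_i'\subset \xi^u(y)\setminus Q^u(y,\ell)}|J_i'|\leq \kappa_4 m\,|\xi^u(y)\setminus Q^u(y,\ell)|\leq 2\kappa_4 m\beta\eta\,|\xi^u(x)|,
\]
which is a genuine measure transfer from $\xi^u(y)$ to $\xi^u(x)$ obtained without absolute continuity of $H^{cs}$. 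Then any $a\in Q^u(x,\ell)\cap I^u(x)$ outside this small union lies in some $J_i$ with $J_i'\cap Q^u(y,\ell)\neq\emptyset$, and $(x_i,y_i)$ is the desired pair. You cite the right auxiliary tools (Corollary~\ref{c_size_match}, Proposition~\ref{p_matched-dist_control}) near the end, but you are missing the covering-plus-bounded-overlap mechanism that makes them effective; your Vitali/Lebesgue-density sketch does not supply it.
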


The next paragraphs are devoted to the proof of this proposition.

\subsubsection{Good matching} Let $x,y\in\TT$ be points verifying  Lemma \ref{l_construction_tails}. Note that the lengths of dynamical balls $|J(z)|=|J(z,\eps,\ell)|$ tend uniformly to $0$ as $\ell\to\infty$. Hence we may suppose that $\ell$ is large enough so that for every $z\in I^u(x)$, $J(z)\dans\xi^u(x)$ and $J(H^{cs}_{x,y}(z))\dans\xi^u(y).$

Let $x_1,\ldots,x_k\in I^u(x)\dans\xi^u(x)$ and $J_1,\ldots,J_k\dans \cW^u(x)$ be the dynamical balls defined {\color{blue}by} 
$$J_i\eqdef J(x_i,\eps,\ell)\dans\xi^u(x).$$

We may assume that the following properties are satisfied
\begin{enumerate}
	\item for every $i\geq1$, $d(x_i,x)>c$;
	\item $I^u(x) \dans\bigcup_{i=1}^k J_i$; 
	\item for every $z\in I^u(x)$, $\#\{i:z\in J_i\}\leq 2$.
\end{enumerate}

Set
$$y_i \eqdef H^{cs}_{x,y}(x_i)\in\xi^u(x),\,\,\,\,\,\,\text{and}\,\,\,\,\,\,J_i'\eqdef J(y_i,\eps,\ell)\dans\xi^u(x).$$
The dynamical balls $J_i$ and $J_i'$ are matched. In particular Corollary \ref{c_size_match} implies that 
$$\kappa_4^{-1}|J_i|\leq|J_i'|\leq\kappa_4|J_i|.$$

Set
$$\tau_i\eqdef \tau(x,x_i,\eps,\ell),\,\,\,\,\,\,\text{and}\,\,\,\,\,\,\tau_i'\eqdef\tau(y,y_i,\eps,\ell),$$
so $|\tau_i-\tau_i'|< T_0$ for all $i$.

\begin{lemma}[Control of overlap]\label{l.overlap}
	There exists $m=m(f)\in\N$ such that for every $z\in\xi^u(y)$
	$$\#\{i:z\in J_i'\}\leq m.$$
\end{lemma}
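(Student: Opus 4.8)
The plan is to control how many of the ``image'' dynamical balls $J_i' = J(y_i,\eps,\ell)$ can overlap at a given point $z \in \xi^u(y)$, using the bounded-overlap property (3) we already arranged for the $J_i$'s on the $x$-side together with the matched distortion estimates. First I would reduce the statement on the $y$-side to the $x$-side: since $y_i = H^{cs}_{x,y}(x_i)$ and the center-stable holonomy $H^{cs}_{x,y}\colon \cW^u_2(x) \to \cW^u(y)$ is a well-defined homeomorphism (indeed Hölder, by Lemma~\ref{thm_regularity}), a point $z \in \xi^u(y)$ lies in $J_i' = J(y_i,\eps,\ell)$ only for indices $i$ such that the corresponding balls $J_i'$ cluster near $z$. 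The key quantitative input is that the sizes $|f^j(J_i)|$ and $|f^j(J_i')|$ stay comparable all along the orbit up to the stopping time, by Corollary~\ref{c_size_match}, and that by Lemma~\ref{l.synchroinstable} the stopping times $\tau_i = \tau(x,x_i,\eps,\ell)$ and $\tau_i' = \tau(y,y_i,\eps,\ell)$ differ from $\tau(x,x^u,\eps,\ell)$ (for any base pair) by at most a uniform constant $T_0$, so all the $J_i'$ are ``defined at comparable scales''.

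The main step is then a covering/packing argument. I would push everything forward by $f^{\tau_i'}$: by Definition~\ref{d.unstaball}, $f^{\tau_i'}(J_i')$ is exactly the unstable ball $\cW^u_1(f^{\tau_i'}(y_i))$ of radius $1$. Suppose $z \in J_{i_1}' \cap \dots \cap J_{i_p}'$ for distinct indices $i_1,\dots,i_p$. Using the quasi-isometric estimates (Lemma~\ref{l_qi_estimates}) and the synchronization bound $|\tau_i' - \tau_j'| \le 2T_0$ for all pairs (which follows from Lemma~\ref{l.synchroinstable} and Remark~\ref{rem_symmetry}), I can choose a common time $\tau_* \eqdef \min_i \tau_i'$; then each $f^{\tau_*}(J_{i_j}')$ has length bounded above and below by constants depending only on $f$ (the lower bound comes from $|J_{i_j}'| = 1$ at time $\tau_{i_j}' \le \tau_* + 2T_0$ and the distortion bound $m(Df)^{2T_0} \le |f^{\tau_*}(J_{i_j}')| \le 1$), and they all contain the point $f^{\tau_*}(z)$, hence they are all contained in a ball of uniformly bounded radius around $f^{\tau_*}(z)$ inside $\cW^u(f^{\tau_*}(y))$. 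A volume (length) count: these $p$ intervals all have length $\ge m(Df)^{2T_0}$, all sit inside an interval of length $\le 2 + 2\, m(Df)^{2T_0}$, and — crucially — no point of $\cW^u(f^{\tau_*}(y))$ can belong to more than a bounded number of them. This last sub-point is obtained by pulling back to the $x$-side: $f^{-\tau_*}$ of these intervals, once mapped back by $(H^{cs}_{x,y})^{-1}$ to $\cW^u(x)$, are comparable (via Corollary~\ref{c_size_match} and Hölder continuity of $H^{cs}$, with the Hölder constant absorbed because all scales are comparable up to constants) to the original balls $J_{i_j}$, whose overlap is $\le 2$ by property~(3); so the overlap of the $J_{i_j}'$ is bounded by $2$ times a constant coming from the Hölder distortion, giving the uniform bound $m = m(f)$.

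The hard part will be making the transfer of the bounded-overlap property through the center-stable holonomy $H^{cs}_{x,y}$ rigorous, because $H^{cs}$ is only Hölder (Lemma~\ref{thm_regularity}) rather than Lipschitz, so a naive comparison of lengths could blow up. The way around this is precisely that we are not comparing arbitrary small sets but dynamical balls at comparable scales: once we push to a time $\tau_*$ where $|f^{\tau_*}(J_i')| \asymp 1$, the images are sets of definite (bounded below) size, so the Hölder modulus only contributes a fixed multiplicative constant, not a scale-dependent one. I would therefore phrase the argument entirely at the ``unit scale'' after applying $f^{\tau_*}$, reducing it to the elementary fact that on a one-dimensional manifold, $p$ intervals of length $\ge \ell_0$ contained in an interval of length $L_0$ with pairwise overlap multiplicity $\le 2$ (inherited from the $x$-side) force $p \le 2L_0/\ell_0 + O(1)$; tracking the constants gives $m = m(f)$, and one takes $L_0 = L_0(f)$ large enough that all the balls $J_i', J_i$ are defined and the synchronization estimates apply. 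Finally, I note that since each $z$ belongs to at most $m$ of the $J_i'$, and $\sum_i |J_i'| \le \kappa_4 \sum_i |J_i| \le 2\kappa_4 |\xi^u(x)|$ while the $J_i'$ cover $H^{cs}_{x,y}(I^u(x))$, a set of measure $\ge (1-C\eta)|\xi^u(y)|$, we simultaneously get a lower bound on $k$ and the overlap bound — but only the overlap bound $m$ is what Lemma~\ref{l.overlap} asserts, so the proof concludes here.
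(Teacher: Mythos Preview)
You assemble the right ingredients—push to a common time $\tau_*$ so the dynamical balls have unit scale, invoke the overlap-$2$ bound for the $J_i$ on the $x$-side, and appeal to H\"older continuity of $H^{cs}$—but the logical assembly has a gap. You assert that the $p$ intervals $f^{\tau_*}(J'_{i_j})$ on the $y$-side have ``pairwise overlap multiplicity $\le 2$ (inherited from the $x$-side)'' and then run the packing count there. That is circular: the multiplicity at $f^{\tau_*}(z)$ on the $y$-side is precisely the number $p$ you are trying to bound. Your justification—pull back the intervals by $f^{-\tau_*}$ and then by $(H^{cs}_{x,y})^{-1}$ and declare the results ``comparable to the original balls $J_{i_j}$''—does not work either: $(H^{cs}_{x,y})^{-1}(J'_{i_j})$ is not $J_{i_j}$, Corollary~\ref{c_size_match} compares $|J_i|$ with $|J'_i|$ dynamically rather than through holonomy, and applying the H\"older estimate at time $0$ (where $|J'_{i_j}|$ is exponentially small) only gives $|(H^{cs})^{-1}(J'_{i_j})|\le C^{cs}|J'_{i_j}|^{\theta^{cs}}$, which is much larger than $|J_{i_j}|$ and hence useless for an overlap bound.

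The paper runs the packing argument on the $x$-side and applies H\"older only to the \emph{centres} at time $\tau^*$. Since every $f^{\tau^*}(J'_{i_j})$ contains $f^{\tau^*}(z)$ and has length $\le 2$, the centres $f^{\tau^*}(y_{i_j})$ are pairwise within distance $2$. Lemma~\ref{thm_regularity}, applied to $H^{cs}_{f^{\tau^*}(y_{i_j}),f^{\tau^*}(x_{i_j})}$ at this unit scale, then forces any two centres $f^{\tau^*}(x_{i_j}),f^{\tau^*}(x_{i_l})$ to be within $C^{cs}2^{\theta^{cs}}$ of each other. On the other hand, the overlap-$2$ property of the $J_{i_j}$ is preserved by the homeomorphism $f^{\tau^*}$, so $\big|\bigcup_j f^{\tau^*}(J_{i_j})\big|\ge\tfrac12\sum_j |f^{\tau^*}(J_{i_j})|\ge m/\|Df^{-T_1}\|$; since this is a subset of a line, its diameter is at least its measure, and two of the centres $f^{\tau^*}(x_{i_j})$ must be at distance at least $m/(2\|Df^{-T_1}\|)-2$. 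Comparing the two bounds yields the explicit estimate $m\le (C^{cs}2^{1+\theta^{cs}}+4)\|Df^{-T_1}\|$. The essential point is that H\"older is applied only to centres at unit scale, never to the small intervals themselves, and the packing is carried out on the $x$-side where the overlap-$2$ property is already known.
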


\begin{proof}
	Let $J'_{i_1},\ldots,J'_{i_m}$ be the dynamical balls in $\cW^u(y)$ that contain $z$. Note that for every $j$, $|\tau'_{i_j}-\tau'_z|\leq T_0<T_1$ (see Lemma \ref{l.synchroinstable}). Let
	$$J\eqdef\bigcup_{j=1}^m J_{i_j},\,\,\,\,\,\,\,\,\,\,\,\, J'\eqdef\bigcup_{j=1}^m J_{i_j}',\,\,\,\,\,\,\text{and}\,\,\,\,\,\,\tau^\ast\eqdef\min_{j=1,\ldots m}\{\tau_{i_j},\tau_{i_j}'\}.$$
	
	We claim that the oscillations inside the set $\{\tau_{i_1},\dots,\tau_{i_m},\tau'_{i_1},\dots,\tau'_{i_m}\}$ are less than $T_1$.
Indeed,	by Lemma~\ref{lemme synchro}, for every $j$ we have that $|\tau_{i_j}-\tau'_{i_j}|<T_0$. Also, by Lemma~\ref{l.synchroinstable} we have $|\tau'_{i_j}-\tau(y,z,\eps,\ell)|<T_0$ and thus $|\tau'_{i_j}-\tau'_{i_k}|<2T_0$ and $|\tau_{i_j}-\tau_{i_k}|<4T_0$. We conclude that for every $\alpha\in\{\tau_{i_1},\dots,\tau_{i_m},\tau'_{i_1},\dots,\tau'_{i_m}\}$ it holds
	\begin{equation}
	\label{e.temposmodernos}
	|\alpha-\tau^*|<4T_0,
	\end{equation}
	which proves our claim due to our choice of $T_1$ in \eqref{e.otemponãopara}
	
	On the one hand $f^{\tau_\ast}(z)$ belongs to the intersection of intervals $f^{\tau_\ast}(J_i')$, which have length $\leq 2$, so $|f^{\tau_\ast}(J')|\leq 4$.

	With this claim established we can bound from below the length $|f^{\tau^*}(J)|$. In fact, because at most two intervals $J_i$ can overlap at the same time, we can estimate
	$$\big|f^{\tau^\ast}(J)\big|=\left|\bigcup_{j=1}^mf^{\tau^\ast}(J_{i_j})\right|\geq \frac{1}{2}\sum_{i=1}^m|f^{\tau^\ast}(J_{i_j})|.$$
	Now, by definition of the unstable dynamical ball we have $|f^{\tau_{i_j}}(J_{i_j})|=2$ and by \eqref{e.temposmodernos} $|\tau^{*}-\tau_{i_j}|<T_1$ for every $j=1,\dots,m$. Therefore, 
	\[
	|f^{\tau^*}(J_{i_j})|\geq\frac{2}{\|Df^{-T_1}\|}.
	\]
	Combining the last two inequalities one deduces that 
	\[
	|f^{\tau^\ast}(J)|\geq\frac{m}{\|Df^{-T_1}\|}.
	\]
	This implies in particular that there exist two points of $f^{\tau_\ast}(J)$ that are distant of at least $\frac{m}{2\|Df^{-T_1}\|}$. Since such a point is at distance $\leq 1$ of some $f^{\tau_\ast}(x_i)$ we deduce that there exists $j,l$ such that
	$$d(f^{\tau_\ast}(x_{i_j}),f^{\tau_\ast}(x_{i_l}))\geq \frac{m}{2\|Df^{-T_1}\|}-2.$$
%Now we can use Theorem \ref{thm_regularity}. From Corollary~\ref{c.gapdoseba} we have that 
%\begin{eqnarray}
%d(f^{\tau^\ast}(x_{i_j}),f^{\tau^\ast}(y_{i_j}))&\leq& d(f^{\tau^\ast}(x_{i_j}),f^{\tau^\ast}(z_{i_j}))+d(f^{\tau^\ast}(z_{i_j}),f^{\tau^\ast}(y_{i_j}))\nonumber\\
%&\leq& e^{\chi^s_2\tau^\ast}+\kappa\eps<2,\nonumber
%\end{eqnarray}
%provided that $\ell$ is large enough.

On the one hand $d(f^{\tau^\ast}(y_{i_j}),f^{\tau^\ast}(y_{i_l}))\leq 2$ (this follows from the definition of $\tau^\ast$ and the fact that $f^{\tau^\ast}(z)\in\cW^u_1(f^{\tau^\ast}(y_{i_j}))\cap \cW^u_1(f^{\tau^\ast}(y_{i_l}))$). On the other hand if $\ell$ is large enough and $\eps$ small enough we may ask $d(f^{\tau^\ast}(y_{i_j}),f^{\tau^\ast}(x_{i_j}))\leq\rho_0$ (where $\rho_0$ is the constant of Lemma \ref{thm_regularity}). Hence the Hölder regularity of center-stable holonomies provided by Lemma \ref{thm_regularity} yields
	\begin{align*}
		\frac{m}{2\|Df^{-T_1}\|}-2 &\leq d(f^{\tau_\ast}(x_{i_j}),f^{\tau_\ast}(x_{i_l}))\\
		&\leq C^{cs}d(f^{\tau_\ast}(y_{i_j}),f^{\tau_\ast}(y_{i_l}))^{\theta^{cs}}\\
		&\leq C^{cs} 2^{\theta^{cs}}.
	\end{align*}
	
	We obtain
	$$m\leq (C^{cs}2^{1+\theta^{cs}}+4)\|Df^{-T_1}\|.$$
	This upper bound only depends on $f$, which concludes the proof.
\end{proof}

\subsubsection{Proof of Proposition \ref{p.Eskin_Mirzakoukou}}\label{sss.proof_mirzakoukou} We are now ready to give a proof of Proposition \ref{p.Eskin_Mirzakoukou} which is a modification of that of \cite[Lemma 12.8]{EskinMirzakhani}.

Set
$$\mathcal{I}^x\eqdef\left\{i\in\{1,\ldots,k\}: J_i\cap Q^u(x,\ell)\neq\emptyset\right\},\,\,\,\,\,\,\text{}\,\,\,\,\,\,\mathcal{J}^x\eqdef\{1,\ldots,k\}\setminus\mathcal{I}^x,$$
and
$$\mathcal{I}^y\eqdef\left\{i\in\{1,\ldots,k\}: J_i'\cap Q^u(y,\ell)\neq\emptyset\right\},\,\,\,\,\,\,\text{and}\,\,\,\,\,\,\mathcal{J}^y\eqdef\{1,\ldots,k\}\setminus\mathcal{I}^y.$$

Let
$$Q\eqdef\{z\in Q^u(x,\ell):\forall\, i,(z\in J_i\Rightarrow i\in\mathcal{I}^y)\}.$$
It suffices to show that $Q\cap I^u(x)\neq\emptyset$.

Note that
$$(Q^u(x,\ell)\cap I^u(x))\setminus Q\dans\bigcup_{i\in\mathcal{I}^x\cap\mathcal{J}^y} Q^u(x,\ell)\cap J_i$$
so (supposing that $\ell$ is large enough so that $|\xi^u(y)|\leq 2|\xi^u(x)|$)
\begin{align*}
	|(Q^u(x,\ell)\cap I^u(x))\setminus Q|&\leq  \sum_{i\in\mathcal{I}^x\cap\mathcal{J}^y}|Q^u(x,\ell)\cap J_i|\leq \sum_{i\in\mathcal{I}^x\cap\mathcal{J}^y}|J_i|
	\leq\kappa_4\sum_{i\in\mathcal{J}^y}|J_i'|\\
	&\leq \kappa_4 m \left|\bigcup_{i\in\mathcal{J}^y} J_i'\right|\leq \kappa_4 m|\xi^u(y)\setminus Q^u(y,\ell)|\\
	&\leq \kappa_4 m \beta\eta|\xi^u(y)|\leq 2\kappa_4 m \beta\eta|\xi^u(x)|.
\end{align*}
Hence there exists $\beta'=\beta'(f)$ such that
$$|Q\cap I^u(x)|\geq |Q^u(x,\ell)\cap I^u(x)|-|(Q^u(x,\ell)\cap I^u(x))\setminus Q |\geq (1-\beta'\eta)|\xi^u(x)|>0,$$
as soon as $\eta<\frac{1}{\beta'}$.

Now let $a\in Q$. There exists $i$ such that $d(x,x_i)>c$, $a\in J(x_i,\eps,\ell)$ and $J(y_i,\eps,\ell)\cap Q^u(y,\ell)\neq\emptyset$ where $y_i=H^{cs}_{x,y}(x_i)$: the proof of the proposition is over.\quad \hfill $\square$

\subsubsection{The synchronization} We are now ready to finish the proof of Lemma \ref{l.fatality}. Let $\ell\in\cD$ such that $\ell\geq L_0$. Let $x\in K'(\ell)$ and $y\in K(\ell)\cap\xi^s(x)$ be given by Lemma \ref{l_construction_tails}. Let $x^u\in\xi^u(x)$ and $y^u\in\xi^u(y)$ be given by Proposition \ref{p.Eskin_Mirzakoukou}: there exist $a\in J(x^u,\eps,\ell)\cap Q^u(x,\ell)$ and $b\in J(x^u,\eps,\ell)\cap Q^u(y,\ell)$.

\begin{lemma}\label{constructio_Yconfiguration}
There exists $T>0$ independent of $\ell$ and $\tau,t>0$ such that
\begin{enumerate}
\item $|\tau-\tau(x,x^u,\eps,\ell)|\leq T$ and $|t-t(x,x^u,\eps,\ell)|\leq T$;
\item $f^{\tau}(a),f^{t}(x),f^{\tau}(b),f^{t}(y)\in K_0$.
\end{enumerate}
\end{lemma}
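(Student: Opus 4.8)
The statement asserts that we can simultaneously land the relevant points of the two matched $Y$-configurations back inside $K_0$, with synchronized stopping times. The key point is that $x$ and $y$ were chosen in $K(\ell)\subset K=(K_0)^\circ$, hence are $(\delta,T)$-recurrent to $K_0$, and that the perturbations $a\in J(x^u,\eps,\ell)\cap Q^u(x,\ell)$ and $b\in J(y^u,\eps,\ell)\cap Q^u(y,\ell)$ were chosen inside the sets $Q^u(\cdot,\ell)$, which by definition \eqref{eq_leslongueurs}--\eqref{eq_jujuell} means precisely that $\ell\in E(x^u\text{-related pair})$, i.e. $f^{\tau(\ell)}$ and $f^{t(\ell)}$ of the corresponding orbit point land in $K$. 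So the plan is: first, use the membership of $a$ in $Q^u(x,\ell)$ to get $f^{\tau(x,x^u,\eps,\ell)}(a)\in K$ and $f^{t(x,x^u,\eps,\ell)}(x)\in K$ (modulo the distortion/synchronization control on how $\tau$ and $t$ move when we replace $x^u$ by the nearby point $a\in J(x^u,\eps,\ell)$, which is Lemma~\ref{l.synchroinstable}). Likewise, use $b\in Q^u(y,\ell)$ together with Lemma~\ref{lemme synchro} (synchronization for the quadrilateral $(x,x^u,y,y^u)$) and Lemma~\ref{l.synchroinstable} to control $\tau(y,y^u,\eps,\ell)$ and $\tau(y,b,\eps,\ell)$ against $\tau(x,x^u,\eps,\ell)$, all within a universal bound.

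\textbf{Main steps.} Set $\tau\eqdef\tau(x,x^u,\eps,\ell)$ and $t\eqdef t(x,x^u,\eps,\ell)$; these are the $\tau,t$ in the statement, so item (1) holds trivially with $T=0$ for the point $x^u$ — the content is really about (2) and about the fact that $\tau,t$ also work for the $y$-side up to a bounded error, which is exactly Remark~\ref{rem_symmetry}. For item (2): (i) By definition of $E(x,x^u)$ in \eqref{eq_leslongueurs} and the fact that $a\in Q^u(x,\ell)\subset\{z:\ell\in E(x,z)\}$ is not quite what we need — rather, $a\in J(x^u,\eps,\ell)$ and $a\in Q^u(x,\ell)$ separately. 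Since $a\in Q^u(x,\ell)$ means $\ell\in E(x,a)$, we get $f^{\tau(x,a,\eps,\ell)}(a)\in K$ and $f^{t(x,a,\eps,\ell)}(x)\in K\subset K_0$; and by Lemma~\ref{l.synchroinstable}, $|\tau(x,a,\eps,\ell)-\tau|<T_0$ and $|t(x,a,\eps,\ell)-t|<T_0$. Now invoke the $(\delta,T_1)$-recurrence of $f^{t(x,a,\eps,\ell)}(x)$ — or more simply of $x\in K$ — to $K_0$: since $f^{t(x,a,\eps,\ell)}(x)\in K=(K_0)^\circ$, points of $K$ return to $K_0$ with density $1-\delta$ in every window of length $>T_1$, so by pigeonhole there is a bounded correction $j\in[0,N]$ with $N=N(\delta)$ (essentially $T_1/\delta$ or so) such that $f^{t(x,a,\eps,\ell)+j}(x)\in K_0$; but this changes $t$ by a bounded amount. (Alternatively, and more cleanly: $K(\ell)$ can be intersected at an earlier stage with the recurrence-refined set $(K_0)^\circ$ so that the relevant iterates land directly in $K_0$, avoiding any further correction; I expect the authors take $K$ small enough that $f^{\tau}(a),f^t(x)\in K\subset K_0$ up to replacing $K_0$ by a slightly larger set — so the cleanest route is to carry $K_0$ and $(K_0)^\circ$ along and absorb the constants.) (ii) Repeat for $b$: $b\in Q^u(y,\ell)$ gives $\ell\in E(y,b)$, hence $f^{\tau(y,b,\eps,\ell)}(b)\in K$ and $f^{t(y,b,\eps,\ell)}(y)\in K$; by Lemma~\ref{l.synchroinstable}, $\tau(y,b,\eps,\ell)$ and $t(y,b,\eps,\ell)$ are within $T_0$ of $\tau(y,y^u,\eps,\ell)$, $t(y,y^u,\eps,\ell)$; and by Lemma~\ref{lemme synchro} (applied to the $(C,\ell)$-quadrilateral $(x,x^u,y,y^u)$), the latter are within $T_0$ of $\tau,t$. (iii) Assemble: taking $T\eqdef 2T_0+N(\delta)$ (or whatever universal bound the bookkeeping produces), one gets integers $\tau'\in[0,\tau+T]$ and $t'\in[0,t+T]$ with $f^{\tau'}(a),f^{\tau'}(b),f^{t'}(x),f^{t'}(y)\in K_0$; renaming $\tau'\to\tau$, $t'\to t$ gives (1) and (2). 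This is exactly the data of Definition~\ref{d.matched}, so $X=(x,x^u,\ell)$ and $Y=(y,y^u,\ell)$ are $(K_0,C,T)$-matched, which — ranging over $\ell\in\cD$ with $\ell\geq L_0$, an infinite set by Proposition~\ref{p.claim64} — proves Lemma~\ref{l.fatality}.

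\textbf{Main obstacle.} The delicate bookkeeping is making sure that \emph{all four} correcting shifts can be taken equal — i.e. that we land $f^{\tau}(a)$, $f^{\tau}(b)$, $f^{t}(x)$, $f^{t}(y)$ in $K_0$ for a \emph{single} pair $(\tau,t)$, not four different ones. This is where the $(C,\ell)$-quadrilateral hypothesis and the two synchronization lemmas (Lemmas~\ref{lemme synchro} and \ref{l.synchroinstable}) are essential: they guarantee that the four natural stopping times $\tau(x,a,\cdot)$, $\tau(y,b,\cdot)$, $t(x,\cdot)$ interpreted appropriately, and their $y$-counterparts, all sit within a window of size $O(T_0)$ of the reference values $(\tau,t)$, so a single density/pigeonhole correction — using that $a,b,x,y$ all lie in $K=(K_0)^\circ$, hence are $(\delta,T_1)$-recurrent to $K_0$, and that $\eta=\eta(\delta)$ and $\delta$ are small — suffices to push the whole cluster of four iterates into $K_0$ at once. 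The mild subtlety is that the distances $d(f^j(a),f^j(x^u))$, $d(f^j(b),f^j(y^u))$ stay $<1$ only for $j\le\tau(\cdot)$ (Remark~\ref{r.unstaball}), so the correcting shift $j$ beyond $\tau$ must be handled via the already-established bounded distortion (Proposition~\ref{p_matched-dist_control}, Corollary~\ref{c_size_match}) rather than by the dynamical-ball definition; this only enlarges the universal constant $T$.
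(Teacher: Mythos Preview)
Your approach matches the paper's: cluster the four natural stopping times $\tau(x,a,\eps,\ell)$, $\tau(y,b,\eps,\ell)$, $t(x,a,\eps,\ell)$, $t(y,b,\eps,\ell)$ within $O(T_0)$ of the reference pair via Lemmas~\ref{lemme synchro} and~\ref{l.synchroinstable}, use $a\in Q^u(x,\ell)$ and $b\in Q^u(y,\ell)$ to place the four corresponding iterates in $K=(K_0)^\circ$, and then invoke $(\delta,T)$-recurrence to $K_0$ to find common $\tau,t$.

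The one step you leave imprecise --- and correctly flag as the ``main obstacle'' --- is how to produce a \emph{single} $t$ working for both $x$ and $y$ (and likewise a single $\tau$ for $a,b$). The paper's execution is a clean density-intersection: pick $T'>\max(T,T_0)$ with $3T_0/T'<\delta$, so that in the window $[\,t'(\ell),\,t'(\ell)+T'\,]$ (with $t'(\ell)$ the larger of the two $t$-stopping times) the set $\{k:f^k(x)\in K_0\}$ has density $>1-4\delta$ while $\{k:f^k(y)\in K_0\}$ has density $>1-\delta$; these intersect once $\delta<1/5$, and $T\eqdef T'+2T_0$ does the job. Your final worry about controlling $d(f^j(a),f^j(x^u))$ for $j$ beyond $\tau(\ell)$ is a red herring: the argument never needs this, since it is the \emph{iterated} points $f^{\tau(x,a,\eps,\ell)}(a)\in K$ etc.\ that are themselves $(\delta,T)$-recurrent to $K_0$, and the correction window is applied from there.
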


It follows from Lemma \ref{constructio_Yconfiguration} that for $\ell\in\cD$ larger than $L_0$, the $Y$-configurations $X=X(x,x^u,\ell)$ and $Y=Y(y,y^u,\ell)$ are $(K_0,C,T)$-matched. This proves Lemma \ref{l.fatality}.

\begin{proof}
Set
$$\tau(\ell)=\tau(x,a,\varepsilon,\ell),t(\ell)=t(x,a,\varepsilon,\ell),\tau'(\ell)=\tau(y,b,\varepsilon,\ell),t'(\ell)=t(y,b,\varepsilon,\ell).$$

By construction $(x,x^u,y,y^u)$ is a $(C,\ell)$-quadrilateral and $a\in J(x^u)$, $b\in J(y^u)$. Combining the results of Lemmas \ref{lemme synchro} and \ref{l.synchroinstable} we obtain
	$$
	|\tau(\ell)-\tau(x,x^u,\eps,\ell)|<T_0,\quad |t(\ell)-t(x,x^u,\eps,\ell)|<T_0$$
	as well as
	$$|\tau'(\ell)-\tau(x,x^u,\eps,\ell)|<2T_0,\quad |t'(\ell)-t(x,x^u,\eps,\ell)|<2 T_0,
	$$ 
and consequently
	$$
	|\tau'(\ell)-\tau(\ell)|<3T_0,\quad |t'(\ell)-t(\ell)| <3T_0,
	$$ 
where $T_0=T_0(\delta)$.

Proposition \ref{p.Eskin_Mirzakoukou} implies that
$$f^{\tau(\ell)}(a),f^{t(\ell)}(x),f^{\tau'(\ell)}(b),f^{t'(\ell)}(y)\in K,$$
which means that these points are $(\delta,T)$-recurrent inside the compact set $K_0$. Now we might have $\tau(\ell)\neq\tau'(\ell)$ or $t(\ell)\neq t'(\ell)$. Assume for example that $t(\ell)<t'(\ell)$. Let $T'=T'(\delta)>\max(T,T_0)$ such that
$$\frac{T_0}{T'}<\delta,$$
in such a way that $[t(\ell),t'(\ell)]$ has density $<3\delta$ inside $[t(\ell),t'(\ell)+T']$. Since $T'>T$, this implies that 
$$\left\{k\in[t(\ell),t'(\ell)+T']:f^k(x)\in K_0\right\} \cap [t'(\ell), t'(\ell) + T']$$
 has density $>1-4\delta$ in $[t'(\ell),t'(\ell)+T']$. On the other hand the set 
 $$\left\{k\in[t'(\ell),t'(\ell)+T']:f^k(y)\in K_0\right\}$$
has density $>1-\delta$ inside $[t'(\ell),t'(\ell)+T']$. Therefore these two sets must intersect (as soon as $\delta<1/5$) so there exists $t\in[t'(\ell),t'(\ell)+T']$ such that $f^t(x),f^t(y)\in K_0$. This integer $t$ satisfies
$$|t-t(x,x^u,\eps,\ell)|\leq|t-t'(\ell)|+|t'(\ell)-t(x,x^u,\eps,\ell)|<T'+2T_0$$
so $T=T'+2T_0$ is the desired constant. The same argument can be reproduced to treat $a$ and $b$ and find the number $\tau$.
\end{proof}
%
%
%
%
%
%This means that if $T_2=2T+2T_0$, there exists $t_0,\tau_0,t'(0)\tau_0'$
%
%
%
%
%Now let $T=\max(T_0,T_1)$: $\tau(y,y^u,\varepsilon,\ell)$ and $\tau(x,x^u,\varepsilon,\ell)$ belong to the same interval $I$ of size $2T$.
%
%Since $x$ and $y$ are $(T,\delta)$-recurrent, 

\subsubsection{Postliminary: choice of $\delta_0$ and $\delta$}\label{delta_delta0} Let us list the requirements we made on the constant $\delta_0$ and $\delta=3\delta_0$. We needed to require 
$$\delta<\min\big(\frac{1}{5},\frac{1}{r}\big),$$
where $r$ only depends on $f$ (it is defined in Lemma \ref{l.lemadosaci}).

We defined in Proposition \ref{p.claim64} a function $\eta(\delta)$ (of the order of $\delta^{1/4}$) that depends only on $a$ and required that
$$\eta<\min\big(\frac 14,\frac{1}{\beta},\frac{1}{\beta'}\big),$$
where $\beta$ is the constant of Lemma \ref{l.density_u-gibbs} and $\beta'$ appears in \S \ref{sss.proof_mirzakoukou}.

\section{Remarks on Gogolev-Kolmogorov-Maimon's perturbations}
\label{s.final_remarks}

%\subsection{Remarks on Gogolev-Kolmogorov-Maimon's perturbations}

In \cite{GKM} the authors consider two families of perturbations of the linear Anosov diffeomorphism $f_0\colon\TT\to\TT$ induced by the matrix
\[
A = 
\begin{bmatrix}
2 & 1 & 0\\
1 & 2 &1\\
0 & 1 & 1
\end{bmatrix}.
\]

The \emph{dissipative family} is given
$$f_{D,\eps} \begin{bmatrix}
x \\
y \\
z 
\end{bmatrix}=A\begin{bmatrix}
x \\
y \\
z 
\end{bmatrix} +\frac{\eps}{2\pi}\begin{bmatrix}
\sin(2\pi x) \\
0 \\
0 
\end{bmatrix}\,\,\,\,\,\text{mod}(1).$$

The Jacobian at the fixed point of $f_{D,\eps}$ is given by
$$\text{Jac} f_{D,\eps}\begin{bmatrix}
0 \\
0 \\
0 
\end{bmatrix}=\begin{vmatrix}
2+\eps & 1 & 0\\
1 & 2 &1\\
0 & 1 & 1
\end{vmatrix}
=1+\eps,$$
so $f_{D,\eps}$ is dissipative: it does not preserve any volume.

The \emph{conservative family} is given by

$$f_{C,\eps} \begin{bmatrix}
x \\
y \\
z 
\end{bmatrix}=A\begin{bmatrix}
x \\
y \\
z 
\end{bmatrix} +\frac{\eps}{2\pi}\begin{bmatrix}
\sin(2\pi x) \\
\sin(2\pi x) \\
0 
\end{bmatrix}\,\,\,\,\,\text{mod}(1).$$
The Jacobian at any point of $f_{C,\eps}$ is given by
$$\text{Jac} f_{C,\eps}\begin{bmatrix}
x \\
y \\
z 
\end{bmatrix}=\begin{vmatrix}
2+\eps\cos(2\pi x) & 1 & 0\\
1+\eps\cos(2\pi x) & 2 &1\\
0 & 1 & 1
\end{vmatrix}=1.
$$

We apply Gan-Shi's criterion for joint integrability obtained in \cite{GanShi} in order to prove that these two families of perturbations are accessible and hence Theorem \ref{mainthm.dicotomia} applied to both of them gives the following theorem.

\begin{theorem}\label{th_GKM}
If $|\eps|> 0$ is small enough $f_{D,\eps}$ or $f_{C,\eps}$ are accessible. Hence any fully supported ergodic  $u$-Gibbs measure for $f_{D,\eps}$ or $f_{C,\eps}$ is SRB (this is Lebesgue in the conservative case).
\end{theorem}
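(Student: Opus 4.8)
The plan is to apply Theorem~\ref{mainthm.dicotomia} to the two families $f_{D,\eps}$ and $f_{C,\eps}$, which reduces everything to two points: (i) these diffeomorphisms lie in the open set $\mathcal{U}$ to which Theorem~\ref{mainthm.dicotomia} applies; and (ii) the bundles $E^s$ and $E^u$ are \emph{not} jointly integrable for $|\eps|$ small but nonzero. Point (i) is immediate for $\eps$ small: $f_0$ (the linear map induced by $A$) belongs to $\cA^2_m(\TT)$ since its eigenvalues $\approx 0.2, 1.55, 3.25$ are real, positive, and satisfy $\lambda^s<1<\lambda^c<\lambda^u$ with $\lambda^c>1$; hence $f_0\in\cA^2_m(\TT)\subset\cU$, and since $\cU$ is $C^1$-open and $\eps\mapsto f_{\ast,\eps}$ is continuous into $\mathrm{Diff}^2(\TT)$ with $f_{\ast,0}=f_0$, we get $f_{\ast,\eps}\in\cU$ for $|\eps|$ small. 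For the conservative family, $f_{C,\eps}$ is moreover volume preserving (its Jacobian is identically $1$, as computed in the excerpt), so its unique SRB measure is Lebesgue; this is what lets us upgrade ``SRB'' to ``Lebesgue'' in that case.

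The substantive step is point (ii): verifying non-joint-integrability, equivalently accessibility (for these perturbations these are the relevant dichotomy; recall that by \cite{HertzHertzUres} failure of joint integrability is $C^1$-open and $C^2$-dense, but here we need it for the \emph{specific} one-parameter families). The plan is to invoke the criterion of Gan--Shi \cite{GanShi}: one computes an appropriate \emph{$su$-cycle} (a closed path alternating along strong stable and strong unstable leaves) based at the fixed point $0$ and checks that its ``return'' does not close up — the obstruction being detected by a derivative/linearized computation at the fixed point, where $Df$ is exactly $A$ (resp. $A$ with the $(1,1)$-entry perturbed to $2+\eps$). Concretely, at $0$ the strong stable, center and strong unstable directions are the eigendirections of the relevant matrix; the perturbation term $\tfrac{\eps}{2\pi}(\sin 2\pi x,\ast,0)$ has $x$-derivative $\eps$ at $0$ and, crucially, \emph{nonzero higher-order} dependence on $x$ along the leaves, so that transporting $E^u$ along a stable leaf and back produces a twist of size comparable to $\eps$ (to leading order), which is nonzero for $\eps\neq0$. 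Gan--Shi's criterion packages precisely this computation: one exhibits a periodic point (here the fixed point $0$) and shows the associated \emph{periodic cycle functional} / holonomy loop is nontrivial, which by their theorem forces accessibility, hence — in our three-dimensional Anosov setting — failure of joint integrability of $E^s\oplus E^u$.

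With (i) and (ii) in hand, Theorem~\ref{mainthm.dicotomia} gives the dichotomy for $f_{\ast,\eps}$, and (ii) rules out the first alternative, so we are in case~\eqref{case deux}: every fully supported ergodic $u$-Gibbs measure is SRB. For the conservative family we additionally use that the SRB measure of a conservative Anosov diffeomorphism is Lebesgue (Hopf argument, or simply that Lebesgue is invariant and absolutely continuous along $\cW^{cu}$), completing the statement.

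I expect the main obstacle to be the careful bookkeeping in the Gan--Shi computation: one must set up the $su$-path explicitly in coordinates adapted to the eigenbasis of $A$ (or of the perturbed matrix at $0$), track the strong unstable line field under the stable holonomy along that path, and extract the leading-order-in-$\eps$ term of the twist, making sure it is genuinely nonzero and not killed by some symmetry of $\sin$. This is a concrete finite-dimensional calculation but is the only place where the specific form of the perturbations (rather than mere genericity) enters, so it is where all the real work lies; the rest is an essentially formal application of Theorem~\ref{mainthm.dicotomia}, the openness of $\cU$, and conservativity of $f_{C,\eps}$.
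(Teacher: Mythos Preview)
Your overall framework is correct --- invoke Gan--Shi \cite{GanShi} to rule out joint integrability, then feed this into Theorem~\ref{mainthm.dicotomia} --- but you have misidentified the content of the Gan--Shi criterion, and this leads you to plan a much harder computation than is actually needed. The criterion the paper uses is not about $su$-cycle holonomies or periodic cycle functionals; it is a \emph{periodic data rigidity} statement: a $C^{1+\alpha}$ diffeomorphism $f$ that is $C^1$-close to $f_0$ fails to be accessible if and only if the central Lyapunov exponent of $f$ at \emph{every} periodic point coincides with that of $f_0$. Consequently, to prove accessibility it suffices to exhibit a single periodic point --- here the fixed point $0$ --- whose central eigenvalue differs from $\lambda_2(A)\approx 1.55$.

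This turns point~(ii) into pure linear algebra: compute the characteristic polynomial of $Df_{\ast,\eps}(0)$ (a $3\times 3$ matrix depending polynomially on $\eps$) and show its middle root $\lambda_2(\eps)$ satisfies $\lambda_2'(0)\neq 0$. The paper does this by implicitly differentiating the relations between roots and coefficients; for instance, in the dissipative family the matrix is $A$ with $(1,1)$-entry replaced by $2+\eps$, and the assumption $\lambda_2'(0)=0$ forces, via the symmetric functions, $\lambda_2+\lambda_2^{-1}=3$, contradicting $\lambda_2+\lambda_2^{-1}\approx 2.20$. The conservative family is handled the same way. No $su$-paths, no holonomy transport, no tracking of $E^u$ along nonlinear leaves. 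Your proposed route --- computing the twist of $E^u$ under stable holonomy along an explicit $su$-loop and extracting the leading $\eps$-term --- would in principle also detect accessibility, but carrying it out requires controlling the nonlinear foliations of $f_{\ast,\eps}$ to first order in $\eps$, which is exactly the analysis the eigenvalue version of the criterion lets you bypass. As you yourself note, that is ``where all the real work lies''; with the correct form of Gan--Shi, there is essentially no work at all.
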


\begin{proof}
In our context, Gan-Shi's criterion for accessibility is the following. \emph{A $C^{1+\alpha}$ difeomorphism $f$ $C^1$-close to $f_0$ is not accessible if and only if its central Lyapunov exponents at every perdiodic points coincide with those of $f_0$.}

In order to prove the accessibility of $f_{D,\eps}$ and $f_{C,\eps}$ it is enough to prove that the central Lyapunov exponent at the fixed point differ from the one of $A$ (which is approximately $1.55$). This can be done as follows. The characteristic polynomial of the differential at the fixed point for the dissipative family is
$$P_{D,\eps}(X)=\begin{vmatrix}
2-X+\eps & 1 & 0\\
1 & 2-X &1\\
0 & 1 & 1-X
\end{vmatrix}=-X^3+(5+\eps)X^2-(6+3\eps)X+1+\eps.$$

The coefficients of $P_{D,\eps}(X)$ vary smoothly with $\eps$ and when $\eps=0$, this polynomial has three disctinct roots. So by the implicit function theorem, for small $|\eps|$, $P_{D,\eps}$ has three distinct roots $\lambda_1(\eps)<\lambda_2(\eps)<\lambda_3(\eps)$ that depend smoothly on $\eps$ and satisfy the  relations
$$ \begin{cases}
\lambda_1\lambda_2\lambda_3=1+\eps \\
\lambda_1+\lambda_2+\lambda_3=5+\eps \\
\lambda_1\lambda_2+\lambda_1\lambda_3+\lambda_2\lambda_3=6+3\eps
\end{cases} .$$

We must have $\lambda_2'(0)\neq 0$, which yields the accessibility of $f_{D,\eps}$. Indeed, if $\lambda_2'(0)=0$, then derivating the relations above gives (the functions below are evaluated at $0$)

$$ \begin{cases}
\lambda_2(\lambda_1'\lambda_3+\lambda_1\lambda_3')=1 \\
\lambda_1'+\lambda_3'=1 \\
\lambda_2(\lambda_1'+\lambda_3')+\lambda_1'\lambda_3+\lambda_1\lambda_3'=3
\end{cases} .$$

Combining the above one finds $\lambda_2+1/\lambda_2=3$, which contradicts $\lambda_2\simeq 1.55$ (so $\lambda_2+1/\lambda_2\simeq 2.20$).

The same computation works for the conservative family. Indeed, the characteristic polynomial of the differential at the fixed point for this family is
$$P_{C,\eps}(X)=\begin{vmatrix}
2-X+\eps & 1 & 0\\
1+\eps & 2-X &1\\
0 & 1 & 1-X
\end{vmatrix}=-X^3+(5+\eps)X^2-(6+2\eps)X+1.$$

So the relations between roots and coefficients give
$$ \begin{cases}
\lambda_1\lambda_2\lambda_3=1 \\
\lambda_1+\lambda_2+\lambda_3=5+\eps \\
\lambda_1\lambda_2+\lambda_1\lambda_3+\lambda_2\lambda_3=6+2\eps
\end{cases} .$$

The same argument as before provides $\lambda_2'(0)\neq 0$ (the calculations are left to the reader), which yields the accessibility of $f_{C,\eps}$.
\end{proof}

%
%\subsection{Non joint integrability}
%
%In a certain way, our strategy looks at how the direction $E^u$ oscillates along a stable manifold, this is done by using the derivative of the stable holonomies. Recall that in the $Y$-configuration,we had a parameter $l\in \mathbb{N}$. In particular, the displacement we obtain in the center direction between the points $y^u$ and $z^u$ is of the order
%\[
%\frac{\lambda^c_{y_{-l}}(l)}{\lambda^u_{y_{-l}}(l)}.
%\]
%In our setting this is enough to conclude that the $u$-Gibbs measure is SRB.  
%
%Let us remark that in more general settings, one can look at different scales of non joint integrability.  The most important property to make the strategy with the $Y$ configurations work is to obtain a good lower bound control on the distance between $y^u$ and $z^u$. This is the content, for example, of the QNI condition used by Katz. With this control, one can 

%\section{Hyperbolic attractors}
%\input{Sec9_attractors.tex}

\bibliographystyle{plain}
\bibliography{Bib_ALOS}
\begin{flushleft}
{\scshape S\'ebastien Alvarez}\\
	CMAT, Facultad de Ciencias, Universidad de la Rep\'ublica\\
	Igua 4225 esq. Mataojo. Montevideo, Uruguay.\\
	email: \texttt{salvarez@cmat.edu.uy}

	\vspace{0.2cm}

{\scshape Martin Leguil}\\
	Laboratoire Ami\'enois de Math\'ematique Fondamentale et Appliqu\'ee (LAMFA, UMR 7352)\\
	Université de Picardie Jules Verne, 33 rue Saint Leu, 80039 Amiens, France\\
	email: \texttt{martin.leguil@u-picardie.fr}

	\vspace{0.2cm}
	
{\scshape Davi Obata}\\
    Department of Mathematics, University of Chicago\\
	5734 S. University Avenue, Chicago, Illinois 60637. USA\\
	email:  \texttt{davi.obata@math.uchicago.edu}

	\vspace{0.2cm}
	
{\scshape Bruno Santiago}\\
	Instituto de Matem\'atica e Estat\'istica, Universidade Federal Fluminense\\
	Rua Prof. Marcos Waldemar de Freitas Reis, S/N -- Bloco H, 4o Andar\\
	Campus do Gragoatá, Niterói, Rio de Janeiro 24210-201, Brasil\\
	email:  \texttt{brunosantiago@id.uff.br}

\end{flushleft}

\end{document}